\documentclass[a4paper,11pt,oneside,reqno]{article}
\usepackage[top=1in, left=1in, right=1in, bottom=1in]{geometry}

\usepackage{tabularx} 
\usepackage[table]{xcolor}
\usepackage{amsthm, amsmath, amssymb, amsfonts, mathrsfs, multicol, multirow, geometry, comment, tikz,enumitem}

\usepackage[table]{xcolor} 
\usepackage{booktabs}      
\usepackage{graphicx}      

\definecolor{grey1}{gray}{0.9} 
\definecolor{grey2}{gray}{0.8}
\definecolor{grey3}{gray}{0.7}
\definecolor{grey4}{gray}{0.6} 

\definecolor{yellow1}{RGB}{255, 255, 204} 
\definecolor{yellow2}{RGB}{255, 255, 153}
\definecolor{yellow3}{RGB}{255, 204, 102}
\definecolor{yellow4}{RGB}{255, 153, 51}  

\usepackage[colorlinks=true,    
            linkcolor=blue,     
            citecolor=cyan,    
            urlcolor=red]       
           {hyperref}
\usepackage{algorithm}
\usepackage{algpseudocode}
\usepackage{adjustbox} 

\allowdisplaybreaks
\newtheorem{theorem}{Theorem}[section]
   
\newtheorem{remark}[theorem]{Remark}
\newtheorem{proposition}[theorem]{Proposition}
\newtheorem{lemma}[theorem]{Lemma}

\newtheorem{problem}[theorem]{Problem}
\newtheorem{example}[theorem]{Example}
\newtheorem{assumption}[theorem]{Assumption}

\newcommand{\vertiii}[1]{{\left\vert\kern-0.25ex\left\vert\kern-0.25ex\left\vert #1
  \right\vert\kern-0.25ex\right\vert\kern-0.25ex\right\vert}}

\newcommand{\regu}{R}                  
\newcommand{\Jmin}{J^{\star}}          
\newcommand{\Je}{J_{\rho}}             
\newcommand{\Jeprime}{\Je^{\prime}}    
 
\newcommand{\Jedprime}{\Je^{\prime\prime}}
\newcommand{\Aad}{\mathcal{A}}         
\newcommand{\mina}{\underline{\alpha}}
\newcommand{\maxa}{\overline{\alpha}}
\newcommand{\alpharho}{\alpha_{\rho}} 
\newcommand{\VV}{H^{1}(\varOmega)}     
\newcommand{\bb}{b}                     
\newcommand{\nn}{n}                     
\newcommand{\ud}{u_{D}} 
\newcommand{\un}{u_{N}} 
\newcommand{\iu}{u_{i}}                 
\newcommand{\ru}{u_{r}}                 
\newcommand{\ip}{p_{i}}                 
\newcommand{\rp}{p_{r}}                 
\newcommand{\dela}{\beta}               
\newcommand{\SubsetK}{\mathcal{K}}      
\newcommand{\barAad}{\overline{\mathcal{A}}}
\newcommand{\dAad}{\partial{\overline{\mathcal{A}}}}
\newcommand{\dn}[1]{\partial_{\nn}{#1}} 
\newcommand{\norm}[1]{\|{#1}\|}         
\newcommand{\abs}[1]{\vert{#1}\vert}    
\newcommand{\intO}[1]{\int_{\varOmega} #1 \, dx}     
\newcommand{\intG}[1]{\int_{\varGamma} #1 \, ds}     
\newcommand{\inS}[1]{\langle{#1}\rangle}
\newcommand{\Fprime}{F^{\prime}}
\newcommand{\Fdprime}{F^{\prime\prime}}
\newcommand{\uprime}{u^{\prime}}
\newcommand{\udprime}{u^{\prime\prime}}
\newcommand{\iuprime}{u_{i}^{\prime}}
\newcommand{\iudprime}{u_{i}^{\prime\prime}}

\newcommand{\alert}[1]{{\color{black}{#1}}}

\begin{document}

\title{Numerical methods for diffusion coefficient recovery} 


\title{Numerical methods for diffusion coefficient recovery}

\author{
Sahat Pandapotan Nainggolan\thanks{Division of Mathematical and Physical Sciences, Institute of Science and Engineering, Kanazawa University, Japan. Email: sahatpandapotannainggolan@gmail.com}
\and
Julius Fergy Tiongson Rabago\thanks{Faculty of Mathematics and Physics, Institute of Science and Engineering, Kanazawa University, Japan. Email: jftrabago@gmail.com}
\and
Hirofumi Notsu\thanks{Faculty of Mathematics and Physics, Institute of Science and Engineering, Kanazawa University, Japan. Email: notsu@se.kanazawa-u.ac.jp}
}

\date{\today}

\maketitle

\let\thefootnote\relax
\footnotetext{MSC2020: Primary 65N21, 65F22, Secondary 47G40.} 

\begin{abstract}
We revisit the inverse problem of reconstructing a spatially varying diffusion coefficient in stationary elliptic equations from boundary Cauchy data. From a theoretical perspective, we introduce a gradient-weighted modification of the coupled complex-boundary method (CCBM) incorporating an \(H^1\)-type term, and formulate the reconstruction as a regularized optimization problem over bounded admissible coefficients. We establish continuity and differentiability of the forward map, Lipschitz continuity of the modified cost functional, existence of minimizers, stability with respect to noisy data, and convergence under vanishing noise.
From a numerical perspective, reconstructions are computed using a Sobolev-gradient descent scheme and evaluated through extensive numerical experiments across a range of noise levels, boundary inputs, and coefficient structures. In the reported tests, for sufficiently large but not excessive $H^1$-weights, the modified CCBM is observed to yield more stable reconstructions and to reduce certain high-frequency artifacts. Across the numerical scenarios considered in this study, the method often demonstrates favorable stability and robustness properties relative to several classical boundary-based formulations, although performance remains problem- and parameter-dependent. A projection-based extension further supports stable recovery of piecewise-constant diffusion coefficients in multi-subregion test cases.
Our results indicate that, as long as all subdomains share a portion of the boundary, the proposed CCBM-based Tikhonov regularization approach with a pick-a-point strategy enables stable and reliable reconstruction of diffusion parameters.
\end{abstract}


\bigskip
\tableofcontents 
\section{Introduction}\label{sec:Introduction}%
This work is concerned with the numerical recovery of a diffusion (or conductivity) coefficient in stationary elliptic partial differential equations. Let $\varOmega \subset \mathbb{R}^{d}$, $d \in \{2,3\}$, be a bounded, open, and simply connected domain with Lipschitz boundary $\varGamma := \partial \varOmega$. We consider the second-order elliptic boundary value problem
\begin{equation}\label{eq:general_elliptic_cost}
\mathcal{L}u \equiv
- \nabla \cdot (\alpha \nabla u) + \bb \cdot \nabla u + c\, u = Q 
\quad \text{in } \varOmega, 
\qquad 
u = f 
\quad \text{on } \varGamma,
\end{equation}
where $u:\varOmega \to \mathbb{R}$ denotes the state variable, and the coefficients $\alpha:\varOmega \to \mathbb{R}_{+}$, $\bb:\varOmega \to \mathbb{R}^{d}$, and $c:\varOmega \to \mathbb{R}$ represent diffusion, advection, and reaction effects, respectively.

The inverse problem consists of recovering the spatially varying diffusion coefficient $\alpha$ from boundary Cauchy data, namely the Dirichlet trace of $u$ and its normal flux on $\varGamma$. This is a coefficient identification problem for a steady-state elliptic equation.

Although \eqref{eq:general_elliptic_cost} allows for advection and reaction effects, we focus on the diffusion-dominated case
\begin{equation}\label{eq:diffusion_cost}
- \nabla \cdot (\alpha \nabla u) = Q
\quad \text{in } \varOmega, 
\qquad 
u = f 
\quad \text{on } \varGamma,
\end{equation}
which corresponds to the classical inverse conductivity problem (ICP), also known as electrical impedance tomography (EIT) \cite{KohnVogelius1984,KohnVogelius1985,Borcea2002,Uhlmann2009}. The diffusion coefficient $\alpha \in L^{\infty}(\varOmega)$ is assumed bounded and sufficiently regular. In this work, we consider both smooth and piecewise-defined diffusion coefficients, thereby complementing existing studies that typically emphasize one of these settings. Extensions to models incorporating advection and reaction effects are left for future work.

Boundary measurements are taken as Cauchy pairs $(f,g)$ of Dirichlet data and Neumann fluxes. In practice, such data are often collected via electrodes, and the Complete Electrode Model (CEM) accounts for electrode geometry and contact impedance \cite{ChengIsaacsonNewellGisser1989,SomersaloCheneyIsaacson1992,Hyvonen2004}. Electrode effects can significantly influence measurements and should be modeled or corrected \cite{Chen1990,SaulnierRossLiu2006,DarbasHeleineMendozaVelasco2021}. Although our analysis is formulated for continuum Cauchy data, it extends naturally to CEM settings \cite{WinklerRieder2014}.

Recovering spatially varying coefficients in elliptic equations is a nonlinear and severely ill-posed inverse problem \cite{LavrentievRomanovVasiliev1970,Lions1978}. Uniqueness from a single boundary measurement is generally not guaranteed, whereas global uniqueness holds when the full Dirichlet-to-Neumann map is available \cite{SylvesterUhlmann1987,SunUhlmann1991,Nachman1995,BrownUhlmann1997,AstalaPaivarinta2006}. For $d \geq 3$, uniqueness requires sufficient smoothness; in two dimensions, it extends to less regular coefficients, including $W^{2,p}(\varOmega)$ for $p>1$ and $L^\infty(\varOmega)$ \cite{Nachman1995,AstalaPaivarinta2006}. Stability estimates, typically logarithmic and conditional, have been established under additional assumptions \cite{Liu1997,Alessandrini1988,BarceloBarceloRuiz2001}.

In this work, we consider reconstruction from a single Cauchy pair $(f,g)$, for which identifiability is generally not guaranteed for arbitrary coefficients. A common approach is therefore to impose structural constraints on the unknown parameter, such as piecewise-constant or low-complexity models \cite{KohnVogelius1984}. A representative example is
\[
\alpha = 1 + \sigma \chi_{\omega},
\]
where $\chi_{\omega}$ denotes the characteristic function of an unknown subdomain $\omega \subset \varOmega$ and $\sigma$ is an unknown constant \cite{ChengIsaacsonNewellGisser1989,SomersaloCheneyIsaacson1992}. In view of these limitations, we focus on regularized numerical reconstruction from limited boundary data, with an emphasis on stability and robustness rather than strict uniqueness guarantees.

To address ill-posedness and measurement noise \cite{KohnVogelius1984}, we adopt a regularized optimization framework. Given $\mina, \maxa \in \mathbb{R}_{+}$, we define
\[
\Aad := \{\alpha \in L^\infty(\varOmega) \mid \mina < \alpha(x) < \maxa \ \text{ a.e. in } \varOmega\} \subset L^{\infty}_{+}(\varOmega)\footnote{Here, $L^{\infty}_{+}(\varOmega)$ denotes the set of essentially bounded functions on $\varOmega$ that are nonnegative almost everywhere.} 
\]
and introduce a regularization functional $\regu(\alpha)$ with parameter $\rho>0$. We denote by $\bar{\Aad}$ the closure of $\Aad$ in $L^\infty(\varOmega)$.

Let $u$ solve the forward problem \eqref{eq:diffusion_cost} for a given $\alpha \in \barAad$ and Dirichlet boundary data $f$. The Dirichlet-to-Neumann (DtN) map is defined as
\[
\Lambda_\alpha : H^{1/2}(\varGamma) \to H^{-1/2}(\varGamma), 
\quad \Lambda_\alpha f := \alpha{}\partial_\nn u \big|_\varGamma,
\]
which maps the Dirichlet trace $f$ to the corresponding Neumann flux. Its inverse, $\Lambda_\alpha^{-1}$, maps Neumann fluxes to the corresponding Dirichlet trace, with the additive constant fixed by, e.g., $\int_\varGamma v = 0$. 

Reconstruction is formulated as the minimization of suitable cost functionals:

\begin{enumerate}
    \item[(i)] \textit{Boundary-data tracking:}  
    Let $u \in H^1(\varOmega)$ solve \eqref{eq:diffusion_cost} with Dirichlet data $f$  (i.e., in the weak sense). Define the Neumann-tracking functional
    \begin{equation}\label{eq:tracking_Neumann_cost}
    J_{N,\rho}(\alpha) := \norm{\Lambda_\alpha f - g}_{L^2(\varGamma)}^2 + \rho{}\regu(\alpha),
    \end{equation}
    and consider the minimization problem $\min_{\alpha \in \barAad} J_{N,\rho}(\alpha)$.
    For pure Neumann data, one may equivalently track Dirichlet traces via the inverse DtN map $\Lambda_\alpha^{-1}$: 
    \begin{equation}\label{eq:tracking_Dirichlet_cost}
    J_{D,\rho}(\alpha) := \norm{\Lambda_\alpha^{-1} g - f}_{L^2(\varGamma)}^2 + \rho{}\regu(\alpha),
    \end{equation}
    where $v = \Lambda_\alpha^{-1} g$ solves the Neumann problem with $\intG{v} = 0$, and $\min_{\alpha \in \barAad} J_{D,\rho}(\alpha)$.

    Here, $\rho>0$ is a regularization parameter that balances the data misfit and the regularization term, introduced to mitigate ill-posedness and promote stable reconstructions.
    \item[(ii)] \textit{Kohn--Vogelius method:}  
    Let $\ud, \un \in H^1(\varOmega)$ solve the Dirichlet and Neumann problems, respectively:
    \begin{align}
    - \nabla \cdot (\alpha \nabla \ud) &= Q \quad \text{in } \varOmega, & \ud &= f \quad \text{on } \varGamma, \label{eq:state_ud} \\
    - \nabla \cdot (\alpha \nabla \un) &= Q\quad \text{in } \varOmega, & \alpha{}\partial_{\nn} \un &= g \quad \text{on } \varGamma, \quad \intG{\un} = 0, \label{eq:state_un}
    \end{align}
    where the integral constraint ensures uniqueness of $\un$. The Kohn--Vogelius functional is
    \begin{equation}\label{eq:Kohn_Vogelius_cost}
    J_{\mathrm{KV},\rho}(\alpha) := \intO{\alpha{}|\nabla(\ud - \un)|^2} + \intG{|\ud - \un|^2} + \rho{}\regu(\alpha),
    \end{equation}
    and the corresponding optimization problem reads $\min_{\alpha \in \barAad} J_{\mathrm{KV},\rho}(\alpha)$.

    \item[(iii)] \textit{Coupled complex-boundary method (CCBM):}  
    Let $w \in H^1(\varOmega;\mathbb{C})$ satisfy the complex Robin condition
    \[
    \alpha{}\partial_{\nn} w + i w = g + i f \quad \text{on } \varGamma.
    \]
    The CCBM functional is
    \begin{equation}\label{eq:CCBM_cost}
    J_{\mathrm{CCBM},\rho}(\alpha) := \intO{\Im\{w\}^2} + \rho{}\regu(\alpha),
    \end{equation}
    and the optimization problem is $\min_{\alpha \in \barAad} J_{\mathrm{CCBM},\rho}(\alpha)$.
\end{enumerate}

The CCBM approach, originally introduced in \cite{Chengetal2014} for an inverse source problem, 
has since been applied to a wide range of inverse problems. Early applications include 
conductivity reconstruction \cite{GongChenHan2017}, parameter identification in elliptic systems 
\cite{ZhengChengGong2020}, shape reconstruction \cite{Afraites2022}, and geometric inverse 
source problems \cite{AfraitesMasnaouiNachaoui2022}. It has also been employed in more complex 
settings, such as exterior Bernoulli problems \cite{Rabago2023b}, free surface flows 
\cite{RabagoNotsu2024}, inverse Cauchy problems for Stokes systems \cite{Ouiassaetal2022}, obstacle detection in Stokes flow \cite{RabagoAfraitesNotsu2025}, tumor localization \cite{Rabago2025}, approximation of unknown sources in a time fractional PDE \cite{HriziPrakashNovotny2025}, and recently in bioluminescence tomography \cite{WuGongGongZhangZhu2025}.

In this work, we build on these developments and revisit conductivity reconstruction 
\cite{GongChenHan2017} by introducing a gradient-weighted modification of the classical CCBM cost 
functional. We also propose corresponding numerical methods to improve reconstruction quality. 
In our numerical experiments, the modified approach with the additional gradient term showed indications of better control over high-frequency components and potentially improved stability.

Overall, in both free-boundary \cite{Rabago2023b} and free-surface \cite{RabagoNotsu2024} 
problems, CCBM has consistently shown improved stability and accuracy compared with standard 
least-squares methods, while maintaining computational costs comparable to Kohn--Vogelius formulations. 
In this work, we aim to highlight these advantages over classical methods by conducting extensive 
numerical experiments in the context of diffusion coefficient recovery.

In the general elliptic setting \eqref{eq:general_elliptic_cost}, the recovery of $\alpha$ extends naturally. Let the forward operator be
\[
\mathcal{F}: \barAad \to H^{-1/2}(\varGamma), \quad \mathcal{F}(\alpha) := \alpha{}\partial_{\nn} u \big|_{\varGamma},
\]
where $u$ solves \eqref{eq:general_elliptic_cost} with given Dirichlet data $f$. The inverse problem then reads:

\begin{problem}[Inverse Problem]\label{prob:inverse_problem} 
Given Cauchy data $(f,g)$ on $\varGamma$, find $\alpha \in \barAad$ such that $u$ solves \eqref{eq:general_elliptic_cost} and 
\[
\alpha{}\partial_{\nn} u \big|_{\varGamma} = g.
\]
\end{problem}

This problem can be addressed via regularized optimization in direct analogy with the methods presented for the diffusion-dominated case, \emph{i.e.}, boundary-data tracking, Kohn--Vogelius-type functionals, or the coupled complex-boundary method (CCBM) as described in \eqref{eq:tracking_Neumann_cost}--\eqref{eq:CCBM_cost}.

\alert{In this paper, the theoretical analysis and main results are presented for the \textit{modified} coupled complex-boundary method (CCBM), introduced in Subsection~\ref{subsec:modified_CCBM_based_optimization_formulation}. The boundary-data tracking and Kohn--Vogelius formulations are included mainly for numerical comparison and completeness. While not pursued here, similar theoretical results are expected to hold under suitable assumptions and minor technical adjustments.}

\alert{\textit{Position of the paper in the literature.}}

\begin{itemize}
    \item {To improve stability and mitigate noise amplification, we employ a Sobolev-gradient formulation, which smooths the descent direction and contributes to a more stable reconstruction process.}
    
    \item {When considering a piecewise-constant diffusion coefficient, we employ a first-order gradient-based descent method combined with a pick-a-point strategy, rather than the second-order sequential quadratic programming (SQP) methods used in previous work \cite{GongChenHan2017}. At each iteration, a representative point is selected within each subregion, and its corresponding coefficient value is assigned uniformly across that subregion. This yields a simple quasi-heuristic framework, which appears particularly suited to recovering piecewise diffusion coefficients when the number of subregions is relatively small. The approach could potentially be extended to related inverse problems with piecewise-defined coefficients, subject to appropriate assumptions and problem-specific modifications.}
    
    \item {Through extensive numerical experiments involving both smooth and piecewise diffusion coefficients, we observe that the CCBM yields stable and accurate reconstructions and, in the test cases considered, compares favorably with other established methods, including under moderate noise levels. These findings help to further substantiate the motivation for introducing the CCBM as a non-conventional approach to diffusion coefficient recovery. In this sense, the present work may be viewed as a complementary numerical study to \cite{GongChenHan2017, Chengetal2014}.}
    
    \item {We also propose a modest modification of the classical CCBM cost functional by incorporating a weighted tracking of the energy of the imaginary part of the solution, balancing its $L^2$- and $H^1$-type contributions (see \eqref{eq:modified_CCBM_cost_functional}). While the classical CCBM formulation already exhibits effective performance with an $L^2$-type tracking term, the numerical experiments presented in this work indicate that further improvements can be achieved by including an $H^1$ contribution with a sufficiently large weight.}
\end{itemize}

The numerical framework proposed in this work suggests potential applicability to related inverse problems involving piecewise-defined parameters, including absorption coefficient estimation and source identification, thereby further motivating the present study.

\alert{\textit{Paper organization.}}
The remainder of this paper is organized as follows.
Section~\ref{sec:Preliminaries} introduces the notation and formulates the inverse problem. 
The section also presents auxiliary analytical results, including continuity and differentiability of the forward map and key properties of the modified cost functional. 
Section~\ref{sec:convergence_analysis} is devoted to the theoretical analysis of existence, stability, and convergence under noisy measurements. Section~\ref{sec:grad_descent_algorithm} describes the numerical approximation strategy based on a Sobolev-gradient descent method. Section~\ref{sec:numerical_experiments} reports numerical experiments for both smooth and piecewise-constant diffusion coefficients. 
Finally, Section~\ref{sec:conclusion}  concludes the paper and discusses possible directions for future work.

\section{Preliminaries}\label{sec:Preliminaries}

\subsection{Notations and the problem setting}
Let $\varOmega \subset \mathbb{R}^{d}$, $d \in \{2,3\}$, be an open bounded domain with Lipschitz boundary $\varGamma := \partial \varOmega$. The Lebesgue spaces $L^{2}(\varOmega)$ and $L^{2}(\varGamma)$ are used throughout, and $H^{1/2}(\varGamma)$ denotes the trace space of $\VV$.
Unless otherwise stated, the same notation is used for real- and complex-valued functions.
Nevertheless, we ocassionally write a complex-valued functions space $X(\varOmega)$ as $X(\varOmega;\mathbb{C})$ only for emphasis.
For complex-valued functions, the $H^{1}(\varOmega)$ inner product is defined by $(u,v)_{1,\varOmega} := (\nabla u,\nabla v)_{0,\varOmega} + (u,v)_{0,\varOmega}$, where $(u,v)_{0,\varOmega} = \intO{u\,{v}}$. 
Throughout the paper, the complex conjugate is omitted from the notation whenever the meaning is clear from the context. The associated norm is given by $\norm{u}_{1,\varOmega}^{2} = (u,u)_{1,\varOmega}$.
Moreover, we denote by $\langle \cdot , \cdot \rangle_{\varGamma}$ the duality pairing between
$H^{-1/2}(\varGamma)$ and $H^{1/2}(\varGamma)$, which coincides with the
$L^{2}(\varGamma)$ inner product whenever both arguments are in $L^{2}(\varGamma)$.
Also, for simplicity, we write $\norm{\cdot}_{\infty,\varOmega}$ for the $L^{\infty}(\varOmega)$-norm.
Furthermore, we adopt a unified notation for norms of scalar- and vector-valued functions.
Lastly, throughout the paper, $C>0$ denotes a generic constant that may change from line to line.

We formulate the inverse problem of identifying the diffusion coefficient $\alpha$ within the framework of the coupled complex boundary method. In this setting, the inverse problem reduces to enforcing the vanishing of the imaginary part of the solution in $\varOmega$.

\begin{problem}\label{prob:main_problem} 
Determine $\alpha \in \barAad$ such that $\iu = 0$ in $\varOmega$, assuming $c \neq 0$, where 
$u = \ru + i \iu : \varOmega \to \mathbb{C}$ solves
\begin{equation}\label{eq:complex_BVP}
\mathcal{L} u = Q \quad \text{in } \varOmega, \qquad
\alpha \dn{u} + i u = g + i f \quad \text{on } \varGamma.
\end{equation}
\end{problem}

For analytical purposes, we impose the following conditions on the data and coefficients:
\begin{assumption}\label{ass:coefficients} 
\begin{enumerate}[label=(A\arabic*), ref=(A\arabic*)]
\item $Q \in L^{2}(\varOmega)$, $\bb \in W^{1,\infty}(\varOmega)^d$, and $c \in L^{\infty}(\varOmega)$.
\item $\bb \cdot \nn \ge 0$ a.e.\ on $\varGamma$.
\item $c - \tfrac12 \nabla \cdot \bb \ge \mina > 0$ a.e.\ in $\varOmega$.
\end{enumerate} 
\end{assumption}

Given Assumption~\ref{ass:coefficients}, we formulate the weak form of the complex boundary value problem~\eqref{eq:complex_BVP}.
Define $B : \VV \times \VV \to \mathbb{C}$ and $l : \VV \to \mathbb{C}$ by
\begin{align}
B(u,v;\alpha)
&=
(\alpha \nabla u, \nabla {v})_{0,\varOmega}
+
(\bb \cdot \nabla u, {v})_{0,\varOmega}
+
(c u, {v})_{0,\varOmega}
+
i{}\langle u, {v}\rangle_{\varGamma},\label{eq:sesquilinear_form} \\
l(v)
&=
(Q, {v})_{0,\varOmega}
+
\langle g, {v}\rangle_{\varGamma}
+
i{}\langle f, {v}\rangle_{\varGamma},\label{eq:linear_form} 
\end{align}
where $f,g, : \varGamma \to \mathbb{R}$, and $f,g \in H^{-1/2}(\varGamma)$.
\begin{problem}\label{prob:weak_form}
Find $u \in \VV$ such that $B(u,v;\alpha) = l(v)$, for all $v \in \VV$.
\end{problem}
Let $u = \ru + i \iu : \varOmega \to \mathbb{C}$.
The real and imaginary parts satisfy
\[
\begin{cases}
\begin{aligned}
-\nabla \cdot (\alpha \nabla \ru) + \bb \cdot \nabla \ru + c \ru &= Q && \text{in } \varOmega, \\
\alpha \dn{\ru} - \iu &= g && \text{on } \varGamma,
\end{aligned}
\end{cases}
\quad
\begin{cases}
\begin{aligned}
-\nabla \cdot (\alpha \nabla \iu) + \bb \cdot \nabla \iu + c \iu &= 0 && \text{in } \varOmega, \\
\alpha \dn{\iu} + \ru &= f && \text{on } \varGamma.
\end{aligned}
\end{cases}
\]

Under Assumption~\ref{ass:coefficients}, the sesquilinear form $B(\cdot,\cdot;\alpha)$ is coercive for all $\alpha \in \barAad$, and thus Problem~\ref{prob:weak_form} admits a unique weak solution (Lemma~\ref{lem:coercivity}). In this case, the imaginary part $\iu$ vanishes in $\varOmega$ if and only if $(u,\alpha)$ solves Problem~\ref{prob:inverse_problem}.
If $\iu$ is identically zero in $\varOmega$, then $u = \ru$ satisfies Problem~\ref{prob:inverse_problem} with the Cauchy data $f$ and $g$. Conversely, if $(u, \alpha)$ solves Problem~\ref{prob:inverse_problem}, the imaginary part of the corresponding complex solution is zero in $\varOmega$. 
As a result, the Dirichlet data of the original problem is recovered as $f = \ru$ on $\varGamma$, and Problem~\ref{prob:main_problem} is fully equivalent to Problem~\ref{prob:inverse_problem}.

The well-posedness of Problem~\ref{prob:weak_form}, which follows from the complex version of the Lax--Milgram lemma \cite[p.~376]{DautrayLionsv21998} (see also \cite[Lem.~2.1.51, p.~40]{SauterSchwab2011}), is stated below.

\begin{lemma}[Coercivity, well-posedness, and stability]\label{lem:coercivity}
Let Assumption~\ref{ass:coefficients} hold and let $\alpha \in \barAad$, so that
$\alpha \ge \mina > 0$ a.e.\ in $\varOmega$.
Then the sesquilinear form $B(\cdot,\cdot;\alpha)$ is coercive on $\VV$, i.e.,
there exists a constant $c_{B}>0$, independent of $\alpha$, such that
\[
\Re \{ B(v,v;\alpha) \}
\ge
c_{B}\,\norm{v}_{1,\varOmega}^{2},
\qquad
\forall v \in \VV.
\]
Consequently, for any $Q \in L^{2}(\varOmega)$ and $f,g \in H^{-1/2}(\varGamma)$, Problem~\ref{prob:weak_form} admits a unique solution
$u \in \VV$, which satisfies the stability estimate
\begin{equation}\label{eq:uniform_estimate}
\norm{u}_{1,\varOmega}
\le
\frac{C}{c_{B}}
\Big(
\norm{Q}_{0,\varOmega}
+
\norm{f}_{-1/2,\varGamma}
+
\norm{g}_{-1/2,\varGamma}
\Big),
\end{equation}
where $C>0$ depends only on $\varOmega$ and the coefficients $\bb$ and $c$,
but is independent of $\alpha$.
\end{lemma}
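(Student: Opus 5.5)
We test the sesquilinear form \eqref{eq:sesquilinear_form} with $u=v$ and take real parts, then invoke the complex Lax--Milgram lemma. Since $\langle v,v\rangle_{\varGamma}=\norm{v}_{0,\varGamma}^2$ is real, the boundary term $i\langle v,v\rangle_{\varGamma}$ is purely imaginary and drops out of $\Re\{B(v,v;\alpha)\}$. This leaves
\[
\Re\{B(v,v;\alpha)\}=(\alpha\nabla v,\nabla v)_{0,\varOmega}+\Re(\bb\cdot\nabla v,v)_{0,\varOmega}+(c\,v,v)_{0,\varOmega}.
\]
The first term is bounded below by $\mina\norm{\nabla v}_{0,\varOmega}^2$, uniformly over $\alpha\in\barAad$.

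The advection term is the main obstacle. We write $\Re(\bar v\,\bb\cdot\nabla v)=\tfrac12\bb\cdot\nabla|v|^2$ and integrate by parts. This is legitimate because $\bb\in W^{1,\infty}(\varOmega)^d$ and $|v|^2\in W^{1,1}(\varOmega)$ with a trace in $L^1(\varGamma)$; we justify it first for smooth $v$ and pass to the limit by density, using continuity of the trace $H^1(\varOmega)\to L^2(\varGamma)$. The result is
\[
\Re(\bb\cdot\nabla v,v)_{0,\varOmega}=-\tfrac12\intO{(\nabla\cdot\bb)|v|^2}+\tfrac12\intG{(\bb\cdot\nn)|v|^2}.
\]
By (A2) the boundary integral is nonnegative and can be discarded. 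Combining the rest with the reaction term and using (A3),
\[
\Re\{B(v,v;\alpha)\}\ge\mina\norm{\nabla v}_{0,\varOmega}^2+\intO{(c-\tfrac12\nabla\cdot\bb)|v|^2}\ge\mina\norm{v}_{1,\varOmega}^2 .
\]
So $c_B=\mina$, which is independent of $\alpha$.

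Next we check the continuity hypotheses of Lax--Milgram. For the form, Cauchy--Schwarz and the trace theorem give
\[
|B(u,v;\alpha)|\le\bigl(\norm{\alpha}_{\infty,\varOmega}+\norm{\bb}_{\infty,\varOmega}+\norm{c}_{\infty,\varOmega}+C_{\mathrm{tr}}^2\bigr)\norm{u}_{1,\varOmega}\norm{v}_{1,\varOmega},
\]
with $\norm{\alpha}_{\infty,\varOmega}\le\maxa$. The functional $l$ is bounded on $\VV$ by $\norm{Q}_{0,\varOmega}+C_{\mathrm{tr}}(\norm{g}_{-1/2,\varGamma}+\norm{f}_{-1/2,\varGamma})$, using the duality pairing with $H^{1/2}(\varGamma)$ and continuity of the trace $\VV\to H^{1/2}(\varGamma)$. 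The complex Lax--Milgram lemma \cite[p.~376]{DautrayLionsv21998} then yields a unique solution $u\in\VV$.

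Finally, for the stability estimate we take $v=u$ in Problem~\ref{prob:weak_form}:
\[
c_B\norm{u}_{1,\varOmega}^2\le\Re\{B(u,u;\alpha)\}\le|l(u)|\le C\bigl(\norm{Q}_{0,\varOmega}+\norm{f}_{-1/2,\varGamma}+\norm{g}_{-1/2,\varGamma}\bigr)\norm{u}_{1,\varOmega}.
\]
Dividing by $\norm{u}_{1,\varOmega}$ gives \eqref{eq:uniform_estimate}. Here $C$ depends only on the trace constant, hence on $\varOmega$, and not on $\alpha$. Note that $\alpha$ enters only through the continuity constant (bounded by $\maxa$) and never through $c_B$ or $C$.
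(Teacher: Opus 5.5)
Your proof is correct and follows the paper's coercivity argument essentially verbatim: you drop the purely imaginary boundary term, integrate the advection term by parts, and use (A2)--(A3) together with $\alpha\ge\mina$ to obtain $c_B=\mina$. Where the paper dismisses boundedness, well-posedness and the stability estimate as standard, you carry them out explicitly via Cauchy--Schwarz, the trace theorem, complex Lax--Milgram and testing with $v=u$, which is exactly the omitted standard argument.
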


\begin{proof}
Let $v\in\VV$. Since $\Re\{i(v,v)_{0,\varGamma}\}=0$, we have
\[
\Re\{ B(v,v;\alpha) \}
=
(\alpha\nabla v,\nabla v)_{0,\varOmega}
+
\Re\{(\bb\cdot\nabla v,v)_{0,\varOmega}\}
+
(c v,v)_{0,\varOmega}.
\]
Using integration by parts and Assumption~\ref{ass:coefficients}(A1), we obtain
\[
\Re\{(\bb\cdot\nabla v,v)_{0,\varOmega}\}
=
-\frac12\big((\nabla\cdot\bb)v,v\big)_{0,\varOmega}
+
\frac12(\bb\cdot\nn v,v)_{0,\varGamma}.
\]
By Assumption~\ref{ass:coefficients}(A2), the boundary term is nonnegative, and
by Assumption~\ref{ass:coefficients}(A3),
\[
(c v,v)_{0,\varOmega}
-
\frac12\big((\nabla\cdot\bb)v,v\big)_{0,\varOmega}
\ge
\mina\,\norm{v}_{L^{2}(\varOmega)}^{2}.
\]
Moreover, since $\alpha \ge \mina$ a.e.\ in $\varOmega$,
$
(\alpha\nabla v,\nabla v)_{0,\varOmega}
\ge
\mina\,\norm{\nabla v}_{L^{2}(\varOmega)^d}^{2}$.
Collecting the estimates yields
\[
\Re\{ B(v,v;\alpha) \}
\ge
\mina\Big(
\norm{\nabla v}_{L^{2}(\varOmega)^d}^{2}
+
\norm{v}_{L^{2}(\varOmega)}^{2}
\Big)
=
\mina\,\norm{v}_{1,\varOmega}^{2}.
\]
Setting $c_{B} := \mina$ completes the coercivity proof.

The boundedness of the sesquilinear form $B(\cdot,\cdot;\alpha)$ and of the linear
functional, as well as the stability estimate, follow from standard arguments and
are therefore omitted. Uniqueness follows from coercivity by standard arguments.
\end{proof}
 

Under Assumption~\ref{ass:coefficients}, Lemma~\ref{lem:coercivity} guarantees that,
for every $\alpha \in \barAad$, Problem~\ref{prob:weak_form} admits a unique solution
$u \in \VV$ satisfying the uniform stability estimate~\eqref{eq:uniform_estimate}.
This well-posedness result allows us to introduce the associated forward
(parameter-to-state) map.

We therefore define the forward map $F$ by
\begin{equation}\label{eq:mapping_F}
\alpha \in \barAad
\longmapsto
F(\alpha) := u(\alpha) \in \VV,
\end{equation}
where $u(\alpha)$ denotes the unique solution of
Problem~\ref{prob:weak_form} corresponding to $\alpha$.
When no ambiguity arises, the dependence on $\alpha$ is omitted and we simply write $u$.

\subsection{Continuity and differentiability of the forward map}
The continuity of the forward map is stated next.
\begin{proposition}\label{prop:continuity_operator_F}
The forward map $F \colon \barAad \subset L^\infty(\varOmega) \to \VV$ is Lipschitz continuous.  
More precisely, there exists a constant $C>0$, independent of $\alpha_1,\alpha_2 \in \barAad$, such that
\begin{equation}\label{eq:lipschitzF}
\norm{F(\alpha_1)-F(\alpha_2)}_{1,\varOmega}
\le
\frac{C}{c_B^2}\, \norm{\alpha_1-\alpha_2}_{L^\infty(\varOmega)}, 
\quad \forall \alpha_1,\alpha_2 \in \barAad.
\end{equation}
Equivalently, the Fréchet derivative of $F$ satisfies the operator-norm bound
\[
\norm{\Fprime(\alpha)}_{\mathcal{B}(L^\infty(\varOmega),\VV)} 
\le \frac{C}{c_B^2}, \quad \forall \alpha \in \barAad.
\]
\end{proposition}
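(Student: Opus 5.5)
The plan is the standard perturbation argument. Let $u_j := F(\alpha_j)$, $j=1,2$, so that $B(u_j,v;\alpha_j) = l(v)$ for all $v \in \VV$. Since $l$ does not depend on $\alpha$, subtracting the two variational identities and noting that $B(\cdot,\cdot;\alpha)$ depends on $\alpha$ only through the diffusion term gives, with $\delta u := u_1 - u_2$,
\[
B(\delta u, v;\alpha_1) = -\big((\alpha_1-\alpha_2)\nabla u_2, \nabla v\big)_{0,\varOmega}, \qquad \forall v \in \VV .
\]

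Next I would test with $v = \delta u$ and take real parts. The coercivity in Lemma~\ref{lem:coercivity}, whose constant $c_B = \mina$ is independent of $\alpha_1 \in \barAad$, bounds the left side from below by $c_B\norm{\delta u}_{1,\varOmega}^2$. For the right side, Hölder's inequality gives
\[
\big|\big((\alpha_1-\alpha_2)\nabla u_2, \nabla \delta u\big)_{0,\varOmega}\big| \le \norm{\alpha_1-\alpha_2}_{\infty,\varOmega}\norm{u_2}_{1,\varOmega}\norm{\delta u}_{1,\varOmega}.
\]
Dividing through yields
\[
\norm{\delta u}_{1,\varOmega} \le c_B^{-1}\norm{\alpha_1-\alpha_2}_{\infty,\varOmega}\norm{u_2}_{1,\varOmega}.
\]
Inserting the uniform estimate \eqref{eq:uniform_estimate}, namely $\norm{u_2}_{1,\varOmega} \le (C/c_B)(\norm{Q}_{0,\varOmega}+\norm{f}_{-1/2,\varGamma}+\norm{g}_{-1/2,\varGamma})$, produces \eqref{eq:lipschitzF}. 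The data norms are absorbed into $C$, which is independent of $\alpha_1,\alpha_2$.

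For the derivative bound, I would first identify the candidate derivative. For $\alpha\in\barAad$ and a direction $h \in L^\infty(\varOmega)$, let $\uprime = \Fprime(\alpha)h \in \VV$ be the solution of
\[
B(\uprime, v;\alpha) = -(h\nabla u, \nabla v)_{0,\varOmega}, \qquad \forall v \in \VV .
\]
This problem is well posed by Lemma~\ref{lem:coercivity}. The same test-function argument gives $\norm{\uprime}_{1,\varOmega} \le (C/c_B^2)\norm{h}_{\infty,\varOmega}$, which is the claimed operator-norm bound.

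The main obstacle is justifying that this linear map really is the Fréchet derivative, and handling the fact that $\barAad$ is not open. I would treat $\alpha+h$ only for small $h$ with $\alpha+h$ still bounded below by some positive constant, so that coercivity persists with a constant comparable to $c_B$. Taking the difference of the defining identities, the remainder $r := F(\alpha+h)-F(\alpha)-\uprime$ satisfies
\[
B(r,v;\alpha) = -\big(h\nabla(F(\alpha+h)-F(\alpha)),\nabla v\big)_{0,\varOmega}.
\]
Applying the Lipschitz estimate already proved then gives $\norm{r}_{1,\varOmega} \le C\norm{h}_{\infty,\varOmega}^2$, so $r = o(\norm{h}_{\infty,\varOmega})$. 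The word ``equivalently'' in the statement I would read as this consistency: the derivative bound follows from the same estimate, and conversely it implies the Lipschitz bound by the mean value inequality along the segment joining $\alpha_1$ and $\alpha_2$. That segment lies in $\barAad$ because $\barAad$ is convex.
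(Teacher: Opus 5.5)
Your argument is correct and is the standard perturbation argument the paper has in mind; the paper omits the proof as standard, but your difference identity is its increment equation \eqref{eq:weakform_increment} with $\alpha_1=\alpha+\dela$, $\alpha_2=\alpha$, and combining the $\alpha$-independent coercivity constant $c_B$ with \eqref{eq:uniform_estimate} gives exactly the $C/c_B^{2}$ constant. Your handling of Fr\'echet differentiability on all of $\barAad$, by extending $F$ to coefficients bounded below by, e.g., $\mina/2$, is more careful than the paper, which only asserts differentiability on $\Aad$ even though that set is not open in $L^\infty(\varOmega)$.
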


We next study the differentiability of the forward map
$F \colon \barAad \to \VV$ defined in \eqref{eq:mapping_F}.
Let $\alpha \in \Aad$ and let ${\dela} \in L^\infty(\varOmega)$ be such that
$\alpha + {\dela} \in \barAad$.
We define $\delta F := F(\alpha + {\dela}) - F(\alpha) \in \VV$.
By definition of $F$ and subtracting the corresponding variational problems,
$\delta F$ satisfies
\begin{equation}\label{eq:weakform_increment}
B(\delta F,v;\alpha+{\dela})
=
-({\dela}\nabla u,\nabla{v})_{0,\varOmega},
\qquad
\forall v \in \VV,
\end{equation}
where $u=F(\alpha)$.

This formulation allows us to identify the derivative of $F$ by passing to the
limit as ${\dela} \to 0$.

\begin{proposition}\label{prop:first_derivative_of_F}
Let $\alpha \in \Aad$.
Then the forward map $F$ is Fréchet differentiable at $\alpha$.
For any ${\dela} \in L^\infty(\varOmega)$, the derivative
$\uprime := \Fprime(\alpha){\dela} \in \VV$ is the unique solution of
\begin{equation}\label{eq:first_derivative}
B(\uprime,v;\alpha)
=
-({\dela}\nabla u,\nabla{v})_{0,\varOmega},
\qquad
\forall v \in \VV,
\end{equation}
where $u=F(\alpha)$.
Moreover, the following estimate holds:
\begin{equation}\label{eq:first_derivative_bound}
\norm{\Fprime(\alpha){\dela}}_{1,\varOmega}
\le
\frac{C}{c_B^{2}}\,
\norm{{\dela}}_{L^\infty(\varOmega)} .
\end{equation}
\end{proposition}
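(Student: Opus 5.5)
Fix $\alpha\in\Aad$ and $u=F(\alpha)$. We show three things in turn: \eqref{eq:first_derivative} defines a bounded linear operator $\dela\mapsto\uprime$, this operator obeys \eqref{eq:first_derivative_bound}, and it is the Fréchet derivative.

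\emph{Well-posedness of the derivative equation.} For fixed $\dela\in L^\infty(\varOmega)$, the map $v\mapsto -(\dela\nabla u,\nabla v)_{0,\varOmega}$ is a bounded antilinear functional on $\VV$, with norm at most $\norm{\dela}_{\infty,\varOmega}\norm{u}_{1,\varOmega}$. Lemma~\ref{lem:coercivity} gives coercivity of $B(\cdot,\cdot;\alpha)$ with constant $c_B$, so the complex Lax--Milgram lemma yields a unique $\uprime\in\VV$. Testing with $v=\uprime$ and taking real parts gives
\[
c_B\norm{\uprime}_{1,\varOmega}^2\le \norm{\dela}_{\infty,\varOmega}\norm{u}_{1,\varOmega}\norm{\uprime}_{1,\varOmega}.
\]
The stability estimate \eqref{eq:uniform_estimate} bounds $\norm{u}_{1,\varOmega}\le C/c_B$, with $C$ depending on the data but not on $\alpha$. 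Combining these gives \eqref{eq:first_derivative_bound}. Since $\uprime$ depends linearly on $\dela$ by uniqueness, the map $\Fprime(\alpha)\colon L^\infty(\varOmega)\to\VV$ is bounded and linear.

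\emph{Remainder estimate.} Let $r:=\delta F-\uprime$, where $\delta F$ satisfies \eqref{eq:weakform_increment}. Since $\alpha\in\Aad$ (the open set), $\alpha+\dela\in\barAad$ for all sufficiently small $\dela$. We rewrite \eqref{eq:weakform_increment} with the form at $\alpha$:
\[
B(\delta F,v;\alpha)=-(\dela\nabla u,\nabla v)_{0,\varOmega}-(\dela\nabla\delta F,\nabla v)_{0,\varOmega}.
\]
This uses that $B(w,v;\alpha+\dela)=B(w,v;\alpha)+(\dela\nabla w,\nabla v)_{0,\varOmega}$, because only the diffusion term depends on the coefficient. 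Subtracting \eqref{eq:first_derivative} yields
\[
B(r,v;\alpha)=-(\dela\nabla\delta F,\nabla v)_{0,\varOmega}\qquad\forall v\in\VV.
\]
Testing with $v=r$ and using coercivity gives $c_B\norm{r}_{1,\varOmega}\le\norm{\dela}_{\infty,\varOmega}\norm{\delta F}_{1,\varOmega}$. Proposition~\ref{prop:continuity_operator_F} (its Lipschitz part \eqref{eq:lipschitzF}, which we take as proven independently of the derivative statement) gives $\norm{\delta F}_{1,\varOmega}\le C c_B^{-2}\norm{\dela}_{\infty,\varOmega}$. Hence $\norm{r}_{1,\varOmega}\le Cc_B^{-3}\norm{\dela}_{\infty,\varOmega}^2=o(\norm{\dela}_{\infty,\varOmega})$, which proves Fréchet differentiability.

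\emph{Main obstacle.} The delicate point is logical rather than technical. Proposition~\ref{prop:continuity_operator_F} is stated with an "equivalently" clause involving $\Fprime$, so invoking it here risks circularity. To avoid this, I would instead derive the Lipschitz bound directly from \eqref{eq:weakform_increment}: test with $v=\delta F$ and use the coercivity of $B(\cdot,\cdot;\alpha+\dela)$, valid because $\alpha+\dela\in\barAad$ gives the same uniform constant $c_B$. This gives $c_B\norm{\delta F}_{1,\varOmega}\le\norm{\dela}_{\infty,\varOmega}\norm{u}_{1,\varOmega}$, which is exactly the needed bound. A secondary point is to handle the complex conjugation in the sesquilinear form consistently when taking real parts; this is routine.
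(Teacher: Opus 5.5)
Your argument is correct and is essentially the standard one the paper has in mind (the paper omits the proof, only deriving \eqref{eq:weakform_increment} and saying the derivative is identified as $\dela\to0$). Rewriting the increment equation with $B(\cdot,\cdot;\alpha)$ gives the explicit remainder bound $\norm{\delta F-\uprime}_{1,\varOmega}\le Cc_B^{-3}\norm{\dela}_{\infty,\varOmega}^2$, and bounding $\delta F$ directly via coercivity at $\alpha+\dela$ rightly sidesteps the ``equivalently'' clause of Proposition~\ref{prop:continuity_operator_F}.

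One minor correction: with strict a.e.\ inequalities, $\Aad$ is not actually open in $L^\infty(\varOmega)$, since one can have $\operatorname{ess\,inf}\alpha=\mina$. So instead of asserting $\alpha+\dela\in\barAad$ for all small $\dela$, either restrict to increments with $\alpha+\dela\in\barAad$ as the paper does, or note that $F$ extends to coefficients with $\alpha\ge\mina/2$ with coercivity constant $c_B/2$. Your estimates go through unchanged in either case.
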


\begin{remark}
For $\alpha \in \dAad := \barAad \setminus \Aad$, denote by $\widetilde{\Aad}$ the largest subset of $L^{\infty}(\varOmega)$ such that 
for all ${\dela} \in \widetilde{\Aad}$ and sufficiently small $t>0$, we have $\alpha + t{}{\dela} \in \barAad$.  
Then the map $F$ is also differentiable at each $\alpha \in \dAad$, and $\norm{\Fprime(\alpha)}_{\mathcal{B}(\widetilde{\Aad}, \VV)} $is bounded, where $\mathcal{B}(\widetilde{\Aad}, \VV)$ denotes the space of bounded linear operators from 
$\widetilde{\Aad}$ to $\VV$.
\end{remark}

We now turn to the second-order differentiability of the forward map $F$.
We define $\delta^{2}F
:=
\Fprime(\alpha+{\dela}_1){\dela}_2
-
\Fprime(\alpha){\dela}_2
\in \VV .
$
Using Proposition~\ref{prop:first_derivative_of_F} and subtracting the corresponding
linearized variational problems, we obtain
\begin{equation}\label{eq:weakform_second_increment}
\begin{aligned}
B(\delta^{2}F,v;\alpha+{\dela}_1)
&=
-({\dela}_2\nabla[F(\alpha+{\dela}_1)-F(\alpha)],\nabla{v})_{0,\varOmega} \\
&\quad
-({\dela}_1\nabla \Fprime(\alpha){\dela}_2,\nabla{v})_{0,\varOmega},
\qquad
\forall v \in \VV .
\end{aligned}
\end{equation}
Passing to the limit as $\norm{{\dela}_1}_{L^\infty(\varOmega)} \to 0$
and using the stability and continuity properties of $F$ and $\Fprime$ (Propositions~\ref{prop:continuity_operator_F}--~\ref{prop:first_derivative_of_F}),
we obtain the second derivative of $F$ at $\alpha$.

\begin{proposition}\label{prop:second_derivative}
Let $\alpha \in \Aad$.
Then the forward map $F$ is twice Fr\'echet differentiable at $\alpha$
with respect to admissible directions.
For any admissible
${\dela}_1,{\dela}_2 \in L^\infty(\varOmega)$,
the second derivative $\udprime := \Fdprime(\alpha)[{\dela}_1,{\dela}_2] \in \VV$ is the unique solution of
\begin{equation}\label{eq:second_derivative}
B(\udprime,v;\alpha)
=
-({\dela}_2\nabla \Fprime(\alpha){\dela}_1,\nabla{v})_{0,\varOmega}
-({\dela}_1\nabla \Fprime(\alpha){\dela}_2,\nabla{v})_{0,\varOmega},
\quad
\forall v \in \VV ,
\end{equation}
where $u=F(\alpha)$.
Moreover, there exists a constant $C>0$, independent of $\alpha$, such that
\begin{equation}\label{eq:second_derivative_bound}
\norm{\Fdprime(\alpha)[{\dela}_1,{\dela}_2]}_{1,\varOmega}
\le
\frac{C}{c_B^{3}}\,
\norm{{\dela}_1}_{L^\infty(\varOmega)}
\norm{{\dela}_2}_{L^\infty(\varOmega)} .
\end{equation}
\end{proposition}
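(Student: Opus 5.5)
We follow the same pattern as Proposition~\ref{prop:first_derivative_of_F}: first show that the candidate $\udprime$ is well defined and bounded, then prove that it is the Fréchet derivative of $\alpha\mapsto \Fprime(\alpha){\dela}_2$ in direction ${\dela}_1$.

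\textbf{Well-posedness and the bound \eqref{eq:second_derivative_bound}.} Fix admissible ${\dela}_1,{\dela}_2$. The right-hand side of \eqref{eq:second_derivative} is a bounded antilinear functional on $\VV$, since by \eqref{eq:first_derivative_bound}
\[
\bigl|({\dela}_2\nabla \Fprime(\alpha){\dela}_1,\nabla v)_{0,\varOmega}\bigr|
\le \norm{{\dela}_2}_{L^\infty(\varOmega)}\frac{C}{c_B^2}\norm{{\dela}_1}_{L^\infty(\varOmega)}\norm{v}_{1,\varOmega},
\]
and symmetrically for the second term. Lemma~\ref{lem:coercivity} (complex Lax--Milgram) then gives a unique $\udprime\in\VV$. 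Testing with $v=\udprime$, taking real parts and using $\Re\{B(\udprime,\udprime;\alpha)\}\ge c_B\norm{\udprime}_{1,\varOmega}^2$ yields \eqref{eq:second_derivative_bound} with a factor $C/c_B^3$. Bilinearity and symmetry in $({\dela}_1,{\dela}_2)$ are evident from the form of \eqref{eq:second_derivative}.

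\textbf{Differentiability.} Let $R:=\delta^2F-\udprime$, with $\delta^2F$ as in \eqref{eq:weakform_second_increment}. Rewrite $B(\delta^2F,v;\alpha+{\dela}_1)=B(\delta^2F,v;\alpha)+({\dela}_1\nabla\delta^2F,\nabla v)_{0,\varOmega}$ and subtract \eqref{eq:second_derivative}. This gives
\[
B(R,v;\alpha)=-\bigl({\dela}_2\nabla[F(\alpha+{\dela}_1)-F(\alpha)-\Fprime(\alpha){\dela}_1],\nabla v\bigr)_{0,\varOmega}-({\dela}_1\nabla\delta^2F,\nabla v)_{0,\varOmega}.
\]
Coercivity of $B(\cdot,\cdot;\alpha)$ then gives
\[
\norm{R}_{1,\varOmega}\le \frac{1}{c_B}\Bigl(\norm{{\dela}_2}_{\infty,\varOmega}\norm{F(\alpha+{\dela}_1)-F(\alpha)-\Fprime(\alpha){\dela}_1}_{1,\varOmega}+\norm{{\dela}_1}_{\infty,\varOmega}\norm{\delta^2F}_{1,\varOmega}\Bigr).
\]
The first term is $o(\norm{{\dela}_1}_{\infty,\varOmega})\norm{{\dela}_2}_{\infty,\varOmega}$ by Fréchet differentiability of $F$ (Proposition~\ref{prop:first_derivative_of_F}). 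For the second term we need $\norm{\delta^2F}_{1,\varOmega}\le C\norm{{\dela}_1}_{\infty,\varOmega}\norm{{\dela}_2}_{\infty,\varOmega}$. This follows by testing \eqref{eq:weakform_second_increment} with $v=\delta^2F$ and using the uniform coercivity of $B(\cdot,\cdot;\alpha+{\dela}_1)$, valid because $\alpha+{\dela}_1\in\barAad$ and $c_B$ is independent of $\alpha$. Its right-hand side is controlled by Proposition~\ref{prop:continuity_operator_F} for $F(\alpha+{\dela}_1)-F(\alpha)$ and by \eqref{eq:first_derivative_bound} for $\Fprime(\alpha){\dela}_2$. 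Hence $\norm{R}_{1,\varOmega}=o(\norm{{\dela}_1}_{\infty,\varOmega})\norm{{\dela}_2}_{\infty,\varOmega}$ uniformly for $\norm{{\dela}_2}_{\infty,\varOmega}\le1$, which is Fréchet differentiability of $\Fprime$ at $\alpha$ with derivative $\Fdprime(\alpha)$.

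\textbf{Main obstacle.} The delicate step is the uniformity in ${\dela}_2$, which requires keeping every estimate linear in $\norm{{\dela}_2}_{\infty,\varOmega}$ and using that $c_B=\mina$ does not depend on the perturbed coefficient. Because $\alpha\in\Aad$ lies in the interior, $\alpha+{\dela}_1\in\barAad$ holds for all small ${\dela}_1$, so that coercivity constant is available.
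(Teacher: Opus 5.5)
Your proposal is correct and follows the route the paper sketches. You derive the increment identity \eqref{eq:weakform_second_increment}, compare it with \eqref{eq:second_derivative}, and let $\norm{\dela_1}_{\infty,\varOmega}\to 0$ using the Lipschitz bound for $F$ and \eqref{eq:first_derivative_bound} for $\Fprime$. Your explicit remainder equation for $R$ and the estimate $\norm{\delta^2F}_{1,\varOmega}\le C\norm{\dela_1}_{\infty,\varOmega}\norm{\dela_2}_{\infty,\varOmega}$ supply exactly the details the paper omits.

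One small correction concerns your interior-point claim. Since $\Aad$ is defined by strict a.e.\ inequalities, it is not open in $L^\infty(\varOmega)$, because one may have $\operatorname{ess\,inf}\alpha=\mina$. So $\alpha+\dela_1\in\barAad$ should come from the admissibility hypothesis in the statement rather than from $\alpha$ being interior. Alternatively, you can bypass it: for $\norm{\dela_1}_{\infty,\varOmega}\le\mina/2$ the form $B(\cdot,\cdot;\alpha+\dela_1)$ remains coercive with constant $\mina/2$, since the zeroth-order coercivity comes from Assumption~\ref{ass:coefficients} and not from $\alpha$.
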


With the continuity and differentiability of the forward map established (see Propositions~\ref{prop:continuity_operator_F}--\ref{prop:second_derivative}, whose proofs follow standard arguments and are omitted), we can now reformulate Problem~\ref{prob:main_problem} as a regularized optimization problem suitable for numerical approximation via Tikhonov regularization.
\subsection{CCBM-based optimization reformulation of the inverse problem}\label{subsec:modified_CCBM_based_optimization_formulation}

For $\alpha \in \barAad$, let $u = F(\alpha) = \ru(\alpha) + i \iu(\alpha) \in \VV$ be the unique weak solution of~\eqref{eq:complex_BVP}.  
With a fixed $\rho > 0$, the inverse problem reduces to minimizing the regularized functional
\begin{equation}\label{eq:Tikhonov_regularization}
\Je(\alpha) := J(\alpha) + \regu_{\rho}(\alpha).
\end{equation}

Note that $\dela \in L^\infty(\varOmega) \subset L^2(\varOmega)$ (so that $\norm{\dela}_{0,\varOmega} \le \abs{\varOmega}^{1/2} \norm{\dela}_{L^\infty(\varOmega)}$; see \cite[Prop.~6.12]{Folland1999}).
Accordingly, unless stated otherwise, we choose the (Tikhonov) regularization term as
\begin{equation}\label{eq:Tikhonov_regularization_term}
\regu_{\rho}(\alpha) 
:= \frac{1}{2} \rho \regu(\alpha) 
:= \frac{1}{2} \rho \norm{\alpha}_{0,\varOmega}^{2}.
\end{equation}

We further introduce a modified CCBM-type misfit functional defined by
\begin{equation}\label{eq:modified_CCBM_cost_functional}
J(\alpha) 
:= \frac{1}{2}
\Big(
w_0 \norm{\iu(\alpha)}_{0,\varOmega}^{2}
+
w_1 \norm{\nabla \iu(\alpha)}_{0,\varOmega}^{2}
\Big),
\end{equation}
where $w_0,w_1 > 0$ are fixed weighting parameters.

The inclusion of the gradient term $\norm{\nabla \iu(\alpha)}_{0,\varOmega}^{2}$ in 
\eqref{eq:modified_CCBM_cost_functional} constitutes a novelty of this work.  
For $w_1>0$, the weighted misfit $J$ controls the $H^1(\varOmega)$-norm of 
$\iu(\alpha)$, i.e.,
\[
J(\alpha) \;\ge\; \frac{\min\{w_0, w_1\} }{2}\norm{\iu(\alpha)}_{1, \varOmega}^2, 
\qquad \forall \alpha\in\barAad,
\]
while $w_1=0$ provides control only in $L^2(\varOmega)$.  
In the numerical examples considered here, moderately large $w_1$ generally improves 
reconstruction quality by penalizing oscillatory components and promoting smoother 
states, though excessively large $w_1$ may reduce resolution.

For technical reasons, in particular to ensure strict convexity of the objective functional (see Proposition~\ref{prop:strict_convexity}), we impose the following assumption.

\begin{assumption}\label{ass:key_assumption}
Assume $\SubsetK \subset \Aad$ is finite-dimensional, closed, and convex.
\end{assumption}

A case of particular interest is when $\SubsetK$ consists of piecewise constant diffusion coefficients on a prescribed partition of $\varOmega$; other examples include bounded polynomial spaces $\{ \alpha \in P_m(\varOmega) \mid \underline{\alpha} \le \alpha \le \overline{\alpha} \}$ and bounded trigonometric expansions $\{ \alpha \in \mathcal{T}_N \mid \underline{\alpha} \le \alpha \le \overline{\alpha} \}$, where $\mathcal{T}_N := \operatorname{span} \{ \sin(k\cdot x), \cos(k\cdot x) \mid |k| \le N \}$.

Assumption~\ref{ass:key_assumption} is natural and may be viewed as a relaxation framework for convex functionals; consequently, standard Sobolev-gradient convergence theory, such as that in \cite[Ch.~V]{Glowinski1984}, applies.

We consider the following problem.
\begin{problem}\label{prob:minimization_problem}
Find $\alpharho \in \SubsetK$ such that $\alpharho = \arg\min_{\alpha \in \SubsetK} \Je(\alpha)$.
\end{problem}

\subsection{Lipschitz continuity of the modified CCBM cost  functional}
We show that the modified CCBM functional $\Je$ in \eqref{eq:Tikhonov_CCBM} is Lipschitz continuous on bounded subsets of admissible coefficients (Proposition~\ref{prop:Lipschitz_Tikhonov}), and that its first G\^ateaux and Fr\'echet derivative is also Lipschitz continuous with respect to $\alpha$ (Proposition~\ref{prop:Lipschitz_first_derivative}). These properties follow from the Lipschitz continuity and differentiability of the forward map (Propositions~\ref{prop:continuity_operator_F} and~\ref{prop:first_derivative_of_F}) and uniform bounds on the state and coefficients (Lemma~\ref{lem:coercivity}).
\begin{proposition}\label{prop:Lipschitz_Tikhonov}
Let $\Je$ be the functional defined in \eqref{eq:Tikhonov_CCBM}. 
Under the assumptions of Lemma~\ref{lem:coercivity} and Proposition~\ref{prop:continuity_operator_F}, $\Je$ is Lipschitz continuous on $\SubsetK \subset \barAad \subset L^\infty(\varOmega)$. 
That is, there exists a constant $L>0$,  independent of $\alpha_{1}, \alpha_{2} \in \SubsetK$, such that
\[
\abs{ \Je(\alpha_1) - \Je(\alpha_2) }
\le L \, \norm{\alpha_1 - \alpha_2}_{L^\infty(\varOmega)}, 
\quad \forall \alpha_1, \alpha_2 \in \SubsetK.
\]
\end{proposition}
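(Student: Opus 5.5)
We split $\Je = J + \regu_\rho$ and bound the increments of the two parts separately, each by a multiple of $\norm{\alpha_1-\alpha_2}_{L^\infty(\varOmega)}$. Throughout, $\alpha_1,\alpha_2\in\SubsetK\subset\barAad$, and we write $u_k = F(\alpha_k) = \ru(\alpha_k) + i\,\iu(\alpha_k)$ for $k=1,2$.

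\emph{Misfit term.} The key identity is $a^2-b^2=(a-b)(a+b)$, applied in $L^2$. Combined with the Cauchy--Schwarz inequality it gives
\[
\big|\norm{\iu(\alpha_1)}_{0,\varOmega}^2-\norm{\iu(\alpha_2)}_{0,\varOmega}^2\big|
\le \big(\norm{\iu(\alpha_1)}_{0,\varOmega}+\norm{\iu(\alpha_2)}_{0,\varOmega}\big)\,\norm{\iu(\alpha_1)-\iu(\alpha_2)}_{0,\varOmega}.
\]
The analogous bound holds for the gradient terms, with $\nabla\iu$ in place of $\iu$.

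\emph{Bounding the first factor.} The sums of norms are controlled uniformly in $\alpha$, since $\norm{\iu(\alpha)}_{1,\varOmega}\le\norm{F(\alpha)}_{1,\varOmega}$. The stability estimate \eqref{eq:uniform_estimate} of Lemma~\ref{lem:coercivity} then bounds this by $\frac{C}{c_B}\big(\norm{Q}_{0,\varOmega}+\norm{f}_{-1/2,\varGamma}+\norm{g}_{-1/2,\varGamma}\big)=:M$. This constant depends only on the data and on $\mina$, not on $\alpha$.

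\emph{Bounding the second factor.} Taking imaginary parts is a real-linear operation, so $\iu(\alpha_1)-\iu(\alpha_2)=\Im\{F(\alpha_1)-F(\alpha_2)\}$. Its $H^1$ norm is therefore at most $\norm{F(\alpha_1)-F(\alpha_2)}_{1,\varOmega}$. Proposition~\ref{prop:continuity_operator_F} bounds this by $\frac{C}{c_B^2}\norm{\alpha_1-\alpha_2}_{L^\infty(\varOmega)}$.

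\emph{Conclusion for $J$.} Combining the two factors, the $w_0$ and $w_1$ contributions together give
\[
|J(\alpha_1)-J(\alpha_2)|\le (w_0+w_1)\,M\,\frac{C}{c_B^2}\,\norm{\alpha_1-\alpha_2}_{L^\infty(\varOmega)}.
\]

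\emph{Regularization term.} The same factorization applies:
\[
\tfrac{\rho}{2}\big|\norm{\alpha_1}_{0,\varOmega}^2-\norm{\alpha_2}_{0,\varOmega}^2\big|\le \tfrac{\rho}{2}\big(\norm{\alpha_1}_{0,\varOmega}+\norm{\alpha_2}_{0,\varOmega}\big)\norm{\alpha_1-\alpha_2}_{0,\varOmega}.
\]
Every element of $\barAad$ satisfies $\alpha\le\maxa$ a.e., so $\norm{\alpha_k}_{0,\varOmega}\le \maxa\abs{\varOmega}^{1/2}$. The embedding remark in the paper gives $\norm{\alpha_1-\alpha_2}_{0,\varOmega}\le\abs{\varOmega}^{1/2}\norm{\alpha_1-\alpha_2}_{L^\infty(\varOmega)}$. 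Hence this term is bounded by $\rho\,\maxa\abs{\varOmega}\,\norm{\alpha_1-\alpha_2}_{L^\infty(\varOmega)}$.

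\emph{Final constant.} Summing the two contributions, we may take $L=(w_0+w_1)MC/c_B^2+\rho\,\maxa\abs{\varOmega}$. This is independent of $\alpha_1,\alpha_2$.

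\emph{Main obstacle.} The only delicate point is ensuring that the constants are uniform over $\SubsetK$. This rests on the $\alpha$-independence of $c_B=\mina$ in Lemma~\ref{lem:coercivity} and of the constant in Proposition~\ref{prop:continuity_operator_F}, which we take as given. Boundedness of $\SubsetK$ in $L^\infty$ is automatic from the upper bound $\maxa$. The finite-dimensionality of $\SubsetK$ is not needed.
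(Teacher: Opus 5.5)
Your proof is correct and takes essentially the same route as the paper. You factor each difference of squares, bound the sum factor by the uniform stability estimate of Lemma~\ref{lem:coercivity}, and bound the difference factor by the Lipschitz bound of Proposition~\ref{prop:continuity_operator_F}. Your handling of the regularization term, via $\norm{\alpha_k}_{0,\varOmega}\le\maxa\abs{\varOmega}^{1/2}$ and the $L^\infty\subset L^2$ embedding, is more explicit than the paper's, which simply writes the contribution as $\rho\,C_{\SubsetK}$.
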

\begin{proof}
Let $\alpha_1, \alpha_2 \in \SubsetK \subset \barAad$, and denote 
$u_j := \iu(\alpha_j) \in \VV$, $j=1,2$, 
where $u_j$ is the unique solution of Problem~\ref{prob:weak_form} corresponding to $\alpha_j$.

Since $u_j$ are complex-valued, we use the identity
\[
\norm{\varphi}_{0,\varOmega}^{2} - \norm{\psi}_{0,\varOmega}^{2} = \Re \{ \inS{\varphi + \psi, \varphi - \psi} \}, 
\quad \varphi, \psi \in L^2(\varOmega;\mathbb{C}),
\]
together with the Cauchy--Schwarz inequality, to obtain
\[
\abs{ \norm{u_1}_{0,\varOmega}^{2} - \norm{u_2}_{0,\varOmega}^{2} }
\le \norm{u_1 + u_2}_{0,\varOmega} \, \norm{u_1 - u_2}_{0,\varOmega}.
\]

Similarly, for the gradient term (componentwise),
\[
\abs{ \norm{\nabla u_1}_{0,\varOmega}^{2} - \norm{\nabla u_2}_{0,\varOmega}^{2} }
\le \norm{\nabla u_1 + \nabla u_2}_{0,\varOmega} \, \norm{\nabla u_1 - \nabla u_2}_{0,\varOmega}.
\]

By Proposition~\ref{prop:continuity_operator_F}, the forward map 
$F: \alpha \mapsto \iu(\alpha)$ is Lipschitz in $L^\infty$, and since $\SubsetK \subset \Aad$ is bounded in $L^\infty(\varOmega)$, there exists $C_F>0$ such that
\[
\norm{\iu(\alpha_1) - \iu(\alpha_2)}_{1, \varOmega} \le C_F \, \norm{\alpha_1 - \alpha_2}_{L^\infty(\varOmega)}.
\]

Moreover, Lemma~\ref{lem:coercivity} gives uniform $H^1$ stability of $u_j$, so there exists $C_\SubsetK>0$ with 
\(\norm{u_j}_{1, \varOmega} \le C_\SubsetK\) for all $\alpha_j \in \SubsetK$.

Combining these bounds, we obtain
\[
\abs{ \Je(\alpha_1) - \Je(\alpha_2) }
\le \big( (w_0 + w_1) C_\SubsetK C_F + \rho \, C_\SubsetK \big) \, \norm{\alpha_1 - \alpha_2}_{L^\infty(\varOmega)},
\]
showing that $\Je$ is Lipschitz continuous on $\SubsetK$.
\end{proof}

\begin{proposition}\label{prop:Lipschitz_first_derivative}
Let $\Je$ be the functional defined in \eqref{eq:Tikhonov_CCBM}, and let 
$\Je'(\alpha){\dela}$ be its first G\^ateaux derivative in the direction ${\dela} \in L^\infty(\varOmega)$, as in \eqref{eq:first_derivative_Tikhonov}--\eqref{eq:first_derivative_Tikhonov_forms}.  
Under the assumptions of Lemma~\ref{lem:coercivity} and Proposition~\ref{prop:first_derivative_of_F}, $\Je'$ is Lipschitz continuous with respect to $\alpha$ on $\SubsetK \subset \barAad$.  
That is, there exists a constant $L'>0$, independent of $\alpha_1, \alpha_2 \in \SubsetK$ and of $\dela \in L^\infty(\varOmega)$,  such that
\[
\abs{\Je'(\alpha_1){\dela} - \Je'(\alpha_2){\dela}}
\le L^{\prime}\, \norm{\alpha_1 - \alpha_2}_{L^\infty(\varOmega)} \, \norm{\dela}_{L^\infty(\varOmega)}, 
\quad \forall \alpha_1, \alpha_2 \in \SubsetK.
\]
\end{proposition}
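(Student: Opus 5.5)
We write the Gâteaux derivative explicitly. With $u=F(\alpha)$ and $\uprime=\Fprime(\alpha)\dela$, differentiating \eqref{eq:modified_CCBM_cost_functional} and \eqref{eq:Tikhonov_regularization_term} gives
\[
\Je'(\alpha)\dela = w_0(\iu(\alpha),\iuprime(\alpha)\dela)_{0,\varOmega} + w_1(\nabla\iu(\alpha),\nabla\iuprime(\alpha)\dela)_{0,\varOmega} + \rho(\alpha,\dela)_{0,\varOmega},
\]
where $\iuprime(\alpha)\dela=\Im\{\Fprime(\alpha)\dela\}$. For $\alpha_1,\alpha_2\in\SubsetK$ we subtract and split each inner product by adding and subtracting a mixed term:
\[
(a_1,b_1)-(a_2,b_2)=(a_1-a_2,b_1)+(a_2,b_1-b_2),
\]
with $a_j=\iu(\alpha_j)$ (or its gradient) and $b_j=\iuprime(\alpha_j)\dela$. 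The regularization part is trivial: $\rho|(\alpha_1-\alpha_2,\dela)_{0,\varOmega}|\le\rho|\varOmega|\,\norm{\alpha_1-\alpha_2}_{L^\infty(\varOmega)}\norm{\dela}_{L^\infty(\varOmega)}$.

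For the first split term we use Proposition~\ref{prop:continuity_operator_F}, which gives $\norm{\iu(\alpha_1)-\iu(\alpha_2)}_{1,\varOmega}\le (C/c_B^2)\norm{\alpha_1-\alpha_2}_{L^\infty(\varOmega)}$. We combine it with \eqref{eq:first_derivative_bound}, which gives $\norm{\iuprime(\alpha_1)\dela}_{1,\varOmega}\le (C/c_B^2)\norm{\dela}_{L^\infty(\varOmega)}$. For the second split term we use the uniform bound $\norm{\iu(\alpha_2)}_{1,\varOmega}\le C_\SubsetK$ from Lemma~\ref{lem:coercivity}. It then remains to show
\[
\norm{\Fprime(\alpha_1)\dela-\Fprime(\alpha_2)\dela}_{1,\varOmega}\le C\norm{\alpha_1-\alpha_2}_{L^\infty(\varOmega)}\norm{\dela}_{L^\infty(\varOmega)}.
\]

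This last estimate is the main step; it is a Lipschitz bound for $\Fprime$. Set $z:=\Fprime(\alpha_1)\dela-\Fprime(\alpha_2)\dela$. Subtracting the two instances of \eqref{eq:first_derivative} and writing $B(\cdot,\cdot;\alpha_1)=B(\cdot,\cdot;\alpha_2)+((\alpha_1-\alpha_2)\nabla\cdot,\nabla\cdot)_{0,\varOmega}$, we get for all $v\in\VV$
\[
B(z,v;\alpha_1) = -((\alpha_1-\alpha_2)\nabla\Fprime(\alpha_2)\dela,\nabla v)_{0,\varOmega} - (\dela\nabla[F(\alpha_1)-F(\alpha_2)],\nabla v)_{0,\varOmega}.
\]
This is the same structure as \eqref{eq:weakform_second_increment}. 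Testing with $v=z$ and using the $\alpha$-independent coercivity of Lemma~\ref{lem:coercivity} gives
\[
c_B\norm{z}_{1,\varOmega}\le \norm{\alpha_1-\alpha_2}_{L^\infty(\varOmega)}\norm{\Fprime(\alpha_2)\dela}_{1,\varOmega}+\norm{\dela}_{L^\infty(\varOmega)}\norm{F(\alpha_1)-F(\alpha_2)}_{1,\varOmega}.
\]
Proposition~\ref{prop:continuity_operator_F} and \eqref{eq:first_derivative_bound} then give $\norm{z}_{1,\varOmega}\le (C/c_B^3)\norm{\alpha_1-\alpha_2}_{L^\infty(\varOmega)}\norm{\dela}_{L^\infty(\varOmega)}$.

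Alternatively, we could integrate $\Fdprime$ along the segment $\alpha_2+t(\alpha_1-\alpha_2)$, which lies in $\SubsetK$ by convexity, and apply \eqref{eq:second_derivative_bound}. The direct argument avoids any concern about points on $\dAad$.

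Collecting the pieces with Cauchy–Schwarz yields $L'=(w_0+w_1)\big(C^2/c_B^4+C_\SubsetK C/c_B^3\big)+\rho|\varOmega|$. This constant is independent of $\alpha_1$, $\alpha_2$ and $\dela$, as required.
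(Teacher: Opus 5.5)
Your proof is correct and takes the same route as the paper: the add-and-subtract splitting of each inner product, combined with the Lipschitz bound for $F$, the derivative bound \eqref{eq:first_derivative_bound}, the uniform state bound from Lemma~\ref{lem:coercivity}, and a Lipschitz estimate for $\Fprime$. The paper only asserts that Lipschitz estimate for $\Fprime$, citing Propositions~\ref{prop:continuity_operator_F} and~\ref{prop:first_derivative_of_F}, which do not state it, and its constant $L'$ omits both the first split term and the $\rho$-term; your derivation of the estimate from the equation for $z$, together with your explicit constant, closes these gaps.
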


\begin{proof}
Let $\alpha_1, \alpha_2 \in \SubsetK \subset \barAad$, and denote
$u_j := \iu(\alpha_j) \in \VV$ and $u_j^{\prime}:= \Fprime(\alpha_j){\dela} \in \VV$, $j=1,2$.  
Using the definition \eqref{eq:first_derivative_Tikhonov_forms} and the Cauchy--Schwarz inequality, we obtain
\[
\begin{aligned}
&\abs{J^{\prime}(\alpha_1){\dela} - J^{\prime}(\alpha_2){\dela}}\\
&\qquad = \left| w_0 (\iu(\alpha_1), u_{i, 1}^{\prime})_{0,\varOmega} - w_0 (\iu(\alpha_2), u_{i, 2}^{\prime})_{0,\varOmega} \right.\\
&\qquad \qquad \left. + w_1 (\nabla \iu(\alpha_1), \nabla u_{i, 1}^{\prime})_{0,\varOmega} - w_1 (\nabla \iu(\alpha_2), \nabla u_{i, 2}^{\prime})_{0,\varOmega} \right| \\
&\qquad \le w_0 \big( \norm{\iu(\alpha_1) - \iu(\alpha_2)}_{0,\varOmega} \, \norm{u_{i, 1}^{\prime}}_{0,\varOmega} + \norm{\iu(\alpha_2)}_{0,\varOmega} \, \norm{u_{i, 1}^{\prime} - u_{i, 2}^{\prime}}_{0,\varOmega} \big) \\
&\qquad \qquad + w_1 \big( \norm{\nabla \iu(\alpha_1) - \nabla \iu(\alpha_2)}_{0,\varOmega} \, \norm{\nabla u_{i, 1}^{\prime}}_{0,\varOmega} + \norm{\nabla \iu(\alpha_2)}_{0,\varOmega} \, \norm{\nabla u_{i, 1}^{\prime} - \nabla u_{i, 2}^{\prime}}_{0,\varOmega} \big).
\end{aligned}
\]

By Proposition~\ref{prop:continuity_operator_F} and Proposition~\ref{prop:first_derivative_of_F}, the forward map and its derivative satisfy
\begin{align*}
\norm{\iu(\alpha_1) - \iu(\alpha_2)}_{1, \varOmega} &\le C_F \, \norm{\alpha_1 - \alpha_2}_{L^\infty(\varOmega)}, \\
\norm{\Fprime(\alpha_1){\dela} - \Fprime(\alpha_2){\dela}}_{1, \varOmega} &\le C_F^{\prime}\, \norm{\alpha_1 - \alpha_2}_{L^\infty(\varOmega)} \, \norm{\dela}_{L^\infty(\varOmega)}.
\end{align*}
Moreover, Lemma~\ref{lem:coercivity} gives uniform bounds
$\norm{u_j}_{1, \varOmega} \le C_\SubsetK$ and $\norm{u_j^{\prime}}_{1, \varOmega} \le C_\SubsetK \norm{\dela}_{L^\infty(\varOmega)}$ for all $\alpha_j \in \SubsetK$.  
Combining these estimates, we obtain
\[
\abs{\Je'(\alpha_1){\dela} - \Je'(\alpha_2){\dela}}
\le L^{\prime}\, \norm{\alpha_1 - \alpha_2}_{L^\infty(\varOmega)} \, \norm{\dela}_{L^\infty(\varOmega)},
\]
with $L^{\prime}:= (w_0 + w_1) C_\SubsetK C_F'$, showing that $\Je'$ is Lipschitz continuous with respect to $\alpha$ on $\SubsetK$.
\end{proof}

\subsection{Convexity and characterization of the minimizer}
In what follows, we establish convexity of $\Je$ and uniqueness of the minimizer.  
Let $\alpha \in \SubsetK$ and let ${\dela} \in \SubsetK$ be such that $\alpha + {\dela} \in \SubsetK$.  
Denote by $\uprime = \Fprime(\alpha){\dela}$ and $\udprime = \Fdprime(\alpha)[{\dela},{\dela}]$ the first- and second-order variations of the state.

The first Gâteaux derivative of $\Je$ in direction ${\dela}$ is given by
\begin{equation}
\Je'(\alpha){\dela}
= J^{\prime}(\alpha){\dela}  + \regu_{\rho}'(\alpha){\dela}, \label{eq:first_derivative_Tikhonov}
\end{equation}
where
\begin{equation}
J^{\prime}(\alpha){\dela} 
:= w_0{}(\iu(\alpha), \iuprime)_{0,\varOmega}
+
w_1{}(\nabla \iu(\alpha), \nabla \iuprime)_{0,\varOmega},
\qquad
\regu_{\rho}'(\alpha){\dela} := \rho{}(\alpha,{\dela})_{0,\varOmega},
\label{eq:first_derivative_Tikhonov_forms}
\end{equation}
and $\iuprime = \Im\{\uprime\}$.

The second derivative satisfies
\begin{equation}\label{eq:second_derivative_Tikhonov}
\begin{aligned}
\Je''(\alpha)[{\dela},{\dela}]
&=
w_0 \Big(
\norm{\iuprime}_{0,\varOmega}^{2}
+
(\iu(\alpha), \iudprime)_{0,\varOmega}
\Big)
\\
&\qquad +
w_1 \Big(
\norm{\nabla \iuprime}_{0,\varOmega}^{2}
+
(\nabla \iu(\alpha), \nabla \iudprime)_{0,\varOmega}
\Big)
+
\rho \norm{{\dela}}_{0,\varOmega}^{2},
\end{aligned}
\end{equation}
where $\iudprime = \Im\{\udprime\}$ solves the second-order sensitivity problem~\eqref{eq:second_derivative}.

\begin{proposition}\label{prop:strict_convexity}
There exists $\rho_{0}>0$, independent of $\alpha \in \SubsetK$, such that for all $\rho \ge \rho_{0}$, the functional $\Je$ is strictly convex on $\SubsetK$.
\end{proposition}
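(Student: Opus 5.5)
We will show that the second derivative \eqref{eq:second_derivative_Tikhonov} is uniformly positive definite on $\SubsetK$ once $\rho$ is large enough, and then integrate along segments to obtain strict convexity. Since $\SubsetK$ is convex, for $\alpha_1\neq\alpha_2\in\SubsetK$ the segment $\alpha_t:=\alpha_2+t(\alpha_1-\alpha_2)$, $t\in[0,1]$, stays in $\SubsetK\subset\Aad$. By Propositions~\ref{prop:first_derivative_of_F} and~\ref{prop:second_derivative}, the function $\phi(t):=\Je(\alpha_t)$ is twice differentiable with $\phi''(t)=\Je''(\alpha_t)[\dela,\dela]$, where $\dela:=\alpha_1-\alpha_2$. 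It therefore suffices to prove $\phi''(t)>0$ for all $t$ whenever $\dela\neq0$. Strict convexity of $\phi$ on $[0,1]$, for every such pair, is equivalent to strict convexity of $\Je$ on $\SubsetK$.

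\textbf{Lower bound.} In \eqref{eq:second_derivative_Tikhonov}, the terms $w_0\norm{\iuprime}_{0,\varOmega}^2$, $w_1\norm{\nabla\iuprime}_{0,\varOmega}^2$ and $\rho\norm{\dela}_{0,\varOmega}^2$ are nonnegative. Only the mixed terms involving $\iudprime$ can be negative. By Cauchy--Schwarz, Lemma~\ref{lem:coercivity} (which gives $\norm{\iu(\alpha)}_{1,\varOmega}\le C_\SubsetK$ uniformly on $\SubsetK$) and the bound \eqref{eq:second_derivative_bound}, we get
\[
\abs{w_0(\iu,\iudprime)_{0,\varOmega}+w_1(\nabla\iu,\nabla\iudprime)_{0,\varOmega}}
\le (w_0+w_1)\,C_\SubsetK\,\frac{C}{c_B^{3}}\,\norm{\dela}_{L^\infty(\varOmega)}^{2}=:M\norm{\dela}_{L^\infty(\varOmega)}^{2},
\]
where $M$ does not depend on $\alpha\in\SubsetK$. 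Hence
\[
\Je''(\alpha)[\dela,\dela]\ \ge\ \rho\norm{\dela}_{0,\varOmega}^2-M\norm{\dela}_{L^\infty(\varOmega)}^2 .
\]

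\textbf{Comparing norms.} This is the main obstacle. The penalty controls only the $L^2$-norm of $\dela$, while the perturbation term is measured in $L^\infty$, and on all of $L^\infty(\varOmega)$ no inequality $\norm{\dela}_{L^\infty}\le C\norm{\dela}_{0}$ holds. Assumption~\ref{ass:key_assumption} resolves this. The differences $\dela=\alpha_1-\alpha_2$ lie in the finite-dimensional linear space $V:=\operatorname{span}(\SubsetK-\SubsetK)\subset L^\infty(\varOmega)$. On $V$ all norms are equivalent, and $\norm{\cdot}_{0,\varOmega}$ is a genuine norm there because $L^\infty$ functions that vanish a.e.\ are zero. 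So there is $C_V>0$ with $\norm{\dela}_{L^\infty(\varOmega)}\le C_V\norm{\dela}_{0,\varOmega}$ for all $\dela\in V$. Then
\[
\Je''(\alpha)[\dela,\dela]\ \ge\ (\rho-MC_V^2)\norm{\dela}_{0,\varOmega}^2 .
\]
Choosing $\rho_0:=MC_V^2+1$ (any $\rho_0>MC_V^2$ works) makes the right-hand side strictly positive for every $\dela\neq0$ and every $\rho\ge\rho_0$. Since $M$ and $C_V$ do not depend on $\alpha$, neither does $\rho_0$.

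\textbf{Conclusion.} With $\phi''>0$ on $[0,1]$, we have $\phi(t)<(1-t)\phi(0)+t\phi(1)$ for $t\in(0,1)$, which is strict convexity of $\Je$ on $\SubsetK$.

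One point needs care. Proposition~\ref{prop:second_derivative} is stated for $\alpha\in\Aad$ and admissible directions, and $\dela$ is admissible here because the segment lies in $\SubsetK\subset\Aad$. Moreover, the bound \eqref{eq:second_derivative_bound} uses the $\alpha$-independent coercivity constant $c_B=\mina$, so $M$ is indeed uniform in $\alpha$.
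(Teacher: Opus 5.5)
Your proof is correct and follows essentially the same route as the paper. Both bound the indefinite terms $w_0(\iu,\iudprime)_{0,\varOmega}+w_1(\nabla\iu,\nabla\iudprime)_{0,\varOmega}$ by an $\alpha$-independent constant times $\norm{\dela}_{\infty,\varOmega}^2$ using \eqref{eq:uniform_estimate} and \eqref{eq:second_derivative_bound}, and then absorb this into $\rho\norm{\dela}_{0,\varOmega}^2$ through norm equivalence on the finite-dimensional set. Your differences are refinements rather than a new argument: you take directions in $\operatorname{span}(\SubsetK-\SubsetK)$ instead of in $\SubsetK$ itself, which is the correct setting. You also measure coercivity in $L^2$ rather than $L^\infty$, and you spell out the step from $\phi''>0$ along segments to strict convexity, which the paper leaves implicit.
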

\begin{proof}
Let $\alpha, {\dela} \in \SubsetK$ be arbitrary and ${\dela} \not\equiv 0$.  
We prove strict convexity by showing coercivity in the $L^{\infty}(\varOmega)$-sense of the second derivative $\Jedprime(\alpha)$ in \eqref{eq:second_derivative_Tikhonov}.

From \eqref{eq:uniform_estimate} and \eqref{eq:second_derivative_bound}, there exist constants $c_0, c_1 >0$, independent of $\alpha$, such that
\[
\norm{\iu(\alpha)}_{0,\varOmega}, \ \norm{\nabla \iu(\alpha)}_{0,\varOmega} \le c_0, 
\qquad
\norm{\iudprime}_{0,\varOmega}, \ \norm{\nabla \iudprime}_{0,\varOmega} \le c_1 \norm{{\dela}}_{\infty,\varOmega}^2.
\]
On the other hand, since $\SubsetK$ is finite-dimensional, there exists $K>0$ such that $\norm{{\dela}}_{0,\varOmega} \ge K \norm{{\dela}}_{\infty,\varOmega}$.

Define $J_0 := w_0 \norm{\iuprime}_{0,\varOmega}^{2} + w_1 \norm{\nabla \iuprime}_{0,\varOmega}^{2}$ and set $c_2 := c_0 c_1 >0$.  
Then, the second derivative satisfies
\begin{align*}
\Jedprime(\alpha)[{\dela},{\dela}]
&= J_0
+ w_0 (\iu(\alpha), \iudprime)_{0,\varOmega}
+ w_1 (\nabla \iu(\alpha), \nabla \iudprime)_{0,\varOmega}
+ \rho \norm{{\dela}}_{0,\varOmega}^{2} \\
&\ge J_0
- (w_0 + w_1) c_2 \norm{{\dela}}_{\infty,\varOmega}^{2}
+ \rho \norm{{\dela}}_{0,\varOmega}^{2} \\
&\ge (\rho K^2 - (w_0 + w_1)c_2) \norm{{\dela}}_{\infty,\varOmega}^{2}.
\end{align*}

Now, to ensure a uniform positive lower bound, we first fix any constant $\epsilon > 0$ and define
\[
\rho_{0} 
:= \frac{(w_0 + w_1)c_2}{K^2} + \epsilon > 0.
\]
Then, for all $\rho \ge \rho_{0}$, we have
\[
\Jedprime(\alpha)[{\dela},{\dela}] 
\ge (\rho K^2 - (w_0 + w_1)c_2) \norm{{\dela}}_{\infty,\varOmega}^{2} 
\ge \epsilon K^2 \norm{{\dela}}_{\infty,\varOmega}^{2} > 0,
\]
for all $0 \not\equiv{\dela} \in \SubsetK$.
Hence, $\Je$ is strictly convex on $\SubsetK$ uniformly in $\alpha$, with a uniform lower bound $\epsilon K^2$ on the coercivity.  
\end{proof}
Hereinafter, Proposition~\ref{prop:strict_convexity} is assumed to hold, unless explicitly stated otherwise.
\begin{remark}\label{rem:effect_of_rho}
The regularization parameter $\rho > 0$ cannot be chosen arbitrarily large or small.  
If $\rho$ is too large, the regularization term dominates, and the functional may become insensitive to $\alpha$, potentially losing strict convexity (or even convexity in general). This behavior occurs not only for the CCBM functional but also for the boundary-data tracking functionals $J_{N,\rho}$ and $J_{D,\rho}$, as well as the Kohn--Vogelius functional $J_{\mathrm{KV},\rho}$; see subsection~\ref{subsec:effect_of_rho} for a numerical illustration of this effect.
If $\rho$ is too small, coercivity is weakened and the estimate involving $\epsilon$ in the strict convexity proof (Proposition~\ref{prop:strict_convexity}) may fail.  
Thus, the admissible range of $\rho$ is restricted, which in turn constrains the choice of $\epsilon$ for which strict convexity holds.
\end{remark}

We now present a variational characterization of the solution to Problem~\ref{prob:minimization_problem}, which can be viewed as a first-order optimality condition.  
Thanks to the strict convexity of the functional, this condition uniquely identifies the minimizer; however, it should be interpreted with care in the context of approximation or numerical errors.\footnote{Since $\rho$ is the regularization parameter, it cannot be chosen arbitrarily large.  Consequently, convexity of the objective functional cannot be expected in general.}

\begin{theorem}\label{thm:optimality_condition}
Let $\SubsetK$ satisfy Assumption~\ref{ass:key_assumption}, and suppose Proposition~\ref{prop:strict_convexity} is valid.  
Then Problem~\ref{prob:minimization_problem} admits a unique solution $\alpharho \in \SubsetK$, which depends continuously on the data.  
Moreover, $\alpharho$ is characterized by the variational inequality
\begin{equation}\label{eq:variational_inequality}
(J^{\prime}(\alpharho) 
+ \rho R^{\prime}(\alpharho) ,\; \dela - \alpharho )_{0,\varOmega} =
\bigl( \nabla \ru \cdot \nabla \ip - \nabla \iu \cdot \nabla \rp + \rho \alpharho,\; \dela - \alpharho \bigr)_{0,\varOmega} \ge 0,
\quad
\forall \dela \in \SubsetK,
\end{equation}
where $u = \ru + i \iu = F(\alpharho)$ and $p = \rp + i \ip \in \VV$ uniquely solves the adjoint problem
\begin{equation}\label{eq:adjoint_problem}
\left\{
\begin{aligned}
- \nabla \cdot (\alpharho \nabla p) 
- \nabla \cdot (\bb{}p) 
+ c{}p
&= w_0{}\iu - w_1{}\Delta \iu & & \text{in } \varOmega, \\[1mm]
\alpharho{}\dn{p} 
+ \bb \cdot \nn{}p 
- i{}p 
&= w_1{} \dn{\iu} & & \text{on } \varGamma.
\end{aligned}
\right.
\end{equation}
\end{theorem}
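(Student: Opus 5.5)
Our plan has three parts: existence and uniqueness, the variational inequality, and the adjoint representation of the derivative.

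\emph{Existence and uniqueness.} By Assumption~\ref{ass:key_assumption}, $\SubsetK$ is a closed, convex, bounded subset of a finite-dimensional space, hence compact. By Proposition~\ref{prop:Lipschitz_Tikhonov}, $\Je$ is continuous (indeed Lipschitz) on $\SubsetK$. A minimizer therefore exists by the Weierstrass theorem, without any weak-compactness argument. Uniqueness follows from the strict convexity in Proposition~\ref{prop:strict_convexity}: two distinct minimizers would make $\Je$ strictly smaller at their midpoint, which lies in $\SubsetK$ by convexity.

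\emph{Continuous dependence on the data.} We perturb $(f,g,Q)$ and note that, by the linear stability estimate \eqref{eq:uniform_estimate} applied to the difference of states, $\Je$ changes uniformly on $\SubsetK$. The uniform coercivity bound $\epsilon K^2\norm{\cdot}_{\infty,\varOmega}^2$ on $\Jedprime$ from Proposition~\ref{prop:strict_convexity} then converts this small uniform perturbation of the functional into a small change of the minimizer. A compactness argument also suffices: minimizers of the perturbed functionals cluster at minimizers of the limit functional, and the latter is unique.

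\emph{Variational inequality.} For $\dela\in\SubsetK$, convexity of $\SubsetK$ gives $\alpharho+t(\dela-\alpharho)\in\SubsetK$ for $t\in[0,1]$. Minimality then yields
\[
0 \le \lim_{t\to 0^+}\frac{\Je\bigl(\alpharho+t(\dela-\alpharho)\bigr)-\Je(\alpharho)}{t}=\Je'(\alpharho)(\dela-\alpharho),
\]
using the Fréchet differentiability from Proposition~\ref{prop:first_derivative_of_F} together with \eqref{eq:first_derivative_Tikhonov}. Conversely, strict convexity makes this first-order condition sufficient. It remains to write $J'(\alpharho)\beta$, with $\beta:=\dela-\alpharho$, in the stated integral form.

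\emph{Adjoint representation; this is the main obstacle.} The difficulty is that $J'(\alpharho)\beta$ involves $\iuprime=\Im\{\uprime\}$, with $\uprime$ from \eqref{eq:first_derivative}, and we must remove this dependence. We write the objective term as a real functional,
\[
w_0(\iu,\iuprime)_{0,\varOmega}+w_1(\nabla\iu,\nabla\iuprime)_{0,\varOmega},
\]
and test the adjoint problem \eqref{eq:adjoint_problem} with $\uprime$. Integrating by parts, the adjoint operator $-\nabla\cdot(\alpha\nabla p)-\nabla\cdot(\bb p)+cp$ with the boundary terms $\alpha\dn p+\bb\cdot\nn p$ reproduces the transpose of $B(\uprime,\cdot\,;\alpharho)$. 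The $-ip$ term on $\varGamma$ matches $i\langle \uprime,\cdot\rangle_\varGamma$ through conjugation. On the right-hand side, the source $w_0\iu-w_1\Delta\iu$ with Neumann datum $w_1\dn\iu$ is exactly the weak form of the functional $v\mapsto w_0(\iu,v)_{0,\varOmega}+w_1(\nabla\iu,\nabla v)_{0,\varOmega}$. We therefore get $B(\uprime,p;\alpharho)$ equal to that functional evaluated at $\uprime$, with the imaginary part extracted by the appropriate choice of real or imaginary part. Inserting \eqref{eq:first_derivative} gives $-(\beta\nabla u,\nabla p)_{0,\varOmega}$. Taking the relevant real or imaginary part of $\nabla u\cdot\nabla p$ for $u=\ru+i\iu$ and $p=\rp+i\ip$ yields $\beta(\nabla\ru\cdot\nabla\ip-\nabla\iu\cdot\nabla\rp)$, up to the sign and conjugation convention, which we will fix by this computation.

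Well-posedness of the adjoint problem follows from Lemma~\ref{lem:coercivity}, since its form is the adjoint of $B$ and coercivity is preserved. The right-hand side is a bounded functional on $\VV$ even though $\Delta\iu$ need only exist weakly, because the weak formulation only uses $(\nabla\iu,\nabla v)_{0,\varOmega}$. The real bookkeeping risk is getting the conjugation and the choice between $\Re$ and $\Im$ right so that the signs in \eqref{eq:variational_inequality} come out as stated.
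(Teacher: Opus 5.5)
Your proposal is correct and follows essentially the same route as the paper. Existence and uniqueness come from compactness of $\SubsetK$ and strict convexity; the paper simply cites a textbook convex-minimization theorem for this and for the first-order characterization, where you argue directly. The adjoint representation comes from $B(\uprime,p;\alpharho)=\overline{B^{\ast}(p,\uprime;\alpharho)}=\overline{l^{\ast}(\uprime)}$ followed by taking imaginary parts. With the convention $(u,v)_{0,\varOmega}=\intO{u\,\bar v}$, this gives exactly the signs in \eqref{eq:variational_inequality}, so the bookkeeping you deferred does close. Your strong-convexity perturbation argument for continuous dependence is something the paper's proof of this theorem omits; it is only treated later, by a compactness argument, in Proposition~\ref{prop:continuous_dependence}.
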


Let us define $B^{\ast} : \VV \times \VV \to \mathbb{C}$ and $l^{\ast} : \VV \to \mathbb{C}$ by
\begin{align}
B^{\ast}(p,v;\alpharho)
&= (\alpharho \nabla p, \nabla v)_{0,\varOmega}
 + (p, \bb \cdot \nabla v)_{0,\varOmega} 
 + (c p, v)_{0,\varOmega}
- i{}\langle p, v\rangle_{\varGamma},
\label{eq:adjoint_sesquilinear_form}
\\
l^{\ast}(v)
&= w_0(\iu, v)_{0,\varOmega}
+ w_1(\nabla \iu, \nabla v)_{0,\varOmega}.
\label{eq:adjoint_linear_form}
\end{align}
Then, the weak formulation of the adjoint system~\eqref{eq:adjoint_problem} can be stated as follows:
\begin{problem}\label{prob:adjoint_weak_form}
Find $p \in \VV$ such that $B^{\ast}(p,v;\alpha) = l^{\ast}(v)$, for all $v \in \VV$.
\end{problem}
Existence and uniqueness of the solution of Problem~\ref{prob:adjoint_weak_form} again follow from the complex Lax--Milgram lemma.

\begin{proof}[Proof of Proposition~\ref{thm:optimality_condition}]
Since $J_\rho$ is strictly convex on $\SubsetK$, standard results on convex minimization (Theorem~5.3.19 in~\cite[p.~233]{AtkinsonHan2009}) guarantee a unique minimizer $\alpharho \in \SubsetK$, characterized by $J_\rho'(\alpharho)(\dela - \alpharho) \ge 0$, for all $\dela \in \SubsetK$.
Setting $\widehat{\beta} := \dela - \alpharho \in \SubsetK$, and using~\eqref{eq:first_derivative_Tikhonov} and~\eqref{eq:first_derivative_Tikhonov_forms}, we have
\begin{equation}
\label{eq:first_inequality}
J_\rho'(\alpharho)\widehat{\beta}
= J^{\prime}(\alpharho){\widehat{\beta}}  + \regu_{\rho}'(\alpharho){\widehat{\beta}}  \ge 0,
\end{equation}
where $\iuprime = \Im\{\uprime\} = \Im\{\Fprime(\alpharho)\widehat{\beta}\}$ solves the linearized state problem (Proposition~\ref{prop:first_derivative_of_F}).

Let $p \in \VV$ solve Problem~\ref{prob:adjoint_weak_form}.
Using the forms defined in \eqref{eq:sesquilinear_form}, \eqref{eq:adjoint_sesquilinear_form}, and \eqref{eq:adjoint_linear_form}, the linearized state problem and its adjoint can be written concisely (after selecting suitable test functions) as
\[
B(\uprime, p; \alpharho) = -(\widehat{\beta} \nabla u, \nabla p)_{0,\varOmega}, \qquad
B(\uprime, p; \alpharho) = \overline{B^{\ast}(p, \uprime; \alpharho)} =\overline{ l^{\ast}(\uprime) }.
\]

Using integration by parts, the terms involving $\bb$ cancel exactly. Taking the imaginary part yields
\[
J^{\prime}(\alpharho){\widehat{\beta}}  
= \Im\{ w_0(\iu, \uprime)_{0,\varOmega} + w_1(\nabla \iu, \nabla \uprime)_{0,\varOmega} \}
= \intO{ \widehat{\beta} ( \nabla \ru \cdot \nabla \ip - \nabla \iu \cdot \nabla \rp ) }.
\]
Combining this with inequality \eqref{eq:first_inequality} yields the variational characterization \eqref{eq:variational_inequality}, thereby proving the theorem.
\end{proof}

\section{Convergence Analysis under Noisy Measurements}\label{sec:convergence_analysis}
In the presence of noisy boundary data, we consider the Tikhonov-regularized inverse problem in the finite-dimensional subspace $\SubsetK$ (Assumption~\ref{ass:key_assumption}) and summarize its key properties, including existence of minimizers, continuous dependence on the data, and convergence to the exact solution under Assumption~\ref{ass:exact_solution_in_K}.

\begin{assumption}\label{ass:exact_solution_in_K}
There exists a unique $\alpha^\dagger \in \SubsetK$ such that 
$u^\dagger = F(\alpha^\dagger)$ satisfies $\iu^\dagger = 0$ in $\varOmega$, i.e., $\alpha^\dagger$ is the exact coefficient producing the measured Neumann data $g$ on $\varGamma$.
\end{assumption}

\subsection{Convergence results}\label{subsec:convergence_results}
In practice, boundary measurements are affected by noise. We model this by  
\begin{equation}\label{eq:noisy_boundary}
g^\delta = g + \eta, \qquad \norm{\eta}_{0,\varGamma} \le \delta,
\end{equation}
where $\delta>0$ denotes the noise level. Substituting $g^\delta$ for $g$ in Problem~\ref{prob:weak_form} yields the perturbed forward problem: find $u^\delta \in \VV$ such that
\begin{equation}\label{eq:noisy_forward}
B(u^\delta,v;\alpha) = l^\delta(v), \qquad \forall v \in \VV,
\end{equation}
where $l^\delta(v) := (Q,v)_{0,\varOmega} + \langle g^\delta, v \rangle_\varGamma + i \langle f, v \rangle_\varGamma$.

By standard stability arguments and Lemma~\ref{lem:coercivity}, there exists a constant $C>0$, independent of $\alpha$ and $\delta$, such that
\begin{equation}\label{eq:stability_noisy}
\norm{u^\delta - u}_{1,\varOmega} \le C\,\delta.
\end{equation}

The corresponding Tikhonov functional under noisy data is defined as
\begin{equation}\label{eq:Tikhonov_CCBM}
\Je^\delta(\alpha)
:= \frac{1}{2}
\Big(
w_0 \norm{\iu^\delta(\alpha)}_{0,\varOmega}^{2}
+
w_1 \norm{\nabla \iu^\delta(\alpha)}_{0,\varOmega}^{2}
\Big)
+ \frac{\rho}{2} \norm{\alpha}_{0,\varOmega}^2, 
\qquad \alpha \in \SubsetK,
\end{equation}
where $u^\delta(\alpha) = \ru^\delta(\alpha) + i \iu^\delta(\alpha)$ solves \eqref{eq:noisy_forward}.

We now consider the finite-dimensional regularized inverse problem with noisy data, aiming to recover a stable approximation of the diffusion coefficient in $\SubsetK$.

\begin{problem}\label{prob:regularized_inverse}
Find $\alpharho^\delta \in \SubsetK$ such that
\begin{equation}\label{eq:minimization_problem}
\Je^\delta(\alpharho^\delta) = \min_{\alpha \in \SubsetK} \Je^\delta(\alpha).
\end{equation}
\end{problem}

The first question we address is whether a minimizer actually exists.

\begin{proposition}\label{prop:existence_minimizer}
Problem~\ref{prob:regularized_inverse} admits at least one solution $\alpharho^\delta \in \SubsetK$.
\end{proposition}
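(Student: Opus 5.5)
The plan is to use the direct method of the calculus of variations. Finite-dimensionality of $\SubsetK$ (Assumption~\ref{ass:key_assumption}) will replace any weak-compactness argument in $L^\infty(\varOmega)$.

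First, $\Je^\delta$ is bounded below by $0$ on $\SubsetK$, since every term in \eqref{eq:Tikhonov_CCBM} is nonnegative. Hence $m := \inf_{\alpha \in \SubsetK} \Je^\delta(\alpha) \in [0,\infty)$ is finite; it is attained as an infimum over a nonempty set because, for instance, $\alpha^\dagger \in \SubsetK$ by Assumption~\ref{ass:exact_solution_in_K}. We then pick a minimizing sequence $\{\alpha_n\} \subset \SubsetK$ with $\Je^\delta(\alpha_n) \to m$.

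Second, we extract a convergent subsequence. Since $\SubsetK \subset \Aad$, we have $\mina \le \alpha_n \le \maxa$ a.e., so $\{\alpha_n\}$ is bounded in $L^\infty(\varOmega)$. It lies in the finite-dimensional space spanned by $\SubsetK$, where all norms are equivalent. By Bolzano--Weierstrass, a subsequence (not relabeled) converges in $L^\infty(\varOmega)$ to some $\alpha^\ast$. Because $\SubsetK$ is closed, $\alpha^\ast \in \SubsetK$.

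Third, we show that $\Je^\delta(\alpha_n) \to \Je^\delta(\alpha^\ast)$, which gives $\Je^\delta(\alpha^\ast) = m$. The noisy forward problem \eqref{eq:noisy_forward} differs from Problem~\ref{prob:weak_form} only through the data $g^\delta$ in place of $g$. The Lipschitz estimate of Proposition~\ref{prop:continuity_operator_F} (equivalently, the argument of Proposition~\ref{prop:Lipschitz_Tikhonov}) depends only on coercivity and the uniform bound \eqref{eq:uniform_estimate}, with constants independent of $\alpha$. It therefore applies verbatim to $\alpha \mapsto u^\delta(\alpha)$, and we obtain
\[
\norm{\iu^\delta(\alpha_n) - \iu^\delta(\alpha^\ast)}_{1,\varOmega} \le C \norm{\alpha_n - \alpha^\ast}_{L^\infty(\varOmega)} \to 0 .
\]
The misfit term therefore converges. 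The regularization term $\frac{\rho}{2}\norm{\alpha_n}_{0,\varOmega}^2$ also converges, since $\norm{\cdot}_{0,\varOmega} \le \abs{\varOmega}^{1/2}\norm{\cdot}_{L^\infty(\varOmega)}$. In fact Proposition~\ref{prop:Lipschitz_Tikhonov}, applied to the data $g^\delta$, directly gives continuity of $\Je^\delta$ on $\SubsetK$. Hence $\alpha^\ast =: \alpharho^\delta$ solves Problem~\ref{prob:regularized_inverse}.

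The main point needing care is the compactness step. One must make sure that closedness of $\SubsetK$ is meant in the topology where the subsequence converges. In finite dimensions this is automatic, because all norm topologies coincide. One also needs the minor check that the Lipschitz constants remain uniform once $g$ is replaced by $g^\delta$. This holds because the right-hand side in \eqref{eq:uniform_estimate} is bounded by $\norm{g}_{-1/2,\varGamma} + \delta$ times a trace constant. Note that the argument does not require strict convexity, so only existence, not uniqueness, is obtained here.
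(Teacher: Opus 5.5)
Your argument is correct and is essentially the paper's proof: the direct method with a minimizing sequence, finite-dimensional compactness together with closedness of $\mathcal{K}$, and continuity of $J_\rho^\delta$ from the Lipschitz continuity of the forward map. The only minor differences are these. You bound the minimizing sequence through the box constraint $\mathcal{K}\subset\mathcal{A}$ rather than through the coercivity of the Tikhonov term. You also check explicitly that the forward-map estimates carry over to the noisy data $g^\delta$, whereas the paper's proof writes the argument with $g$ and skips this step.
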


Once existence is established, it is natural to ask how sensitive the minimizer is to variations in the data.  

\begin{proposition}\label{prop:continuous_dependence}
Let Assumption~\ref{ass:key_assumption} hold, and let $\rho \ge \rho_0$ as in Proposition~\ref{prop:strict_convexity}.  
Let $\{g^n\} \subset H^{-1/2}(\varGamma)$ satisfy
\[
\|g^n - g^\infty\|_{H^{-1/2}(\varGamma)} \to 0 \quad \text{as } n \to \infty.
\]
For each $n$, let $\alpharho^n \in \SubsetK$ denote the unique minimizer of $\Je^n$ corresponding to $g^n$, and let $u^n := F(\alpharho^n; g^n) \in \VV$ be the corresponding state.

Then, as $n\to\infty$,
\[
\alpharho^n \to \alpharho^\infty \quad \text{strongly in } L^2(\varOmega),
\qquad
u^n \to u^\infty \quad \text{strongly in } \VV,
\]
where $\alpharho^\infty$ and $u^\infty$ correspond to $g^\infty$.
\end{proposition}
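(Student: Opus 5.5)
The plan is a standard compactness-plus-uniqueness argument, using the finite dimensionality of $\SubsetK$ and the uniqueness of minimizers from Proposition~\ref{prop:strict_convexity}.

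\emph{Step 1 (data stability of the state, uniform in $\alpha$).} For fixed $\alpha\in\SubsetK$, the difference $F(\alpha;g^n)-F(\alpha;g^\infty)$ solves Problem~\ref{prob:weak_form} with $Q=0$, $f=0$ and boundary datum $g^n-g^\infty$. By \eqref{eq:uniform_estimate} of Lemma~\ref{lem:coercivity},
\[
\norm{F(\alpha;g^n)-F(\alpha;g^\infty)}_{1,\varOmega}\le \frac{C}{c_B}\norm{g^n-g^\infty}_{-1/2,\varGamma},
\]
with a constant independent of $\alpha$. Combining this with the bound $\norm{F(\alpha;g^n)}_{1,\varOmega}\le C$, uniform in $n$ and $\alpha$, and with the identity $\abs{\norm{\varphi}^2-\norm{\psi}^2}\le\norm{\varphi+\psi}\norm{\varphi-\psi}$ used in Proposition~\ref{prop:Lipschitz_Tikhonov}, we get
\[
\sup_{\alpha\in\SubsetK}\abs{\Je^n(\alpha)-\Je^\infty(\alpha)}\le C\norm{g^n-g^\infty}_{-1/2,\varGamma}\to0 .
\]
So the functionals converge uniformly on $\SubsetK$.

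\emph{Step 2 (compactness).} The set $\SubsetK\subset\Aad$ is bounded in $L^\infty$. It is also finite-dimensional and closed, hence compact, and all norms on its span are equivalent. Given any subsequence of $\{\alpharho^n\}$, we can therefore extract a further subsequence converging in $L^\infty(\varOmega)$, hence in $L^2(\varOmega)$, to some $\hat\alpha\in\SubsetK$.

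\emph{Step 3 (identify the limit).} For any $\alpha\in\SubsetK$, minimality gives $\Je^n(\alpharho^n)\le\Je^n(\alpha)$. By Step 1 and the Lipschitz continuity of $\Je^\infty$ (Proposition~\ref{prop:Lipschitz_Tikhonov}, applied with datum $g^\infty$),
\[
\Je^n(\alpharho^n)=\Je^\infty(\alpharho^n)+o(1)\to\Je^\infty(\hat\alpha).
\]
Passing to the limit yields $\Je^\infty(\hat\alpha)\le\Je^\infty(\alpha)$ for all $\alpha\in\SubsetK$. Since $\rho\ge\rho_0$, the minimizer is unique by Proposition~\ref{prop:strict_convexity} and Theorem~\ref{thm:optimality_condition}, so $\hat\alpha=\alpharho^\infty$. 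Because every subsequence has a further subsequence with the same limit, the whole sequence converges: $\alpharho^n\to\alpharho^\infty$ in $L^\infty$, and a fortiori in $L^2(\varOmega)$.

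\emph{Step 4 (states).} Split the state difference as
\[
u^n-u^\infty=\bigl[F(\alpharho^n;g^n)-F(\alpharho^n;g^\infty)\bigr]+\bigl[F(\alpharho^n;g^\infty)-F(\alpharho^\infty;g^\infty)\bigr].
\]
The first bracket is controlled by Step 1. The second is controlled by Proposition~\ref{prop:continuity_operator_F} through $C\norm{\alpharho^n-\alpharho^\infty}_{L^\infty(\varOmega)}\to0$. Together these give $u^n\to u^\infty$ in $\VV$.

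\emph{Main obstacle.} The delicate point is Step 1: we need the uniform-in-$\alpha$ constant from Lemma~\ref{lem:coercivity}, and we must handle $g$ measured in $H^{-1/2}(\varGamma)$ rather than $L^2(\varGamma)$, which the linear form supports via the duality pairing. The remaining steps rely essentially on finite dimensionality, which upgrades $L^2$ convergence to $L^\infty$ convergence; without it, $\alpha\mapsto F(\alpha)$ would not be continuous under weak convergence and the argument would break.
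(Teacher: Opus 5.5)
Your proof is correct and follows essentially the same route as the paper: compactness of the finite-dimensional set $\SubsetK$, passing to the limit in the minimality inequality, identifying the limit through uniqueness from Proposition~\ref{prop:strict_convexity}, and using Lipschitz continuity of $F$ in $\alpha$ and in $g$ for the states. Your uniform-in-$\alpha$ estimate $\sup_{\alpha\in\SubsetK}\abs{\Je^n(\alpha)-\Je^\infty(\alpha)}\to 0$ rigorously justifies the limit passage that the paper attributes loosely to weak lower semicontinuity, and your direct splitting of $u^n-u^\infty$ makes the paper's intermediate weak-limit argument for the states unnecessary.
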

\begin{remark}
In Proposition~\ref{prop:continuous_dependence}, the Neumann data are assumed to lie in the natural trace space $H^{-1/2}(\varGamma)$.  
If instead $\{g^n\} \subset L^2(\varGamma)$ with $g^n \to g^\infty$ in $L^2(\varGamma)$, the same convergence result holds.  
Indeed, the trace embedding $H^1(\varOmega) \hookrightarrow L^2(\varGamma)$ ensures that
\[
	\abs{\langle g^n - g^\infty, v \rangle_\varGamma} \le \norm{g^n - g^\infty}_{0,\varGamma} \, \norm{v}_{1,\varOmega}, \quad \forall v \in \VV,
\]
so that the forward map $F$ remains Lipschitz continuous with respect to the $L^2(\varGamma)$-norm of the boundary data.  
Therefore, Proposition~\ref{prop:continuous_dependence} holds with $H^{-1/2}(\varGamma)$ replaced by $L^2(\varGamma)$.
\end{remark}

Finally, we examine the behavior of the regularized solutions as both noise and regularization vanish. This establishes the convergence of the numerical scheme towards the exact solution.

\begin{proposition}\label{prop:convergence}
Let $\{\delta_n\} \to 0$ and let $\rho_n = \rho(\delta_n) \to 0$.  
For each $n$, let $\alpha_{\rho_n}^{\delta_n} \in \SubsetK$ be a minimizer of $J^{\delta_n}_{\rho_n}$.  
Then, under Assumption~\ref{ass:exact_solution_in_K}, there exists a subsequence such that
\[
\alpha_{\rho_n}^{\delta_n} \to \alpha^\dagger \quad \text{in } L^2(\varOmega), \qquad
F^{\delta_n}(\alpha_{\rho_n}^{\delta_n}) \rightharpoonup u^\dagger \text{ in } \VV.
\]
\end{proposition}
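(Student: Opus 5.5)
The argument is the standard Tikhonov convergence scheme: compare the minimizer with the exact coefficient $\alpha^\dagger$, use compactness of the finite-dimensional set $\SubsetK$, and identify the limit through Assumption~\ref{ass:exact_solution_in_K}. Write $\alpha_n := \alpha_{\rho_n}^{\delta_n}$ and $u_n := F^{\delta_n}(\alpha_n)$, the solution of \eqref{eq:noisy_forward} with $g^{\delta_n}$.

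\textbf{Step 1 (comparison with $\alpha^\dagger$).} By minimality, $J^{\delta_n}_{\rho_n}(\alpha_n) \le J^{\delta_n}_{\rho_n}(\alpha^\dagger)$. Since $\iu^\dagger = 0$, the stability estimate \eqref{eq:stability_noisy} gives $\norm{\iu^{\delta_n}(\alpha^\dagger)}_{1,\varOmega} \le \norm{u^{\delta_n}(\alpha^\dagger) - u^\dagger}_{1,\varOmega} \le C\delta_n$. Hence
\[
\tfrac12\big(w_0\norm{\Im u_n}_{0,\varOmega}^2 + w_1\norm{\nabla \Im u_n}_{0,\varOmega}^2\big) + \tfrac{\rho_n}{2}\norm{\alpha_n}_{0,\varOmega}^2 \le C\delta_n^2 + \tfrac{\rho_n}{2}\norm{\alpha^\dagger}_{0,\varOmega}^2 .
\]
The right-hand side tends to zero, so $\Im u_n \to 0$ in $\VV$. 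We do not need the usual condition $\delta_n^2/\rho_n \to 0$, because boundedness of $\alpha_n$ comes directly from $\SubsetK \subset \Aad$.

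\textbf{Step 2 (compactness).} The set $\SubsetK$ is finite-dimensional, closed, convex, and bounded in $L^\infty(\varOmega)$ by $\maxa$. It is therefore compact in any norm on its span, in particular in $L^\infty$ and $L^2$. So a subsequence satisfies $\alpha_n \to \bar\alpha \in \SubsetK$ in $L^\infty(\varOmega)$, and hence in $L^2(\varOmega)$. The states $u_n$ are uniformly bounded in $\VV$ by Lemma~\ref{lem:coercivity}, so after a further subsequence $u_n \rightharpoonup \bar u$ weakly in $\VV$.

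\textbf{Step 3 (identification of the limit).} Split
\[
u_n - F(\bar\alpha) = \big(F^{\delta_n}(\alpha_n) - F(\alpha_n)\big) + \big(F(\alpha_n) - F(\bar\alpha)\big).
\]
The first term is $O(\delta_n)$ by \eqref{eq:stability_noisy}. The second is bounded by $C\norm{\alpha_n - \bar\alpha}_{L^\infty(\varOmega)}$ by Proposition~\ref{prop:continuity_operator_F}. Thus $u_n \to F(\bar\alpha)$ strongly in $\VV$, which gives $\bar u = F(\bar\alpha)$. By Step~1 the imaginary part of $F(\bar\alpha)$ vanishes. The uniqueness in Assumption~\ref{ass:exact_solution_in_K} then forces $\bar\alpha = \alpha^\dagger$ and $\bar u = u^\dagger$. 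This yields both claimed convergences, with the state convergence even strong. Since every subsequence has a further subsequence with the same limit, the whole sequence in fact converges.

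\textbf{Main obstacle.} The delicate point is Step~3, namely passing to the limit in the state equation. The splitting and the $L^\infty$ Lipschitz bound handle it cleanly. That bound is available only because $\SubsetK$ is finite-dimensional, which upgrades $L^2$ or weak convergence of $\alpha_n$ to $L^\infty$ convergence. In infinite dimensions one would need $H$-convergence arguments instead. A minor check is that Proposition~\ref{prop:continuity_operator_F} and the constant in \eqref{eq:stability_noisy} are uniform over $\barAad$ and over the data, which Lemma~\ref{lem:coercivity} guarantees.
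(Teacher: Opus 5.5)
Your proof is correct and follows the same scheme as the paper's: compactness of the finite-dimensional set $\SubsetK$, the comparison $J^{\delta_n}_{\rho_n}(\alpha_n)\le J^{\delta_n}_{\rho_n}(\alpha^\dagger)\to 0$, and identification of the limit through the uniqueness in Assumption~\ref{ass:exact_solution_in_K}. The only difference is in the identification step. Your splitting (the noise estimate plus the $L^\infty$-Lipschitz bound of $F$) gives strong convergence of the states in $\VV$, so you read off $\Im\{F(\bar\alpha)\}=0$ directly, whereas the paper passes to the limit in the weak formulation and then invokes weak lower semicontinuity.
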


The convergence above guarantees that, in the vanishing-noise and vanishing-regularization limit, the finite-dimensional Tikhonov minimizers recover the exact diffusion coefficient $\alpha^\dagger$.  
This establishes the well-posedness of the regularized inverse problem in $\SubsetK$.

\subsection{Convergence proofs}\label{subsec:convergence_proofs}

\subsubsection{Proof of existence of a minimizer}
\begin{proof}[Proof of Proposition~\ref{prop:existence_minimizer}]
Let $f$ be the imposed Dirichlet input and $g$ the associated Neumann measurement. These data are assumed fixed throughout the following arguments.

Since $\Je(\alpha) \ge 0$ for all $\alpha \in \SubsetK$, we can define $j_{\rho} := \inf_{\alpha \in \SubsetK} \Je(\alpha) \ge 0$.
Let $\{\alpha_n\} := \{\alpha_n\}_{n\in\mathbb{N}}  \subset \SubsetK$ be a minimizing sequence such that $\Je(\alpha_n) \to j_{\rho}$ as $n \to \infty$.
By definition of $J_\rho$ and the coercivity of the regularization term, we have
\[
J_\rho(\alpha) = J(\alpha) + \regu_\rho(\alpha)
= J(\alpha) + \frac{\rho}{2}\norm{\alpha}_{0,\varOmega}^2 \ge \frac{\rho}{2}\norm{\alpha}_{0,\varOmega}^2,
\]
for any $\alpha \in \SubsetK$, since $J(\alpha) \ge 0$.  
Furthermore, there exists a constant $C>0$ such that $J_\rho(\alpha_n) \le C$ for all $n$.  
Combining the two inequalities gives 
$\norm{\alpha_n}_{0,\varOmega} \le \sqrt{2C/\rho}$, for all $n$.
This shows that $\{\alpha_n\}$ is bounded in $L^2(\varOmega)$.

Since $\SubsetK \subset \Aad \subset L^\infty(\varOmega)$ is finite-dimensional (Assumption~\ref{ass:key_assumption}), all norms on $\SubsetK$ are equivalent and every bounded sequence in $\SubsetK$ is relatively compact.  
Hence, there exists a subsequence (still denoted $\{\alpha_n\}$) and some $\alpharho \in \SubsetK$ such that $\alpha_n \to \alpharho$ strongly in $L^2(\varOmega)$ and in $L^\infty(\varOmega)$.

Let $u_n = F(\alpha_n) \in \VV$ and $u_\rho = F(\alpharho) \in \VV$ denote the corresponding state solutions. By the Lipschitz continuity of $F$ (Proposition~\ref{prop:continuity_operator_F}), we have
\[
\norm{u_n - u_\rho}_{1,\varOmega} \le \frac{C}{c_B^2} \norm{\alpha_n - \alpharho}_{L^\infty(\varOmega)} \to 0.
\]

The functional $\Je$ is continuous with respect to $\alpha \in \SubsetK$ because $J(\alpha)$ depends continuously on $\iu(\alpha)$ and $\nabla \iu(\alpha)$ (via \eqref{eq:modified_CCBM_cost_functional}), and $\regu_\rho(\alpha)$ is continuous. 
Hence, $\Je(\alpharho) = \lim_{n\to\infty} \Je(\alpha_n) = j_{\rho}$.

Thus, $\alpharho \in \SubsetK$ attains the minimum of $\Je$, proving existence.
\end{proof}

\subsubsection{Proof of continuous dependence to the measured data}
\begin{proof}[Proof of Proposition~\ref{prop:continuous_dependence}]
Let $\{g^n\} \subset H^{-1/2}(\varGamma)$ satisfy $\norm{g^n - g^\infty}_{H^{-1/2}(\varGamma)} \to 0$ as $n \to \infty$, and let $u^n(\alpha) := F(\alpha; g^n)$ denote the unique solution of the weak problem
\[
B(u^n(\alpha), v; \alpha) = (Q,v)_{0,\varOmega} + \langle g^n, v \rangle_\varGamma + i \langle f, v \rangle_\varGamma, \quad \forall v \in \VV,
\]
for each $\alpha \in \SubsetK$.  
The corresponding Tikhonov-regularized cost functional is
\[
\Je^n(\alpha) := \frac{1}{2} \Big( w_0 \norm{\iu^n(\alpha)}_{0,\varOmega}^2 + w_1 \norm{\nabla \iu^n(\alpha)}_{0,\varOmega}^2 \Big) + \frac{\rho}{2} \norm{\alpha}_{0,\varOmega}^2.
\]

By Lemma~\ref{lem:coercivity}, the forward map depends continuously on the Neumann data, so for each fixed $\alpha \in \SubsetK$,
\[
\norm{u^n(\alpha) - u^\infty(\alpha)}_{1,\varOmega} \le \frac{C}{c_B} \, \norm{g^n - g^\infty}_{H^{-1/2}(\varGamma)} \longrightarrow 0,
\]
which implies pointwise convergence of the functionals: $\Je^n(\alpha) \to \Je^\infty(\alpha)$.  
Since $\SubsetK \subset L^\infty(\varOmega)$ is finite-dimensional, the sequence $\{\alpharho^n\}$ is uniformly bounded and admits a strongly convergent subsequence in $L^2(\varOmega)$, say $\alpharho^n \to \bar{\alpha} \in \SubsetK$.  
By weak lower semicontinuity and since $\alpharho^n$ minimizes $\Je^n$, i.e., $\Je^n(\alpharho^n) \le \Je^n(\alpharho^\infty)$, we obtain
\[
\Je^\infty(\bar{\alpha}) \le \Je^\infty(\alpharho^\infty).
\]
Strict convexity of $\Je^\infty$ then yields $\bar{\alpha} = \alpharho^\infty$. Hence, the full sequence converges strongly: $\alpharho^n \to \alpharho^\infty$ in $L^2(\varOmega)$.

Finally, since $\{u^n(\alpharho^n)\}$ is bounded in $\VV$, up to a subsequence $u^n(\alpharho^n) \rightharpoonup \bar{u}$ weakly in $\VV$. Passing to the limit in the weak formulation and using $\alpharho^n \to \alpharho^\infty$ yields
\[
	B(\bar{u}, v; \alpharho^\infty) = (Q,v)_{0,\varOmega} + \langle g^\infty, v \rangle_\varGamma + i \langle f, v \rangle_\varGamma,
\]
for all $v \in \VV$, so $\bar{u} = F(\alpharho^\infty; g^\infty)$. Uniqueness of the weak limit then gives
\[
u^n(\alpharho^n) \rightharpoonup u^\infty := F(\alpharho^\infty; g^\infty) \quad \text{in } \VV.
\]
Moreover, by the Lipschitz continuity of $F$ in both $\alpha$ (Proposition~\ref{prop:continuity_operator_F}) and $g$ (Lemma~\ref{lem:coercivity}), one can strengthen the convergence to strong convergence:
\[
\norm{u^n(\alpharho^n) - u^\infty}_{1,\varOmega} \longrightarrow 0 \quad \text{as } n \to \infty.
\]
This completes the proof.
\end{proof}

\subsubsection{Proof of convergence under vanishing noise}
\begin{proof}[Proof of Proposition~\ref{prop:convergence}]
In contrast to Proposition~\ref{prop:continuous_dependence}, here both the measurement data $g^{\delta_n}$ and the regularization parameter $\rho_n$ vary with $n$.  

Let $\{\delta_n\}_{n \in \mathbb{N}}$ be a sequence with $\delta_n \to 0$ and let
$\{\rho_n\}_{n \in \mathbb{N}}$ satisfy $\rho_n = \rho(\delta_n) \to 0$.
For each $n$, denote by 
\[
\alpha^n := \alpha_{\rho_n}^{\delta_n} \in \SubsetK
\]
the minimizer of $\Je^n$ corresponding to the noisy data $g^{\delta_n}$.  
Set 
\[
u^n := F^{\delta_n}(\alpha^n) \in \VV, \qquad \iu^n := \Im\{u^n\}.
\]

Since $\alpha^n \in \SubsetK$, which is finite-dimensional and compact in $L^2(\varOmega)$, there exists a subsequence (not relabeled) and $\bar{\alpha} \in \SubsetK$ such that
\begin{equation}\label{eq:conv_alpha_n}
\alpha^n \to \bar{\alpha} \quad \text{strongly in } L^2(\varOmega).
\end{equation}

By Lemma~\ref{lem:coercivity}, the sequence $\{u^n\}$ is uniformly bounded in $\VV$.  
Hence, there exists $u^\infty \in \VV$ and a subsequence such that
\begin{equation}\label{eq:conv_un}
u^n \rightharpoonup u^\infty \quad \text{weakly in } \VV, 
\qquad 
u^n \to u^\infty \quad \text{strongly in } L^2(\varOmega).
\end{equation}

For each fixed $\alpha \in \SubsetK$, the continuity of the forward map with respect to the data (Lemma~\ref{lem:coercivity} and Proposition~\ref{prop:continuity_operator_F}) ensures that
\[
F^{\delta_n}(\alpha) \to F(\alpha) \quad \text{in } \VV \quad \text{as } \delta_n \to 0.
\]
Combining this with the strong convergence \eqref{eq:conv_alpha_n} and the weak convergence \eqref{eq:conv_un}, we can pass to the limit in the variational formulation of the state equation to conclude
\[
u^\infty = F(\bar{\alpha}).
\]

Next, since $\alpha^n$ minimizes $\Je^n$ over $\SubsetK$, we have
\begin{equation}\label{eq:minimizers_for_each_n}
\Je^n(\alpha^n) \le \Je^n(\alpha^\dagger), \quad \forall n \in \mathbb{N},
\end{equation}
where $\alpha^\dagger \in \SubsetK$ is the exact solution (Assumption~\ref{ass:exact_solution_in_K}).  

Now, note that $\alpha^\dagger$ is exact, so the exact state $F(\alpha^\dagger)$ is real-valued: $\Im\{ F(\alpha^\dagger)\} = 0$.  
By continuity of the forward map with respect to the data and since $\rho_n \to 0$, we have
\begin{equation}\label{eq:limit_of_minimizing_cost_at_exact_alpha}
\Je^n(\alpha^\dagger) 
= \frac12 \Big( w_0 \norm{\Im\{ F^{\delta_n}(\alpha^\dagger) \}}_{0,\varOmega}^2 + w_1 \norm{ \nabla \Im\{ F^{\delta_n}(\alpha^\dagger) \} }_{0,\varOmega}^2 \Big)
+ \frac{\rho_n}{2} \norm{\alpha^\dagger}_{0,\varOmega}^2 \longrightarrow 0.
\end{equation}
By weak lower semicontinuity of the $L^2$- and $H^1$-norms, together with \eqref{eq:minimizers_for_each_n} and \eqref{eq:limit_of_minimizing_cost_at_exact_alpha}, it follows that
\[
\frac12 \Big( w_0 \norm{\Im\{F(\bar{\alpha})\}}_{0,\varOmega}^2 + w_1 \norm{\nabla \Im\{F(\bar{\alpha})\}}_{0,\varOmega}^2 \Big) 
\le \liminf_{n\to\infty} \Je^n(\alpha^n) \le \lim_{n\to\infty} \Je^n(\alpha^\dagger) = 0.
\]
Hence,  $\Im\{F(\bar{\alpha})\} = 0$ in $\varOmega$, so that the pair $(\bar{\alpha}, F(\bar{\alpha}))$ solves the exact inverse problem.  

By uniqueness (Assumption~\ref{ass:exact_solution_in_K}), we conclude $\bar{\alpha} = \alpha^\dagger$, and therefore the full sequence converges strongly in $L^2(\varOmega)$: $\alpha^n = \alpha_{\rho_n}^{\delta_n} \to \alpha^\dagger \quad \text{in } L^2(\varOmega)$.

Finally, since the sequence $\{u^n\}$ is bounded in $\VV$, passing to the limit in the variational formulation of the state equation as above shows
\[
F^{\delta_n}(\alpha_{\rho_n}^{\delta_n}) \rightharpoonup u^\dagger \quad \text{in } \VV.
\]
This completes the proof.
\end{proof}

For regularization parameters $\rho_n \ge \rho_0$, Proposition~\ref{prop:strict_convexity} ensures that the functional $J_{\rho_n}^{\delta_n}$ is strictly convex, so the minimizer $\alpha_{\rho_n}^{\delta_n}$ is unique.  
When $\rho_n$ becomes smaller than the threshold $\rho_0$ from Proposition~\ref{prop:strict_convexity}, strict convexity may no longer hold, and multiple minimizers may exist for a given $n$.
Nonetheless, since $\alpha_{\rho_n}^{\delta_n} \in \SubsetK$, which is finite-dimensional and compact, the sequence is bounded and thus admits a convergent subsequence.  
Crucially, Assumption~\ref{ass:exact_solution_in_K} guarantees that any such limit of convergent subsequences is uniquely determined as the exact solution $\alpha^\dagger$, despite the potential non-uniqueness of finite-$n$ minimizers in \eqref{eq:minimizers_for_each_n}.



\section{Numerical Approximation via Sobolev Gradient Method}\label{sec:grad_descent_algorithm}

We consider the minimization of a general cost functional 
\[
J : \barAad \to \mathbb{R}_{+},
\] 
and solve Problem~\ref{prob:inverse_problem} via a gradient descent method in the Sobolev space $H^1(\varOmega)$.  

To improve stability and control the smoothness of the descent direction, we employ a Sobolev-gradient formulation.  
Let $\mu > 0$ and let $J^{\prime}(\alpha)$ denote the Fréchet derivative of $J$ with respect to $\alpha$.  
Instead of using the standard $L^2$ gradient (i.e., the Riesz representative of $J^{\prime}(\alpha)$ in $L^2(\varOmega)$), we compute the Sobolev gradient as the Riesz representative in the weighted $H^1$ inner product \cite{Neuberger1997}:
\begin{equation}\label{eq:Sobolev_gradient_unreg}
(\nabla_{H^1} J(\alpha), \varphi)_{\mu, \varOmega} = J^{\prime}(\alpha)[\varphi], 
\quad \forall \varphi \in H^1(\varOmega),
\end{equation}
where the inner product is defined by
\begin{equation}\label{eq:mu_weighted_inner_product}
(\varphi, \psi)_{\mu, \varOmega} := \mu \, (\nabla \varphi, \nabla \psi)_{0, \varOmega} + (\varphi, \psi)_{0, \varOmega}.
\end{equation}
In particular, for $\mu = 1$, $(\cdot, \cdot)_{\mu,\varOmega}$ coincides with the standard $H^1$ inner product.  
Choosing a sufficiently large $\mu$ enhances the smoothing effect, which improves stability and mitigates noise amplification in the reconstruction, whereas for $\mu \ll 1$, the influence of the $H^1$-smoothing term is negligible (see Figure~\ref{fig:effect_of_mu} for numerical illustration).

Note that the solution of \eqref{eq:Sobolev_gradient_unreg} depends continuously on $J^{\prime}(\alpha)$ and satisfies the stability estimate
\[
\norm{\nabla_{H^1} J(\alpha)}_{1, \varOmega} 
\le \frac{1}{\min(1,\mu)} \, \norm{J^{\prime}(\alpha)}_{(H^1(\varOmega))^{\ast}},
\]
where $\norm{\cdot}_{(H^1(\varOmega))^{\ast}}$ denotes the dual norm on $H^1(\varOmega)^{\ast}$.  

For $k \in \mathbb{N}$, the Sobolev-gradient descent update reads
\begin{equation}\label{eq:H1_gradient_update_unreg}
\alpha^{[k+1]} = \alpha^{[k]} - t^{[k]} \, \nabla_{H^1} J(\alpha^{[k]}), 
\end{equation}
where $t^{[k]} > 0$ is a step size, either fixed and sufficiently small, or determined via a line search satisfying, for example, the Armijo or Wolfe conditions \cite{NocedalWright2006}.
The iteration may be terminated when a prescribed maximum number of iterations is reached.

\begin{algorithm}[H]
\caption{Gradient Descent in $H^1$ (Unregularized)}
\label{alg:H1_grad_descent_unreg}
\begin{algorithmic}[1]
\Require objective function $J$, initial guess $\alpha^{[0]} \in \barAad$, step size rule, maximum iterations $k_{\max}$
\Ensure approximate stationary point $\alpha^\star$
\State Set $k = 0$
\While{$k \le k_{\max}$}
    \State Solve the state equation for $\alpha^{[k]}$ to obtain $u(\alpha^{[k]})$
    \State Solve the adjoint equation (if applicable) to obtain $p(\alpha^{[k]})$
    \State Compute the $H^1$-Sobolev gradient $\nabla_{H^1} J(\alpha^{[k]})$ using $u(\alpha^{[k]})$ and $p(\alpha^{[k]})$
    \State Determine step size $t^{[k]}$ (fixed or via line search \cite{NocedalWright2006})
    \State Do the update \eqref{eq:H1_gradient_update_unreg}
    \State $k \gets k+1$
\EndWhile
\State \Return $\alpha^{[k]}$ as $\alpha^\star$
\end{algorithmic}
\end{algorithm}

Before proceeding to numerical experiments, we note that the Sobolev-gradient iteration (Algorithm~\ref{alg:H1_grad_descent_unreg}) generates iterates $\{\alpha^{[k]}\}$ such that there exists $k^\star \le k_{\max}$ with
\[
\norm{\nabla_{H^1} J(\alpha^{[k^\star]})}_{1, \varOmega} \le \varepsilon^{\star}
\]
for some $\varepsilon^{\star} > 0$, i.e., the iteration produces an \emph{approximate stationary point} of $J$.

\begin{proposition}\label{prop:grad_descent_convergence_unreg}
Let Assumption~\ref{ass:key_assumption} hold, and let $\alpha^{[0]} \in \SubsetK$.  
Assume the step sizes $t^{[k]} > 0$ satisfy standard descent conditions (e.g., Armijo or Wolfe).  
Then, the iterates $\{\alpha^{[k]}\}$ generated by Algorithm~\ref{alg:H1_grad_descent_unreg} satisfy
\[
J(\alpha^{[k+1]}) \le J(\alpha^{[k]}), \quad \forall k \le k_{\max}.
\]
Moreover, there exists an index $k^\star \le k_{\max}$ such that
\[
\norm{\nabla_{H^1} J(\alpha^{[k^\star]})}_{1, \varOmega} \le \frac{J(\alpha^{[0]}) - \Jmin}{t_{\min} k_{\max}},
\]
where $\Jmin := \min_{\alpha \in \SubsetK} J(\alpha)$ and $t_{\min} := \min_{0 \le k \le k_{\max}} t^{[k]}$.  
\end{proposition}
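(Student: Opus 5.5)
The plan is to run the standard descent-lemma argument for gradient methods, carried out in the Hilbert space $H^1(\varOmega)$ equipped with the weighted inner product $(\cdot,\cdot)_{\mu,\varOmega}$ from \eqref{eq:mu_weighted_inner_product}. Two ingredients from earlier in the paper will be used. The first is Lipschitz continuity of the derivative, in the form of Proposition~\ref{prop:Lipschitz_first_derivative} applied on $\SubsetK$; since $\SubsetK$ is finite-dimensional, all norms are equivalent there, so the $L^\infty$ bound can be transferred to the $(\cdot,\cdot)_{\mu,\varOmega}$-norm with a modified constant $\widetilde L$. The second is the defining identity \eqref{eq:Sobolev_gradient_unreg}. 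Writing $d^{[k]} := \nabla_{H^1} J(\alpha^{[k]})$, this identity gives
\[
J^{\prime}(\alpha^{[k]})[d^{[k]}] = \norm{d^{[k]}}_{\mu,\varOmega}^2 .
\]
Hence $-d^{[k]}$ is a descent direction whenever $d^{[k]}\neq 0$.

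\textbf{Monotonicity.} Apply the mean value theorem along the segment from $\alpha^{[k]}$ to $\alpha^{[k+1]}$, and control the variation of $J^{\prime}$ along that segment with the Lipschitz bound. This yields
\[
J(\alpha^{[k+1]}) \le J(\alpha^{[k]}) - t^{[k]}\Big(1-\tfrac{\widetilde L}{2}t^{[k]}\Big)\norm{d^{[k]}}_{\mu,\varOmega}^2 .
\]
If the step sizes obey the Armijo condition (or are fixed with $t^{[k]}\le 1/\widetilde L$), we get the sufficient-decrease inequality $J(\alpha^{[k]})-J(\alpha^{[k+1]}) \ge c\,t^{[k]}\norm{d^{[k]}}_{\mu,\varOmega}^2$ with some $c\in(0,1)$. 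Monotonicity $J(\alpha^{[k+1]})\le J(\alpha^{[k]})$ follows at once.

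\textbf{Summation.} Sum the sufficient-decrease inequality from $k=0$ to $k_{\max}-1$. The left-hand side telescopes, and since every iterate has $J(\alpha^{[k]})\ge \Jmin$, we obtain
\[
t_{\min}\sum_k \norm{d^{[k]}}^2 \le c^{-1}\bigl(J(\alpha^{[0]})-\Jmin\bigr).
\]
The smallest term of the sum is bounded by the average. Choosing $k^\star$ as the minimizing index therefore gives a bound on $\norm{d^{[k^\star]}}^2$ of the form $(J(\alpha^{[0]})-\Jmin)/(c\,t_{\min}k_{\max})$, and $\norm{\cdot}_{1,\varOmega}$ and $\norm{\cdot}_{\mu,\varOmega}$ are equivalent up to a factor depending only on $\mu$.

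\textbf{Main obstacle.} This bound controls the \emph{squared} gradient norm and carries constants $c^{-1}$ and $\min(1,\mu)^{-1}$, whereas the statement bounds the unsquared norm with no constants. I would reconcile the two by absorbing the constants (normalizing $c$ and $\mu$, or interpreting the inequality up to a generic constant as the paper does elsewhere). I would also use $\norm{d}\le\norm{d}^2$ in the regime $\norm{d}\ge1$. Otherwise the bound would have to be stated with a square root, which is the honest form.

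\textbf{Staying in $\SubsetK$.} A second subtlety is that the iterates must remain in $\SubsetK$, which is needed both for $J\ge\Jmin$ over $\SubsetK$ and for the Lipschitz constants. I would handle this by interpreting the Sobolev gradient as the Riesz representative within the finite-dimensional space spanned by $\SubsetK$. Since $\SubsetK\subset\Aad$ is convex and closed and the decrease keeps $J$ bounded, I would also add the implicit assumption that the step-size rule keeps the iterates admissible, for example by backtracking.
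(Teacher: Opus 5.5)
Your argument is the paper's argument: sufficient decrease from the Armijo condition, the identity $J'(\alpha^{[k]})[d^{[k]}]=\norm{d^{[k]}}_{\mu,\varOmega}^2$, telescoping over $k=0,\dots,k_{\max}-1$, and bounding the minimum by the average. The paper's proof also ends with the \emph{squared} estimate $\norm{\nabla_{H^1}J(\alpha^{[k^\star]})}_{1,\varOmega}^2\le (J(\alpha^{[0]})-\Jmin)/(t_{\min}k_{\max})$. It reaches even that only by writing the decrease as $J(\alpha^{[k+1]})\le J(\alpha^{[k]})-t^{[k]}\norm{\nabla_{H^1}J(\alpha^{[k]})}_{1,\varOmega}^2$, which silently drops the Armijo constant $c$ and the factor $\max\{1,\mu^{-1}\}$ from passing from $\norm{\cdot}_{\mu,\varOmega}$ to $\norm{\cdot}_{1,\varOmega}$. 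Your bookkeeping of these constants is correct. Under the Armijo hypothesis you do not even need the Lipschitz/descent-lemma step: it only guarantees that acceptable steps exist, and $t_{\min}$ is defined from the steps actually taken.

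The genuine problem is your attempt to recover the printed, unsquared and constant-free inequality; neither device works, and no device can. Set $B:=(J(\alpha^{[0]})-\Jmin)/(t_{\min}k_{\max})$. The trick $\norm{d}\le\norm{d}^2$ only covers $B\ge 1$. In the relevant regime $B<1$ (large $k_{\max}$) you only get $\norm{d}\le\sqrt{B}$, and $\sqrt{B}>B$. Absorbing constants fails because the printed bound is not scale-invariant. Replace $J$ by $\lambda J$ (e.g.\ scale $w_0,w_1$ by $\lambda$): every Sobolev gradient is multiplied by $\lambda$, and the steps $t^{[k]}/\lambda$ satisfy the Armijo (and Wolfe) conditions for $\lambda J$ while producing the same iterates. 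The left-hand side then scales like $\lambda$ and the right-hand side like $\lambda^2$, so for small $\lambda$ the inequality fails whenever the gradients along the run are nonzero. The correct conclusion is $\min_k\norm{\nabla_{H^1}J(\alpha^{[k]})}_{1,\varOmega}^2\le c^{-1}\max\{1,\mu^{-1}\}B$, and you should state that rather than the printed form.

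Finally, keep $\alpha^{[k]}\in\SubsetK$ as an explicit hypothesis, or add a projection. Backtracking cannot enforce it when $\alpha^{[k]}$ lies on the relative boundary of $\SubsetK$ and $-d^{[k]}$ points outward. Restricting the Riesz representative to $\operatorname{span}\SubsetK$ changes Algorithm~\ref{alg:H1_grad_descent_unreg}. It also requires $\operatorname{span}\SubsetK\subset H^1(\varOmega)$, which fails for piecewise-constant $\SubsetK$.
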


\begin{proof}
Since $\mathcal{K}$ is finite-dimensional and convex (Assumption~\ref{ass:key_assumption}), the Sobolev gradient $\nabla_{H^1} J(\alpha)$ exists and is uniquely defined for all $\alpha \in \mathcal{K}$ (definition given in equation~\eqref{eq:Sobolev_gradient_unreg}).  

Let $\alpha_1, \alpha_2 \in \mathcal{K}$ and $v \in H^1(\varOmega)$. By \eqref{eq:Sobolev_gradient_unreg}, we have
\[
(\nabla_{H^1} J(\alpha_1) - \nabla_{H^1} J(\alpha_2), v)_{\mu,\varOmega} = J^{\prime}(\alpha_1)[v] - J^{\prime}(\alpha_2)[v].
\]
Using the Lipschitz continuity of the forward map and its derivative (Proposition~\ref{prop:continuity_operator_F} and Lemma~\ref{lem:coercivity}) and uniform boundedness of $u(\alpha)$ in $H^1(\varOmega)$, we obtain 
\[
\abs{J^{\prime}(\alpha_1)[v] - J^{\prime}(\alpha_2)[v]} \le C^{\prime}\, \norm{\alpha_1 - \alpha_2}_{L^\infty(\varOmega)} \, \norm{v}_{1, \varOmega},
\]
for some constant $C^{\prime}>0$. Taking the supremum over $\norm{v}_{1, \varOmega} = 1$ and using norm equivalence on $\mathcal{K}$ yields the Lipschitz continuity of $\nabla_{H^1} J$.  

For the iterates $\{\alpha^{[k]}\}$ of Algorithm~\ref{alg:H1_grad_descent_unreg} with a standard line search (e.g., Armijo), we have the descent inequality
\[
J(\alpha^{[k+1]}) \le J(\alpha^{[k]}) - t^{[k]} \, \norm{\nabla_{H^1} J(\alpha^{[k]})}_{1, \varOmega}^2.
\]
Summation over $k = 0,\dots,k_{\max}-1$ then guarantees the existence of $k^\star$ such that
\[
\norm{\nabla_{H^1} J(\alpha^{[k^\star]})}_{1, \varOmega}^2 \le \frac{J(\alpha^{[0]}) - \Jmin}{t_{\min} k_{\max}},
\]
i.e., $\alpha^{[k^\star]}$ is an approximate stationary point of $J$.
\end{proof}

Incorporating a Tikhonov regularization term \eqref{eq:Tikhonov_regularization_term} further enhances the stability of the reconstruction, particularly in the presence of noisy measurement data. 

The Sobolev-gradient formulation can naturally include this term by replacing $J^{\prime}(\alpha)$ with $\Jeprime(\alpha)$ in equation~\eqref{eq:Sobolev_gradient_unreg}. Alternatively, the Tikhonov term can be added after computing the $H^1$-Riesz representative of $J^{\prime}(\alpha)$, effectively smoothing the descent direction. As a result, the iterates tend to avoid spurious oscillations and converge more robustly toward a stable approximate minimizer. 

By Proposition~\ref{prop:strict_convexity}, for sufficiently large $\rho$, $J_\rho$ is strictly convex, ensuring a unique minimizer and the error estimate
\[
\norm{\alpha^{[k^\star]} - \alpharho}_{1, \varOmega} \le \frac{1}{C} \, \norm{\nabla_{H^1} J_\rho(\alpha^{[k^\star]})}_{1, \varOmega}.
\]

In practice, $\rho$ can be chosen as a fixed parameter (as we do here) or updated heuristically during the iterations to balance data fidelity and regularization. For example, using the \emph{balancing principle} \cite{ClasonJinKunisch2010b}:
\begin{equation}\label{eq:balancing_principle_H1}
({\gamma}-1) J(\alpha) - \rho \regu(\alpha) = 0, \quad {\gamma}>1,
\end{equation}
which balances the data-fidelity term against the regularization term without requiring explicit knowledge of the noise level \cite{ClasonJinKunisch2010,Clason2012}. 
This approach provides robust recovery of the dominant features of $\alpha$ while suppressing noise amplification in the gradient iterations \cite{ItoJinTakeuchi2011}.

Using the unique solution of \eqref{eq:Sobolev_gradient_unreg}, the $H^1$-gradient update in equation~\eqref{eq:H1_gradient_update_unreg} can be modified in two ways.  
The first option incorporates the Tikhonov term directly into the Sobolev gradient:  
\begin{equation}\label{eq:H1_gradient_update_with_Tikhonov}
    \alpha^{[k+1]} = \alpha^{[k]} - t^{[k]} \nabla_{H^1} J_{\rho^{[k]}}(\alpha^{[k]}),
\end{equation}
where the gradient is computed from the fully regularized functional $J_\rho$, so the smoothing effect of the Sobolev inner product applies to both the data-fidelity and regularization terms.  

Alternatively, the Tikhonov contribution can be added separately after computing the Sobolev gradient of the unregularized functional:
\begin{equation}\label{eq:H1_gradient_update_plus_Tikhonov}
    \alpha^{[k+1]} = \alpha^{[k]} - t^{[k]} \nabla_{H^1} J(\alpha^{[k]}) + t^{[k]} \rho^{[k]} \regu'(\alpha^{[k]}),
\end{equation}
where only the data term is smoothed in $H^1$, and the regularization term is added in its “raw” form.  

Note that when $\mu$ is set to zero in \eqref{eq:Sobolev_gradient_unreg}, \eqref{eq:H1_gradient_update_plus_Tikhonov} reduces to the conventional $L^2$-gradient update with Tikhonov regularization:
\begin{equation}\label{eq:conventional_update_plus_Tikhonov}
    \alpha^{[k+1]} = \alpha^{[k]} - t^{[k]} J^{\prime}(\alpha^{[k]}) + t^{[k]} \rho^{[k]} \regu'(\alpha^{[k]}).
\end{equation}

In our numerical experiments, we focus on the application of \eqref{eq:H1_gradient_update_plus_Tikhonov} and \eqref{eq:conventional_update_plus_Tikhonov}.

%
\begin{remark}
Even if the regularization parameter $\rho$ is small or vanishes, so that $\Je$ may lose strict convexity (Proposition~\ref{prop:strict_convexity}), the Sobolev-gradient iterates $\alpha^{[k]} \in \SubsetK$ remain bounded.  
By compactness of $\SubsetK$ (Assumption~\ref{ass:key_assumption}), convergent subsequences exist, and any limit of such subsequences can be uniquely identified as the exact solution $\alpha^\dagger$ under Assumption~\ref{ass:exact_solution_in_K}, in analogy with Proposition~\ref{prop:convergence}.
\end{remark}

\begin{remark}\label{rem:coercivity_condition_for_the_Sobolev_gradient}
Assume that the Sobolev gradient $\nabla_{H^1} J : \mathcal{K} \to H^1(\varOmega)$ is 
\emph{strongly monotone}, i.e.,
\[
(\nabla_{H^1} J(\alpha) - \nabla_{H^1} J(\beta), \alpha - \beta)_{1, \varOmega}
\ge \kappa \, \norm{\alpha - \beta}_{1, \varOmega}^2,
\quad \forall \alpha,\beta \in \mathcal{K},
\]
for some $\kappa > 0$, and Lipschitz continuous with constant $L_S > 0$, i.e.,
\[
\norm{\nabla_{H^1} J(\alpha) - \nabla_{H^1} J(\beta)}_{1, \varOmega}
\le L_S \, \norm{\alpha - \beta}_{1, \varOmega}.
\]

Under these conditions, $J$ admits a unique minimizer $\alpha^\star \in \mathcal{K}$ (cf. Proposition~\ref{prop:existence_minimizer}; see also, e.g., \cite[Ch.~V, Sec.~2]{Glowinski1984}).  
Moreover, Algorithm~\ref{alg:H1_grad_descent_unreg} with a sufficiently small constant step size $t>0$ (e.g., $0 < t \le 1/L_S$) generates a sequence $\{\alpha^{[k]}\}$ satisfying
\[
\norm{\alpha^{[k]} - \alpha^\star}_{1, \varOmega}
\le (1 - t \, \kappa)^k \, \norm{\alpha^{[0]} - \alpha^\star}_{1, \varOmega},
\quad \forall k \in \mathbb{N}.
\]
In particular, the Sobolev-gradient iteration converges \emph{linearly} in $H^1(\varOmega)$ to the unique minimizer $\alpha^\star$.
\end{remark}

The following result summarizes the effect of the Sobolev gradient and the weighting parameter $\mu$ on the stability and smoothness of the descent direction, providing a theoretical justification for its use in the numerical reconstruction.

\begin{proposition}\label{prop:strong_monotonicity_of_the_Sobolev_gradient}
Let Assumption~\ref{ass:key_assumption} hold, so that $\SubsetK \subset \Aad \subset L^\infty(\varOmega)$ is finite-dimensional, closed, and convex.  
Let $J: H^1(\varOmega) \to \mathbb{R}$ be Fr\'echet differentiable, and assume $J^{\prime}(\alpha)$ is \emph{strongly monotone in $L^2(\varOmega)$} over $\SubsetK$, i.e., there exists $C > 0$ such that
\[
(J^{\prime}(\alpha_1) - J^{\prime}(\alpha_2), \alpha_1 - \alpha_2)_{0, \varOmega} 
\ge C \, \norm{\alpha_1 - \alpha_2}_{0, \varOmega}^2, 
\quad \forall \alpha_1, \alpha_2 \in \SubsetK.
\]
Let $\nabla_{H^1} J(\alpha)$ denote the Sobolev gradient defined via the $\mu$-weighted inner product \eqref{eq:mu_weighted_inner_product}.
Then, there exists a constant $C_{\SubsetK} > 0$ (depending only on the finite-dimensional set $\SubsetK$) such that
\begin{equation}\label{eq:strong_monotonicity}
(\nabla_{H^1} J(\alpha_1) - \nabla_{H^1} J(\alpha_2), \alpha_1 - \alpha_2)_{\mu,\varOmega} 
\ge \frac{\min\{C, \mu\}}{C_{\SubsetK}} \, \norm{\alpha_1 - \alpha_2}_{1, \varOmega}^2, 
\quad \forall \alpha_1, \alpha_2 \in \SubsetK.
\end{equation}
In particular, the Sobolev gradient is strongly monotone over $\SubsetK$, and the monotonicity constant scales with $\mu$, providing enhanced smoothing and stability of the descent direction.
\end{proposition}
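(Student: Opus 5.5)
The plan is to use the defining relation \eqref{eq:Sobolev_gradient_unreg} to convert the $\mu$-weighted pairing of Sobolev gradients into the $L^2$ pairing of Fréchet derivatives, where the assumed monotonicity applies. Then norm equivalence on the finite-dimensional set $\SubsetK$ upgrades the $L^2$ lower bound to an $H^1$ lower bound.

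\textbf{Step 1 (reduction).} Fix $\alpha_1,\alpha_2 \in \SubsetK$ and set $\dela := \alpha_1 - \alpha_2$. Since $\SubsetK$ is a finite-dimensional subset of $L^\infty(\varOmega)$, I will assume (as a standing reading of Assumption~\ref{ass:key_assumption}) that its elements lie in $H^1(\varOmega)$, so that $\dela$ is an admissible test function in \eqref{eq:Sobolev_gradient_unreg}. This holds for the polynomial and trigonometric examples. The piecewise-constant case needs either a smoothed basis or an interpretation of $\norm{\cdot}_{1,\varOmega}$ restricted to $\SubsetK$, and I would note this caveat explicitly. Testing \eqref{eq:Sobolev_gradient_unreg} at $\alpha_1$ and at $\alpha_2$ with $\varphi = \dela$ and subtracting gives
\[
(\nabla_{H^1} J(\alpha_1) - \nabla_{H^1} J(\alpha_2), \dela)_{\mu,\varOmega} = J^{\prime}(\alpha_1)[\dela] - J^{\prime}(\alpha_2)[\dela] = (J^{\prime}(\alpha_1) - J^{\prime}(\alpha_2), \dela)_{0,\varOmega},
\]
where $J'$ is identified with its $L^2$ Riesz representative, as in the hypothesis. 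The strong monotonicity hypothesis then bounds the right-hand side below by $C\norm{\dela}_{0,\varOmega}^2$.

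\textbf{Step 2 (norm equivalence).} The differences $\dela$ lie in $\operatorname{span}(\SubsetK - \SubsetK)$, a finite-dimensional subspace of $H^1(\varOmega)$. On that subspace all norms are equivalent, so there is $C_{\SubsetK}\ge 1$ with $\norm{\dela}_{1,\varOmega}^2 \le C_{\SubsetK}\norm{\dela}_{0,\varOmega}^2$; this constant depends only on $\SubsetK$. Hence the pairing is at least $(C/C_{\SubsetK})\norm{\dela}_{1,\varOmega}^2 \ge (\min\{C,\mu\}/C_{\SubsetK})\norm{\dela}_{1,\varOmega}^2$, which is \eqref{eq:strong_monotonicity}. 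If one wants the $\mu$-dependence to appear naturally rather than as a weakening, the same argument works with the $\mu$-norm: use $\norm{\dela}_{\mu,\varOmega}^2 \ge \min\{1,\mu\}\norm{\dela}_{1,\varOmega}^2$ and $\norm{\dela}_{\mu,\varOmega}^2\le \max\{1,\mu\} C_{\SubsetK} \norm{\dela}_{0,\varOmega}^2$, then absorb the constants into $C_{\SubsetK}$.

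\textbf{Main obstacle.} The delicate point is the claim that the constant ``scales with $\mu$''. Step 1 shows the $\mu$-pairing equals an $L^2$ quantity independent of $\mu$, so the bound $\min\{C,\mu\}$ is valid only as a weakening. I would state this honestly rather than try to extract genuine $\mu$-enhancement. The remaining technical issue is the $H^1$-membership of elements of $\SubsetK$ needed to test with $\dela$, which I would settle by the assumption noted in Step 1.
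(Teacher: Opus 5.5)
Your proposal is correct and follows the paper's own argument: test \eqref{eq:Sobolev_gradient_unreg} with $\alpha_1-\alpha_2$, apply the $L^2$ strong monotonicity, and pass to the $H^1$-norm by norm equivalence in finite dimensions. Your direct bound $\norm{\dela}_{1,\varOmega}^2 \le C_{\SubsetK}\norm{\dela}_{0,\varOmega}^2$ on $\operatorname{span}(\SubsetK-\SubsetK)$ is precisely the link that the paper's ``combining these estimates'' needs, since its displayed comparisons relate only $\norm{\cdot}_{1,\varOmega}$ and $\norm{\cdot}_{\mu,\varOmega}$. Your caveats about $H^1$-membership of piecewise constants and about the $\mu$-independence of the left-hand side are apt; just do not absorb $\max\{1,\mu\}$ into $C_{\SubsetK}$ in your optional variant, because that would make the constant depend on $\mu$.
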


\begin{proof}
Let $\alpha_1, \alpha_2 \in \SubsetK$. By definition of the Sobolev gradient (equation~\eqref{eq:Sobolev_gradient_unreg}), for any $v \in H^1(\varOmega)$ we have
\[
(\nabla_{H^1} J(\alpha_1) - \nabla_{H^1} J(\alpha_2), v)_{\mu,\varOmega} 
= J^{\prime}(\alpha_1)[v] - J^{\prime}(\alpha_2)[v].
\]

Choosing $v = \alpha_1 - \alpha_2 \in \SubsetK \subset H^1(\varOmega)$ gives
\begin{align*}
(\nabla_{H^1} J(\alpha_1) - \nabla_{H^1} J(\alpha_2), \alpha_1 - \alpha_2)_{\mu,\varOmega} 
	&= J^{\prime}(\alpha_1)[\alpha_1 - \alpha_2] - J^{\prime}(\alpha_2)[\alpha_1 - \alpha_2]\\
	&= (J^{\prime}(\alpha_1) - J^{\prime}(\alpha_2), \alpha_1 - \alpha_2)_{0, \varOmega}.
\end{align*}
By the assumption of $L^2$ strong monotonicity over $\SubsetK$, we have
\[
(J^{\prime}(\alpha_1) - J^{\prime}(\alpha_2), \alpha_1 - \alpha_2)_{0, \varOmega} \ge C \, \norm{\alpha_1 - \alpha_2}_{0, \varOmega}^2.
\]

Next, since $\SubsetK$ is finite-dimensional, all norms on $\SubsetK$ are equivalent. In particular, there exists a constant $C_{\SubsetK} \ge 1$ such that
\[
\norm{\alpha}_{1, \varOmega}^2 = \norm{\alpha}_{0, \varOmega}^2 + \norm{\nabla \alpha}_{0, \varOmega}^2
\le C_{\SubsetK} \left( \norm{\alpha}_{0, \varOmega}^2 + \mu \norm{\nabla \alpha}_{0, \varOmega}^2 \right)
= C_{\SubsetK} \, \norm{\alpha}_{\mu, \varOmega}^2,
\]
where $\norm{\alpha}_{\mu, \varOmega}^2 := (\alpha, \alpha)_{\mu,\varOmega} = \norm{\alpha}_{0, \varOmega}^2 + \mu \norm{\nabla \alpha}_{0, \varOmega}^2$.
Similarly, we have
\[
\norm{\alpha}_{\mu, \varOmega}^2 \ge \min\{1, \mu\} \, \norm{\alpha}_{1, \varOmega}^2.
\]
Combining these estimates leads to our desired condition \eqref{eq:strong_monotonicity}.
%
\end{proof}

Although obvious, it must be observed from Propositions~\ref{prop:strong_monotonicity_of_the_Sobolev_gradient} that over the finite-dimensional admissible set $\SubsetK$, increasing $\mu$ strengthens the $H^1$-smoothing term, which improves stability and suppresses high-frequency oscillations in the gradient.  
Very large $\mu$ enhances smoothness but may slow convergence, while very small $\mu$ reduces smoothing and can amplify noise.  
The strong monotonicity over $\SubsetK$ guarantees that Sobolev-gradient updates converge stably toward a unique minimizer of the functional restricted to $\SubsetK$.


\section{Numerical Experiments} \label{sec:numerical_experiments}
\subsection{Reconstruction of smooth diffusion} \label{subsec:experiments_smooth_diffusion}
%
%
For the initial set of tests, we consider the domain $\varOmega = C(0,1)$, the unit circle centered at the origin, with $\alpha^{\star} = 1.0 + 0.5\,xy$, $\alpha^{[0]} = 1$, $b = 0$, $c = 1$, and $Q = 1$.
The Cauchy pair is generated synthetically. 
The forward problem is solved on a fine mesh using $P_{2}$ finite elements to generate high-resolution synthetic data with Neumann boundary condition $g = 1$. The corresponding Dirichlet data $f$ is then computed from \eqref{eq:state_un} using $\alpha = \alpha^{\star}$.
To avoid inverse crimes (see \cite[p.~154]{ColtonKress2013}), the inverse problem is solved on a coarser mesh, and all variational equations in the inversion process are discretized using $P_{1}$ finite elements. Computations are carried out in {\sc FreeFem++} \cite{Hecht2012} on a MacBook Pro equipped with an Apple M1 chip and 16\,GB RAM.

To assess robustness against noise, we define $u^\delta = (1 + \delta{}\mathrm{g.n.}){}u^{\star}$, where ``g.n.'' denotes Gaussian noise with zero mean and standard deviation $\norm{u^{\star}}_{L^{\infty}(\varOmega)}$, 
and take the noisy boundary measurement as $f|_{\varSigma} = u^\delta|_{\varSigma}$.

We introduce the abbreviations used in the numerical experiments. 
CCBM denotes the \emph{coupled boundary measurement} formulation, KV the \emph{Kohn--Vogelius} functional, TD the \emph{tracking of Dirichlet data}, and TN the \emph{tracking of Neumann data}. 
These formulations differ in the boundary data entering the objective functional and, consequently, in their sensitivity to noise and regularization.
\subsubsection{Effect of $H^{1}$ gradient smoothing on the reconstruction under exact data}
Figure~\ref{fig:effect_of_mu} compares the exact diffusion coefficient with reconstructions obtained using the CCBM, KV, TD, and TN formulations for different values of the $H^1$ smoothing parameter $\mu$. Without smoothing, all methods recover the overall shape of the coefficient, but oscillatory artifacts are evident, particularly for the TN approach. Introducing $H^1$ smoothing significantly improves stability, and for moderate values of $\mu$, the CCBM, KV, and TD methods yield accurate and smooth reconstructions, whereas the TN formulation remains more sensitive to small $\mu$ and exhibits persistent oscillations.

Figure~\ref{fig:effect_of_mu_cost_and_gradient} shows the corresponding histories of the cost functional and gradient norm. In all cases, the cost decreases monotonically, indicating stable optimization. Smaller values of $\mu$ lead to faster initial decay but slower and less regular gradient norm reduction, while larger $\mu$ produces smoother and more consistent convergence. These trends are consistent with the reconstruction quality observed in Figure~\ref{fig:effect_of_mu} and highlight the stabilizing role of $H^1$ smoothing.

\begin{figure}[htp!]
\centering
\resizebox{0.3\textwidth}{!}{\includegraphics{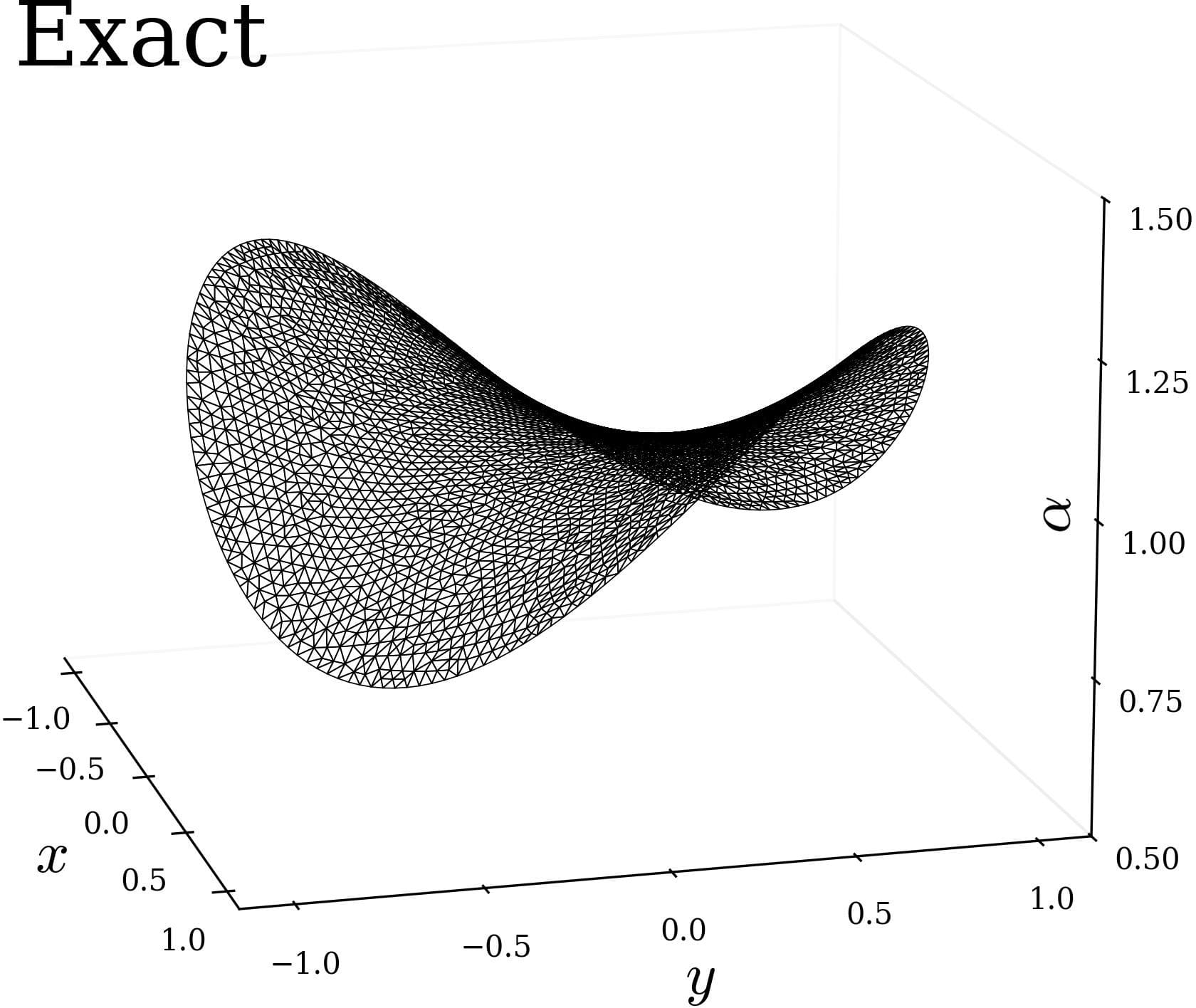}} \\[1em]
\resizebox{0.225\textwidth}{!}{\includegraphics{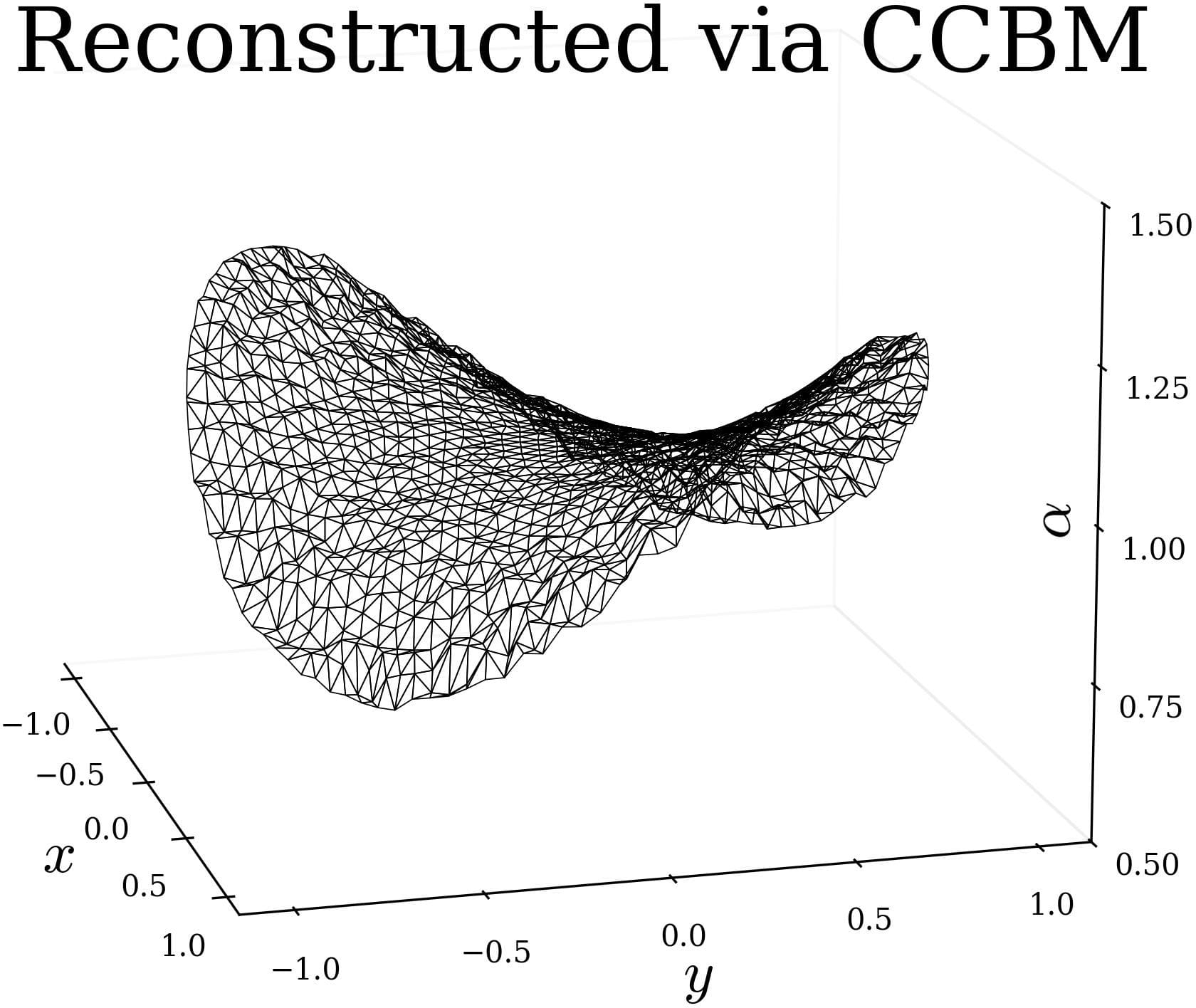}} \ 
\resizebox{0.225\textwidth}{!}{\includegraphics{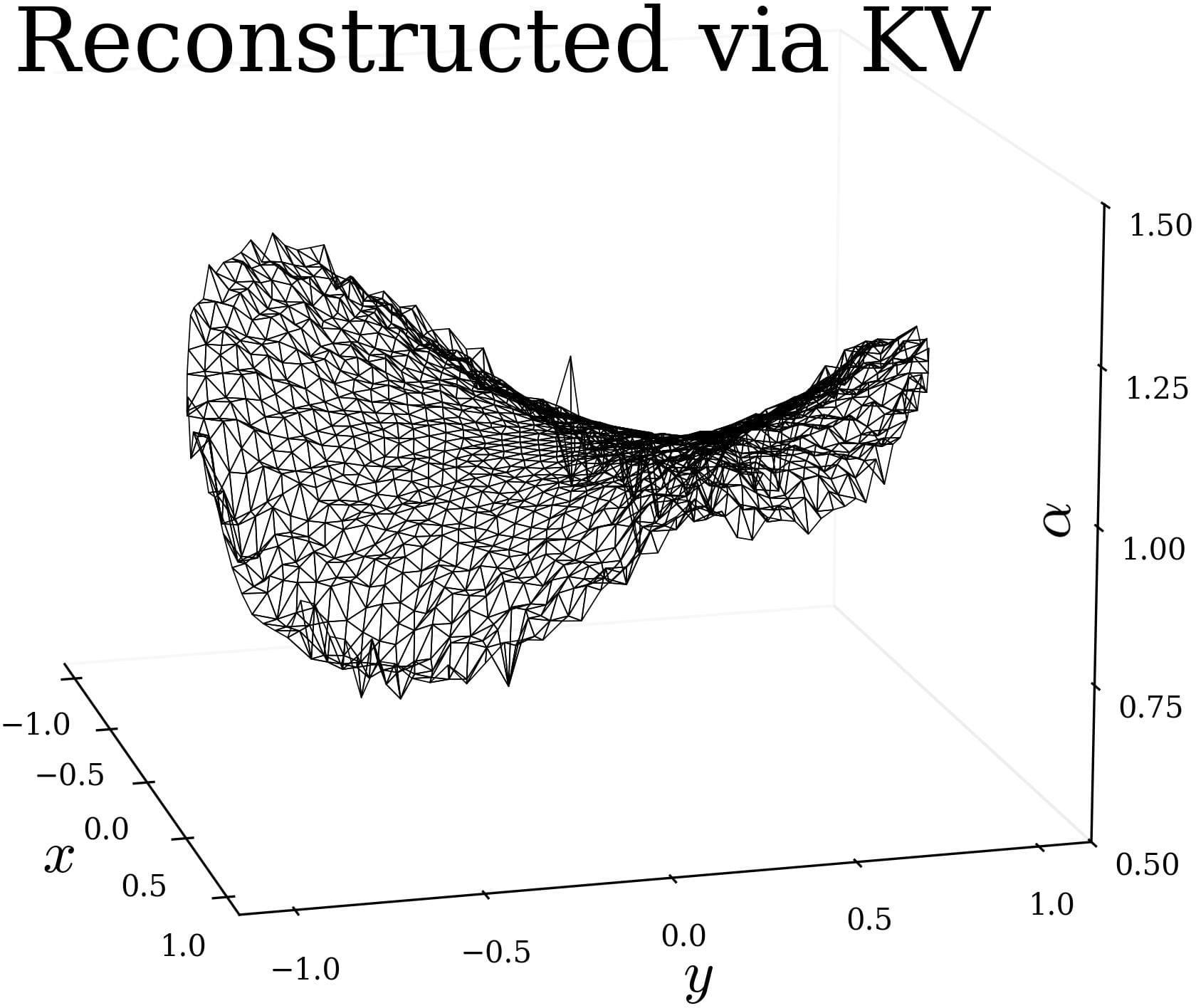}} \ 
\resizebox{0.225\textwidth}{!}{\includegraphics{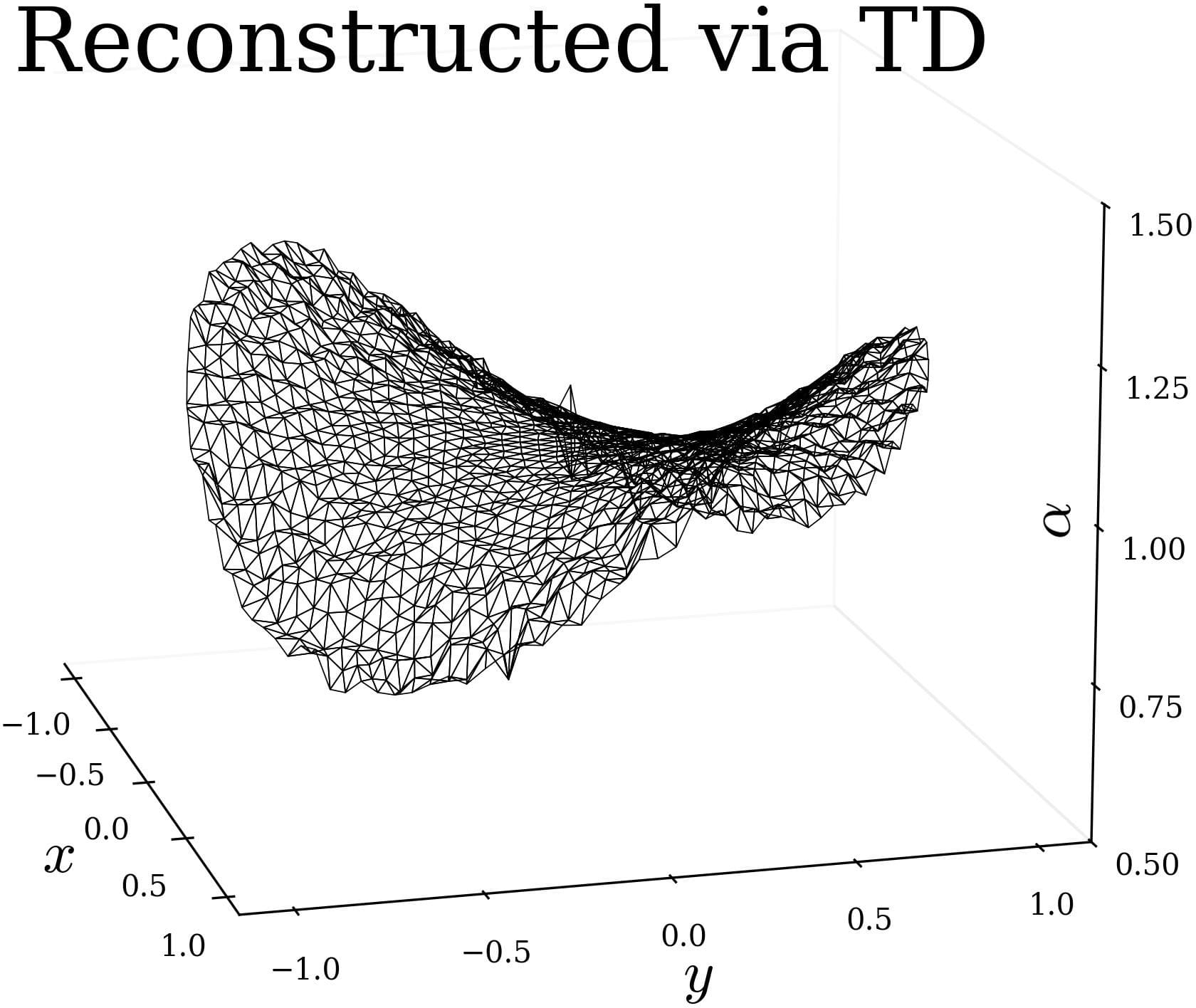}} \ 
\resizebox{0.225\textwidth}{!}{\includegraphics{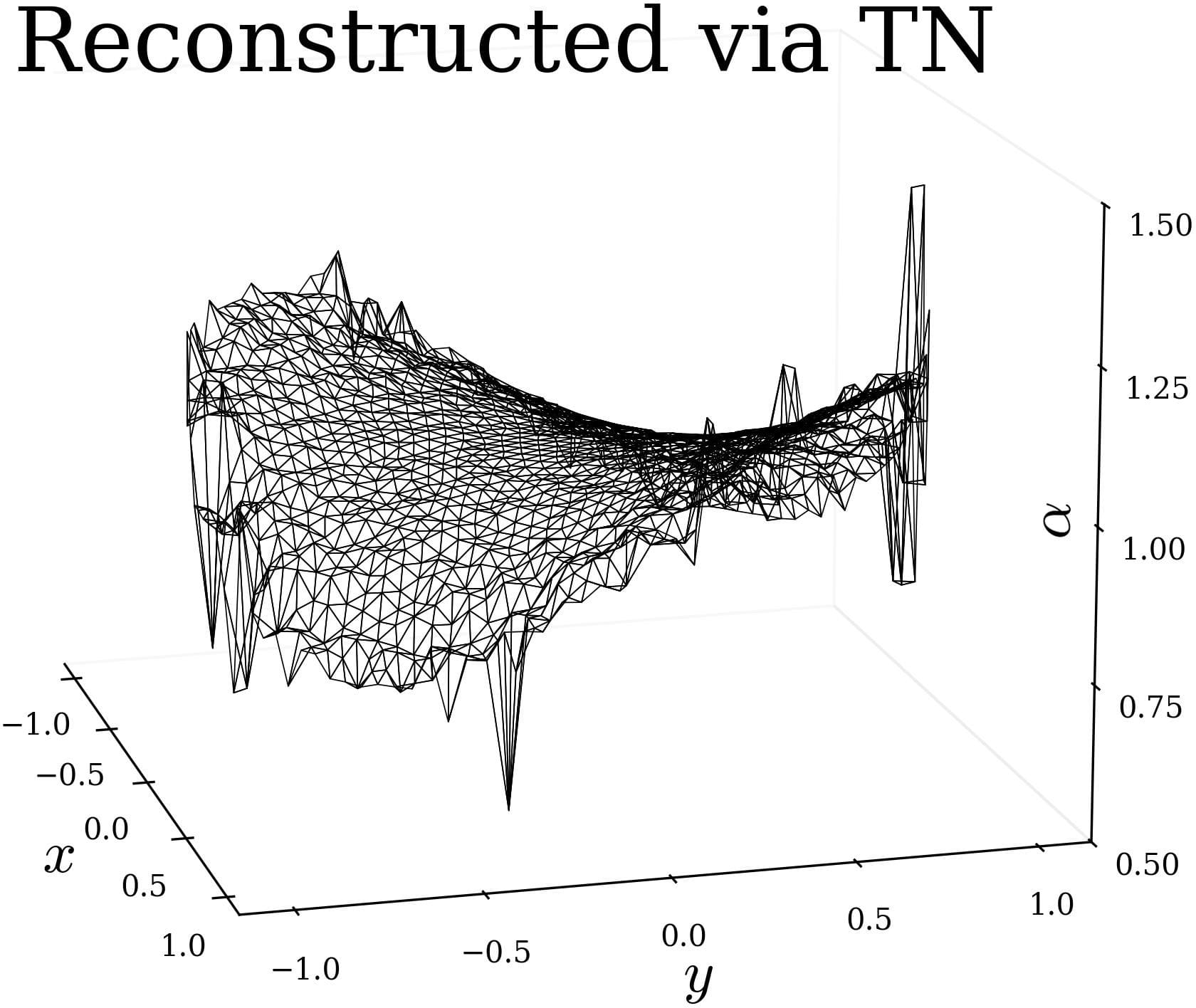}}\\ [1em]
\resizebox{0.225\textwidth}{!}{\includegraphics{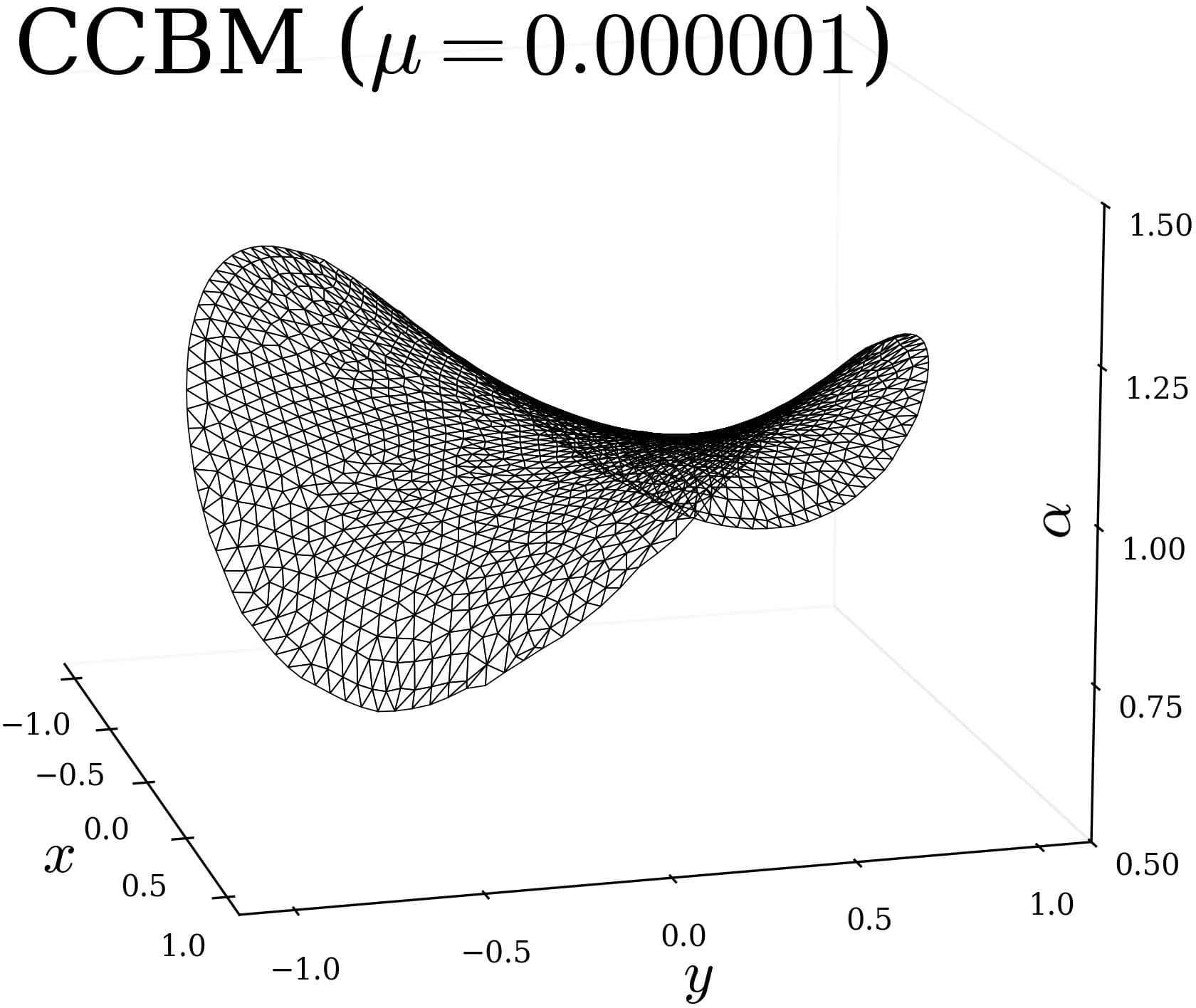}} \ 
\resizebox{0.225\textwidth}{!}{\includegraphics{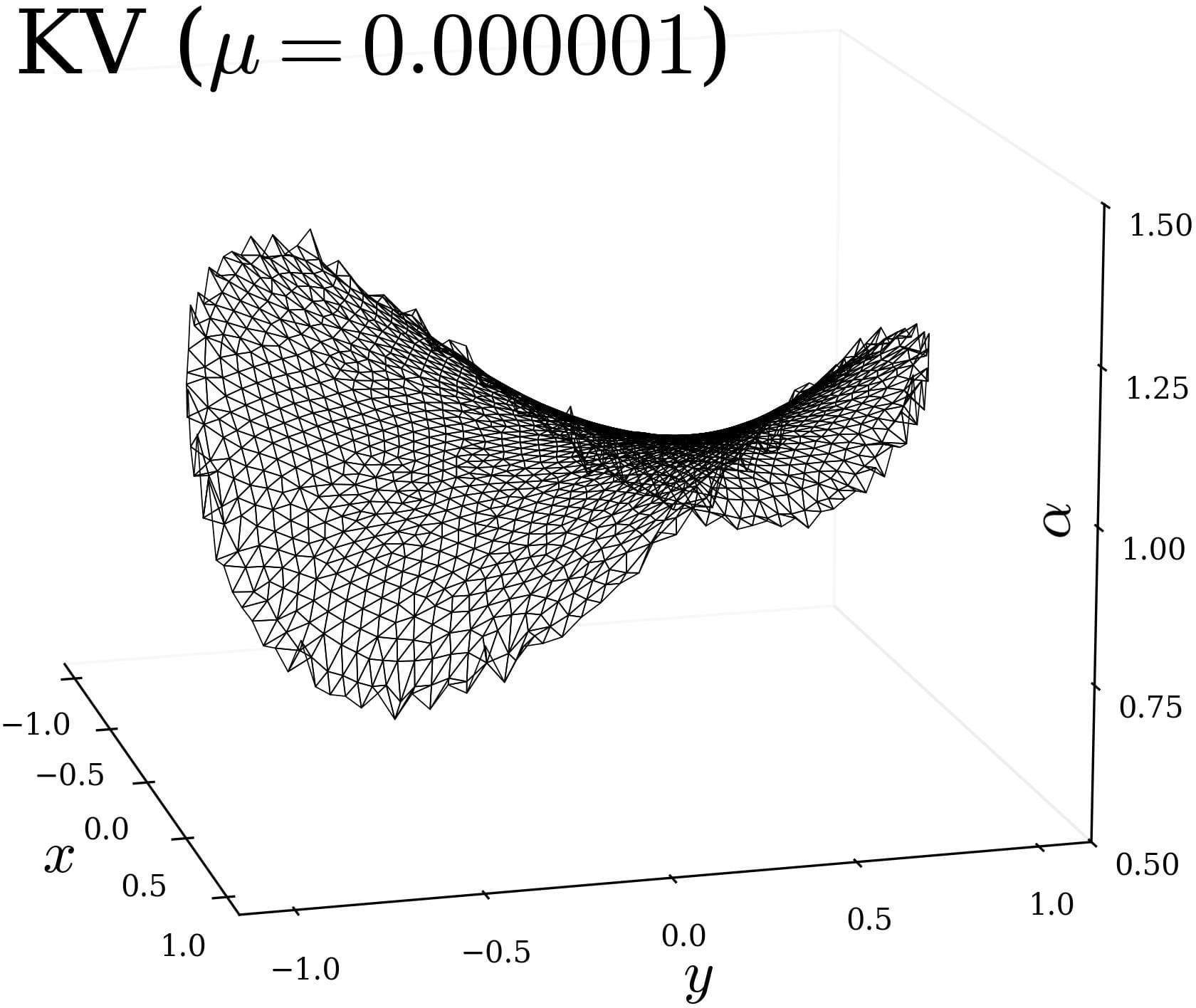}} \ 
\resizebox{0.225\textwidth}{!}{\includegraphics{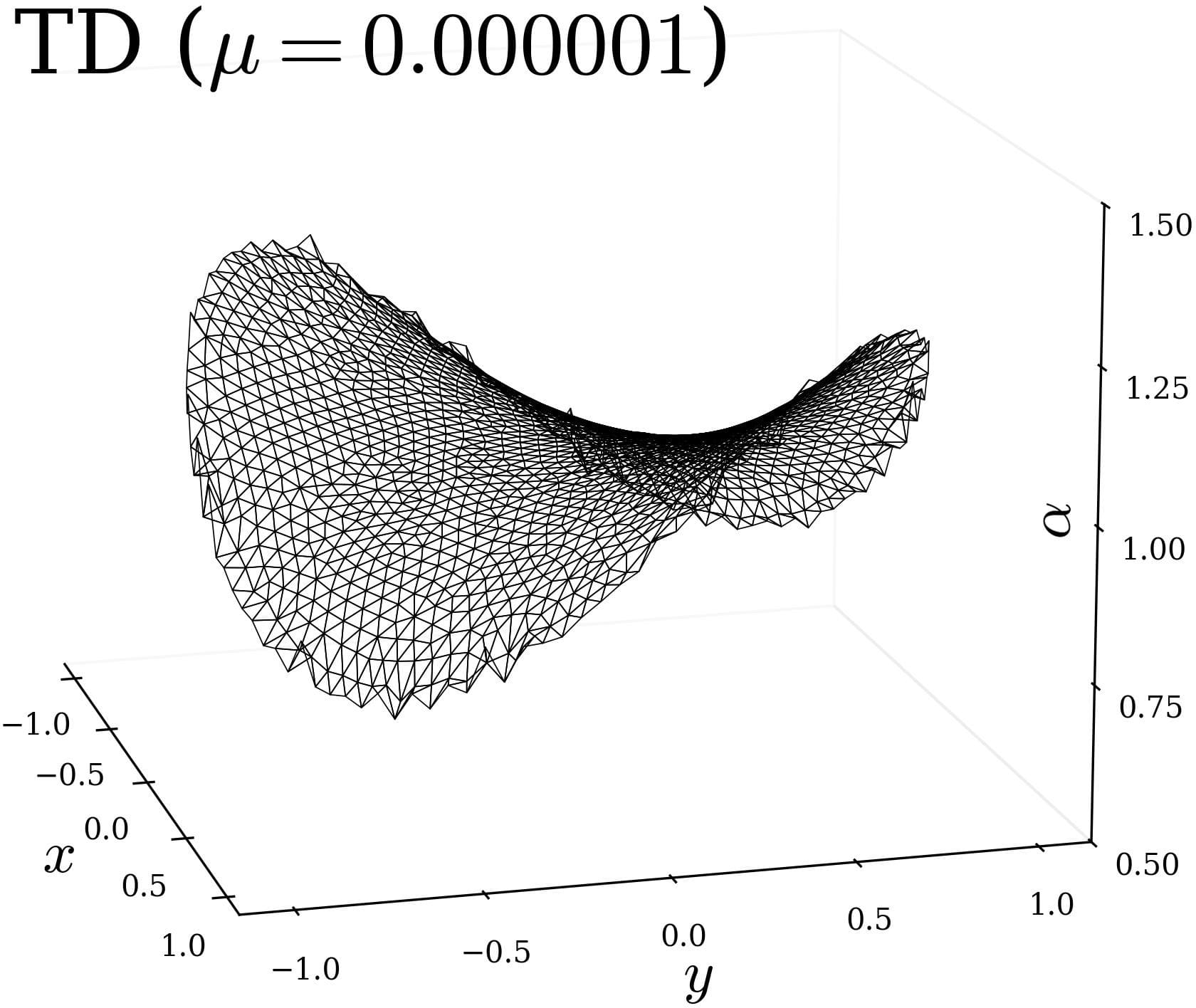}} \ 
\resizebox{0.225\textwidth}{!}{\includegraphics{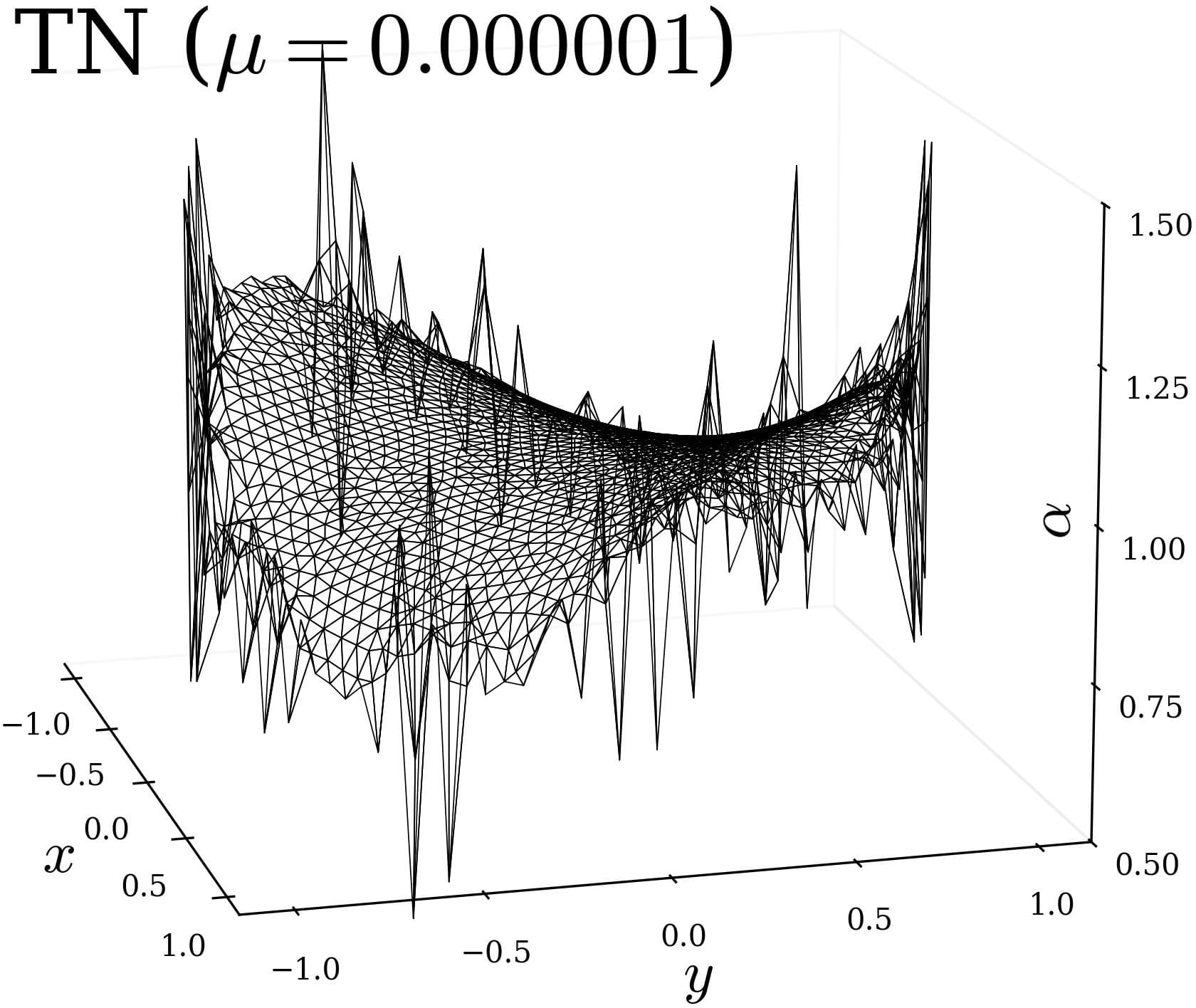}}\\[1em]
\resizebox{0.225\textwidth}{!}{\includegraphics{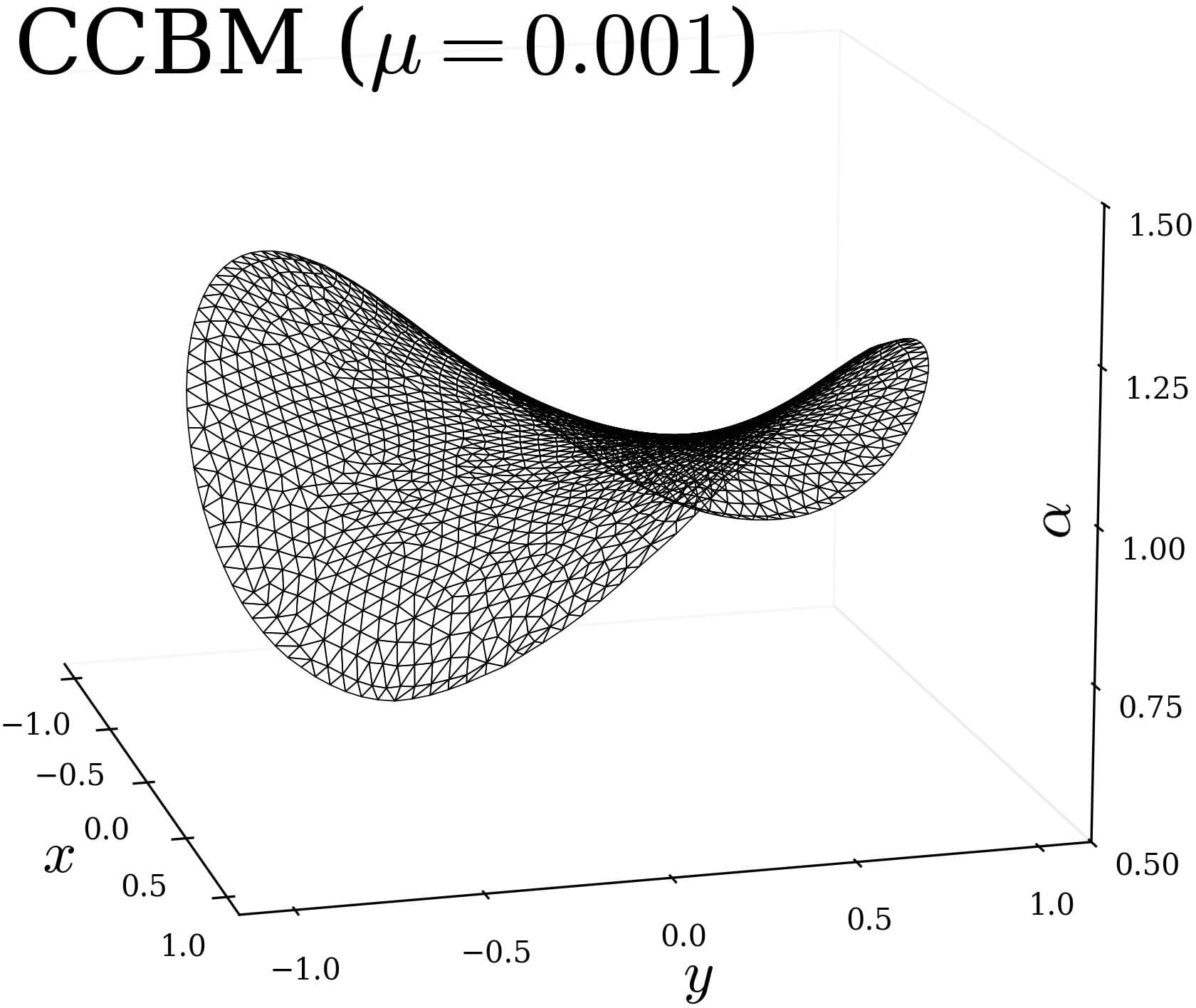}} \ 
\resizebox{0.225\textwidth}{!}{\includegraphics{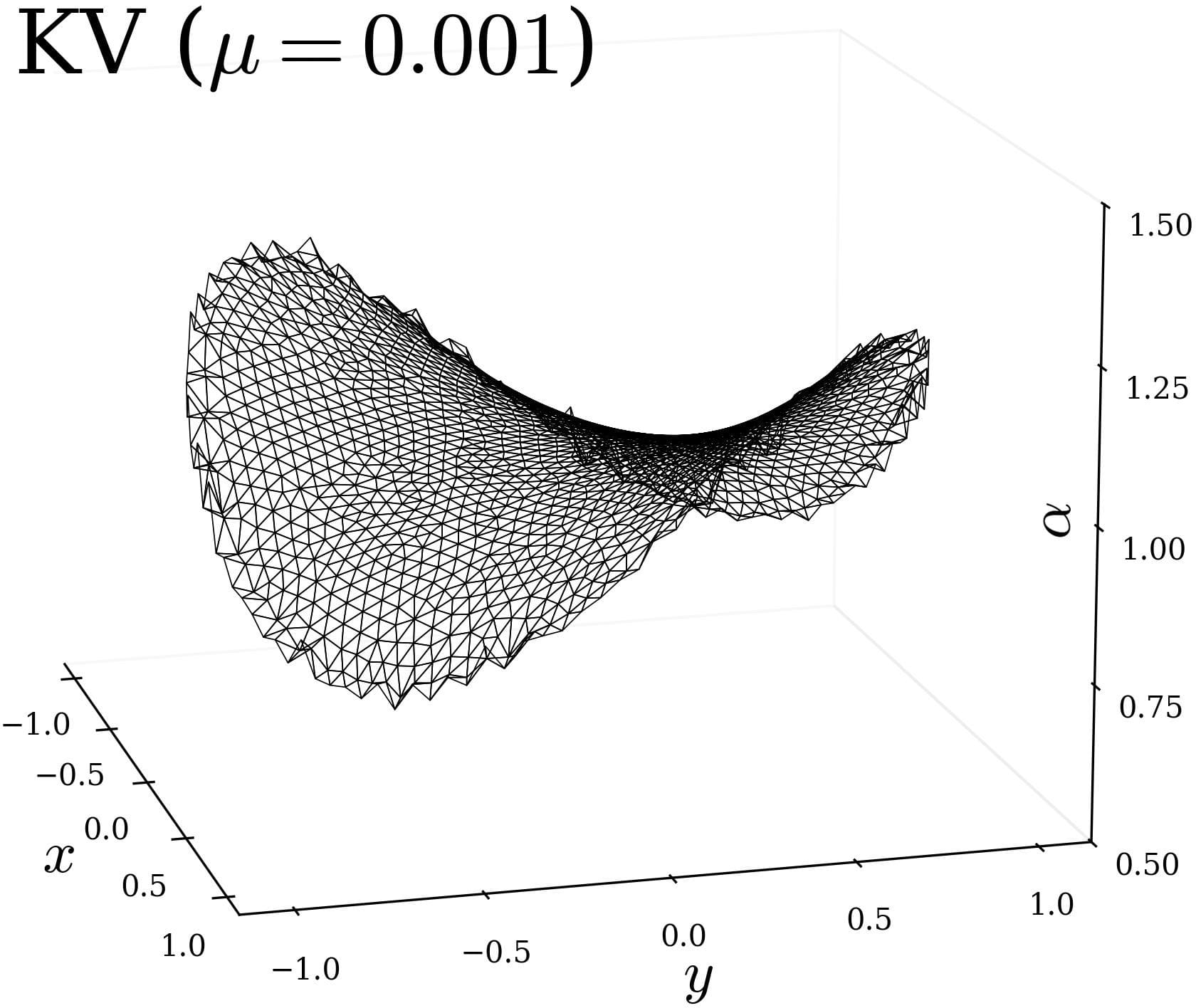}} \ 
\resizebox{0.225\textwidth}{!}{\includegraphics{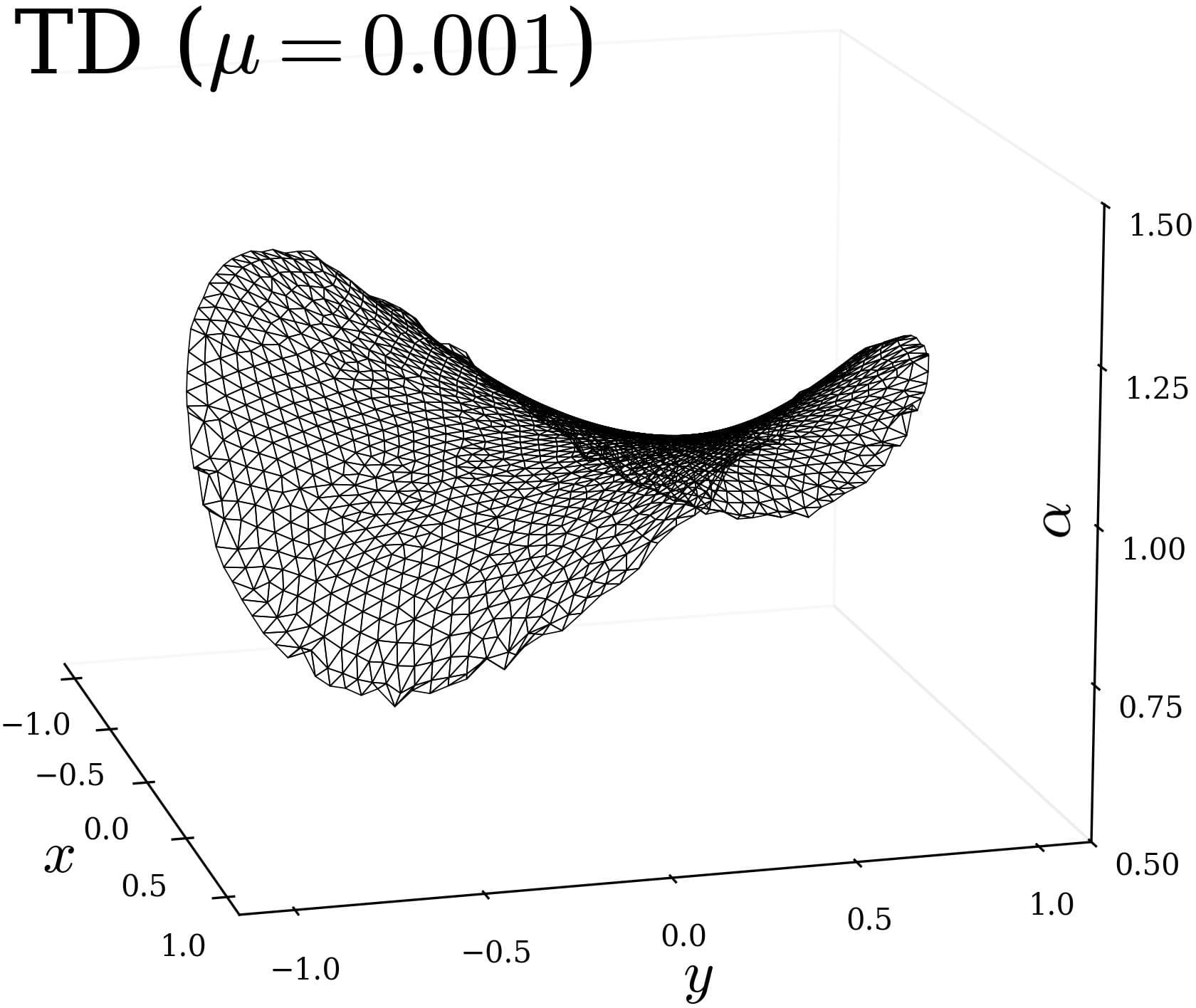}} \ 
\resizebox{0.225\textwidth}{!}{\includegraphics{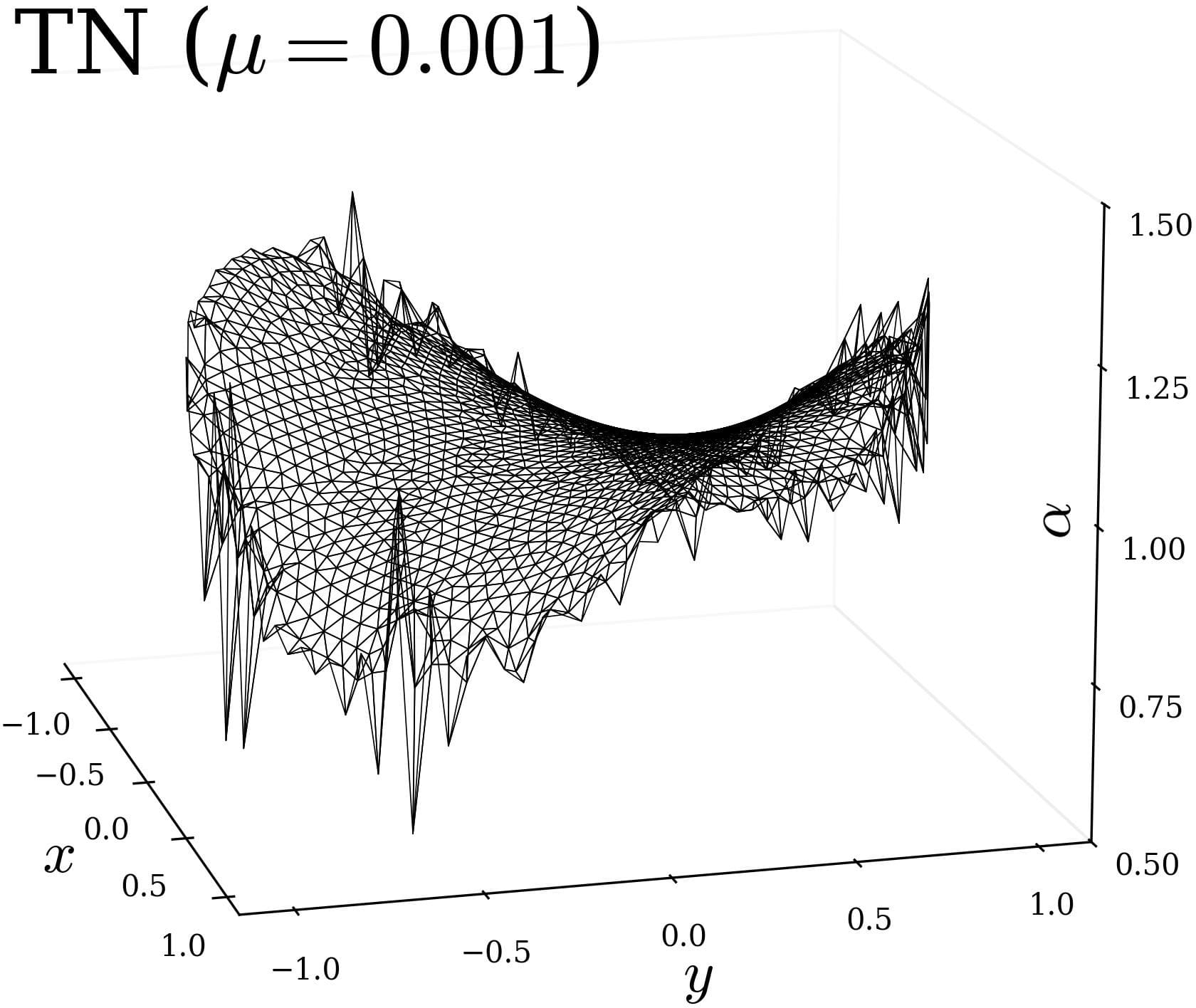}}  \\[1em]
\resizebox{0.225\textwidth}{!}{\includegraphics{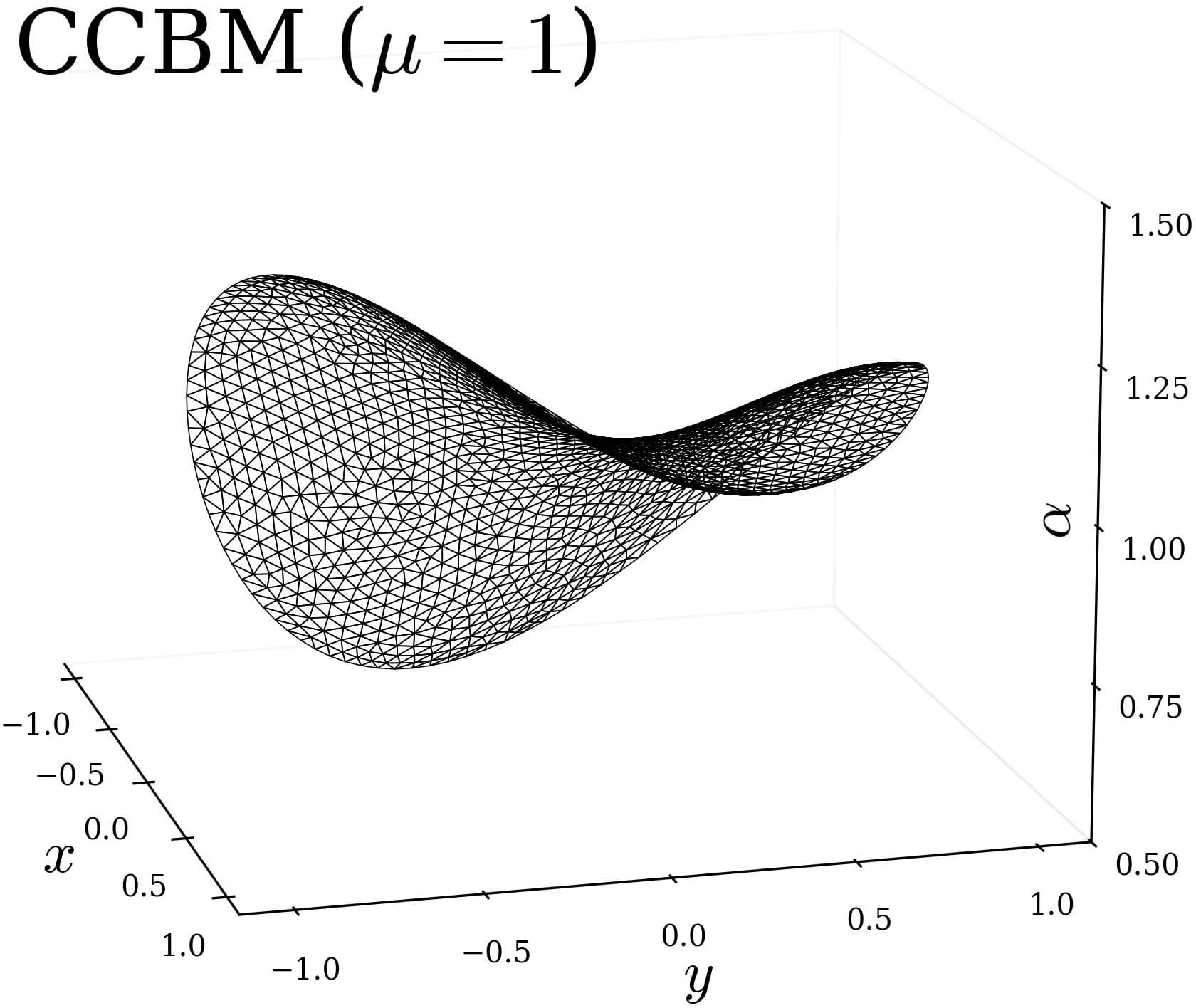}} \ 
\resizebox{0.225\textwidth}{!}{\includegraphics{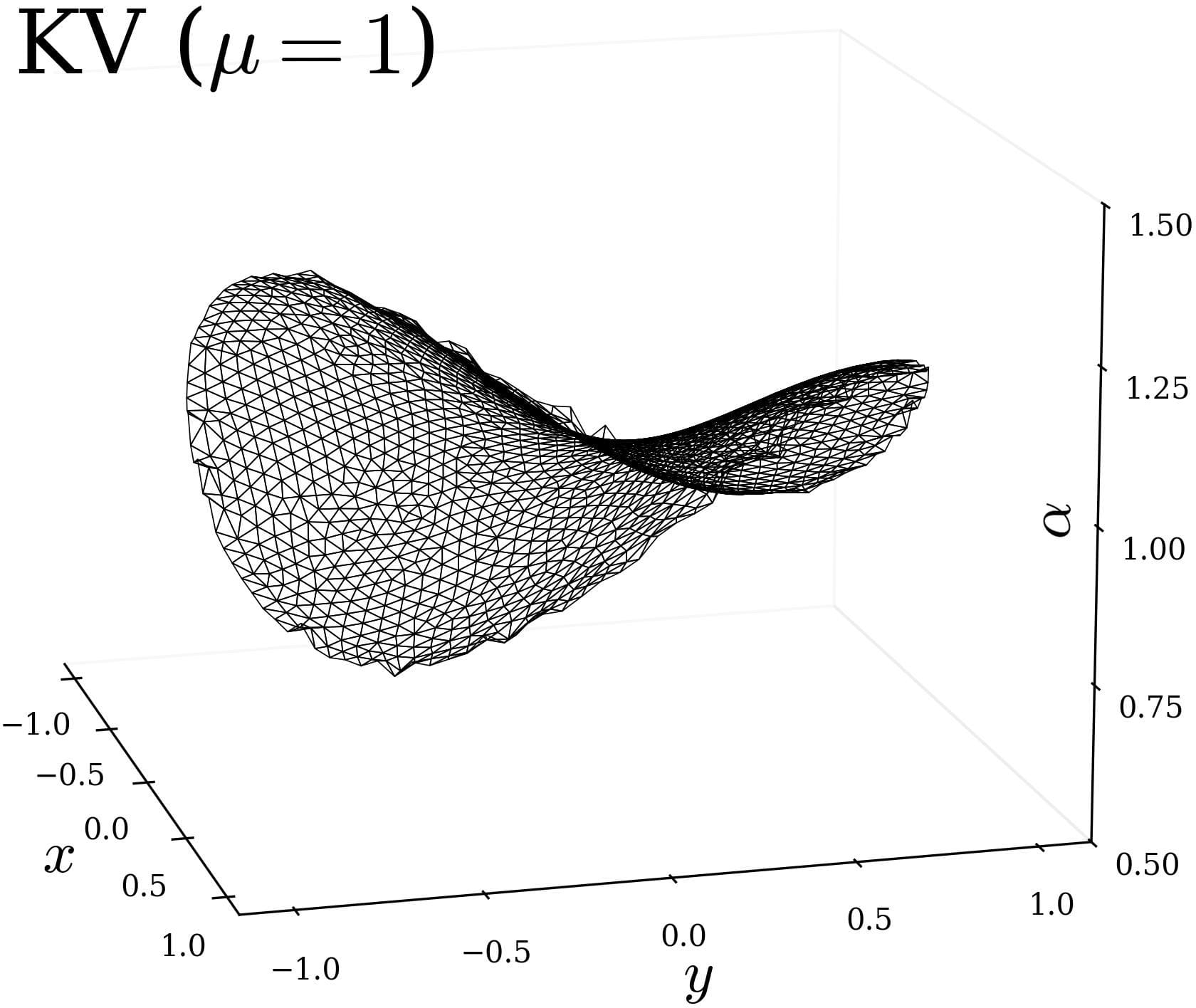}} \ 
\resizebox{0.225\textwidth}{!}{\includegraphics{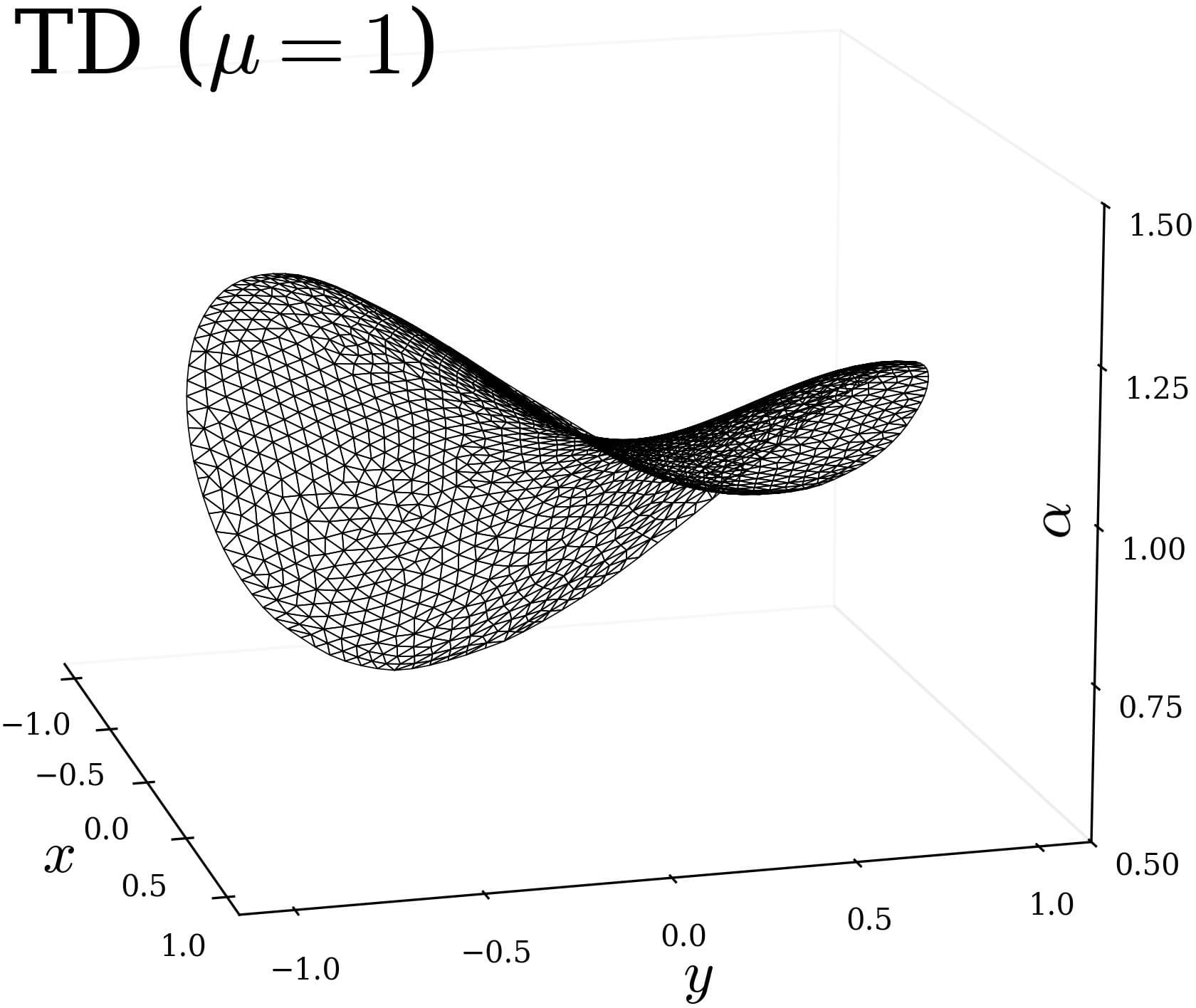}} \ 
\resizebox{0.225\textwidth}{!}{\includegraphics{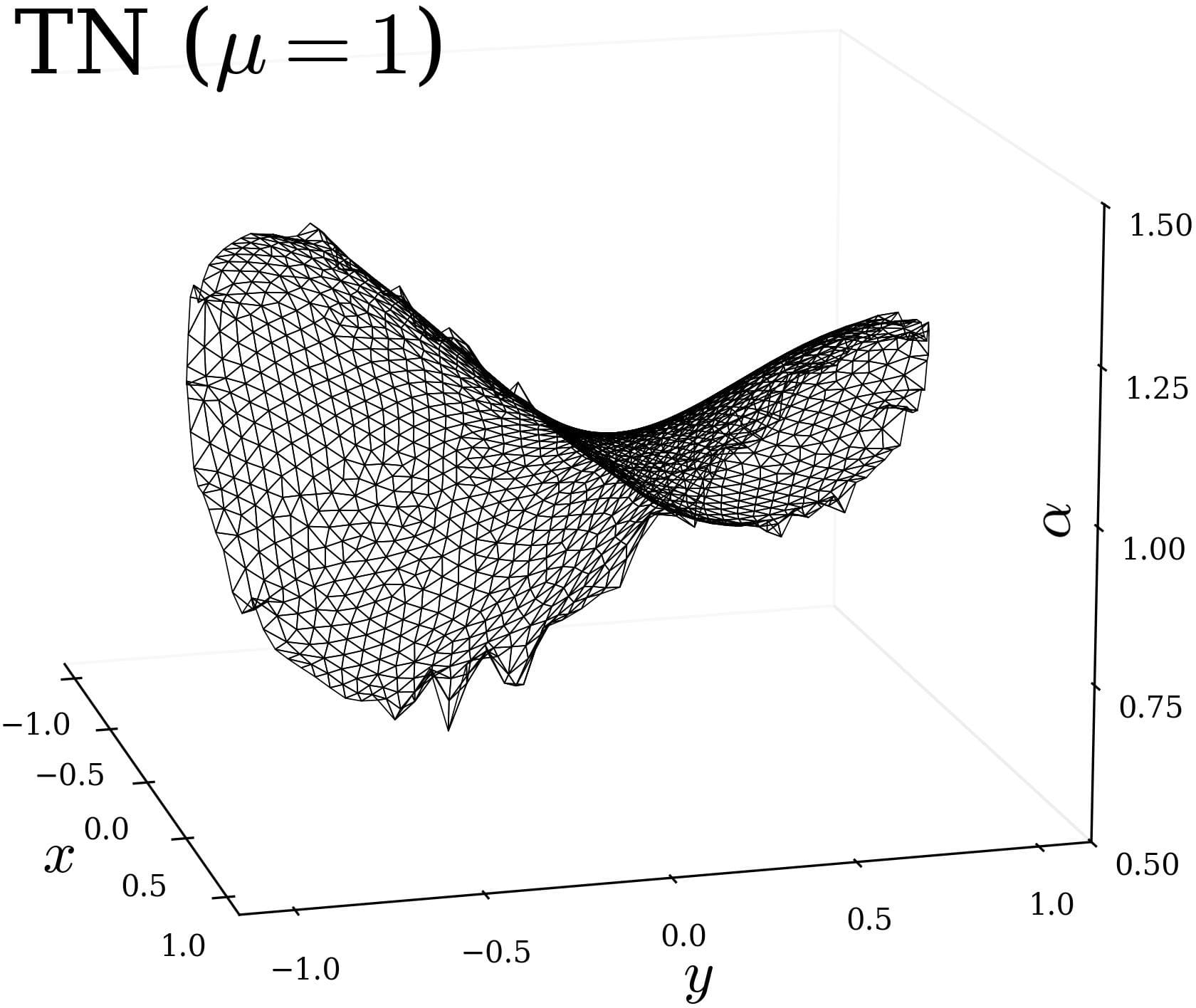}}  
\caption{Exact diffusion coefficient profile (top row) and reconstruction results using exact measurements, without $H^{1}$ smoothing (second row) and with $H^{1}$ smoothing for different smoothing parameters $\mu$ (third to fifth rows).}
\label{fig:effect_of_mu}
\end{figure}
\begin{figure}[htp!]
\centering
\resizebox{0.225\textwidth}{!}{\includegraphics{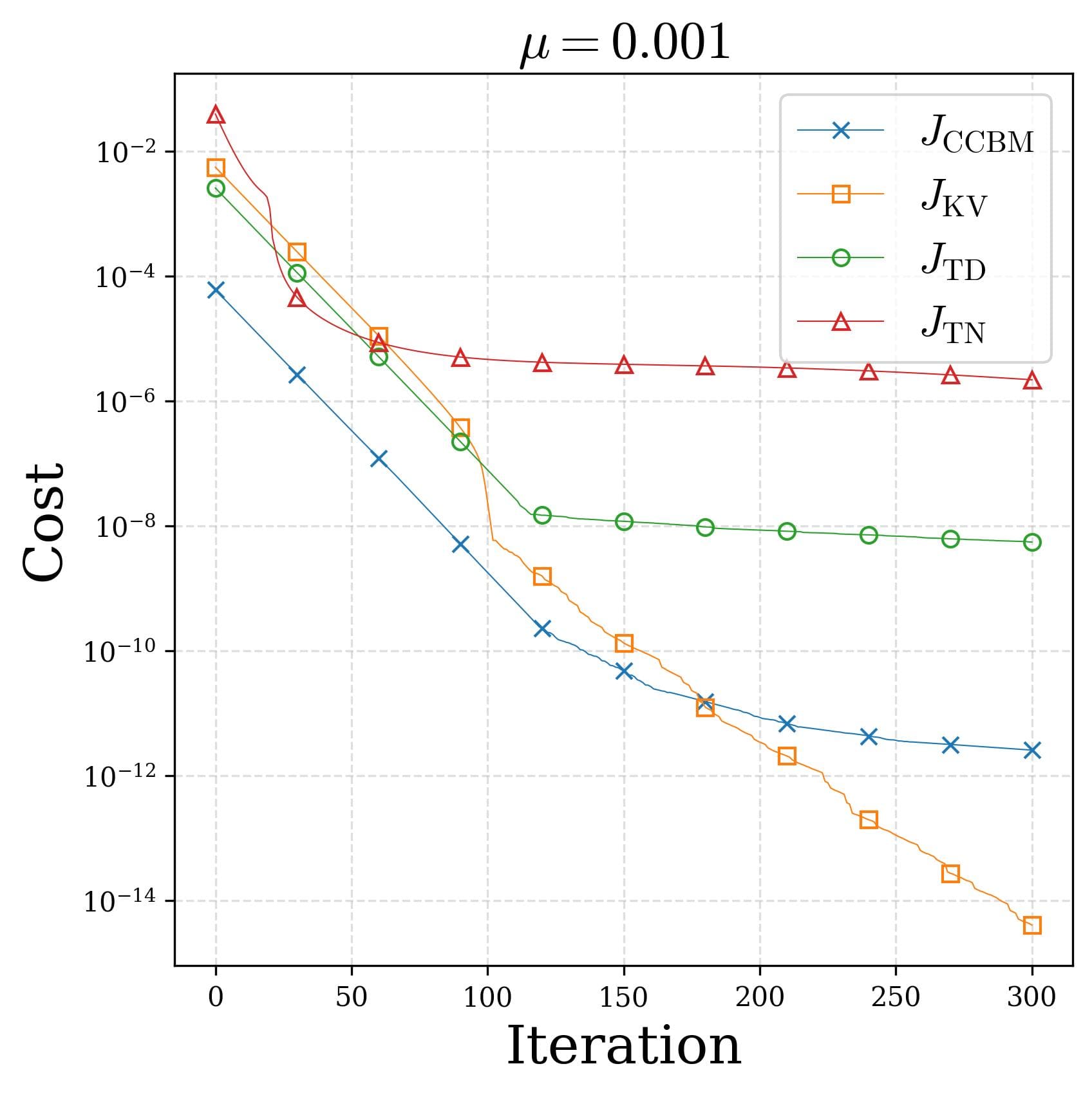}} 
\resizebox{0.225\textwidth}{!}{\includegraphics{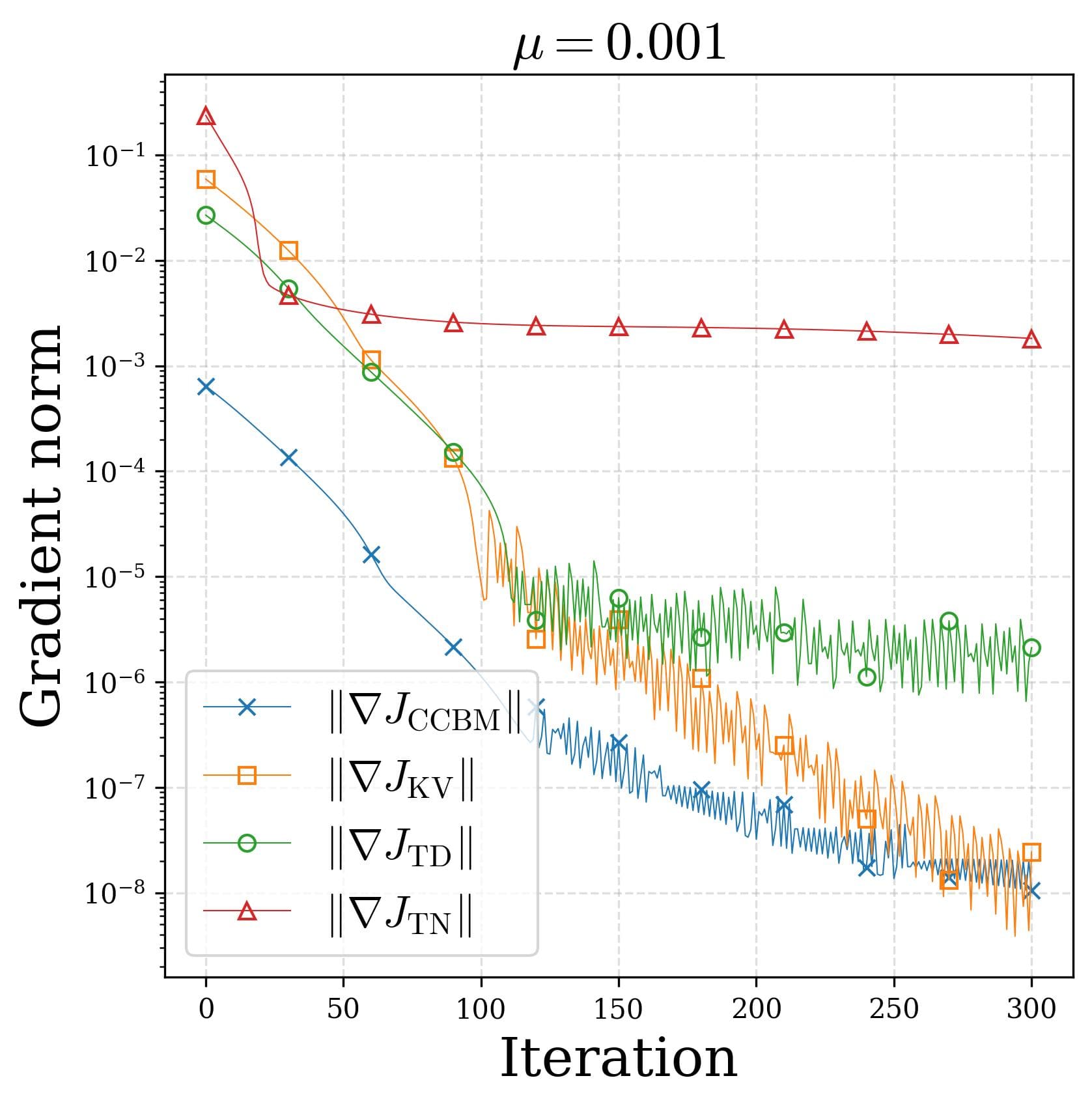}} \
\resizebox{0.225\textwidth}{!}{\includegraphics{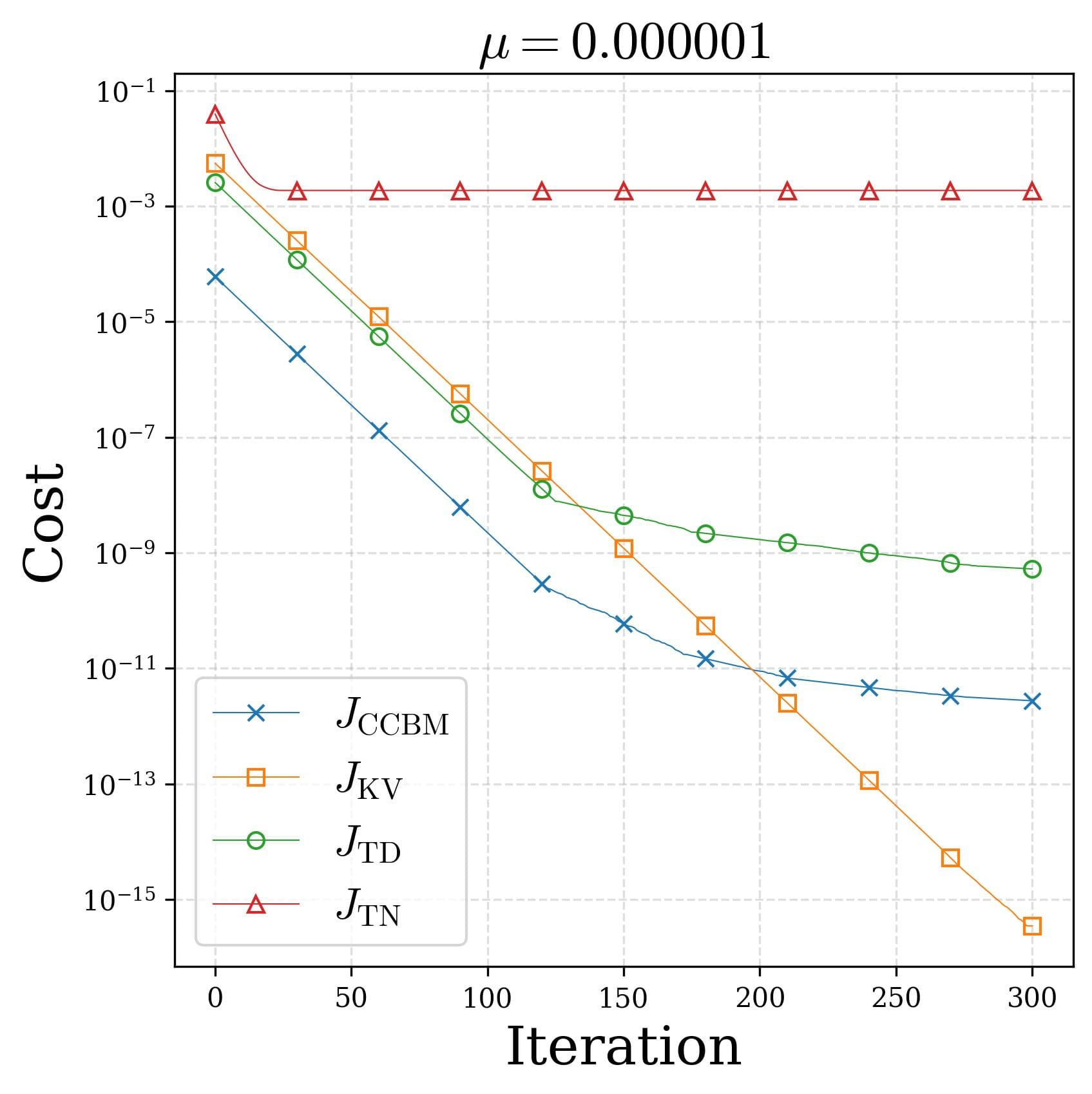}} 
\resizebox{0.225\textwidth}{!}{\includegraphics{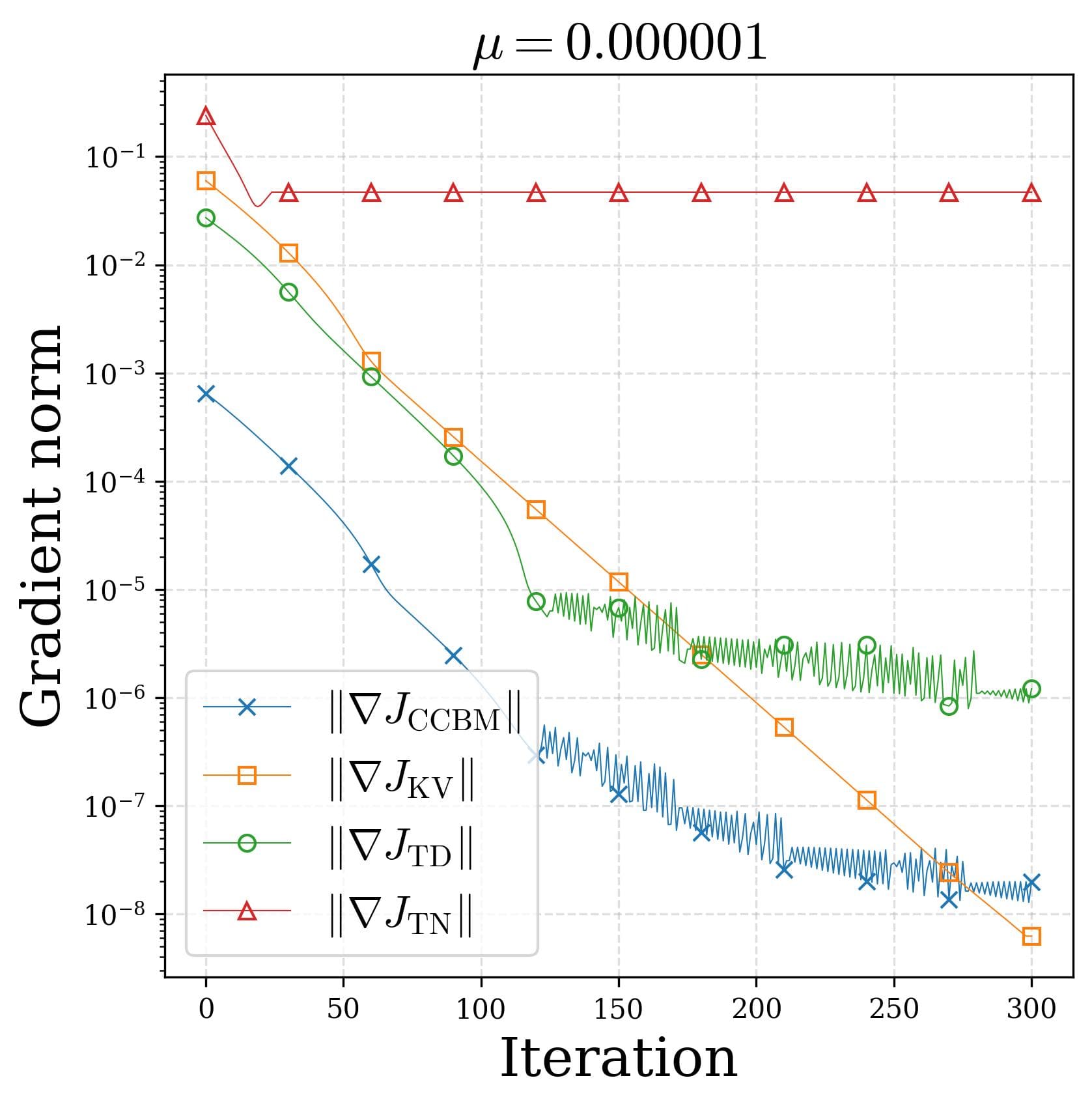}}
\caption{Histories of the cost functional and gradient norm corresponding to Figure~\ref{fig:effect_of_mu}. Left: $\mu=10^{-3}$; right: $\mu=10^{-6}$.}
\label{fig:effect_of_mu_cost_and_gradient}
\end{figure}
\subsubsection{Effect of the regularization parameter $\rho$ under noisy data}\label{subsec:effect_of_rho}
We next examine the influence of noise and regularization on the different formulations to assess their relative robustness. 
Figure~\ref{fig:measurements} shows that increasing the noise level $\delta$ induces strong oscillations in the boundary measurements, amplifying the ill-posedness of the inverse problem.

Figures~\ref{fig:effect_of_rho} and~\ref{fig:effect_of_rho_with_regularization} illustrate the effect of the Tikhonov parameter $\rho$ for fixed $\delta=0.0003$, without and with $H^1$ gradient smoothing, respectively. 
Without gradient smoothing (Figure~\ref{fig:effect_of_rho}), all methods exhibit increasing oscillations as $\rho$ decreases, but the deterioration is markedly less pronounced for the CCBM formulation, which maintains a stable reconstruction over a wider range of $\rho$. 
Note that, in the CCBM formulation, a large $\rho$ reduces the sensitivity of the cost function, consistent with Remark~\ref{rem:effect_of_rho}. Of course, what counts as ``large" depends on the cost function; typically, CCBM convergence occurs around $\rho \sim 10^{-7}$ when Tikhonov regularization is used, and even smaller, $\rho \sim 10^{-11}$, without regularization (see Figure~\ref{fig:effect_of_mu_cost_and_gradient}).
With gradient smoothing (Figure~\ref{fig:effect_of_rho_with_regularization}), stability is substantially improved for all approaches; however, CCBM consistently delivers the most robust and accurate reconstructions, while KV, TD, and especially TN remain more sensitive to small $\rho$.

The corresponding cost and gradient-norm histories in Figure~\ref{fig:effect_of_rho_cost_and_gradient} further confirm this behavior: CCBM exhibits smoother and more reliable convergence, particularly in the low-$\rho$ regime, highlighting its superior robustness with respect to noise and regularization.
Observe that the gradient-norm histories corroborate Proposition~\ref{prop:grad_descent_convergence_unreg}, confirming that the iteration produces an approximate minimizer of $\Je$.

\begin{figure}[htp!]
\centering
\resizebox{0.225\textwidth}{!}{\includegraphics{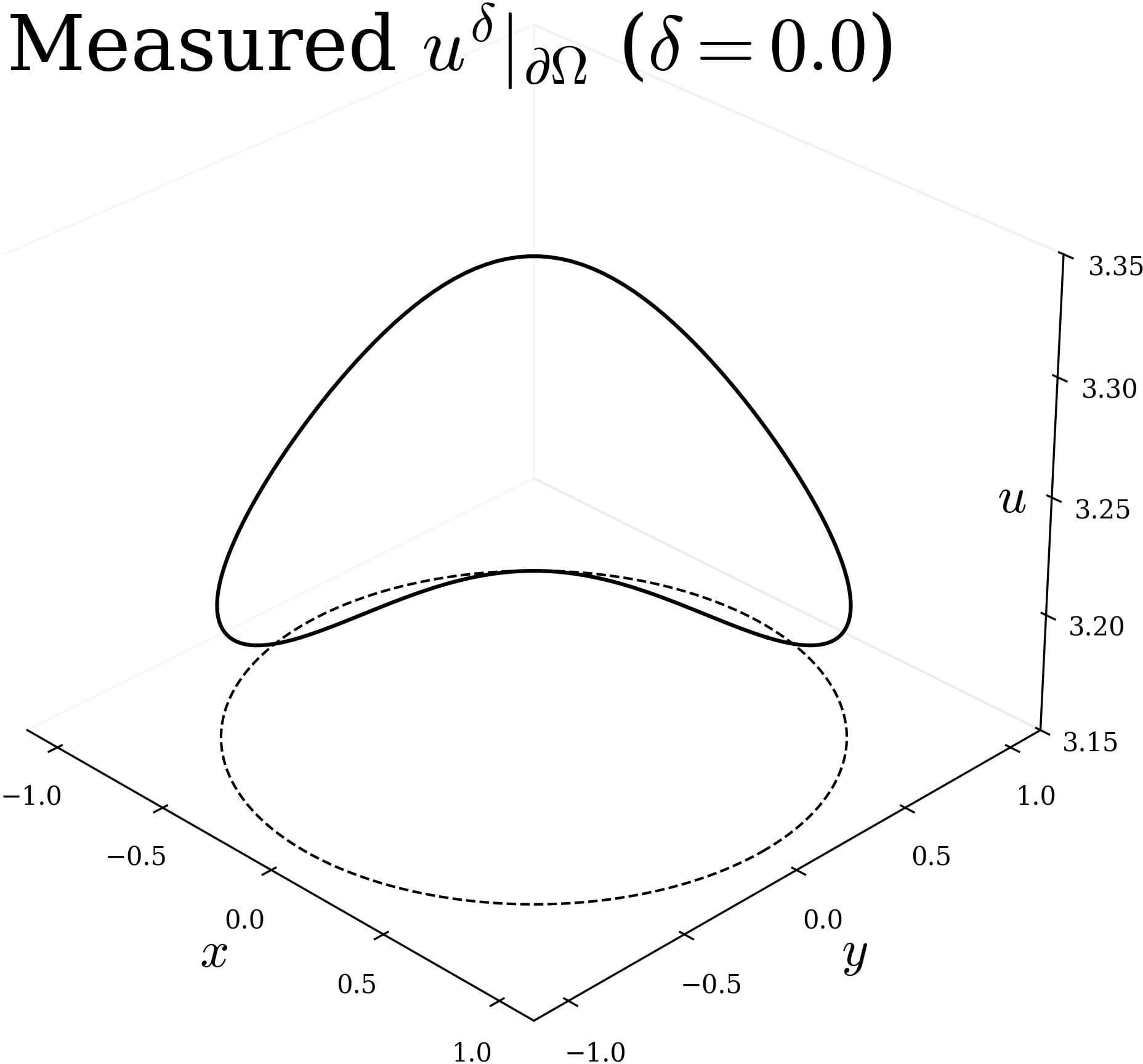}} \ 
\resizebox{0.225\textwidth}{!}{\includegraphics{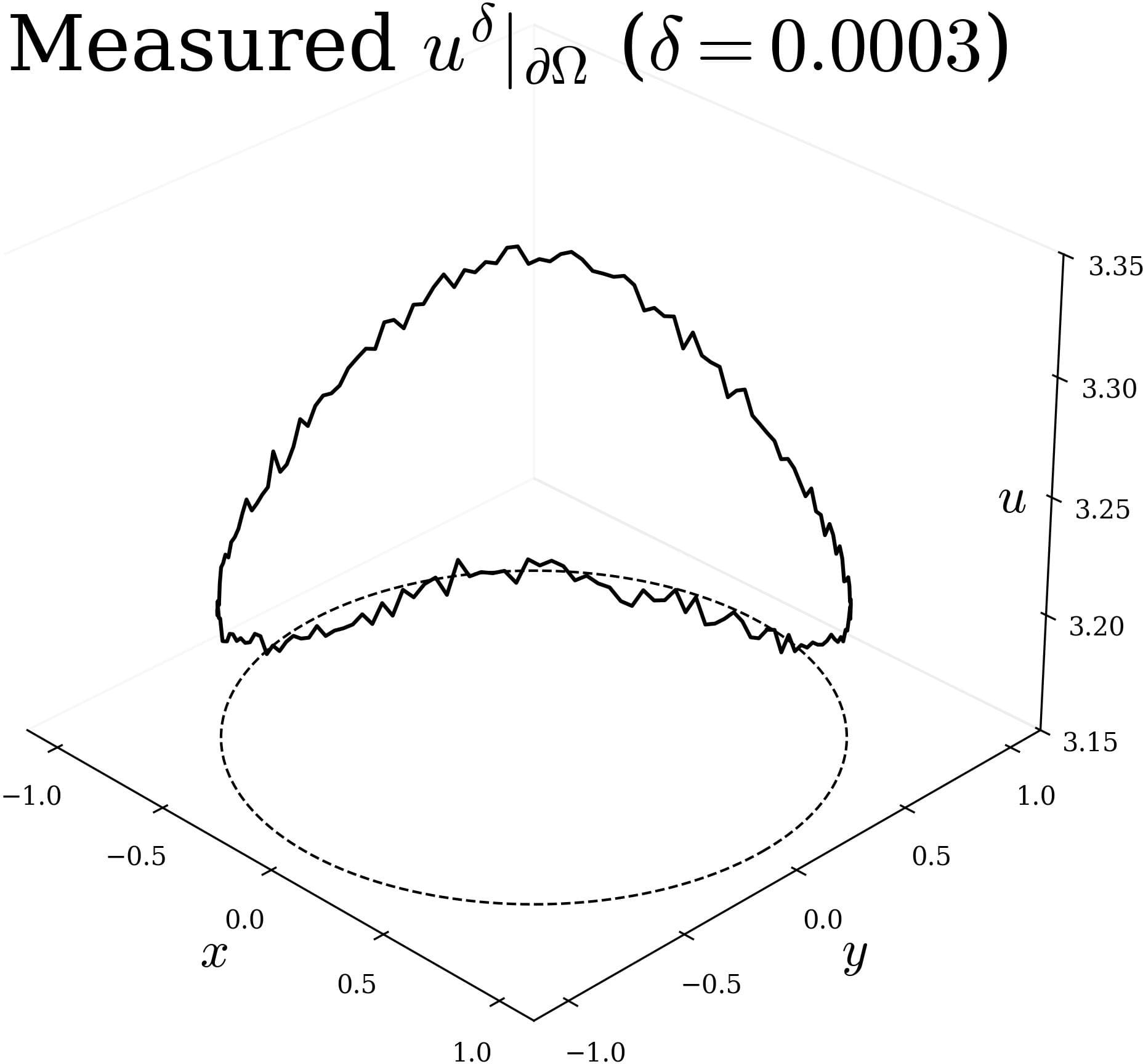}} \
\resizebox{0.225\textwidth}{!}{\includegraphics{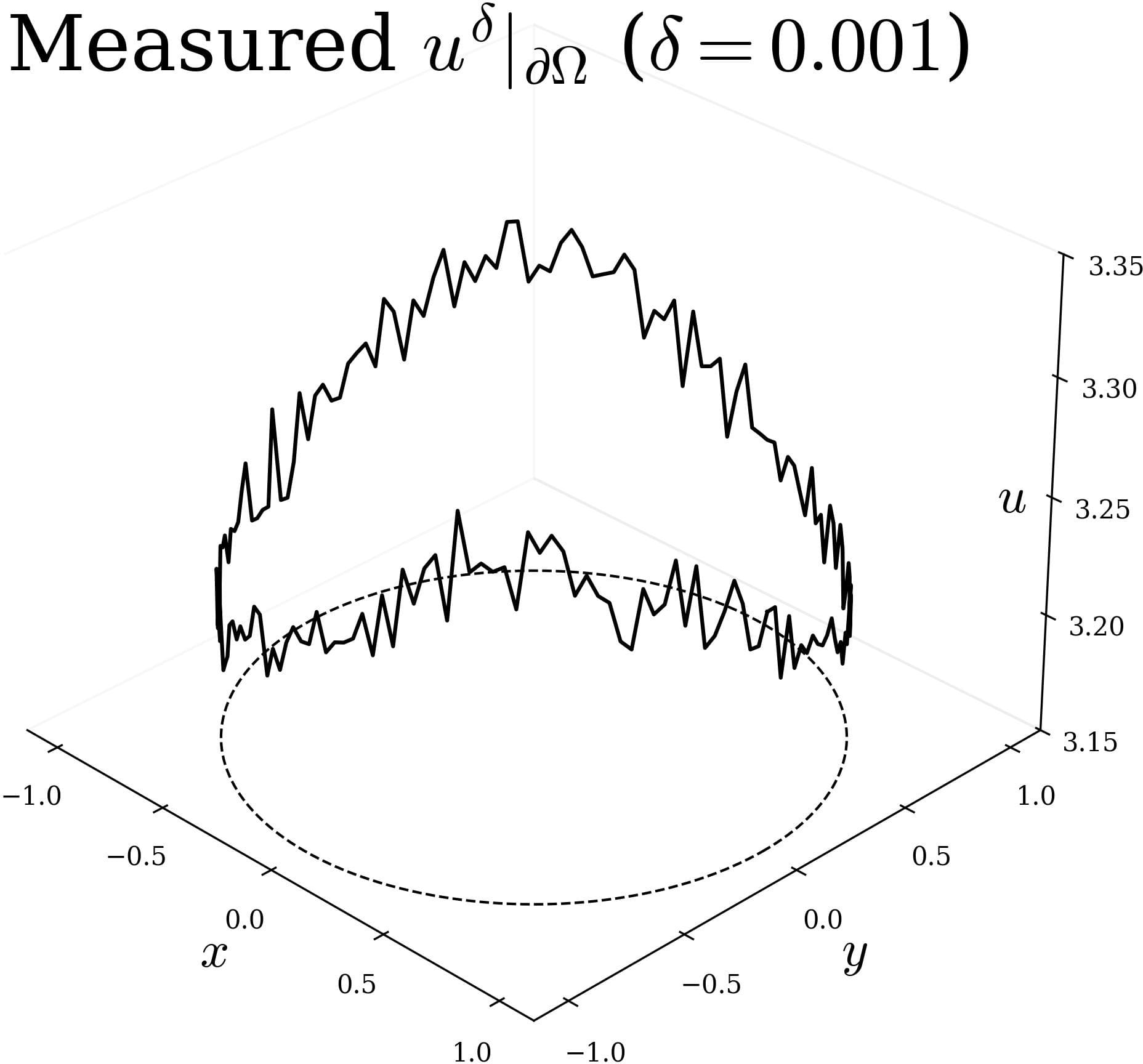}} \
\resizebox{0.225\textwidth}{!}{\includegraphics{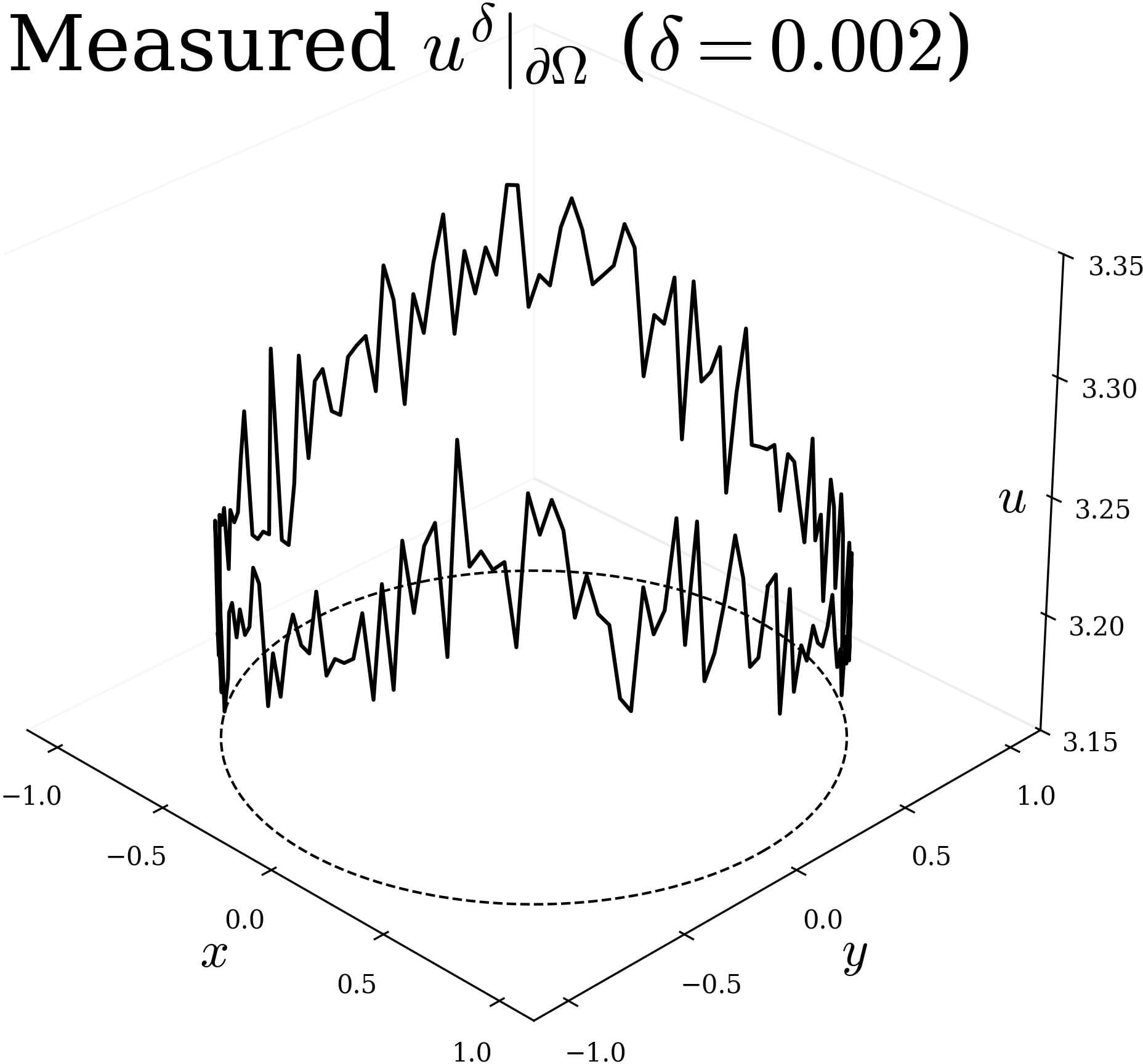}}
\caption{Boundary measurements at different noise levels $\delta$ with input data $g=1.0$.}
\label{fig:measurements}
\end{figure}
%
%
%
\begin{figure}[htp!]
\centering
\resizebox{0.225\textwidth}{!}{\includegraphics{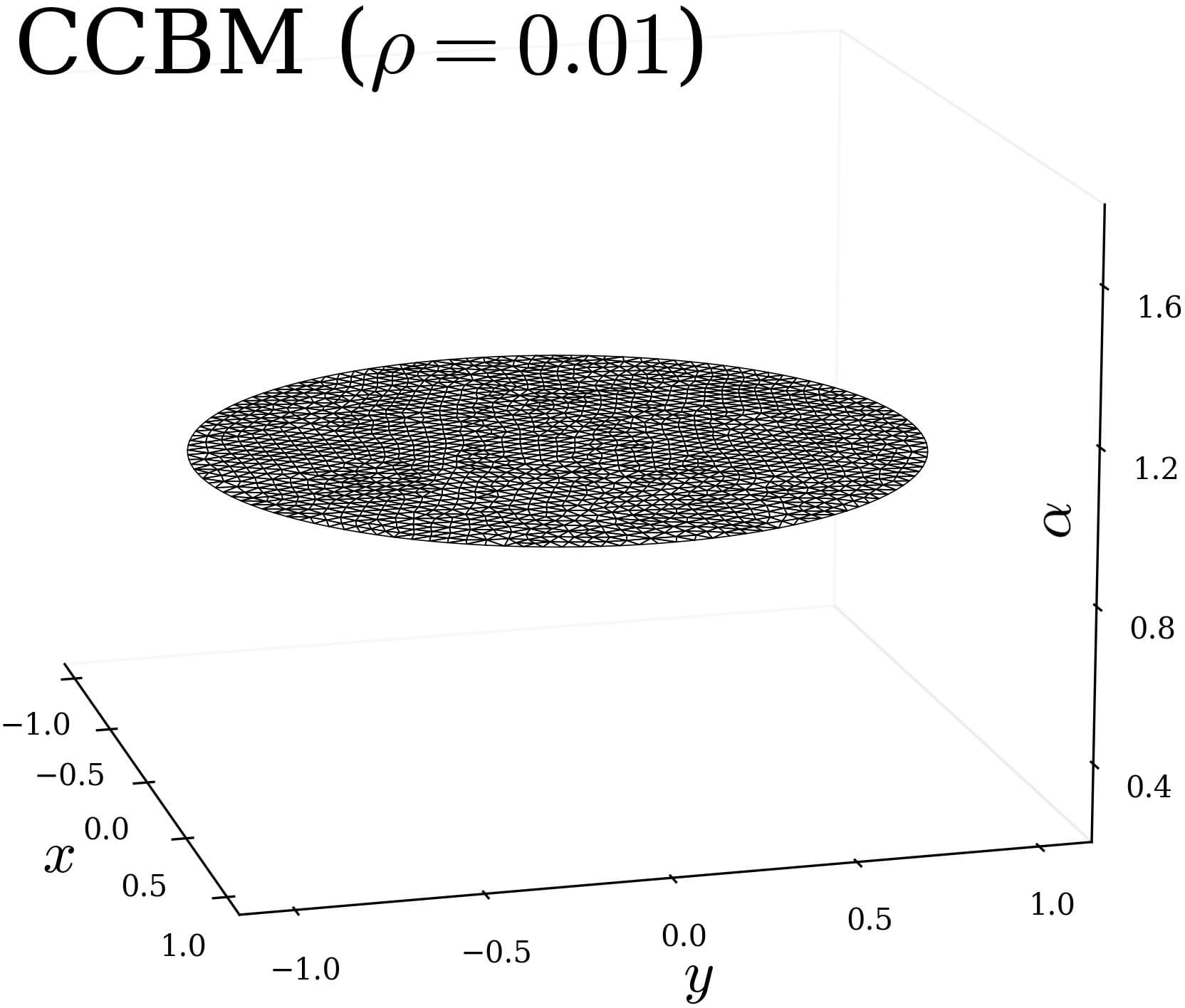}} \ 
\resizebox{0.225\textwidth}{!}{\includegraphics{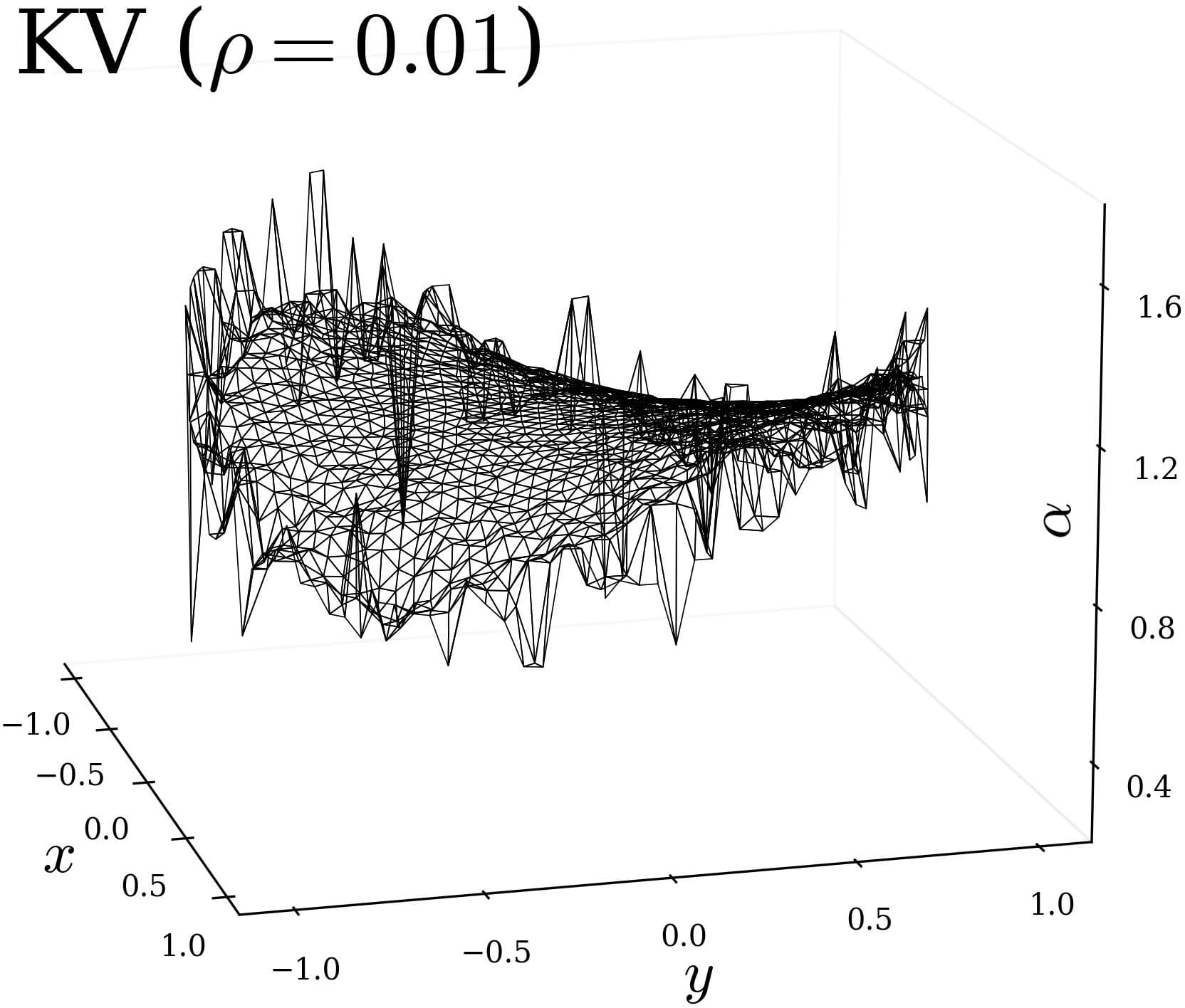}} \ 
\resizebox{0.225\textwidth}{!}{\includegraphics{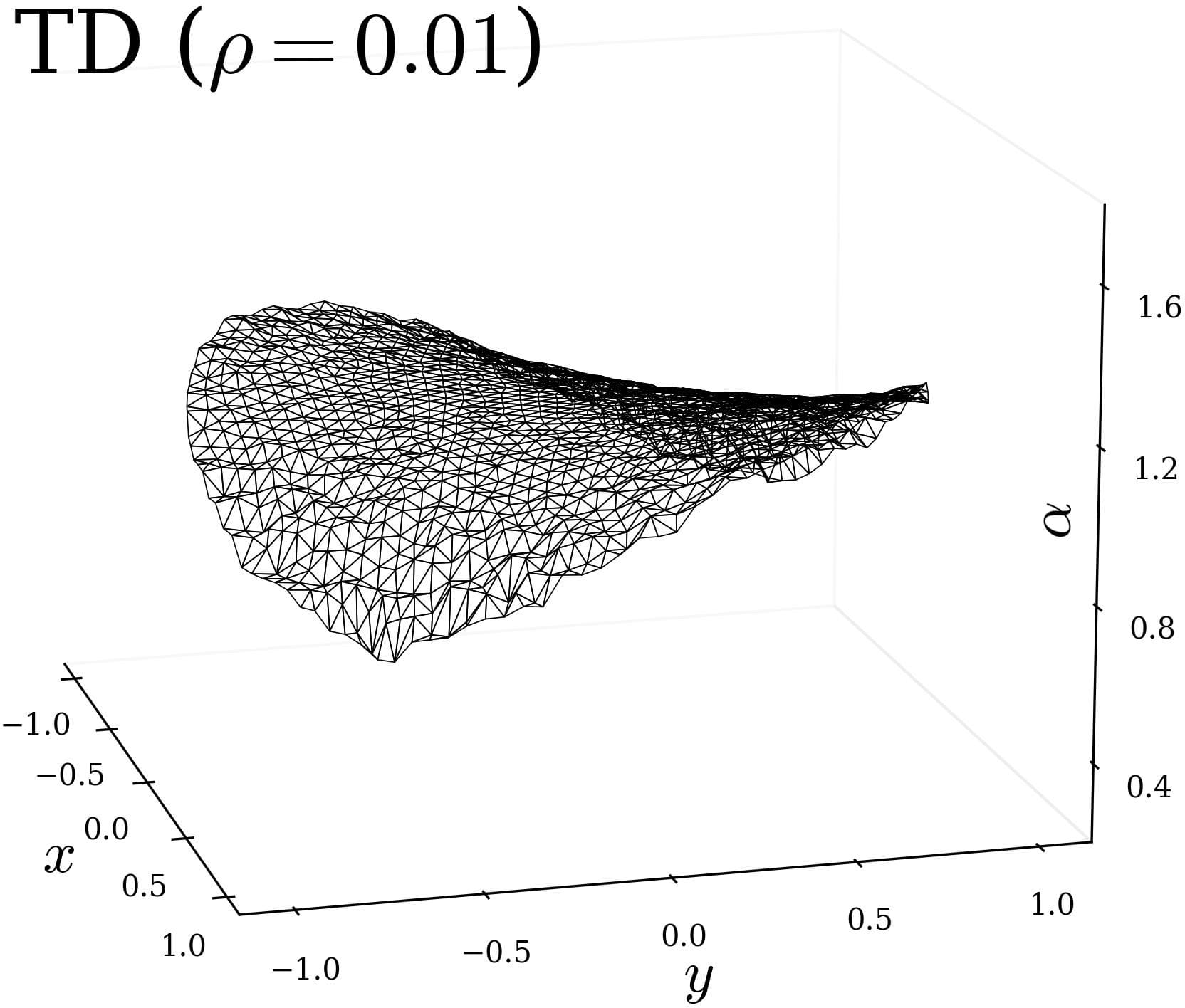}} \ 
\resizebox{0.225\textwidth}{!}{\includegraphics{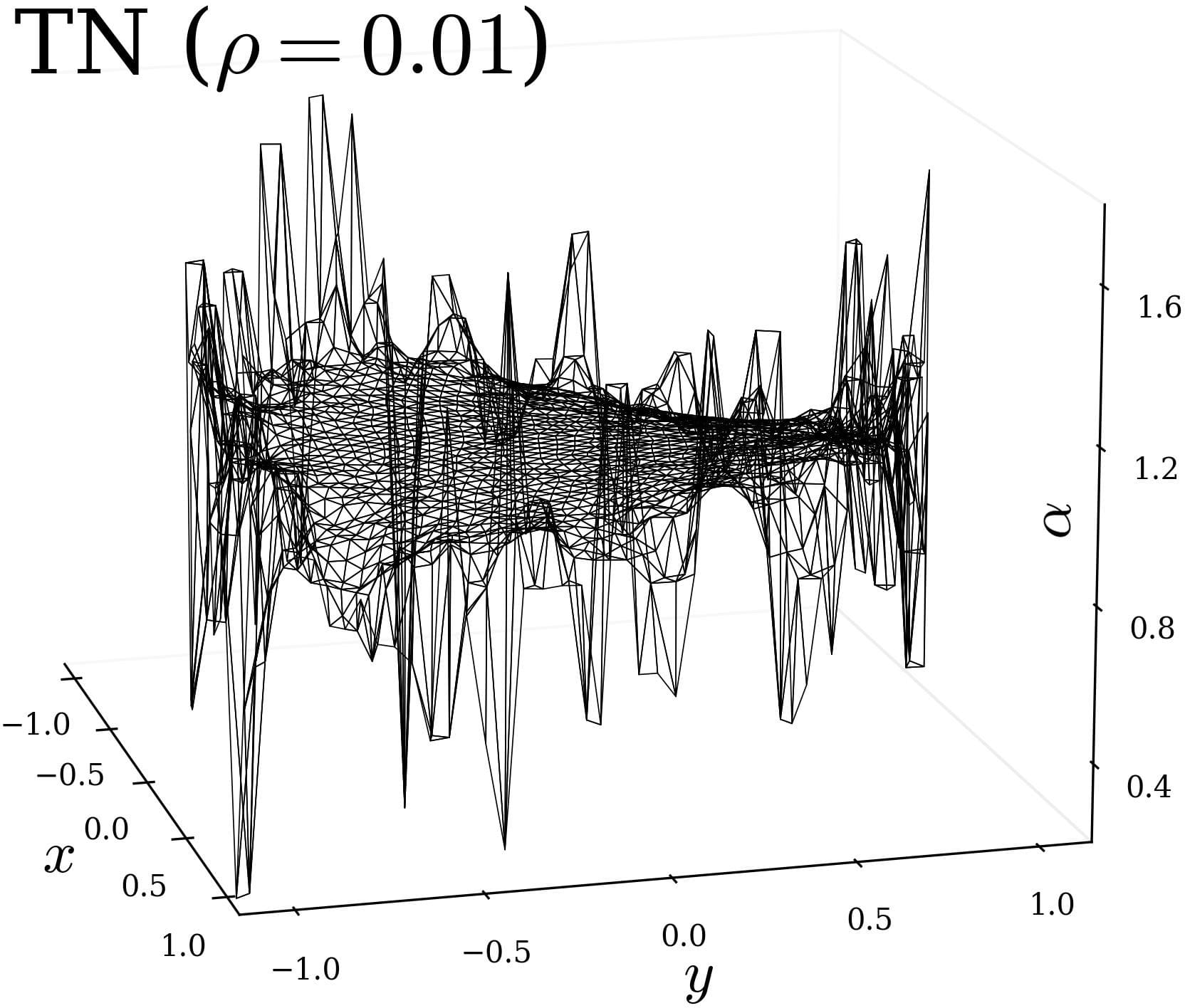}} \\[1em]
\resizebox{0.225\textwidth}{!}{\includegraphics{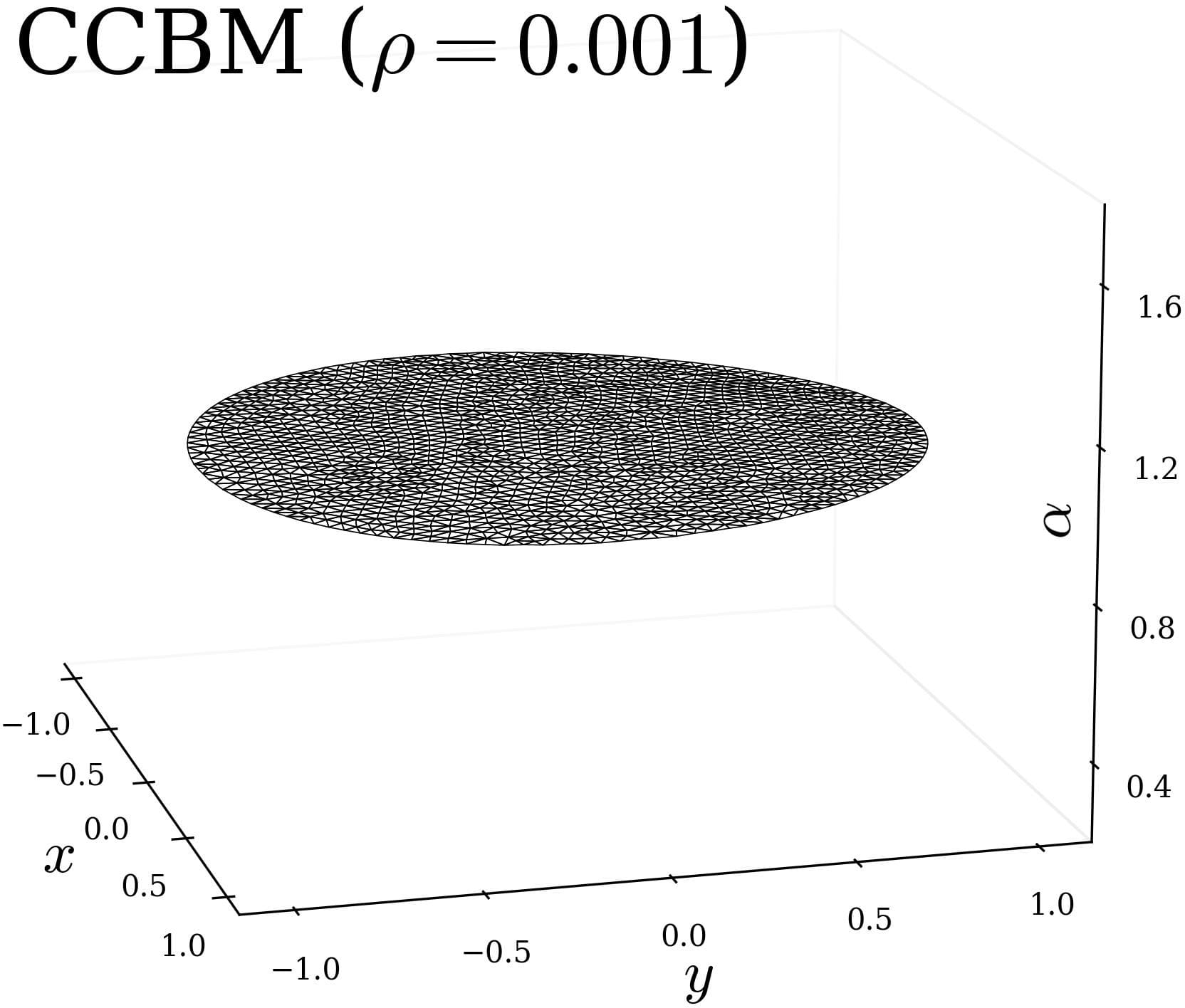}} \ 
\resizebox{0.225\textwidth}{!}{\includegraphics{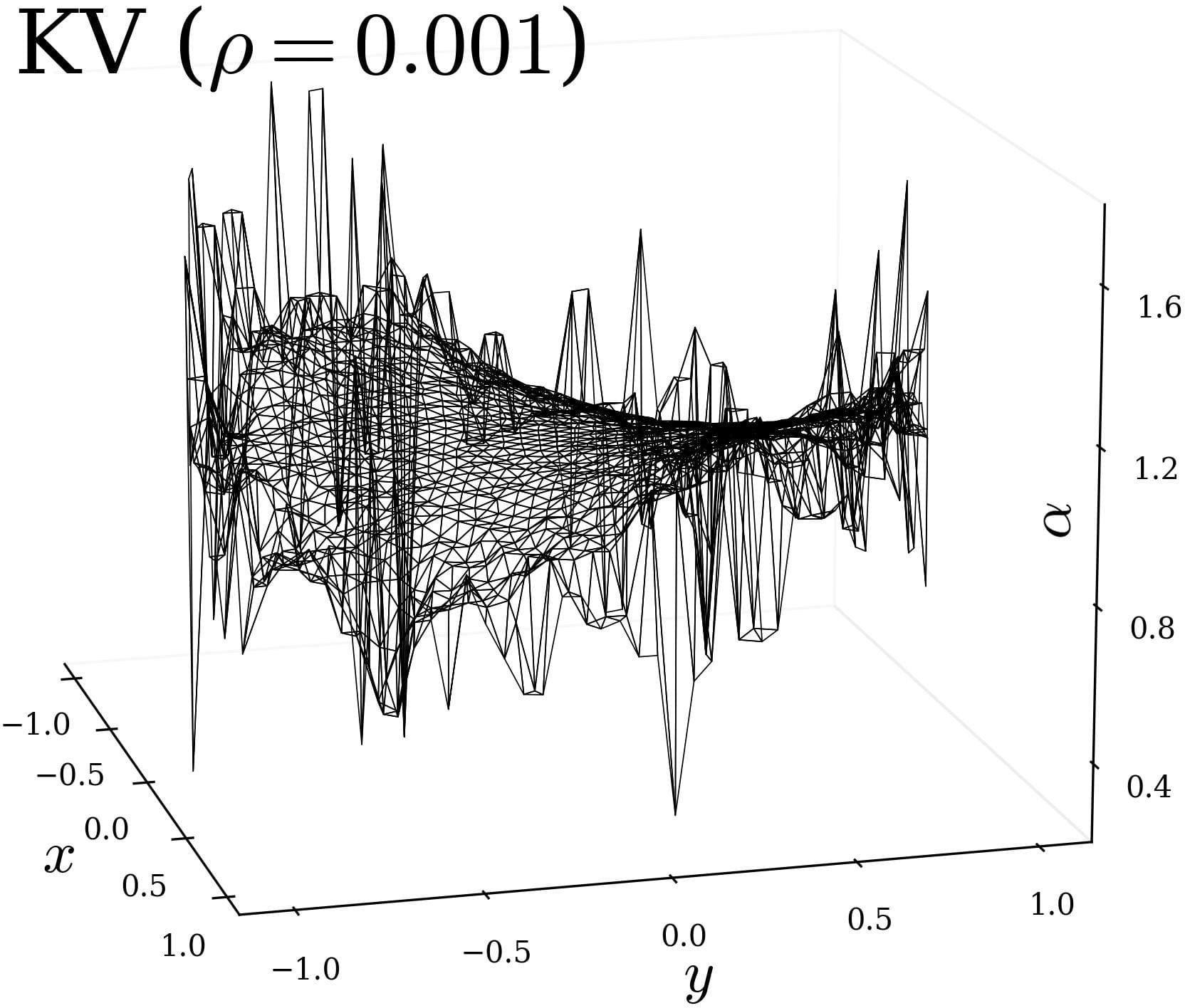}} \ 
\resizebox{0.225\textwidth}{!}{\includegraphics{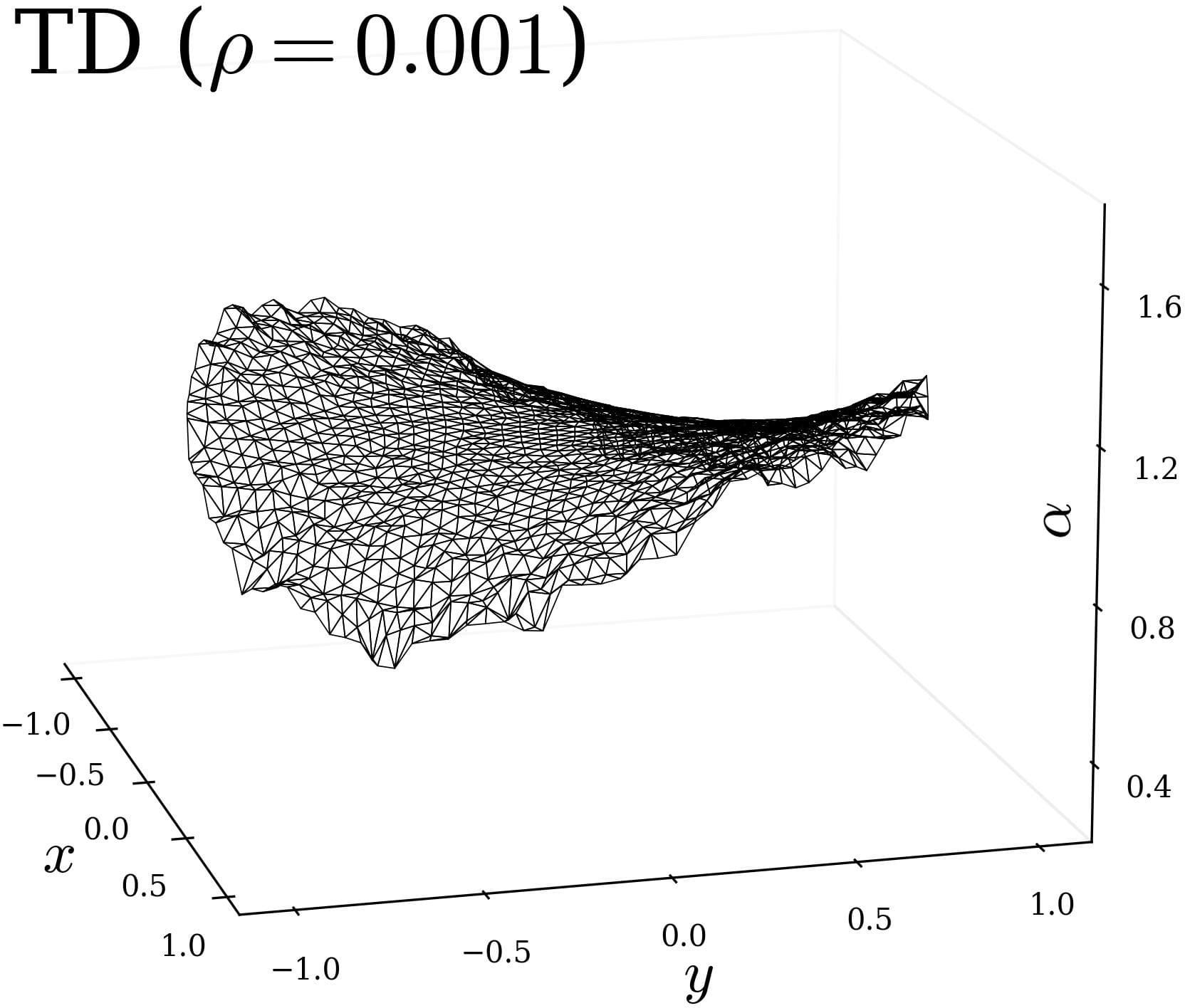}} \ 
\resizebox{0.225\textwidth}{!}{\includegraphics{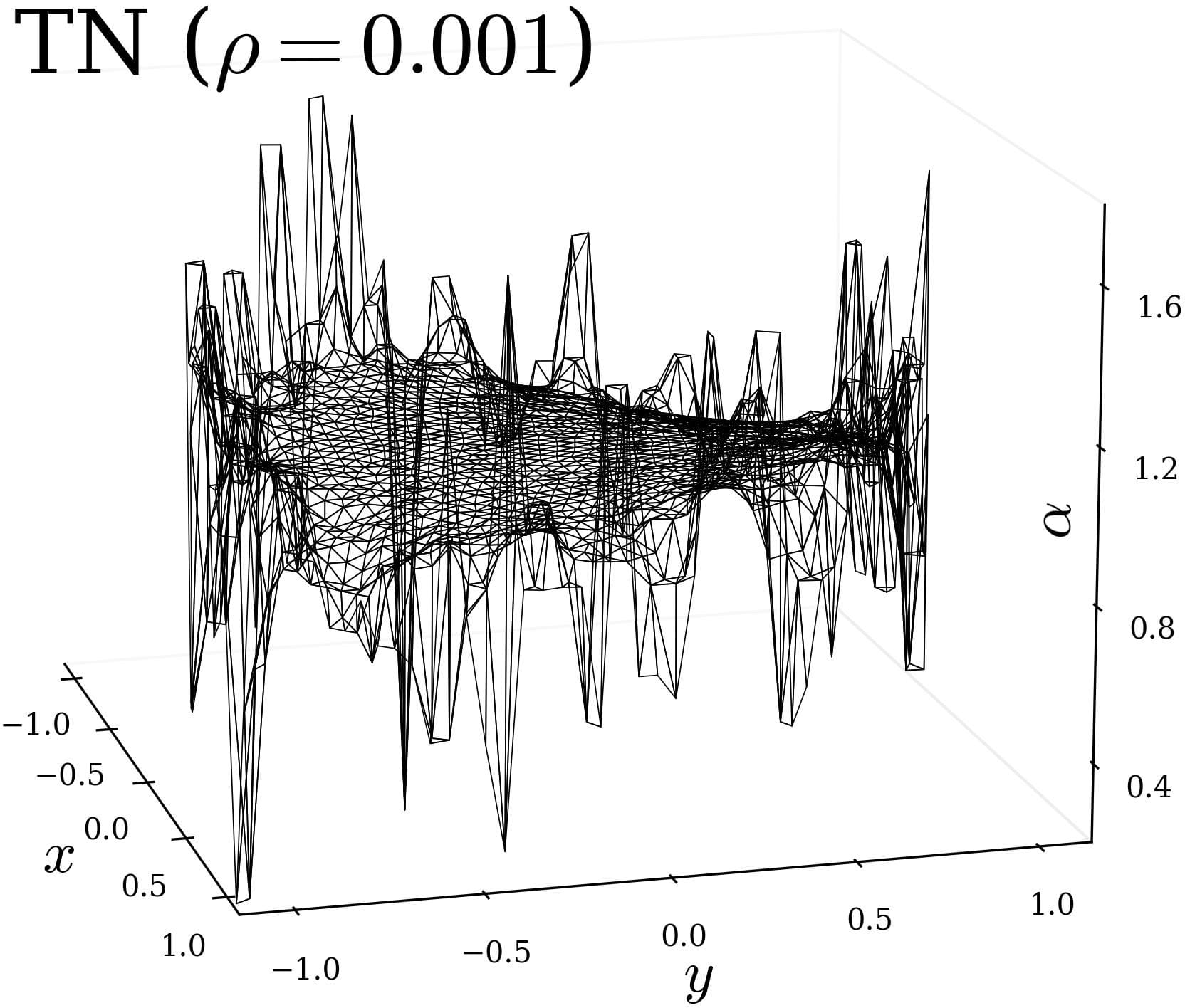}} \\[1em]
\resizebox{0.225\textwidth}{!}{\includegraphics{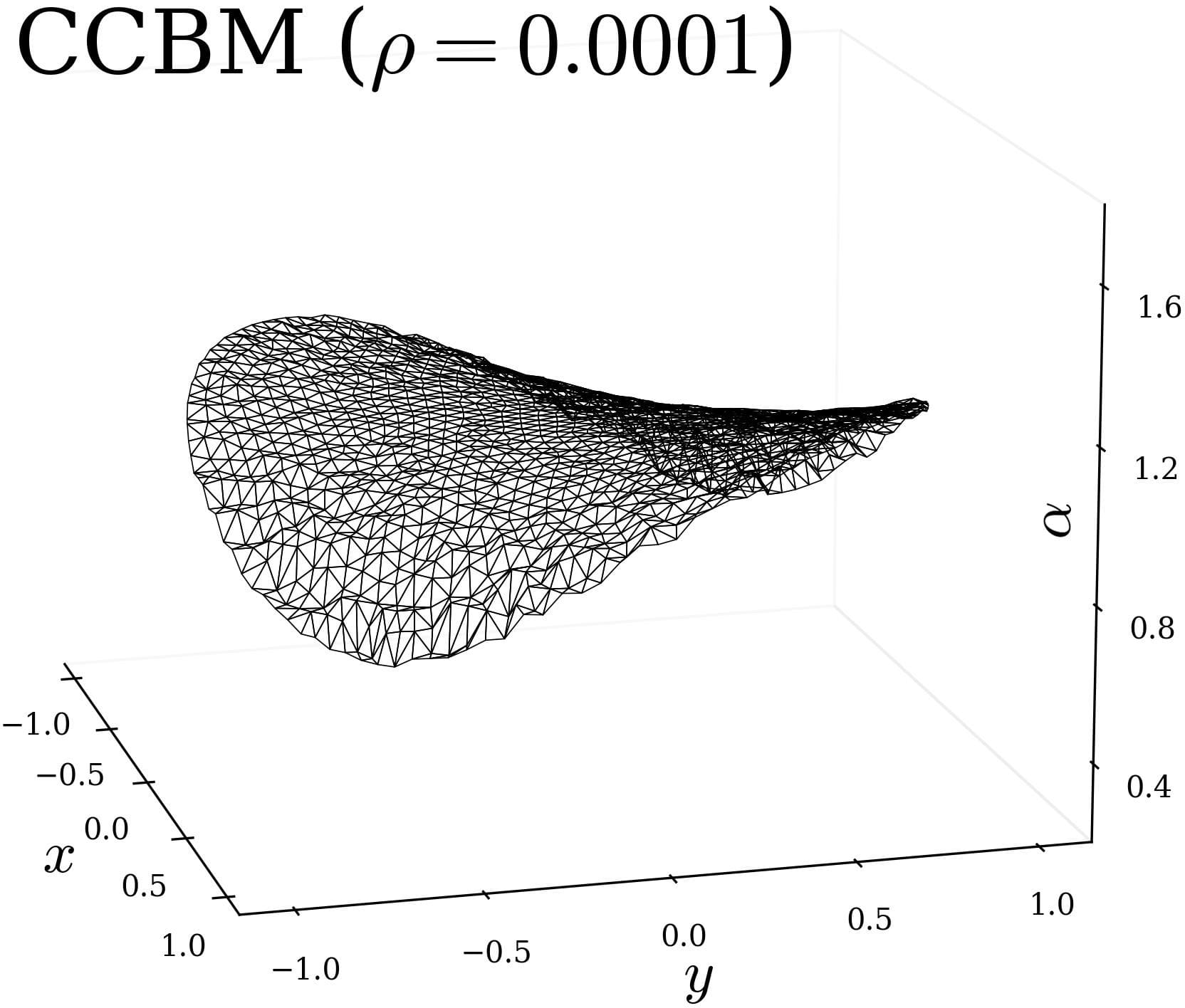}} \ 
\resizebox{0.225\textwidth}{!}{\includegraphics{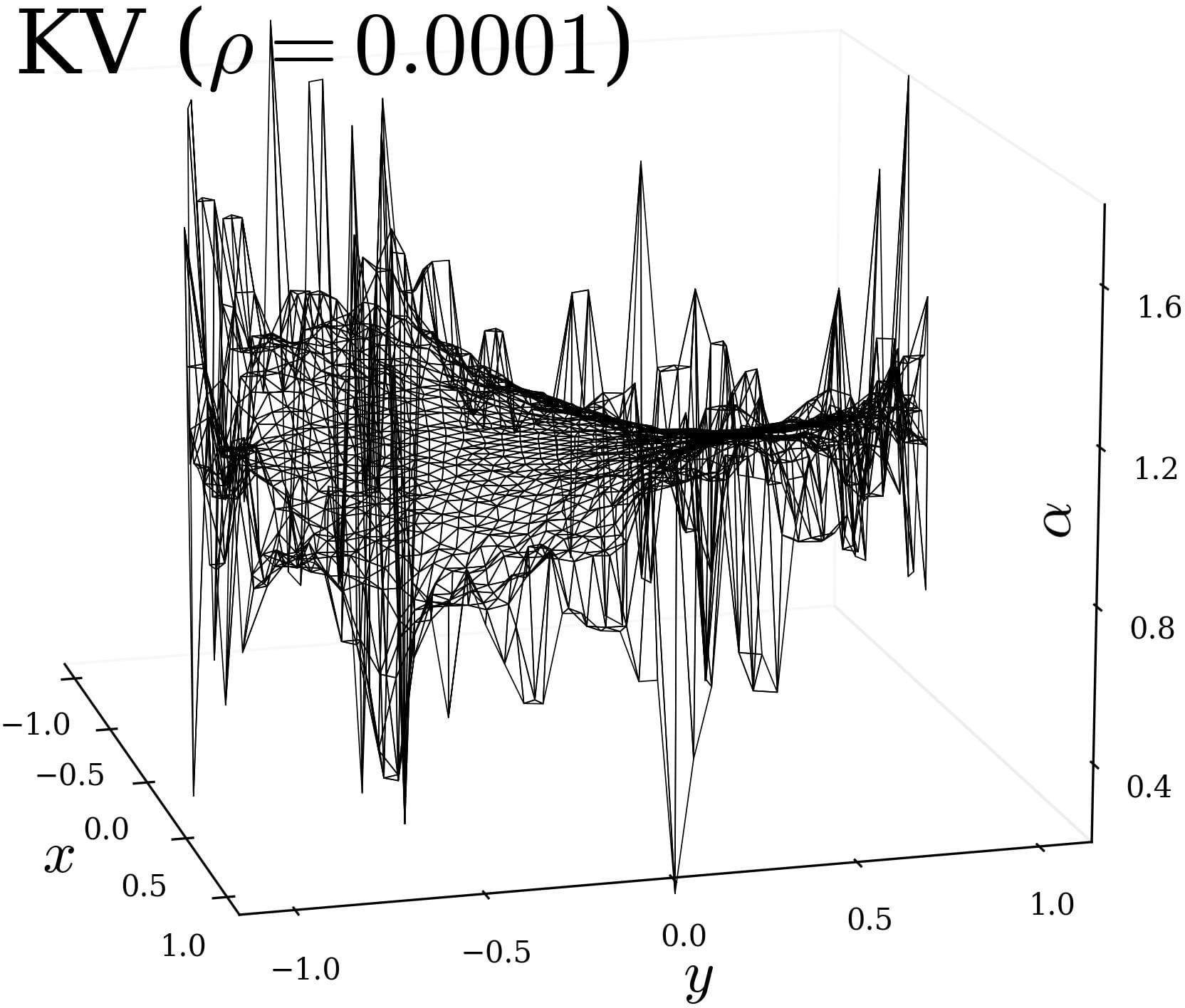}} \ 
\resizebox{0.225\textwidth}{!}{\includegraphics{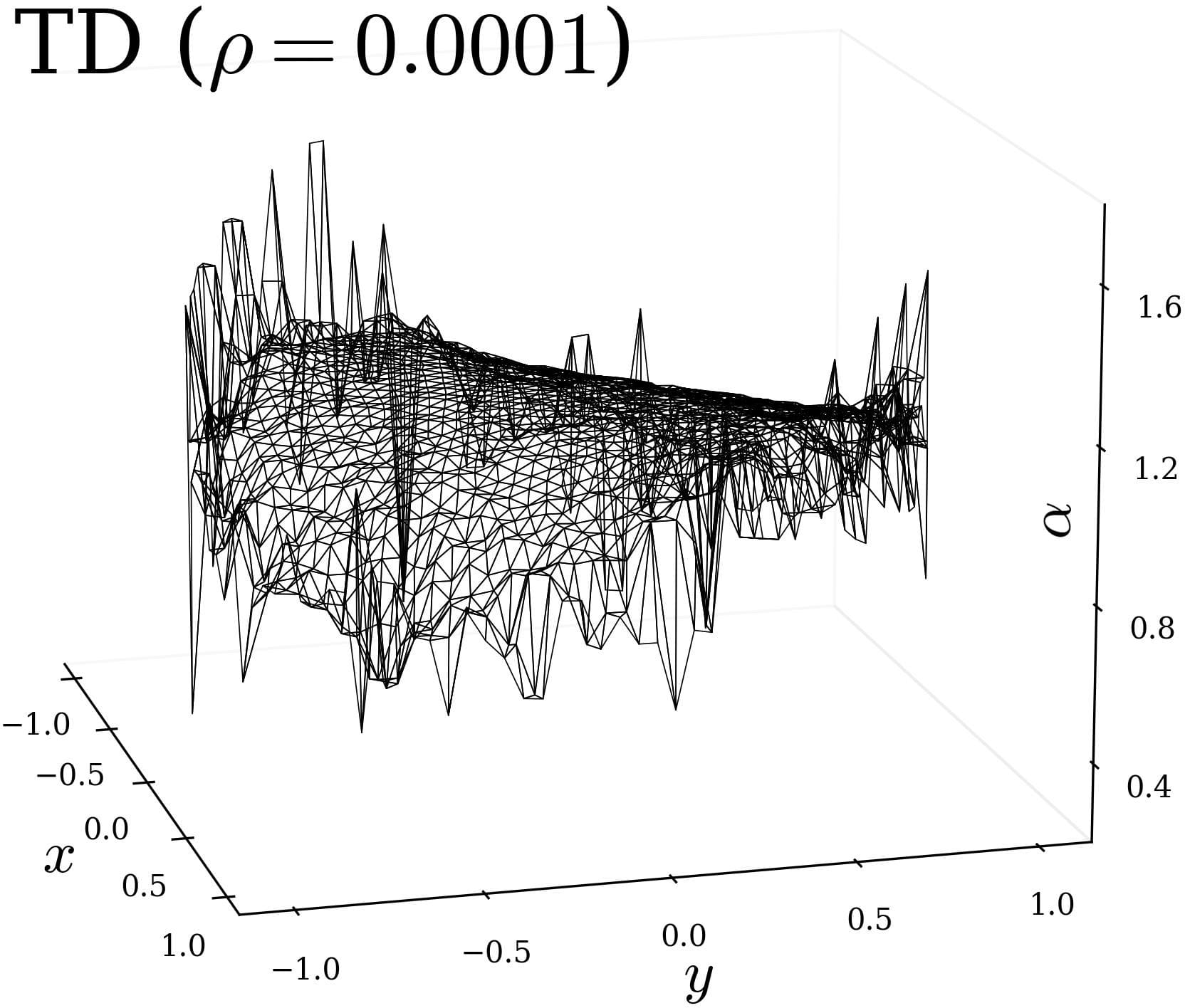}} \ 
\resizebox{0.225\textwidth}{!}{\includegraphics{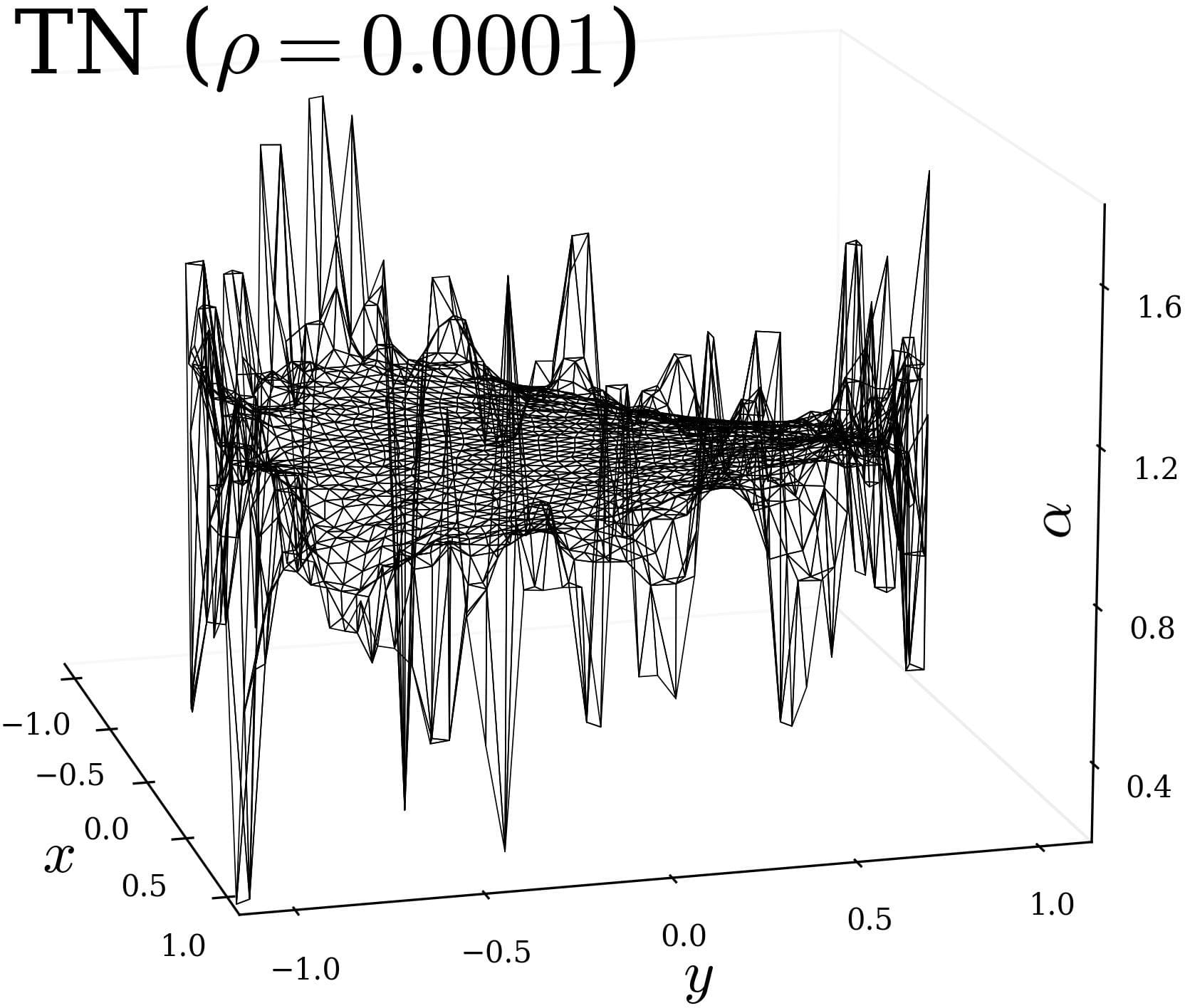}} \\[1em]
\resizebox{0.225\textwidth}{!}{\includegraphics{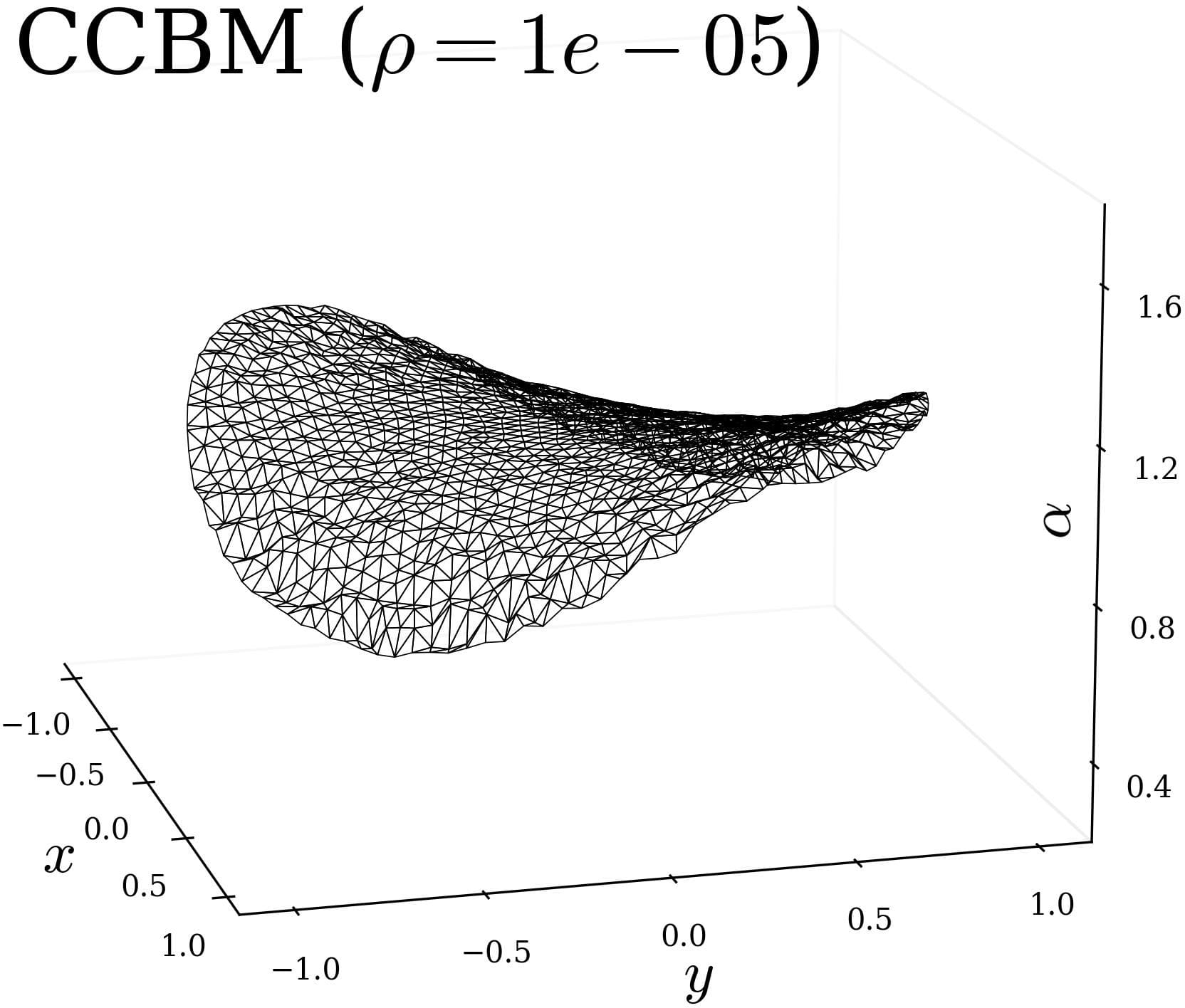}} \ 
\resizebox{0.225\textwidth}{!}{\includegraphics{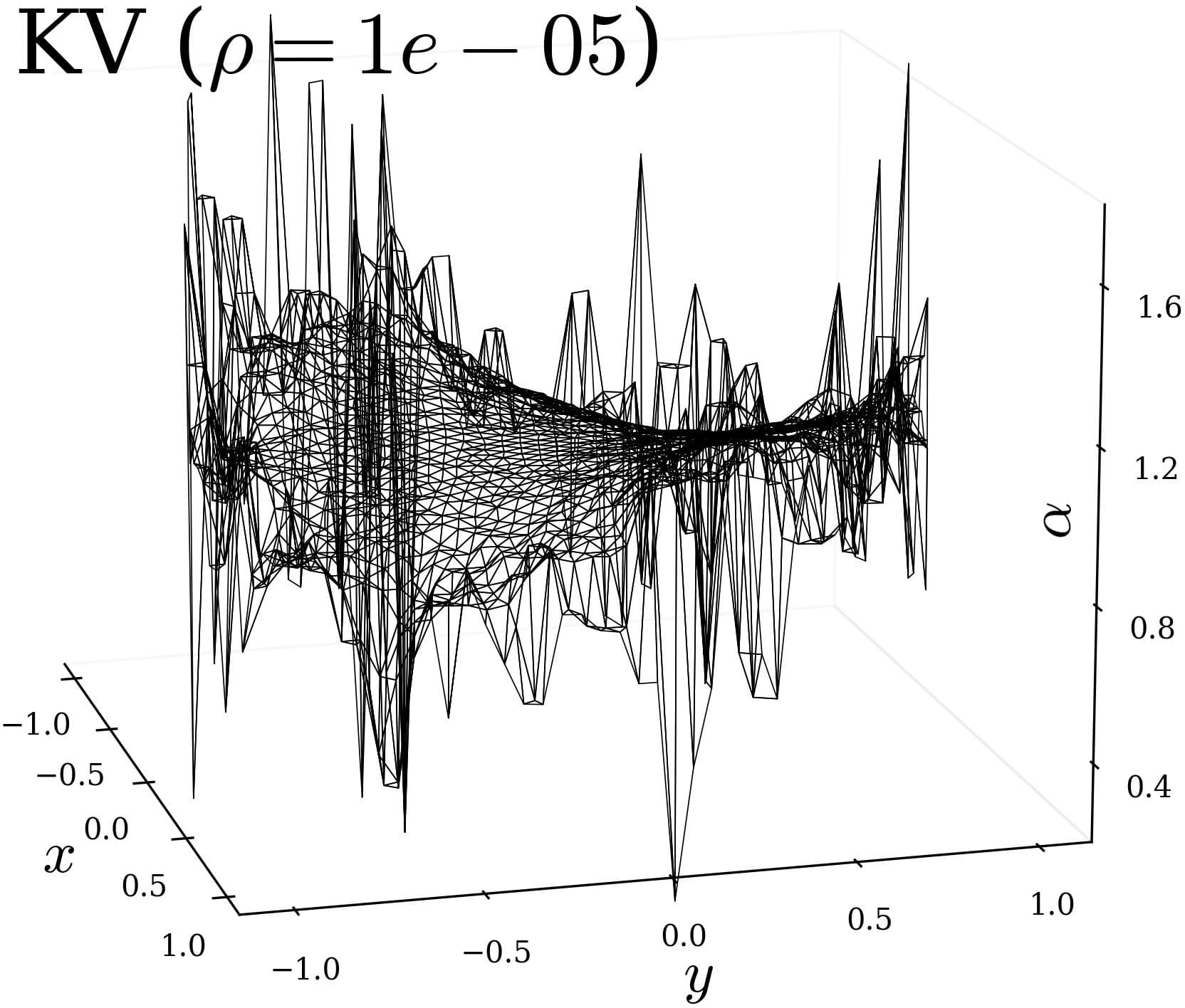}} \
\resizebox{0.225\textwidth}{!}{\includegraphics{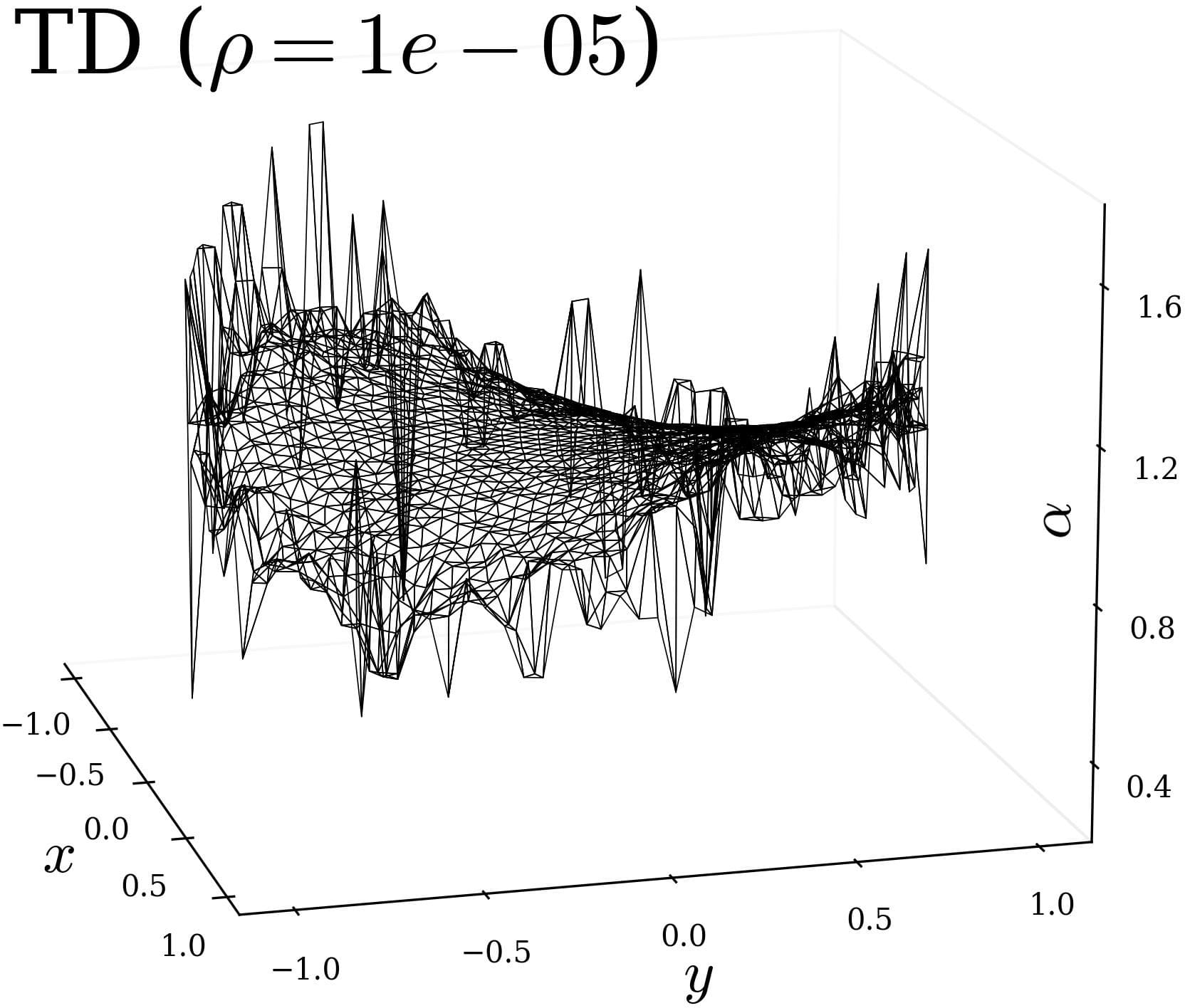}} \
\resizebox{0.225\textwidth}{!}{\includegraphics{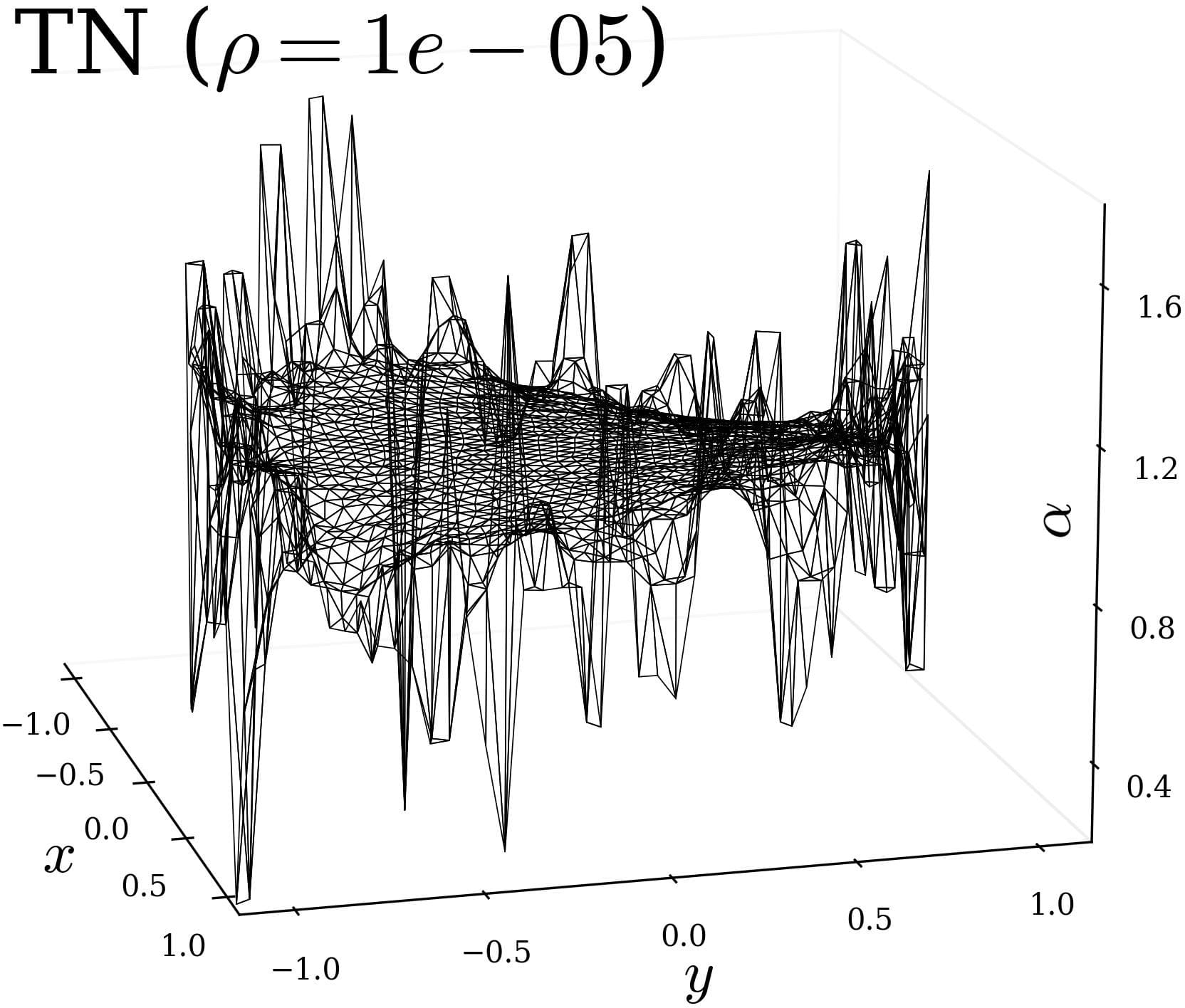}}
\caption{Influence of the Tikhonov parameter $\rho$ on the reconstruction when $\delta = 0.0003$, without gradient smoothing.}
\label{fig:effect_of_rho}
\end{figure}
%
%
%
\begin{figure}[htp!]
\centering
\resizebox{0.225\textwidth}{!}{\includegraphics{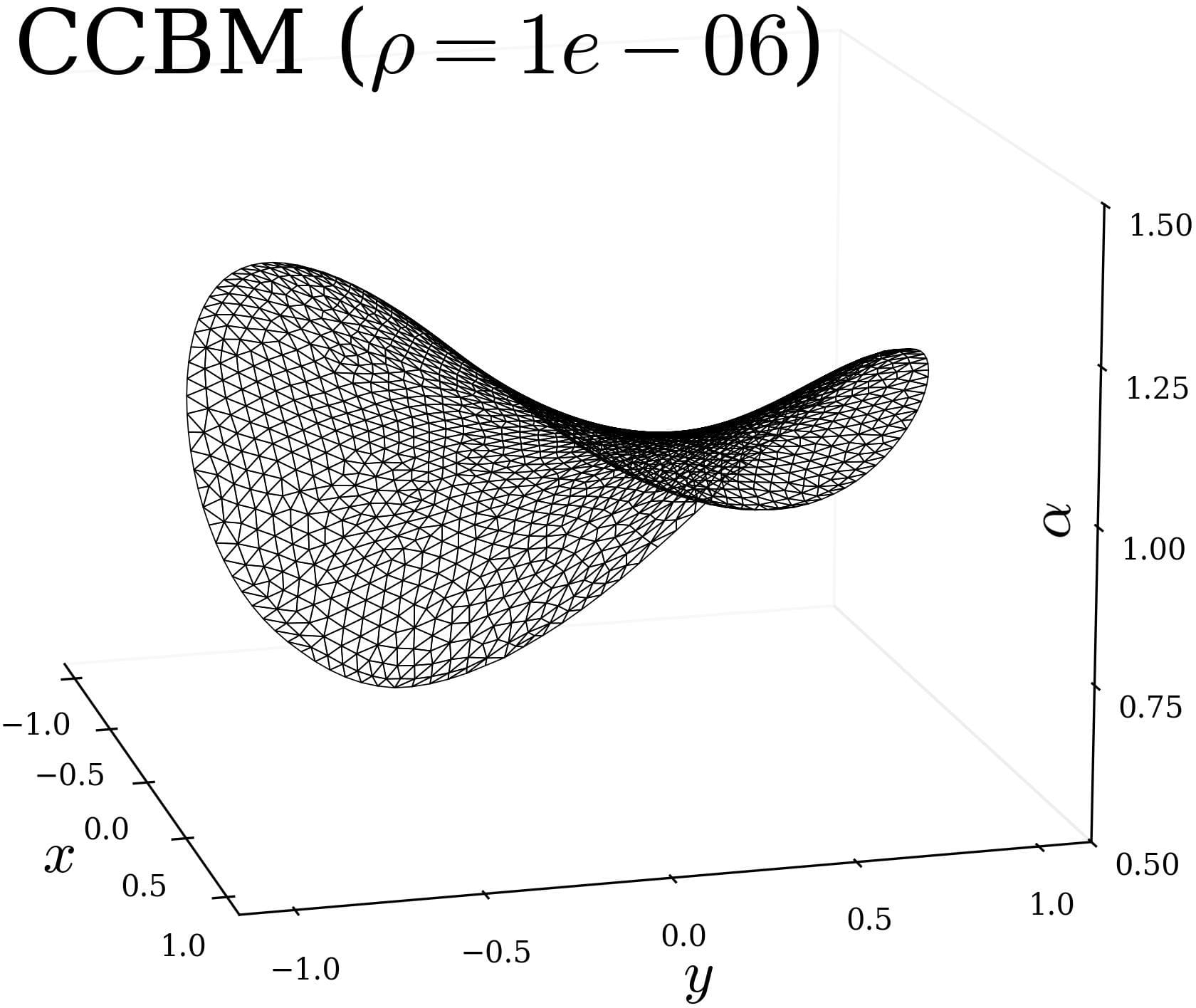}} \ 
\resizebox{0.225\textwidth}{!}{\includegraphics{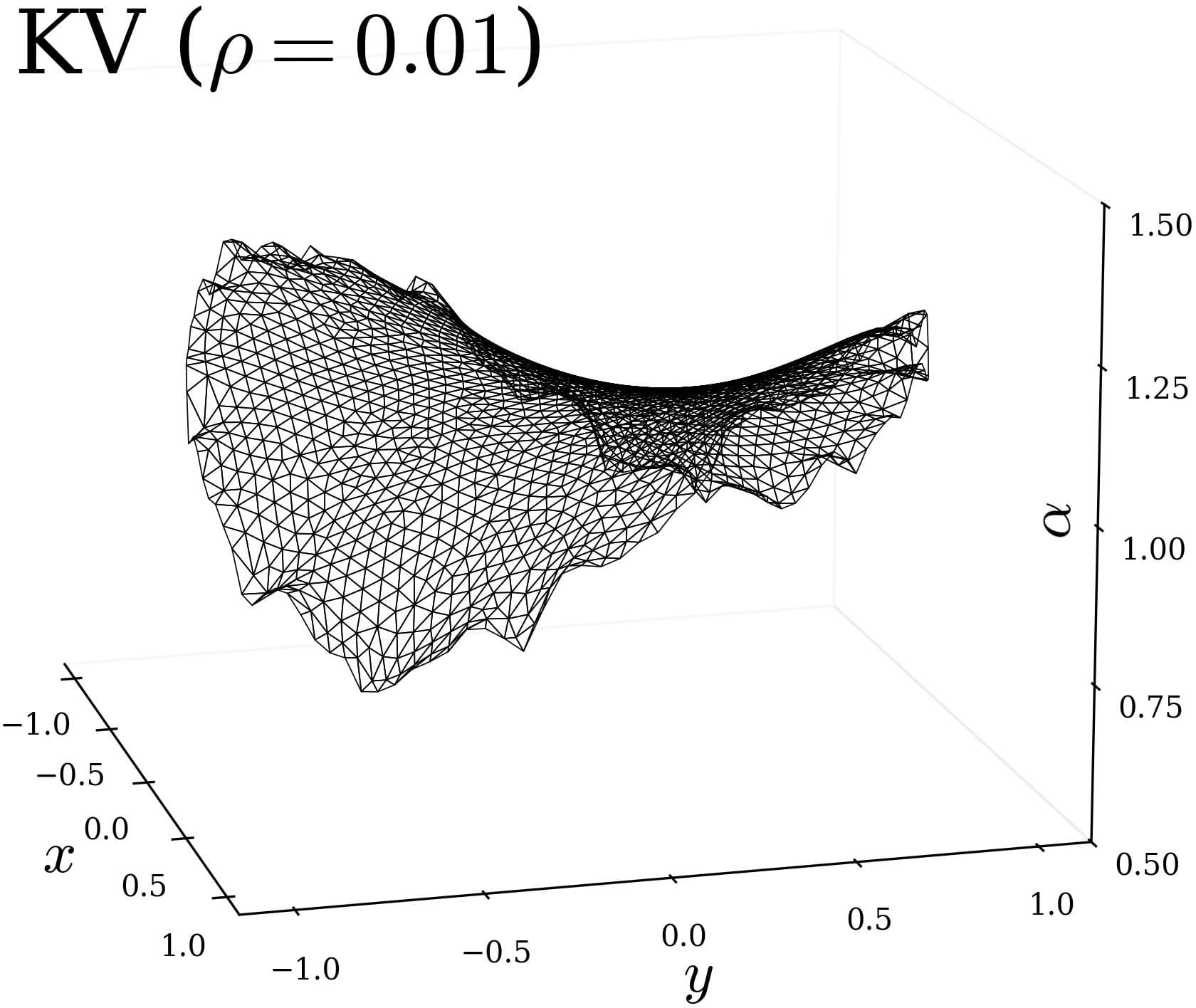}} \ 
\resizebox{0.225\textwidth}{!}{\includegraphics{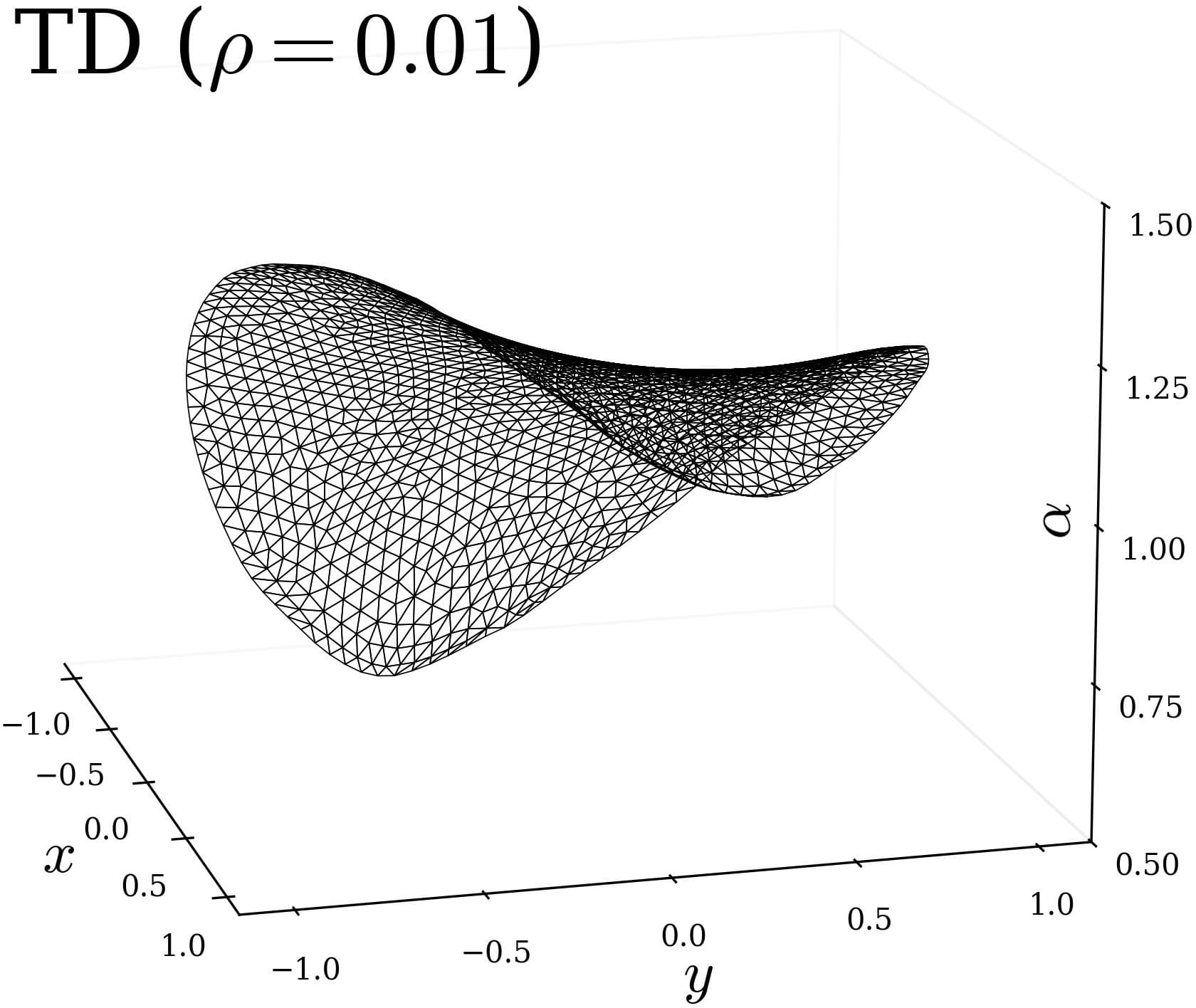}} \ 
\resizebox{0.225\textwidth}{!}{\includegraphics{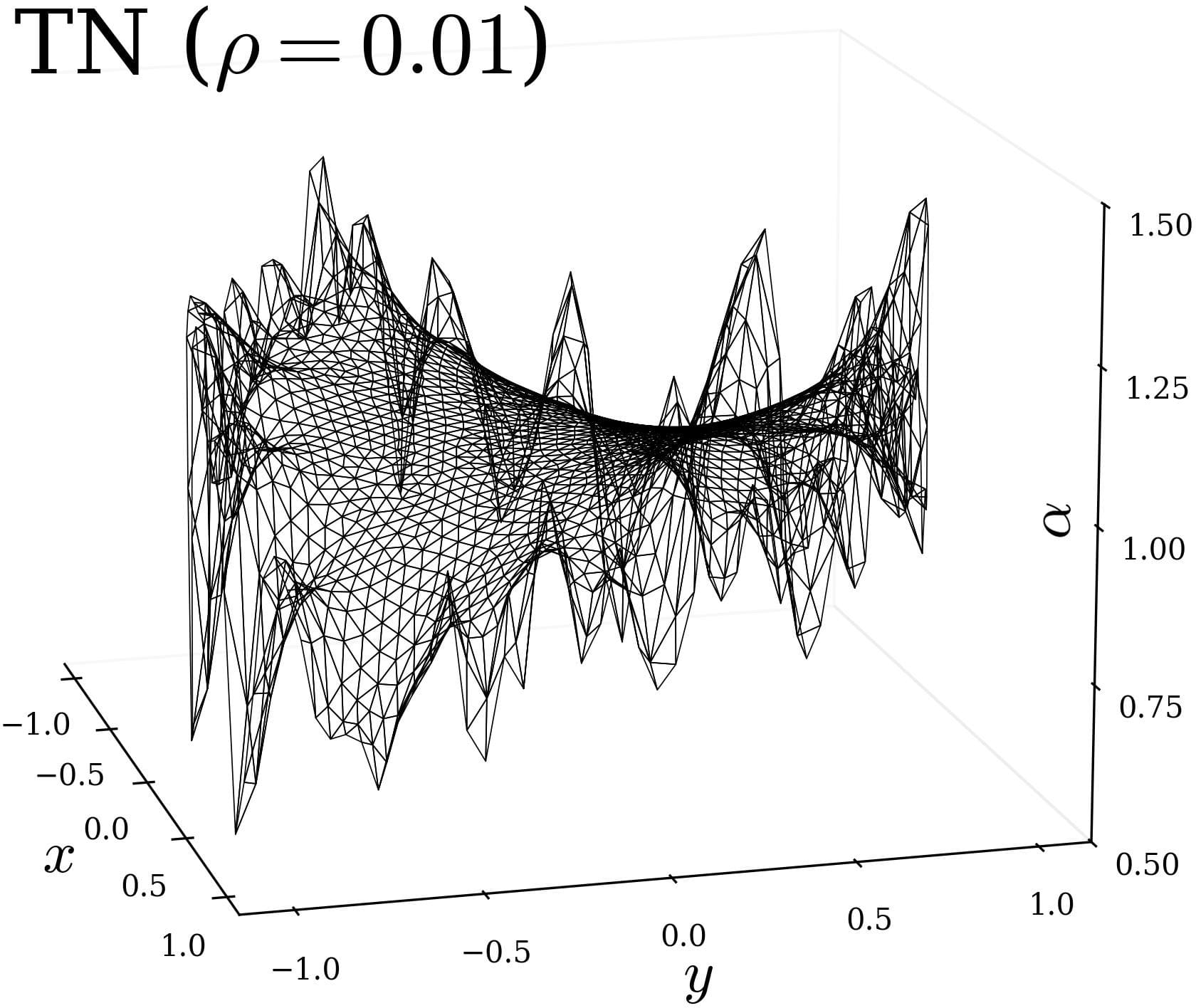}} \\[1em]
\resizebox{0.225\textwidth}{!}{\includegraphics{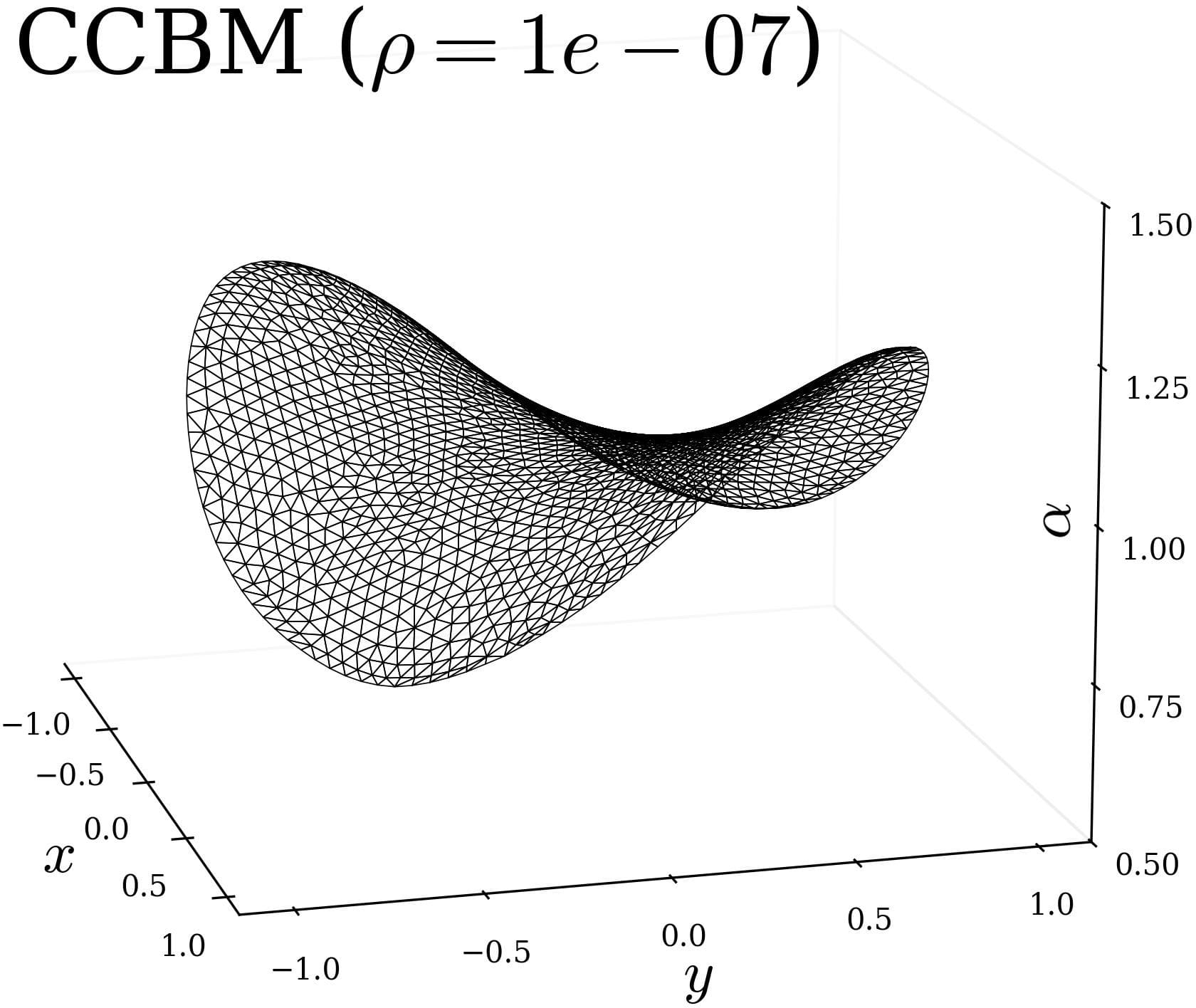}} \ 
\resizebox{0.225\textwidth}{!}{\includegraphics{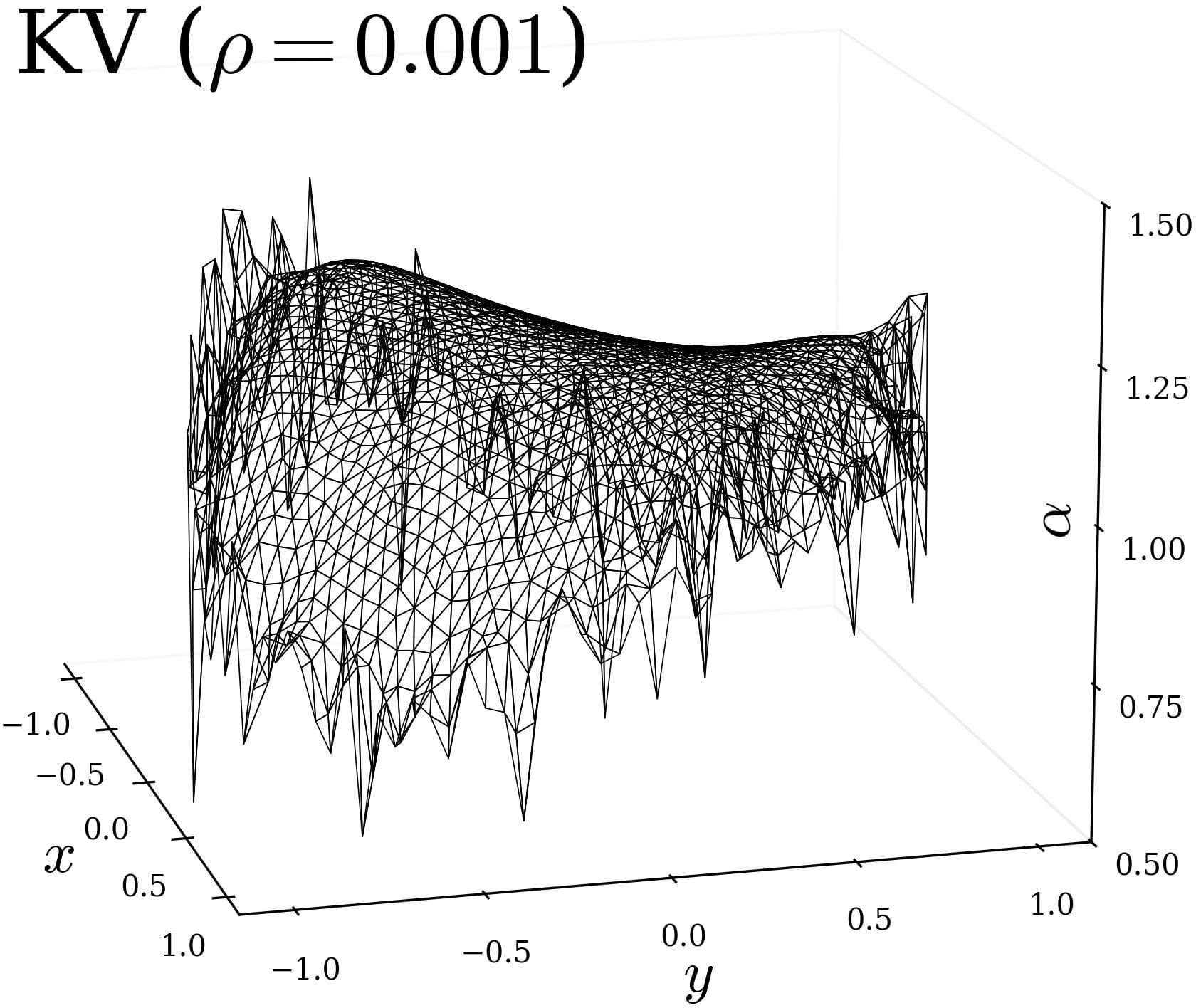}} \ 
\resizebox{0.225\textwidth}{!}{\includegraphics{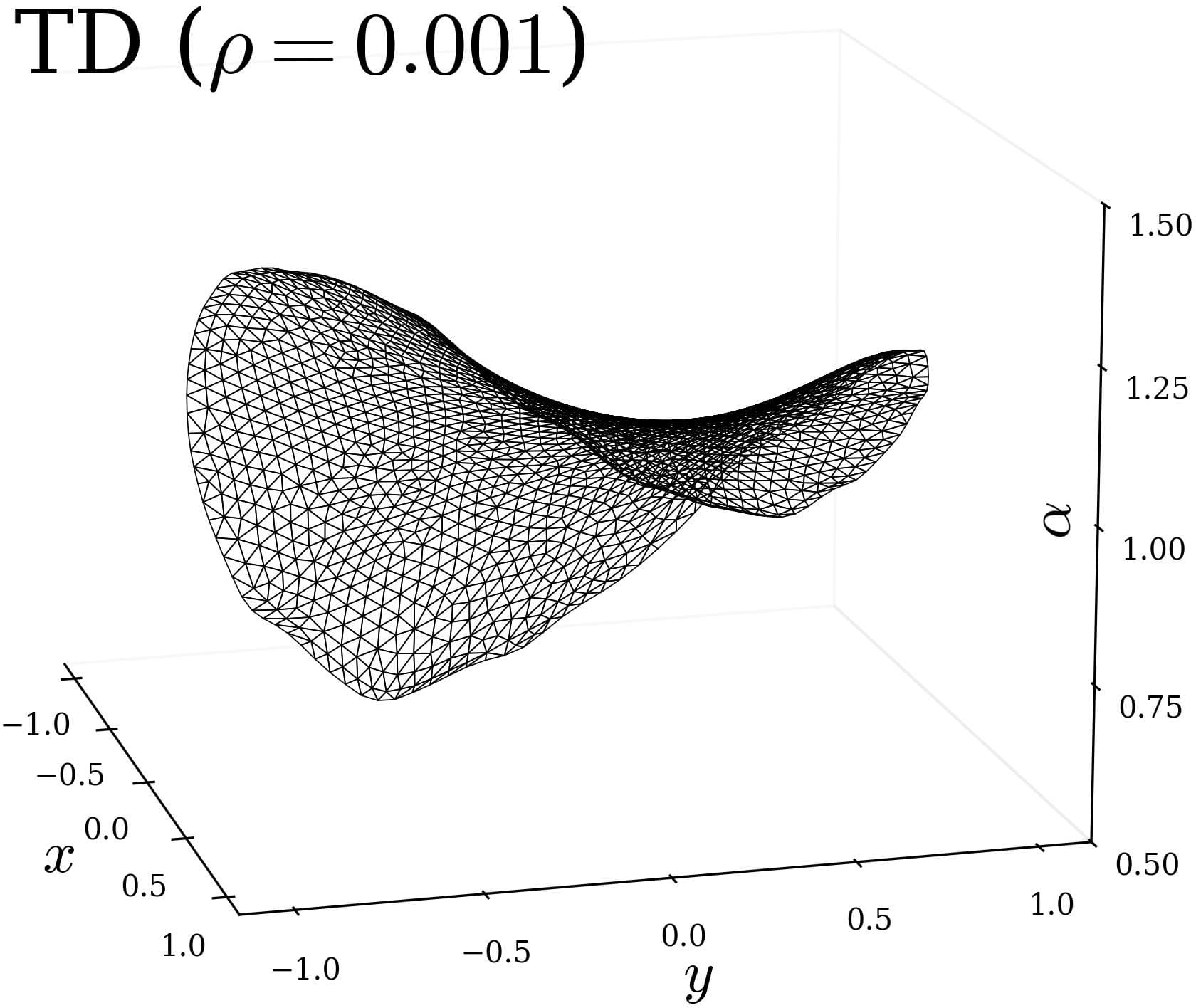}} \ 
\resizebox{0.225\textwidth}{!}{\includegraphics{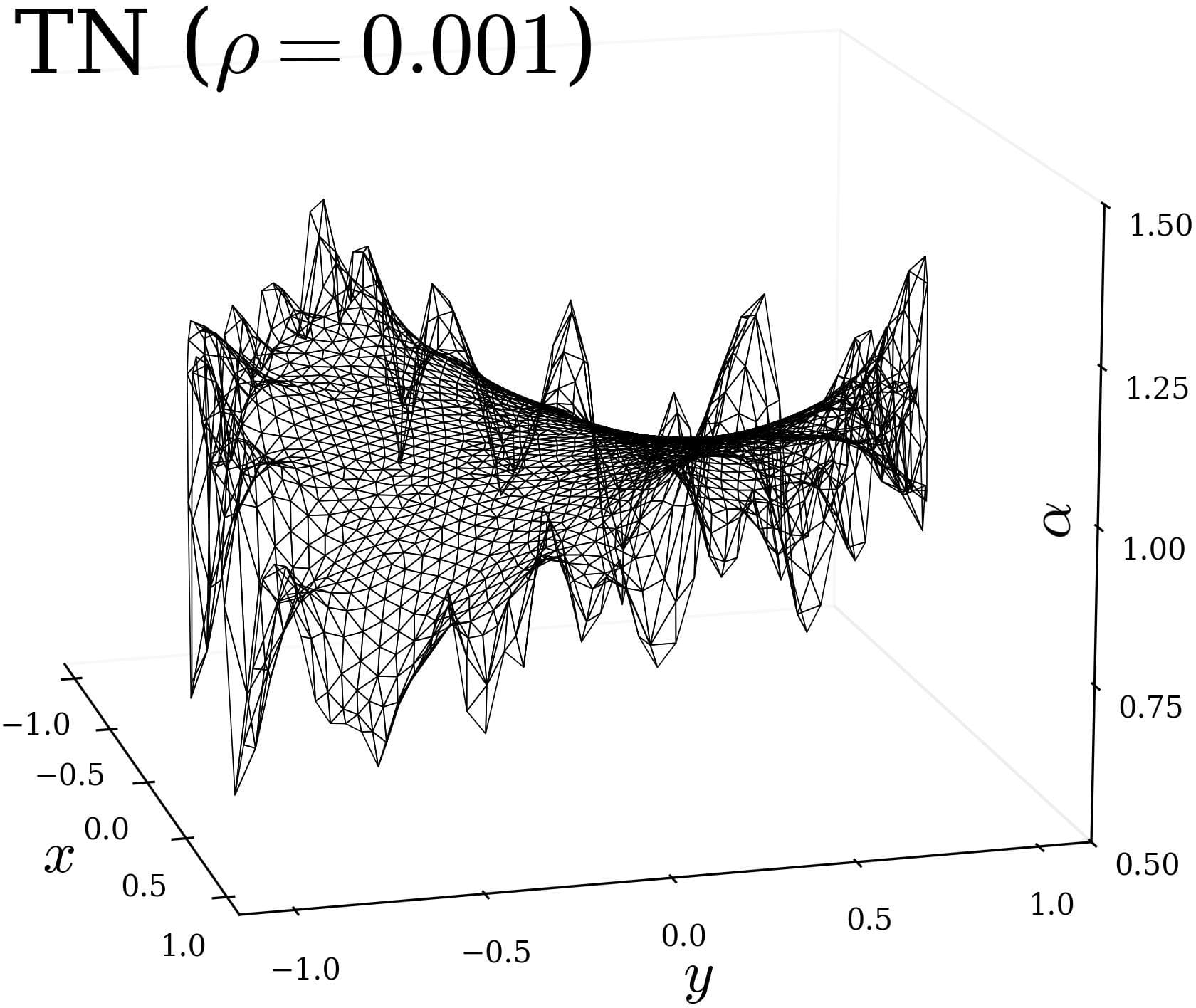}} \\[1em]
\resizebox{0.225\textwidth}{!}{\includegraphics{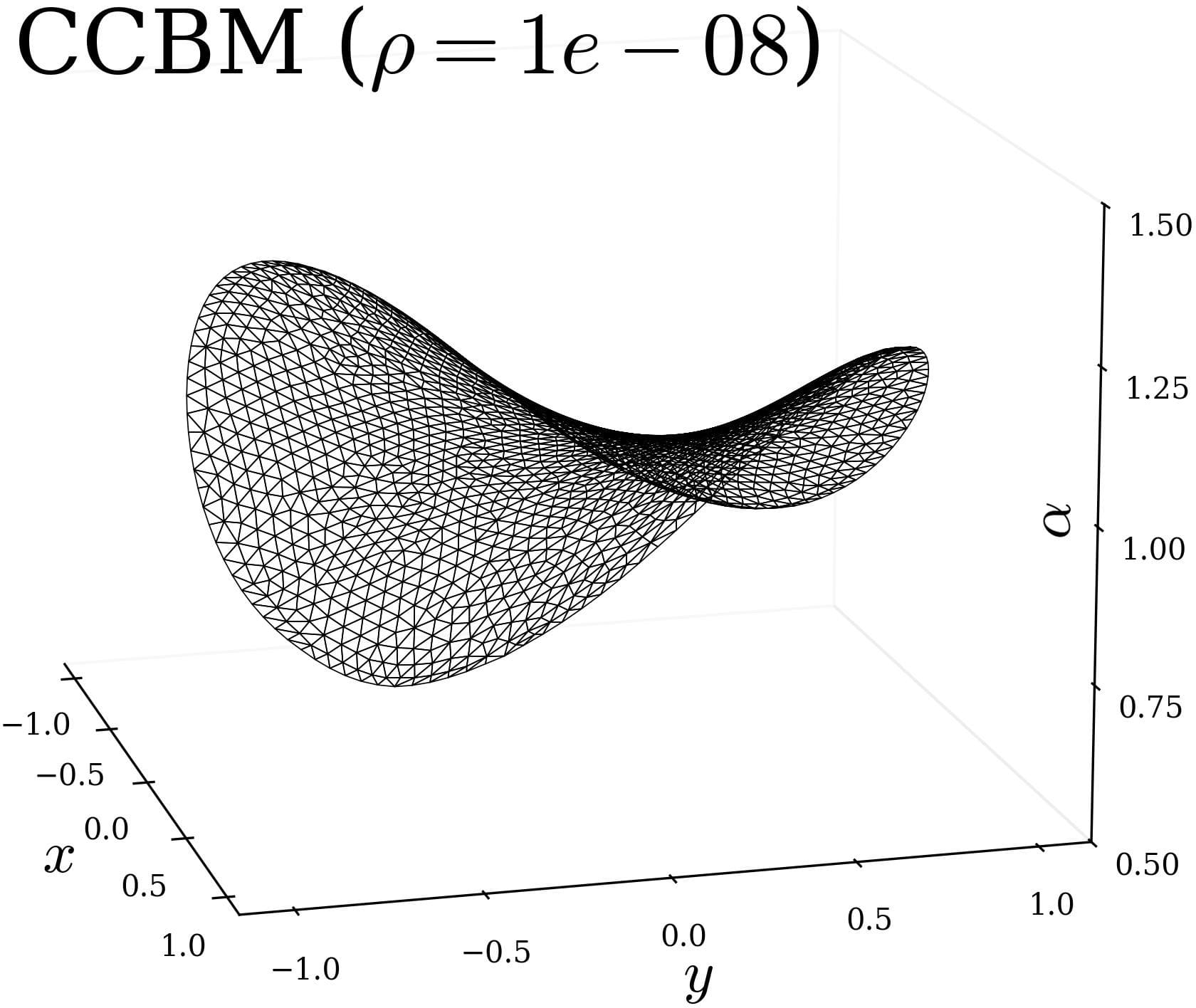}} \ 
\resizebox{0.225\textwidth}{!}{\includegraphics{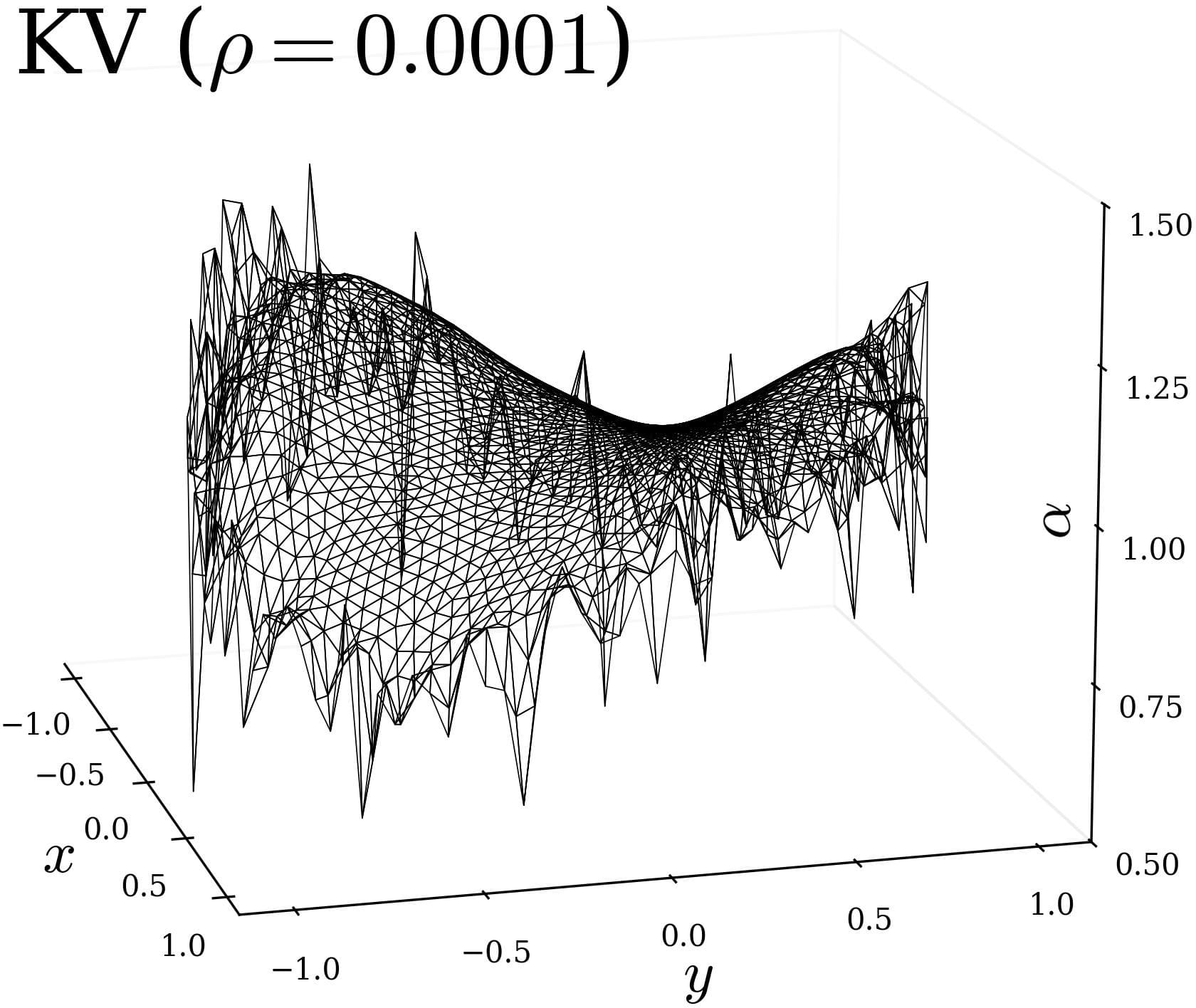}} \ 
\resizebox{0.225\textwidth}{!}{\includegraphics{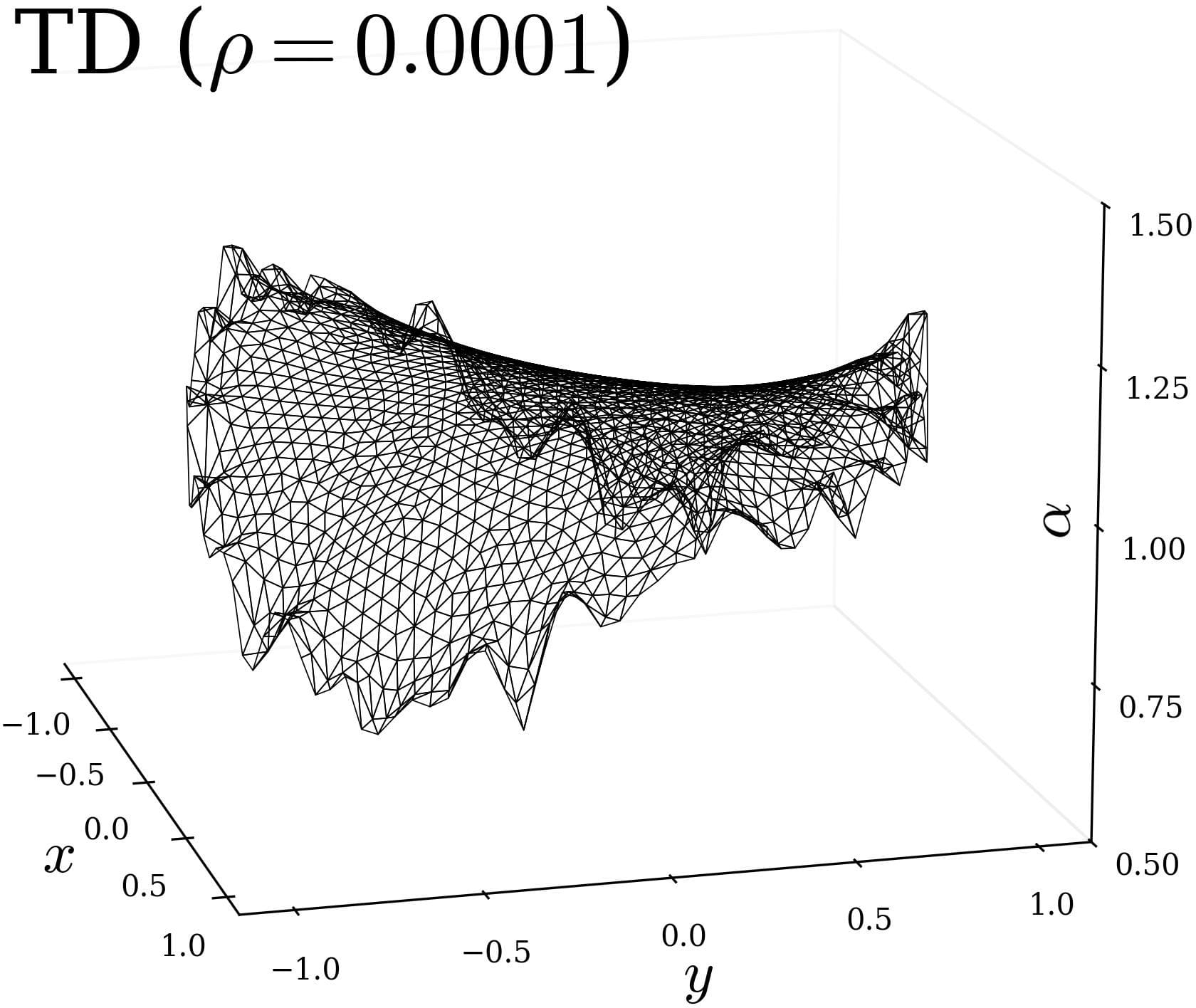}} \ 
\resizebox{0.225\textwidth}{!}{\includegraphics{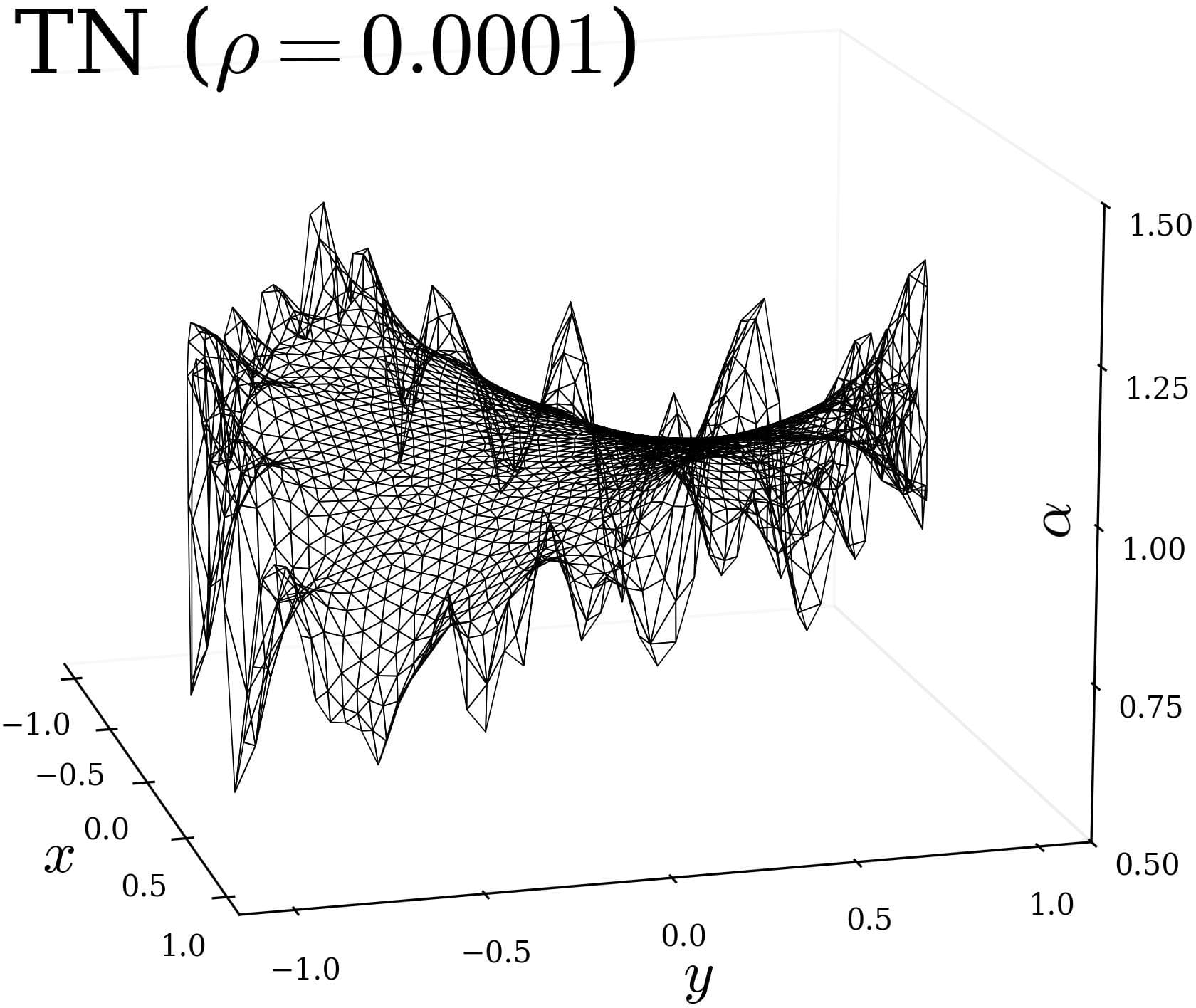}} \\[1em]
\resizebox{0.225\textwidth}{!}{\includegraphics{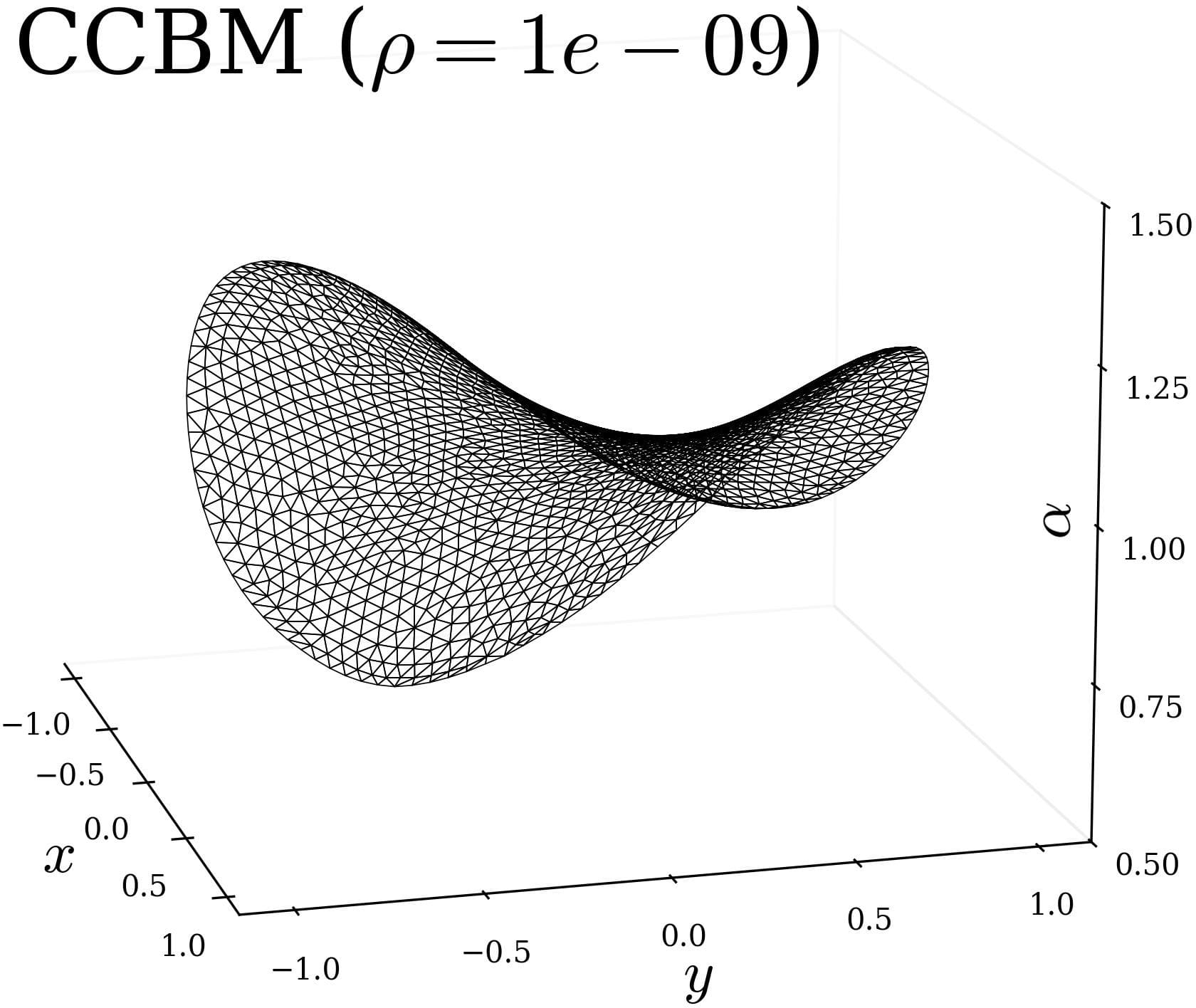}} \ 
\resizebox{0.225\textwidth}{!}{\includegraphics{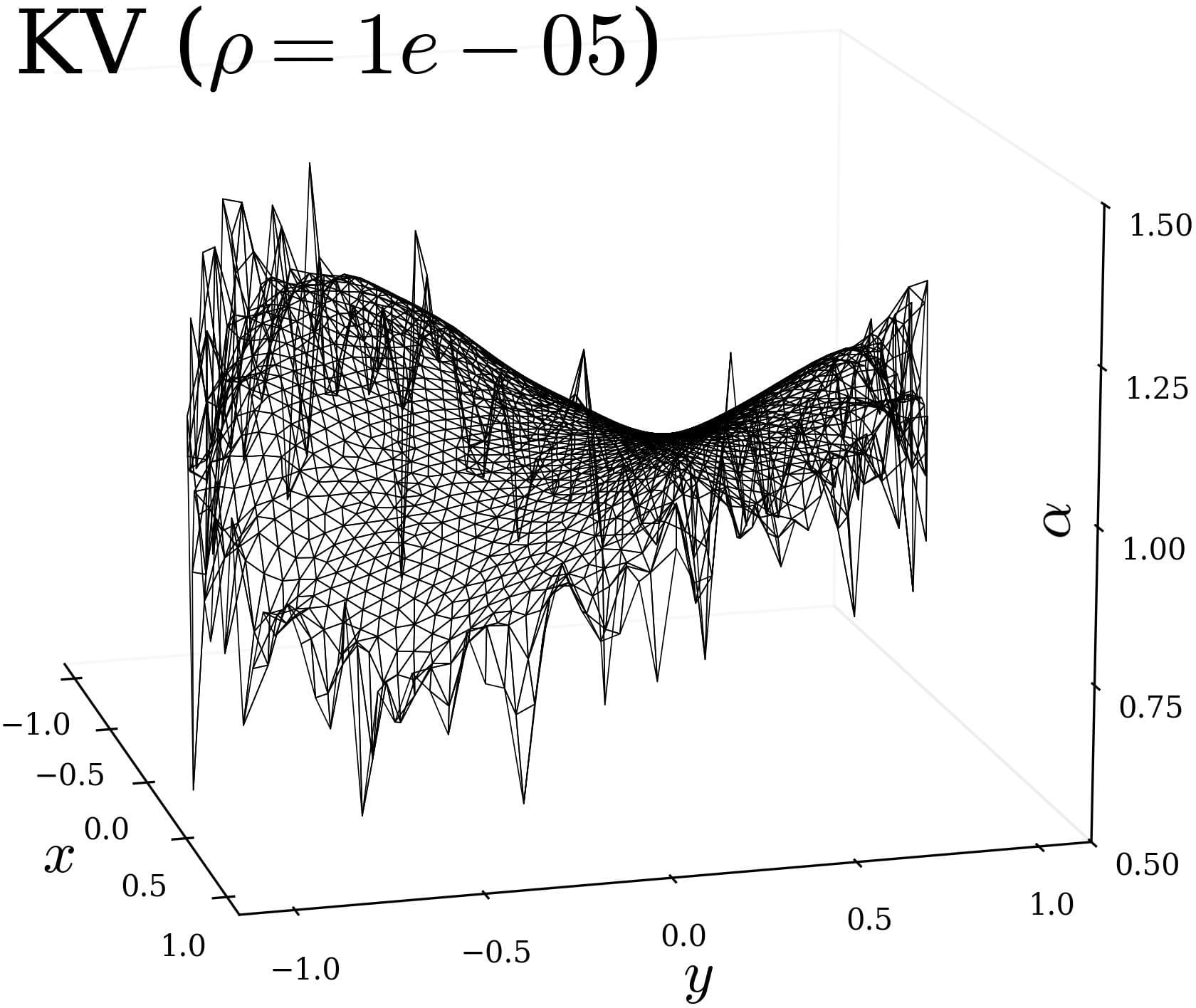}} \
\resizebox{0.225\textwidth}{!}{\includegraphics{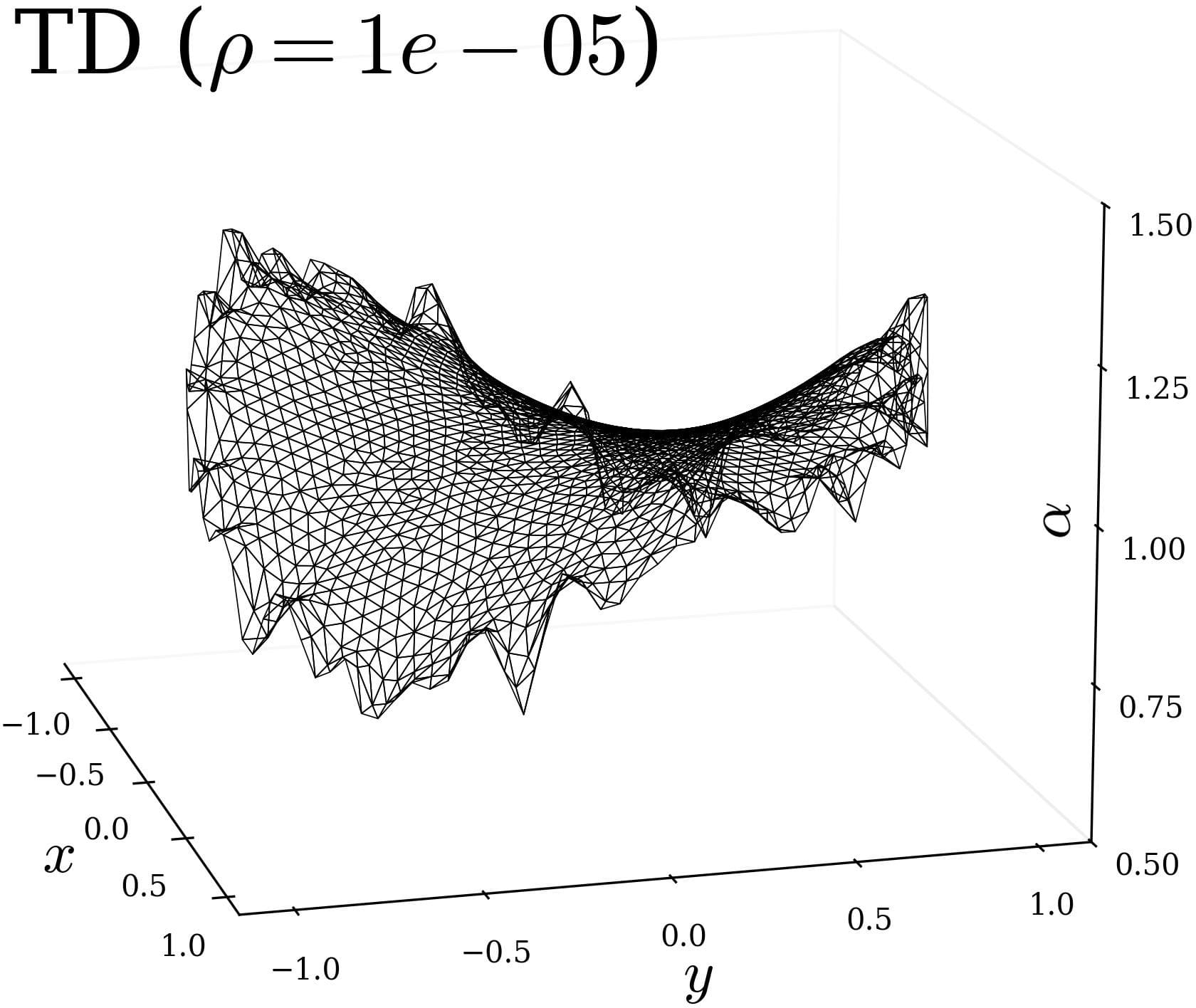}} \
\resizebox{0.225\textwidth}{!}{\includegraphics{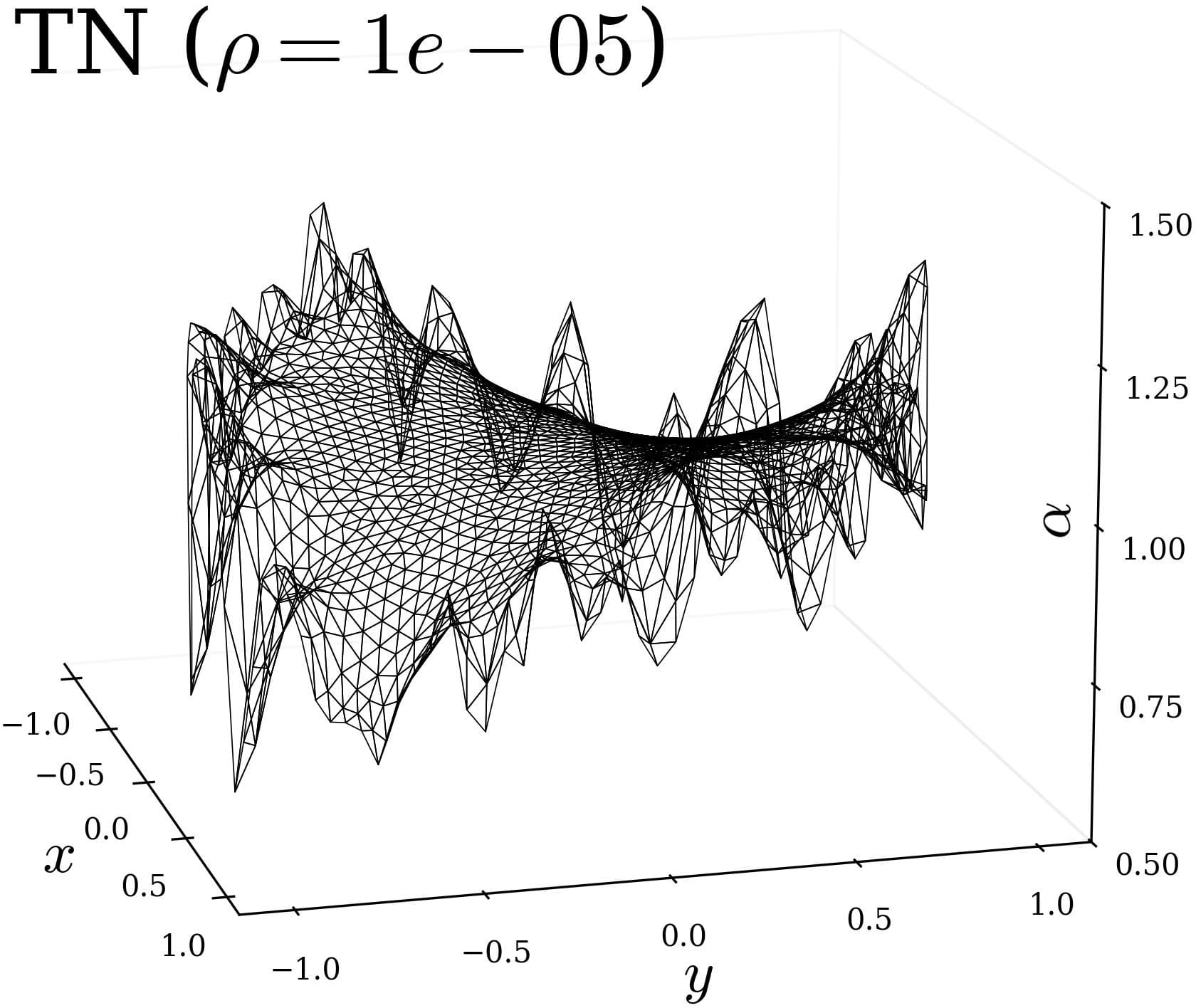}}
\caption{Influence of the Tikhonov parameter $\rho$ on the reconstruction when $\delta = 0.0003$, with gradient smoothing ($\mu = 0.01$).}
\label{fig:effect_of_rho_with_regularization}
\end{figure}
%
%
\begin{figure}[htp!]
\centering
\resizebox{0.225\textwidth}{!}{\includegraphics{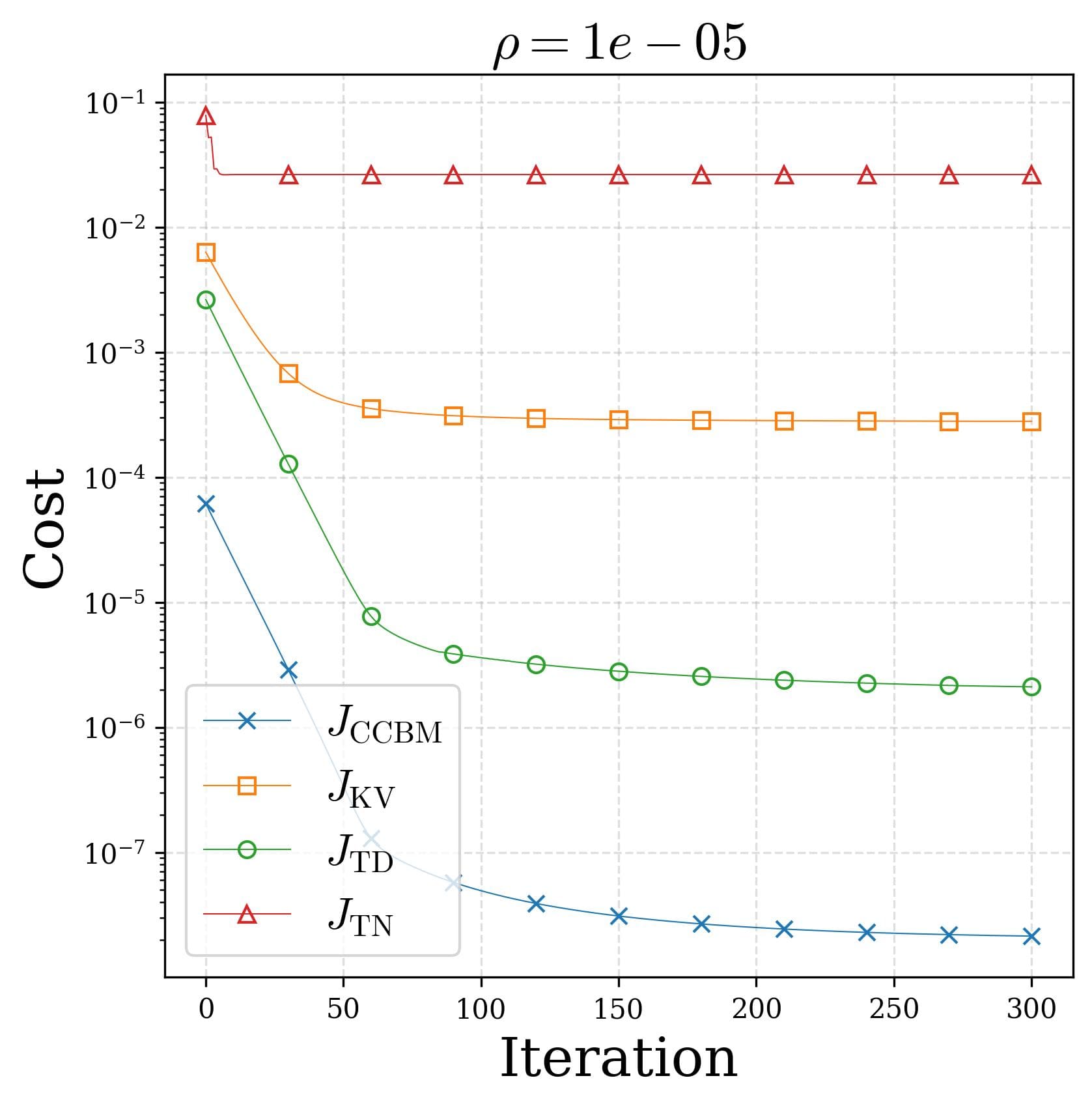}} 
\resizebox{0.225\textwidth}{!}{\includegraphics{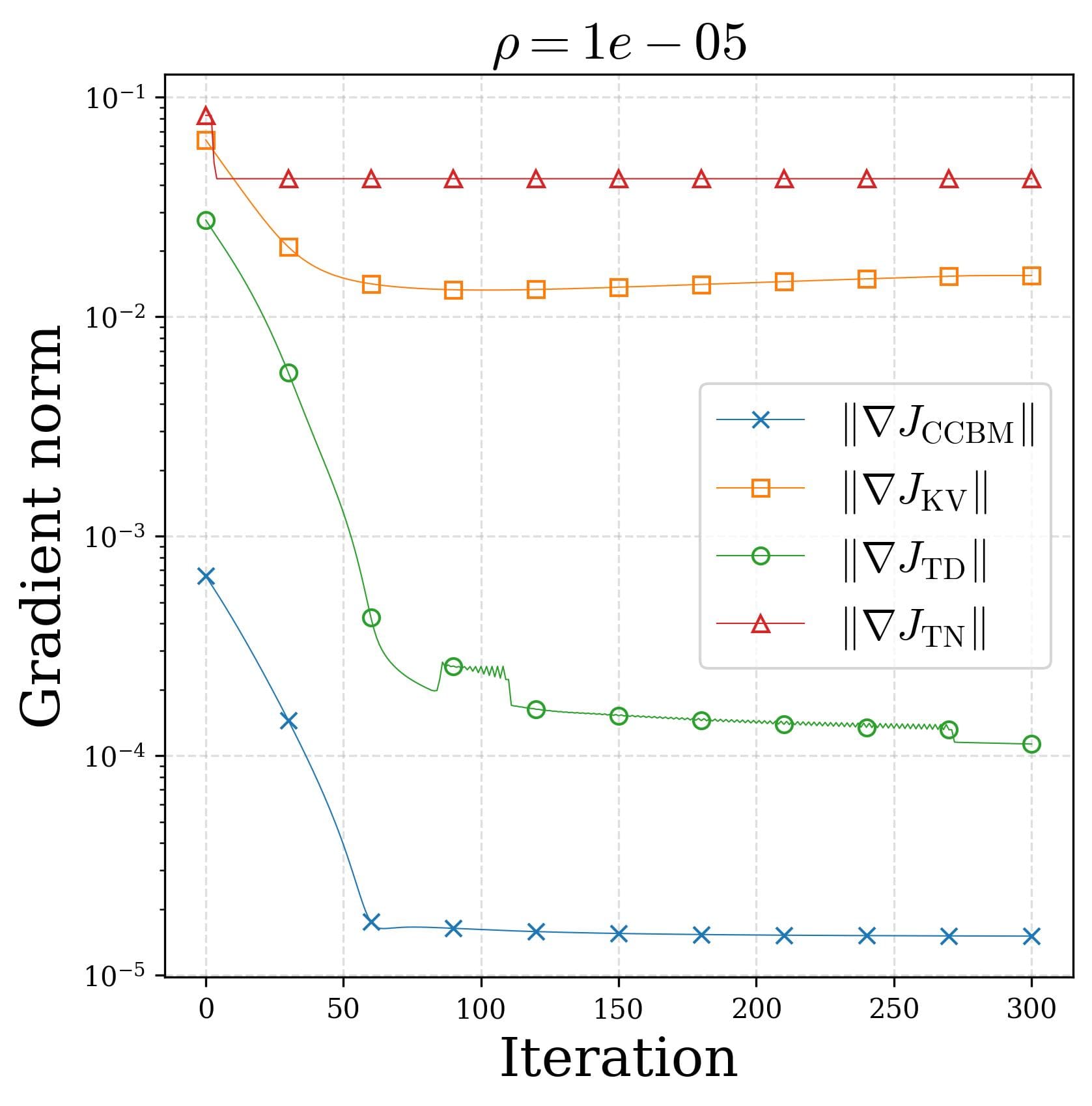}} \
\resizebox{0.225\textwidth}{!}{\includegraphics{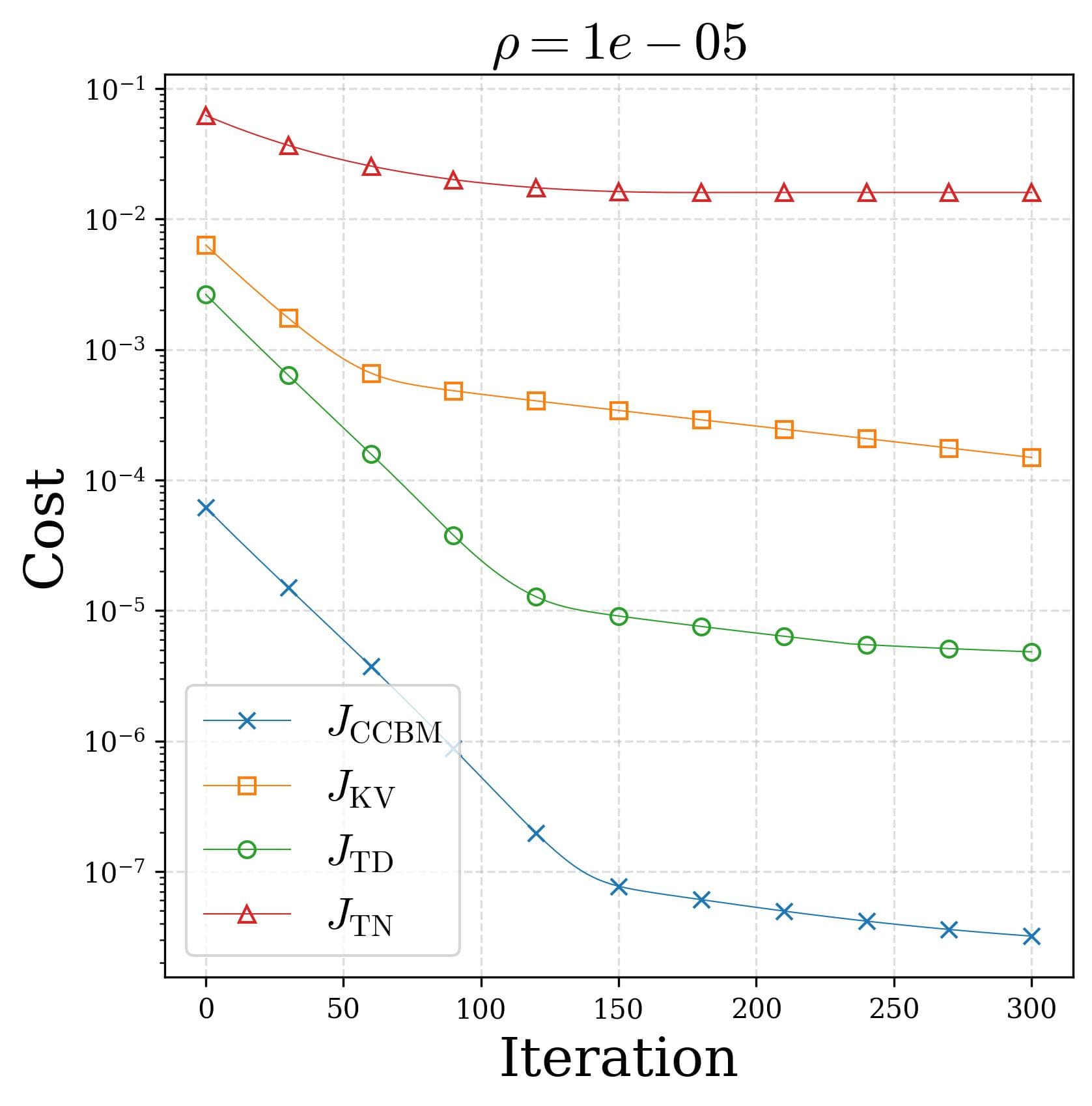}} 
\resizebox{0.225\textwidth}{!}{\includegraphics{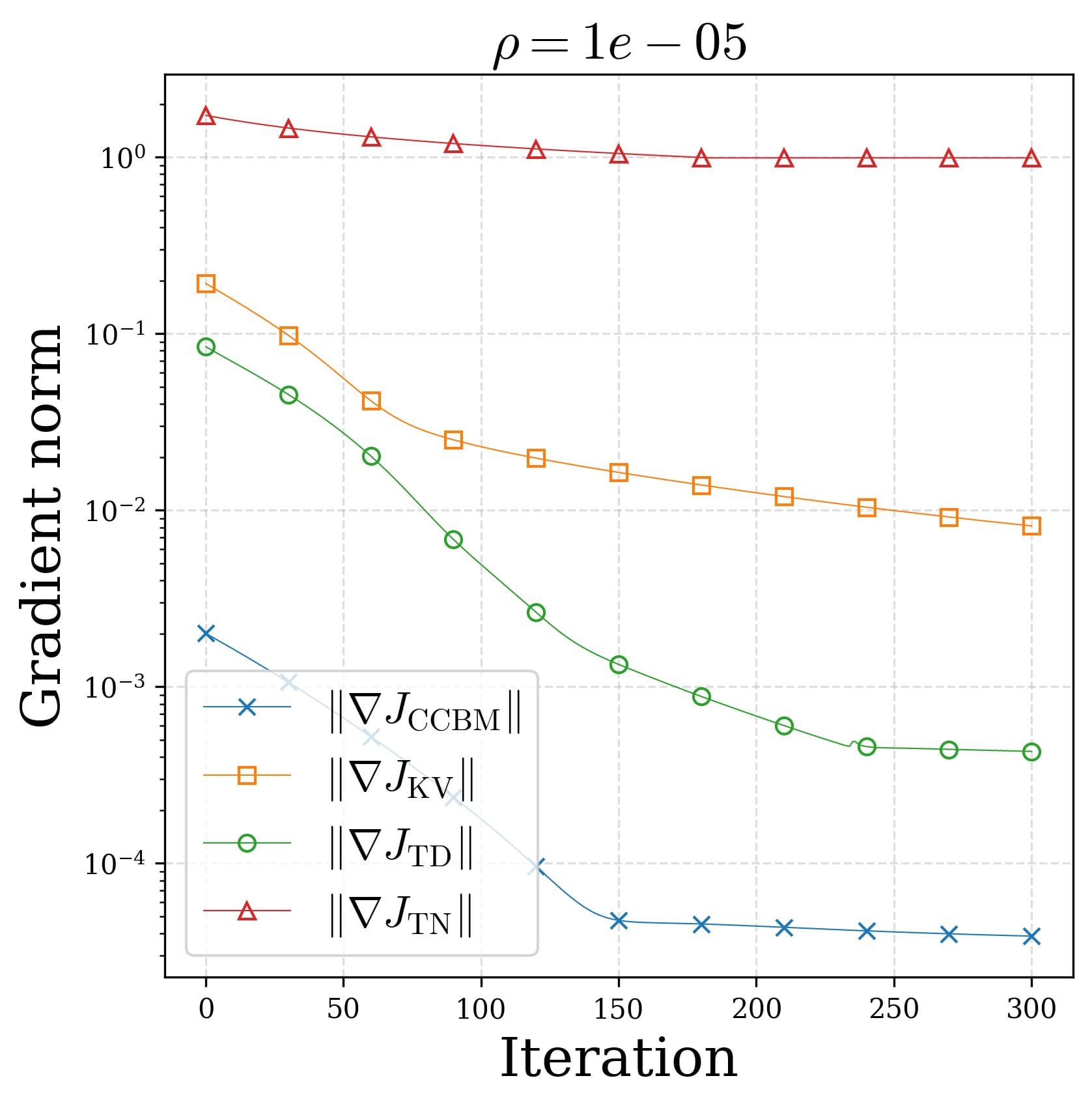}}
\caption{Histories of the cost functional and gradient norm corresponding to Figure~\ref{fig:effect_of_rho_with_regularization}. Left: without gradient smoothing; right: with gradient smoothing ($\mu = 0.01$).}
\label{fig:effect_of_rho_cost_and_gradient}
\end{figure}
\subsubsection{Effect of boundary input data under varying noise levels}
We next examine the effect of the boundary input data on the reconstruction under noisy measurements. Figure~\ref{fig:measurements_with_trigo_input} illustrates the boundary data generated by the oscillatory input $g=\sin(\pi x)\sin(\pi y)$ for increasing noise levels $\delta$, which introduce significant high-frequency perturbations compared to the constant input case.
%
\begin{figure}[htp!]
\centering
\resizebox{0.225\textwidth}{!}{\includegraphics{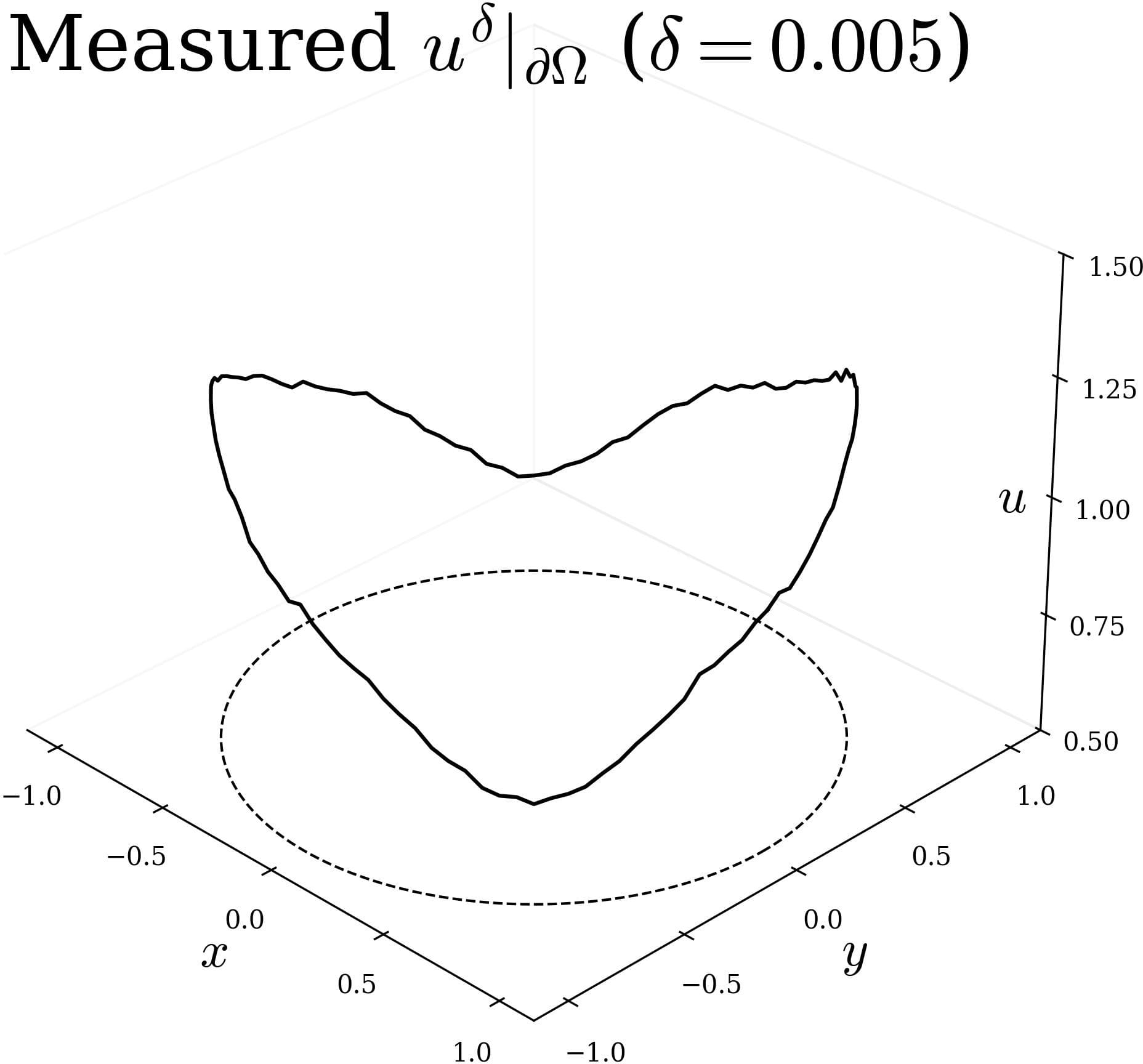}} \ 
\resizebox{0.225\textwidth}{!}{\includegraphics{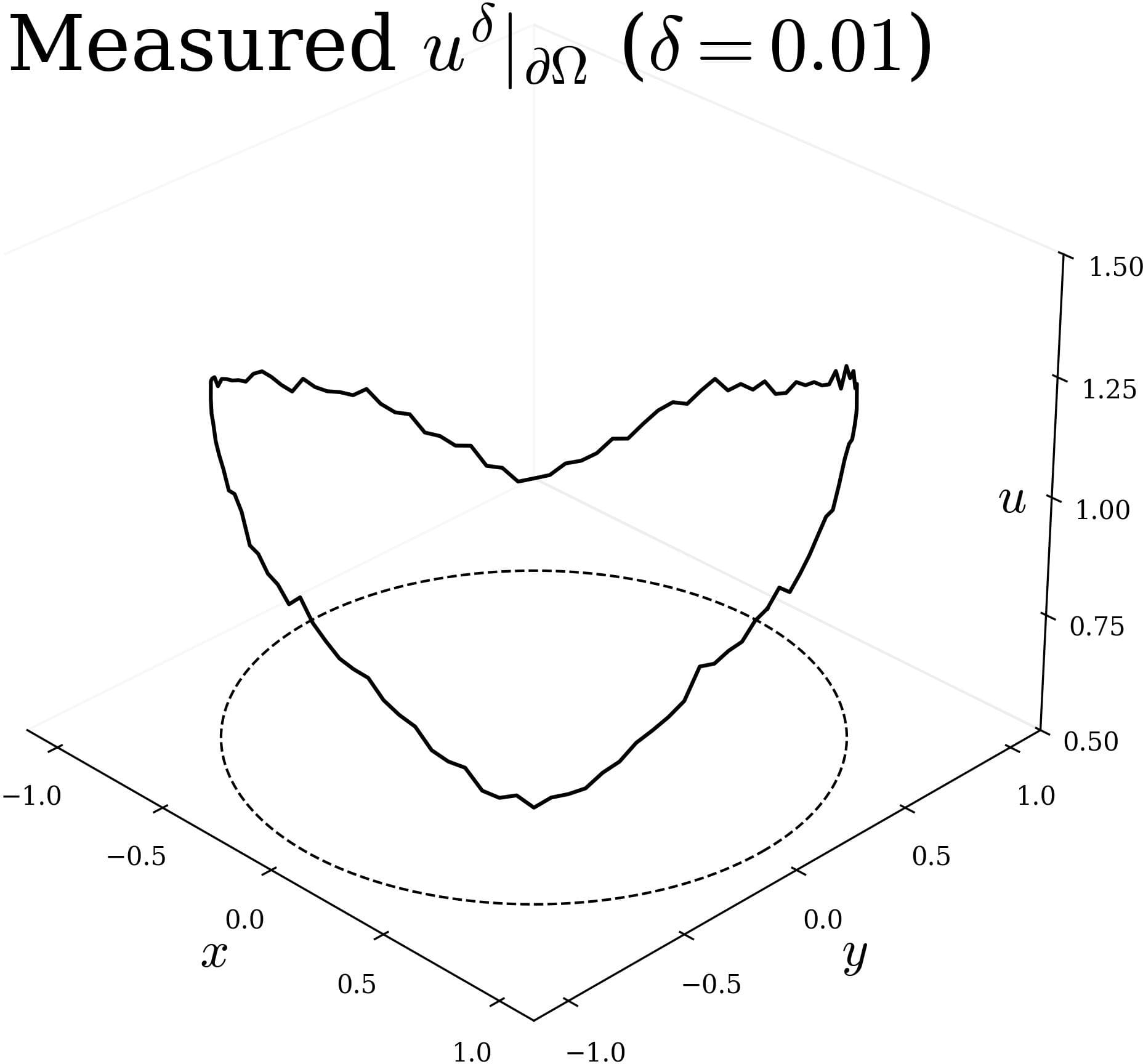}} \
\resizebox{0.225\textwidth}{!}{\includegraphics{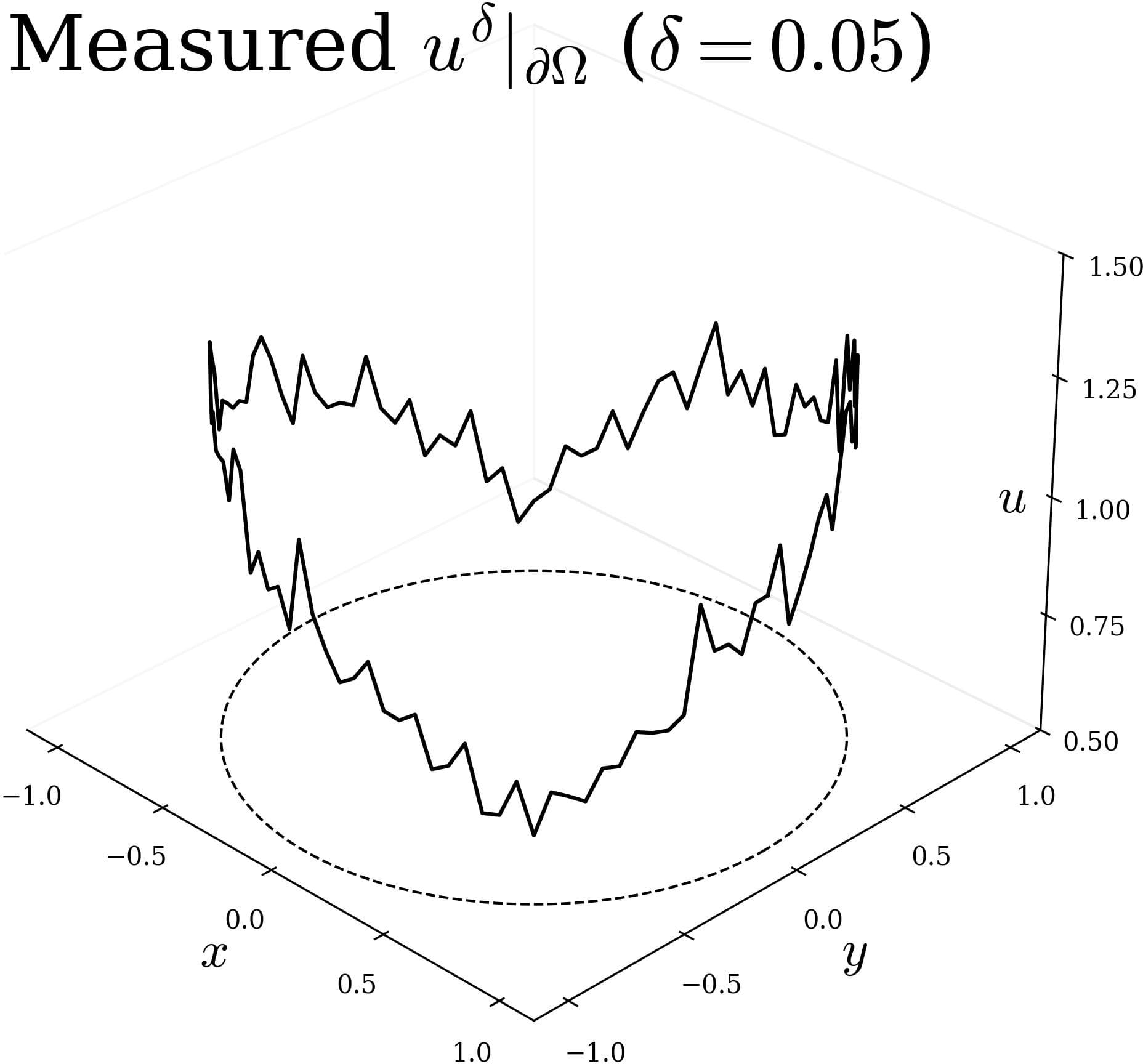}} \
\resizebox{0.225\textwidth}{!}{\includegraphics{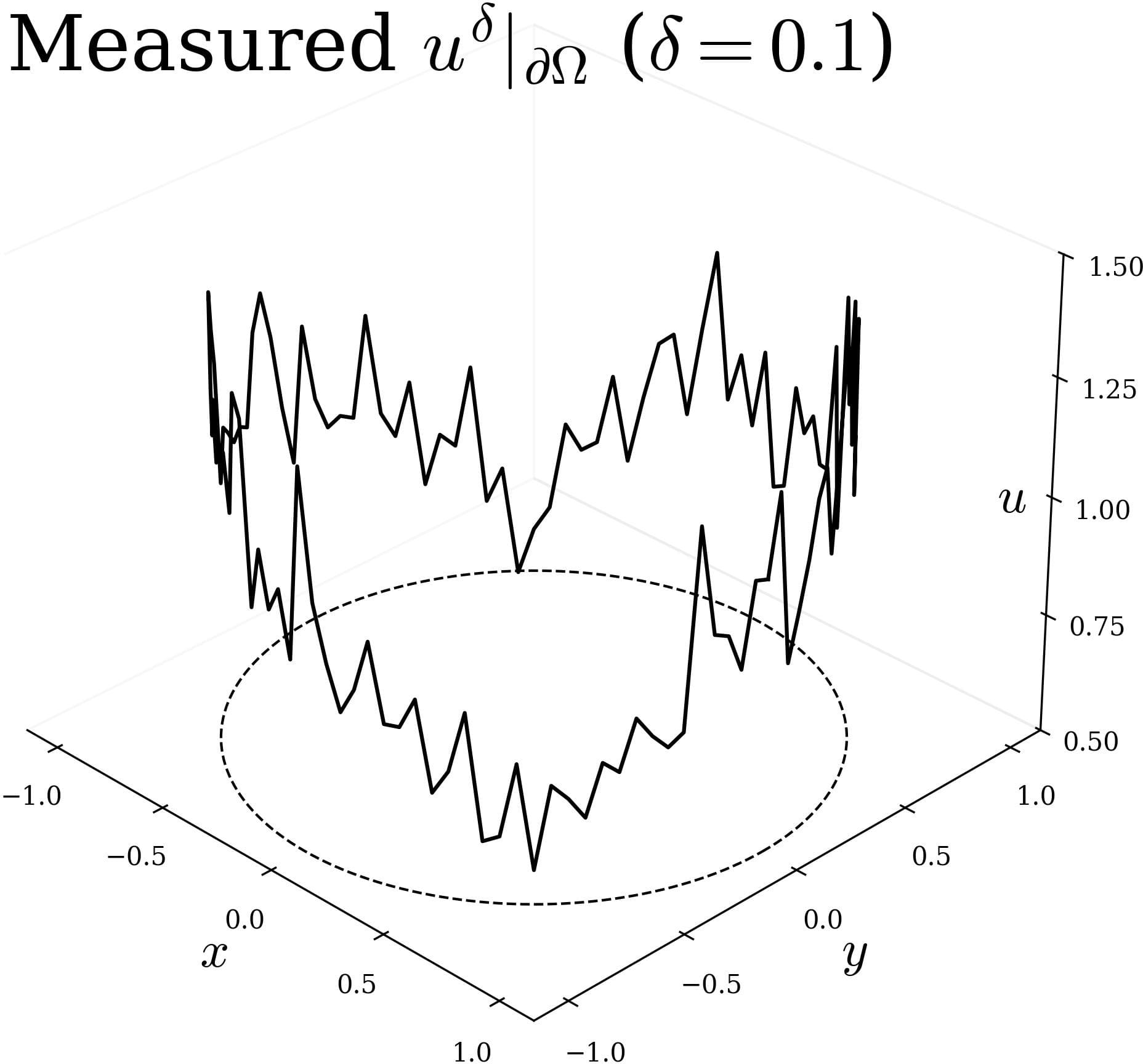}}
\caption{Boundary measurements at different noise levels $\delta$ with input data $g=\sin(\pi x)\sin(\pi y)$.}
\label{fig:measurements_with_trigo_input}
\end{figure}
The corresponding reconstructions are shown in Figures~\ref{fig:effect_of_input_data_g_constant} and~\ref{fig:effect_of_input_data_g_trigo}. Despite the increased oscillatory content and noise contamination, the CCBM formulation remains remarkably robust with respect to the choice of the Tikhonov parameter $\rho$. In particular, CCBM consistently produces smooth and accurate reconstructions over a wide range of $\rho$, with only minor sensitivity to parameter tuning.

By contrast, the KV, TD, and TN formulations exhibit a much stronger dependence on $\rho$. As $\rho$ decreases, these methods develop pronounced oscillations and localized instabilities that are further amplified by the noisy oscillatory boundary input. Although gradient smoothing alleviates some of these effects, the resulting reconstructions remain noticeably less stable than those obtained with CCBM.

Figure~\ref{fig:effect_of_input_data_g_trigo_with_noise_0.005} shows the influence of the Tikhonov parameter $\rho$ for the oscillatory input $g=\sin(\pi x)\sin(\pi y)$ at $\delta=0.005$. The CCBM reconstructions remain smooth and geometrically accurate over several orders of magnitude in $\rho$, with no visible oscillations or loss of shape integrity. This indicates a weak sensitivity to regularization tuning and an intrinsic stabilization effect of the CCBM formulation.

By contrast, the KV and TD methods exhibit a strong dependence on $\rho$. As $\rho$ decreases, both approaches develop increasingly pronounced oscillations and geometric distortions, which are amplified by the oscillatory boundary data. The TD formulation is particularly sensitive, showing severe high-frequency artifacts for small $\rho$. The TN approach suppresses oscillations effectively but produces overly diffusive reconstructions that deviate substantially from the true shape.

Figure~\ref{fig:reconstructions_under_higher_noise_levels} illustrates the effect of increasing noise levels $\delta=0.01,0.03,0.05,0.10$. The CCBM reconstructions remain stable as $\delta$ increases, preserving the main geometric features and exhibiting a gradual, well-controlled degradation. In contrast, the TD-based reconstructions deteriorate rapidly with increasing noise, leading to strong distortions and loss of physical relevance despite gradient smoothing.

Overall, Figures~\ref{fig:effect_of_input_data_g_constant} and~\ref{fig:reconstructions_under_higher_noise_levels} demonstrate that oscillatory and noisy boundary data severely affect classical formulations, whereas CCBM consistently delivers robust reconstructions across wide ranges of $\rho$ and $\delta$.

\begin{figure}[htp!]
\centering
\resizebox{0.225\textwidth}{!}{\includegraphics{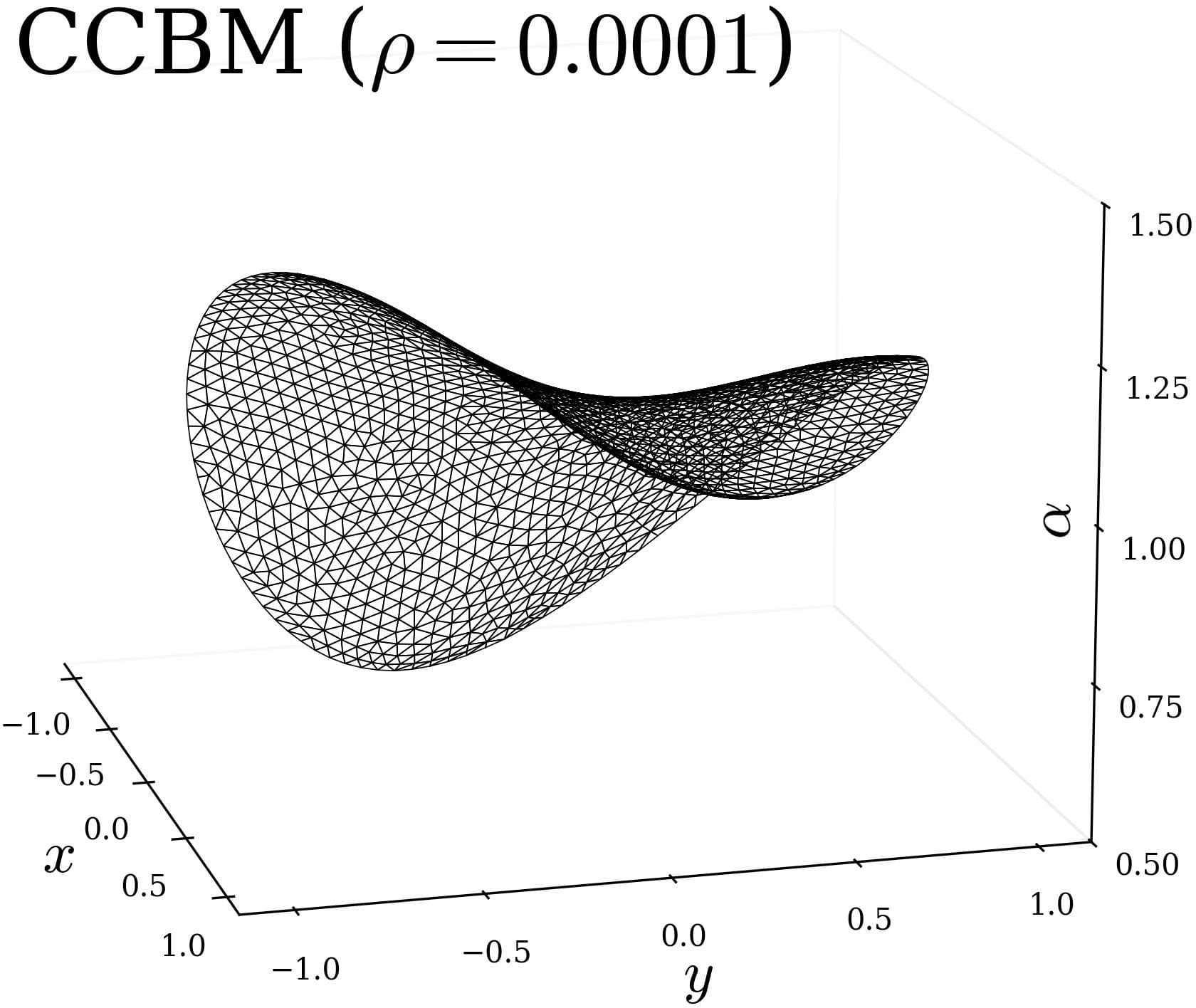}} \ 
\resizebox{0.225\textwidth}{!}{\includegraphics{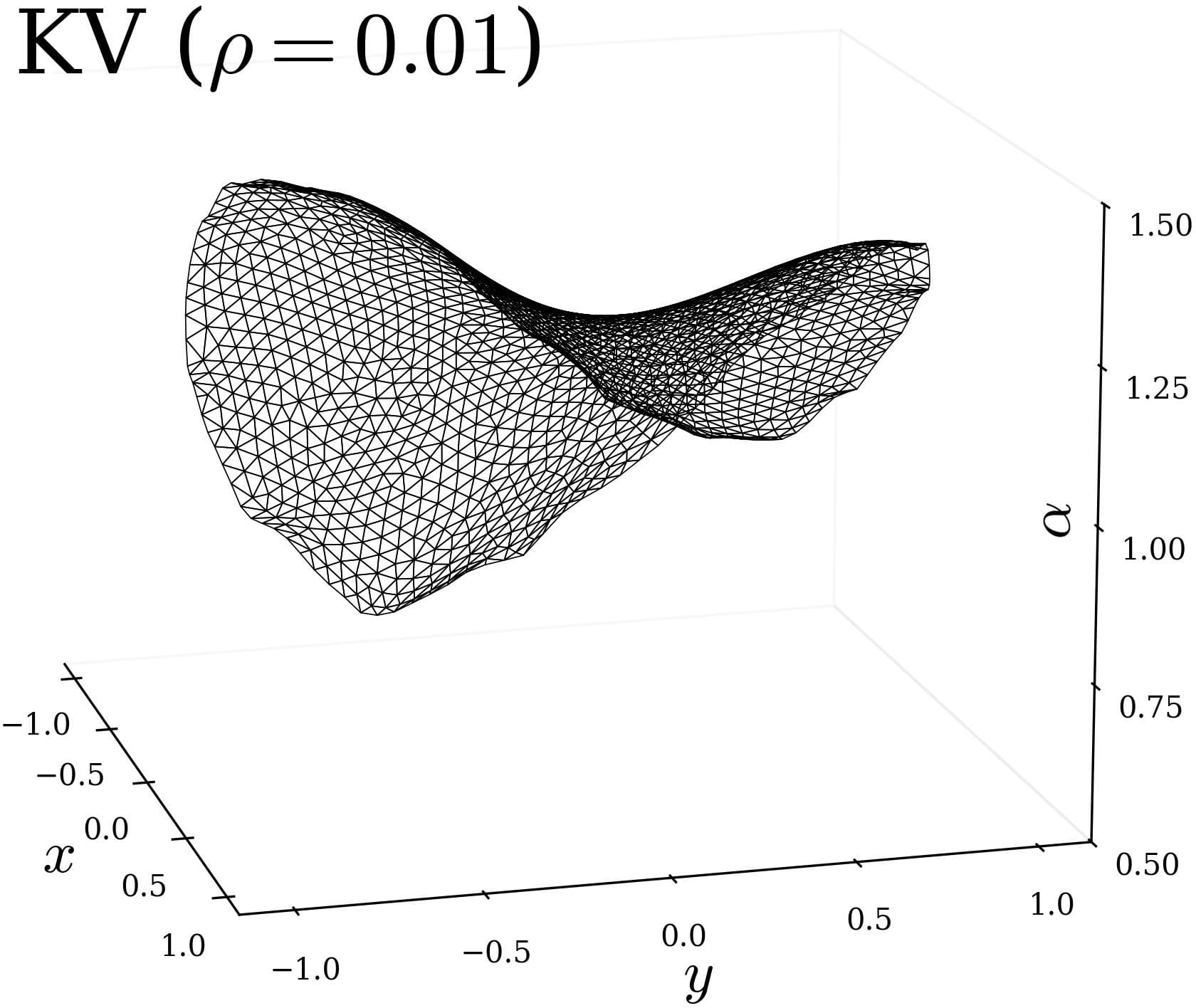}} \ 
\resizebox{0.225\textwidth}{!}{\includegraphics{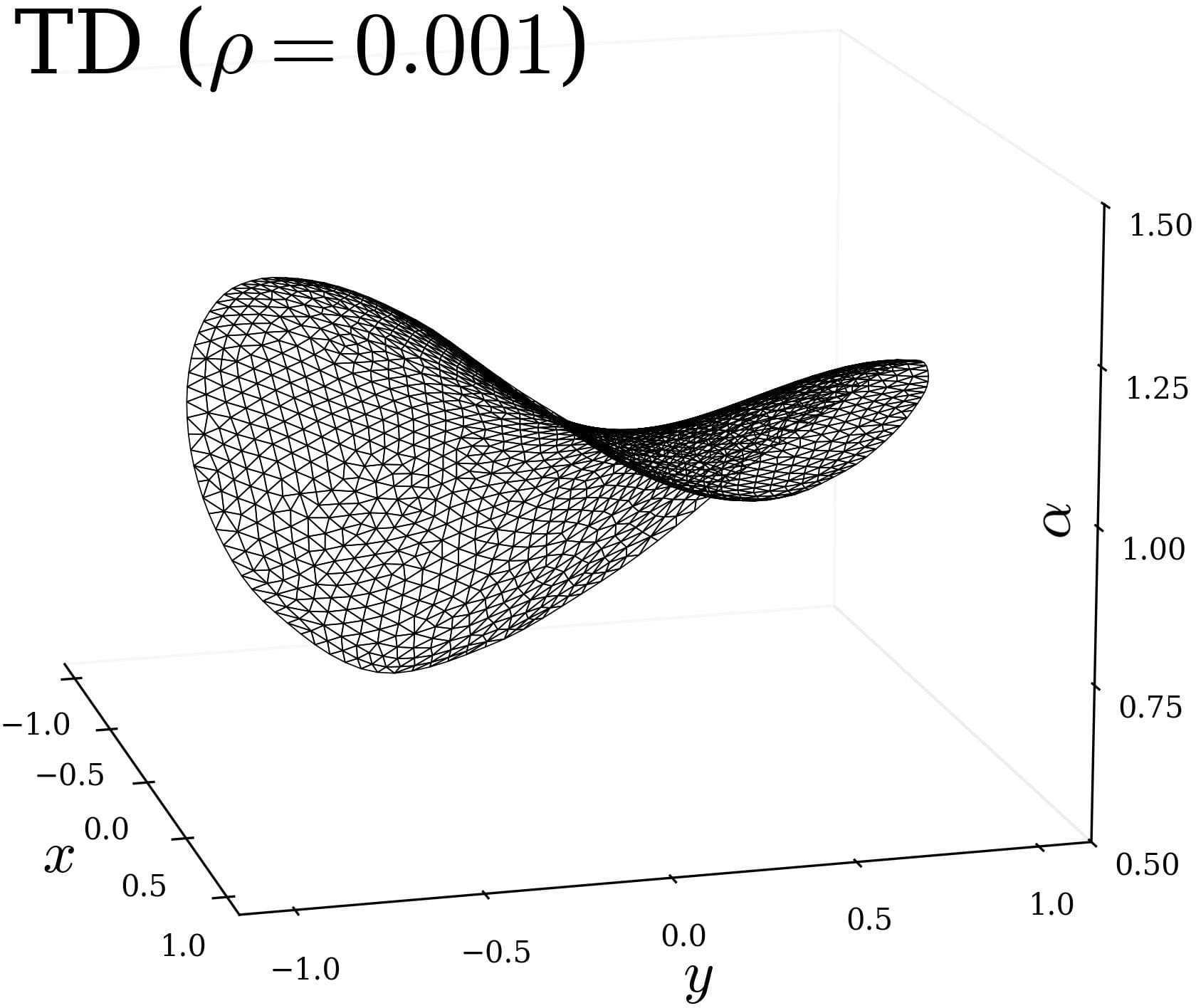}} \ 
\resizebox{0.225\textwidth}{!}{\includegraphics{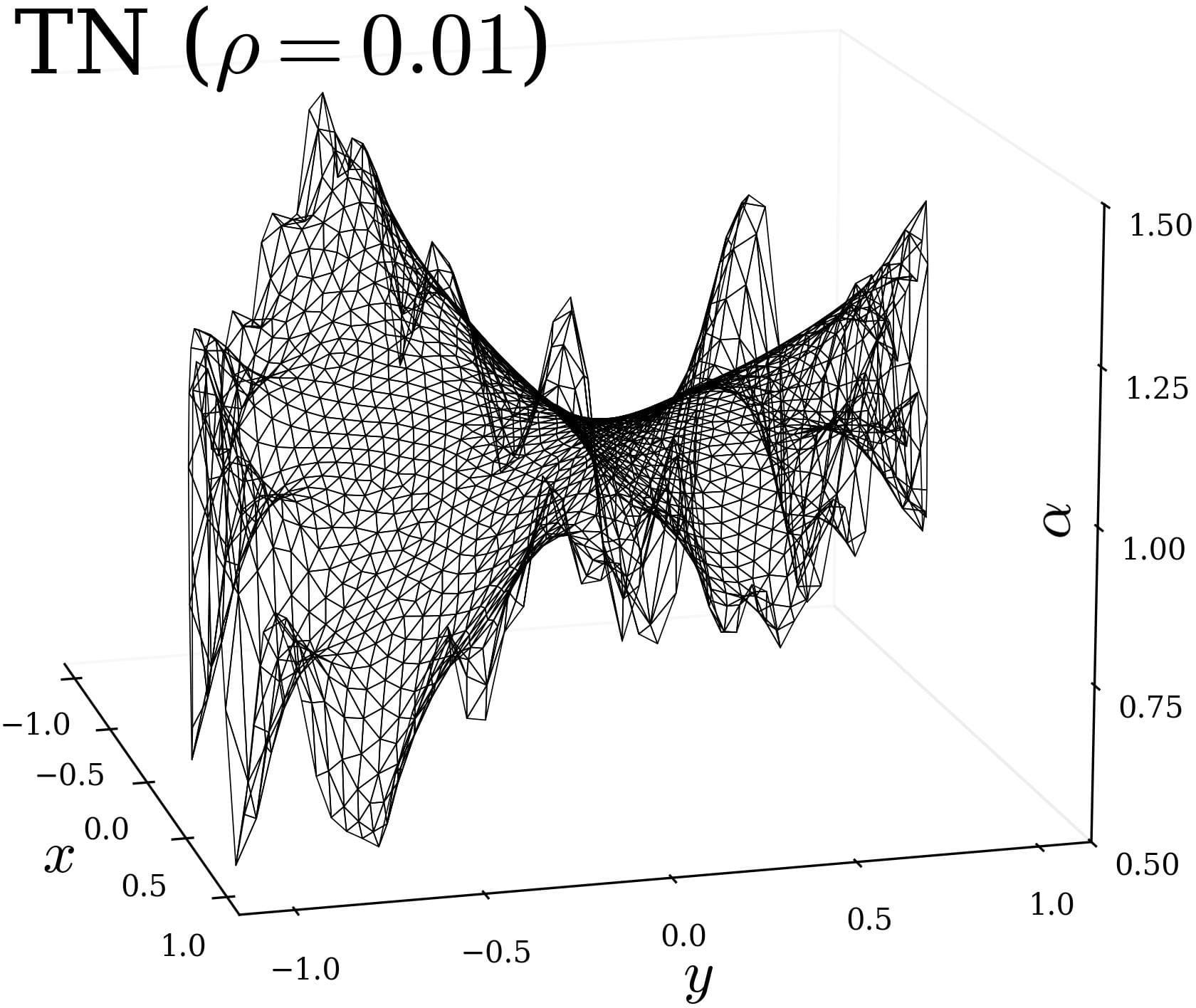}} \\[1em]
\resizebox{0.225\textwidth}{!}{\includegraphics{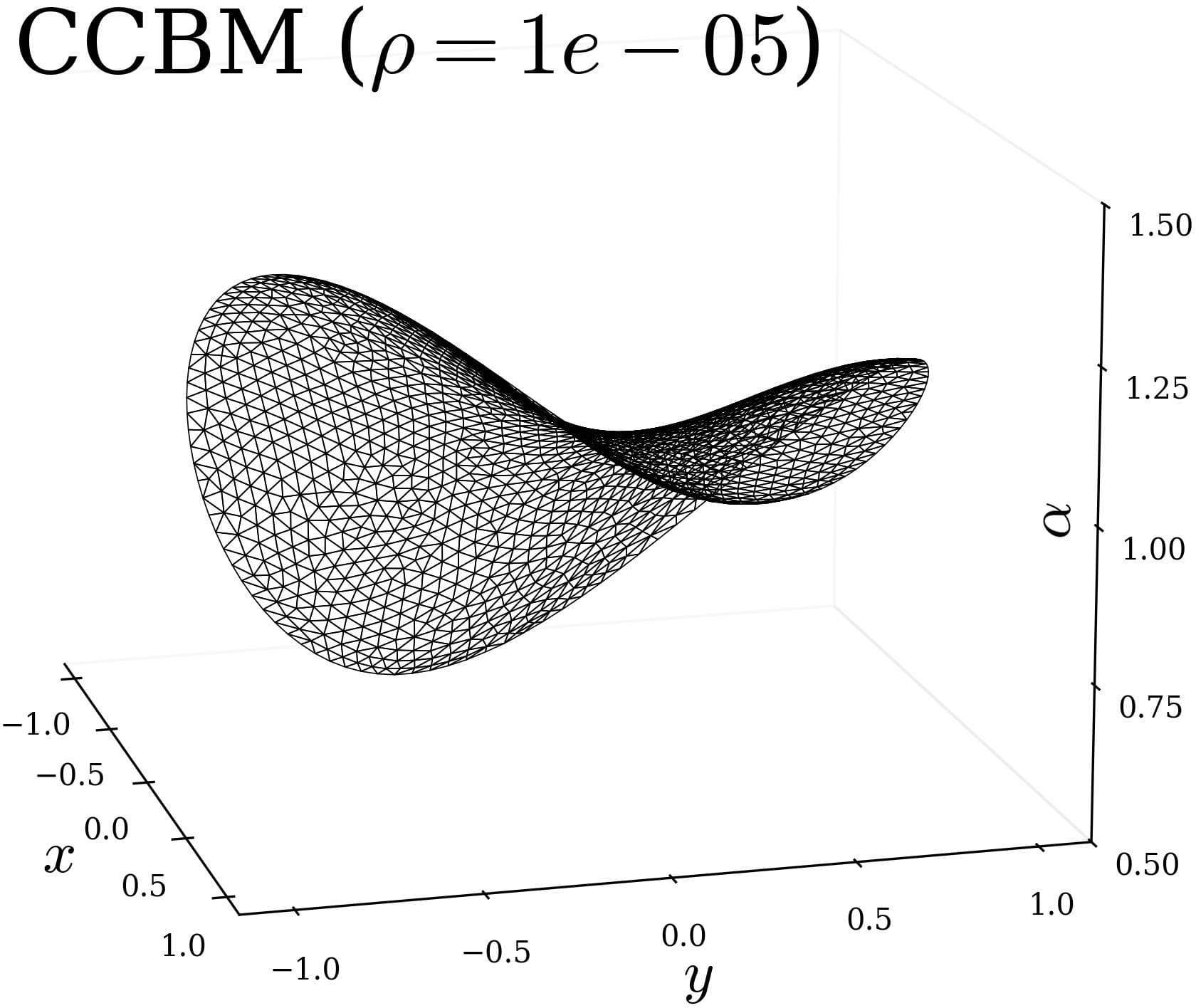}} \ 
\resizebox{0.225\textwidth}{!}{\includegraphics{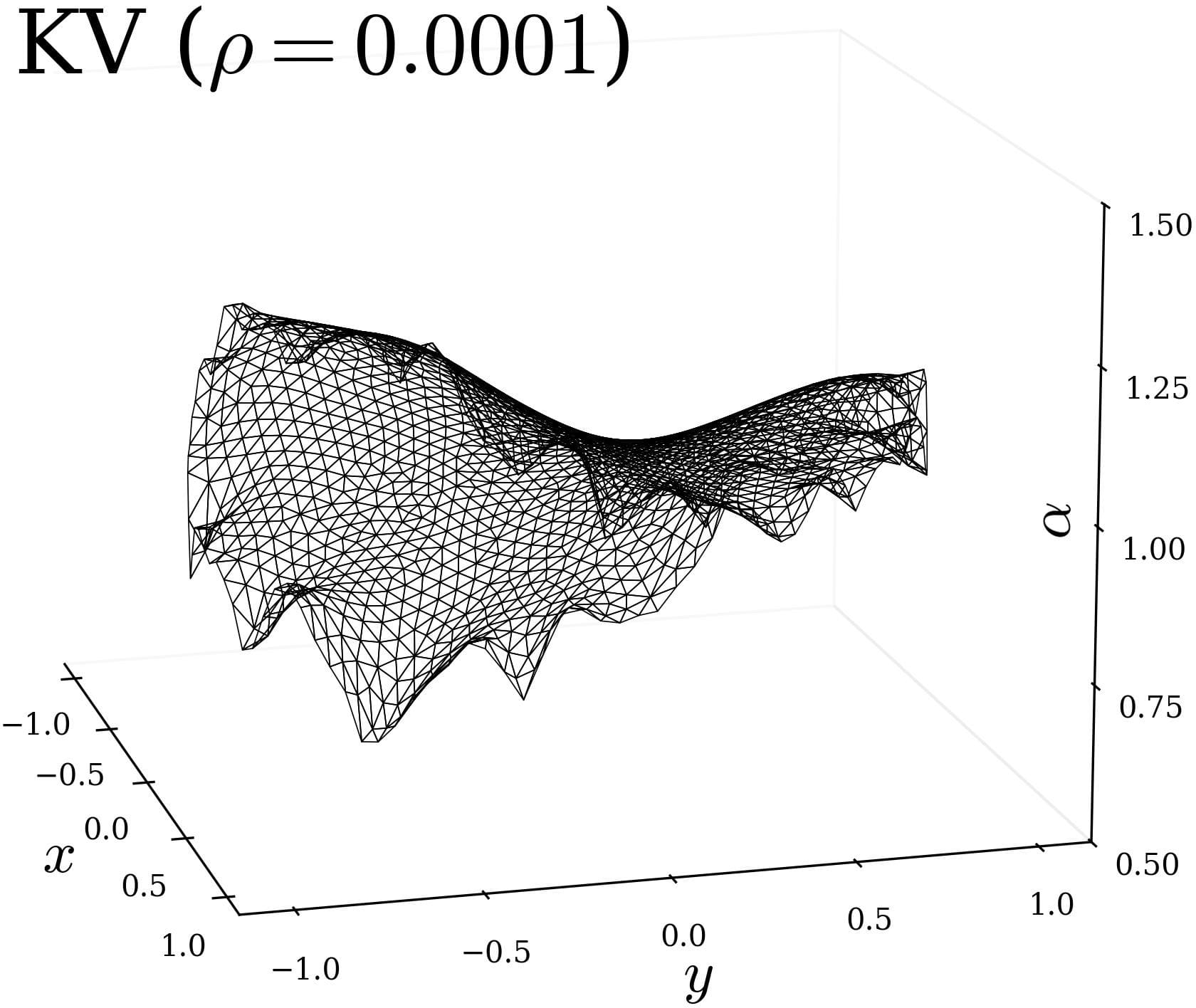}} \ 
\resizebox{0.225\textwidth}{!}{\includegraphics{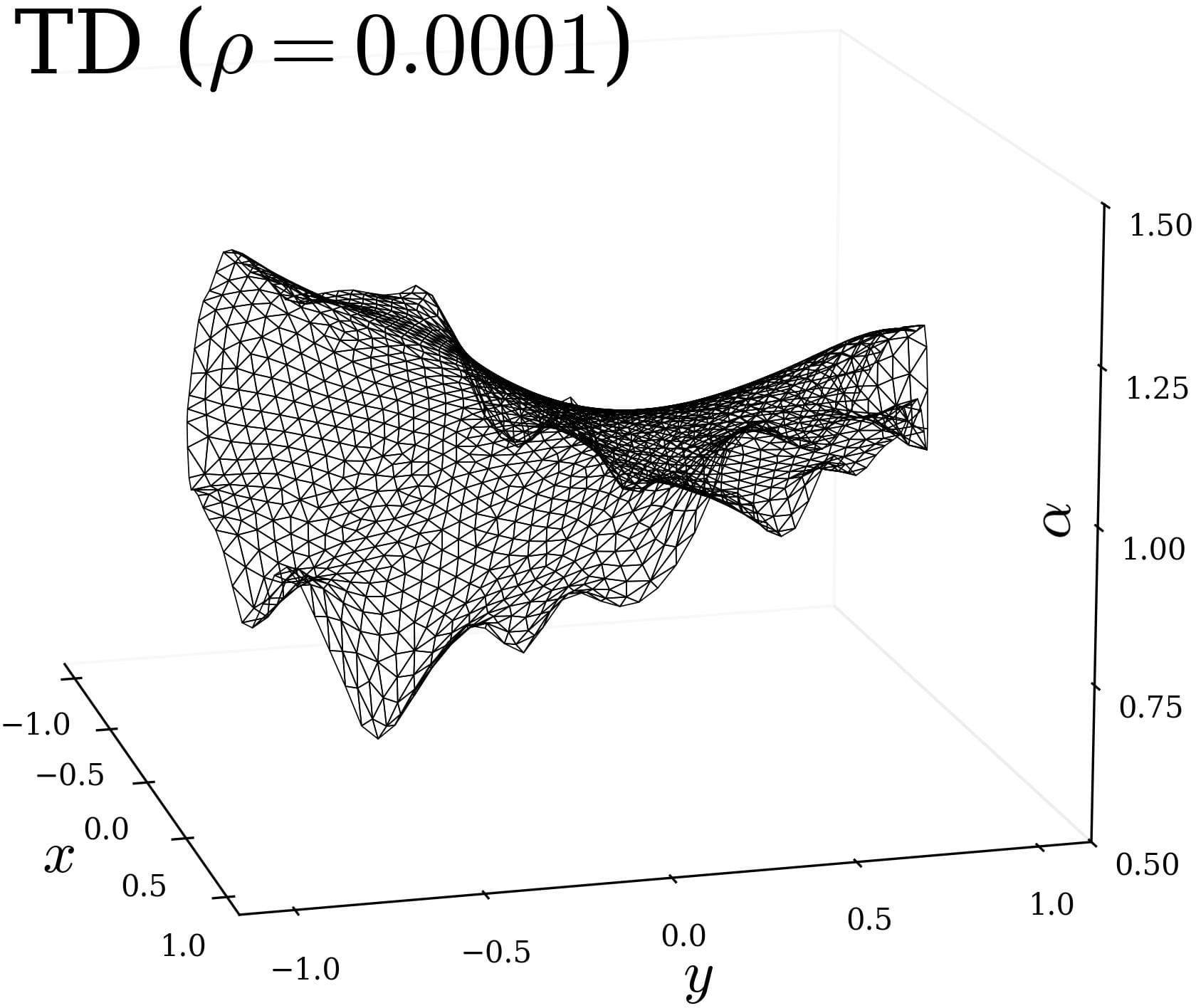}} \ 
\resizebox{0.225\textwidth}{!}{\includegraphics{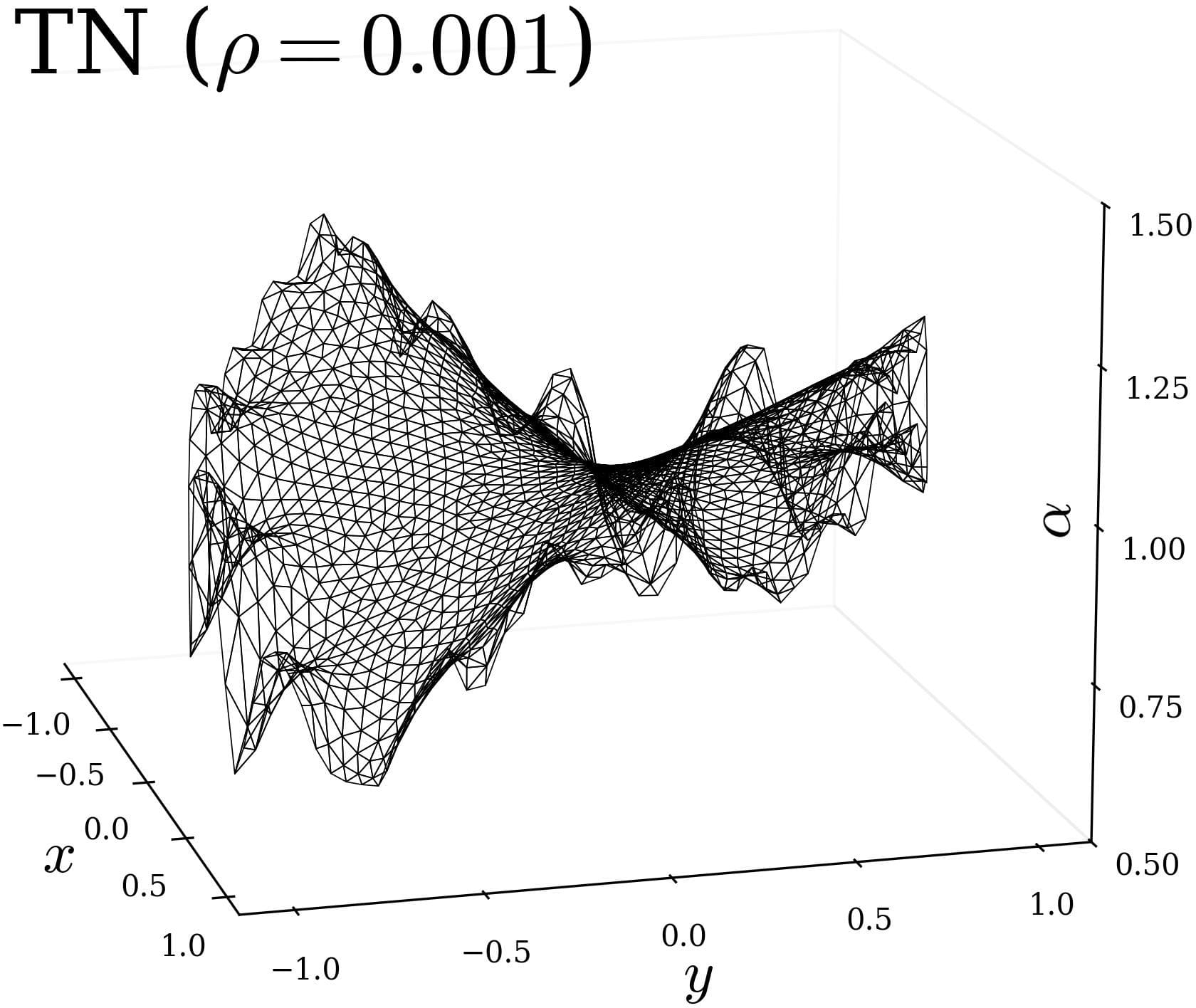}} \\[1em]
\resizebox{0.225\textwidth}{!}{\includegraphics{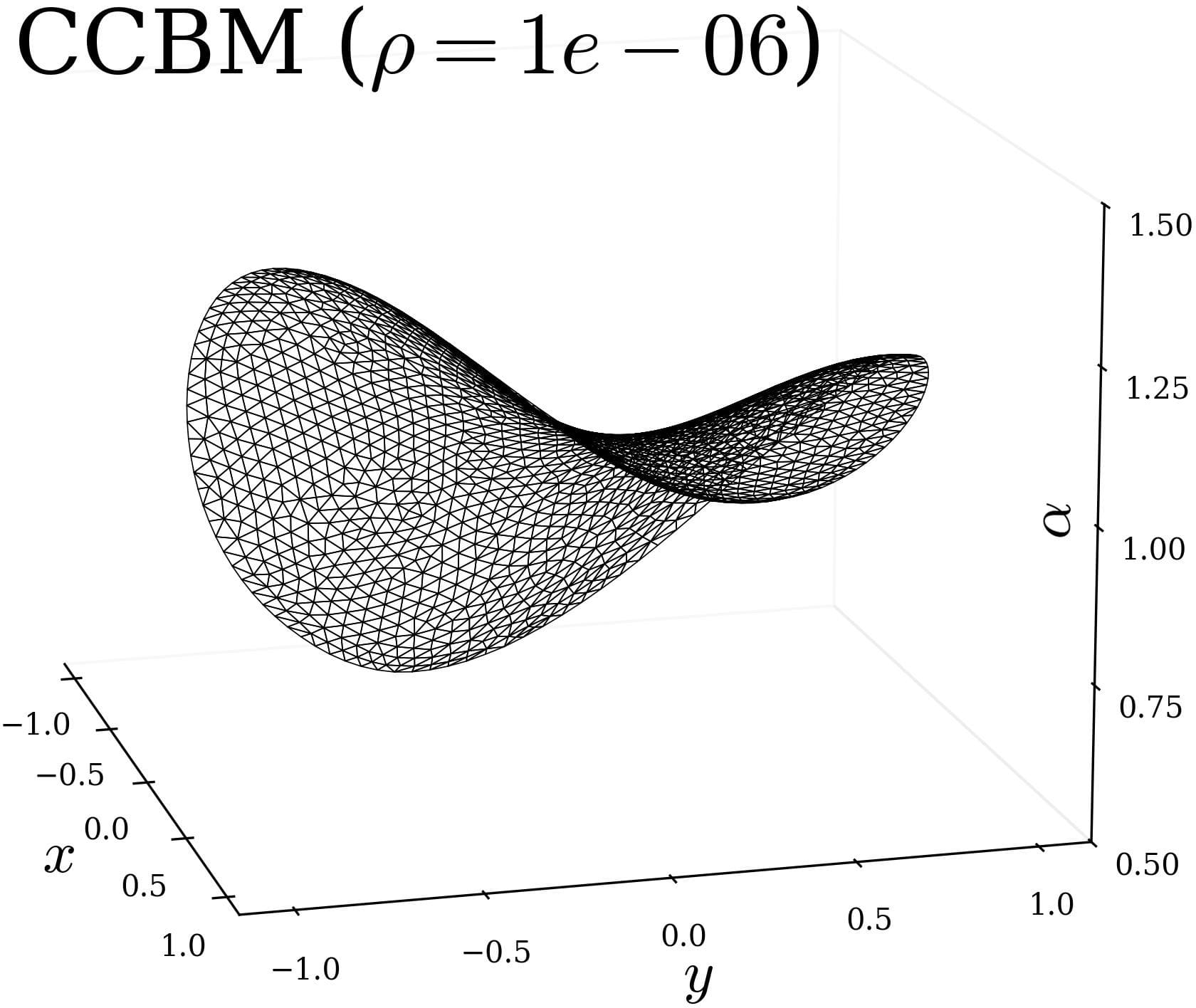}} \ 
\resizebox{0.225\textwidth}{!}{\includegraphics{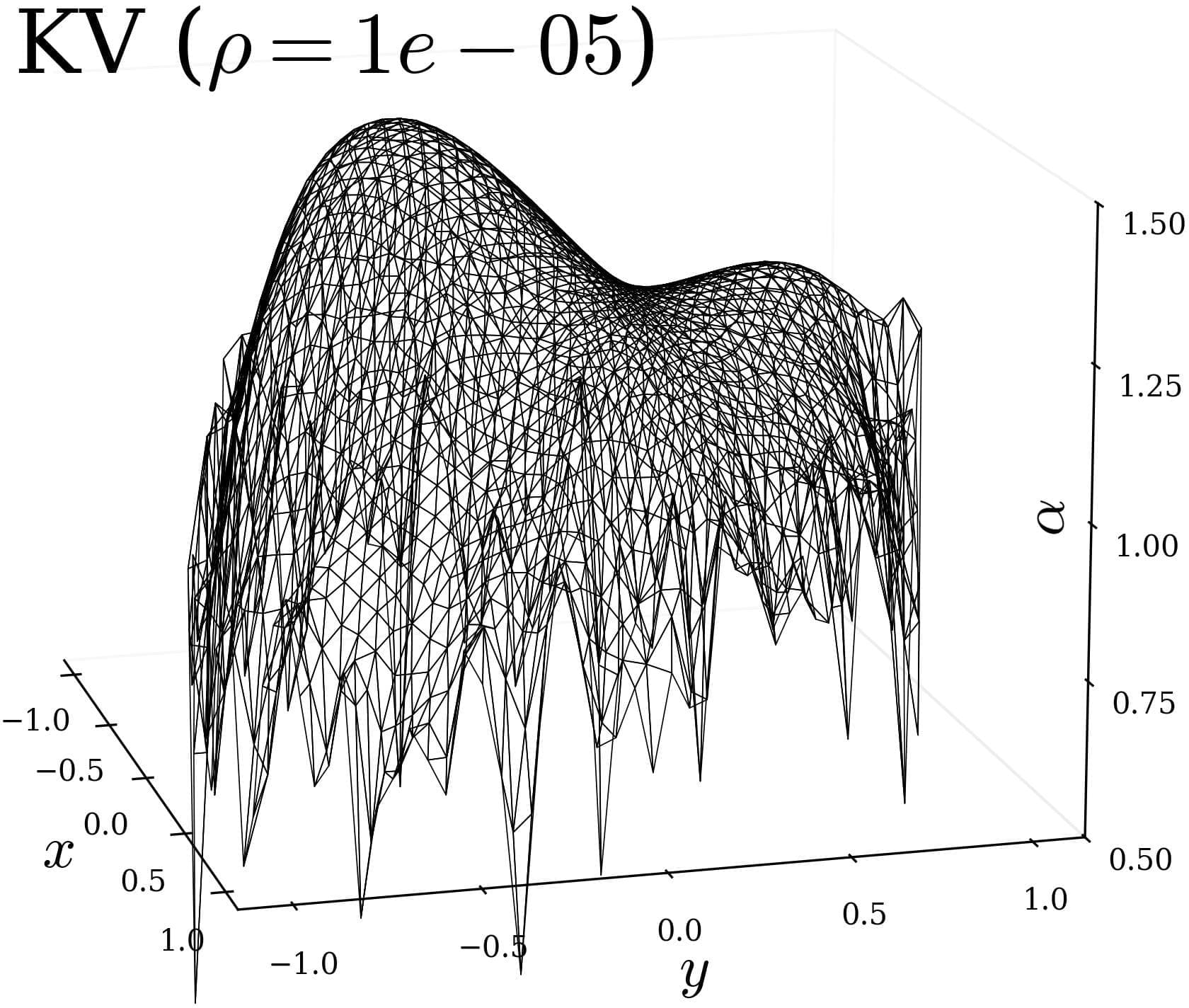}} \ 
\resizebox{0.225\textwidth}{!}{\includegraphics{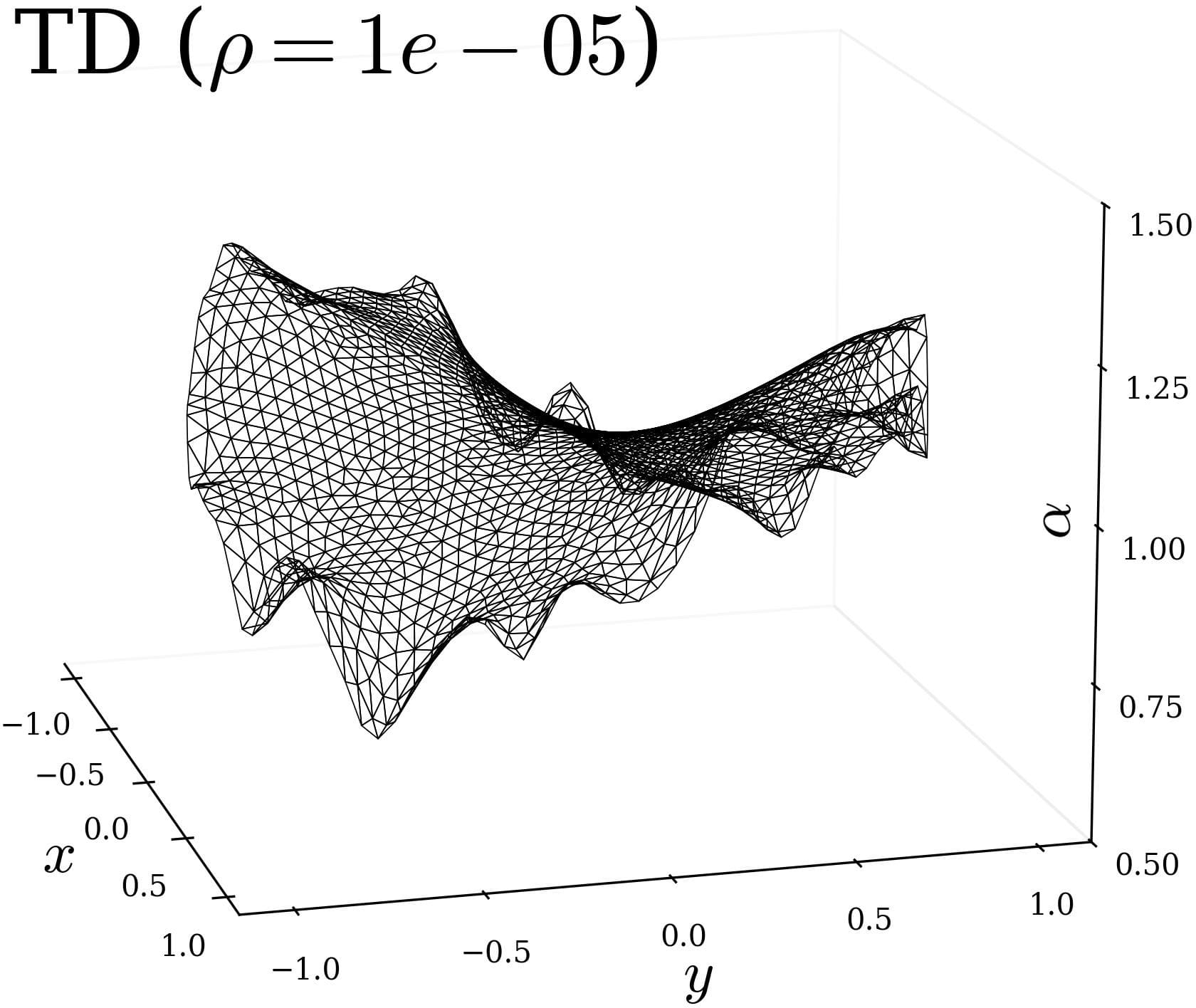}} \ 
\resizebox{0.225\textwidth}{!}{\includegraphics{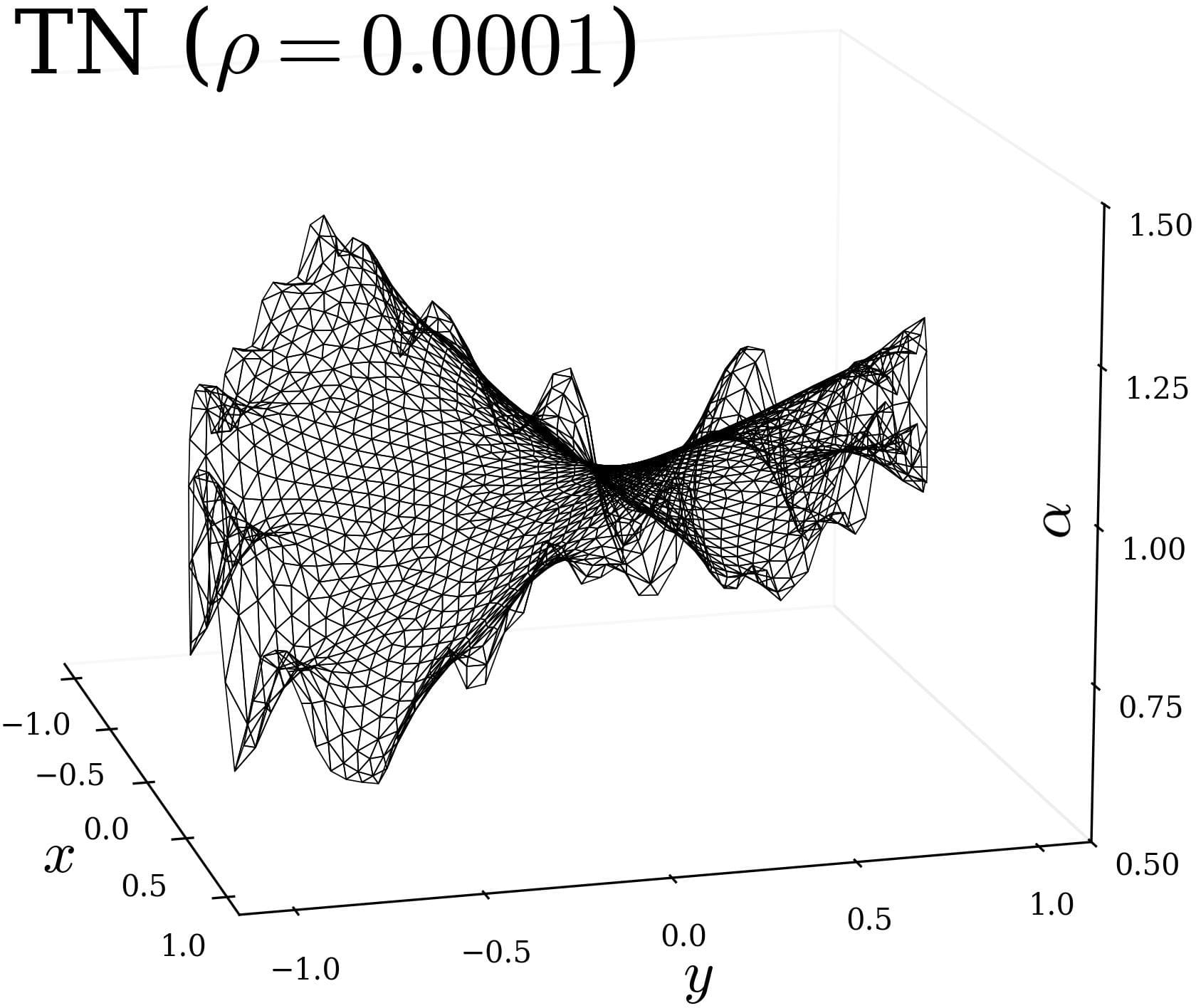}} \\[1em]
\resizebox{0.225\textwidth}{!}{\includegraphics{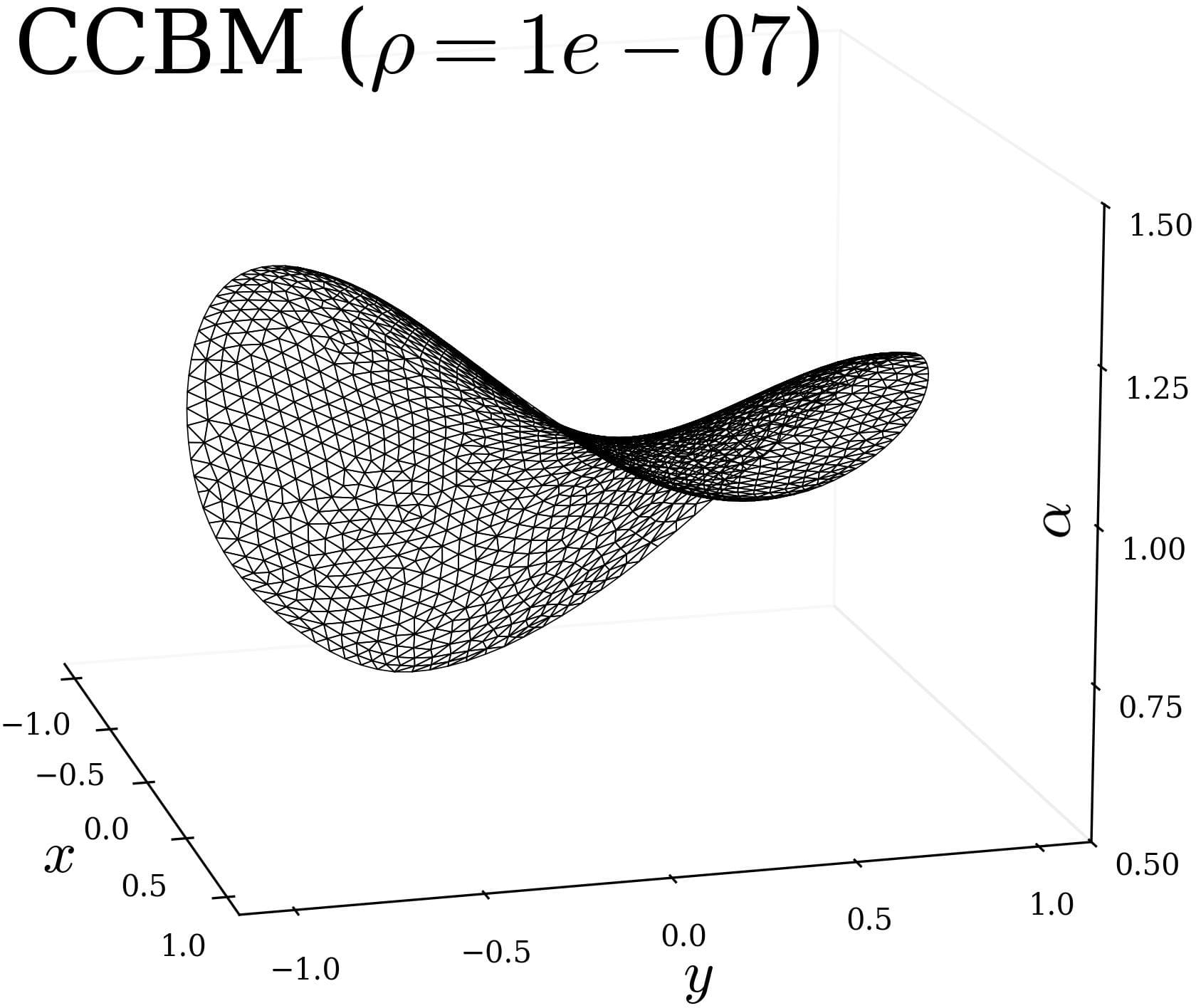}} \ 
\resizebox{0.225\textwidth}{!}{\includegraphics{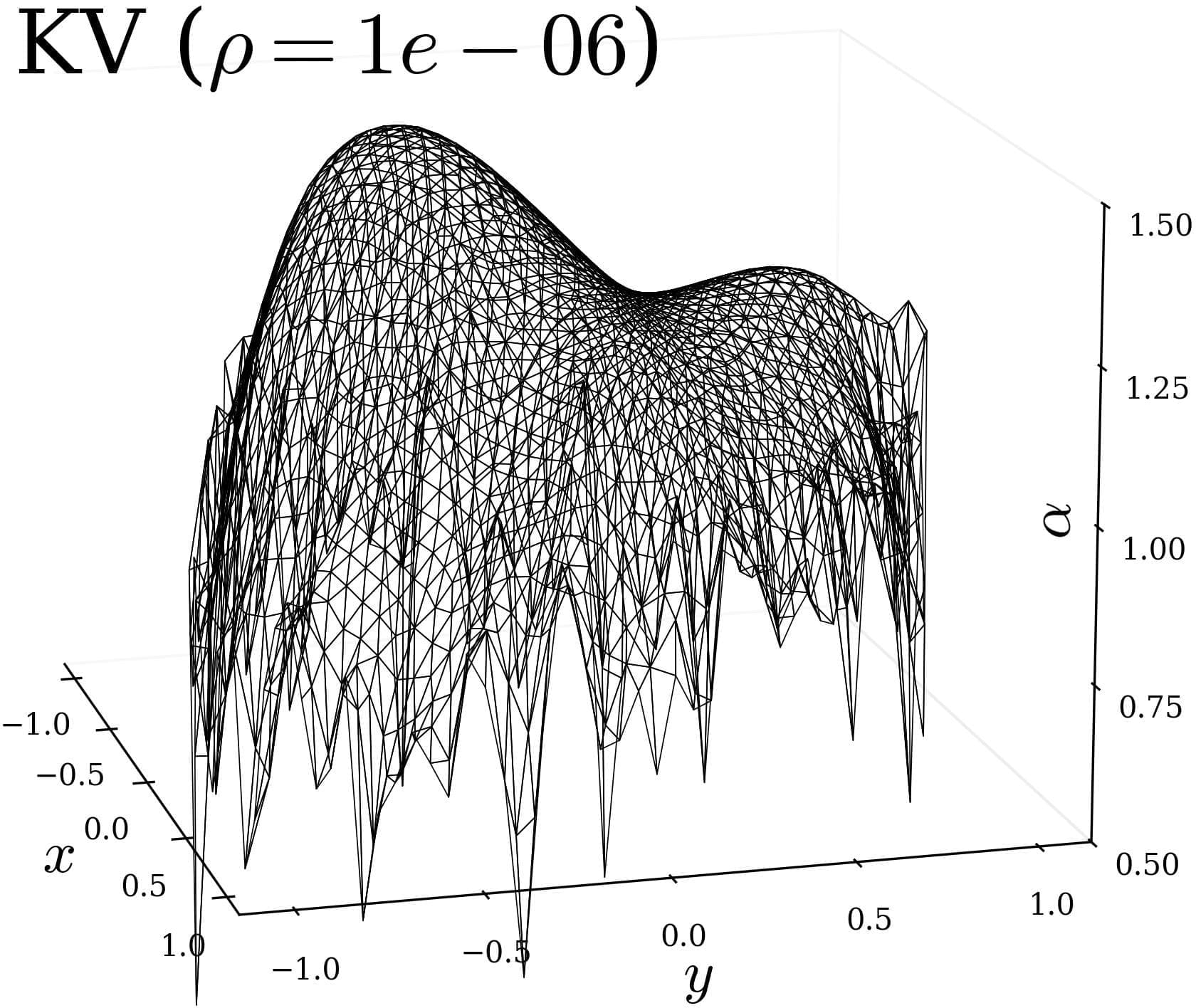}} \
\resizebox{0.225\textwidth}{!}{\includegraphics{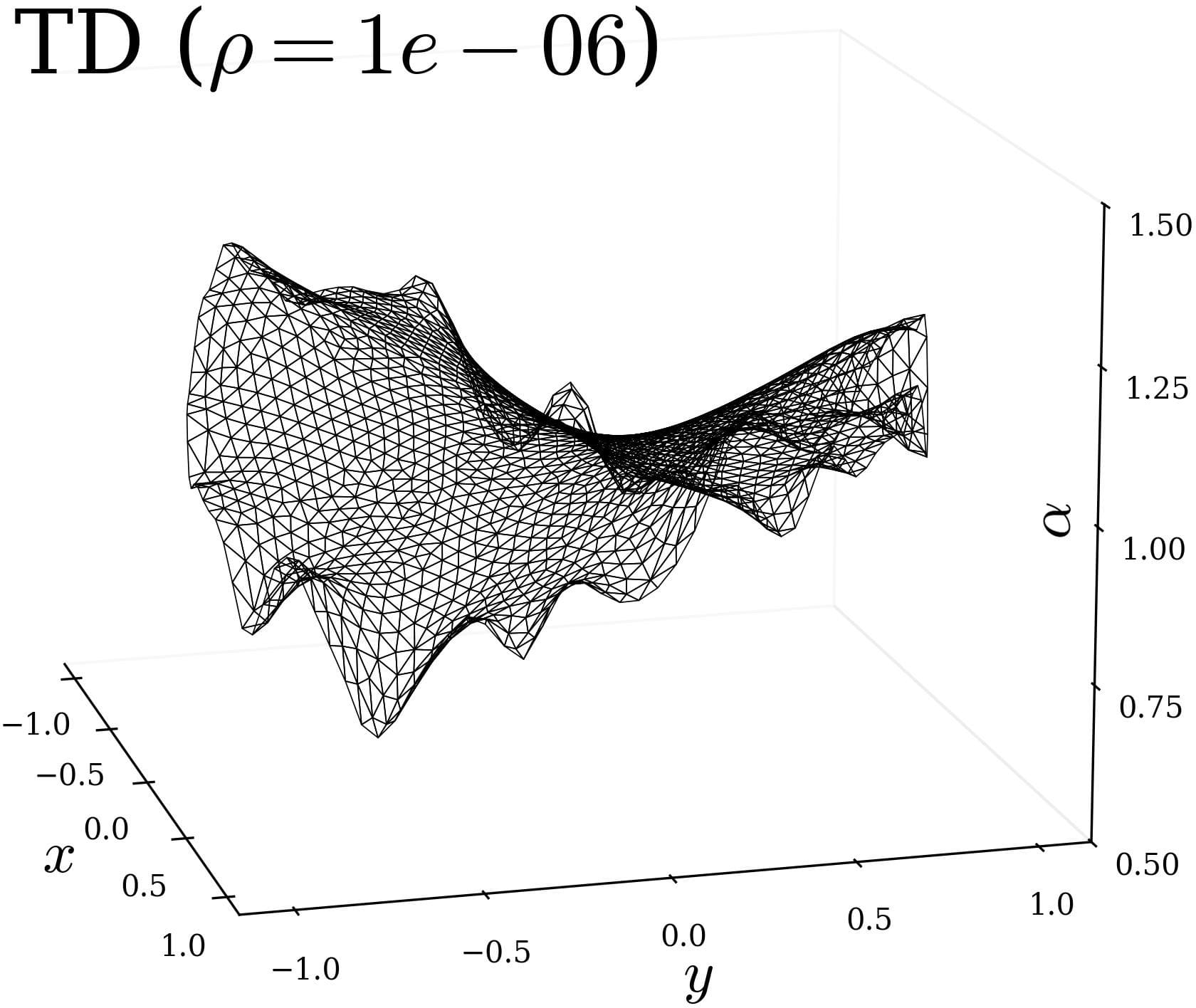}} \
\resizebox{0.225\textwidth}{!}{\includegraphics{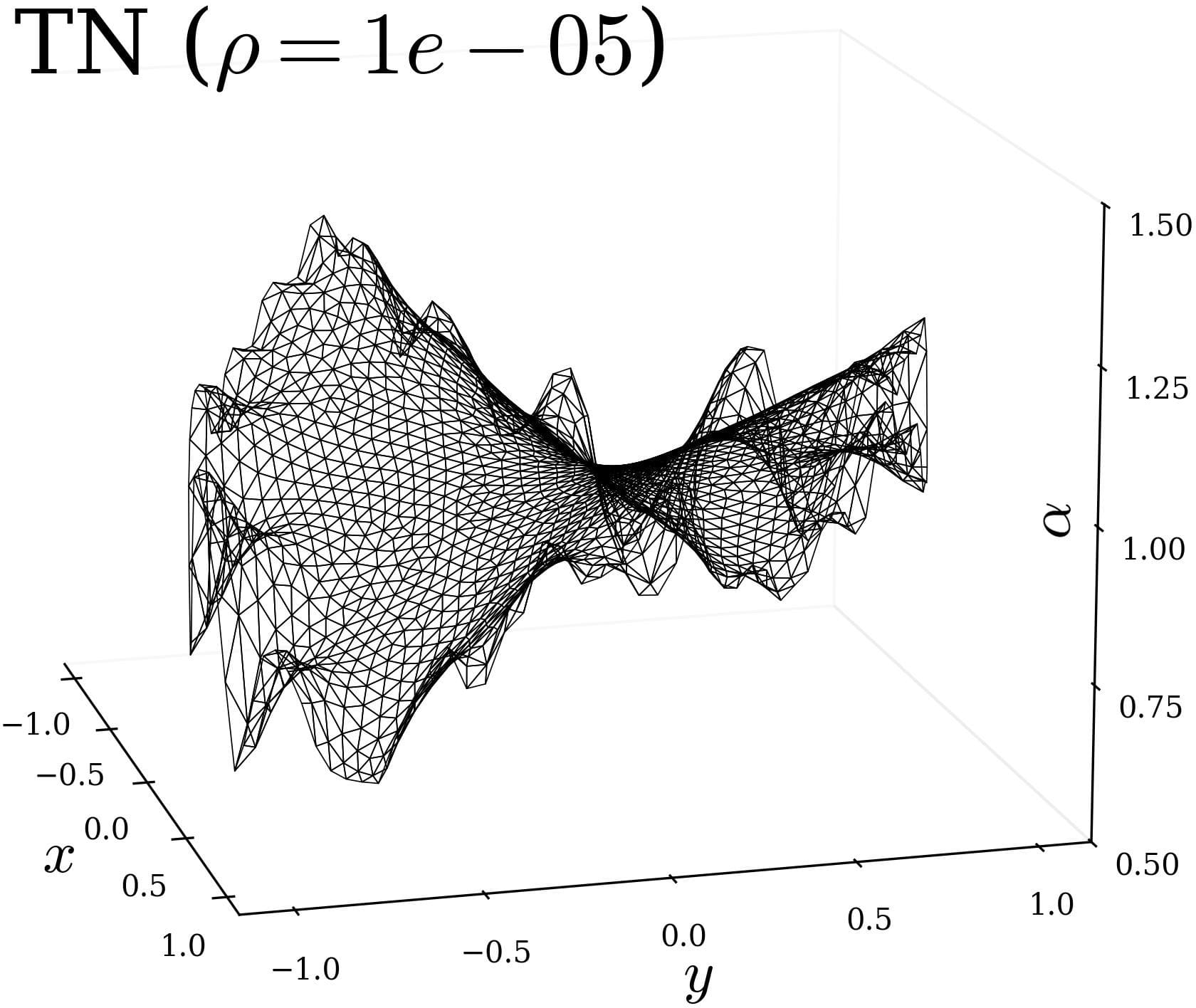}}
\caption{Influence of the Tikhonov parameter $\rho$ on the reconstruction when $\delta = 0.001$, with gradient smoothing ($\mu = 1.0$) and input data $g=1.0$.}
\label{fig:effect_of_input_data_g_constant}
\end{figure}

\begin{figure}[htp!]
\centering
\resizebox{0.225\textwidth}{!}{\includegraphics{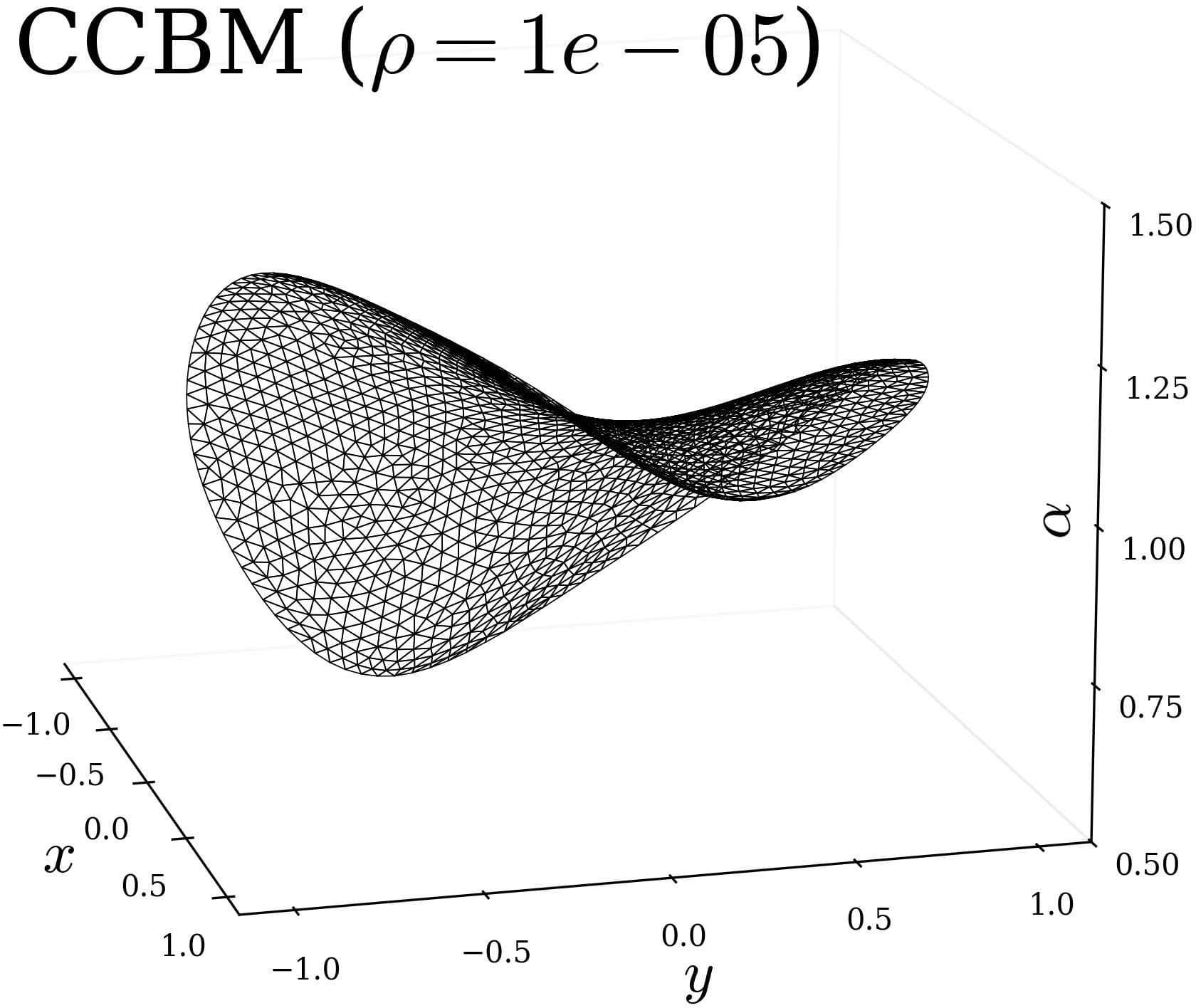}} \ 
\resizebox{0.225\textwidth}{!}{\includegraphics{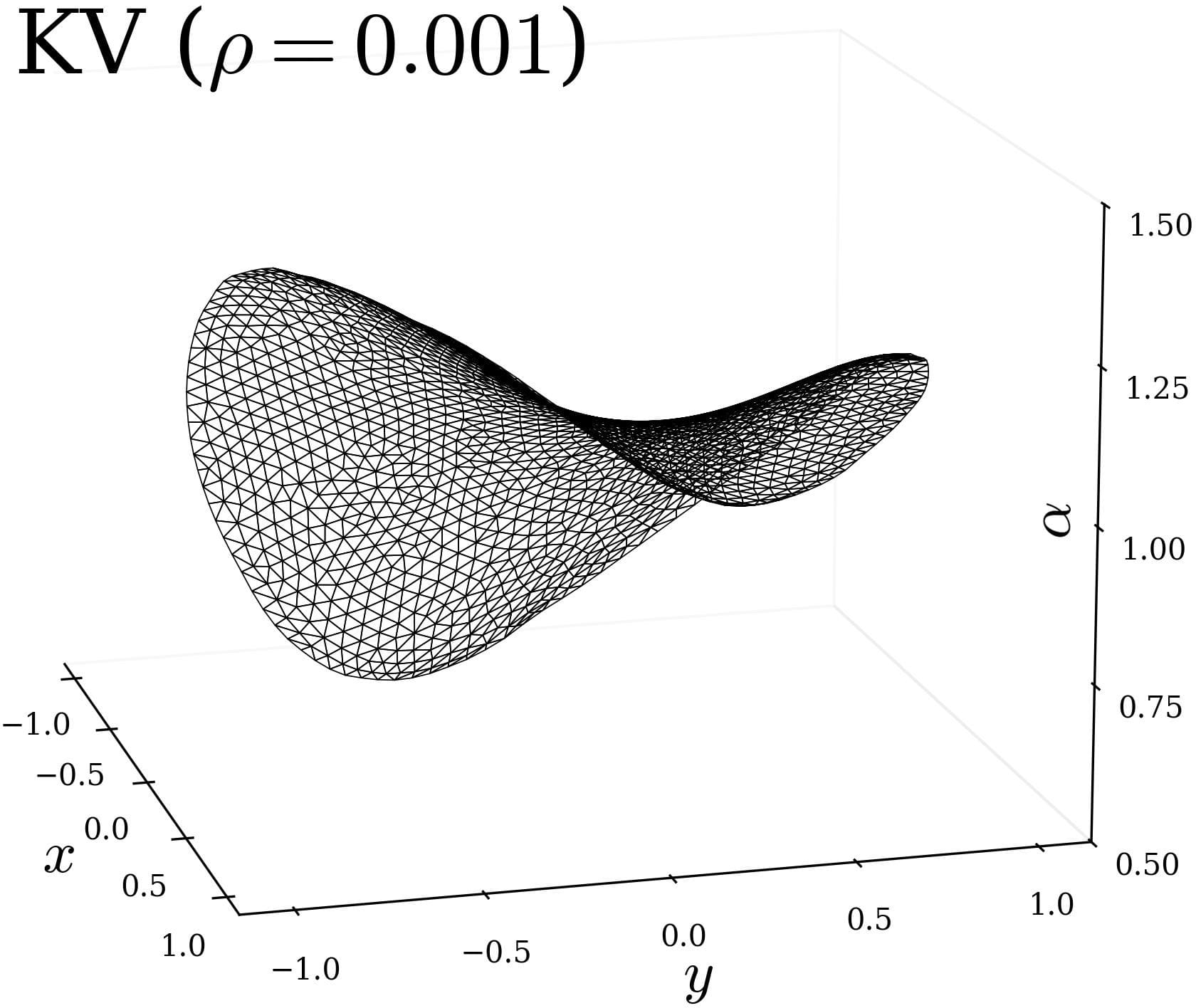}} \ 
\resizebox{0.225\textwidth}{!}{\includegraphics{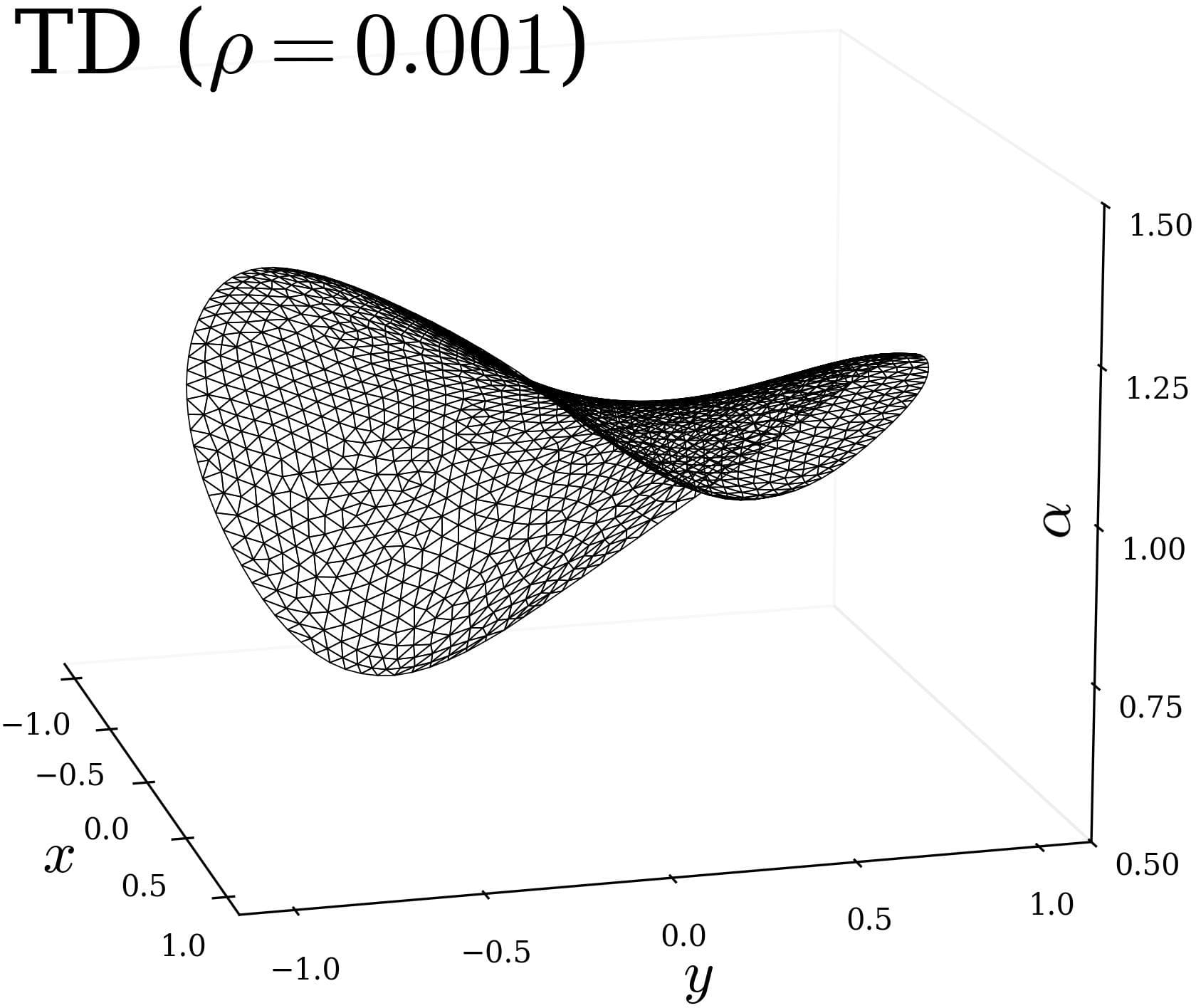}} \ 
\resizebox{0.225\textwidth}{!}{\includegraphics{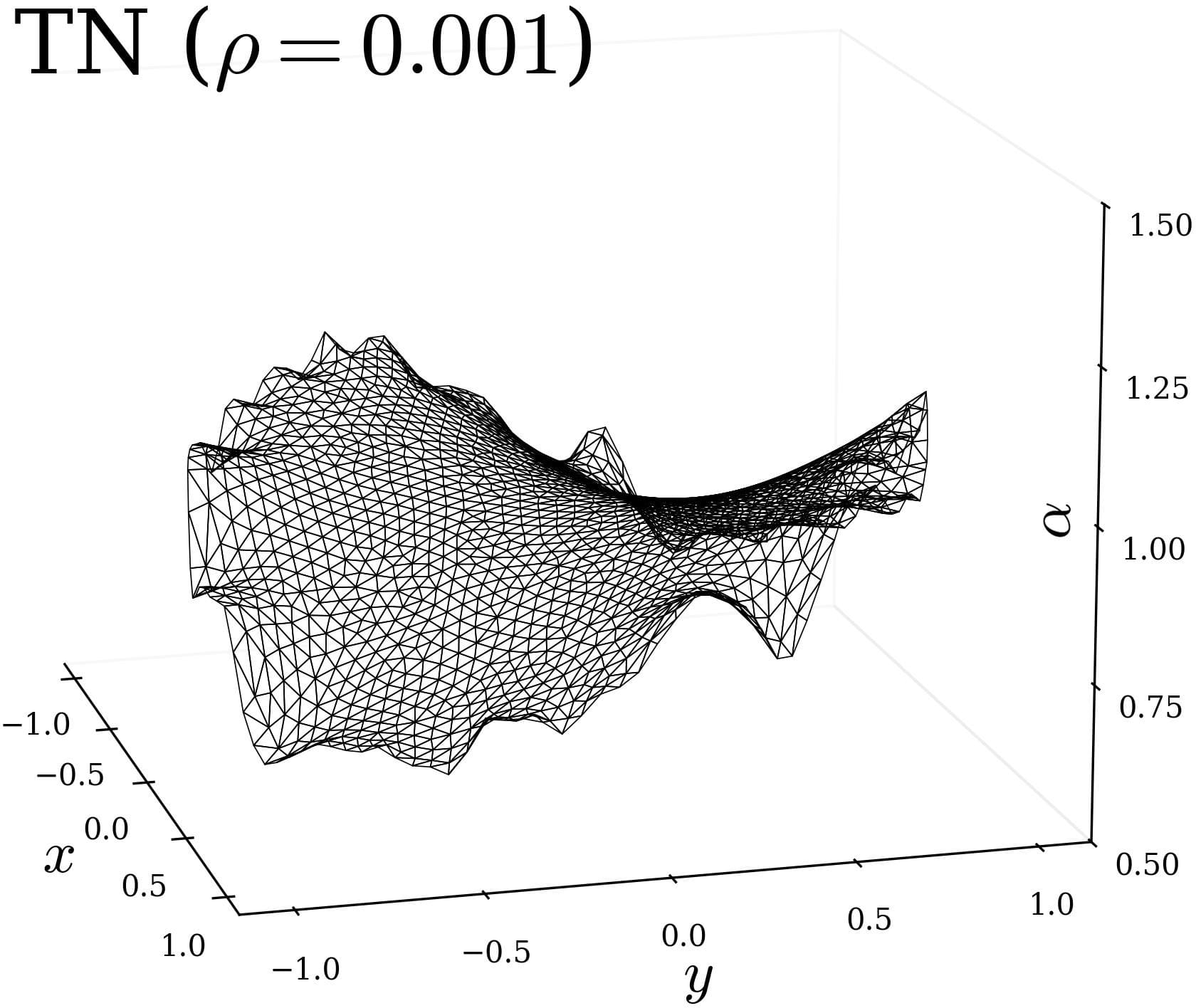}} \\[1em]
\resizebox{0.225\textwidth}{!}{\includegraphics{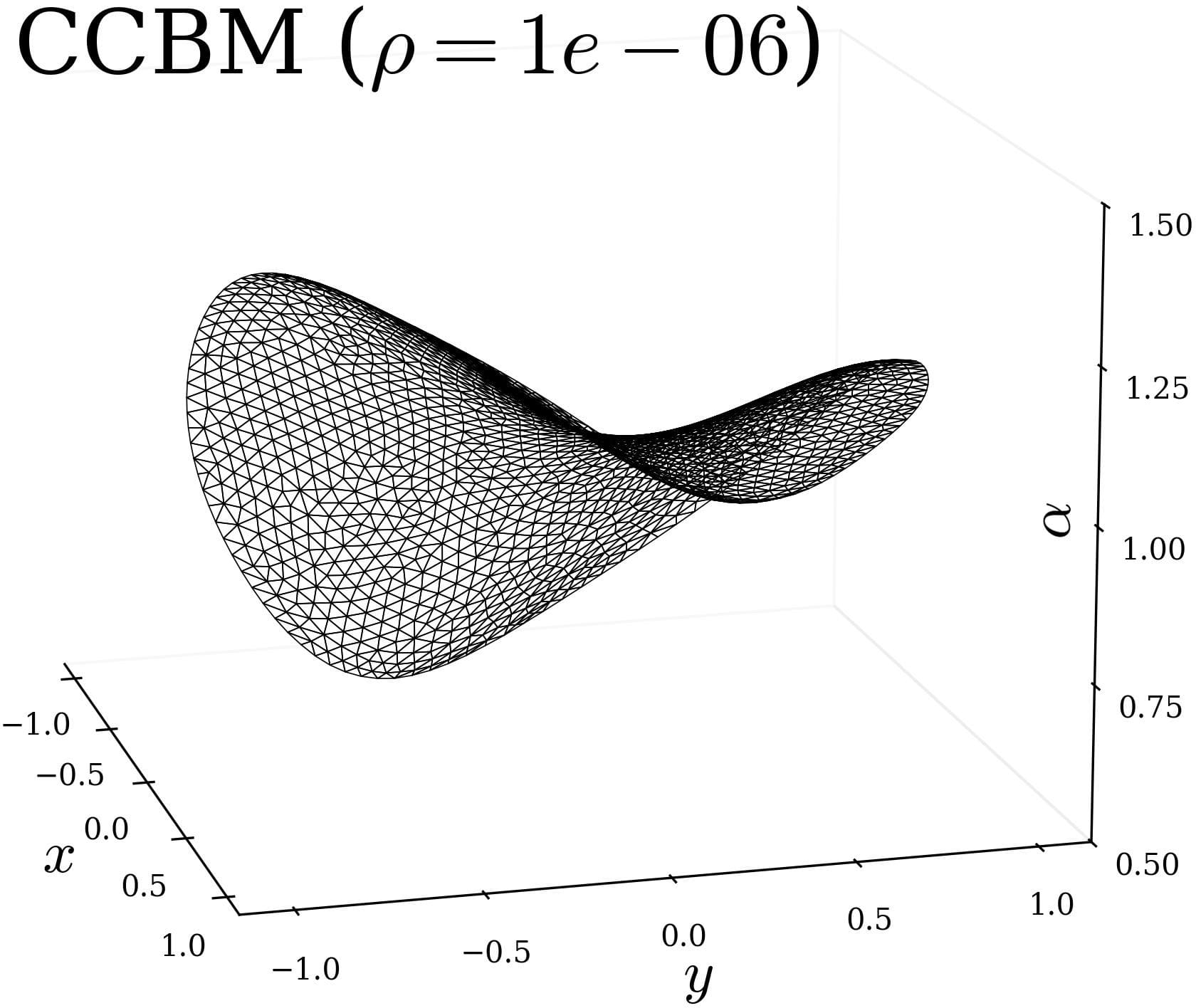}} \ 
\resizebox{0.225\textwidth}{!}{\includegraphics{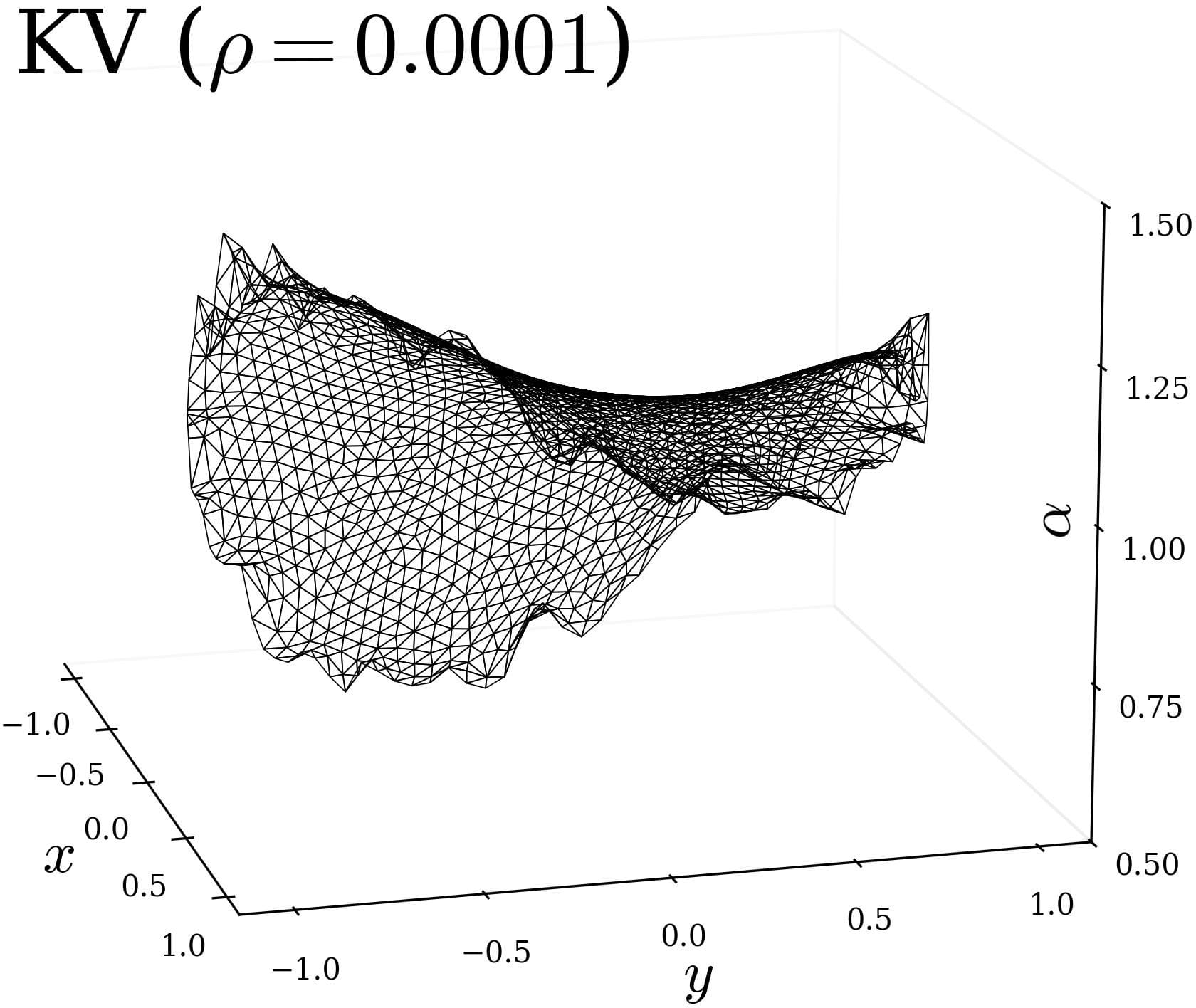}} \ 
\resizebox{0.225\textwidth}{!}{\includegraphics{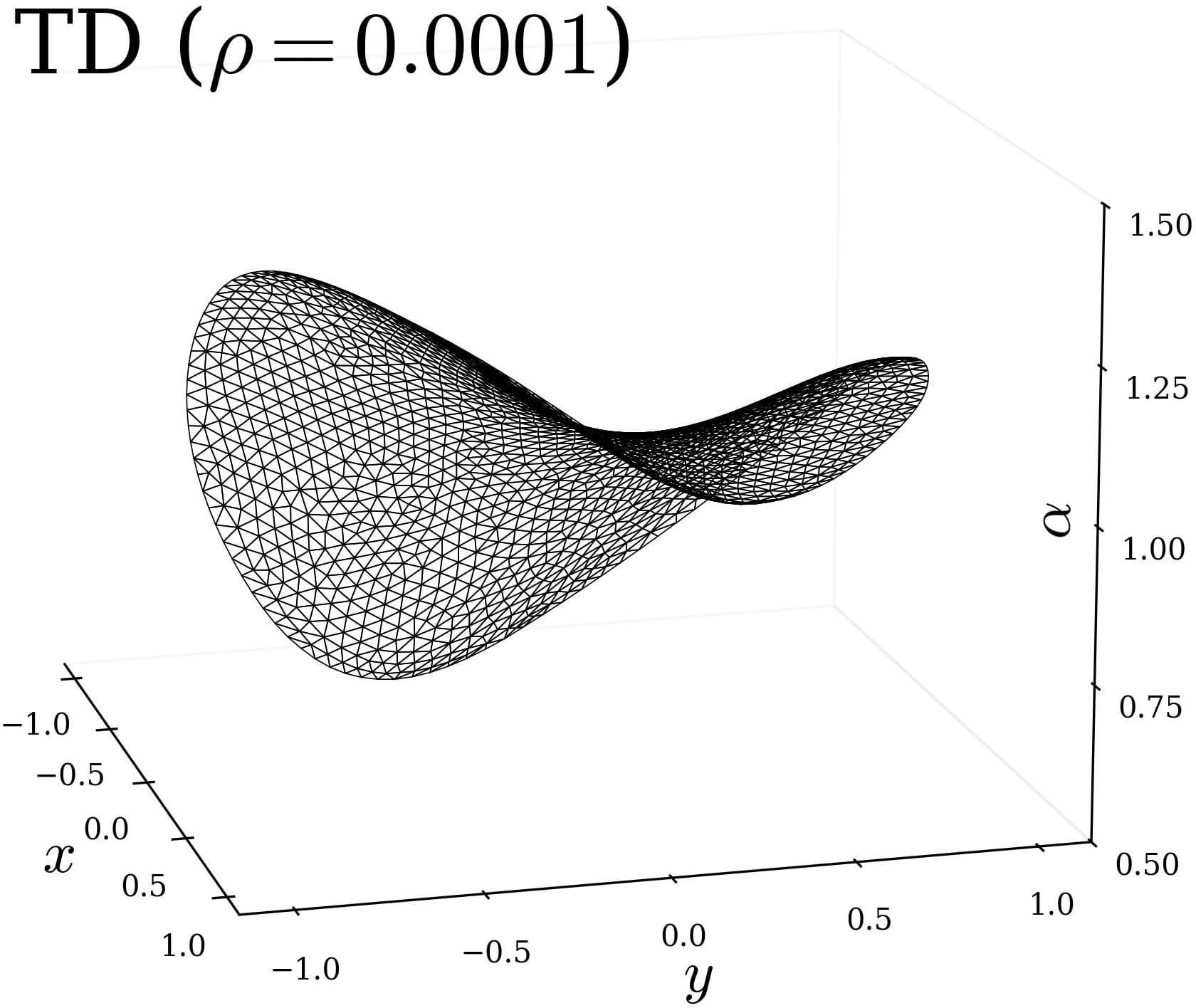}} \ 
\resizebox{0.225\textwidth}{!}{\includegraphics{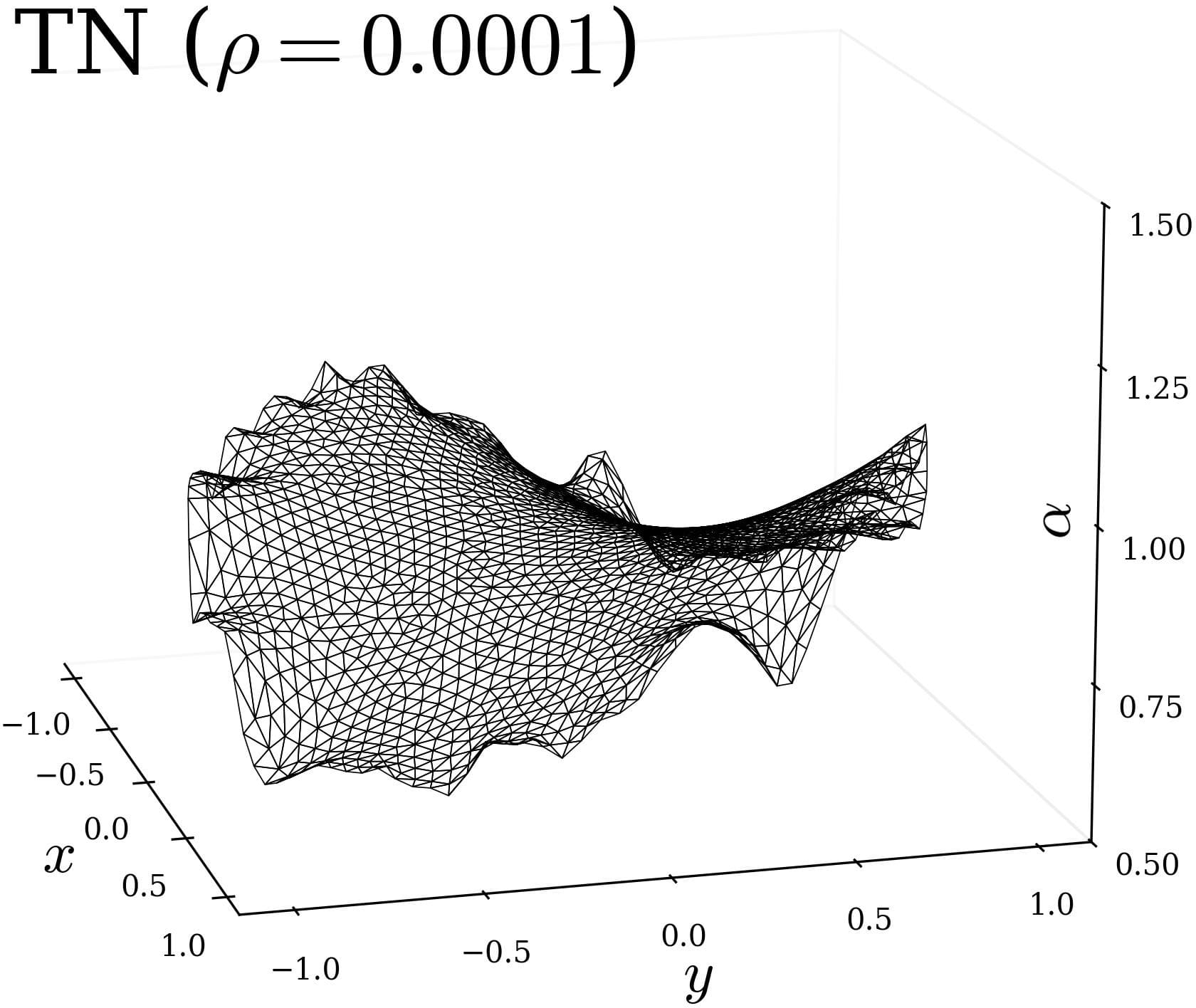}} \\[1em]
\resizebox{0.225\textwidth}{!}{\includegraphics{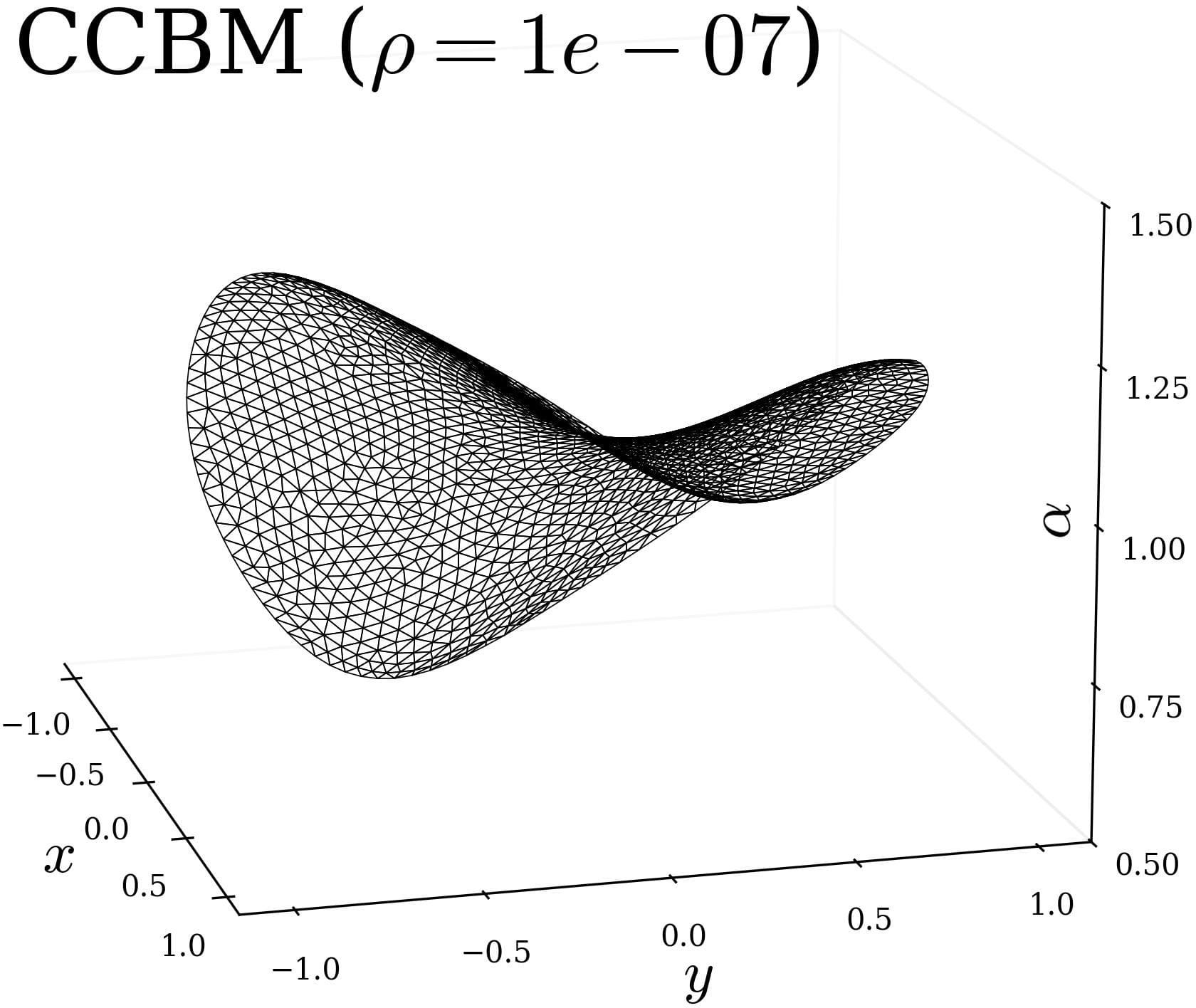}} \ 
\resizebox{0.225\textwidth}{!}{\includegraphics{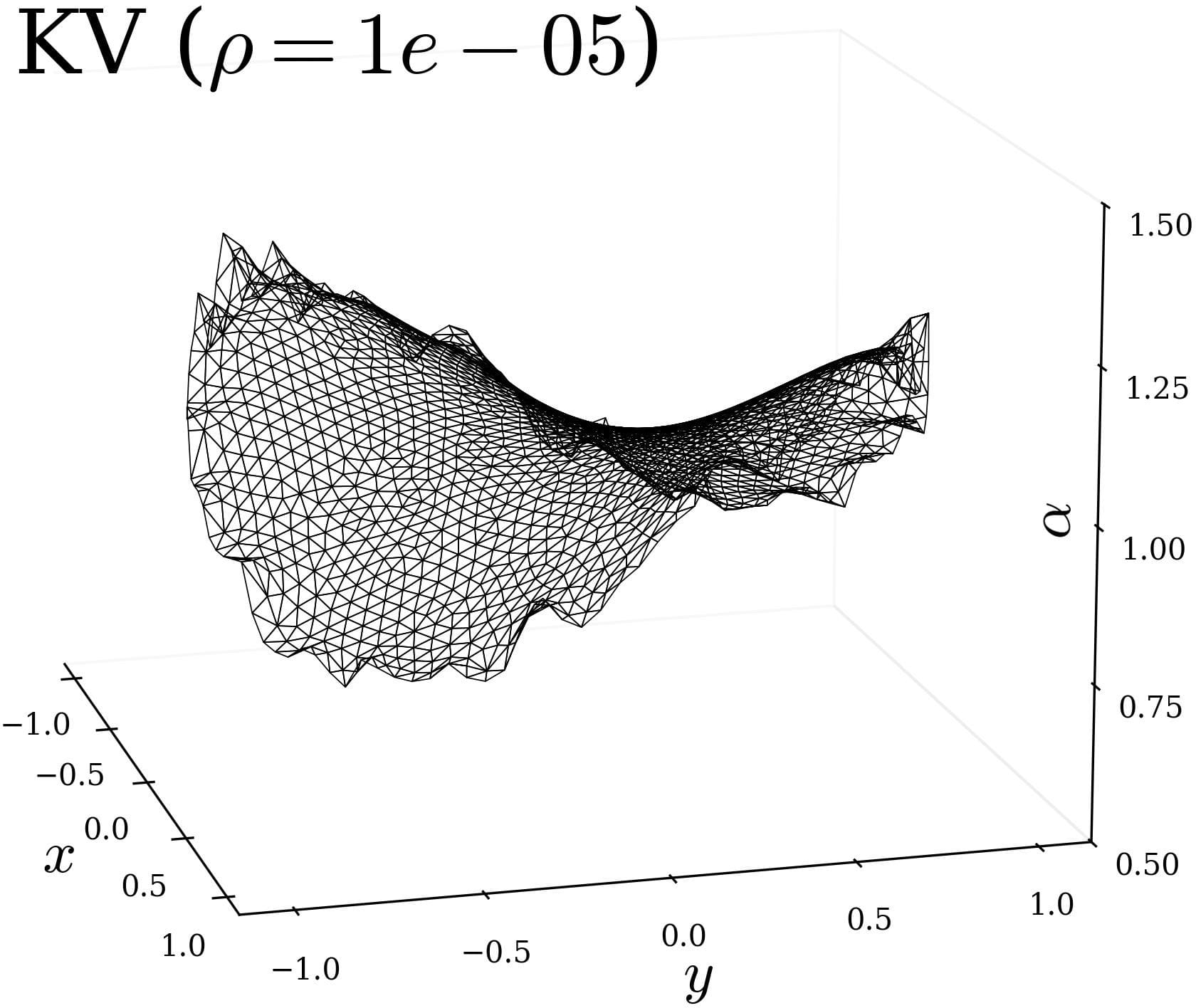}} \ 
\resizebox{0.225\textwidth}{!}{\includegraphics{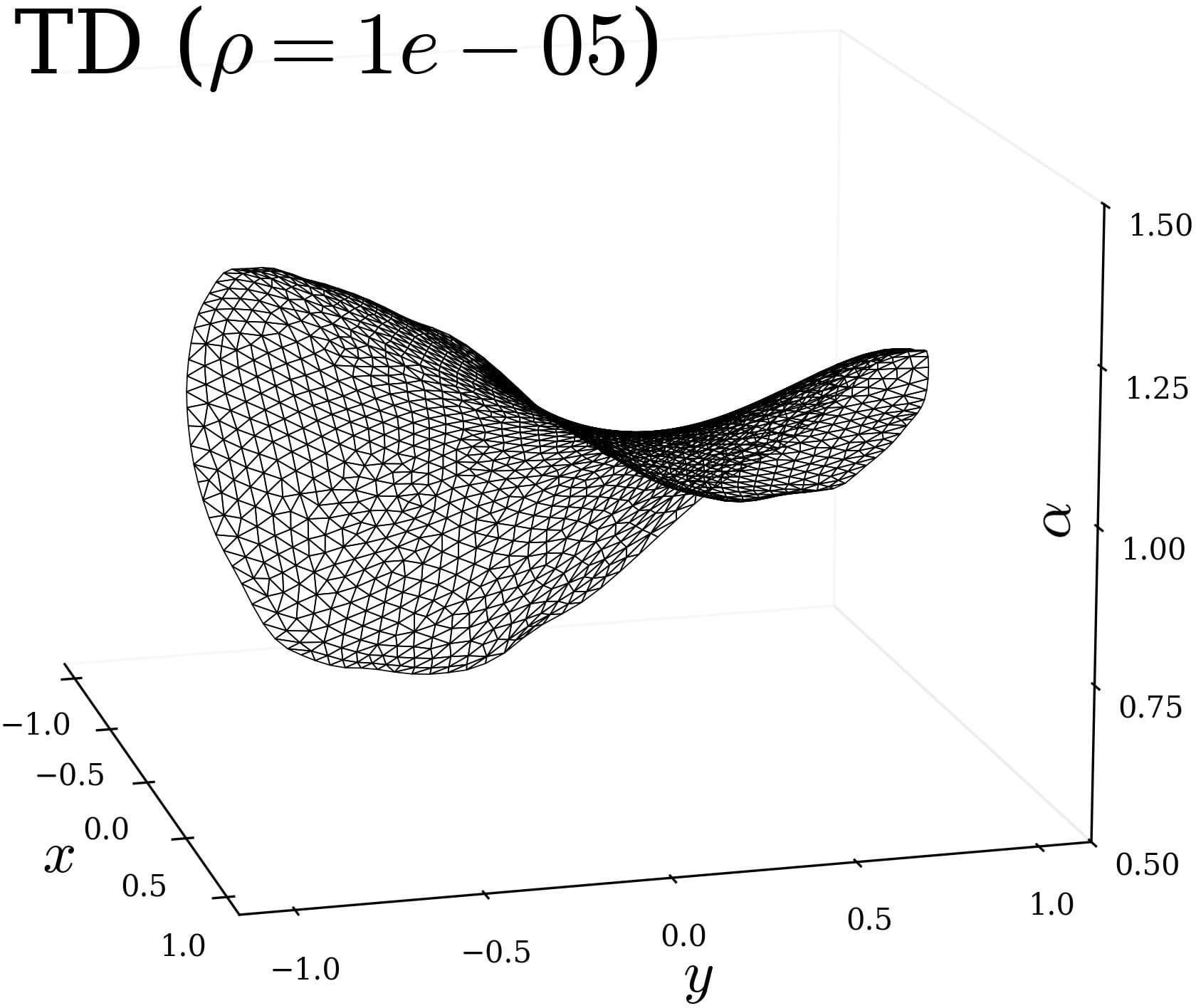}} \ 
\resizebox{0.225\textwidth}{!}{\includegraphics{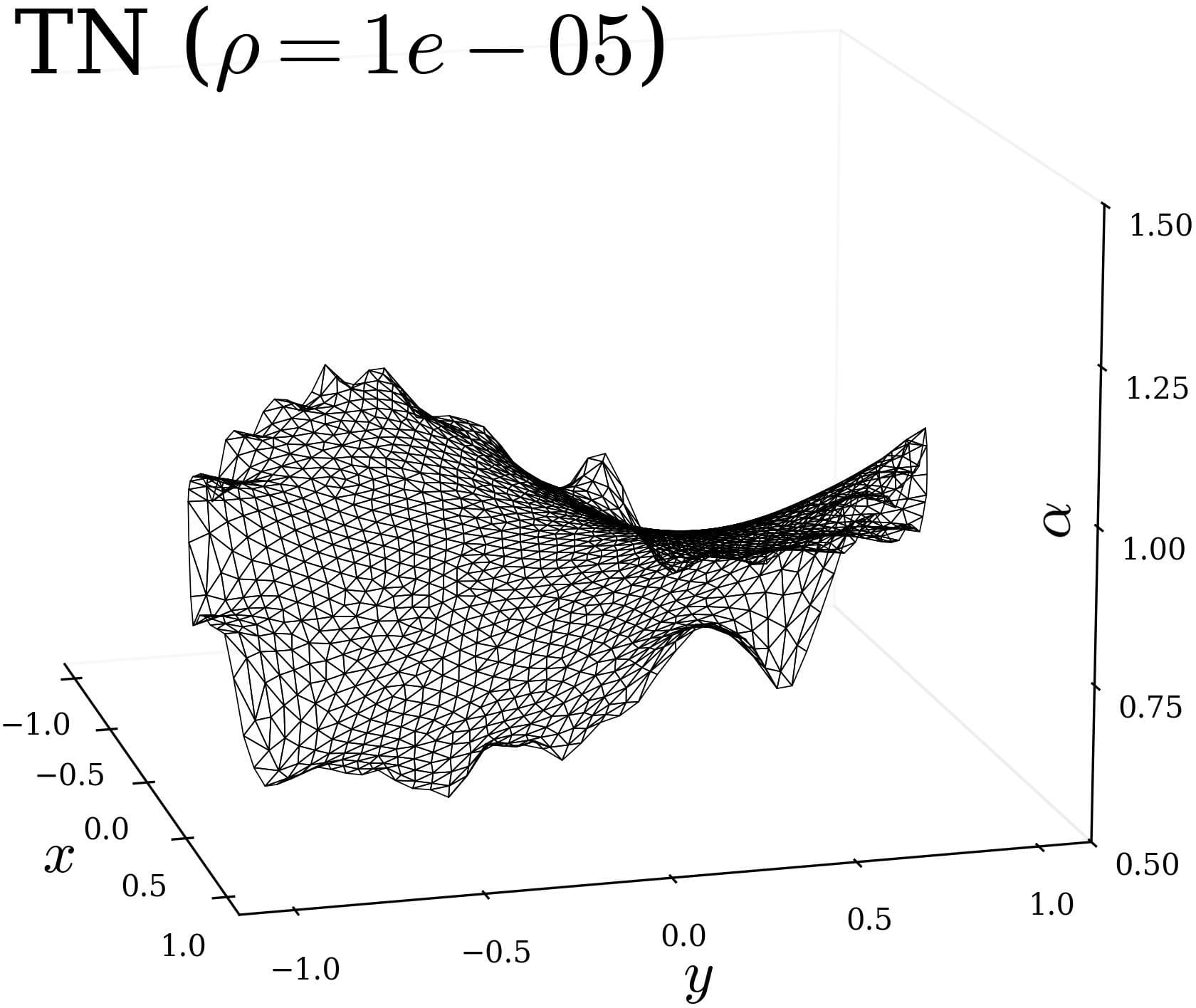}} \\[1em]
\resizebox{0.225\textwidth}{!}{\includegraphics{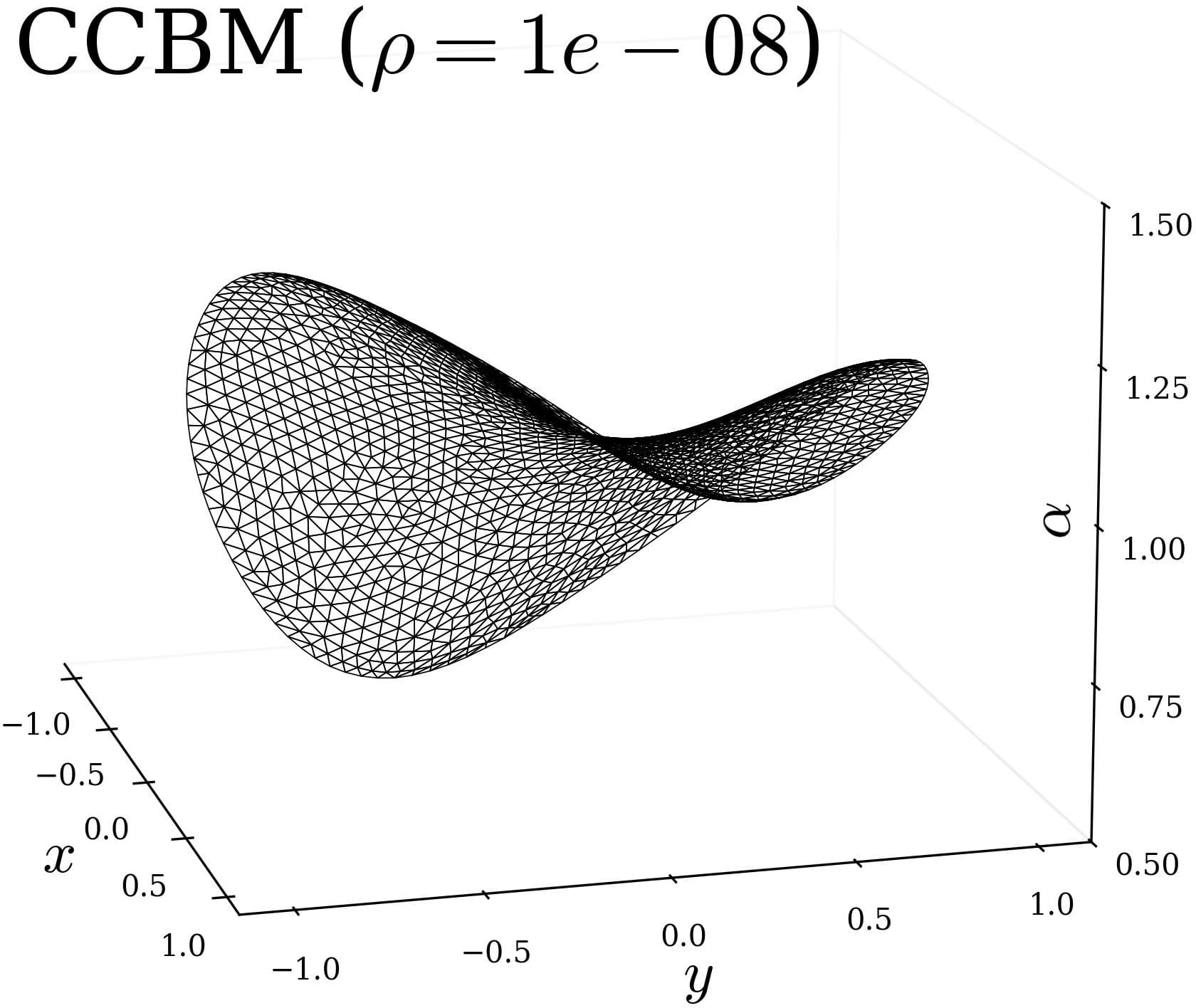}} \ 
\resizebox{0.225\textwidth}{!}{\includegraphics{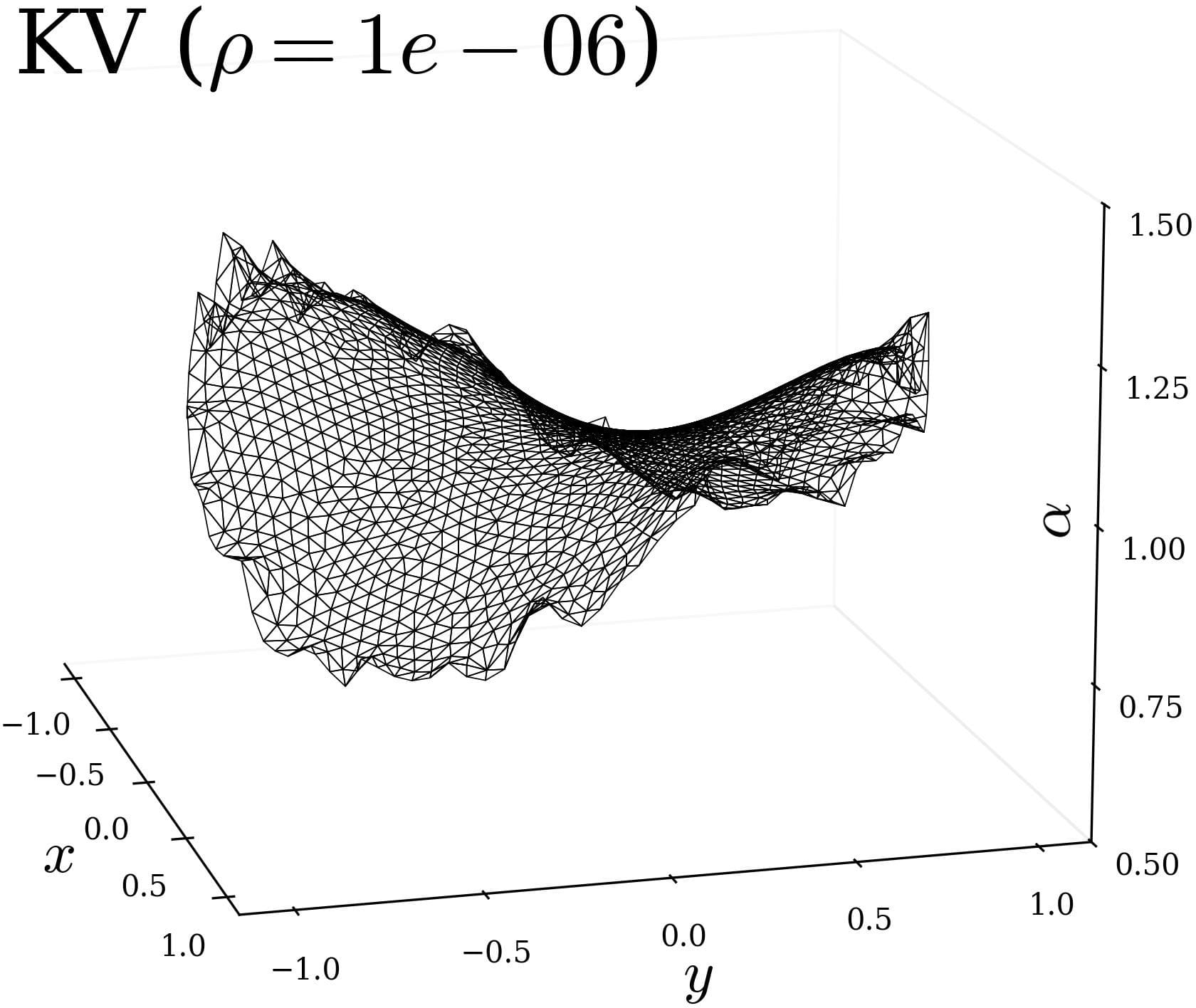}} \
\resizebox{0.225\textwidth}{!}{\includegraphics{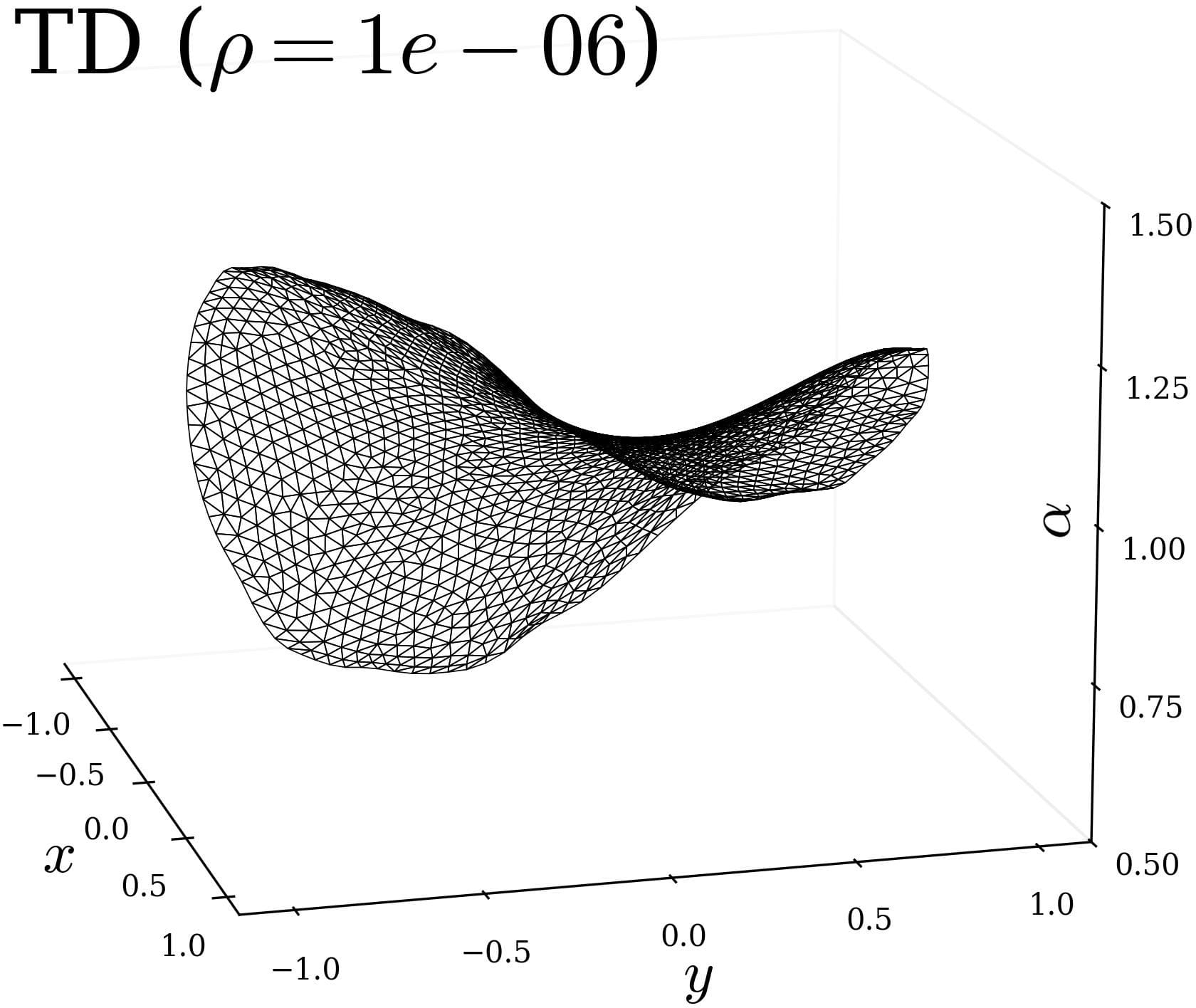}} \
\resizebox{0.225\textwidth}{!}{\includegraphics{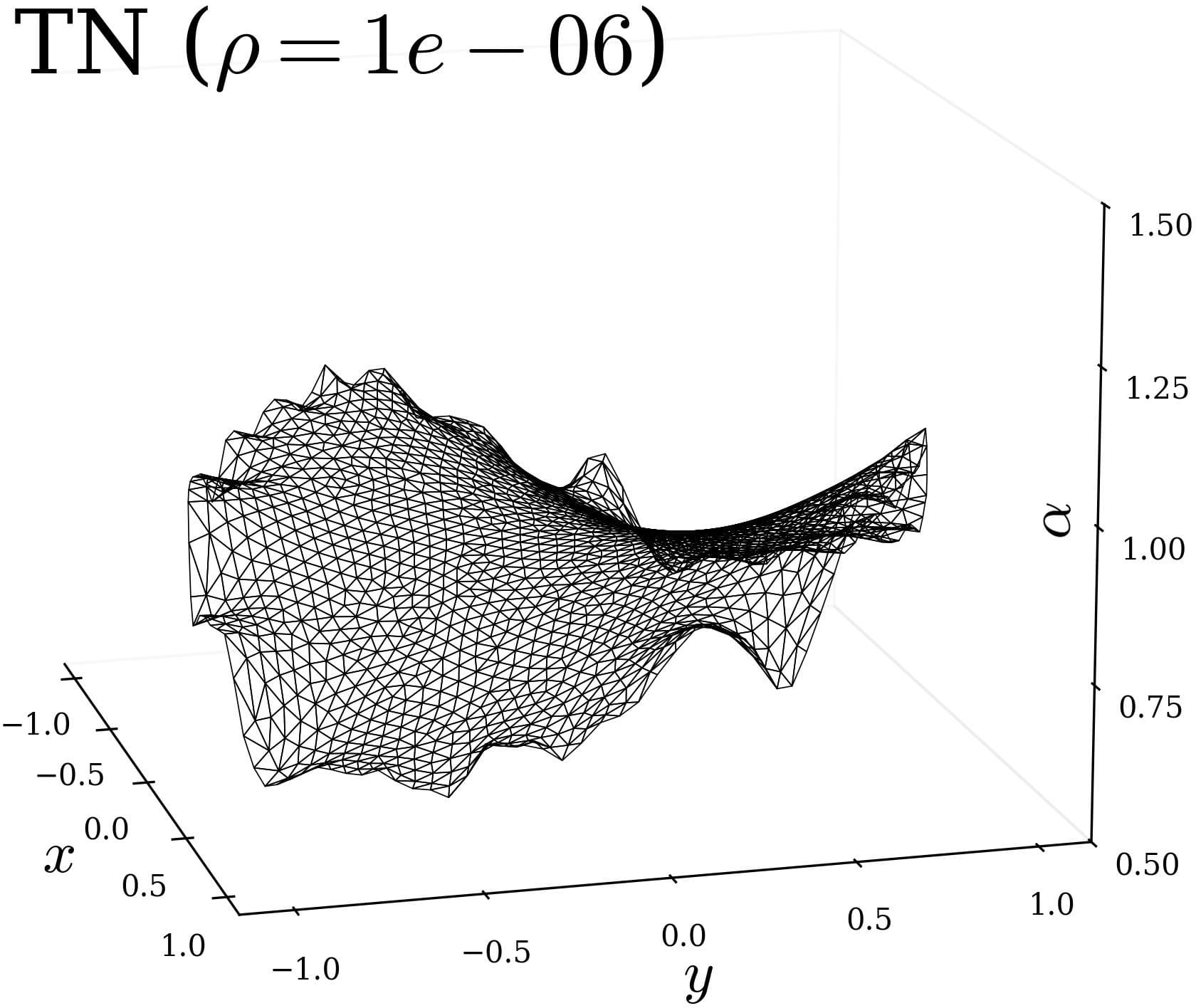}}
\caption{Influence of the Tikhonov parameter $\rho$ on the reconstruction when $\delta = 0.001$, with gradient smoothing ($\mu = 1.0$) and input data $g=\sin(\pi x)\sin(\pi y)$.}
\label{fig:effect_of_input_data_g_trigo}
\end{figure}

\begin{figure}[htp!]
\centering
\resizebox{0.225\textwidth}{!}{\includegraphics{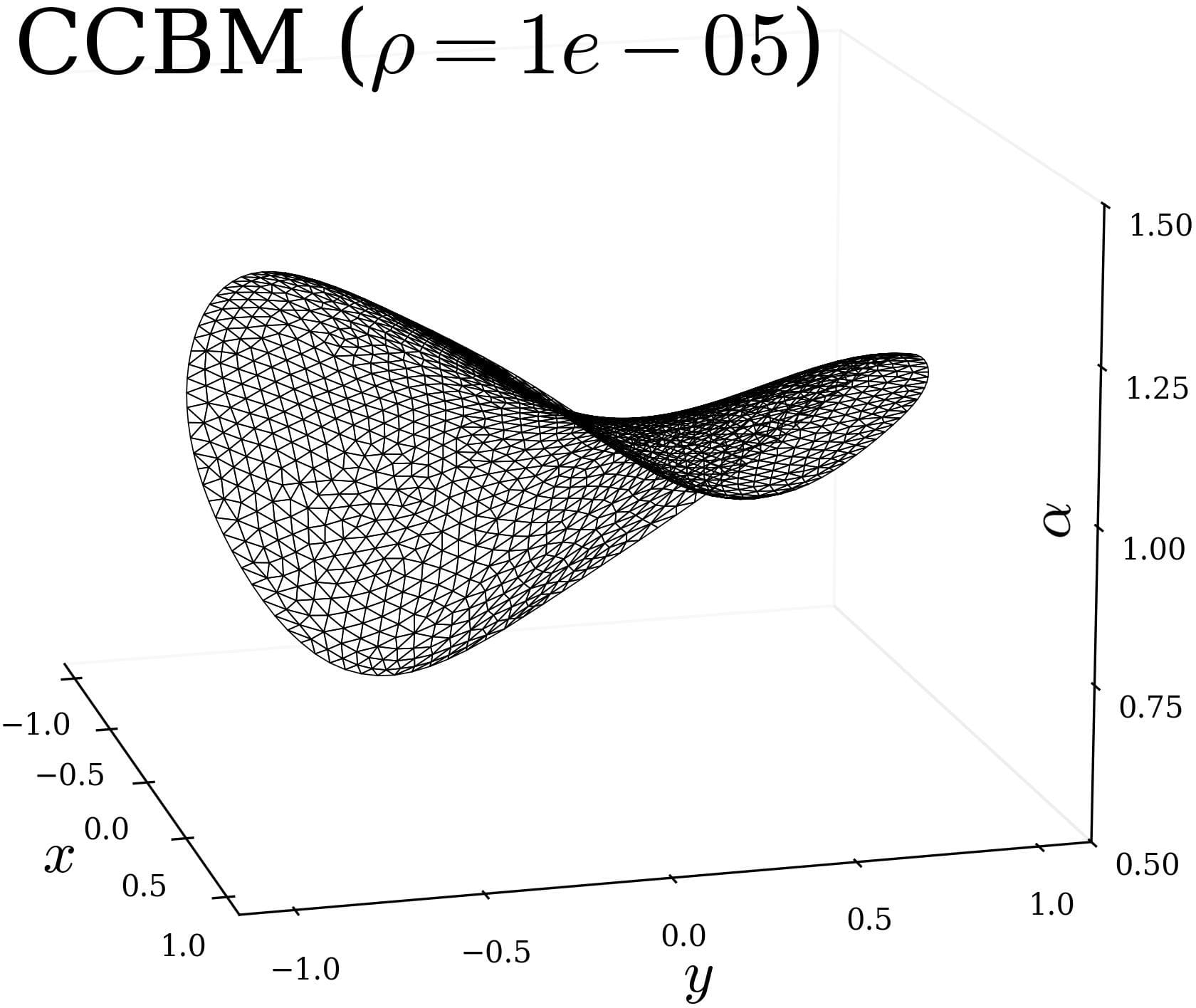}} \ 
\resizebox{0.225\textwidth}{!}{\includegraphics{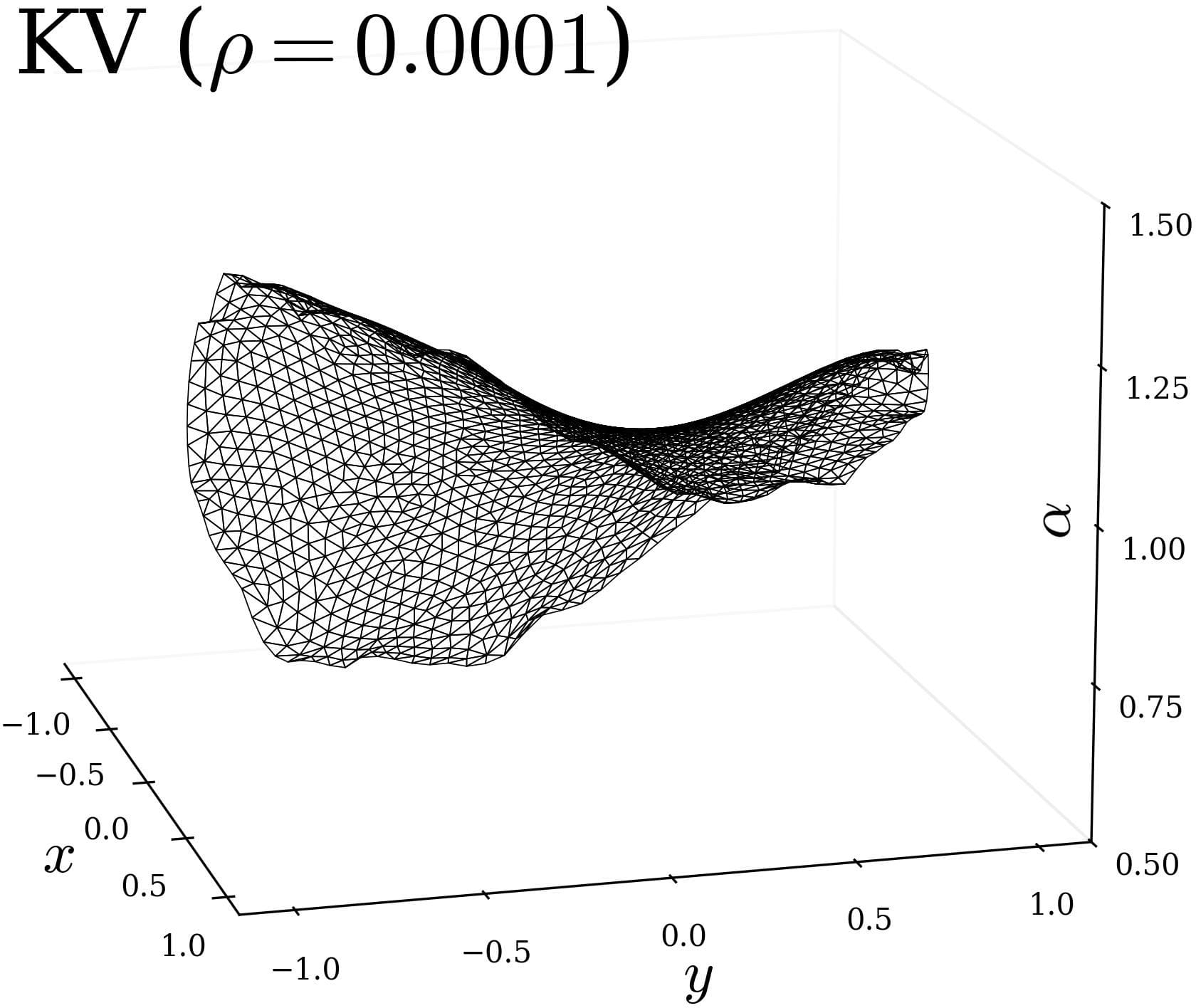}} \ 
\resizebox{0.225\textwidth}{!}{\includegraphics{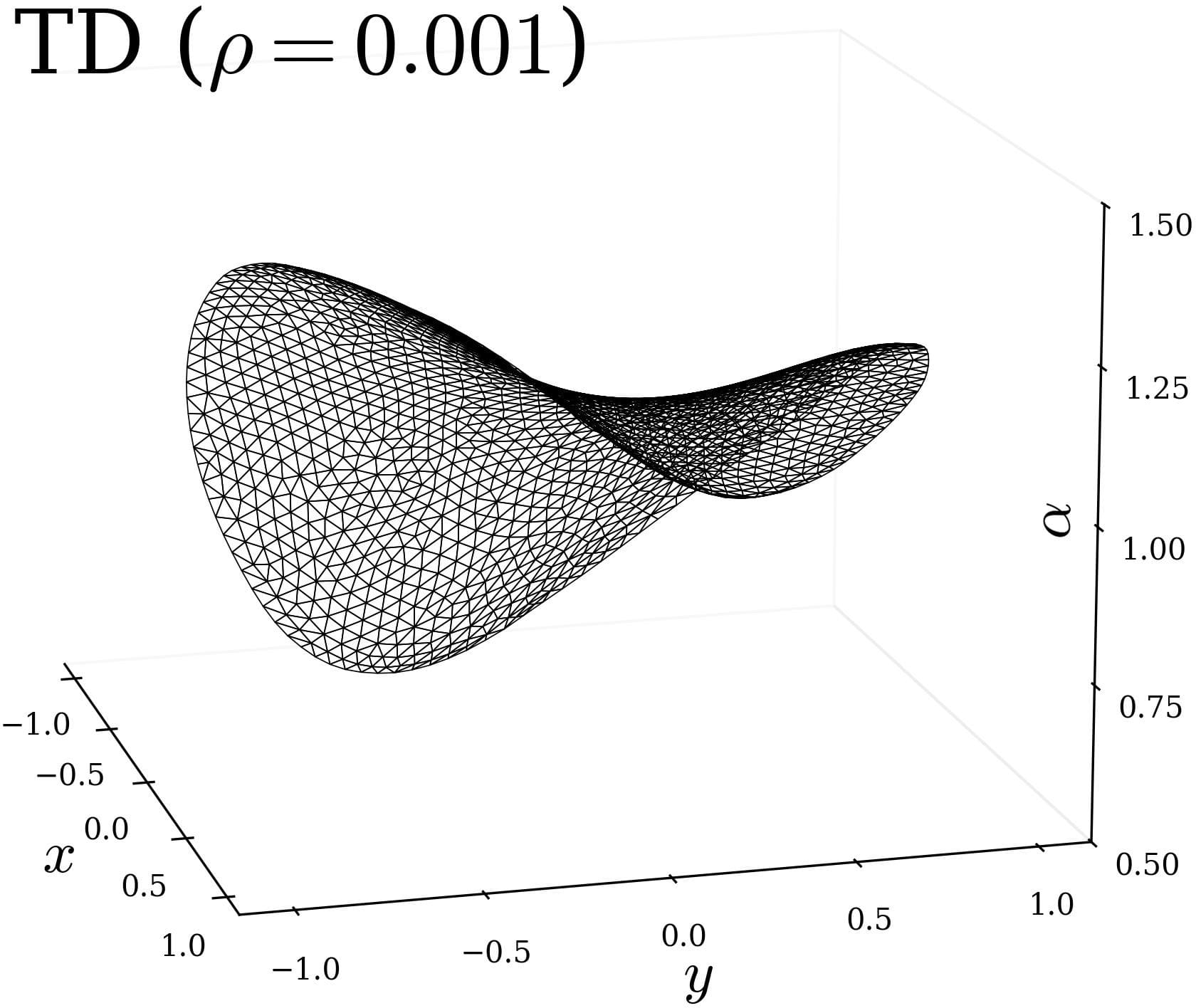}} \ 
\resizebox{0.225\textwidth}{!}{\includegraphics{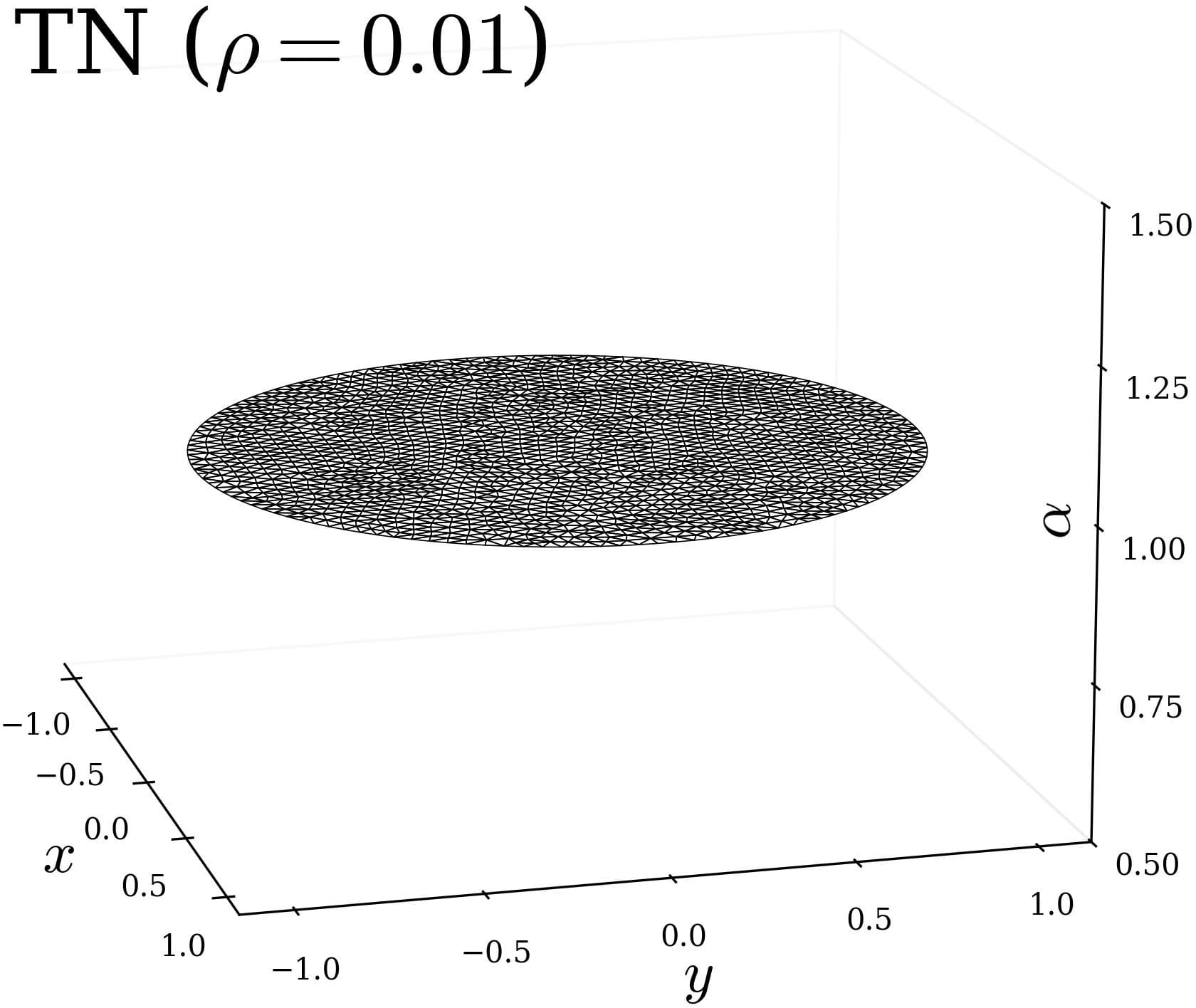}} \\[1em]
\resizebox{0.225\textwidth}{!}{\includegraphics{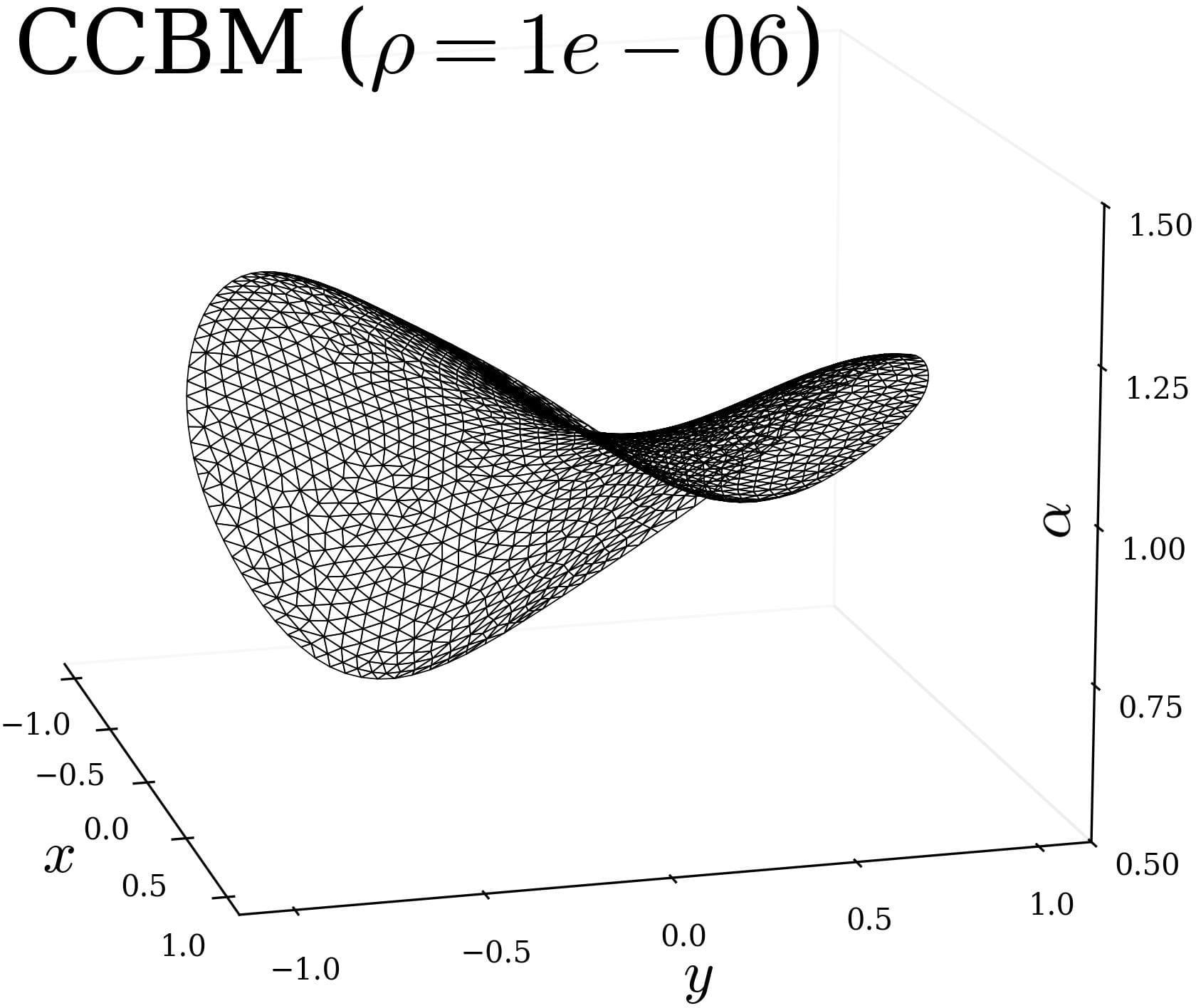}} \ 
\resizebox{0.225\textwidth}{!}{\includegraphics{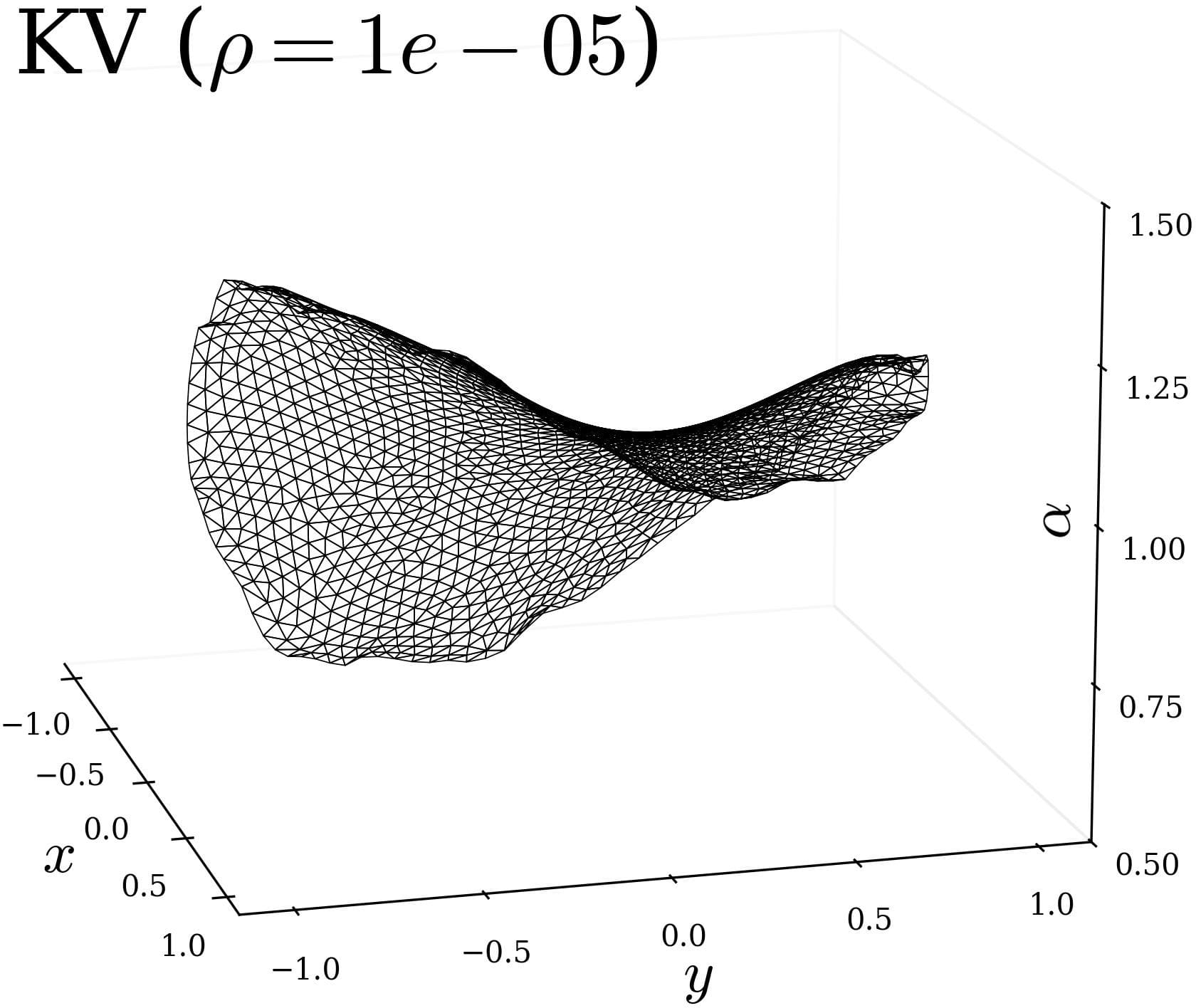}} \ 
\resizebox{0.225\textwidth}{!}{\includegraphics{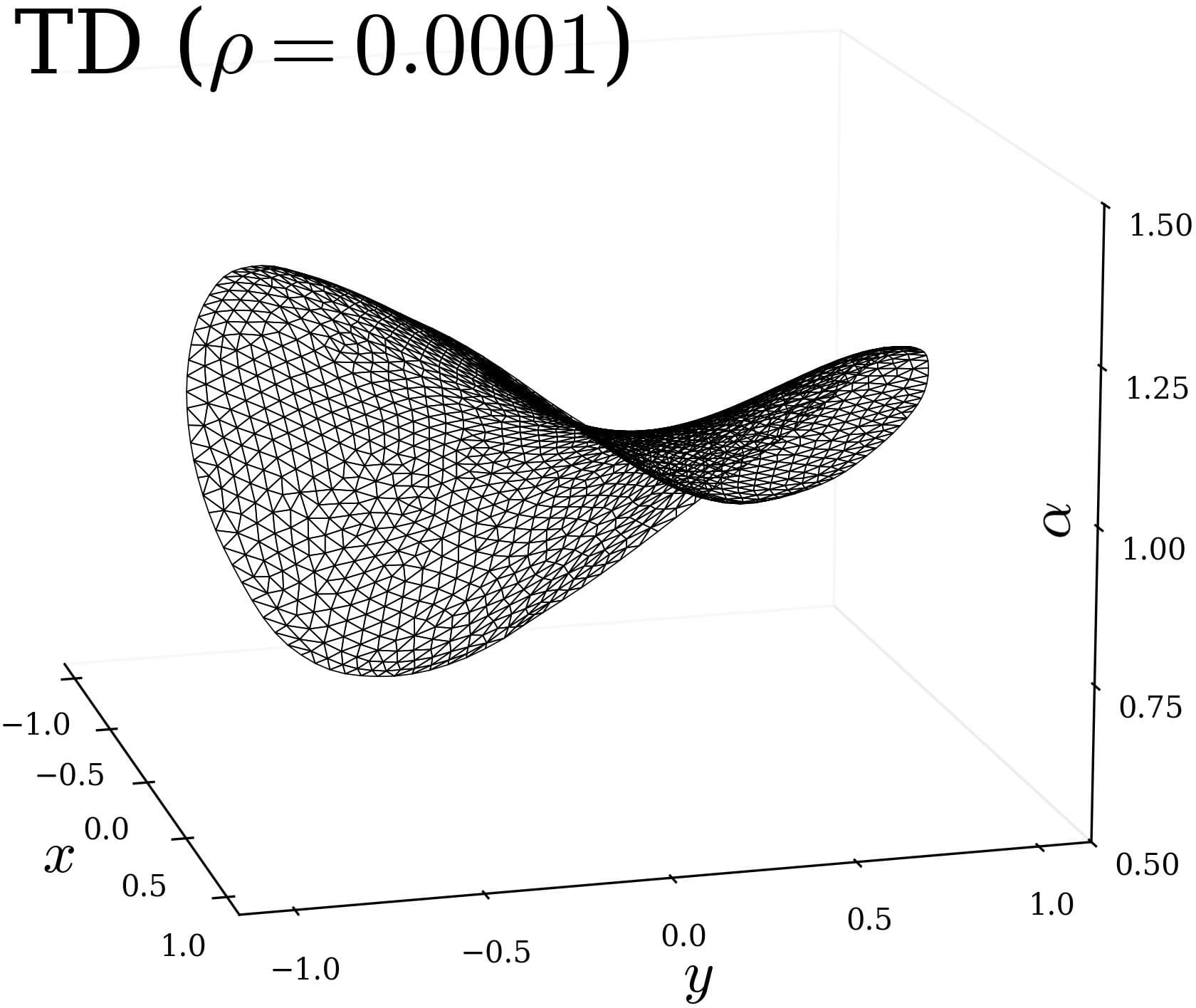}} \ 
\resizebox{0.225\textwidth}{!}{\includegraphics{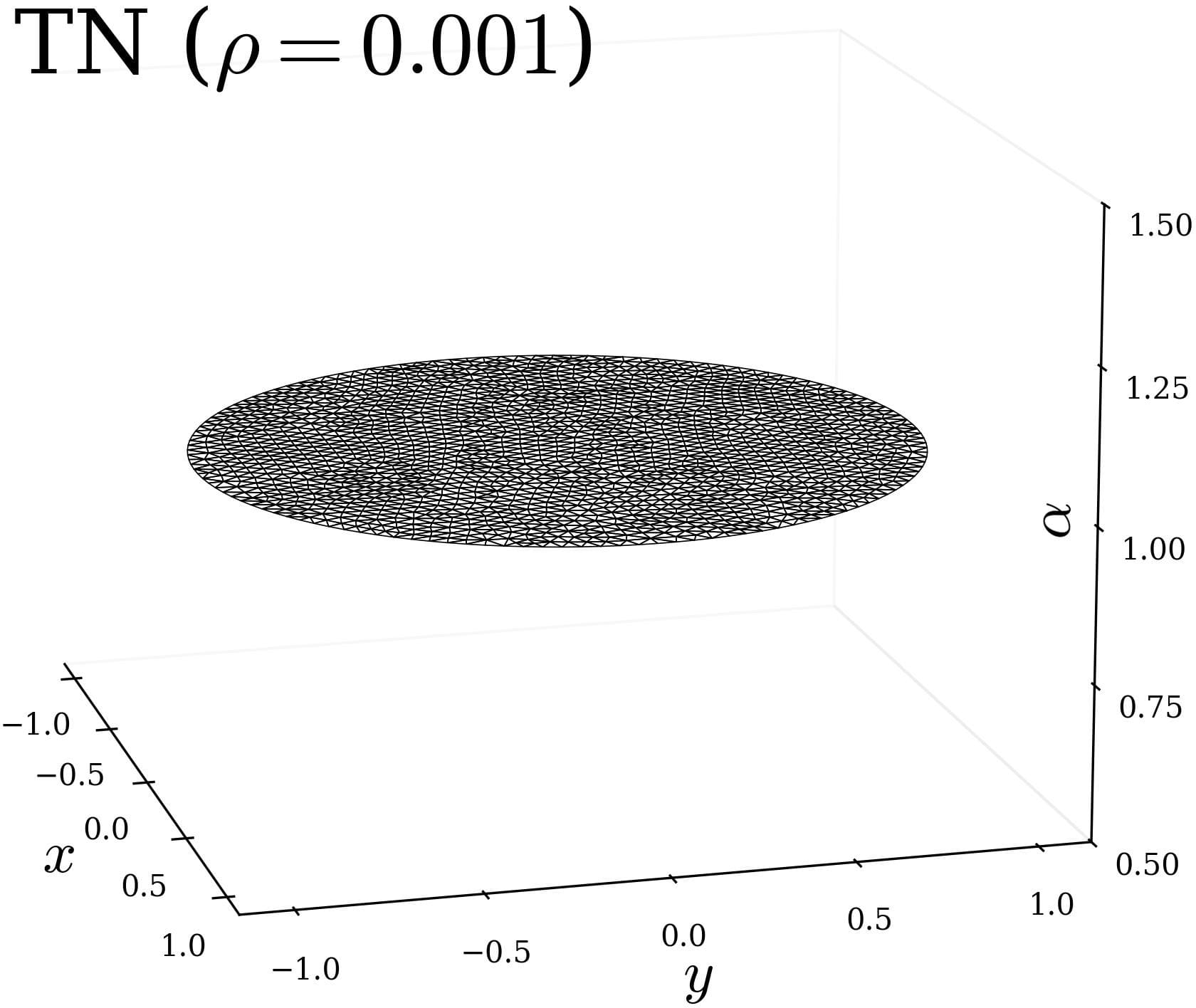}} \\[1em]
\resizebox{0.225\textwidth}{!}{\includegraphics{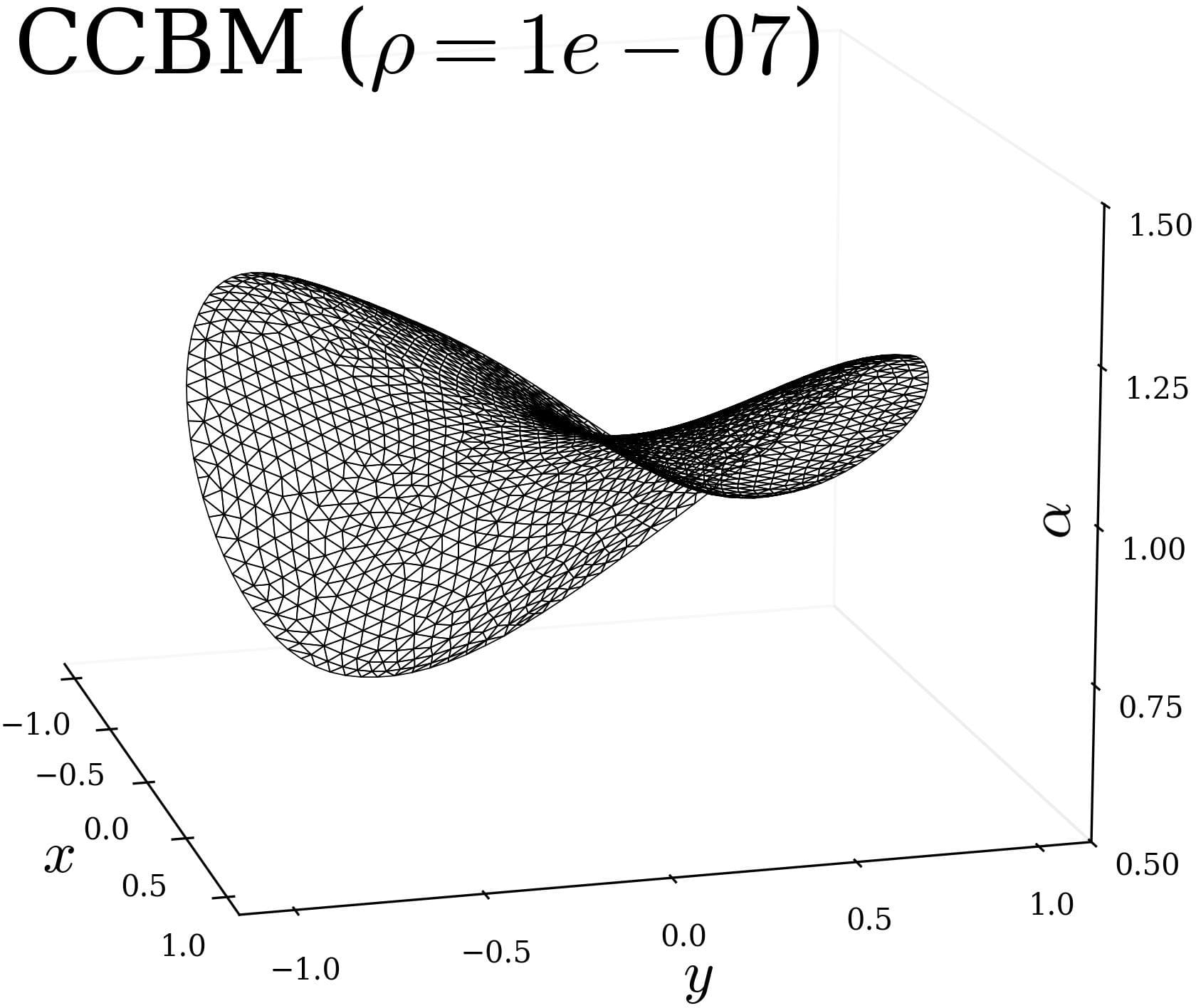}} \ 
\resizebox{0.225\textwidth}{!}{\includegraphics{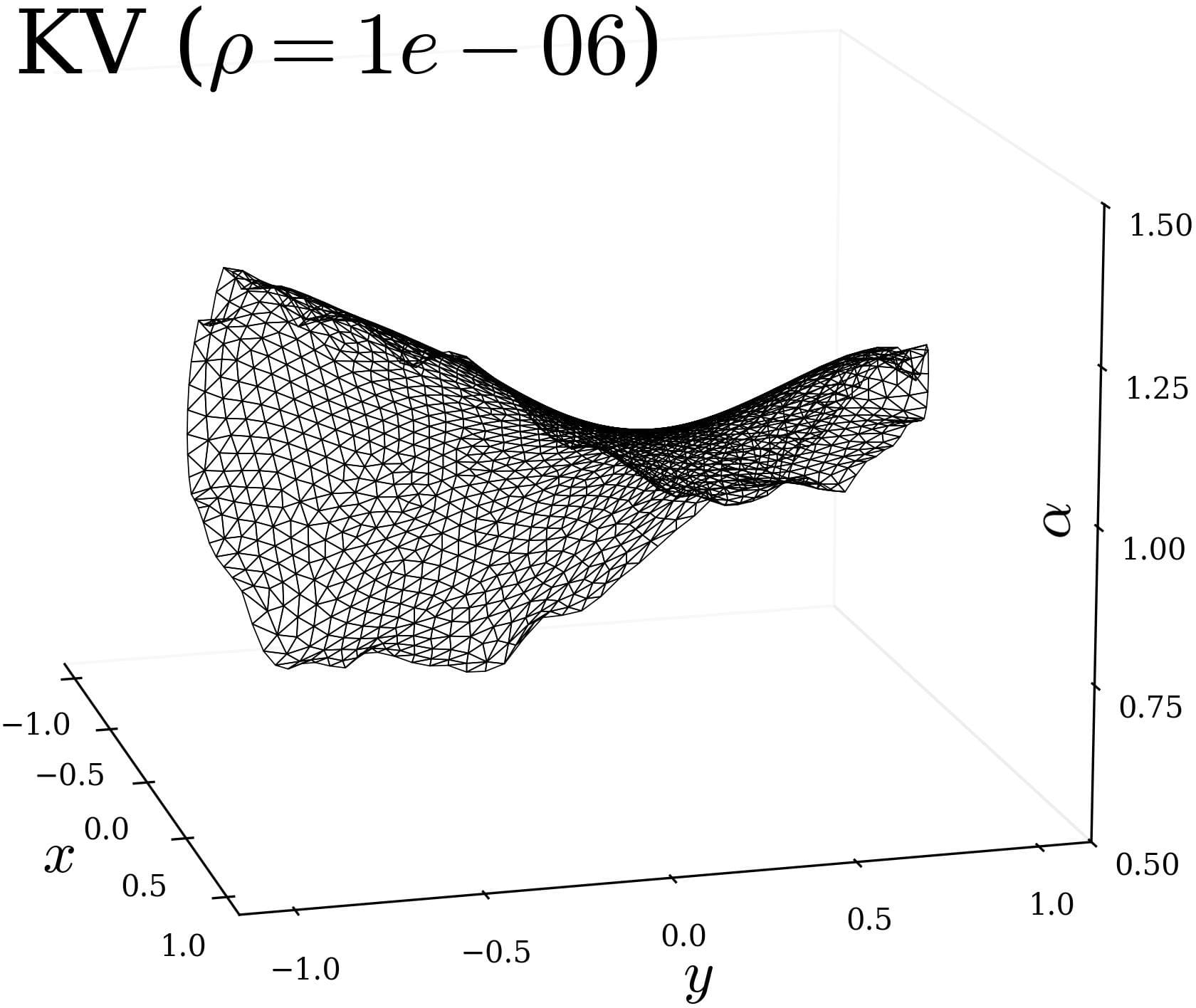}} \ 
\resizebox{0.225\textwidth}{!}{\includegraphics{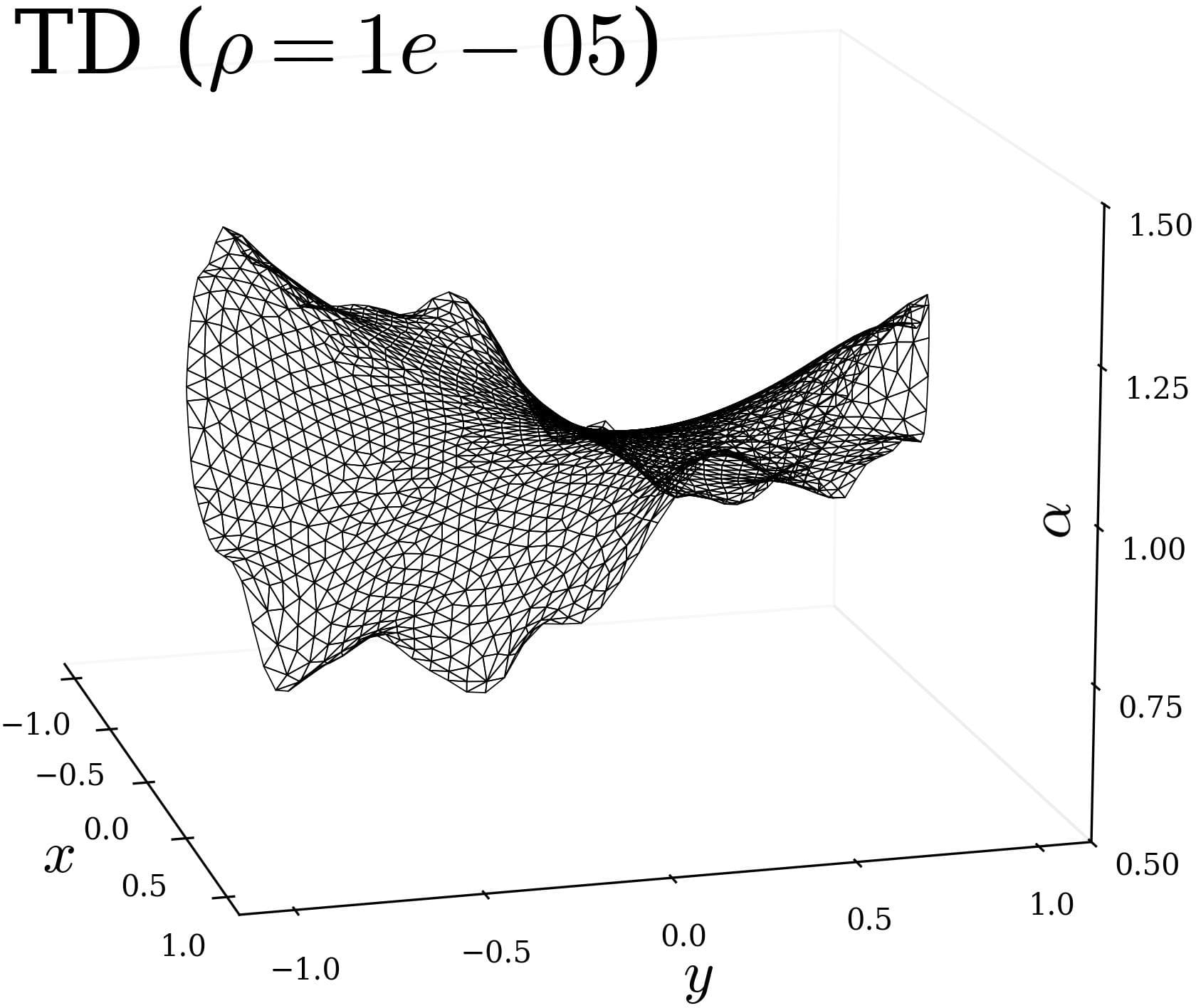}} \ 
\resizebox{0.225\textwidth}{!}{\includegraphics{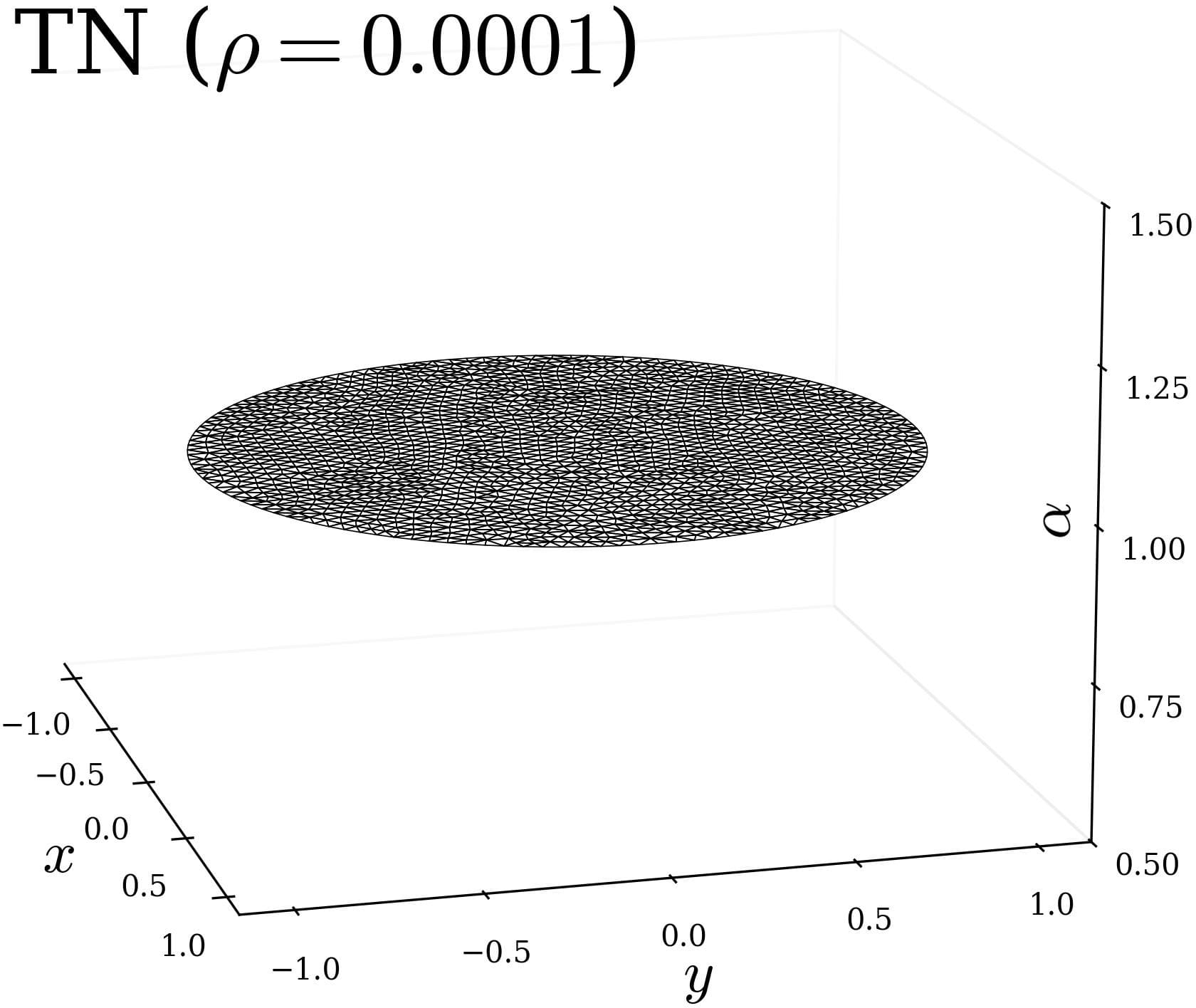}} \\[1em]
\resizebox{0.225\textwidth}{!}{\includegraphics{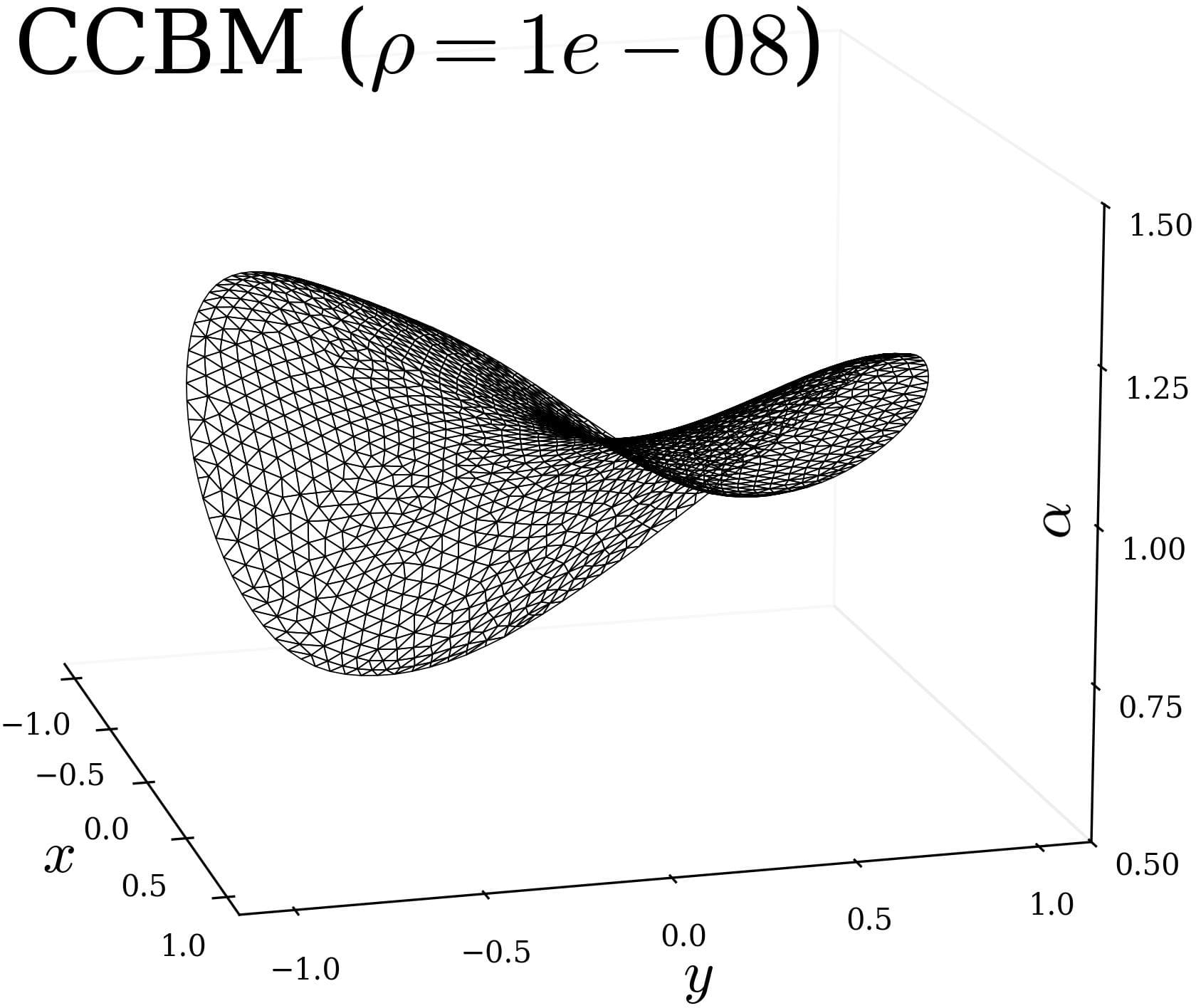}} \ 
\resizebox{0.225\textwidth}{!}{\includegraphics{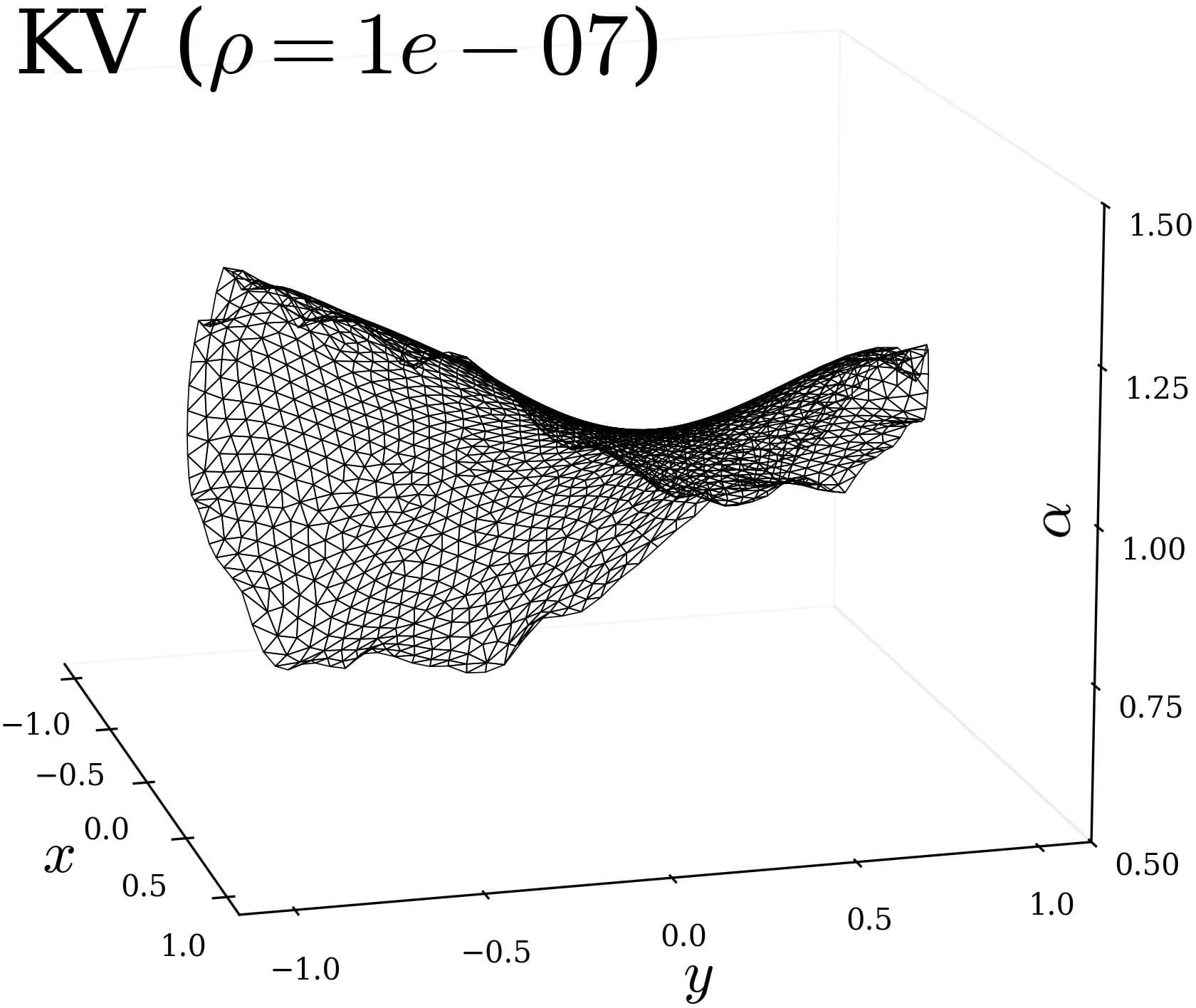}} \
\resizebox{0.225\textwidth}{!}{\includegraphics{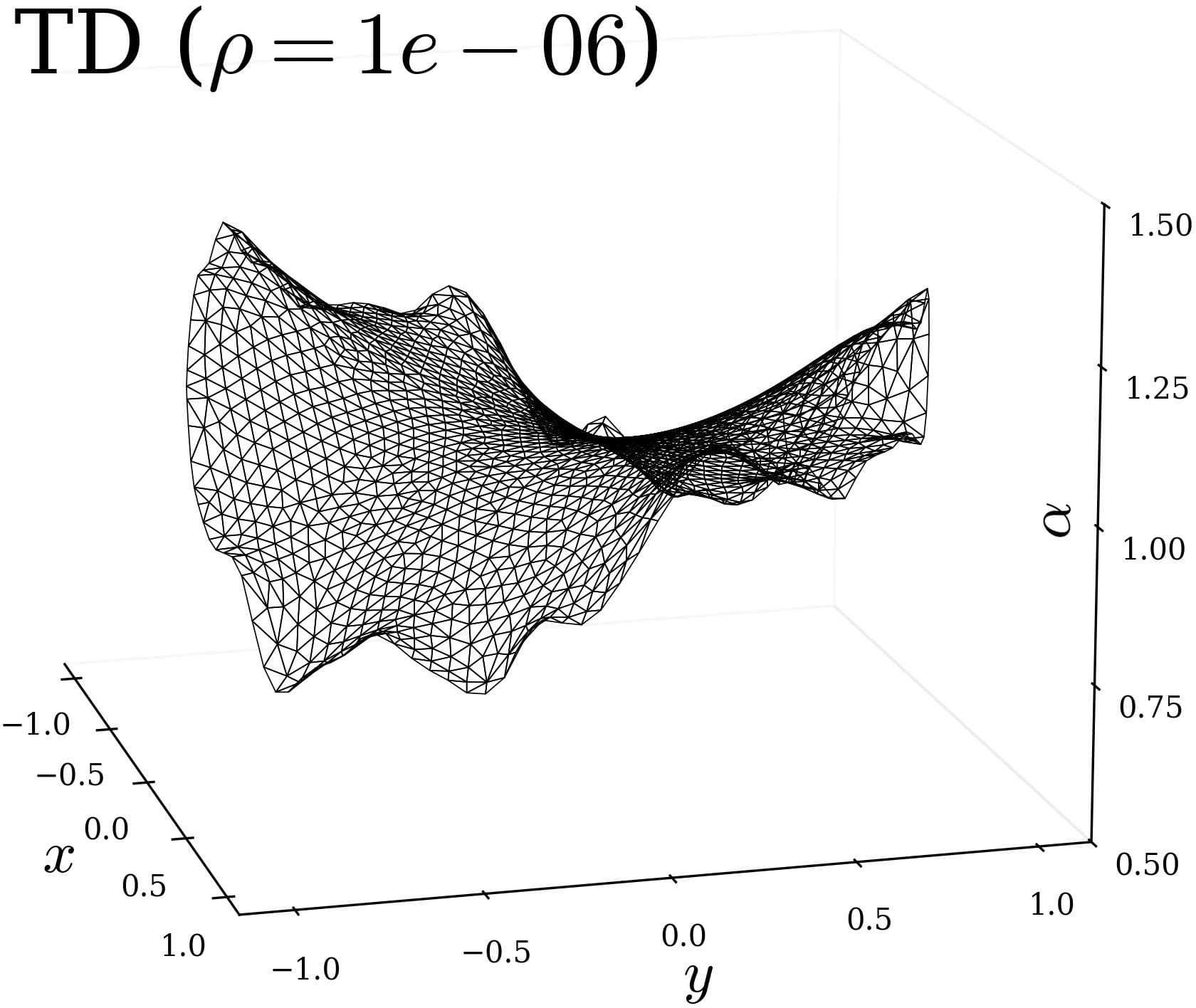}} \
\resizebox{0.225\textwidth}{!}{\includegraphics{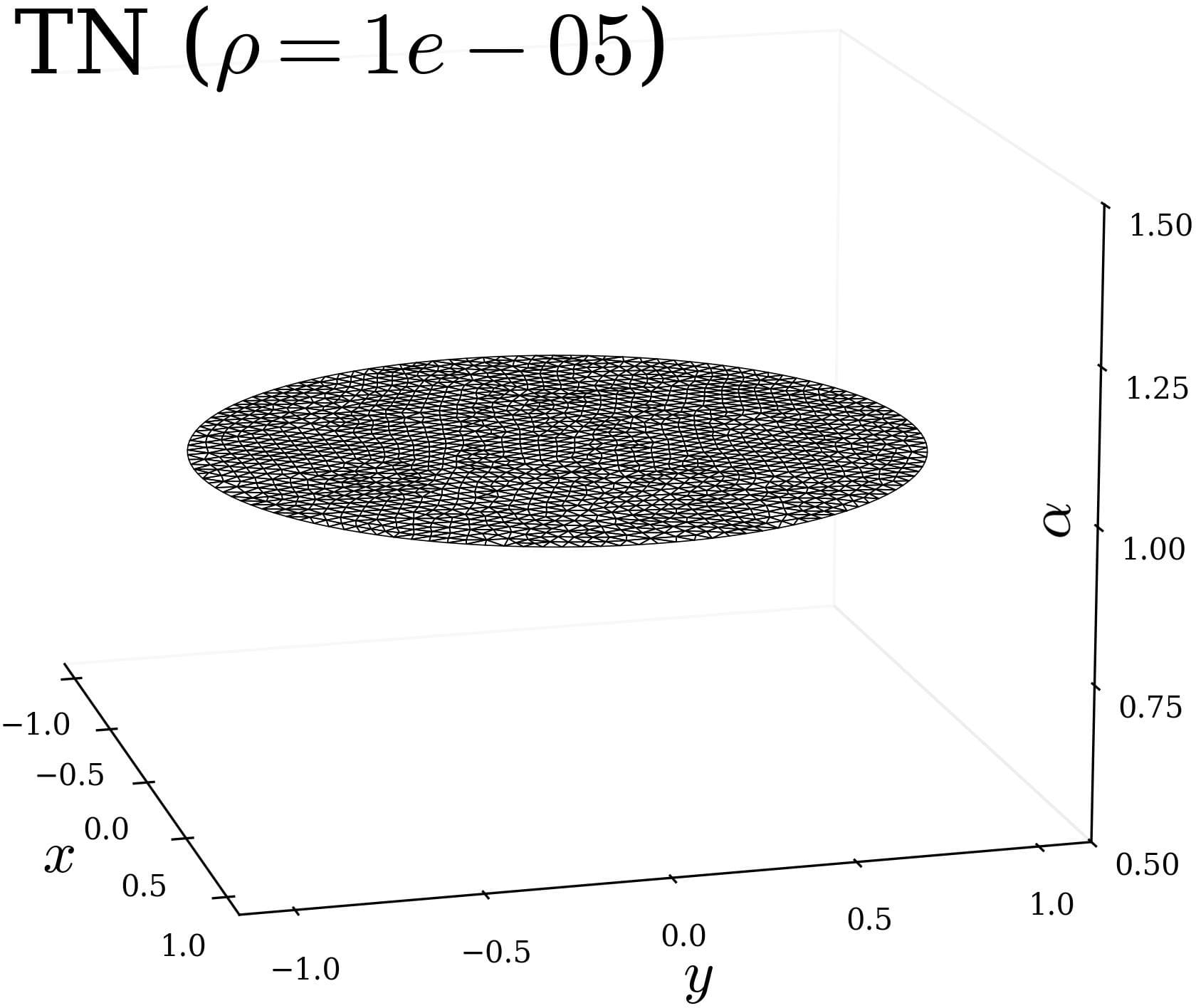}}
\caption{Influence of the Tikhonov parameter $\rho$ on the reconstruction when $\delta = 0.005$, with gradient smoothing ($\mu = 1.0$) and input data $g=\sin(\pi x)\sin(\pi y)$.}
\label{fig:effect_of_input_data_g_trigo_with_noise_0.005}
\end{figure}

\begin{figure}[htp!]
\resizebox{0.28\textwidth}{!}{\includegraphics{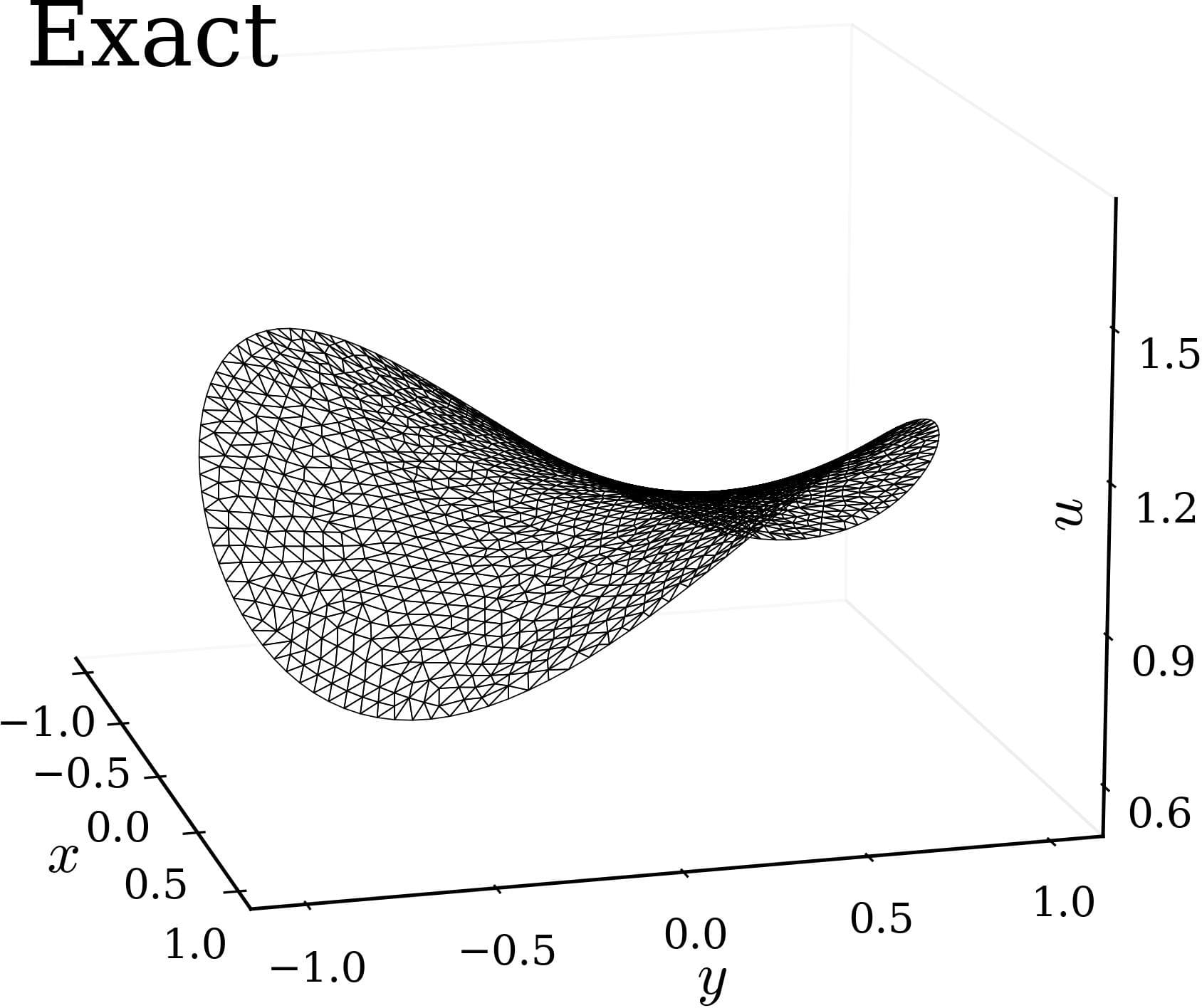}} \quad
\resizebox{0.225\textwidth}{!}{\includegraphics{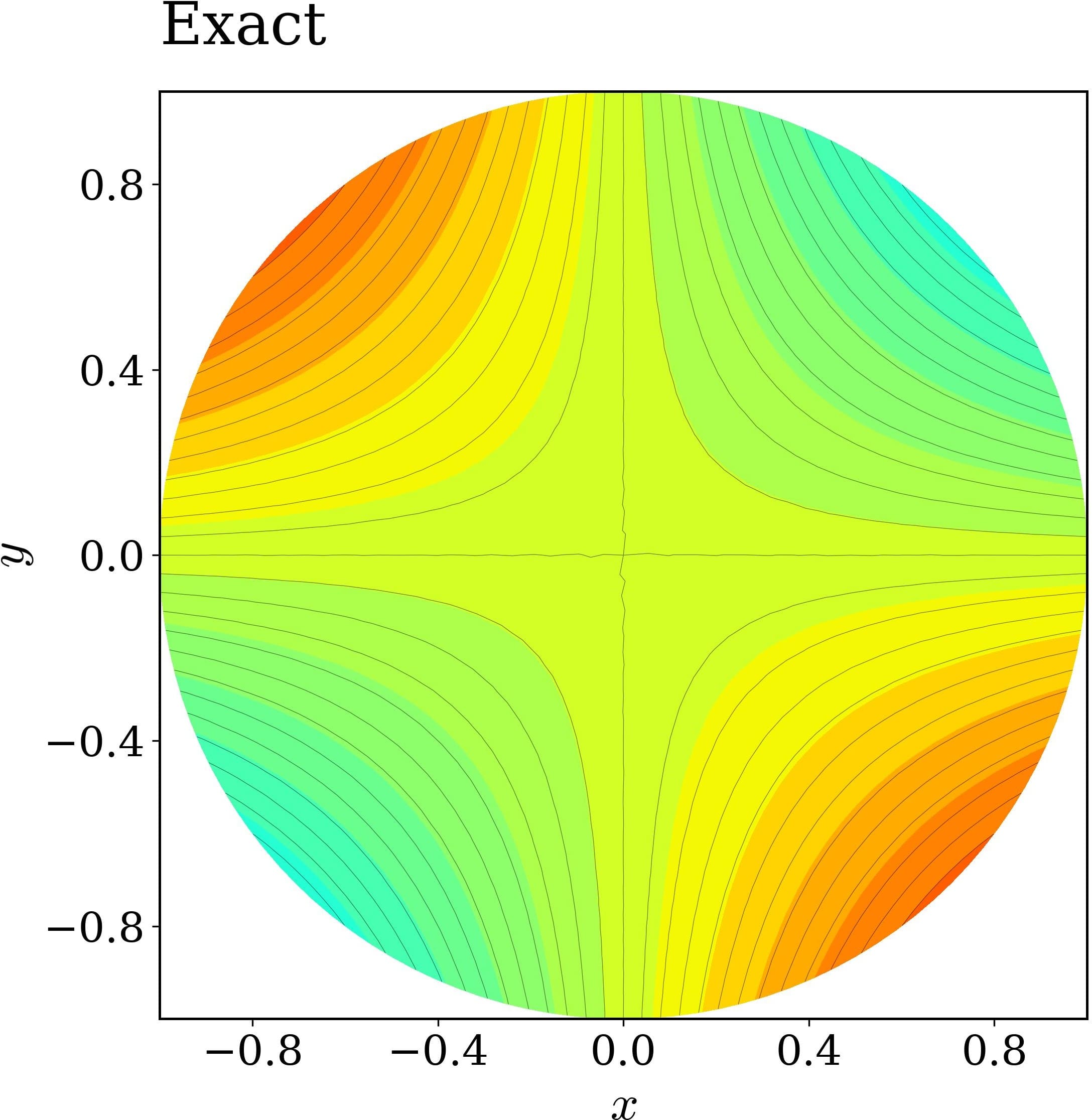}} \\[1em]
\centering
\resizebox{0.225\textwidth}{!}{\includegraphics{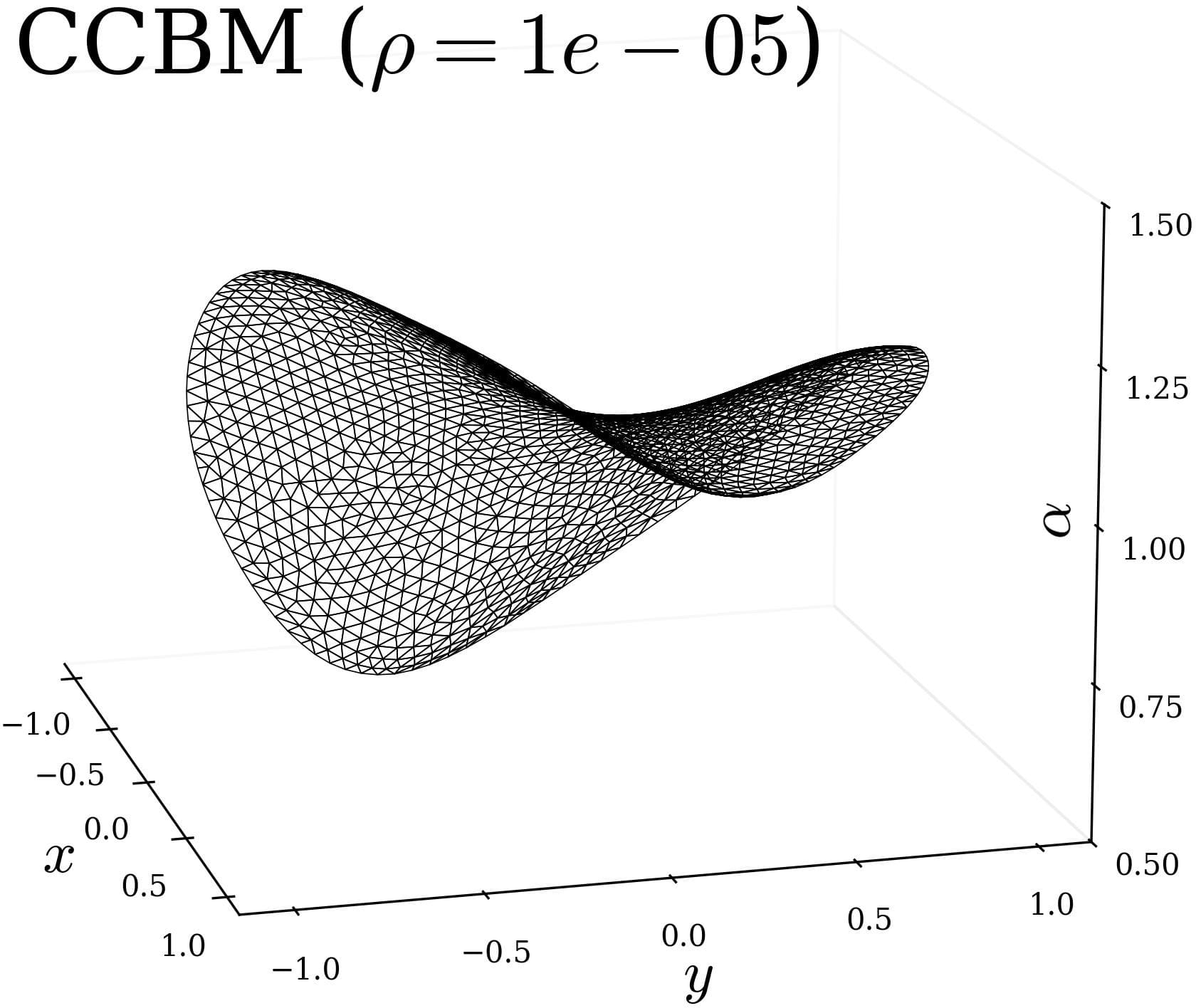}} \ 
\resizebox{0.225\textwidth}{!}{\includegraphics{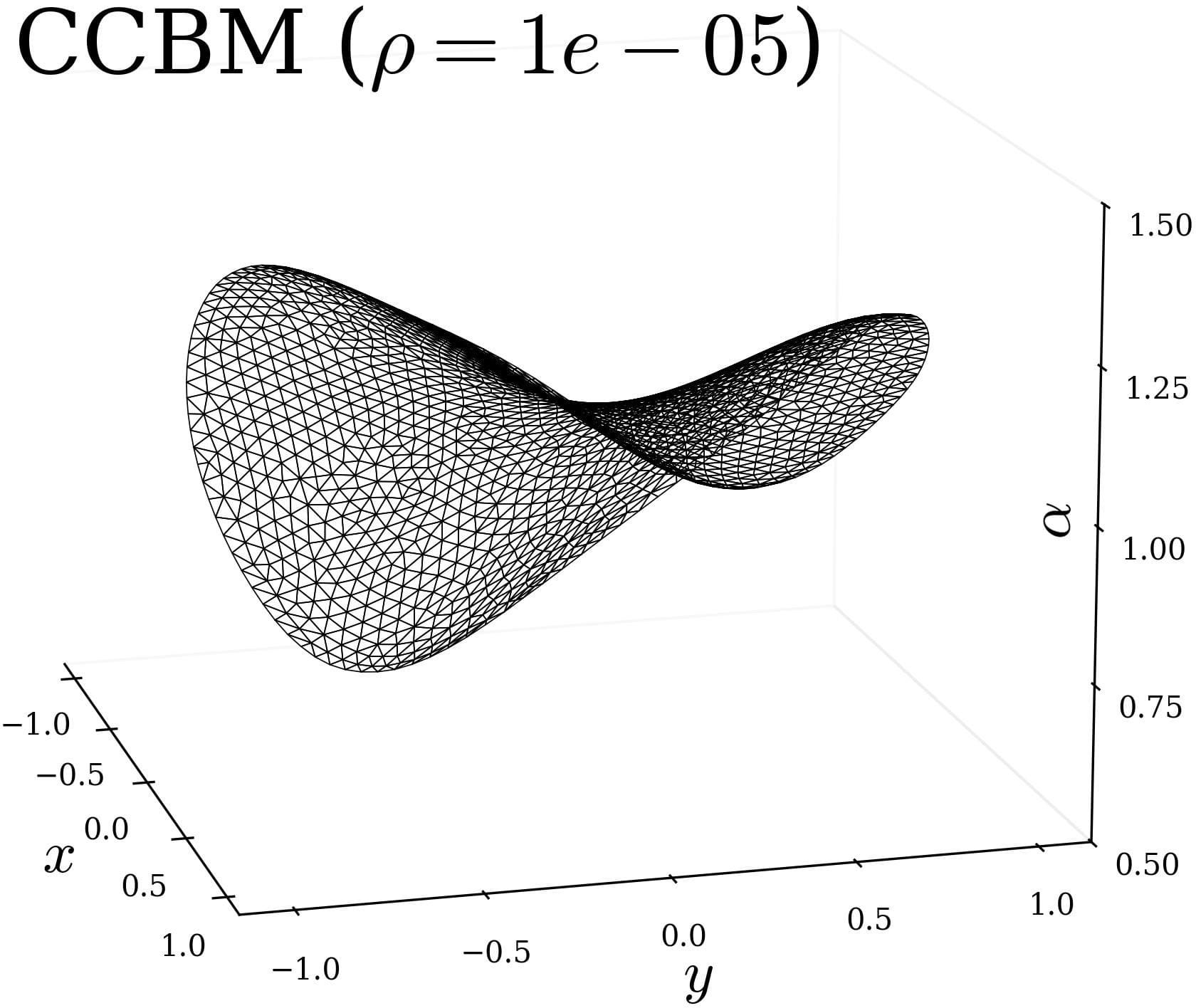}} \ 
\resizebox{0.225\textwidth}{!}{\includegraphics{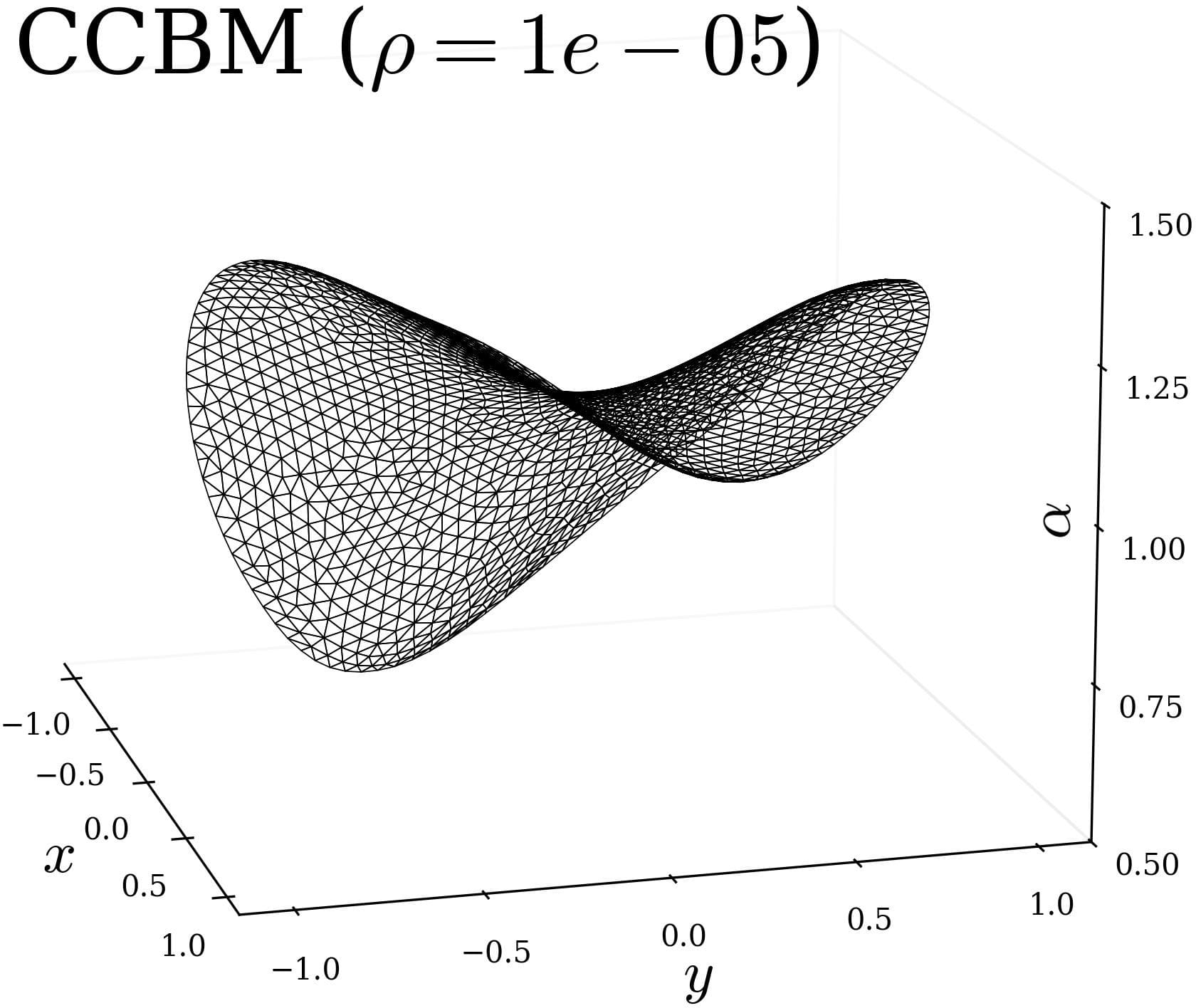}} \ 
\resizebox{0.225\textwidth}{!}{\includegraphics{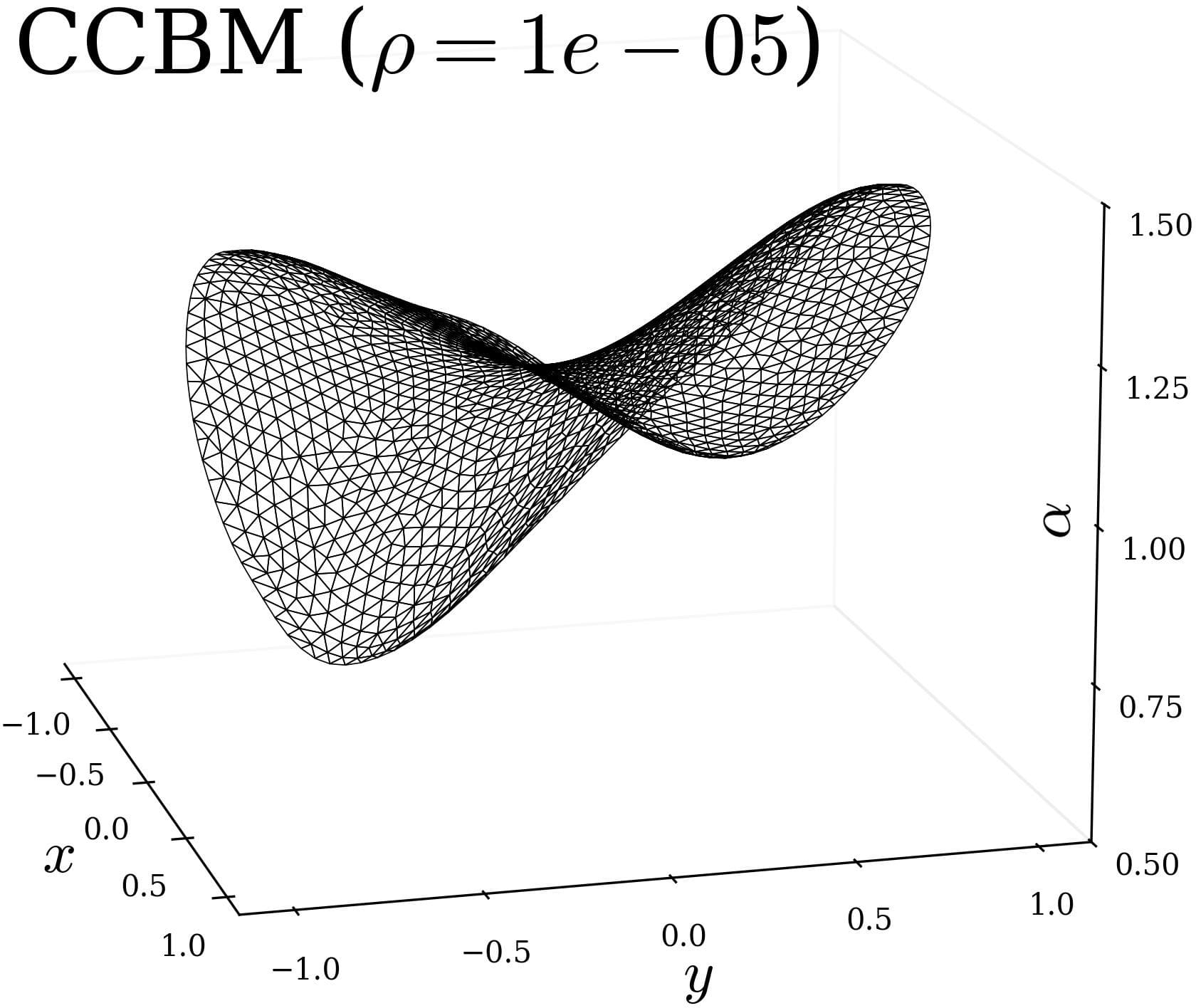}} \\[1em]
\resizebox{0.225\textwidth}{!}{\includegraphics{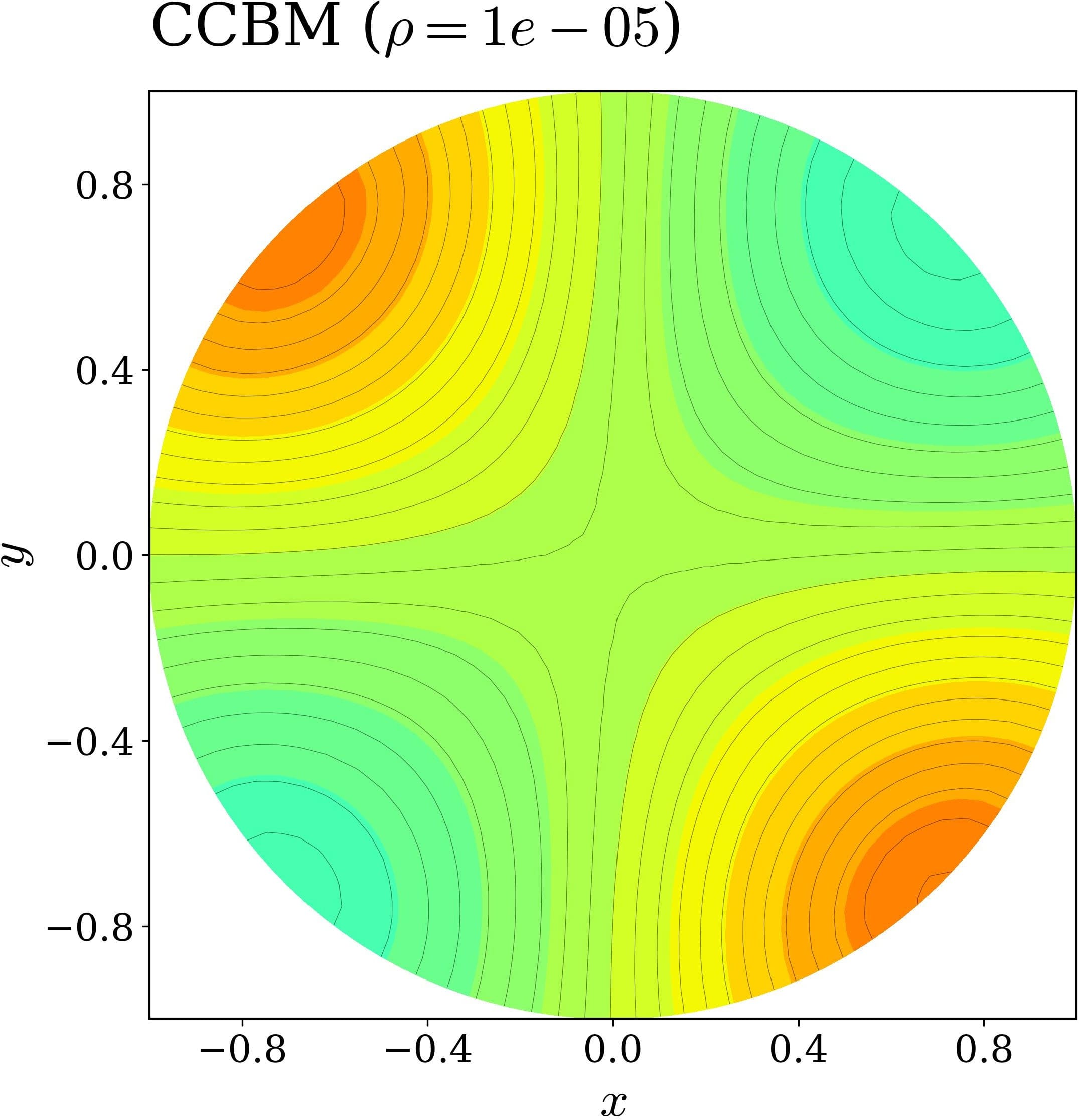}} \ 
\resizebox{0.225\textwidth}{!}{\includegraphics{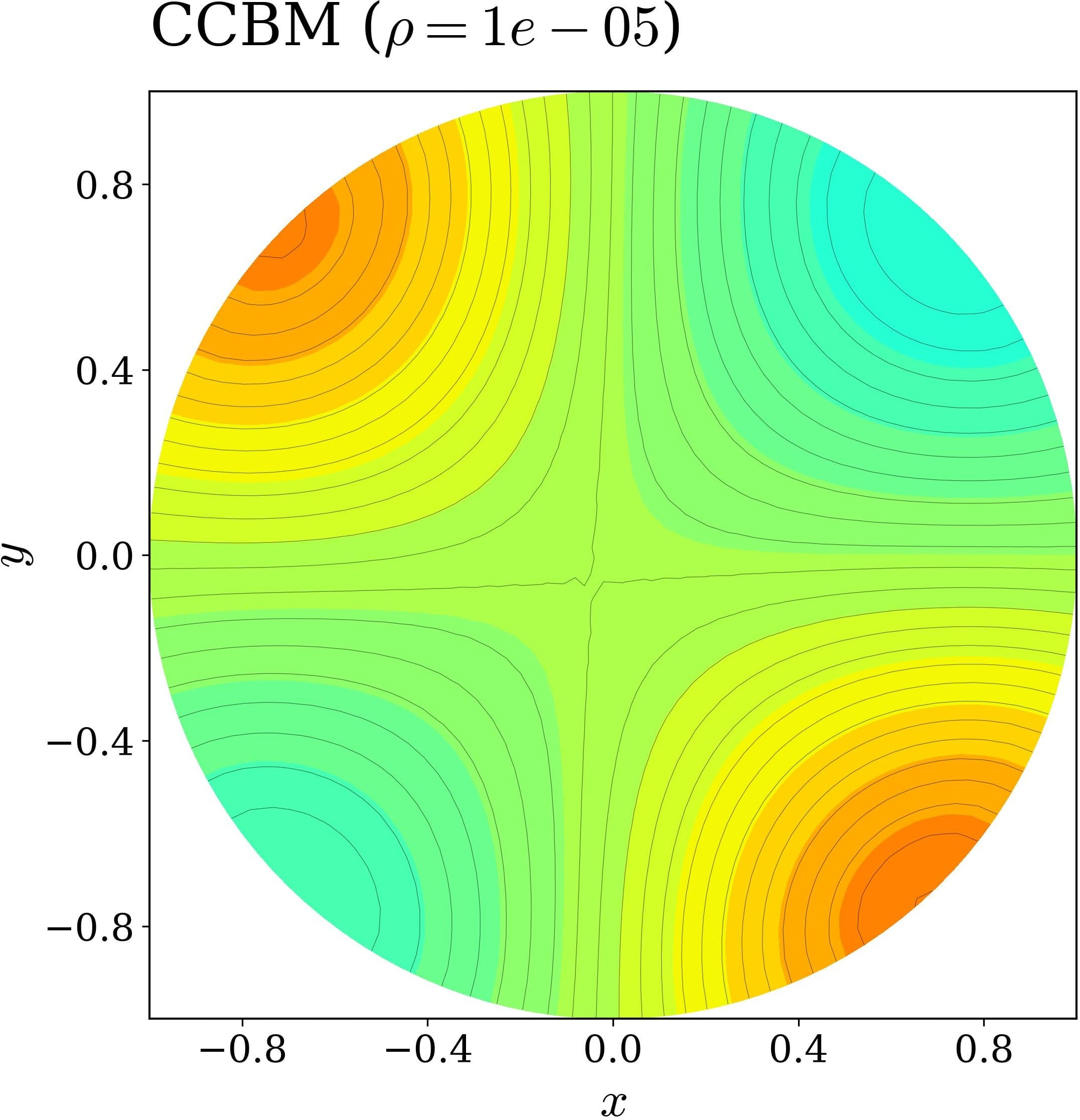}} \ 
\resizebox{0.225\textwidth}{!}{\includegraphics{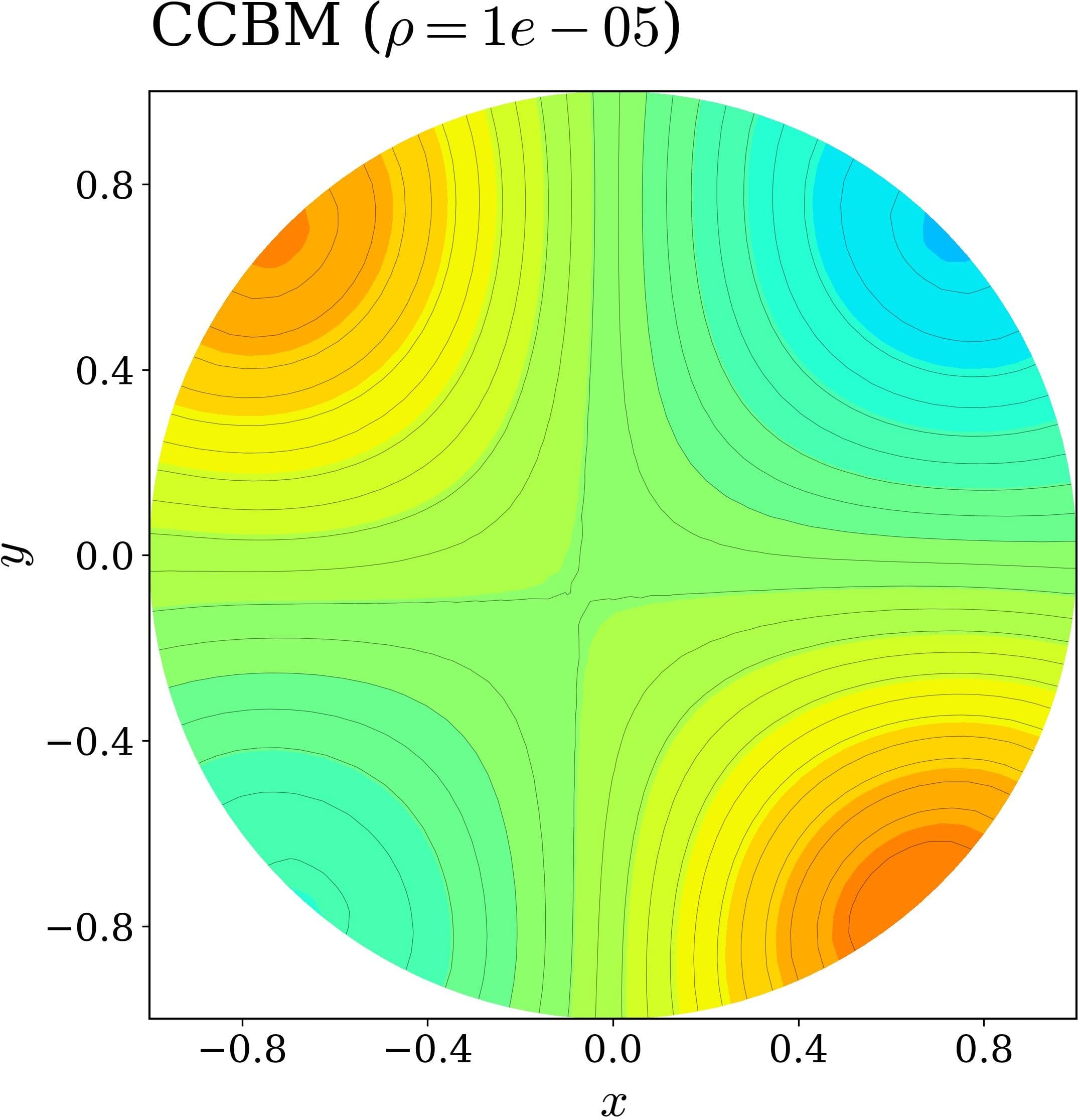}} \ 
\resizebox{0.225\textwidth}{!}{\includegraphics{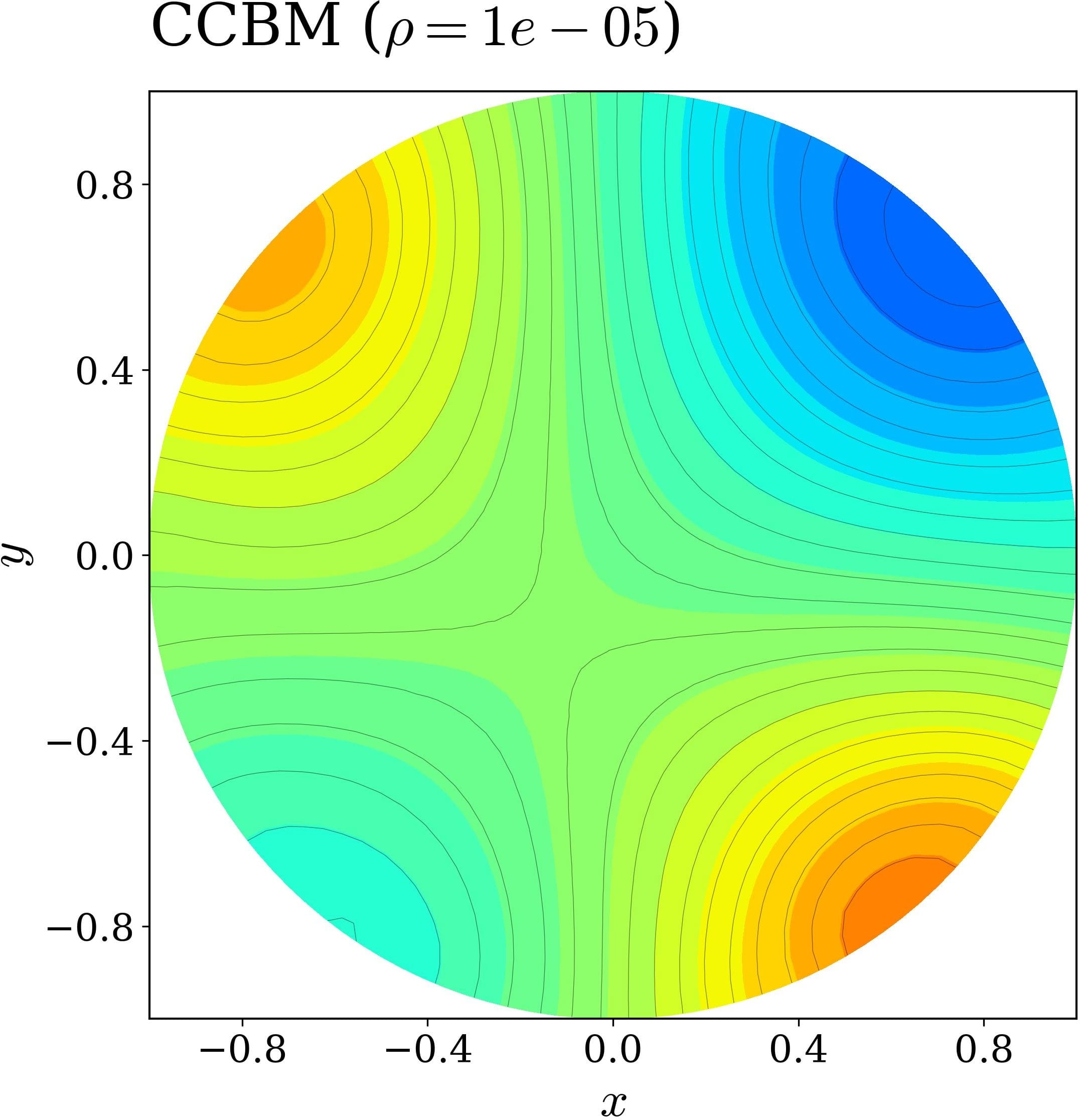}} \\[1em]
\resizebox{0.225\textwidth}{!}{\includegraphics{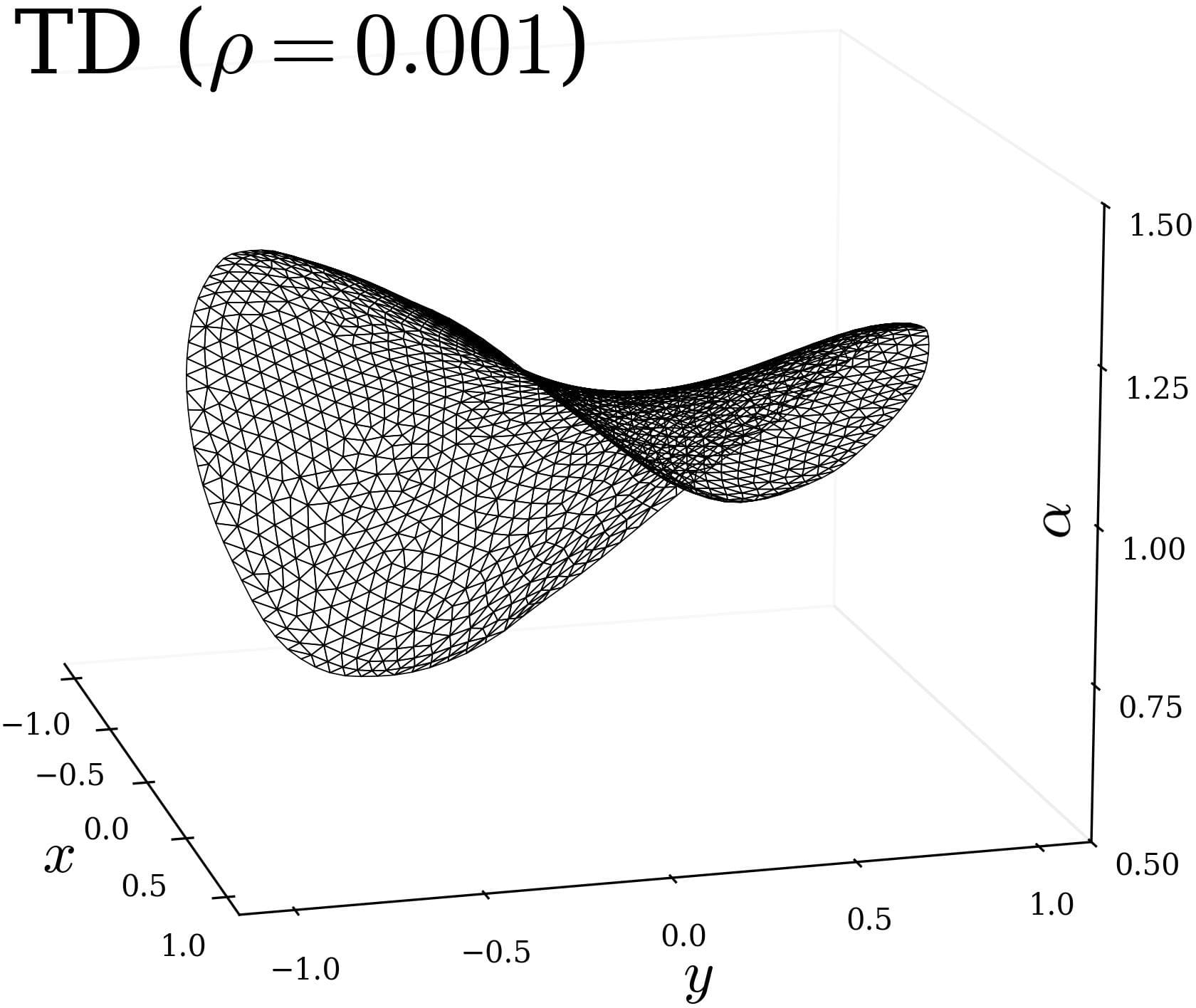}} \ 
\resizebox{0.225\textwidth}{!}{\includegraphics{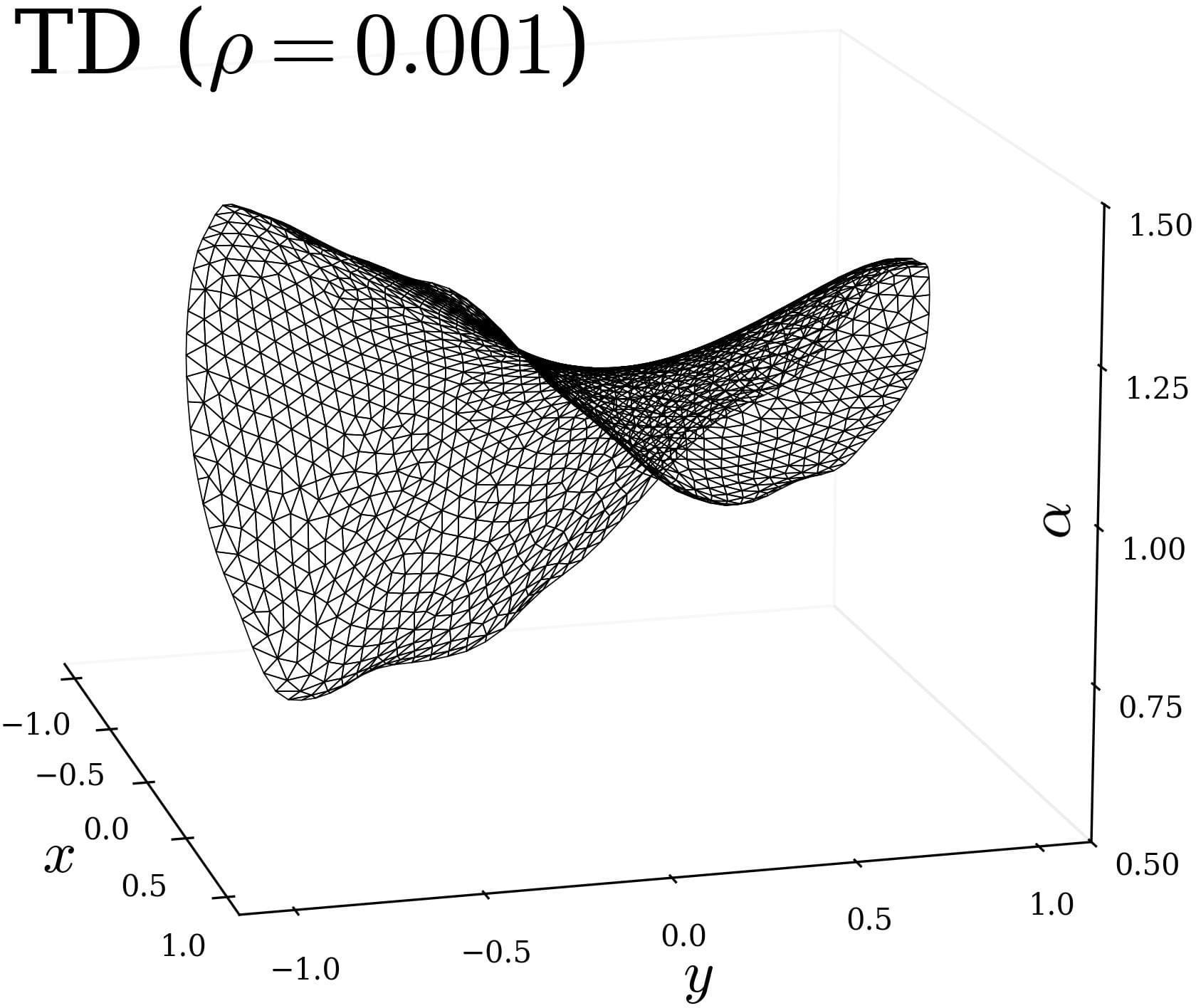}} \ 
\resizebox{0.225\textwidth}{!}{\includegraphics{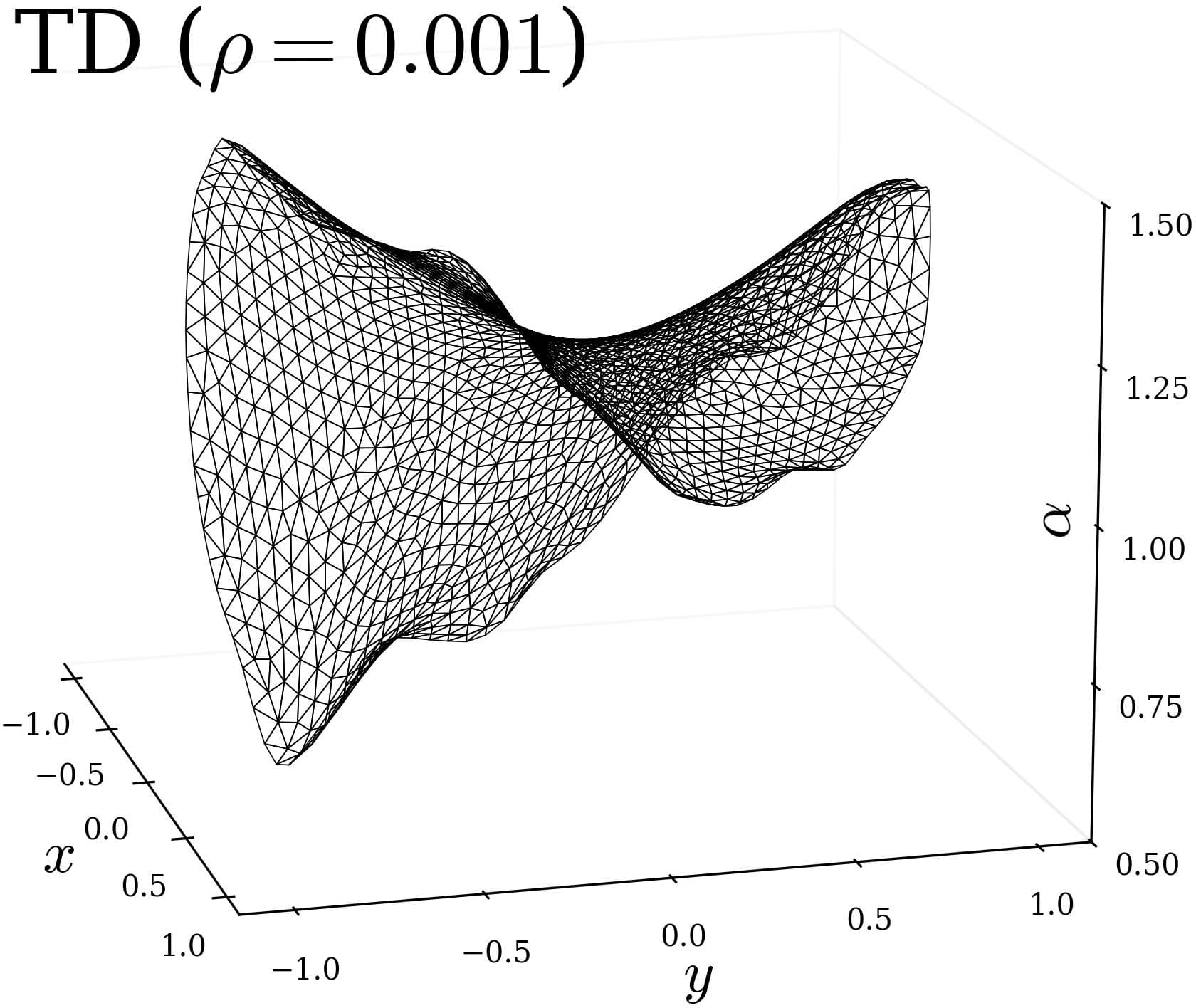}} \ 
\resizebox{0.225\textwidth}{!}{\includegraphics{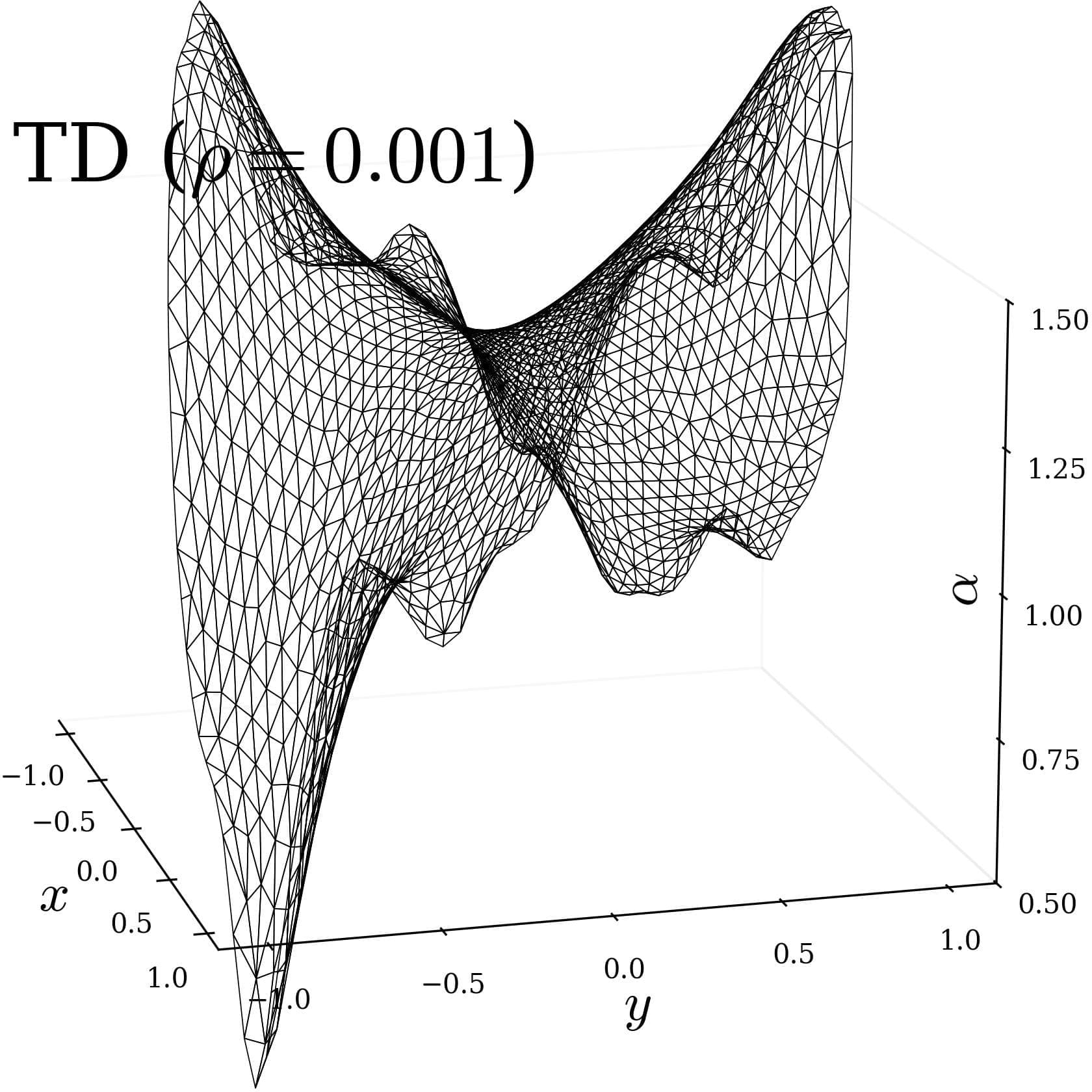}} \\[1em]
\resizebox{0.225\textwidth}{!}{\includegraphics{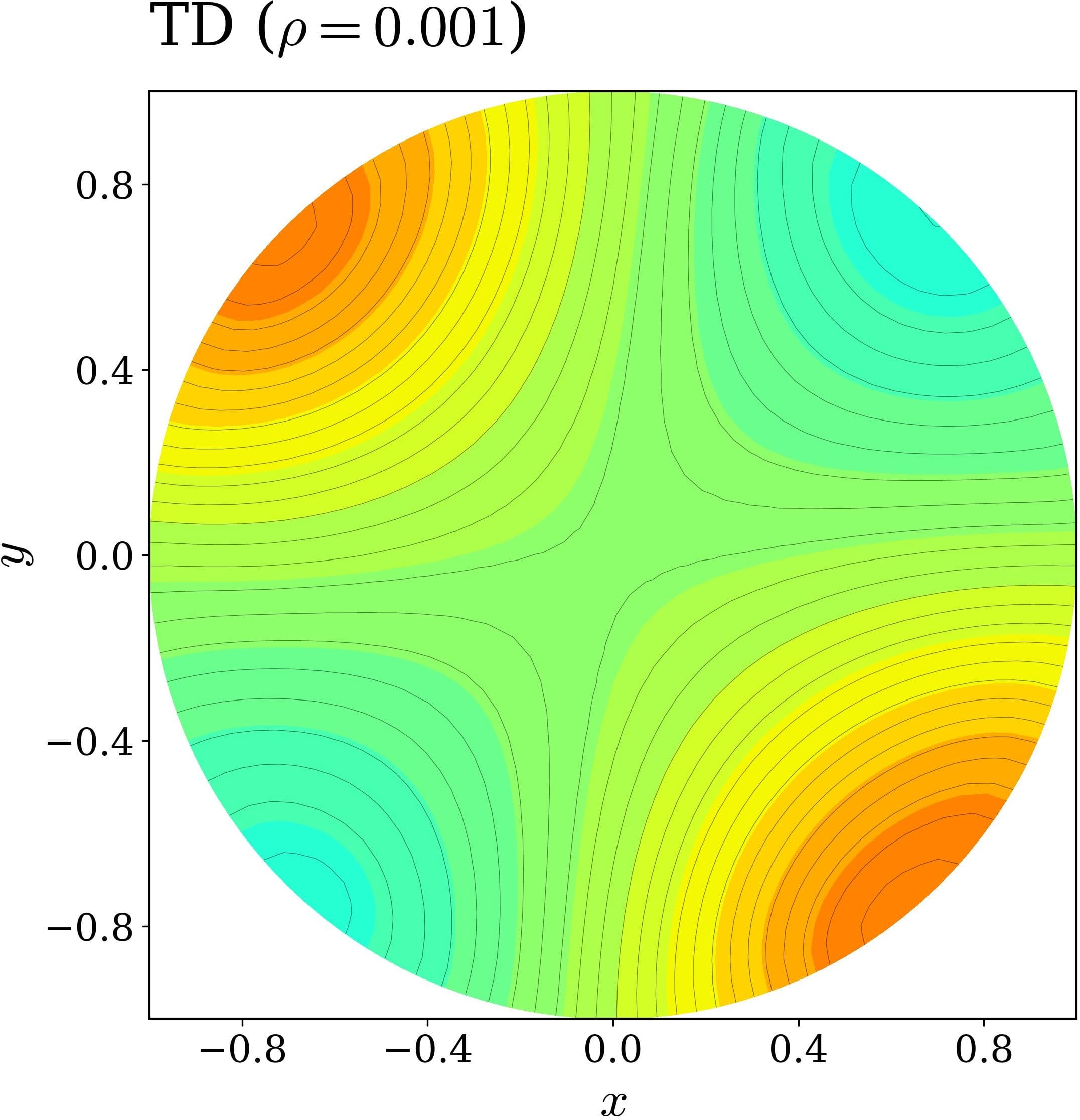}} \ 
\resizebox{0.225\textwidth}{!}{\includegraphics{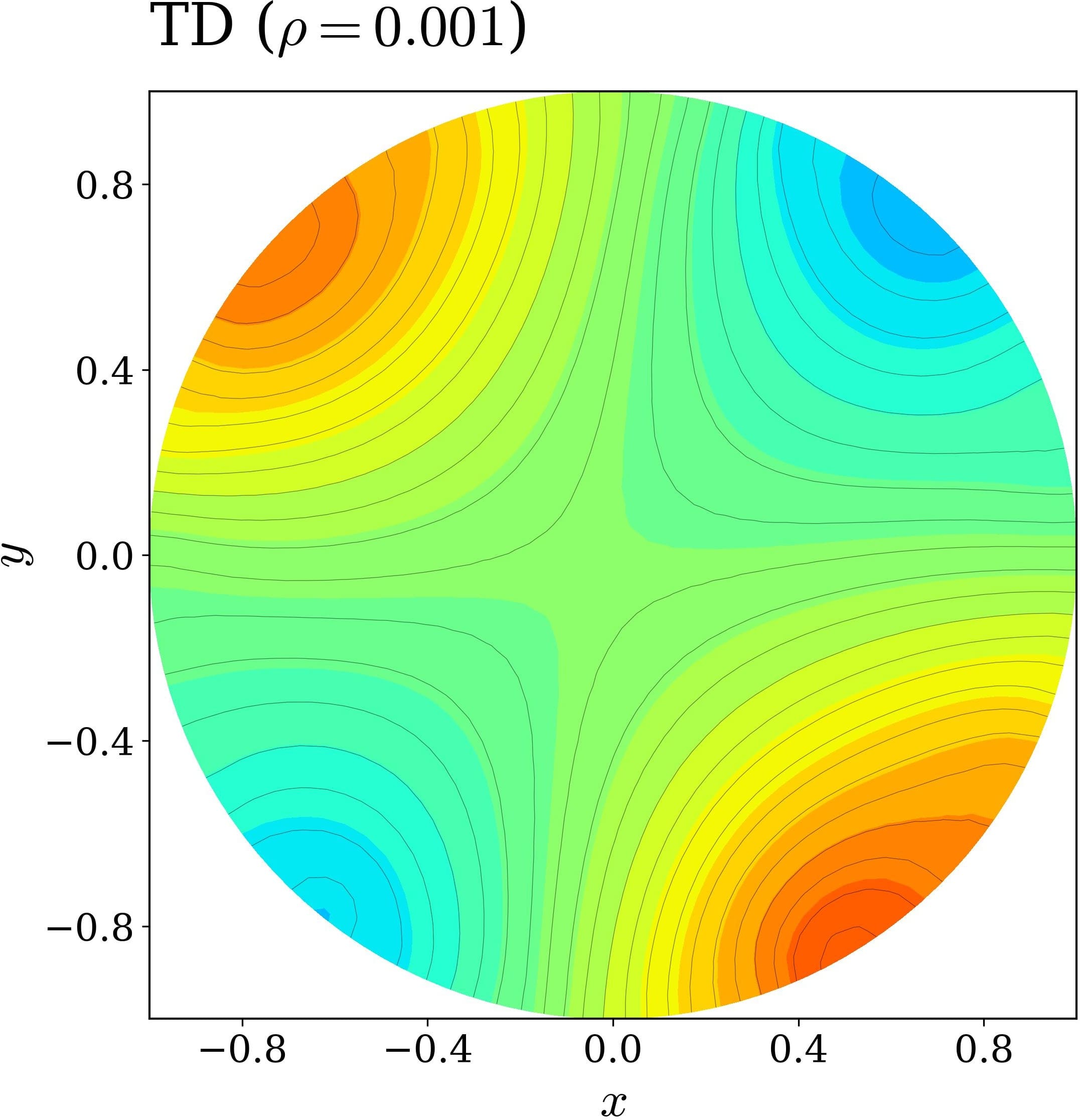}} \ 
\resizebox{0.225\textwidth}{!}{\includegraphics{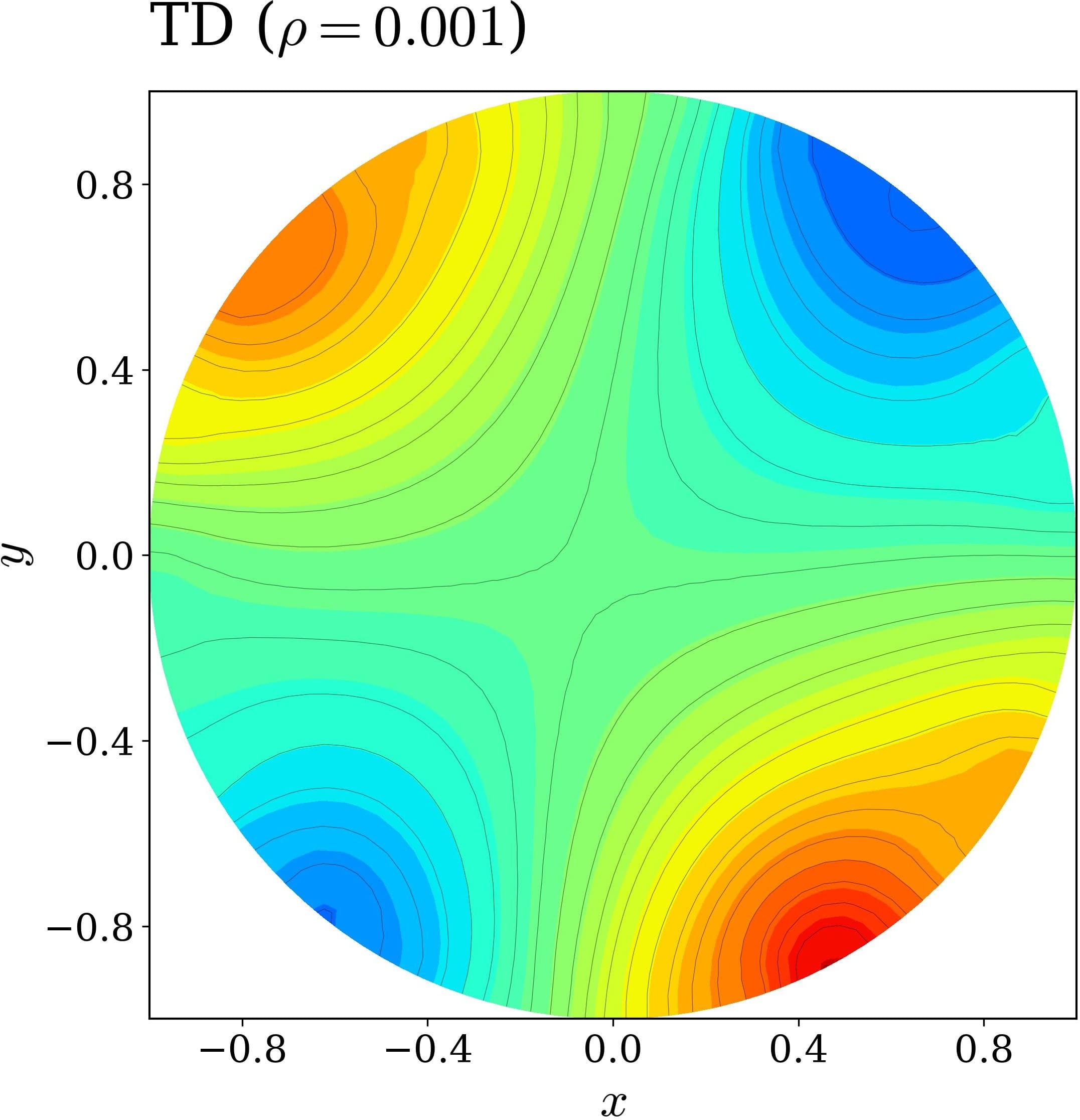}} \ 
\resizebox{0.225\textwidth}{!}{\includegraphics{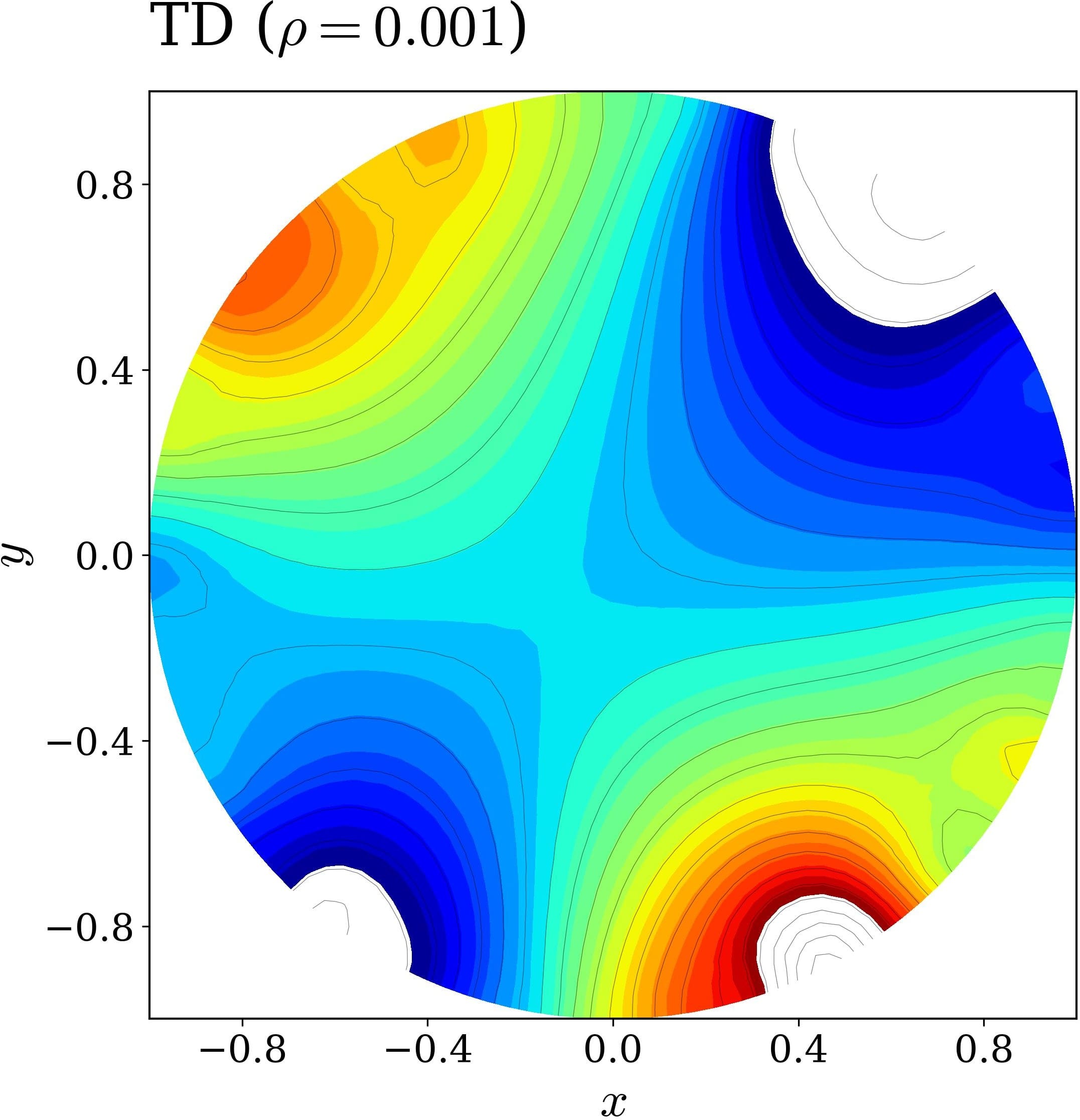}}\\[1em]
\resizebox{0.48\textwidth}{!}{\includegraphics{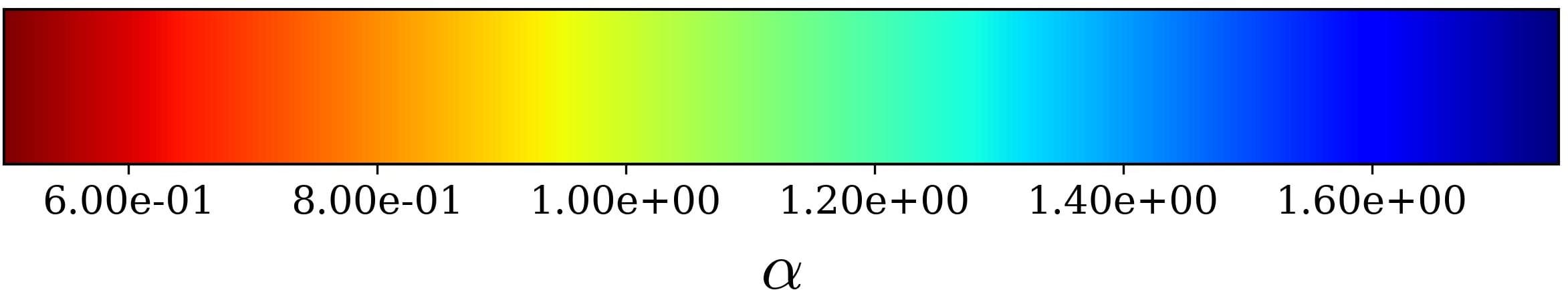}}
\caption{Reconstruction results for Dirichlet-data tracking and the CCBM under different noise levels (from left to right: $\delta = 0.01, 0.03, 0.05, 0.10$), with gradient smoothing ($\mu = 1.0$) and input data $g=\sin(\pi x)\sin(\pi y)$.}
\label{fig:reconstructions_under_higher_noise_levels}
\end{figure}
%
\subsubsection{Reconstructing a mildly oscillatory diffusion}\label{subsec:mildly_oscillatory_diffusion}
Building on the observations in the previous subsection regarding the role of boundary input data under noise, we now examine the reconstruction of a mildly oscillatory diffusion, $\alpha^{\star} = 1 + 0.25 \sin(\pi x)\sin(\pi y)$, from noisy measurements with $\delta=0.001$ and gradient smoothing $\mu=1.0$. To this end, we consider $\varOmega = C(0,2)$ with $\alpha^{[0]} = 1$, $b = 0$, $c = 1$, and $Q = 1$, and now take the target diffusion as $\alpha^{\star} = 1 + 0.25\sin(\pi x)\sin(\pi y)$.

Figure~\ref{fig:reconstructions_mildly_oscillatory_diffusion} compares reconstructions obtained with constant boundary input $g=1$ (second row), oscillatory input $g=\sin(\pi x)\sin(\pi y)$ (third row), and mixed input $g=2+\sin(\pi x)\sin(\pi y)$, where the last row shows the reconstructed diffusion $\alpha$. Consistent with the observations of the previous subsection, constant boundary data provide limited excitation of the internal oscillatory modes, resulting in reconstructions that remain close to the mean diffusion for all methods. For the present problem setup, the oscillatory boundary input yields a visibly improved recovery of the internal oscillations in $\alpha^{\star}$, as illustrated in Figure~\ref{fig:reconstructions_mildly_oscillatory_diffusion}. In this case, CCBM reproduces the amplitude and spatial structure of the oscillations with good accuracy, while the TD formulation captures the main pattern but exhibits mild boundary distortions. By comparison, the KV and TN reconstructions show stronger boundary-induced oscillations that extend into the interior. For the mixed boundary input $g=2+\sin(\pi x)\sin(\pi y)$, the diffusion reconstructed by CCBM remains closest to the reference solution among the tested methods, whereas the other formulations are more strongly affected by noise and boundary artifacts.


It is also worth noting that, across all boundary inputs and formulations, the reconstruction of $\alpha$ is generally less accurate in the interior of the domain than near the boundary, as visible in Figure~\ref{fig:reconstructions_mildly_oscillatory_diffusion}. Recovering fine spatial variations of the diffusion coefficient in the inner region appears to be more challenging, particularly for the oscillatory components. A plausible explanation is that interior features are only indirectly informed by boundary measurements, whose sensitivity to internal variations diminishes with distance from the boundary. In addition, regions where $\alpha$ attains larger values may partially shield or attenuate the influence of deeper structures on the boundary data, thereby further obscuring the true values of $\alpha$ in the interior. These effects are consistent with the ill-posed nature of inverse boundary value problems and have been commonly observed in inverse coefficient reconstruction settings.

For comparison, the histories of the cost functional and the gradient norm corresponding to Figure~\ref{fig:reconstructions_mildly_oscillatory_diffusion} are shown in Figure~\ref{fig:reconstructions_mildly_oscillatory_diffusion_cost_and_gradient}.
\begin{figure}[htp!]
\resizebox{0.28\textwidth}{!}{\includegraphics{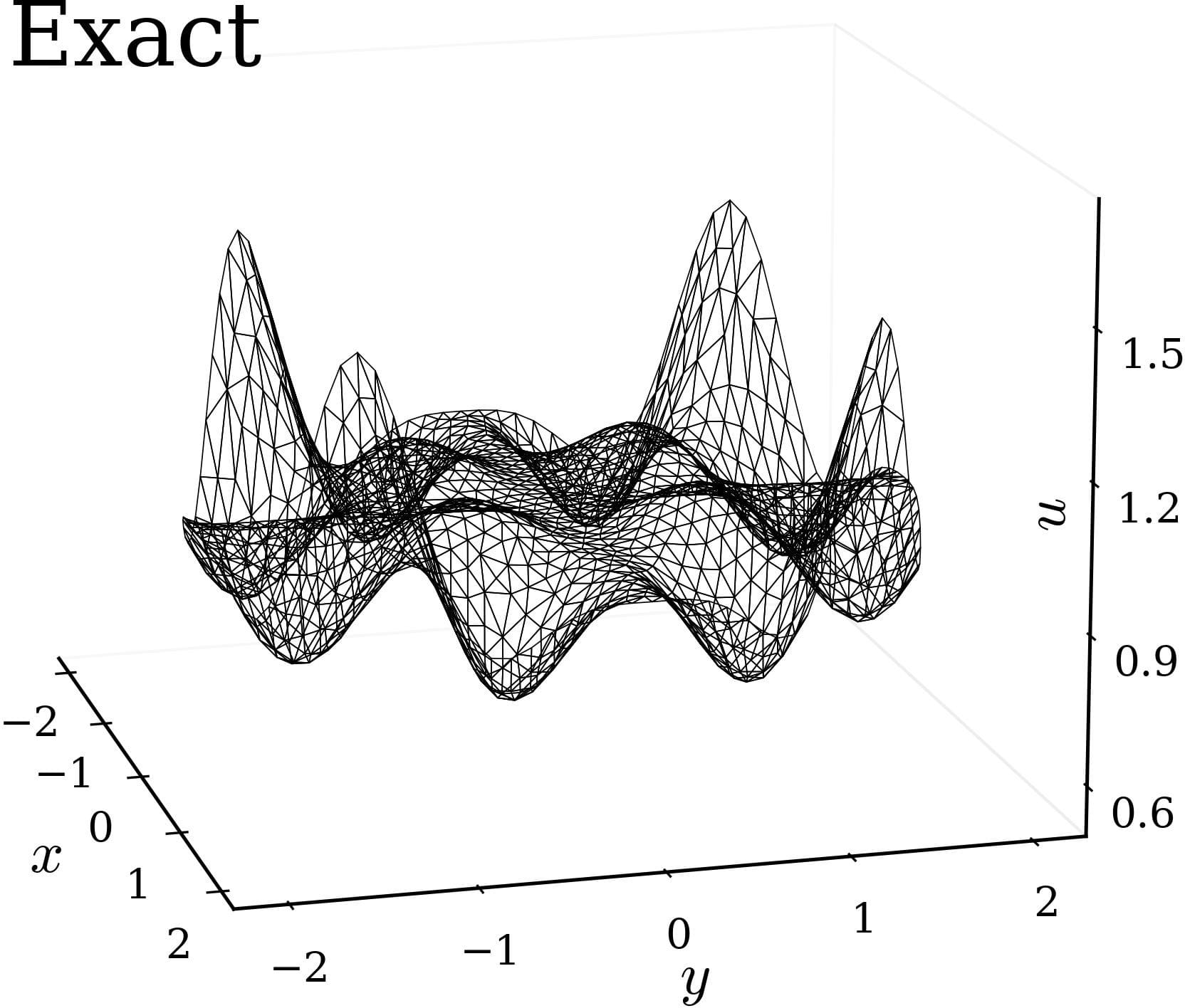}} \quad
\resizebox{0.225\textwidth}{!}{\includegraphics{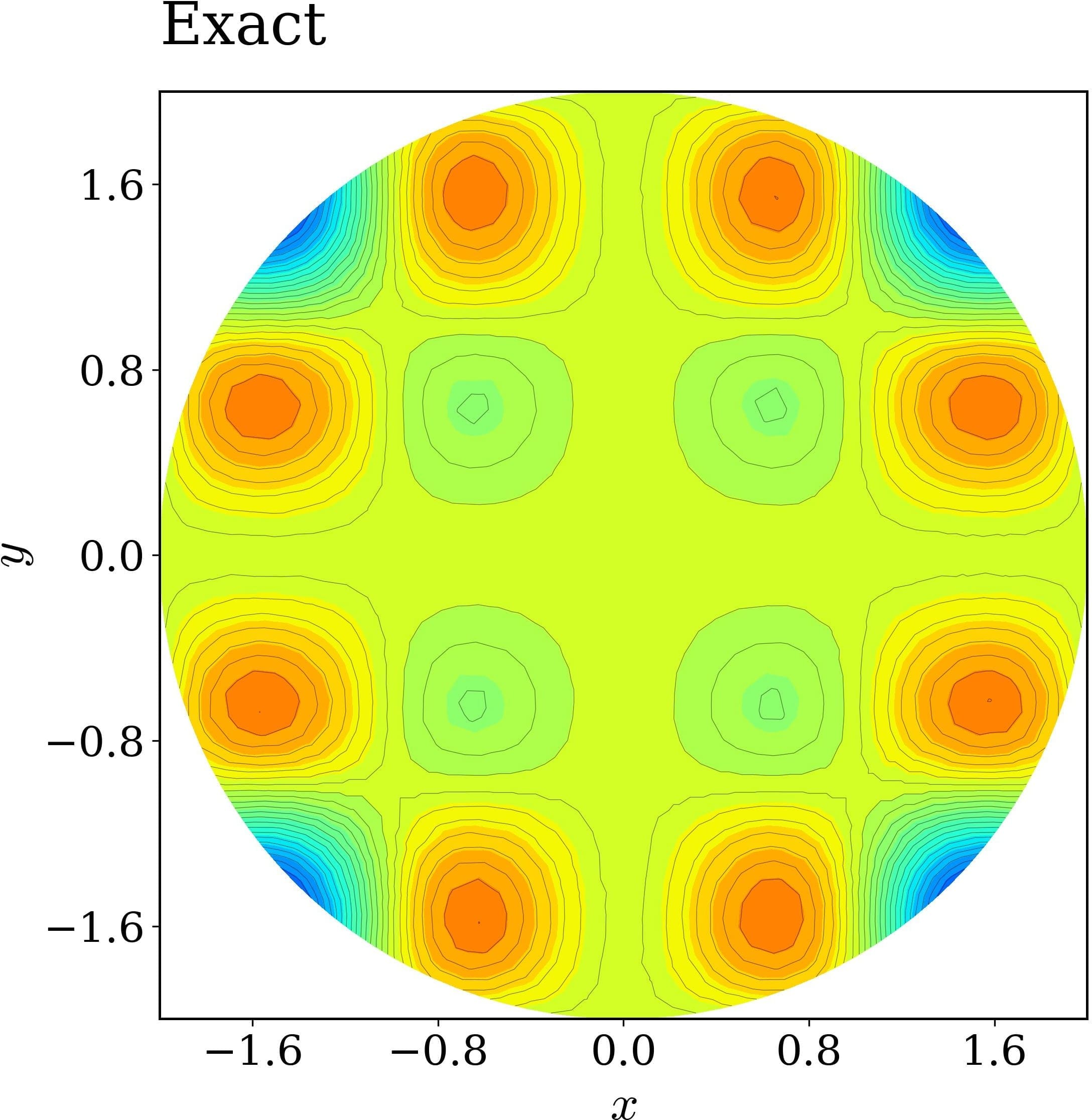}} \\[1em]
\centering
\resizebox{0.225\textwidth}{!}{\includegraphics{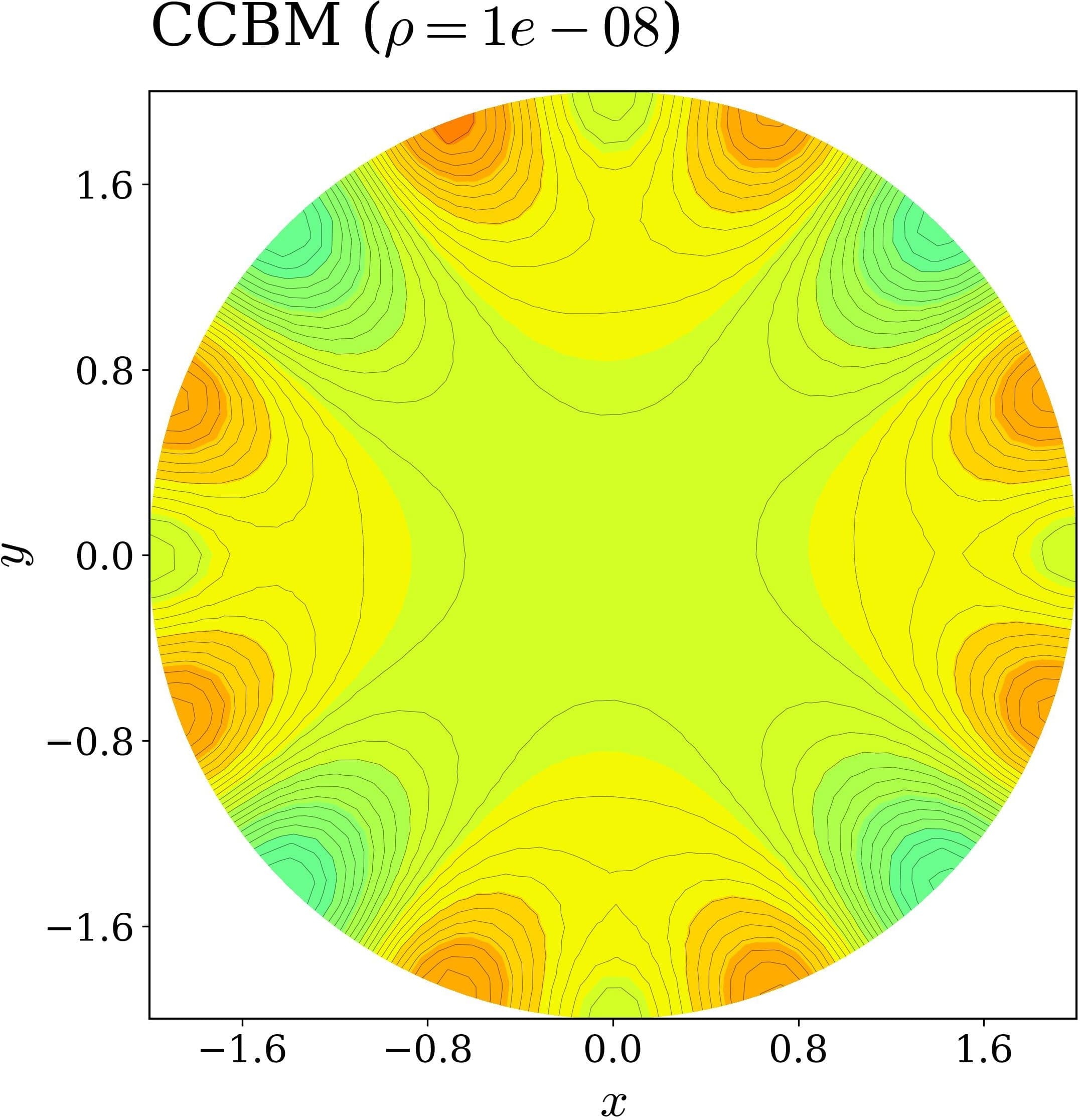}} \ 
\resizebox{0.225\textwidth}{!}{\includegraphics{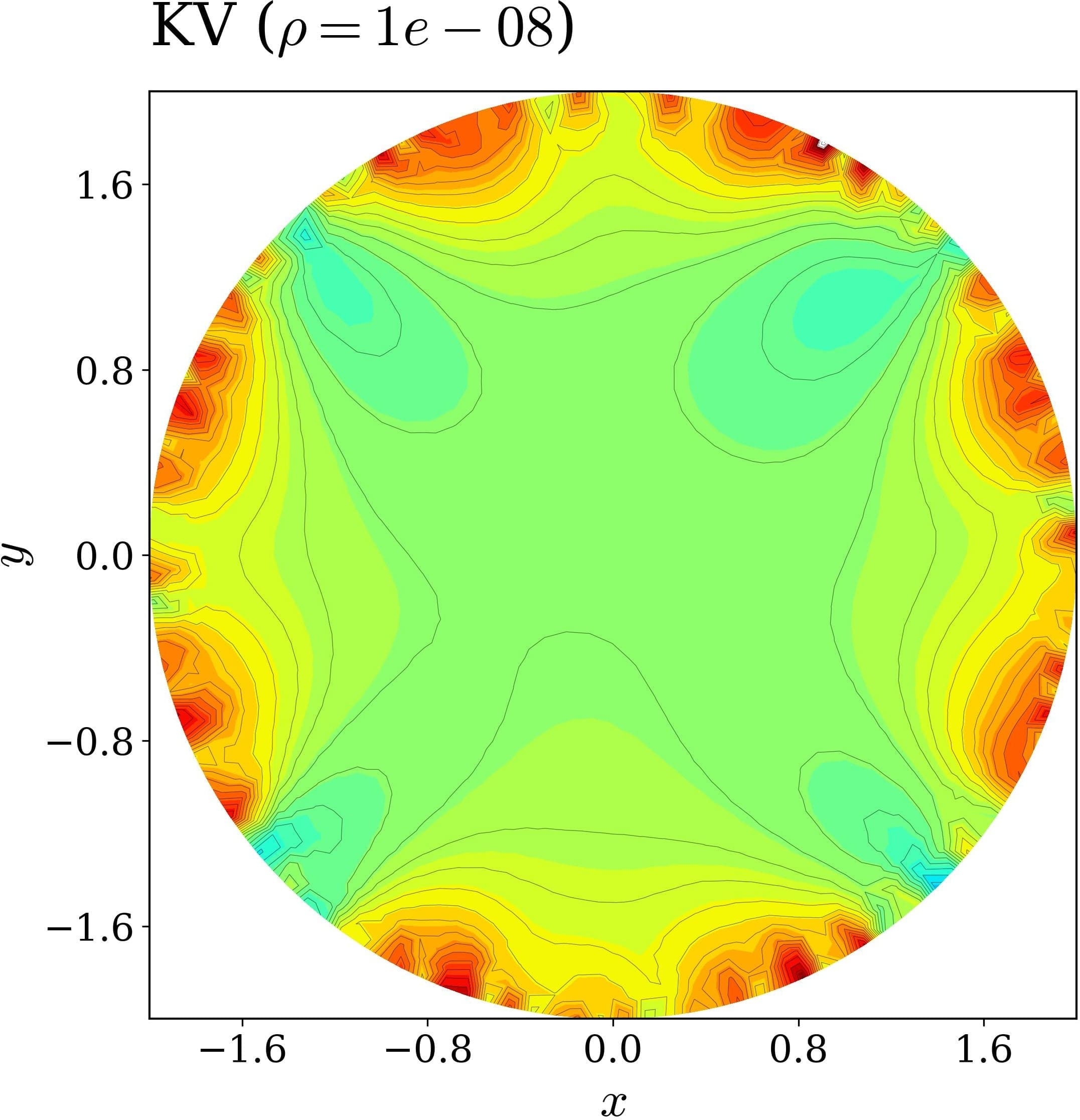}} \ 
\resizebox{0.225\textwidth}{!}{\includegraphics{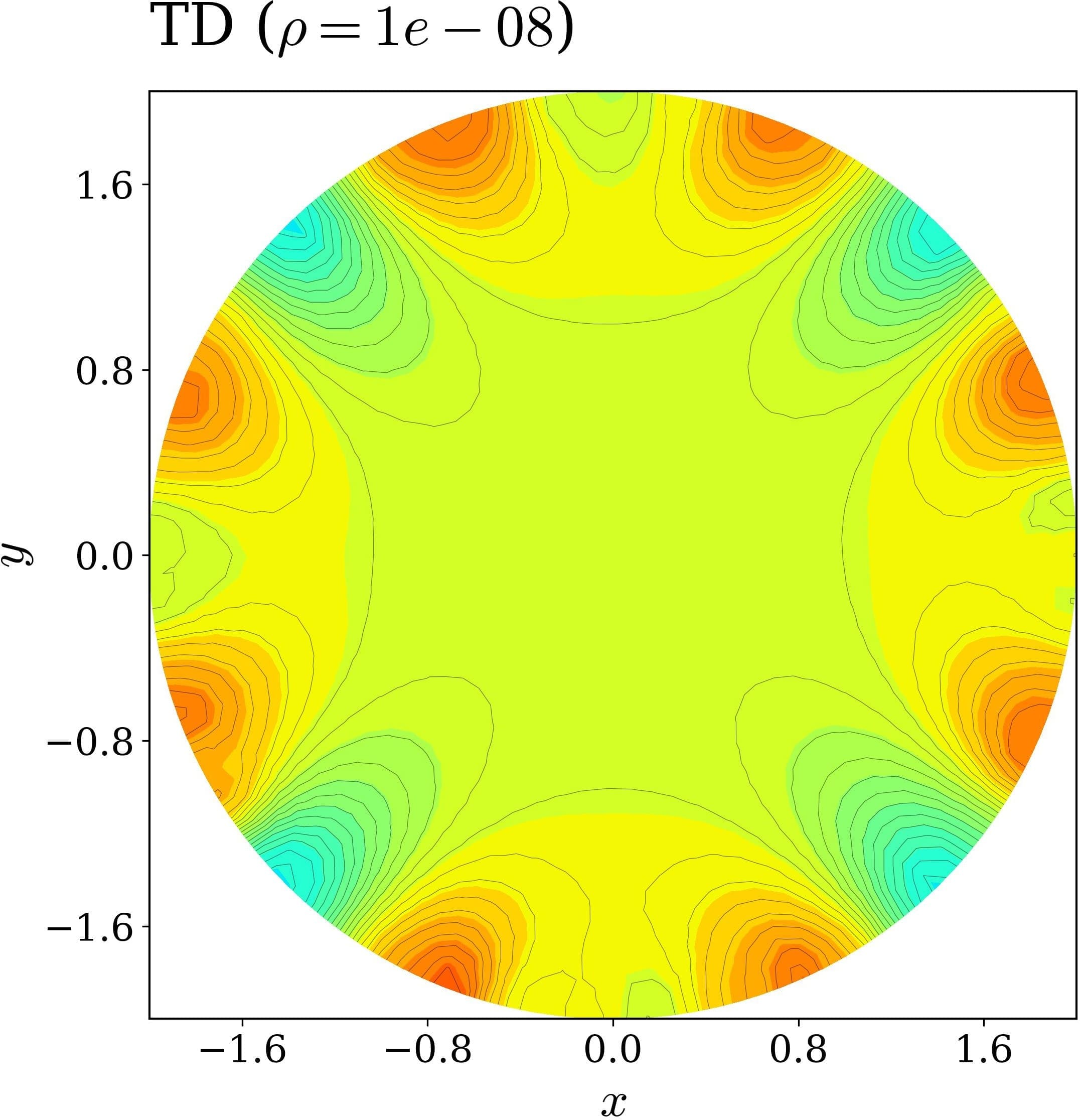}} \ 
\resizebox{0.225\textwidth}{!}{\includegraphics{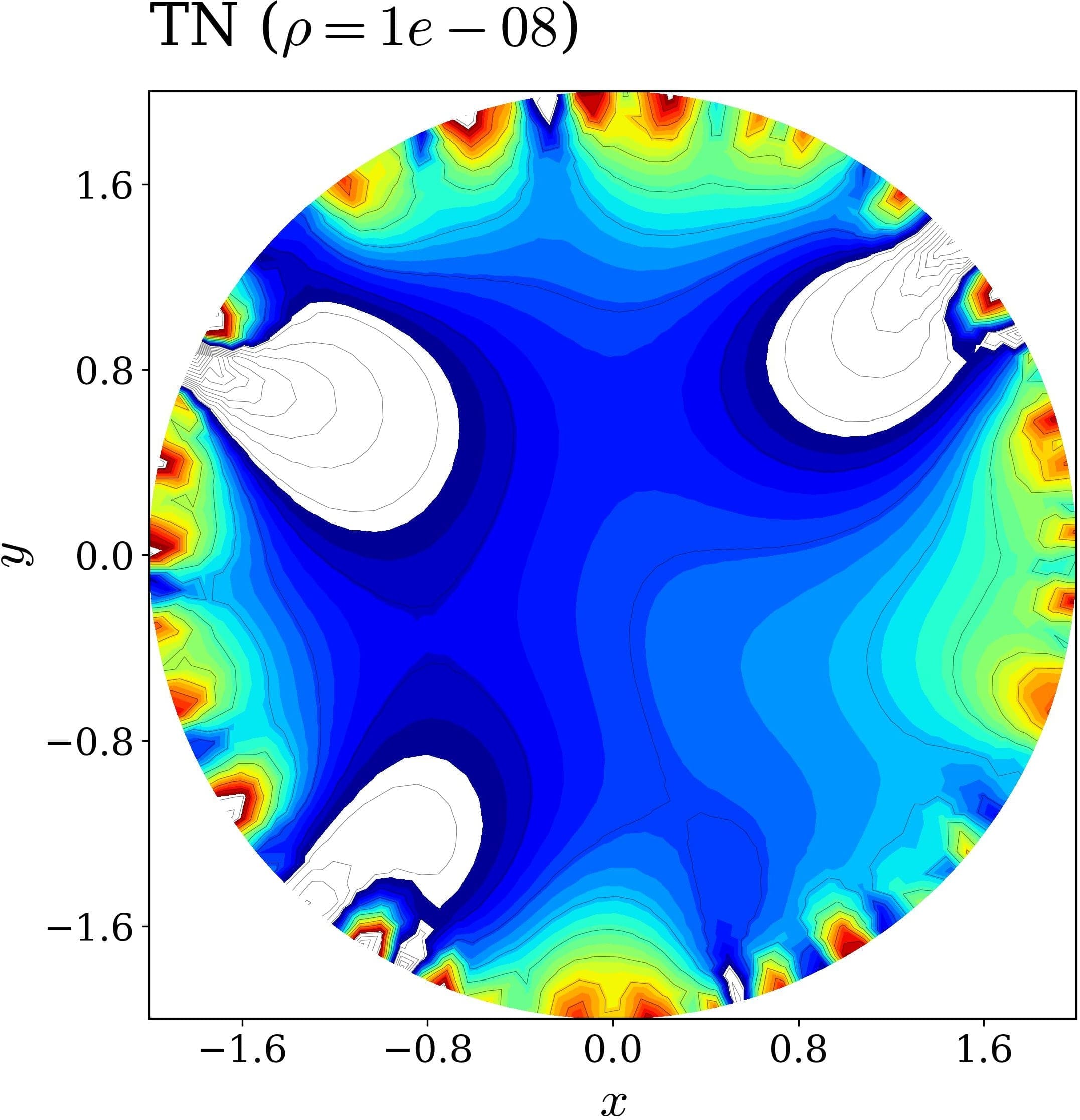}} \\[1em]
\resizebox{0.225\textwidth}{!}{\includegraphics{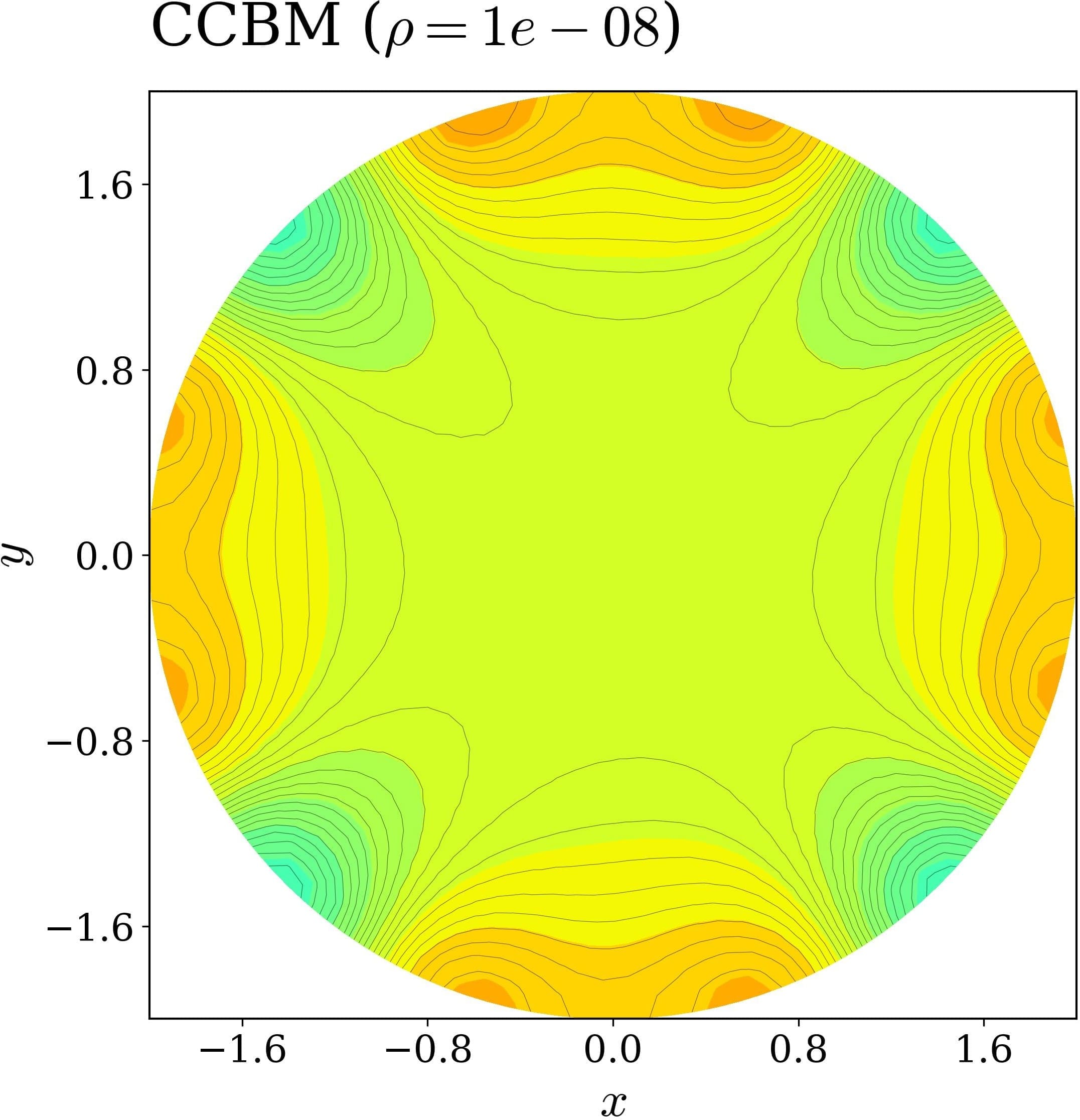}} \ 
\resizebox{0.225\textwidth}{!}{\includegraphics{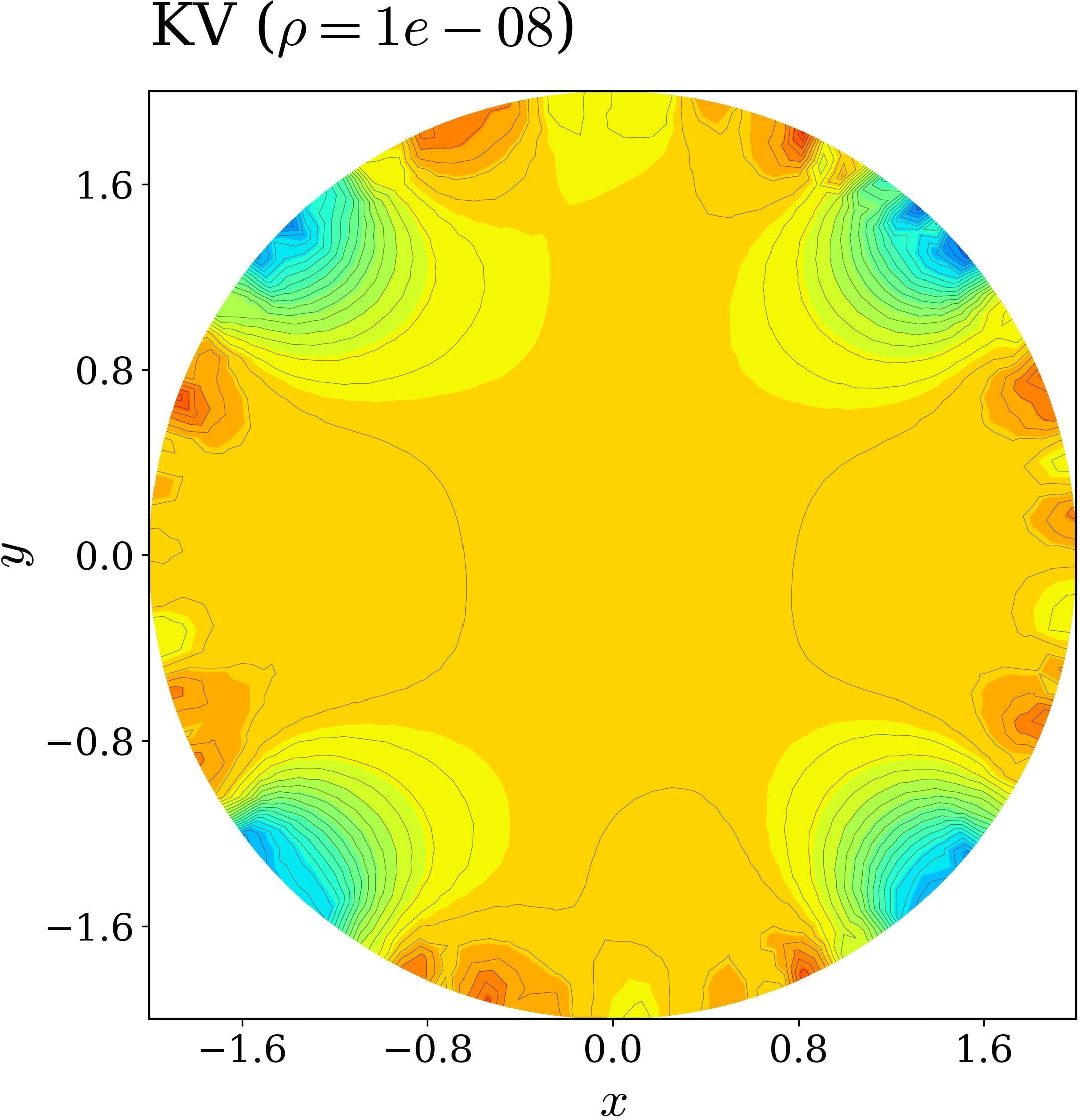}} \ 
\resizebox{0.225\textwidth}{!}{\includegraphics{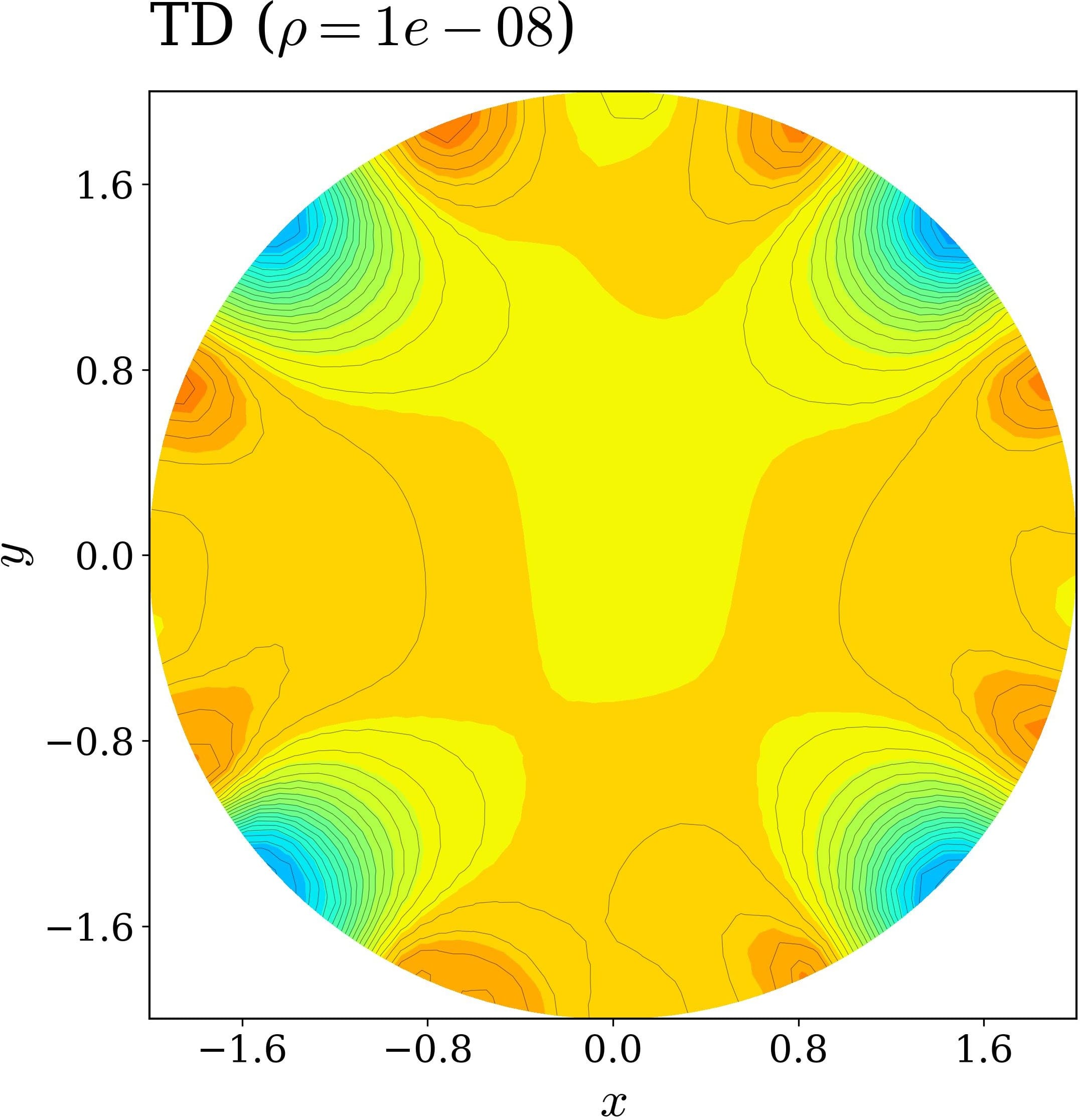}} \ 
\resizebox{0.225\textwidth}{!}{\includegraphics{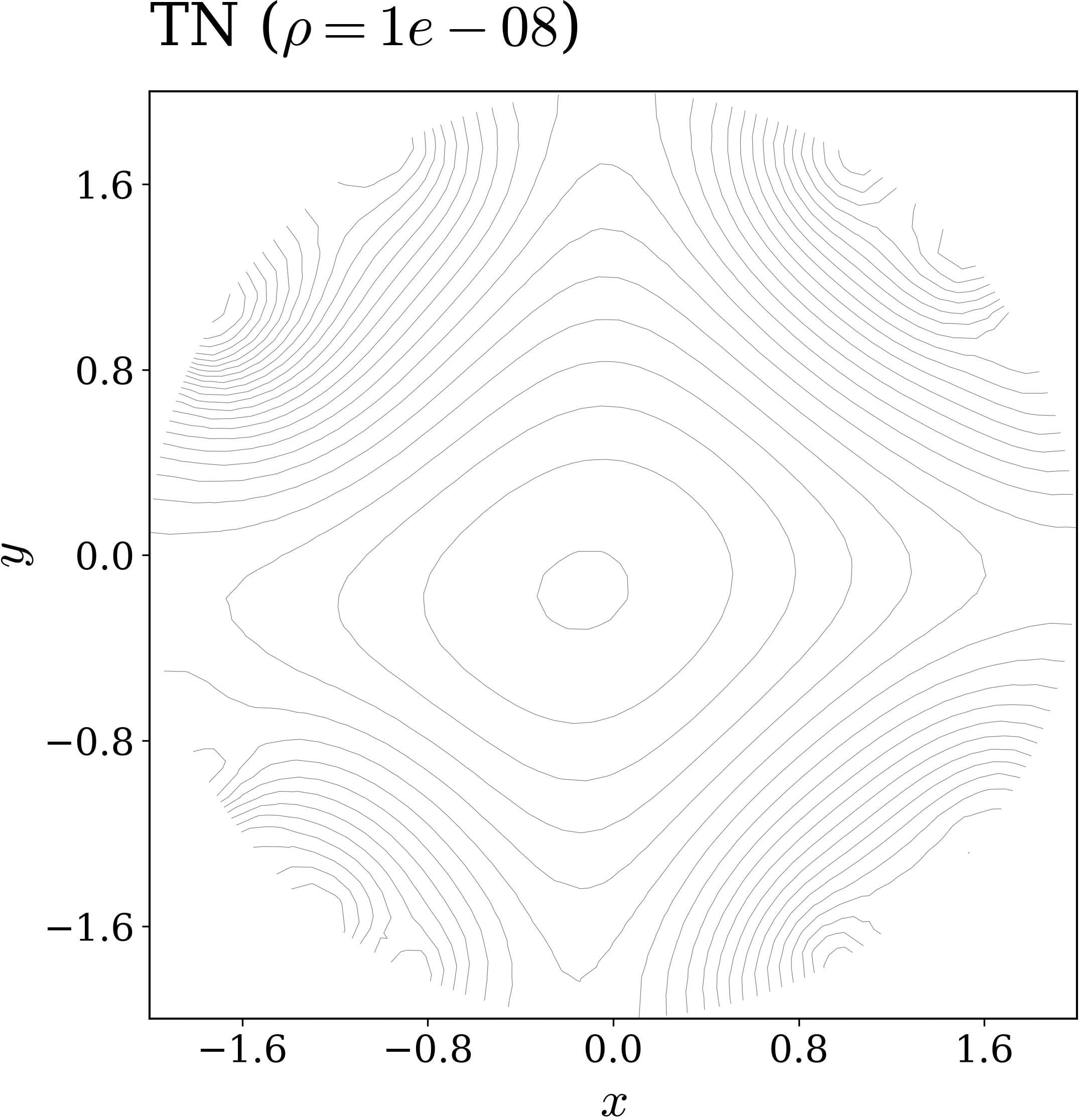}} \\[1em]
\resizebox{0.225\textwidth}{!}{\includegraphics{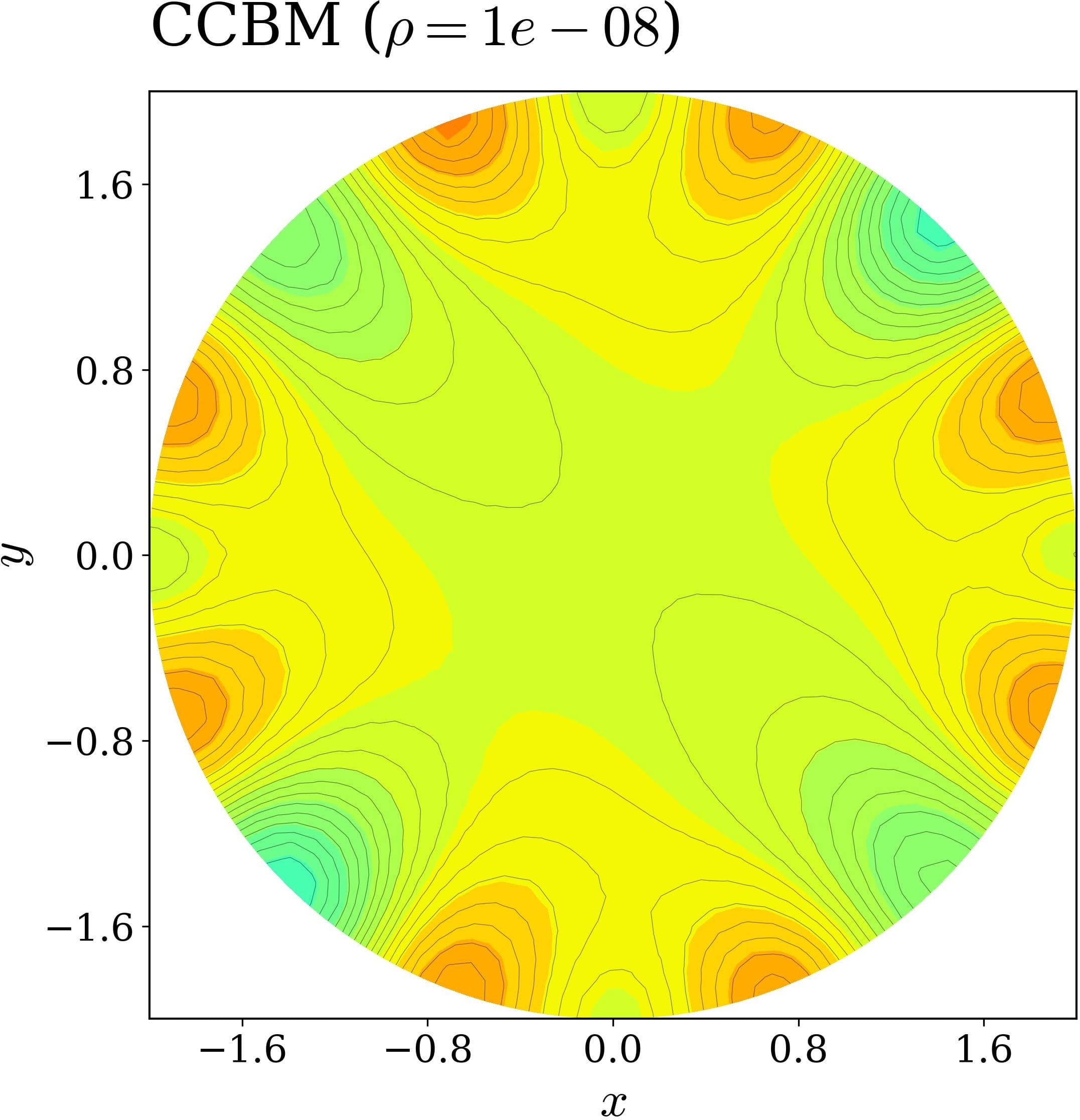}} \ 
\resizebox{0.225\textwidth}{!}{\includegraphics{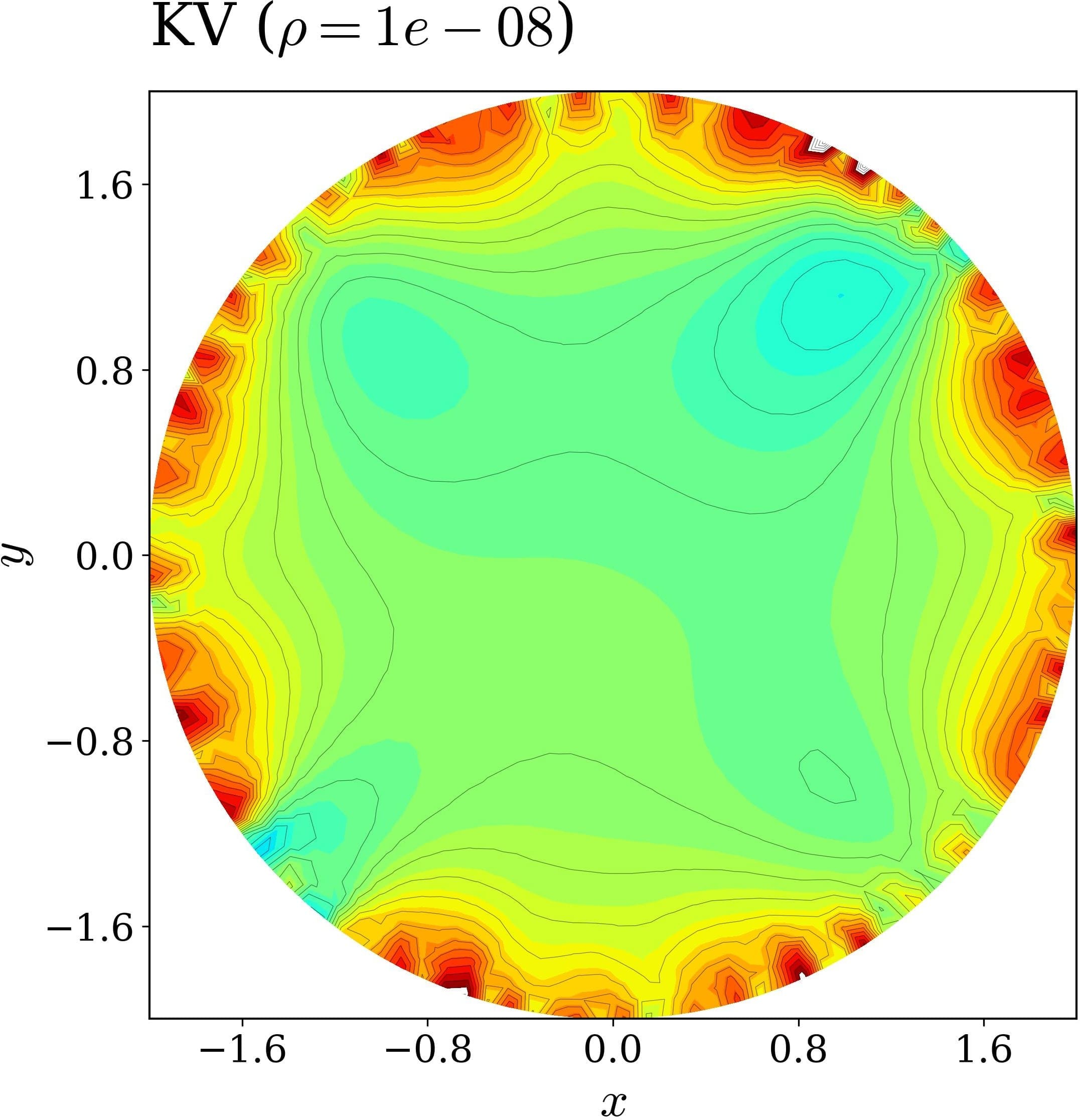}} \ 
\resizebox{0.225\textwidth}{!}{\includegraphics{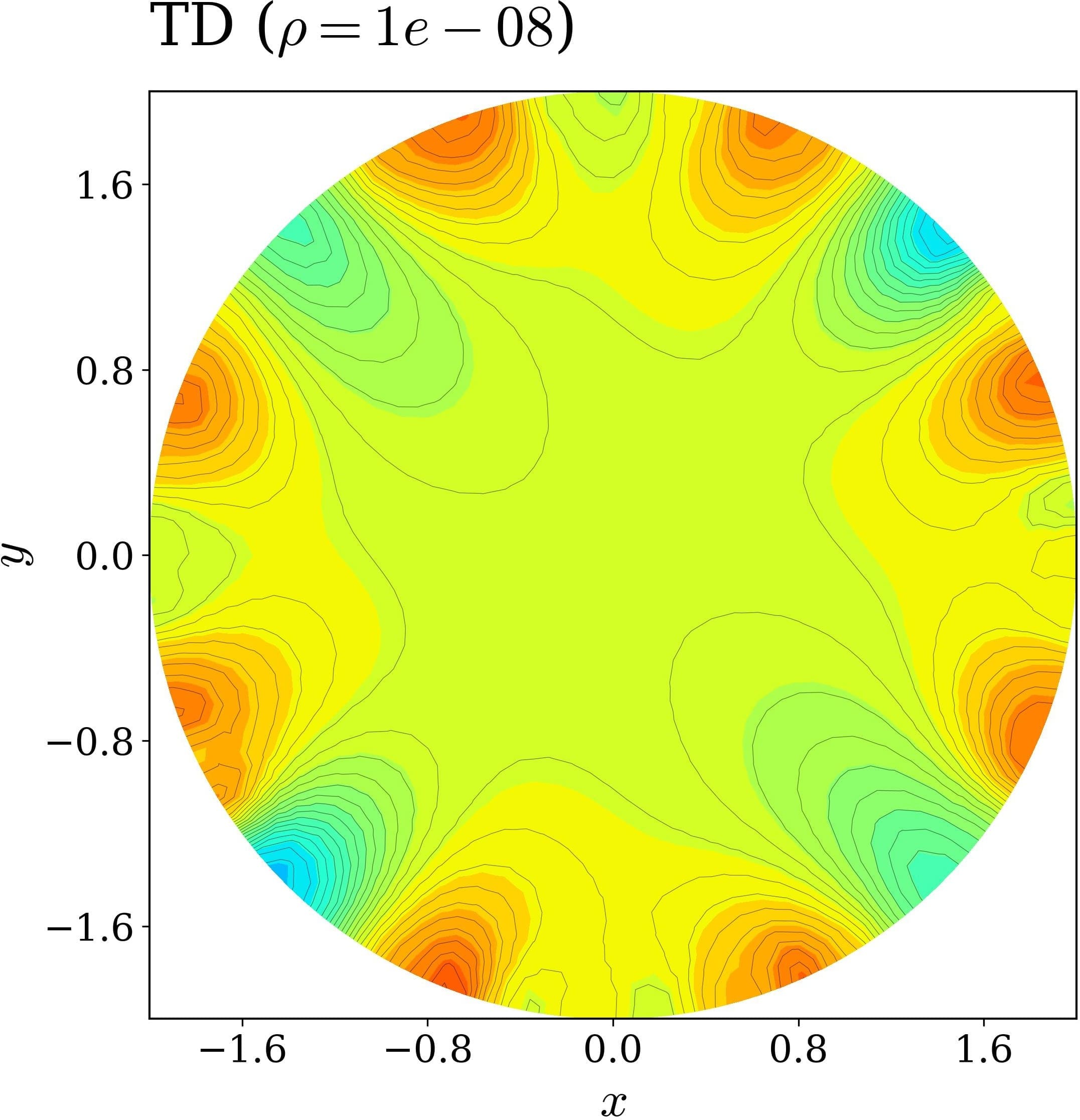}} \ 
\resizebox{0.225\textwidth}{!}{\includegraphics{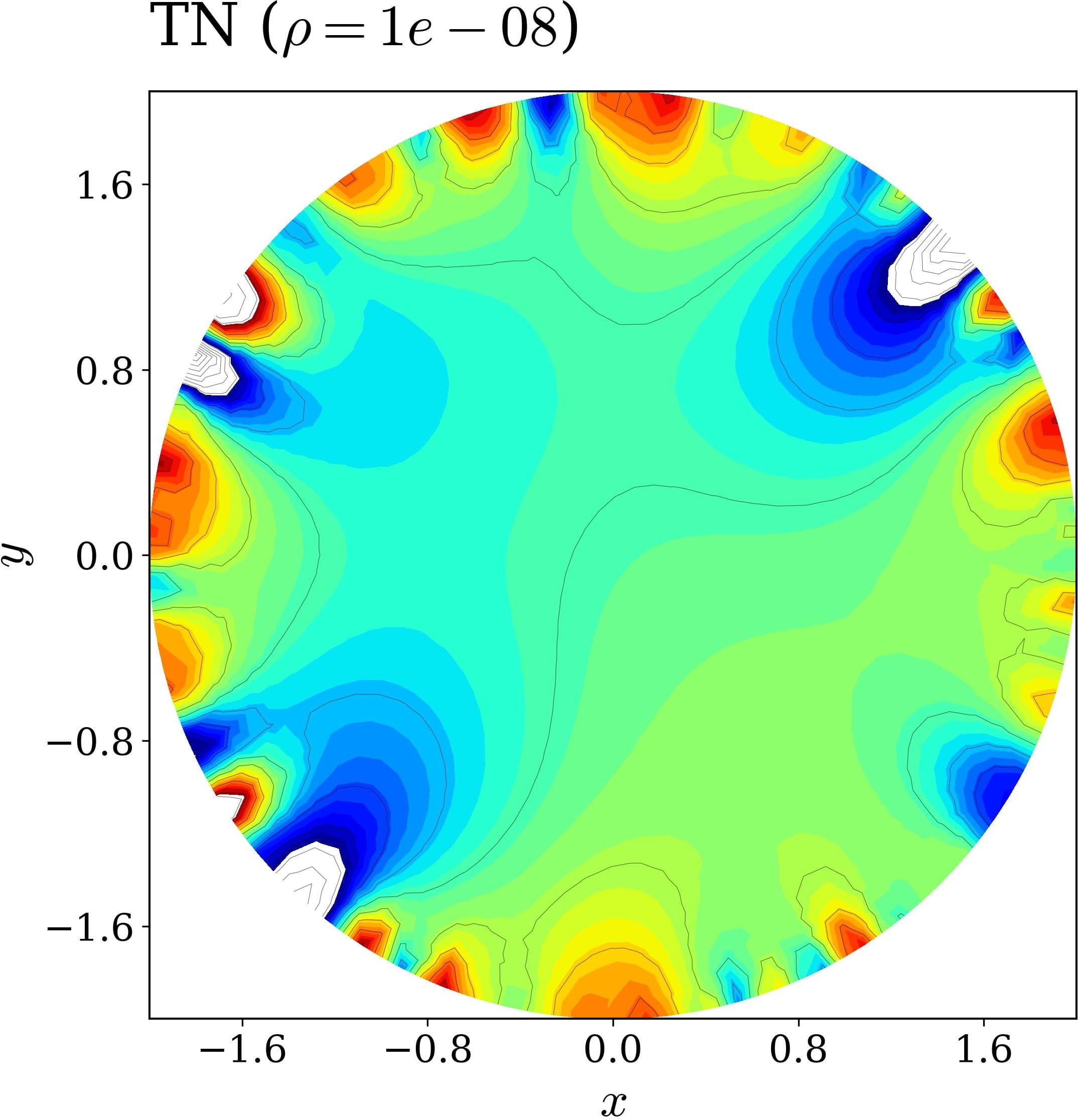}} \\[1em]
\resizebox{0.225\textwidth}{!}{\includegraphics{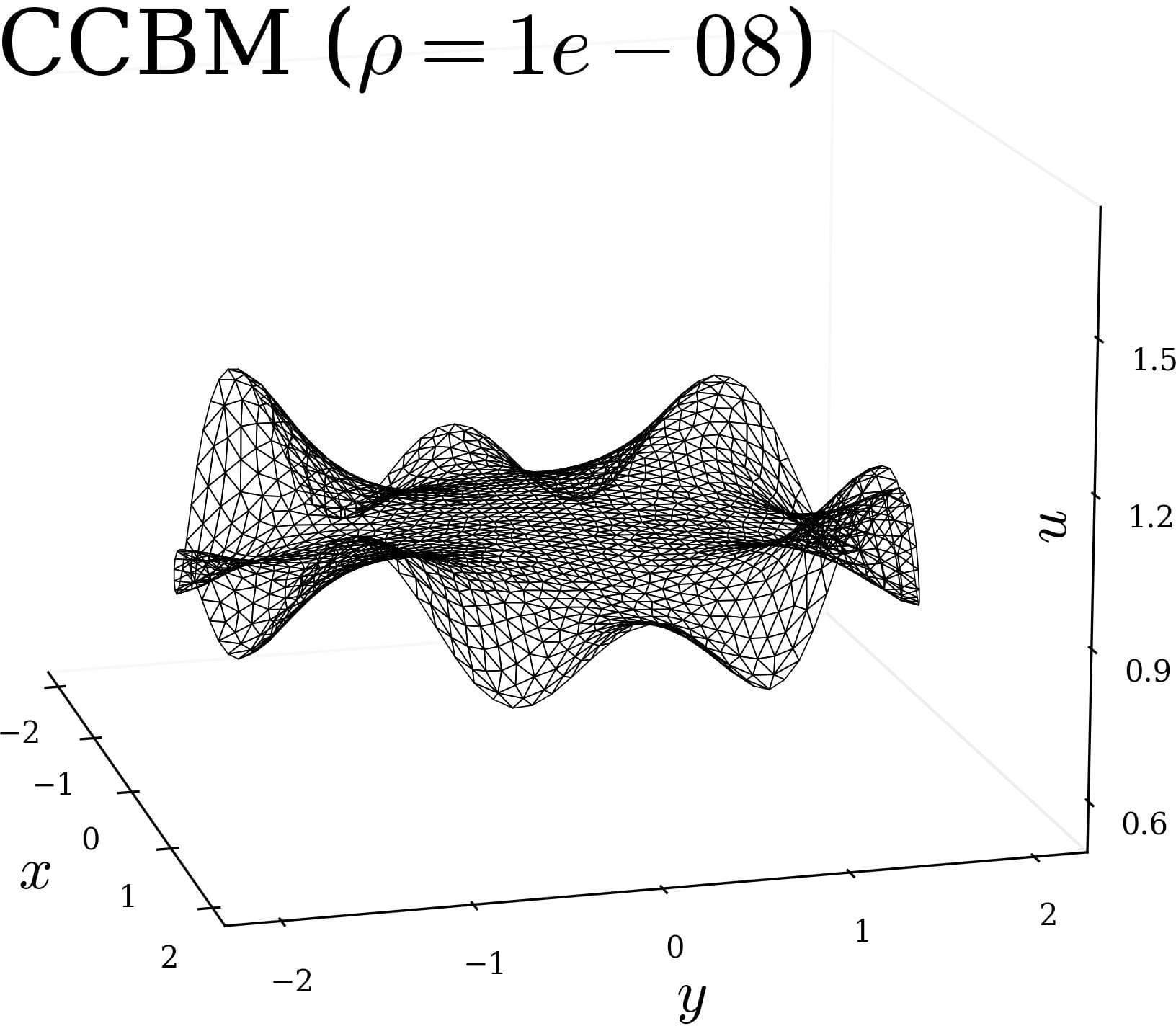}} \ 
\resizebox{0.225\textwidth}{!}{\includegraphics{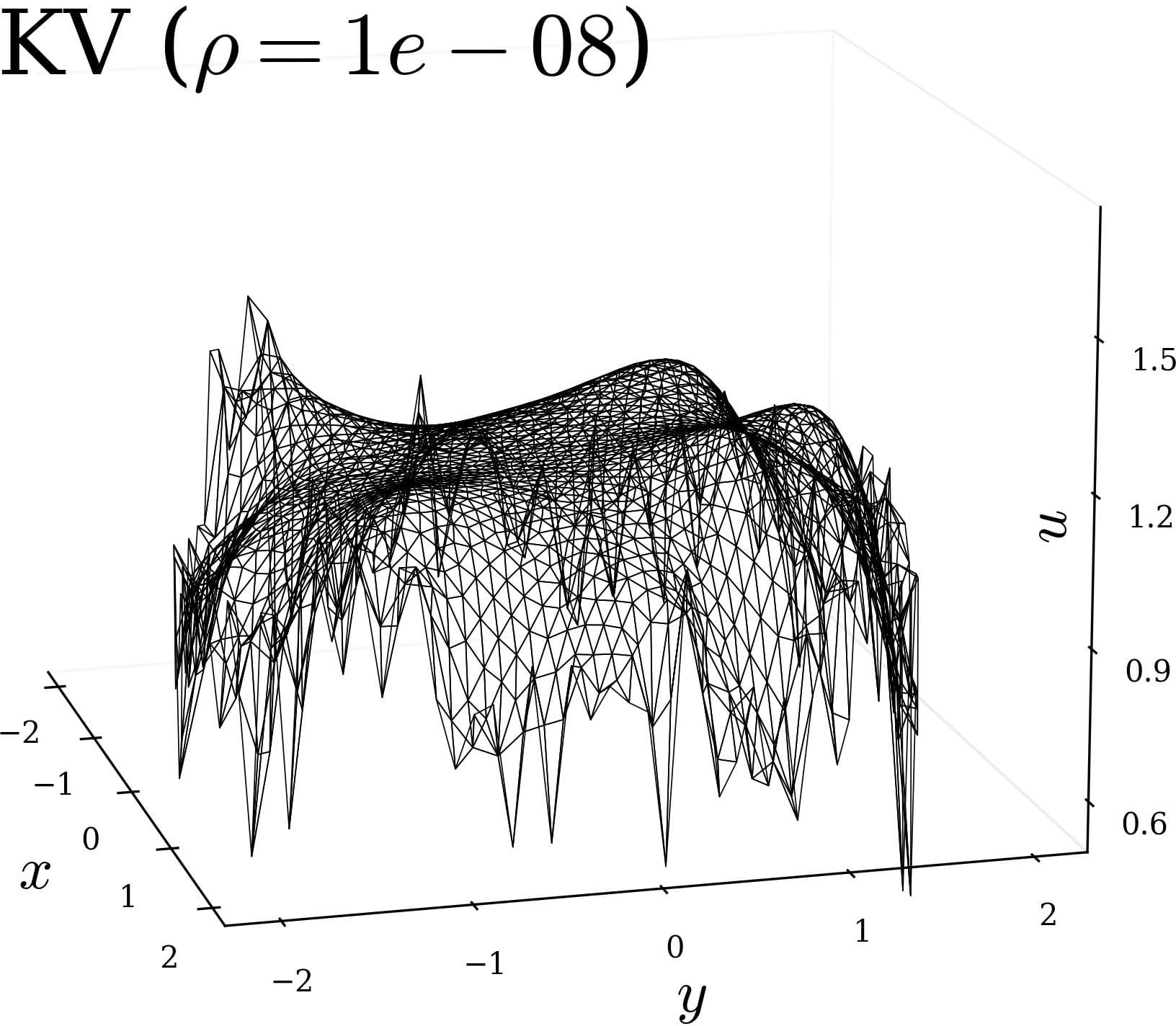}} \ 
\resizebox{0.225\textwidth}{!}{\includegraphics{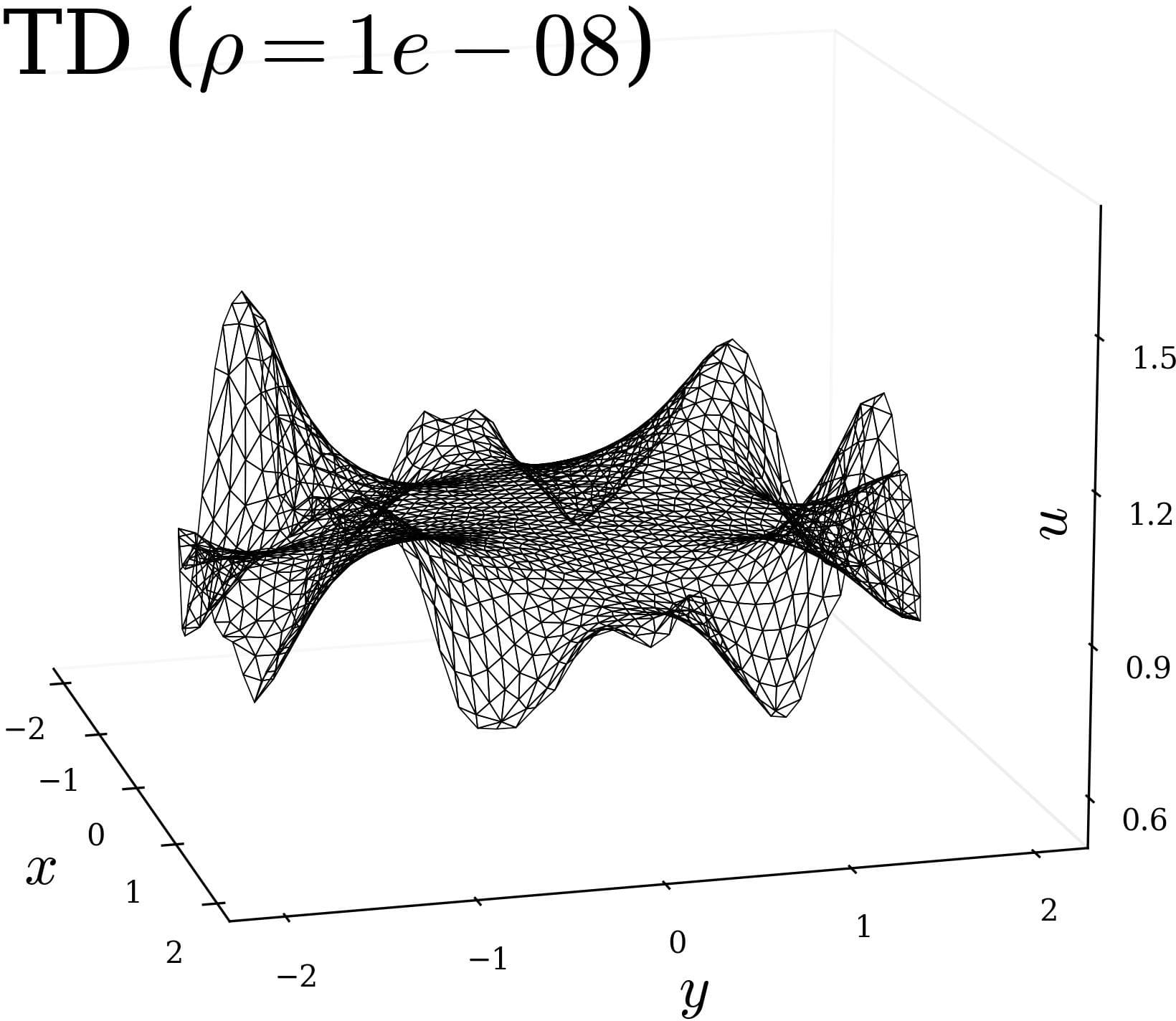}} \ 
\resizebox{0.225\textwidth}{!}{\includegraphics{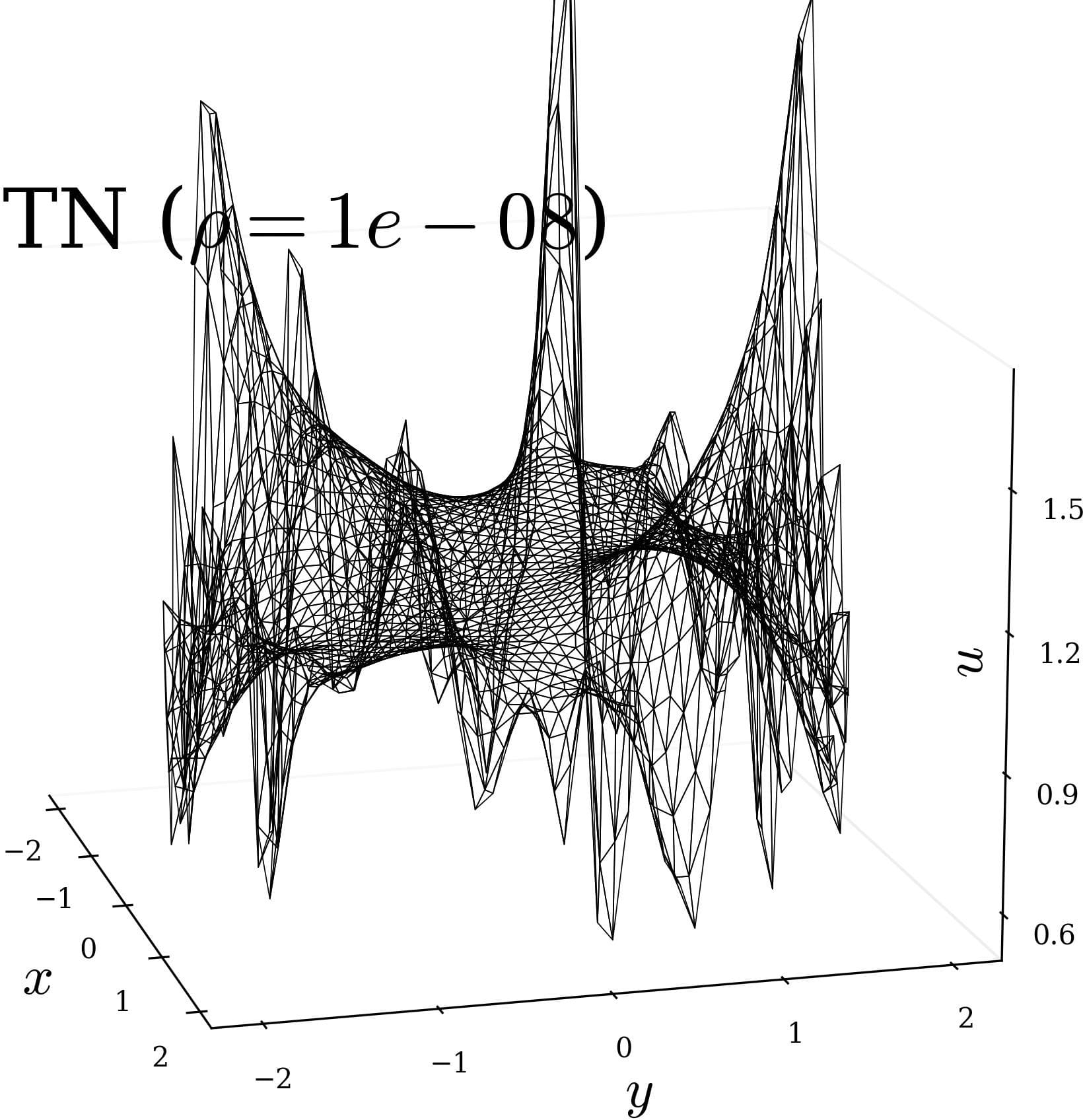}} \\[1em]
\resizebox{0.48\textwidth}{!}{\includegraphics{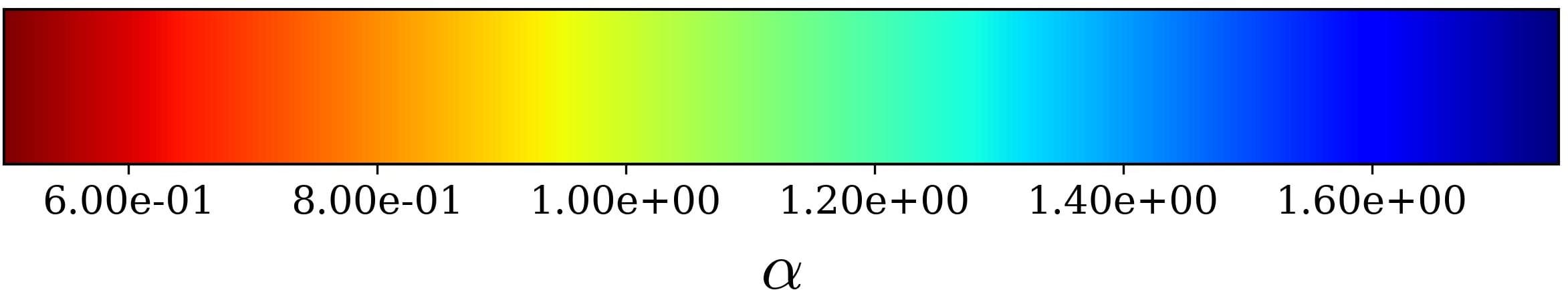}}
\caption{Iso-values of the reconstructed mildly oscillatory diffusion
$\alpha^{\star} = 1 + 0.25 x y \sin(\pi x)\sin(\pi y)$,obtained with gradient smoothing ($\mu=1.0$) from noisy measurements with noise level $\delta=0.001$ under different boundary inputs: $g=1$ (second row), $g=\sin(\pi x)\sin(\pi y)$ (third row), and $g=2+\sin(\pi x)\sin(\pi y)$ (fourth row). The first row shows the reference diffusion.}
\label{fig:reconstructions_mildly_oscillatory_diffusion}
\end{figure}
%
\begin{figure}[htp!]
\centering
\resizebox{0.225\textwidth}{!}{\includegraphics{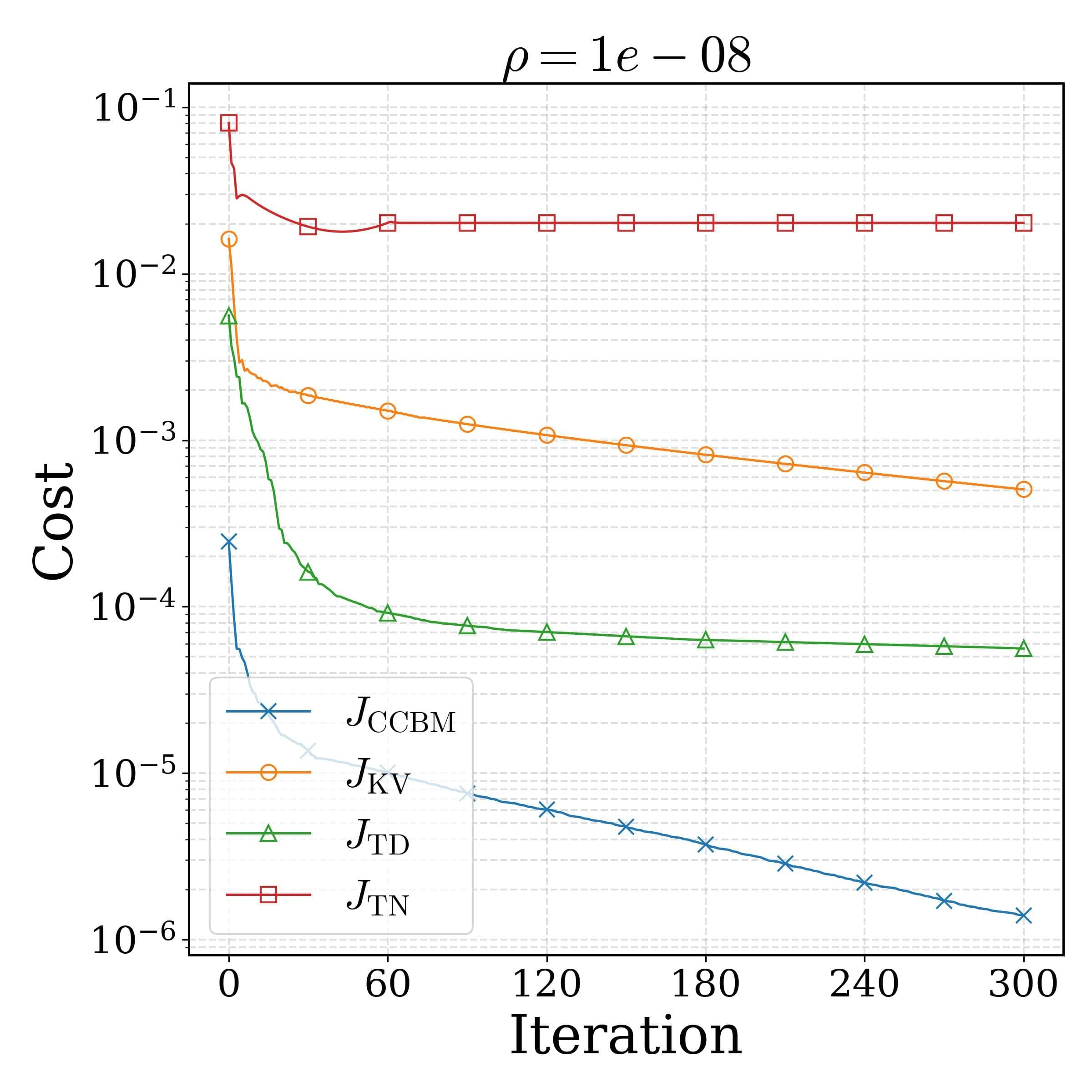}} \ 
\resizebox{0.225\textwidth}{!}{\includegraphics{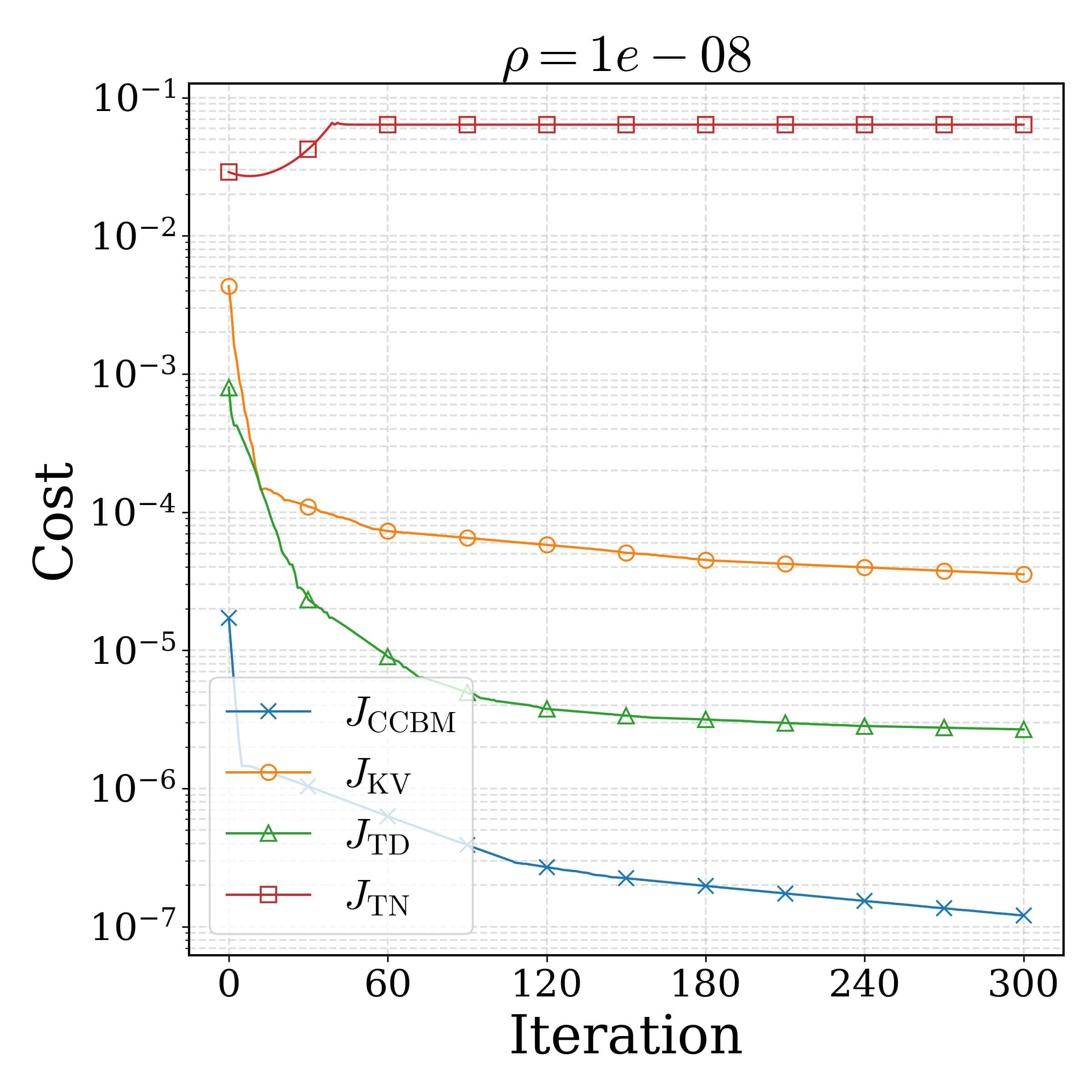}} \ 
\resizebox{0.225\textwidth}{!}{\includegraphics{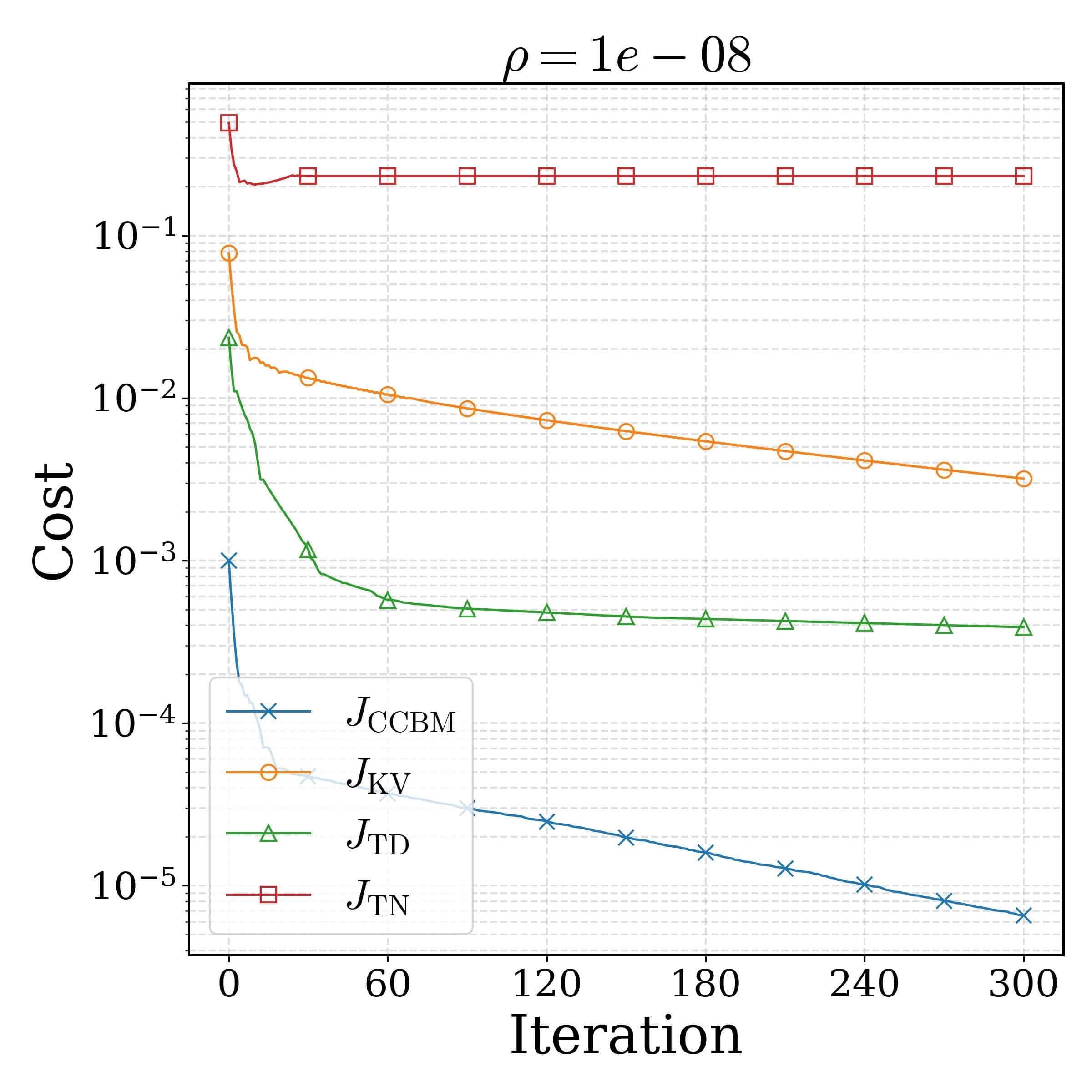}} \\[1em]
\resizebox{0.225\textwidth}{!}{\includegraphics{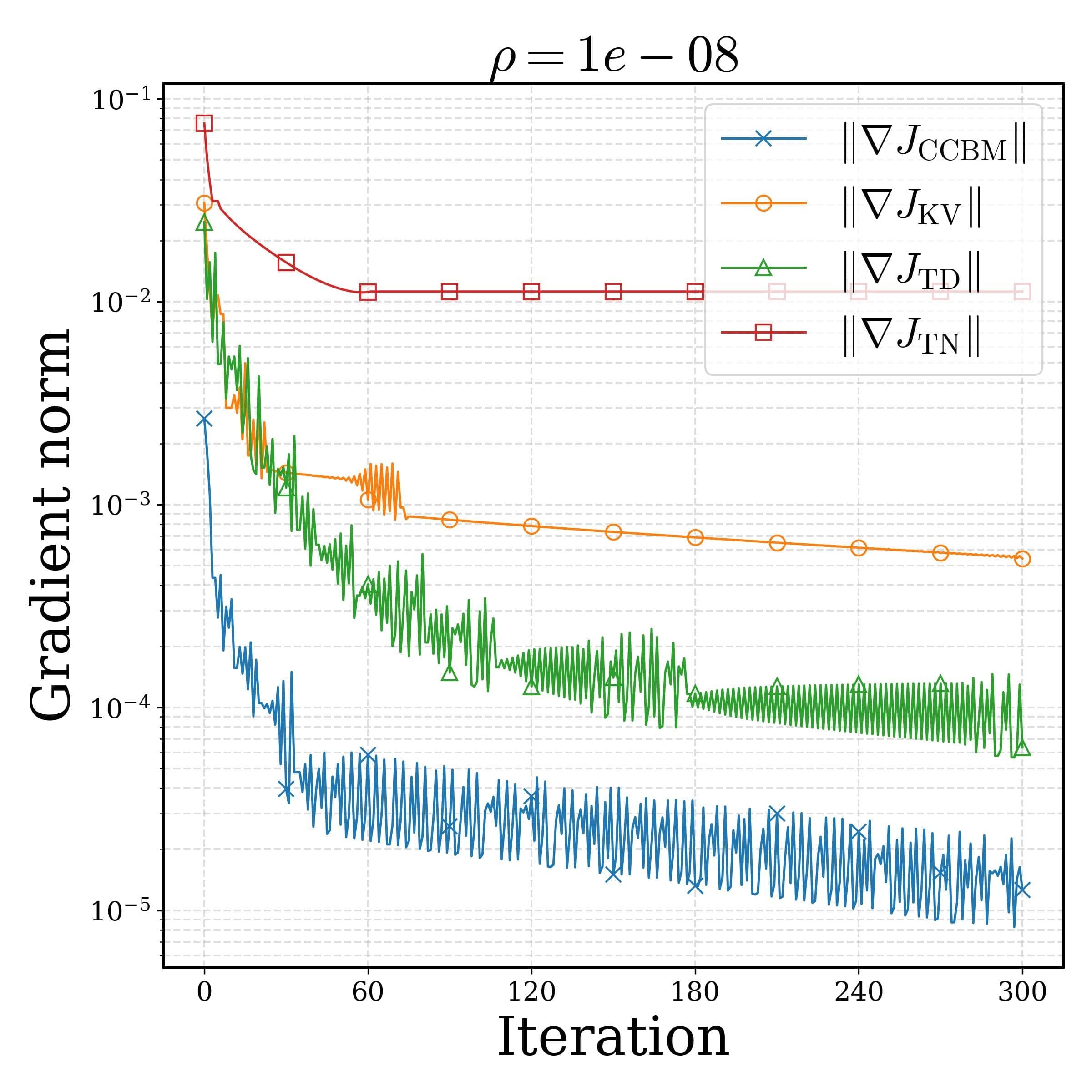}} \
\resizebox{0.225\textwidth}{!}{\includegraphics{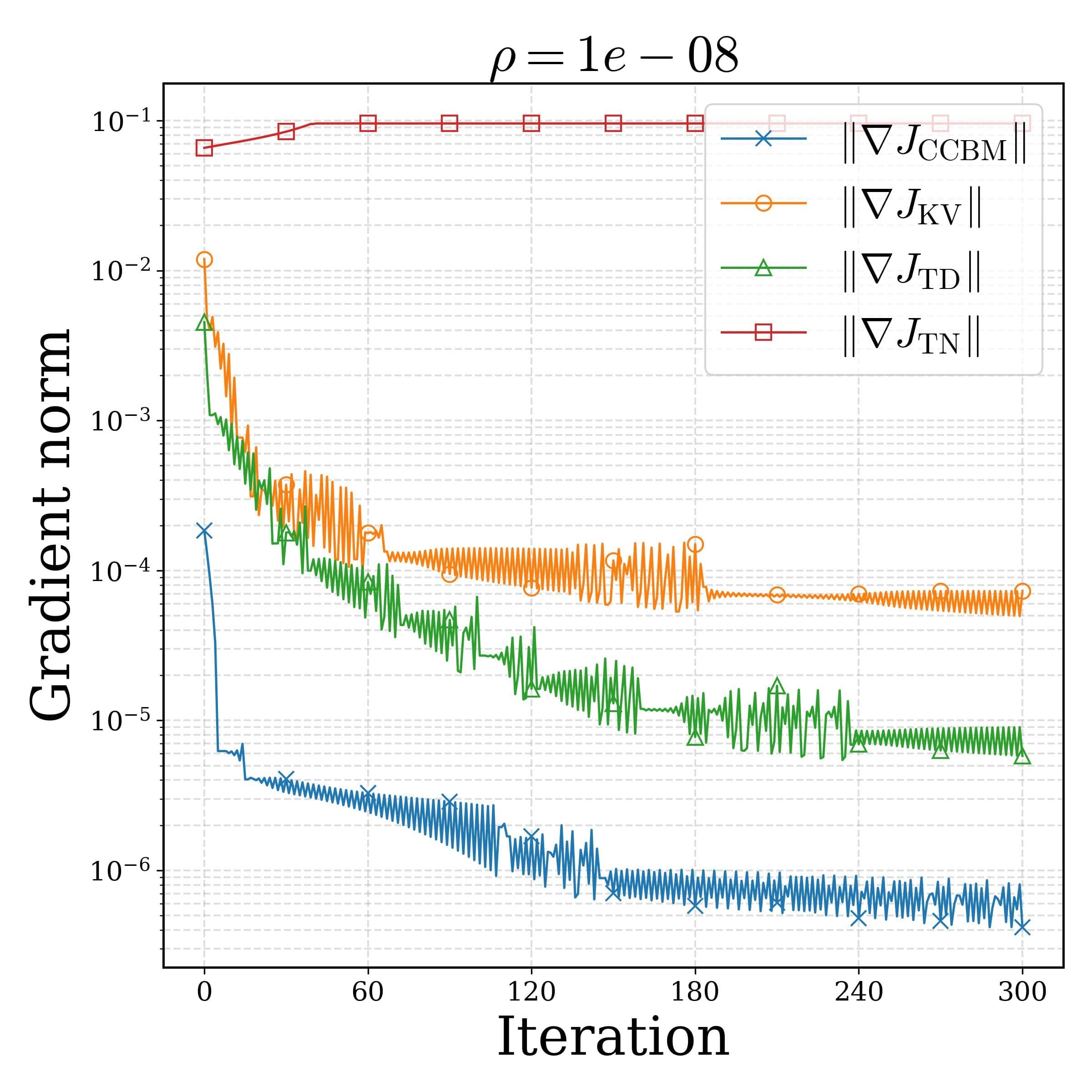}} \
\resizebox{0.225\textwidth}{!}{\includegraphics{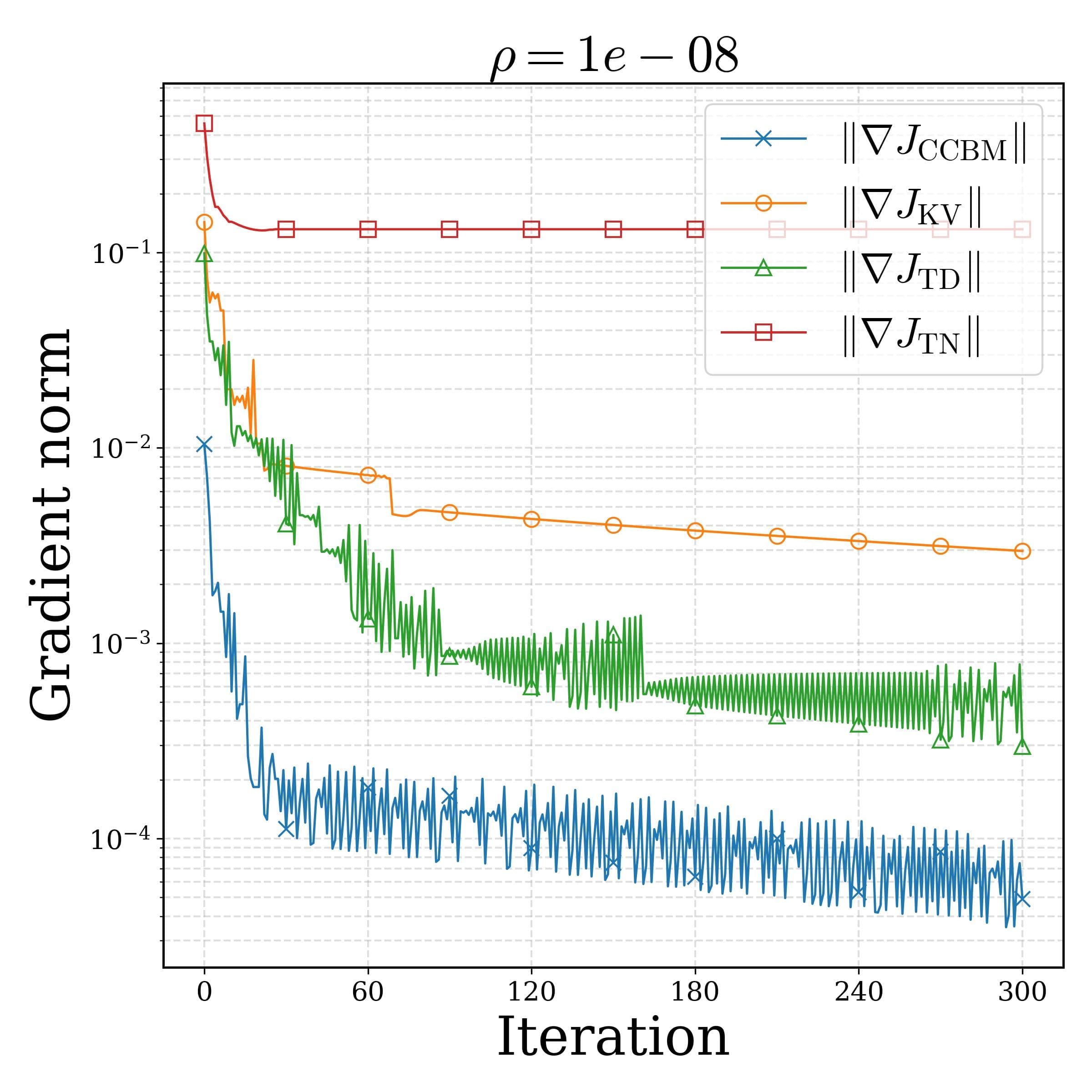}}
\caption{Histories of the cost functional and the gradient norm corresponding to Figure~\ref{fig:reconstructions_mildly_oscillatory_diffusion}. From left to right: $g=1$, $g=\sin(\pi x)\sin(\pi y)$, and $g=2+\sin(\pi x)\sin(\pi y)$.}
\label{fig:reconstructions_mildly_oscillatory_diffusion_cost_and_gradient}
\end{figure}
%
%
%
%
%
%
\subsubsection{Effect of incorporating the $H^{1}$ term in the cost functional} \label{subsec:effect_of_energy_minimization}
We now investigate the influence of incorporating a gradient term into the cost functional.
The experiments are performed on the domain $\varOmega = C(0,2)$ with exact diffusion
\[
\alpha_{1}^{\star}(x,y) = 1 + 0.5 x y
\quad \text{or} \quad
\alpha_{2}^{\star}(x,y) = 1 + \sin\!\left(\dfrac{\pi x}{8}\right)\sin\!\left(\dfrac{\pi y}{8}\right),
\]
starting from the constant initial guess $\alpha^{[0]} \equiv 1$. 
We set $b = 0$, $c = 1$, and $Q = 1$, and generate synthetic data using $f = 0$ and boundary input $g_1 = 1$ or $g_2 = \sin{\pi x} \sin{\pi y}$, corrupted with noise level $\delta = 0.001, 0.003, 0.005, 0.01$. 
The gradient regularization parameter is fixed at $\mu = 0.1$, while the Tikhonov parameter is set to $\rho = 0.001$.

Figures~\ref{fig:effect_of_weight_pringle}--\ref{fig:effect_of_weight_highly_oscillating_g2} show the effect of the weight parameter $w_{1}$ associated with the $H^{1}$-type term in the cost functional.
In the reported experiments, intermediate values of $w_{1}$ are often associated with lower reconstruction errors in both $L^{2}$ and $H^{1}$ norms, whereas very small weights provide limited regularization and very large weights may lead to over-smoothing. This trend is observed across different diffusion profiles, boundary inputs, and noise levels, although the optimal range of $w_{1}$ remains problem-dependent.

As noise increases, the inclusion of the $H^{1}$-term appears to reduce high-frequency artifacts in several test cases and to contribute to more stable reconstructions. Overall, these results suggest that gradient weighting can be beneficial in the considered settings when the regularization strength is chosen appropriately.

%
\begin{figure}[htp!]
\centering
\resizebox{0.235\textwidth}{!}{\includegraphics{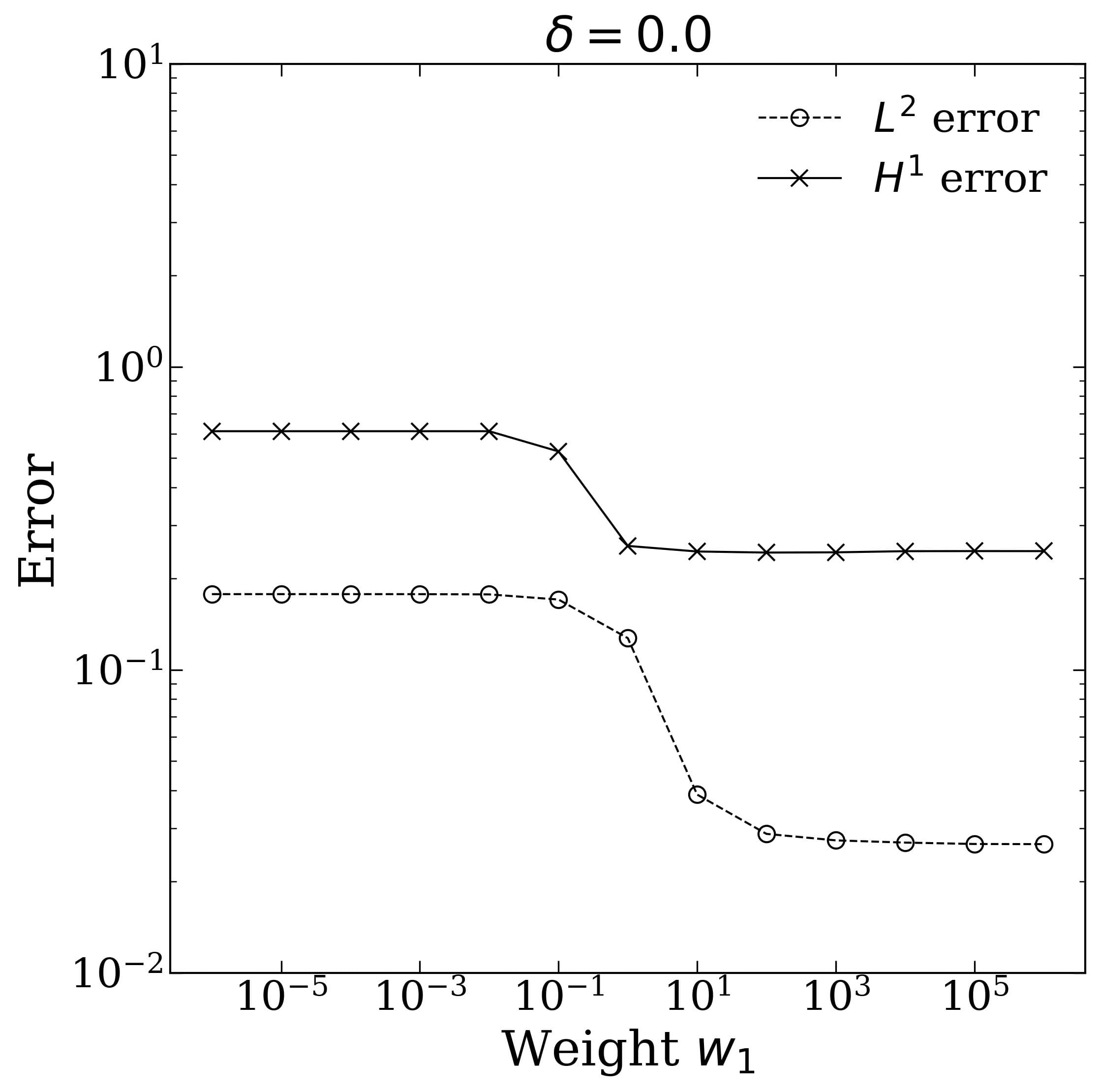}} \  
\resizebox{0.235\textwidth}{!}{\includegraphics{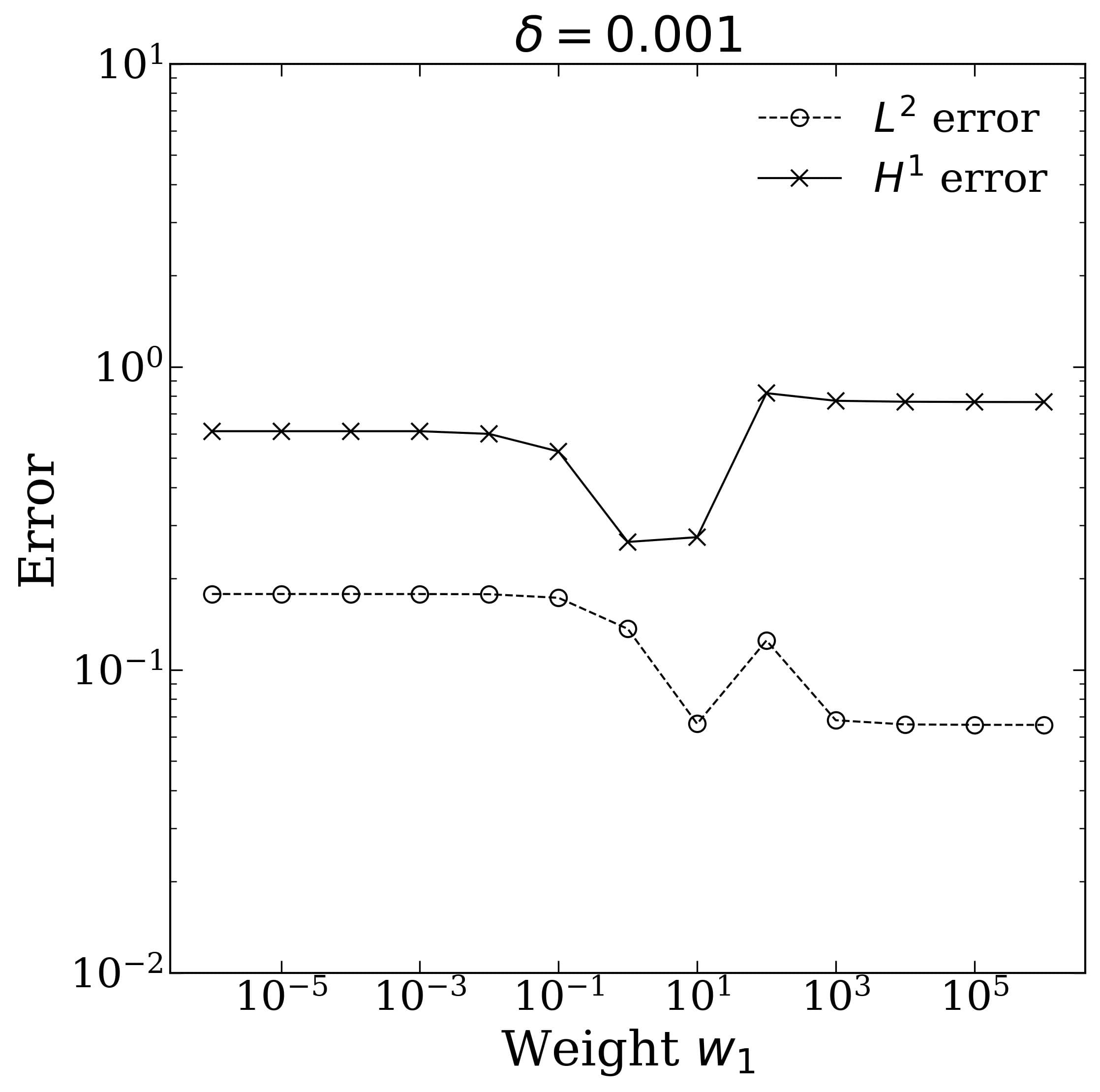}} \  
\resizebox{0.235\textwidth}{!}{\includegraphics{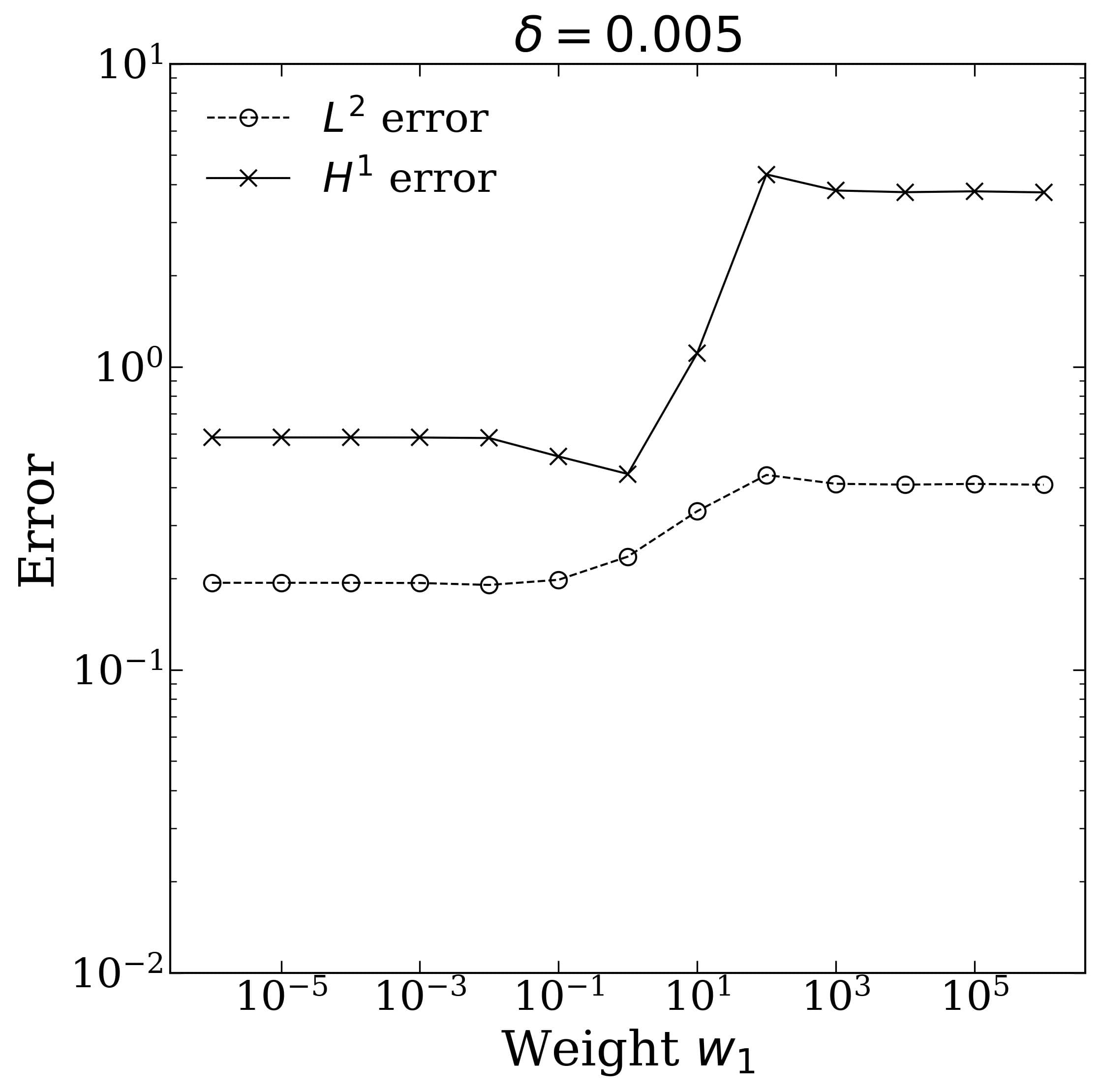}} \  
\resizebox{0.235\textwidth}{!}{\includegraphics{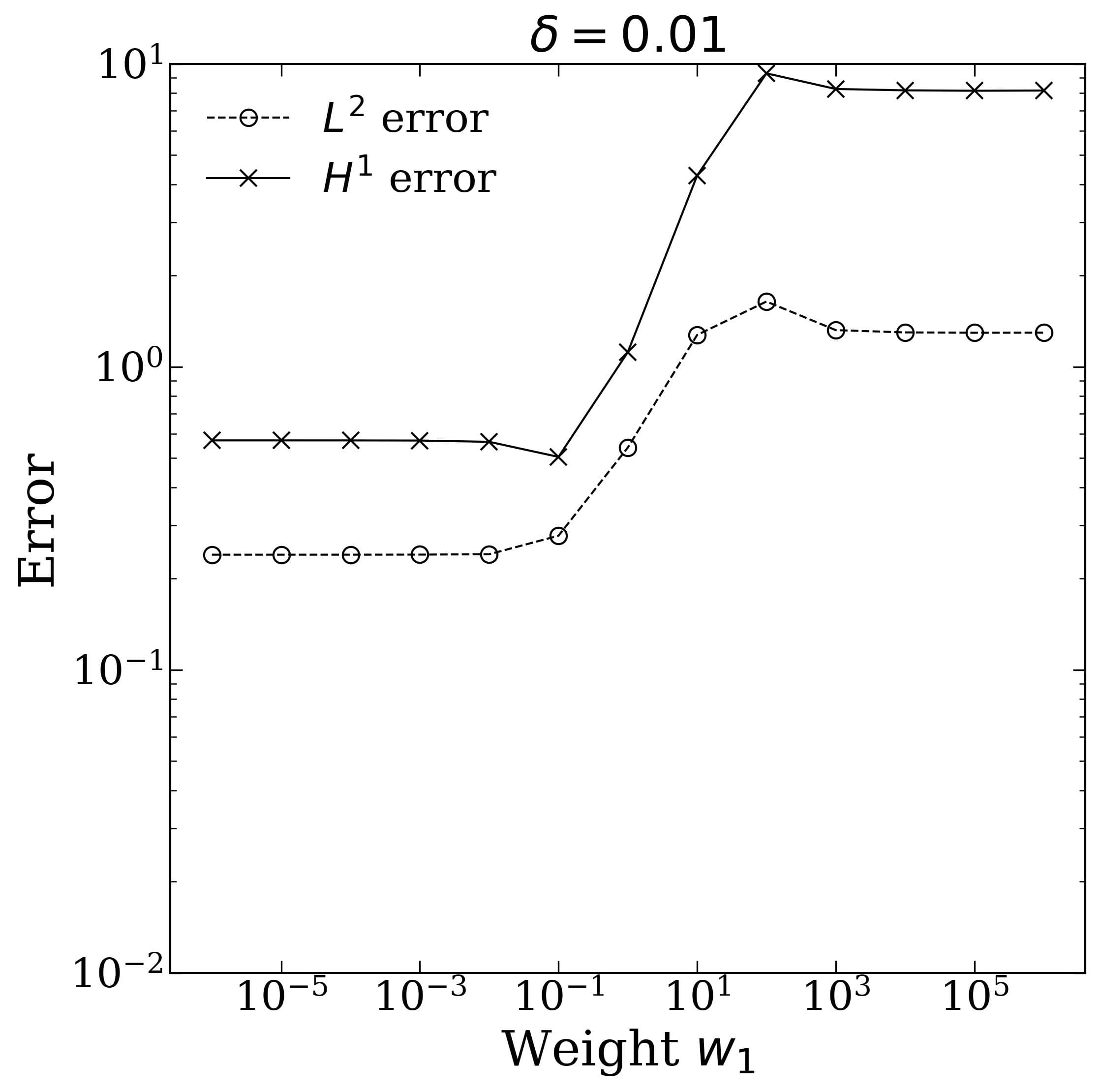}} \  
\caption{Effect of weight parameter $w_{1}$ when $\alpha^{\star} = \alpha_{1}^{\star}$ with $g = g_{1}$}
\label{fig:effect_of_weight_pringle}
\end{figure}
%
%

%
\begin{figure}[htp!]
\centering
\resizebox{0.235\textwidth}{!}{\includegraphics{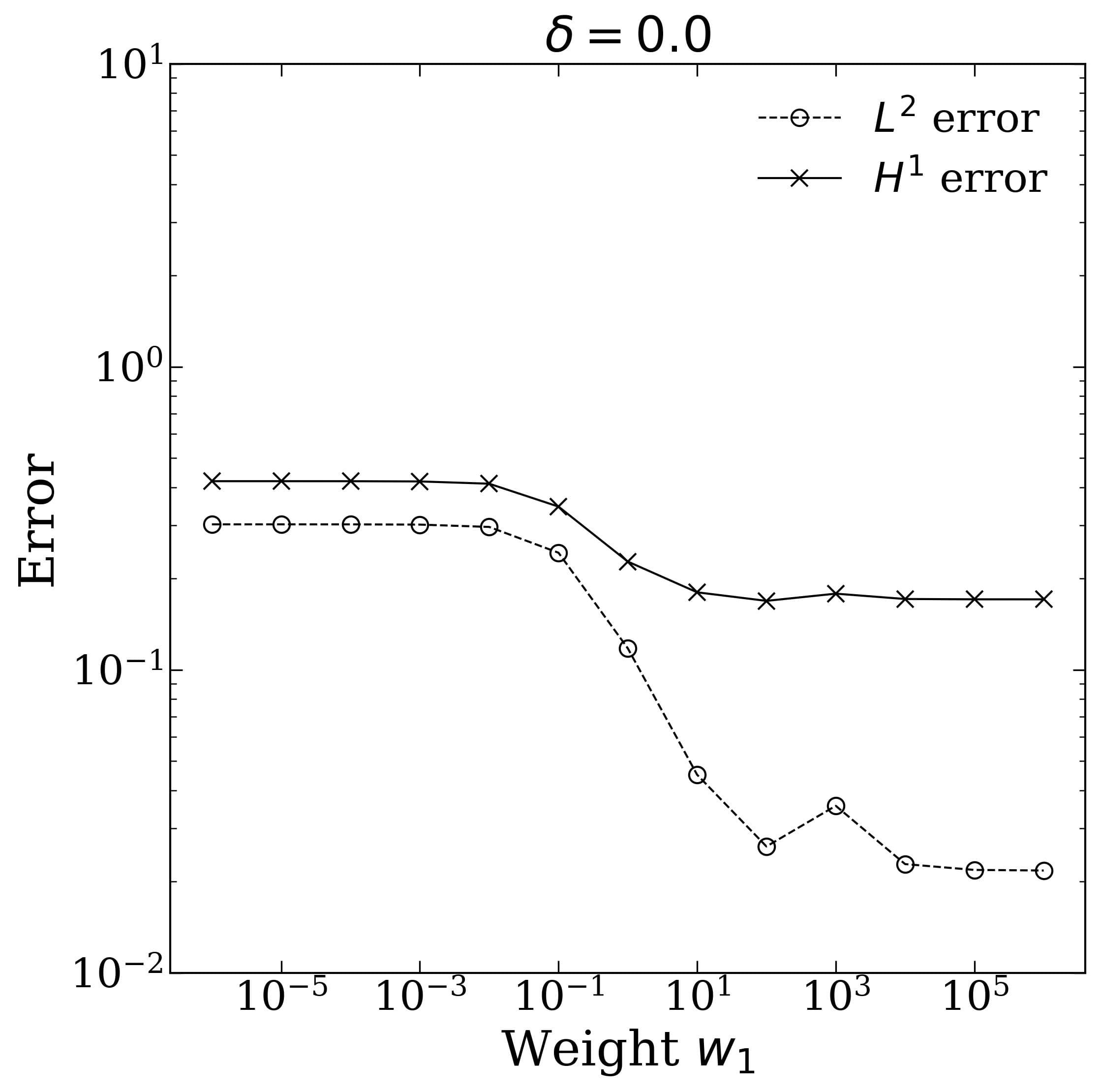}} \  
\resizebox{0.235\textwidth}{!}{\includegraphics{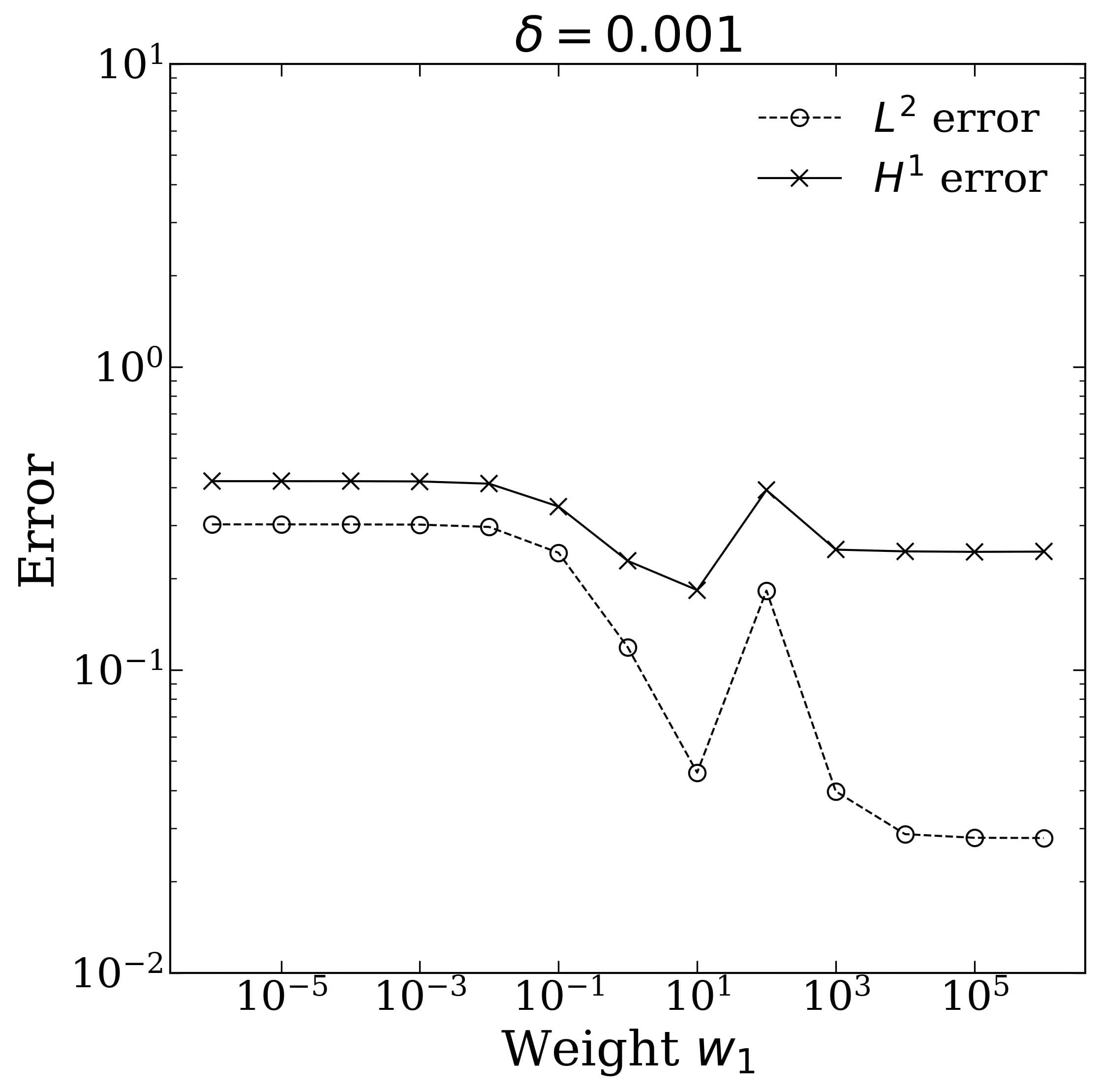}} \  
\resizebox{0.235\textwidth}{!}{\includegraphics{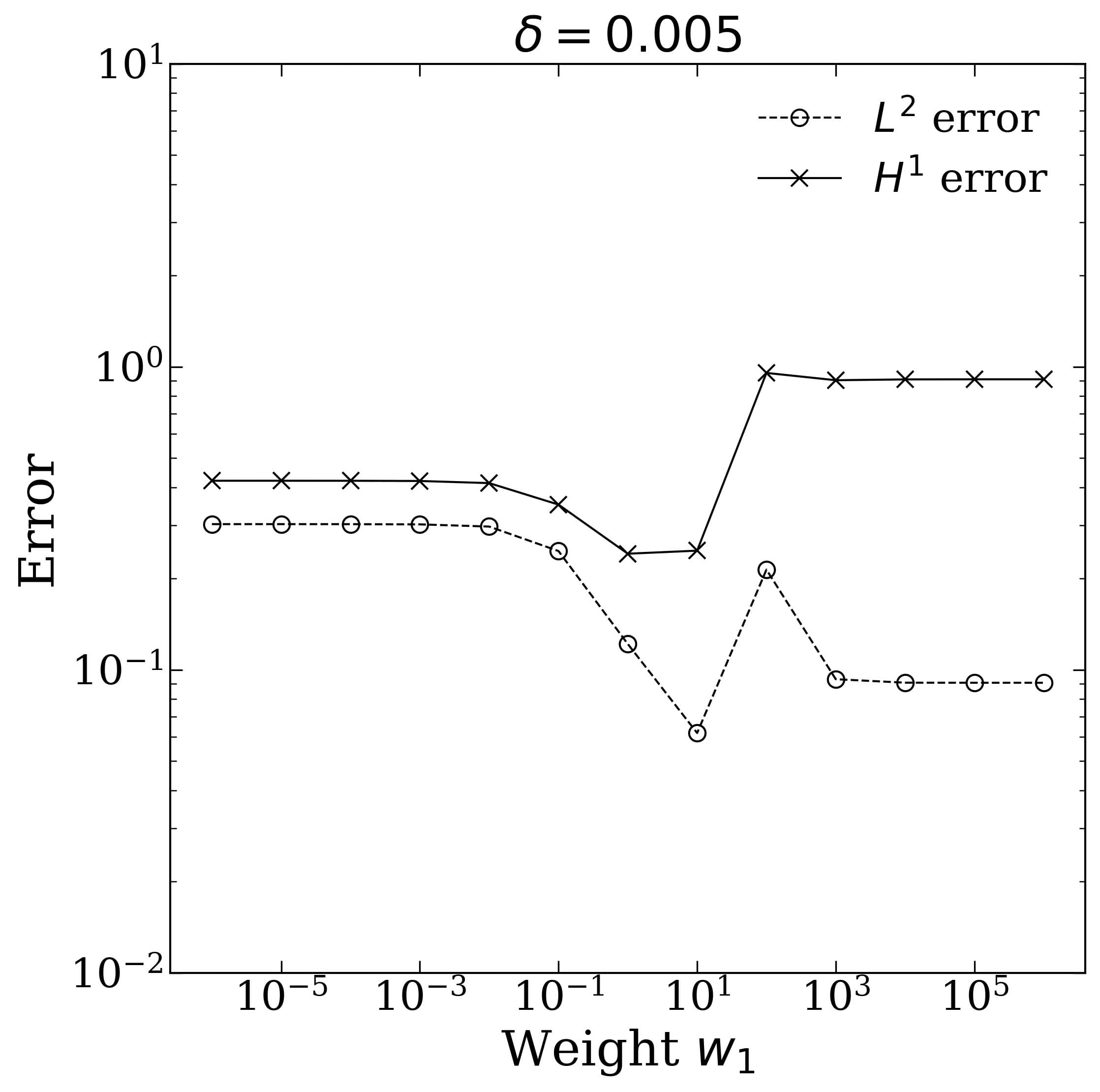}} \  
\resizebox{0.235\textwidth}{!}{\includegraphics{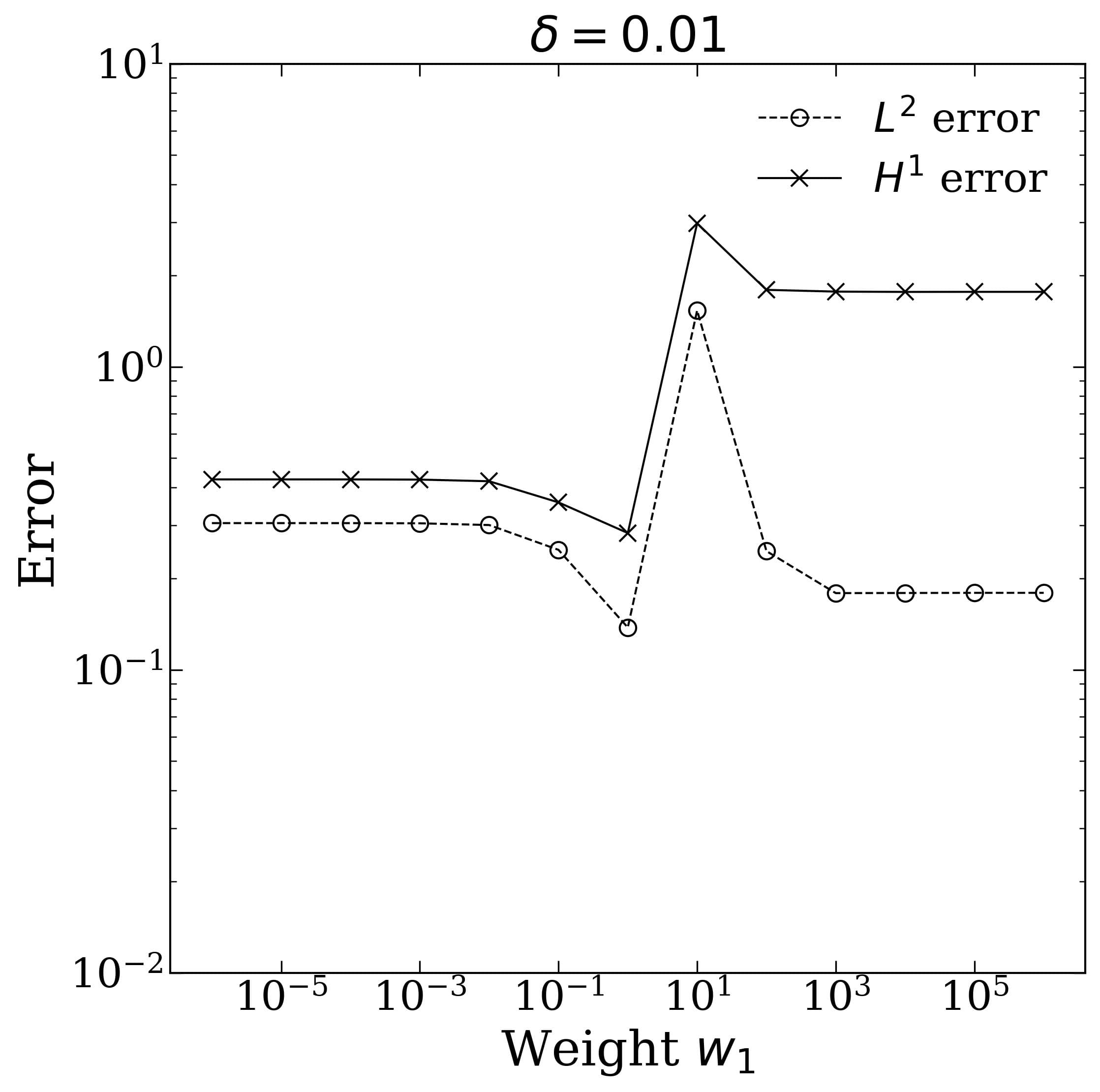}} \  
\caption{Effect of weight parameter $w_{1}$ when $\alpha^{\star} = \alpha_{2}^{\star}$ with $g = g_{1}$}
\label{fig:effect_of_weight_highly_oscillating}
\end{figure}
%
%

%
\begin{figure}[htp!]
\centering
\resizebox{0.235\textwidth}{!}{\includegraphics{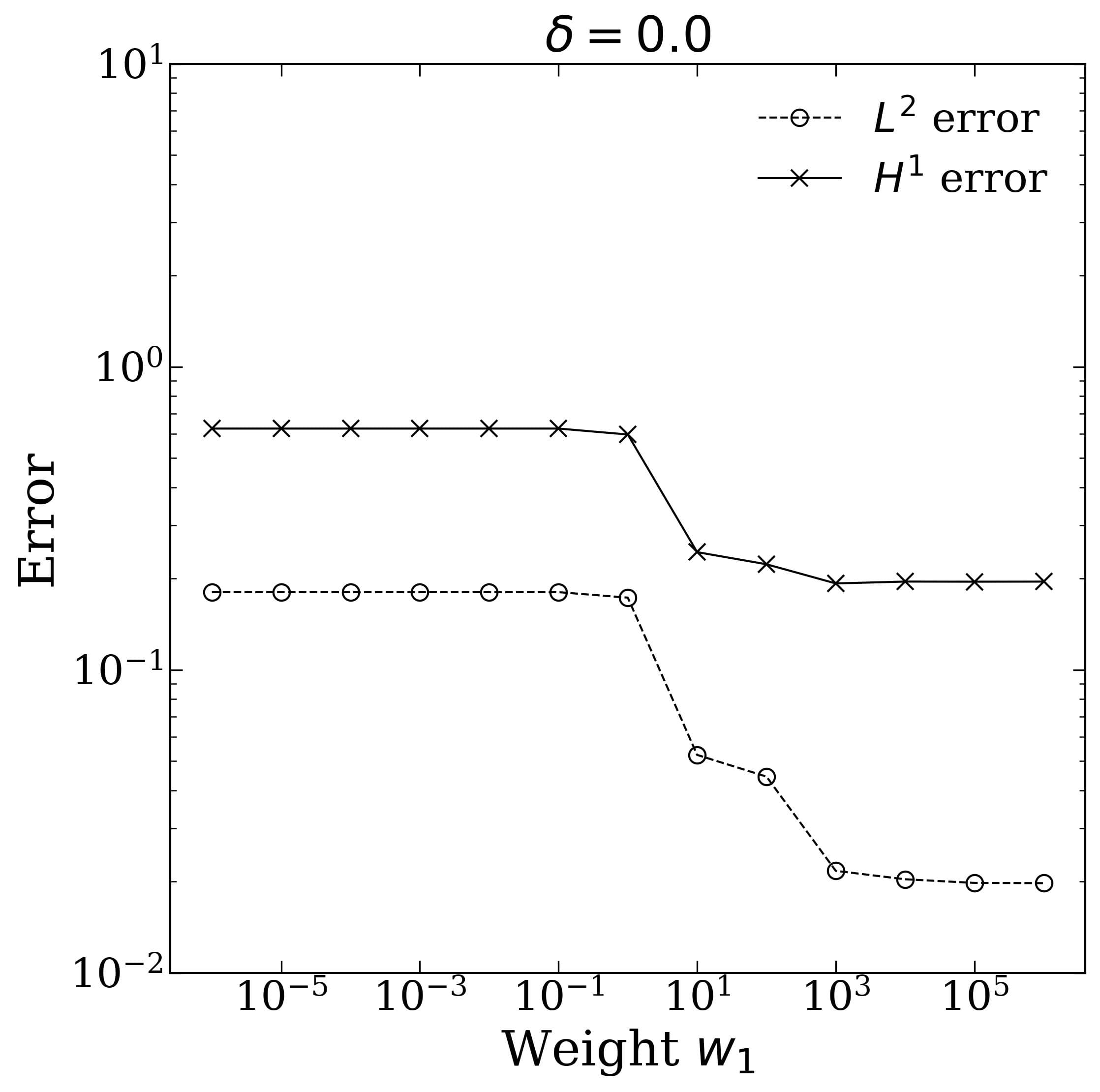}} \  
\resizebox{0.235\textwidth}{!}{\includegraphics{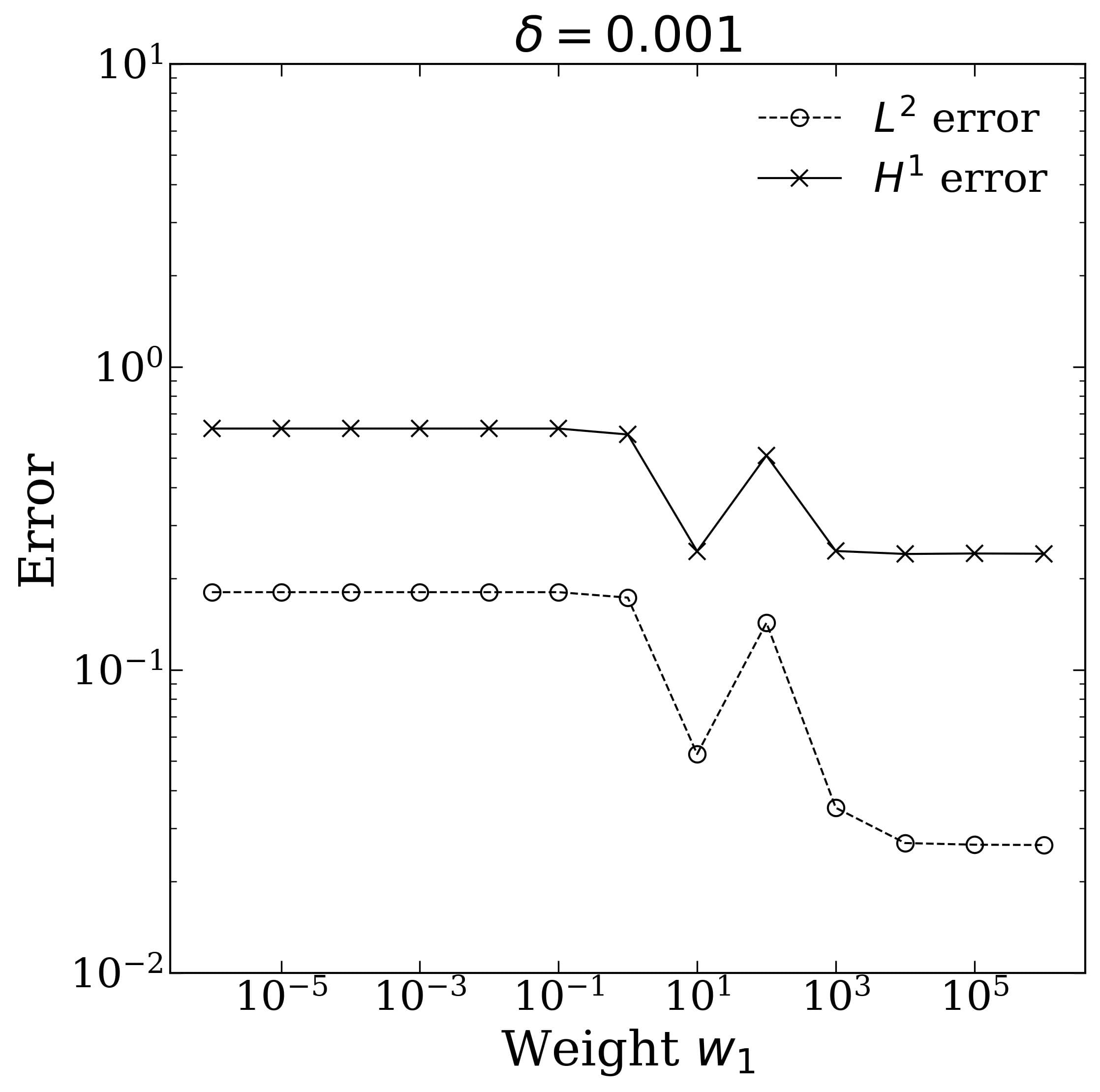}} \  
\resizebox{0.235\textwidth}{!}{\includegraphics{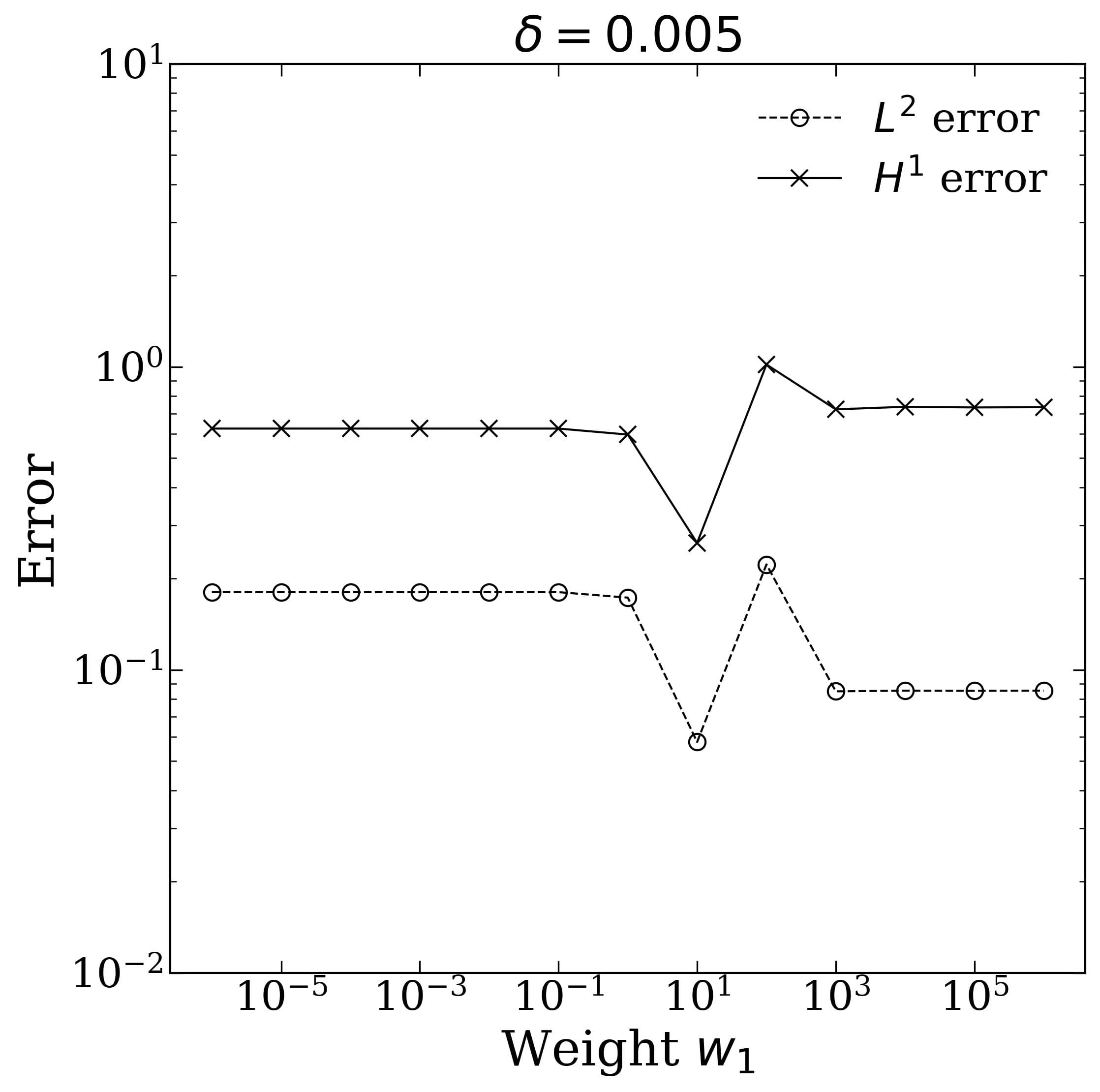}} \  
\resizebox{0.235\textwidth}{!}{\includegraphics{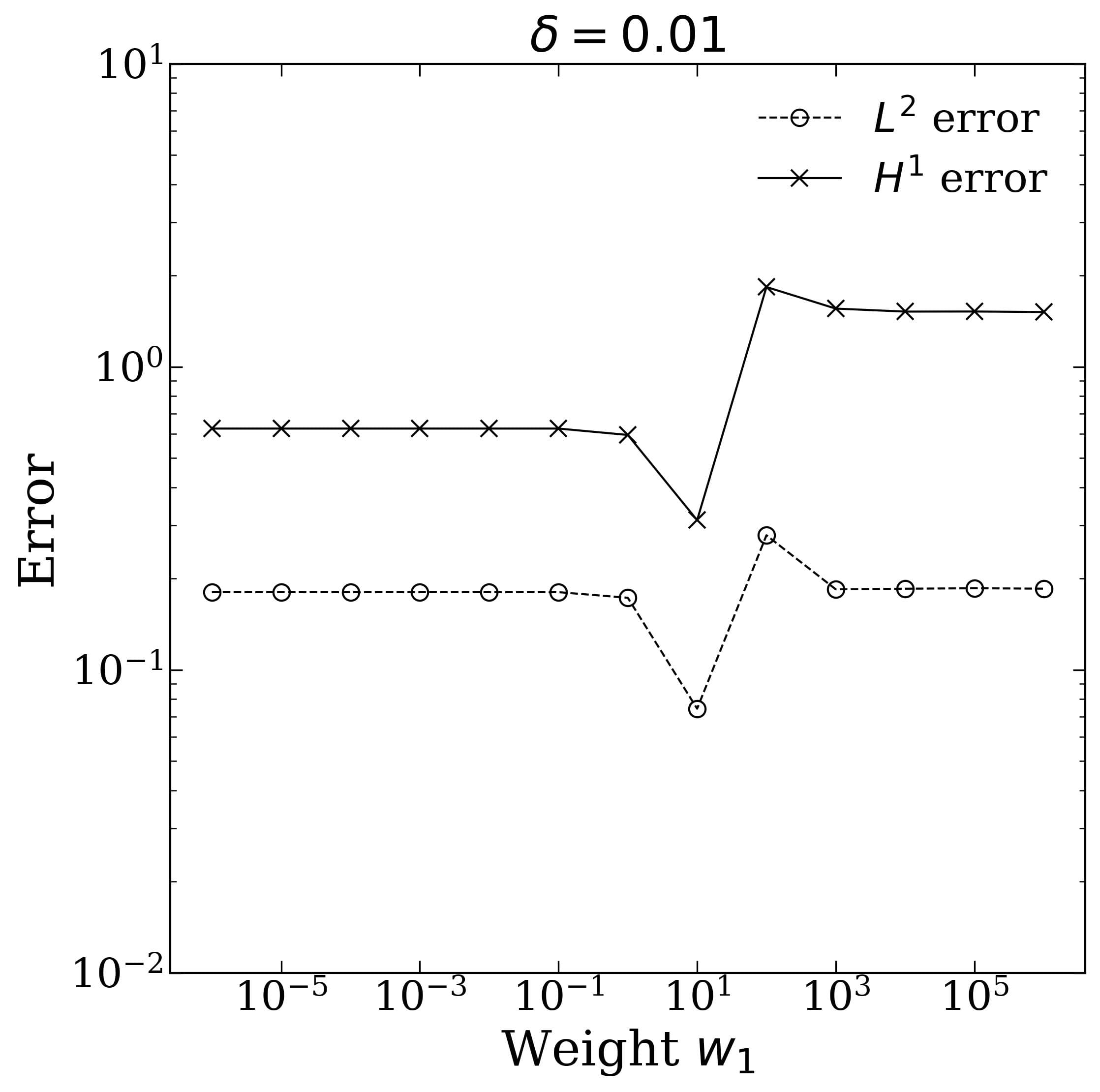}} \  
\caption{Effect of weight parameter $w_{1}$ when $\alpha^{\star} = \alpha_{1}^{\star}$ with $g = g_{2}$}
\label{fig:effect_of_weight_pringle_g2}
\end{figure}
%
%

%
\begin{figure}[htp!]
\centering
\resizebox{0.235\textwidth}{!}{\includegraphics{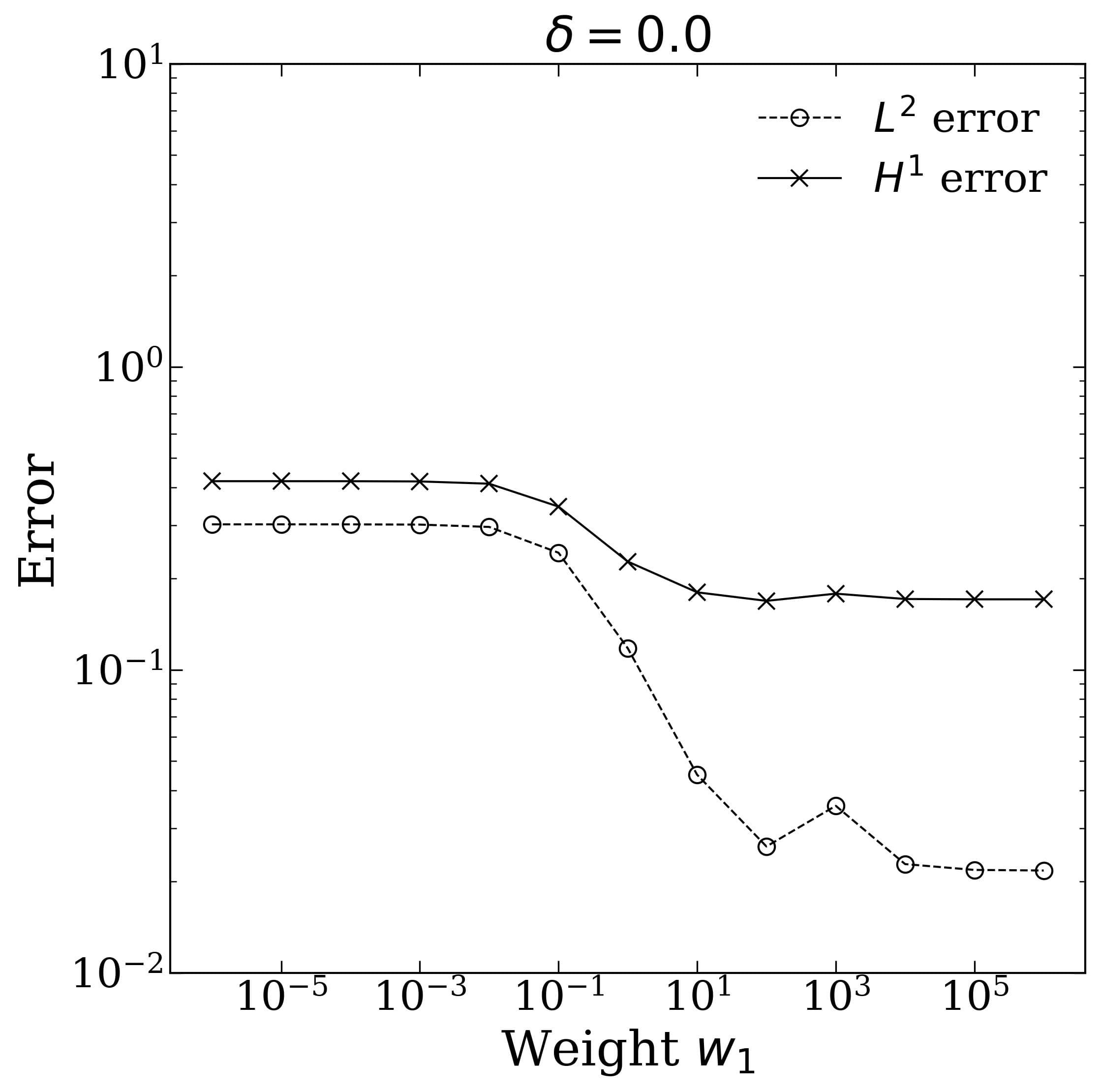}} \  
\resizebox{0.235\textwidth}{!}{\includegraphics{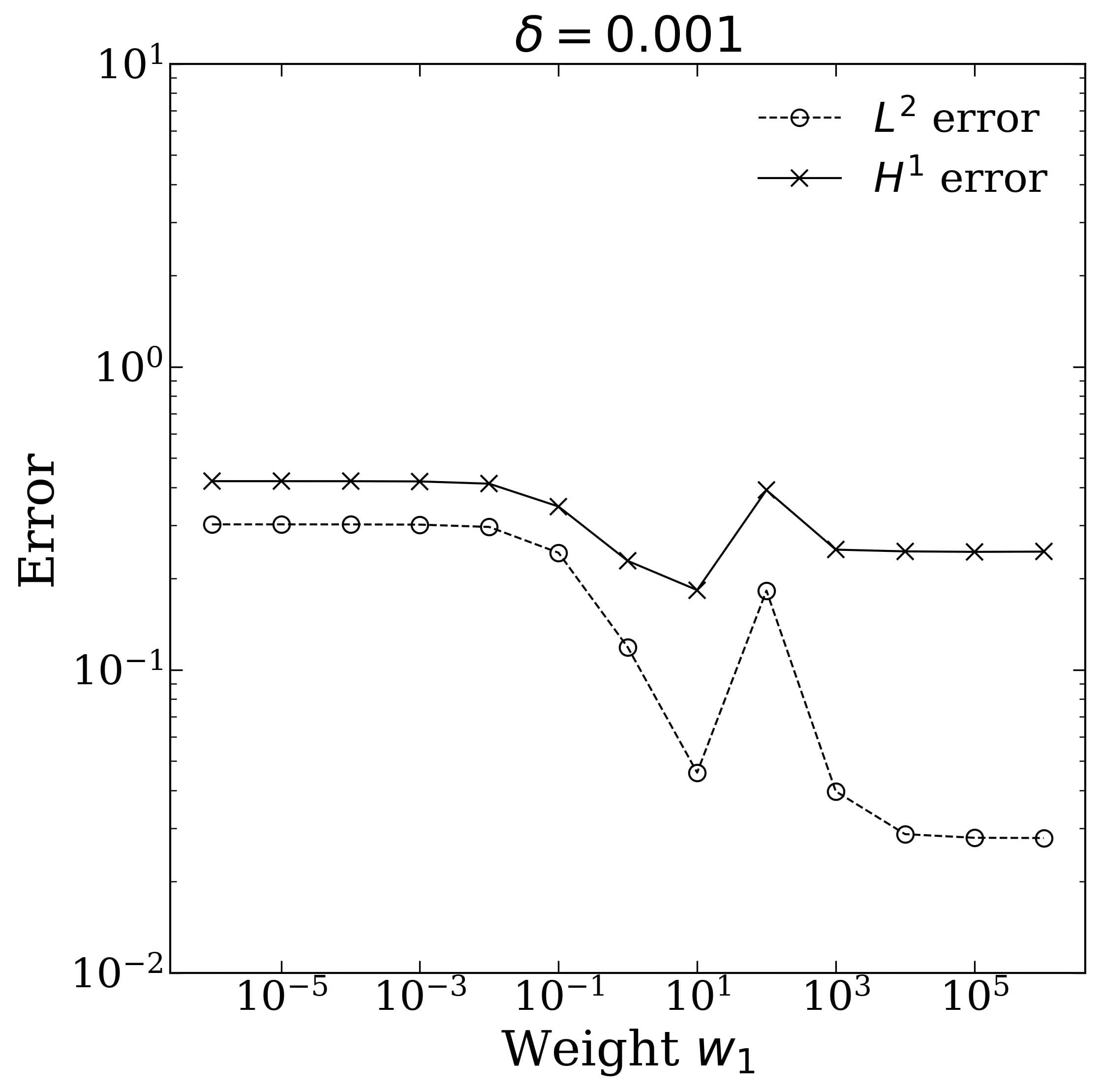}} \  
\resizebox{0.235\textwidth}{!}{\includegraphics{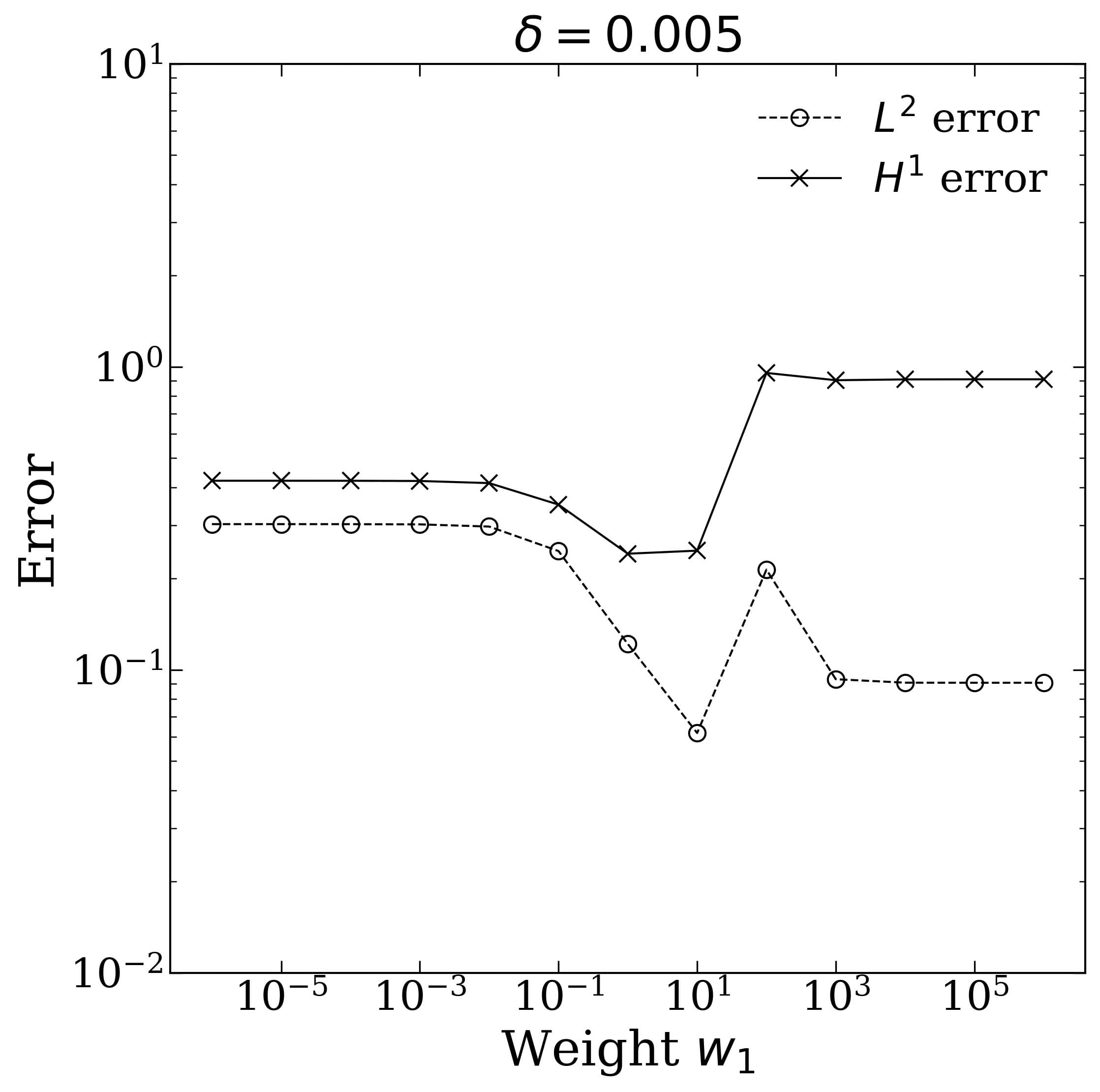}} \  
\resizebox{0.235\textwidth}{!}{\includegraphics{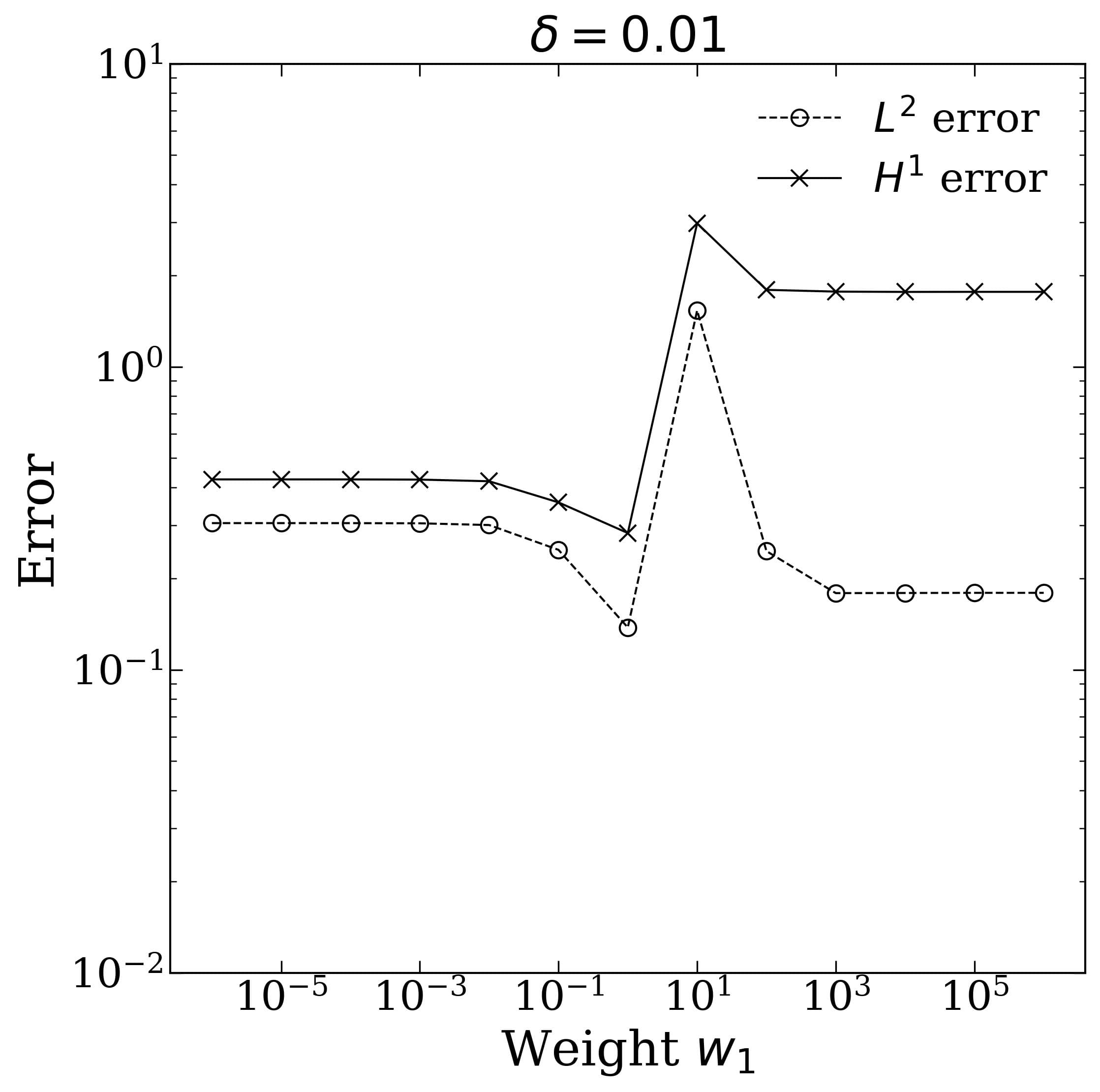}} \  
\caption{Effect of weight parameter $w_{1}$ when $\alpha^{\star} = \alpha_{2}^{\star}$ with $g = g_{2}$}
\label{fig:effect_of_weight_highly_oscillating_g2}
\end{figure}
%
%

\begin{remark}\label{rem:role_of_w1}
We summarize the role of the gradient-dependent term in the modified CCBM misfit~\eqref{eq:modified_CCBM_cost_functional} and its observed numerical effects.

\begin{itemize}
\item The inclusion of the term $\norm{\nabla \iu(\alpha)}_{0,\varOmega}^{2}$ introduces an $H^1$-type contribution to the state discrepancy.
In the reported experiments, this appears to increase sensitivity to spatial variations in the diffusion coefficient, partially counteracting the smoothing effect of the forward map.

\item Although the explicit Tikhonov regularization $\regu_\rho$ controls only $\norm{\alpha}_{0,\varOmega}$, the gradient-weighted misfit may introduce an additional implicit smoothing effect through the forward and adjoint problems.
Empirically, this is associated with Sobolev gradients that better reflect $H^1$-type reconstruction errors.

\item Larger values of $w_1$ are observed to improve contraction of the Sobolev-gradient iteration in several test cases.
While strong monotonicity is assumed in Remark~\ref{rem:coercivity_condition_for_the_Sobolev_gradient}, increased $w_1$ appears to enhance convergence behavior in practice within a suitable parameter range.

\item In the presented numerical tests, sufficiently large but not excessive values of $w_1$ are often associated with lower reconstruction errors in both $L^2(\varOmega)$ and $H^1(\varOmega)$.
This is consistent with improved control of certain high-frequency components, though only $L^2$-regularization is imposed explicitly.

\item These observations are empirical and depend on the problem setting and parameter choice.
Excessively large values of $w_1$ may increase noise sensitivity or numerical stiffness, requiring careful tuning of step sizes and regularization parameters.
\end{itemize}
\end{remark}

\subsection{Reconstruction of piecewise-constant diffusion via projection} \label{subsec:experiments_piecewise_diffusion}
While the numerical experiments above demonstrate the effectiveness of the proposed Sobolev gradient descent scheme for smoothly varying diffusion coefficients, many applications involve heterogeneous media with piecewise-constant diffusion and sharp interfaces. In such cases, reconstructions obtained solely from the continuous $H^1$-based update may exhibit residual smooth transitions across subdomain boundaries, even in the presence of total variation regularization.

To address this limitation, we augment the gradient-based iteration with a constraint-enforcement step that explicitly incorporates the known piecewise-constant structure of the diffusion coefficient. After each regularized Sobolev gradient update, the intermediate iterate is mapped onto the admissible set of piecewise-constant functions by assigning a single representative value to each prescribed subregion. In the present implementation, this value is obtained by evaluating the updated coefficient at a fixed sampling point within each subdomain, thereby enforcing constancy on that region.

This approach can be interpreted as a projection-type strategy, in which the descent step is performed in the continuous space $H^1(\varOmega)$ and subsequently followed by a non-orthogonal projection onto a lower-dimensional subspace encoding the piecewise-constant parametrization. Although this projection is not orthogonal in the $L^2$ or $H^1$ sense, it effectively suppresses intra-region oscillations and stabilizes the identification of subdomain-wise constant diffusion values. As demonstrated in the subsequent numerical experiments, this combined strategy enables accurate recovery of discontinuous diffusion coefficients while retaining the stability observed in the smooth case.

Motivated by the above discussion, we slightly modify Algorithm~\ref{alg:H1_grad_descent_unreg} to account for piecewise-constant diffusion coefficients. The Sobolev gradient descent step is retained, followed by an additional restriction step that enforces the prescribed piecewise-constant structure. This restriction is applied \emph{after} each regularized gradient update, ensuring admissibility while preserving the smoothing and stability properties of the $H^1$-based descent. The resulting algorithm is summarized below.

\begin{algorithm}[H]
\caption{Regularized Gradient Descent in $H^1$ with Piecewise-Constant Restriction}
\label{alg:H1_grad_descent_pc}
\begin{algorithmic}[1]
\Require objective function $J$, initial guess $\alpha^{[0]} \in \barAad$, step size rule, maximum iterations $k_{\max}$
\Ensure approximate minimizer $\alpha^\star$
\State Set $k = 0$
    \While{$k \leq k_{\max}$}
    \State Solve the state equation for $\alpha^{[k]}$ to obtain $u(\alpha^{[k]})$
    \State Solve the adjoint equation (if applicable) to obtain $p(\alpha^{[k]})$
    \State Compute the $H^1$-Sobolev gradient $\nabla_{H^1} J(\alpha^{[k]})$ using $u(\alpha^{[k]})$ and $p(\alpha^{[k]})$
    \State Determine step size $t^{[k]}$ (fixed or via line search \cite{NocedalWright2006})
    \State Perform the regularized update:
    \[
    \tilde{\alpha}^{[k+1]}
    =
    \alpha^{[k]}
    - t^{[k]} \nabla_{H^1} J(\alpha^{[k]})
    + t^{[k]} \rho^{[k]} \regu'(\alpha^{[k]})
    \]
    \State Restrict $\tilde{\alpha}^{[k+1]}$ to the admissible piecewise-constant space:
    \[
    \alpha^{[k+1]} = \Pi_{\mathrm{pc}}(\tilde{\alpha}^{[k+1]})
    \]
    \State $k \gets k+1$
\EndWhile
\State \Return $\alpha^{[k]}$ as $\alpha^\star$
\end{algorithmic}
\end{algorithm}
In this work, the projection-type operator $\Pi_{\mathrm{pc}}$ is implemented using a simple \emph{pick-a-point} strategy. For each subregion, a fixed sampling point located strictly inside the domain is selected, and the value of $\tilde{\alpha}^{[k+1]}$ at this point is assigned uniformly over the entire subregion. In our implementation, the Sobolev gradient is computed in $P_1$, while the coefficient $\alpha$ is represented in $P_0$. While this procedure does not correspond to an orthogonal projection in an $L^2$ or $H^1$ sense and does not provide robustness for complex geometries, it is effective in simple test cases involving two or three subregions, as considered in the present numerical experiments. In these settings, the restriction successfully enforces the desired piecewise-constant structure when combined with Sobolev smoothing and total variation regularization.

In the following examples, we consider piecewise-constant diffusion coefficients with the setups described as follows: $b = 0$ and $c = 1$, the source term as $f(x_1,x_2) = x_1 + x_2 + 2$, and the initial guess for the coefficient as $\alpha^{[0]} = 2$.
\subsubsection{Two-subregions case}
For the following example, we set the boundary data to be $g(x_1,x_2) = 1 + 0.5\,\sin(\pi x_1)\sin(\pi x_2)$.
\begin{example}[Two-subregions case]\label{ex:two_subregions}
Consider the domain $\varOmega = (-1,1)^2$ divided into two subregions by $x_1=0$:
\[
\varOmega_L = \{ x_1 < 0 \}, \qquad \varOmega_R = \{ x_1 \geq 0 \}.
\]
The diffusion coefficient is piecewise constant:
\[
\alpha(x_1) =
\begin{cases}
0.75, & x_1 < 0,\\
0.50, & x_1 \geq 0.
\end{cases}
\]

The piecewise-constant structure is enforced using a pick-a-point strategy: after each gradient update, the value at a fixed point inside each subregion ($x_1=-0.95$ for $\varOmega_L$, $x_1=0.95$ for $\varOmega_R$) is assigned uniformly over the subregion.
\end{example}
Figure~\ref{fig:two_subregions} shows the reconstruction histories for the two-subregions case under different noise levels $\delta$. 
For $\delta=0$, all methods converge to the exact coefficients, with CCBM exhibiting faster convergence. 
As $\delta$ increases, CCBM converges to noise-dependent values close to the exact coefficients, despite non-monotone iteration histories, while KV and TD show increasing bias. 
The TN method is highly sensitive to noise, as expected, and fails to stabilize, particularly in the right subregion.

%
%
%
\begin{figure}[htp!]
\resizebox{0.225\textwidth}{!}{\includegraphics{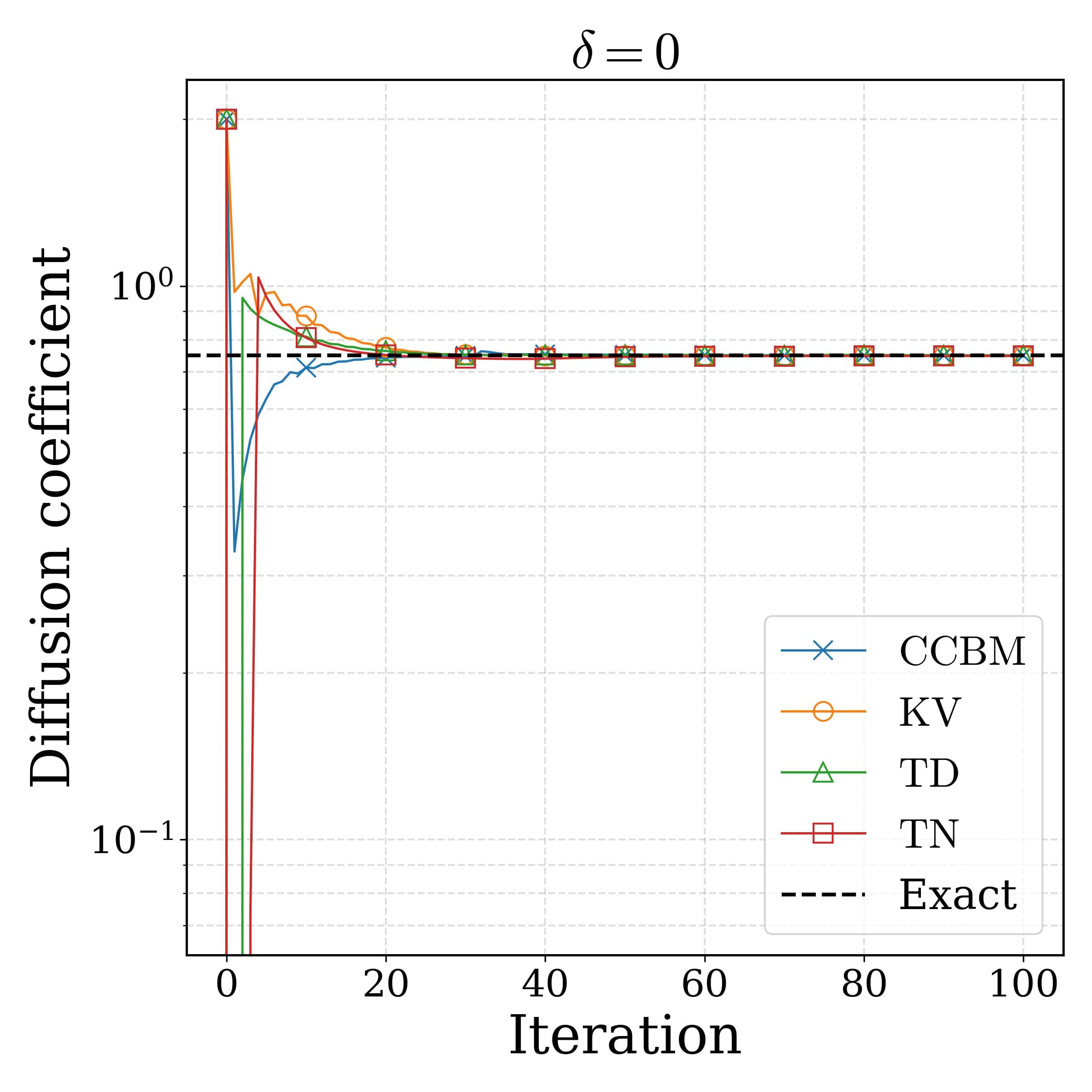}} \  
\resizebox{0.225\textwidth}{!}{\includegraphics{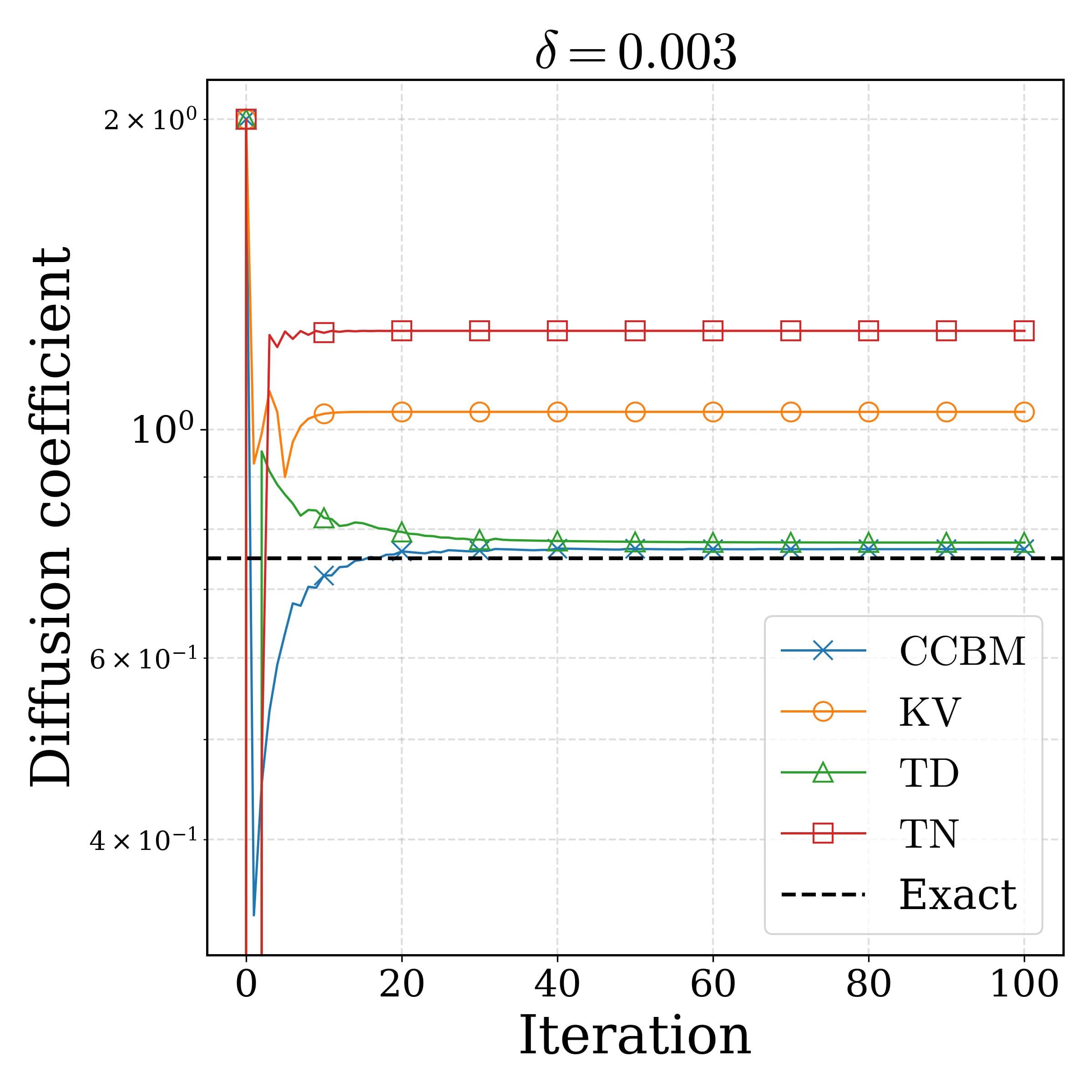}} \  
\resizebox{0.225\textwidth}{!}{\includegraphics{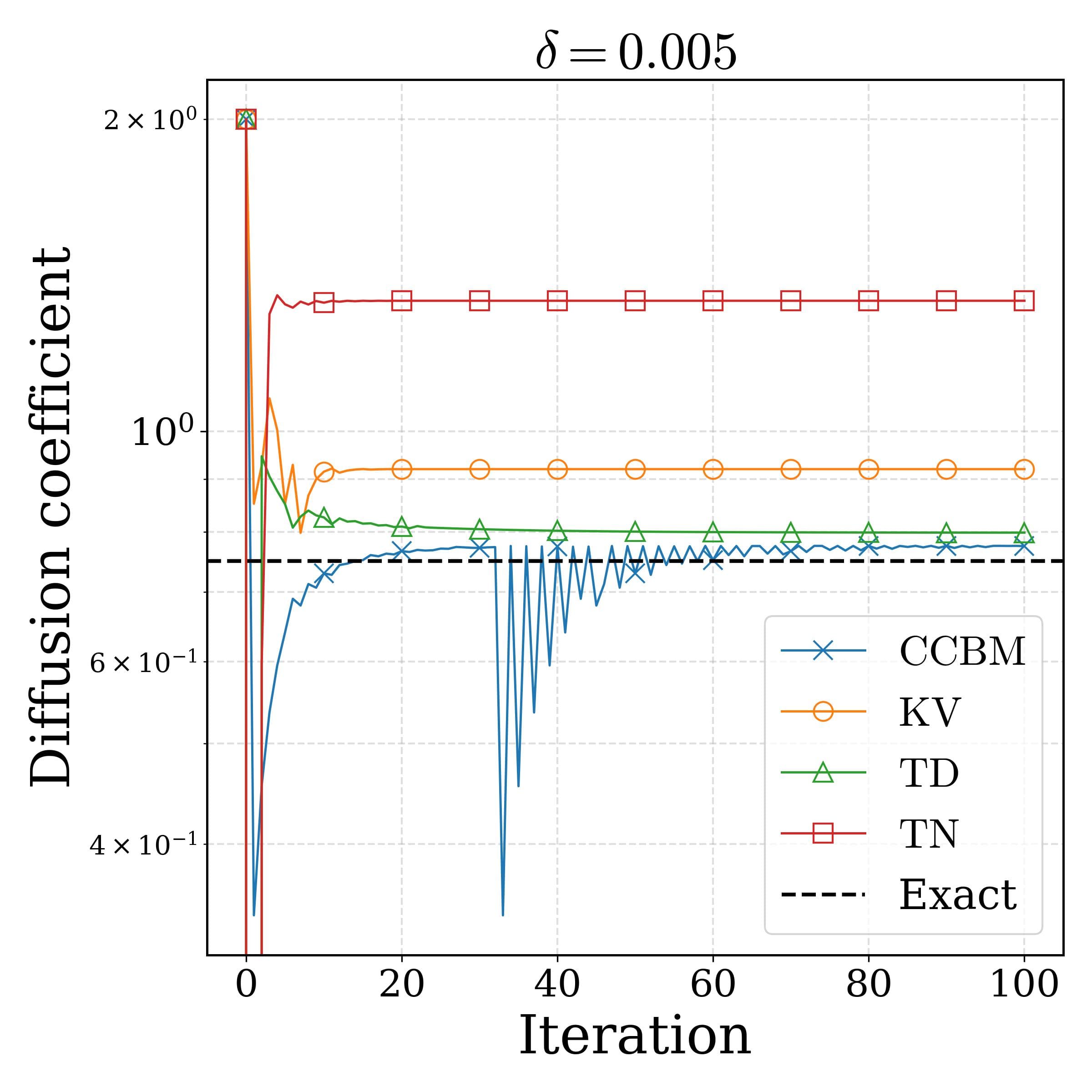}} \  
\resizebox{0.225\textwidth}{!}{\includegraphics{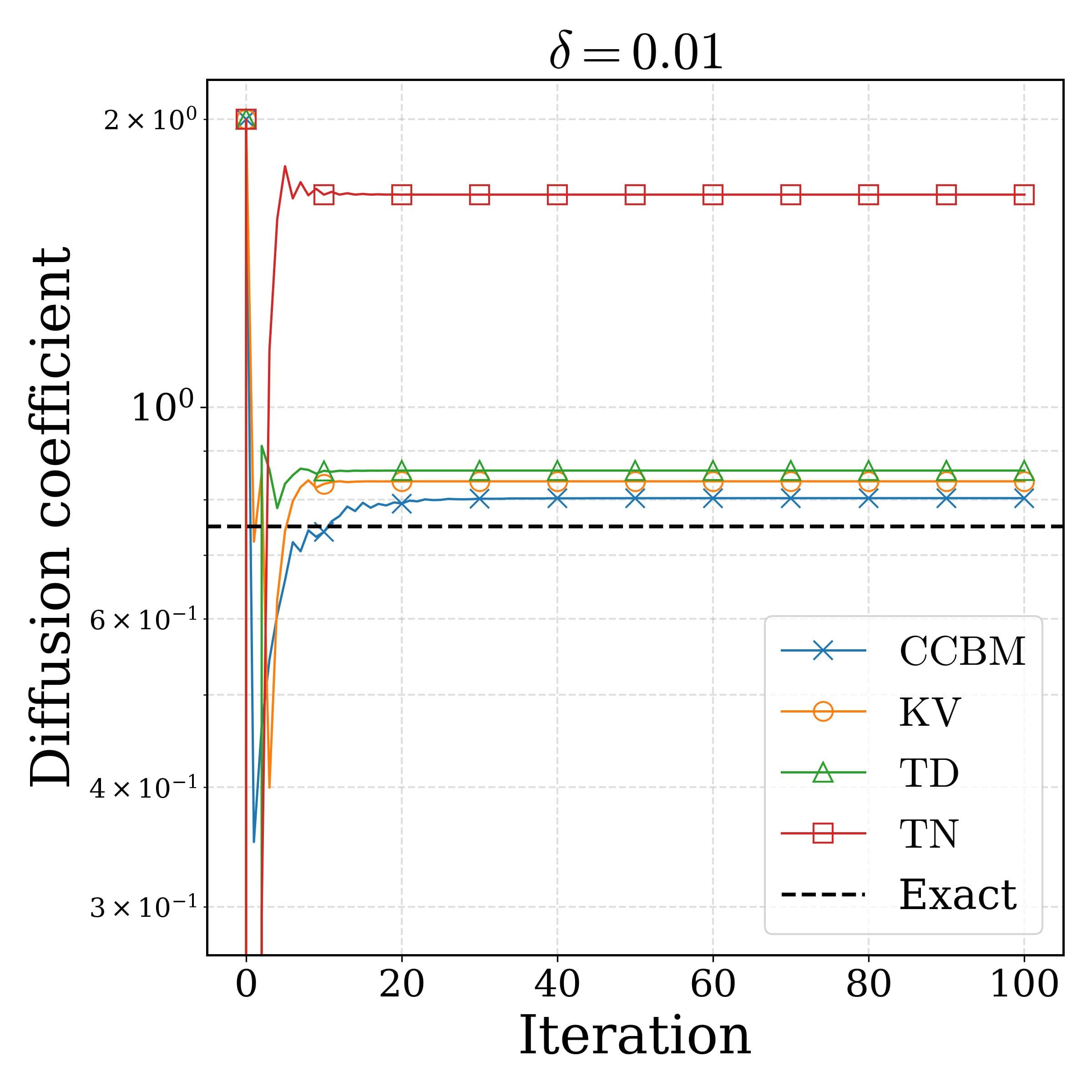}} \\[1em]
\resizebox{0.225\textwidth}{!}{\includegraphics{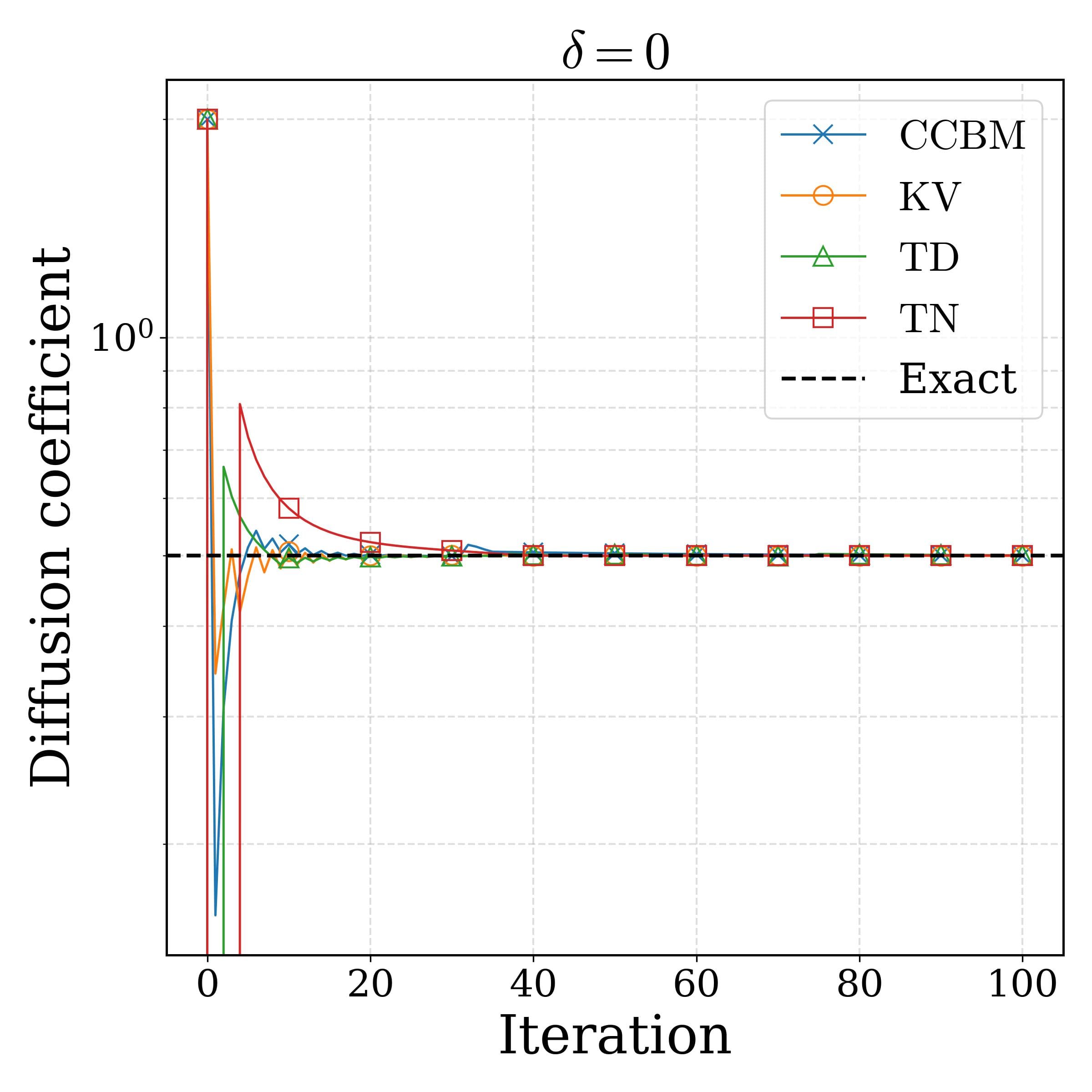}} \  
\resizebox{0.225\textwidth}{!}{\includegraphics{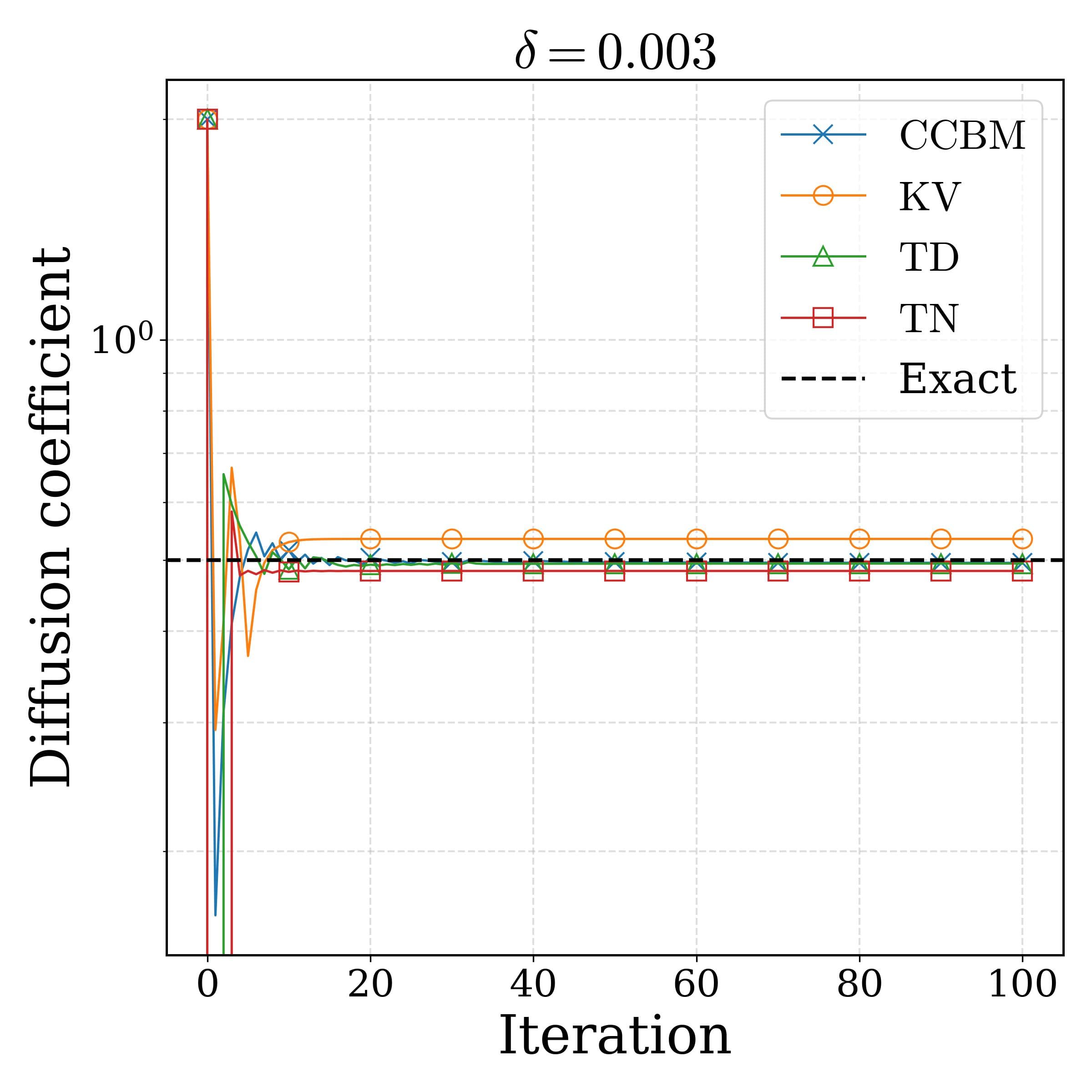}} \  
\resizebox{0.225\textwidth}{!}{\includegraphics{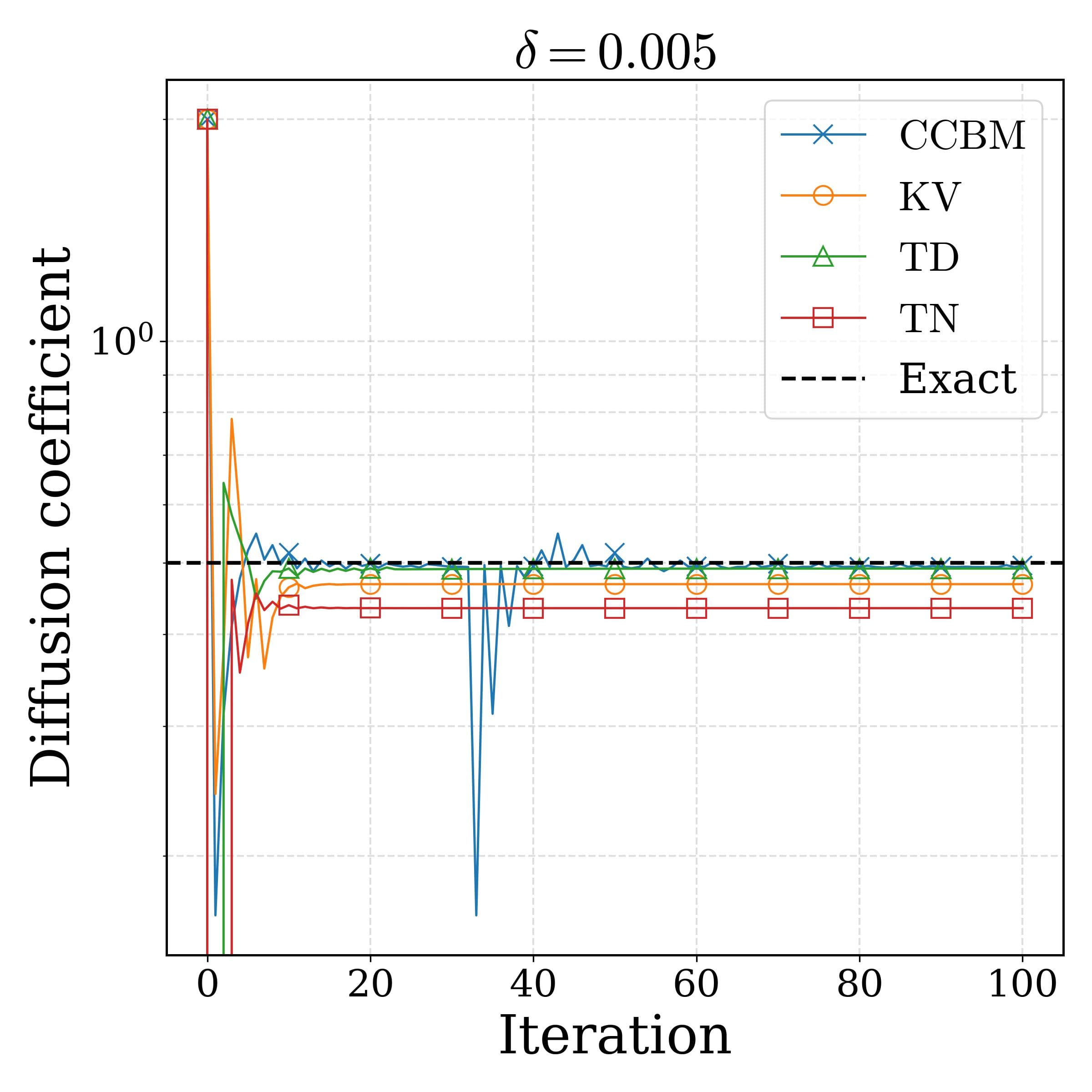}} \  
\resizebox{0.225\textwidth}{!}{\includegraphics{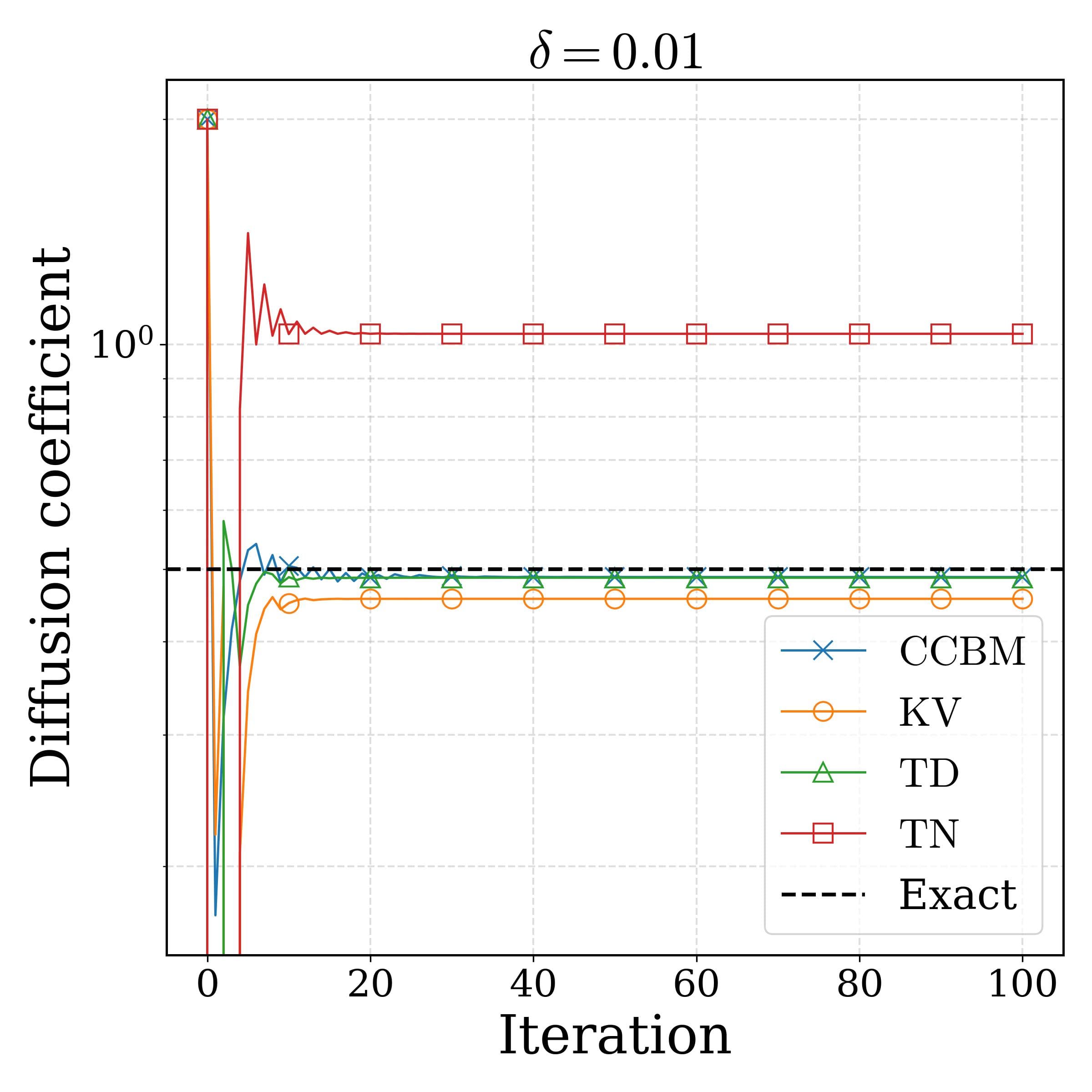}}
\caption{Iteration histories of the reconstructed diffusion coefficients in the two-subregions case for noise levels $\delta = 0$, $0.003$, $0.005$, and $0.01$.  Dashed lines denote the exact values.}
\label{fig:two_subregions}
\end{figure}
%

Figure~\ref{fig:two_subregions_histories_and_CCBM} shows the histories of the cost functionals and gradient norms. 
For exact data (left), all methods rapidly reduce the cost, while CCBM continues to decrease both the cost and gradient norm more steadily. 
For noisy data (right), CCBM stabilizes at noise-dependent levels, reflecting the effect of measurement noise on the achievable accuracy. 
These results suggest that, in this example, the pick-a-point CCBM approach exhibits improved robustness for piecewise-constant coefficient identification.

%
\begin{figure}[htp!]
\resizebox{0.225\textwidth}{!}{\includegraphics{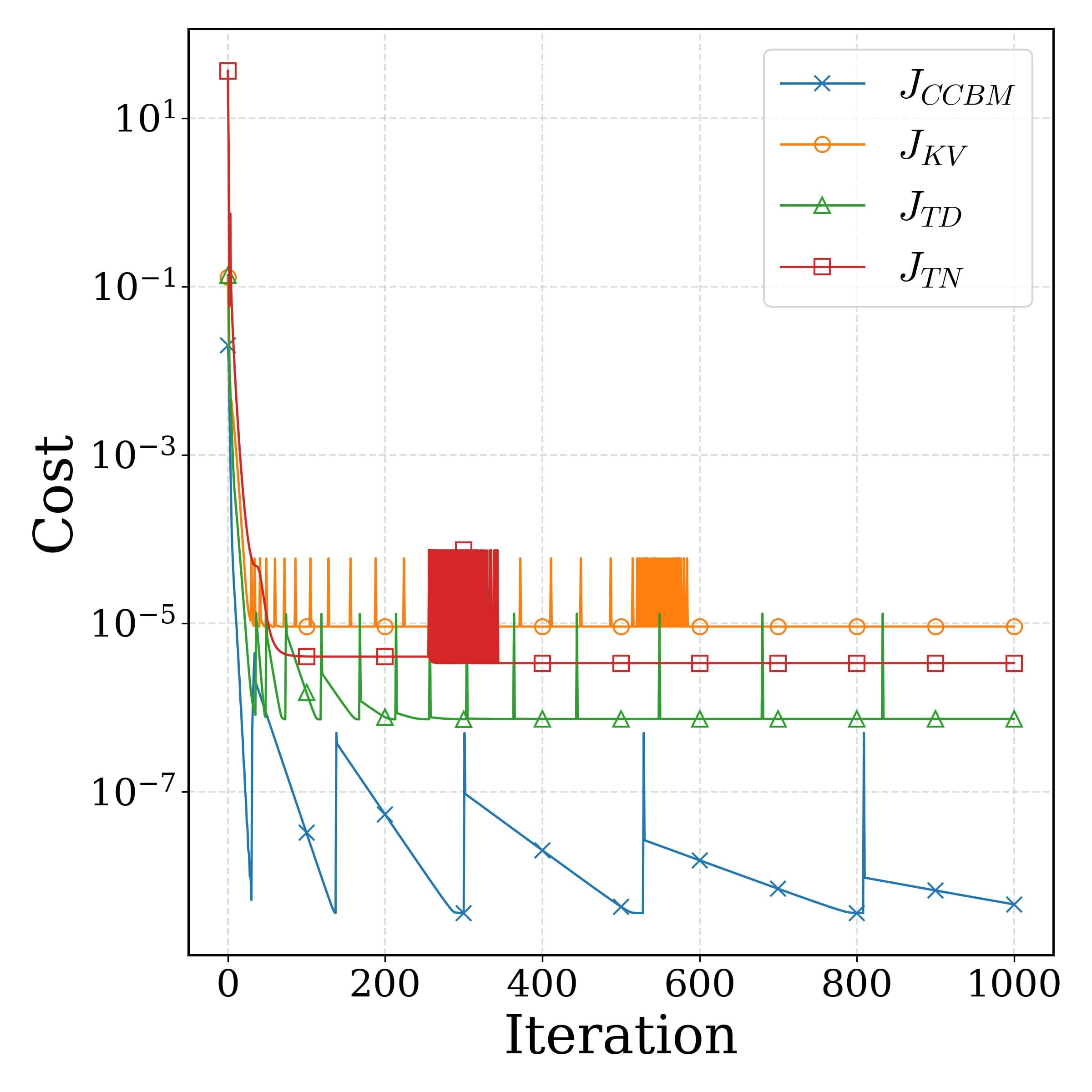}} \  
\resizebox{0.225\textwidth}{!}{\includegraphics{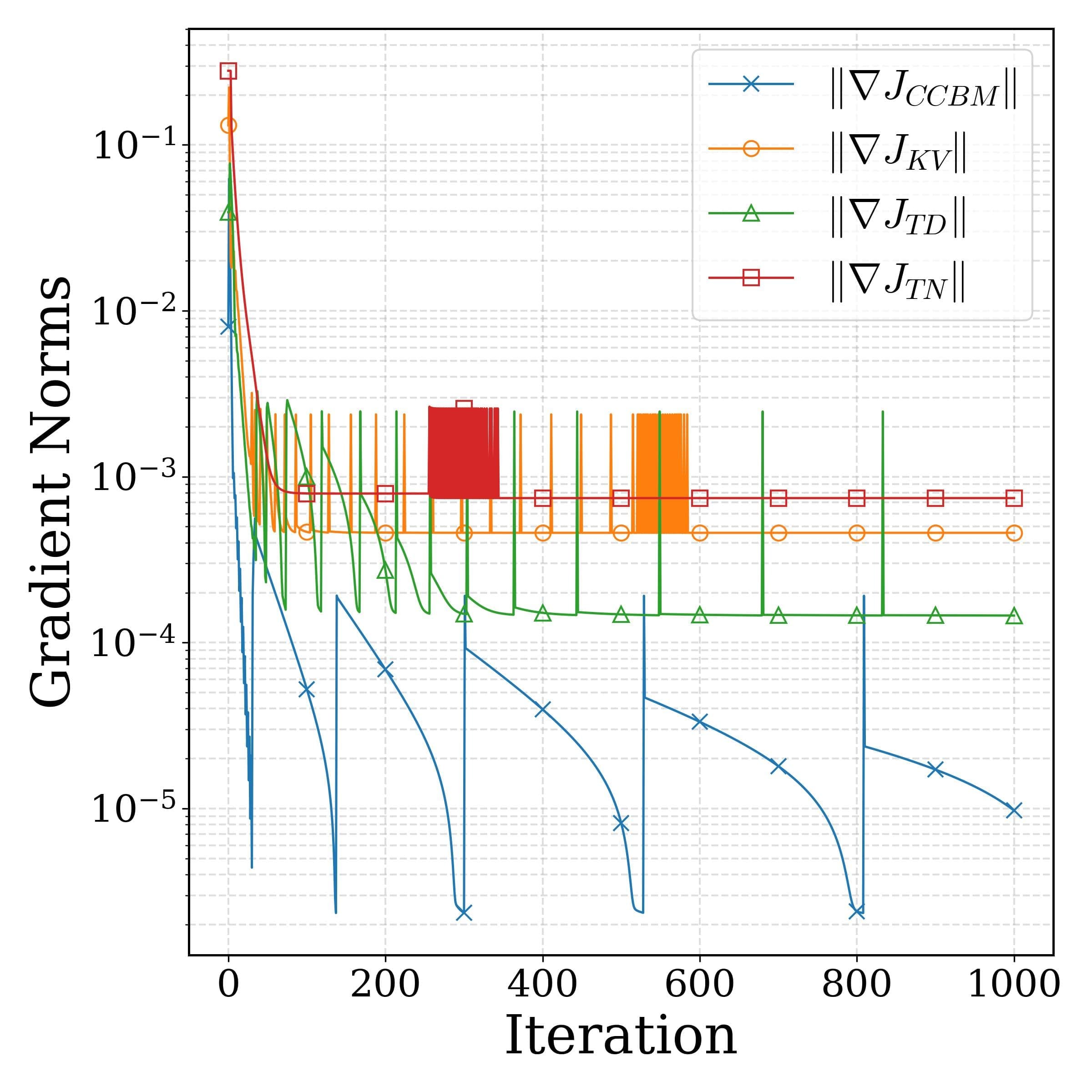}} \hfill 
\resizebox{0.225\textwidth}{!}{\includegraphics{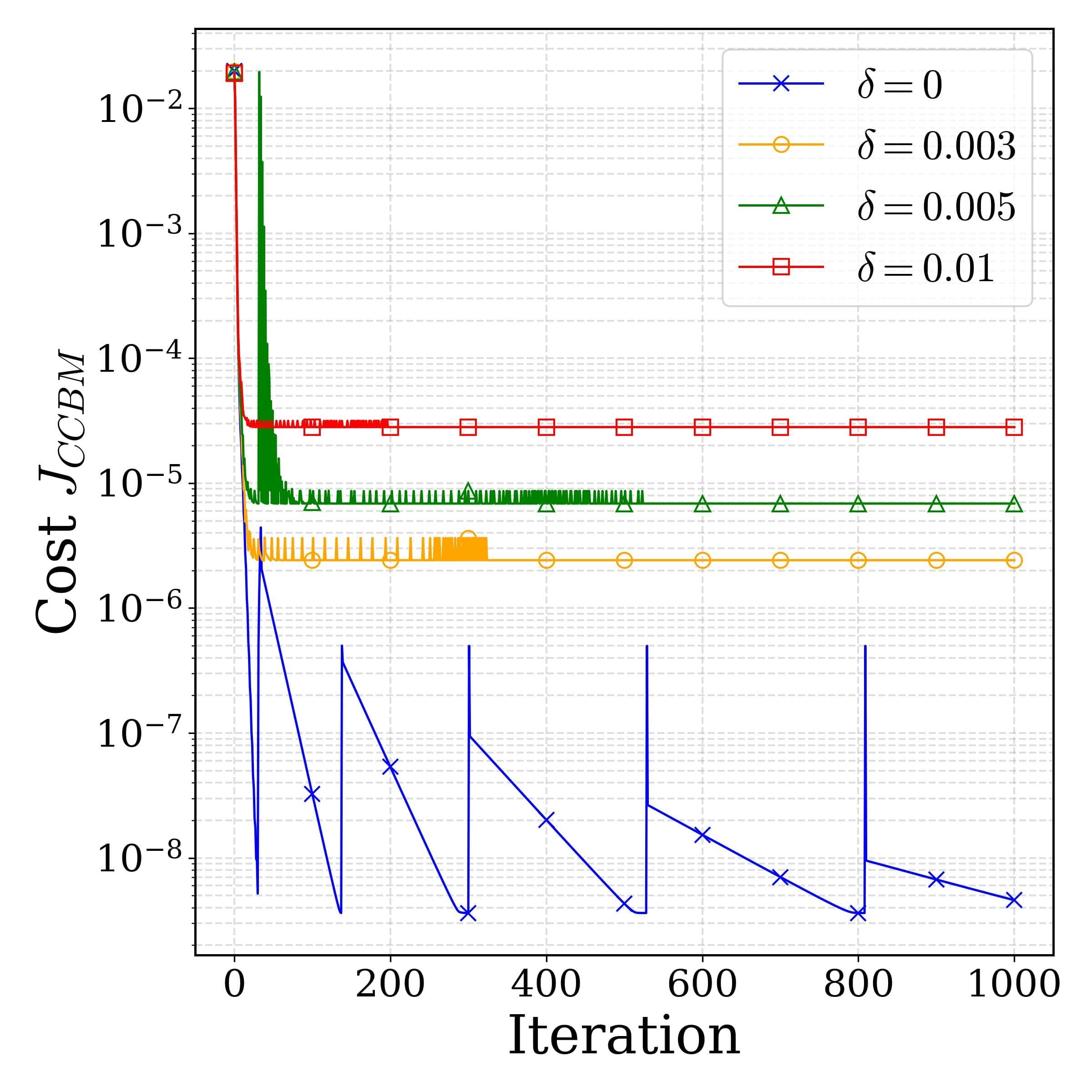}} \  
\resizebox{0.225\textwidth}{!}{\includegraphics{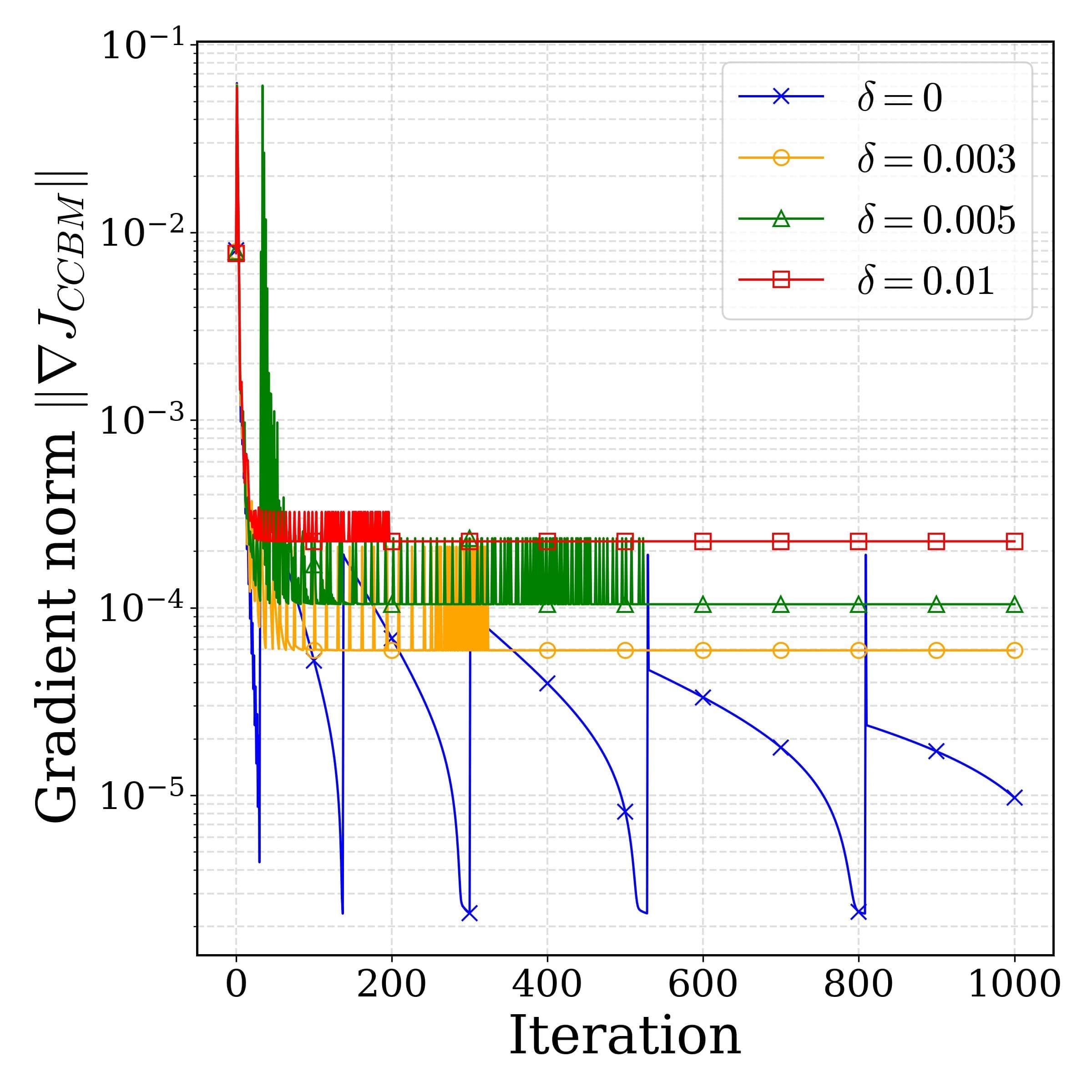}} 
\caption{Left: Histories of the cost functionals and their corresponding gradient norms for each method using exact measurements; Right: Histories of the cost functional and gradient norms for CCBM under different noise levels.}
\label{fig:two_subregions_histories_and_CCBM}
\end{figure}
\subsubsection{Three-subregions case}
For the following example, the boundary data are given by $g(x_1,x_2) = \exp\!\bigl(\sin(\pi x_1)\sin(\pi x_2)\bigr)$.
\begin{example}[Three-subregions case]
Consider the domain $\varOmega = (-1,1)^2$ partitioned into three subregions: a central circular region of radius $R$, a left exterior region, and a right exterior region:
\[
\begin{aligned}
\varOmega_C &= \{ (x_1,x_2) : x_1^2 + x_2^2 \leq R^2 \},\\
\varOmega_L &= \{ (x_1,x_2) : x_1 < 0,\ x_1^2 + x_2^2 > R^2 \},\\
\varOmega_R &= \{ (x_1,x_2) : x_1 \geq 0,\ x_1^2 + x_2^2 > R^2 \}.
\end{aligned}
\]
The diffusion coefficient is piecewise constant:
\[
\alpha(x_1,x_2) =
\begin{cases}
1.5, & (x_1,x_2) \in \varOmega_C,\\
0.75, & (x_1,x_2) \in \varOmega_L,\\
0.50, & (x_1,x_2) \in \varOmega_R.
\end{cases}
\]
The coefficient is restricted using a pick-a-point strategy: after each update, the value at a fixed point in each subregion $(0,0)$ for $\varOmega_C$, $(-0.95,0)$ for $\varOmega_L$, and $(0.95,0)$ for $\varOmega_R$ is assigned uniformly over that subregion.

\end{example}
Figures~\ref{fig:measurements_with_exponential_input_square_setup} and~\ref{fig:three_subregions_histories_of_values}, together with Table~\ref{tab:three_subregions}, summarize the numerical results for the three-subregions case. 
As the noise level $\delta$ increases, the boundary measurements become increasingly oscillatory (Figure~\ref{fig:measurements_with_exponential_input_square_setup}), as expected. 
The reconstruction histories in Figure~\ref{fig:three_subregions_histories_of_values} show that, for exact and low-noise data, the methods converge to stable, subregion-dependent coefficients, whereas higher noise levels lead to earlier stabilization at noise-dependent values and larger reconstruction errors. 
The sensitivity to noise varies across subregions, with larger errors observed in regions farther from the measurement boundary. 
These observations are consistent with the smoother diffusion case discussed in Subsection~\ref{subsec:mildly_oscillatory_diffusion}; see in particular Figure~\ref{fig:reconstructions_mildly_oscillatory_diffusion}.

Table~\ref{tab:three_subregions} shows that CCBM consistently yields the smallest or near-smallest average errors across a wide range of noise levels, providing reliable reconstructions in all subregions. 
TD performs competitively for low to moderate noise levels but exhibits increased bias as the noise level grows. 
KV shows good performance only for very low noise and deteriorates rapidly at moderate noise levels, particularly in the central subregion. 
The TN method is the least reliable, displaying strong sensitivity to noise and producing unstable or nonphysical estimates at moderate to high noise levels. 
Overall, among the considered methods, CCBM offers the most robust and consistent performance for three-subregion coefficient identification.

%
\begin{figure}[htp!]
\centering
\resizebox{0.225\textwidth}{!}{\includegraphics{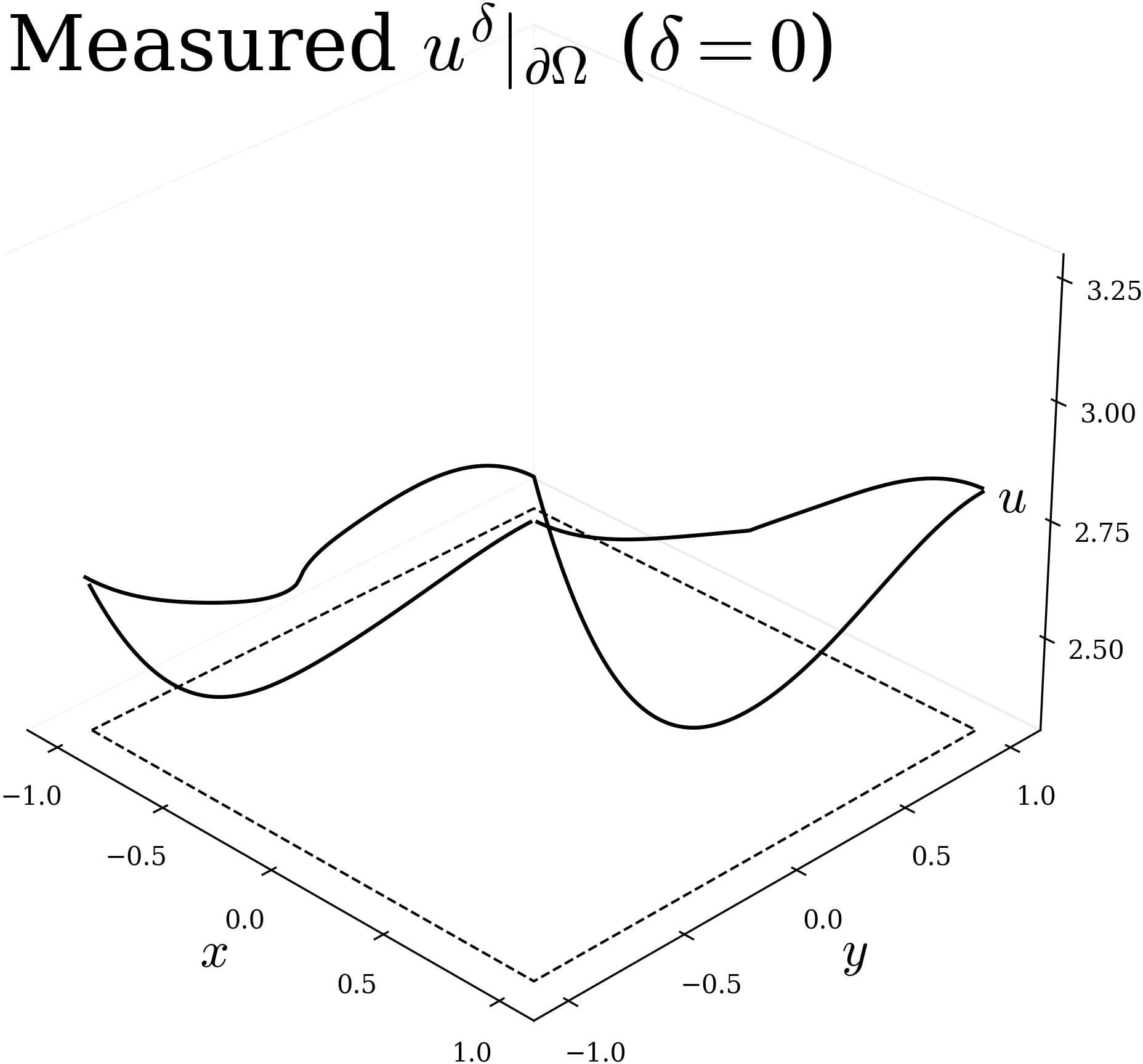}} \ 
\resizebox{0.225\textwidth}{!}{\includegraphics{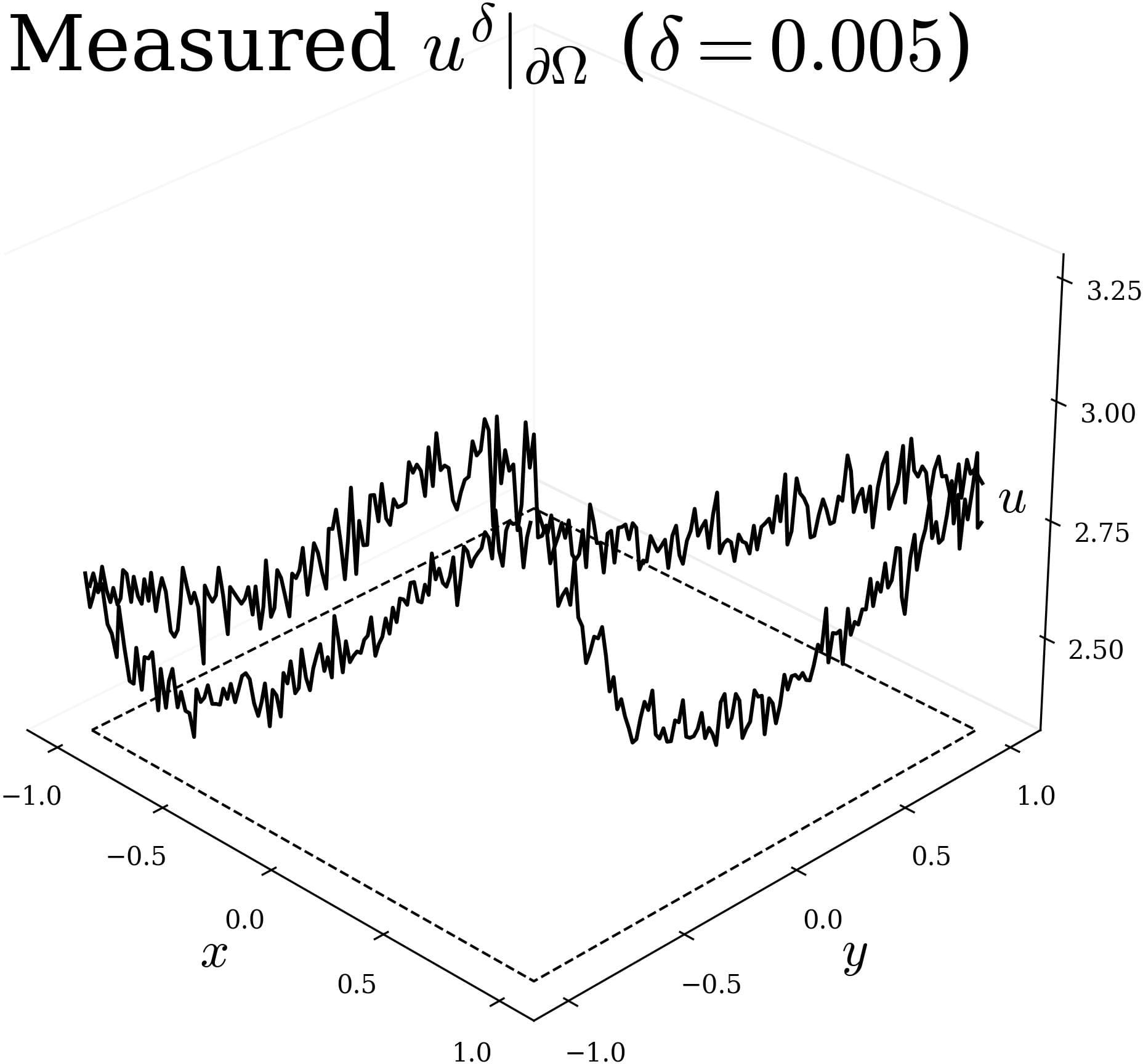}} \ 
\resizebox{0.225\textwidth}{!}{\includegraphics{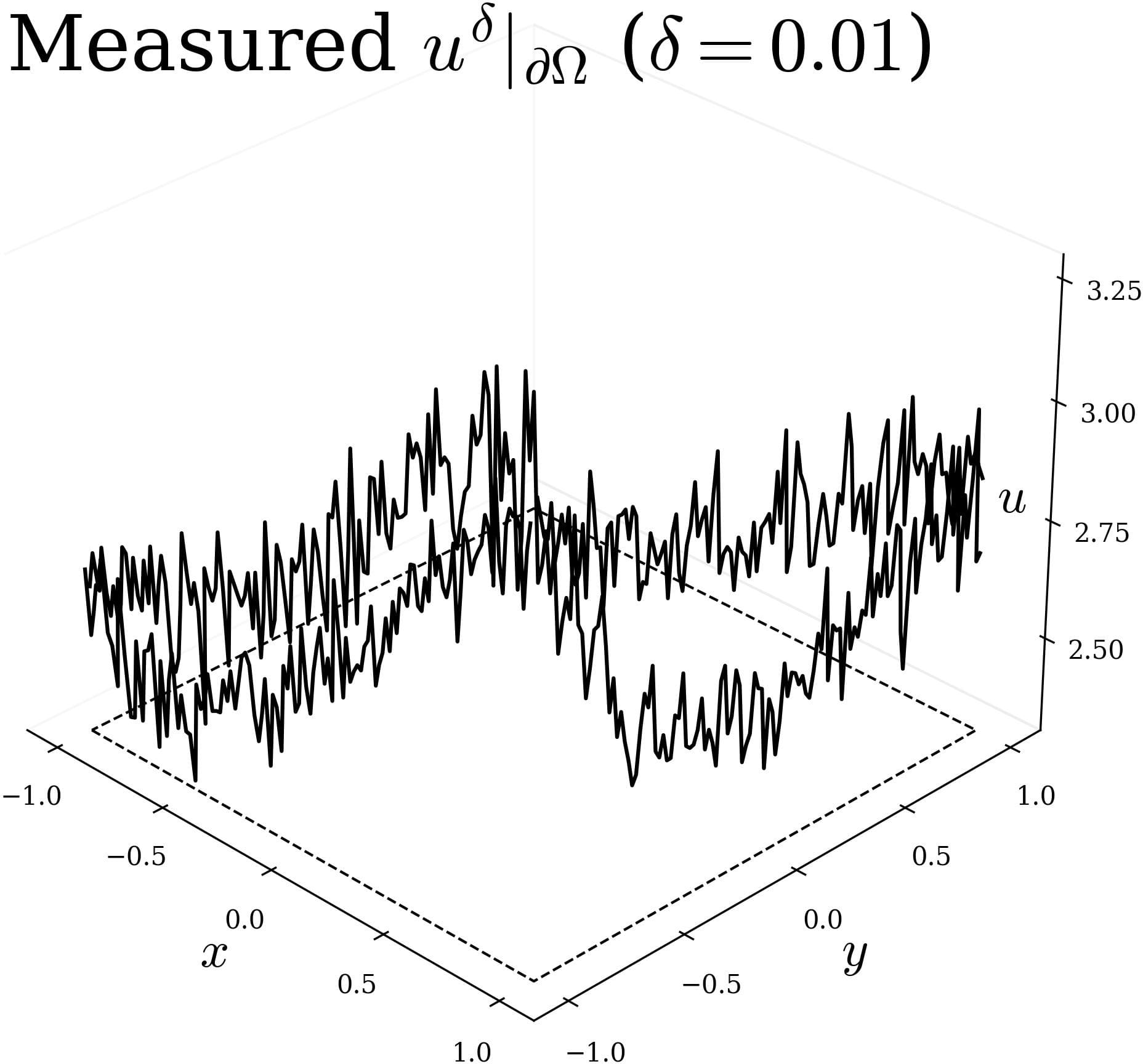}} \ 
\resizebox{0.225\textwidth}{!}{\includegraphics{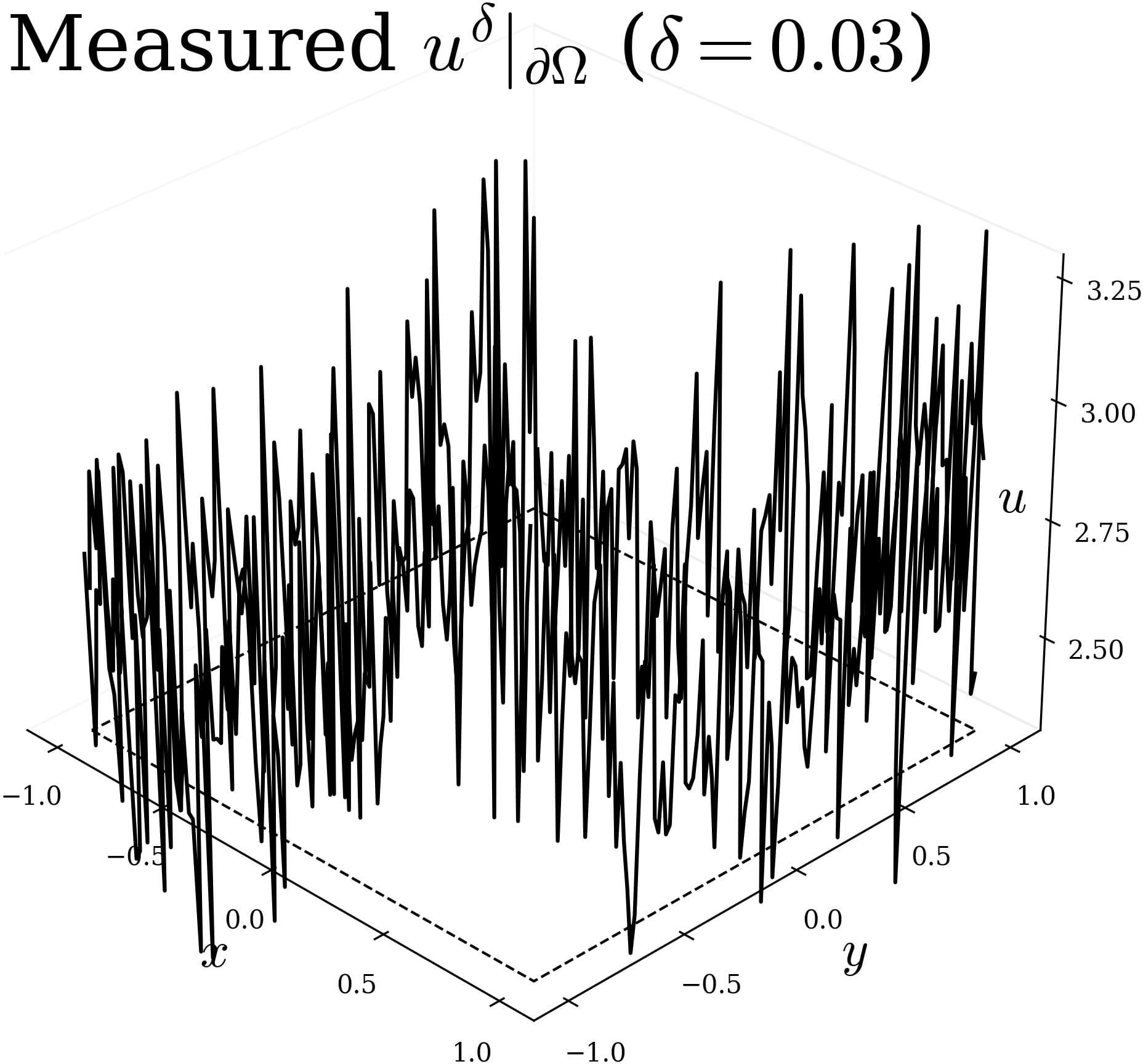}} 
\caption{Boundary measurements at different noise levels $\delta$ with input data $g=\operatorname{exp}(\sin(\pi x)\sin(\pi y))$.}
\label{fig:measurements_with_exponential_input_square_setup}
\end{figure}

\begin{figure}[htp!]
\resizebox{0.225\textwidth}{!}{\includegraphics{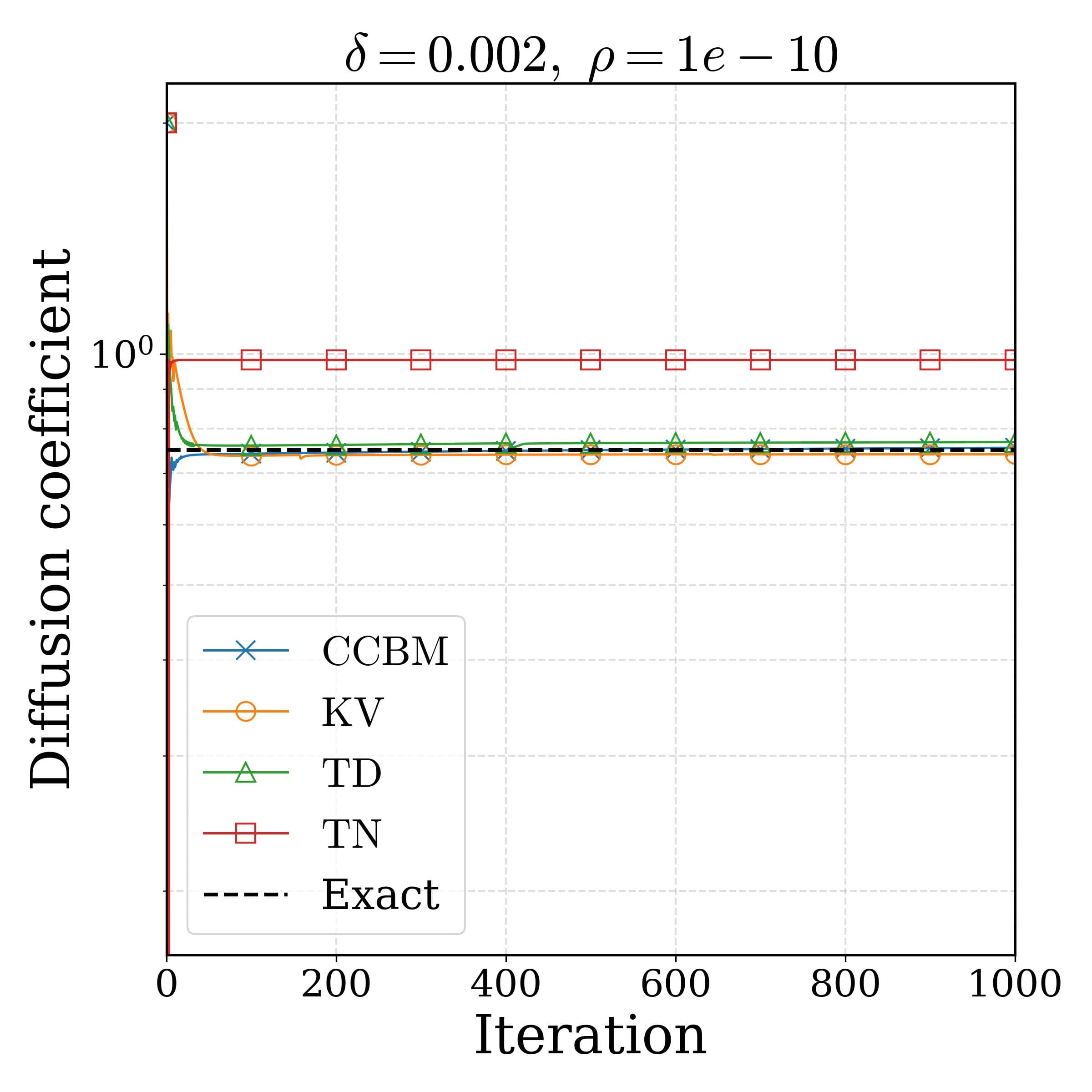}} \  
\resizebox{0.225\textwidth}{!}{\includegraphics{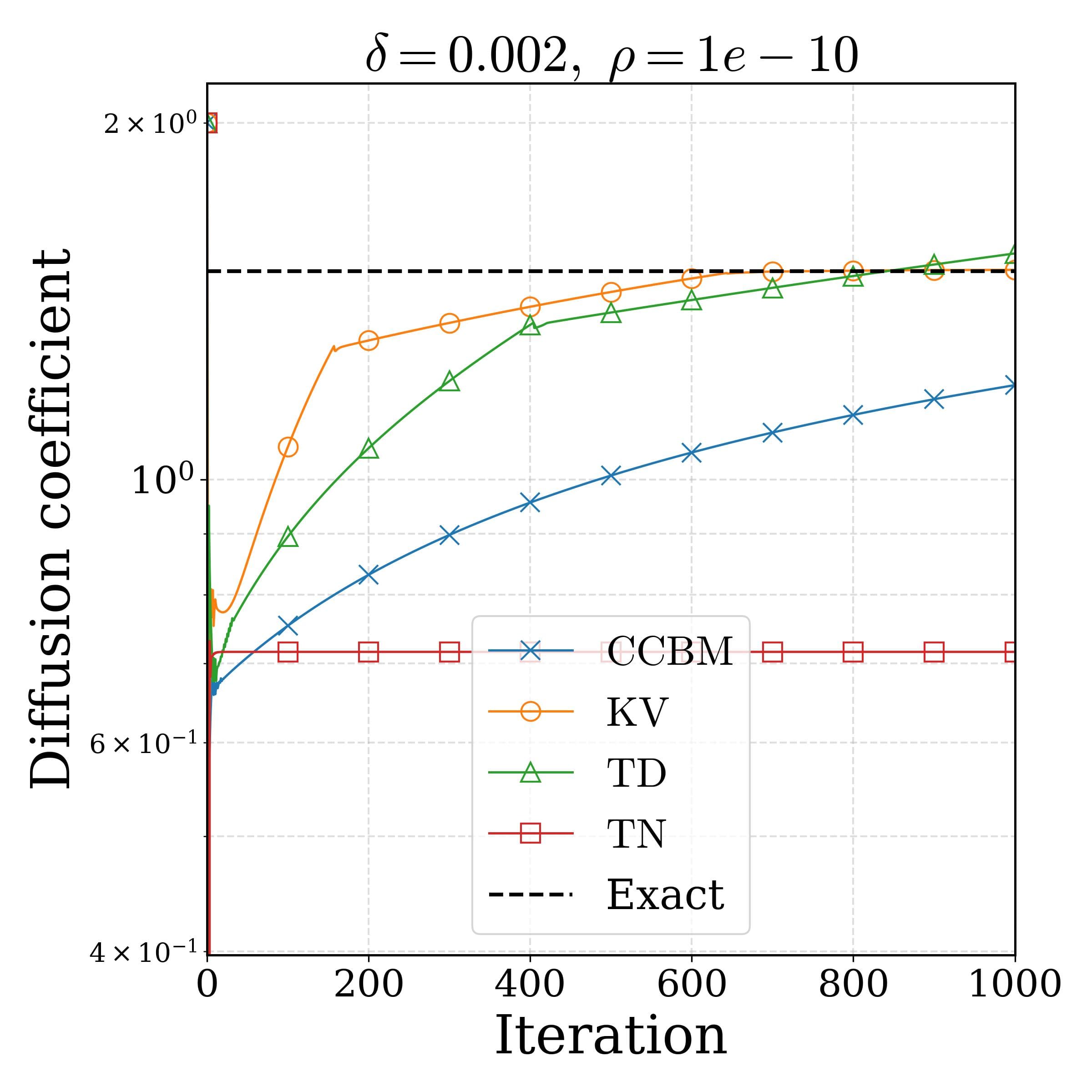}} \  
\resizebox{0.225\textwidth}{!}{\includegraphics{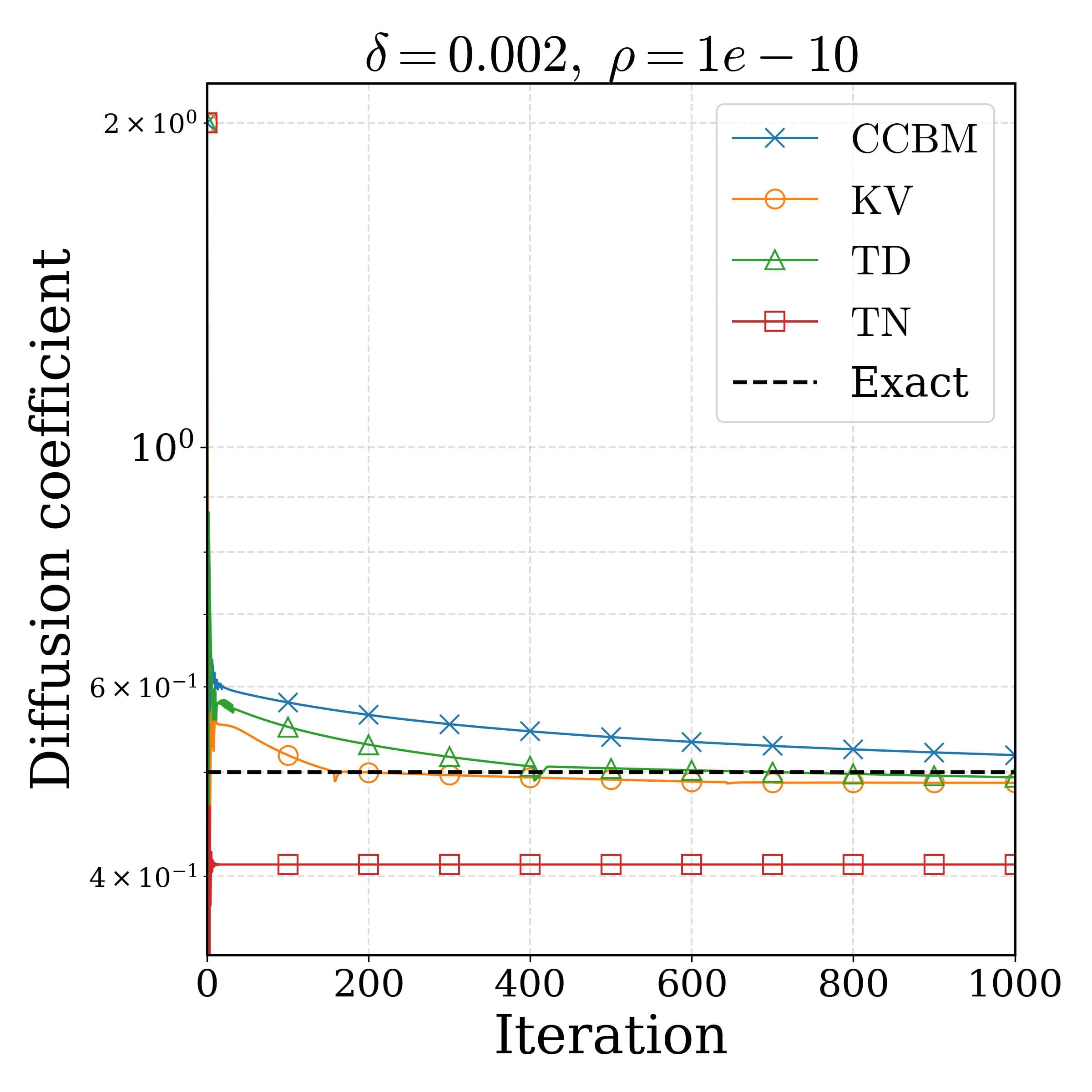}} \  
\resizebox{0.225\textwidth}{!}{\includegraphics{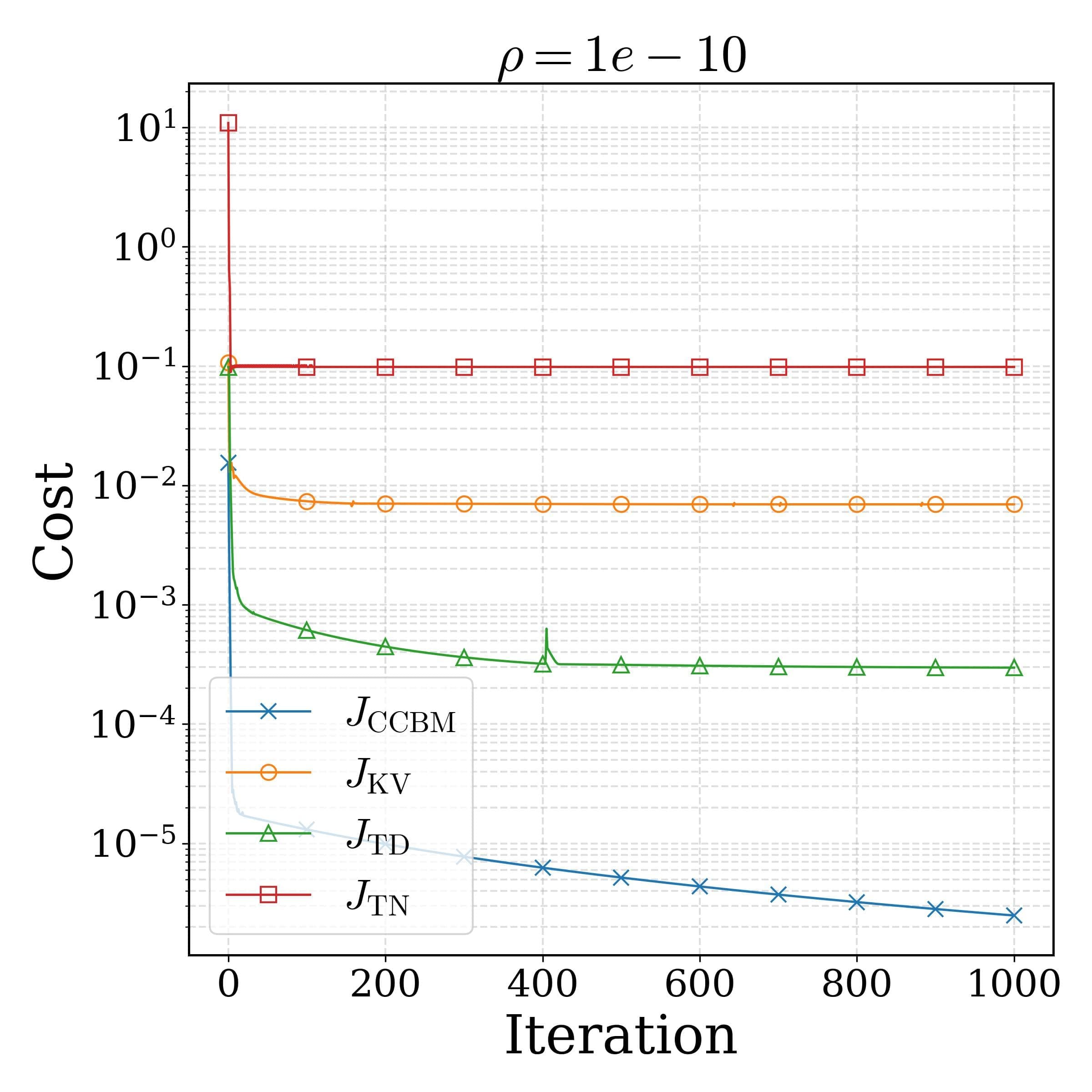}} \\[1em]
\resizebox{0.225\textwidth}{!}{\includegraphics{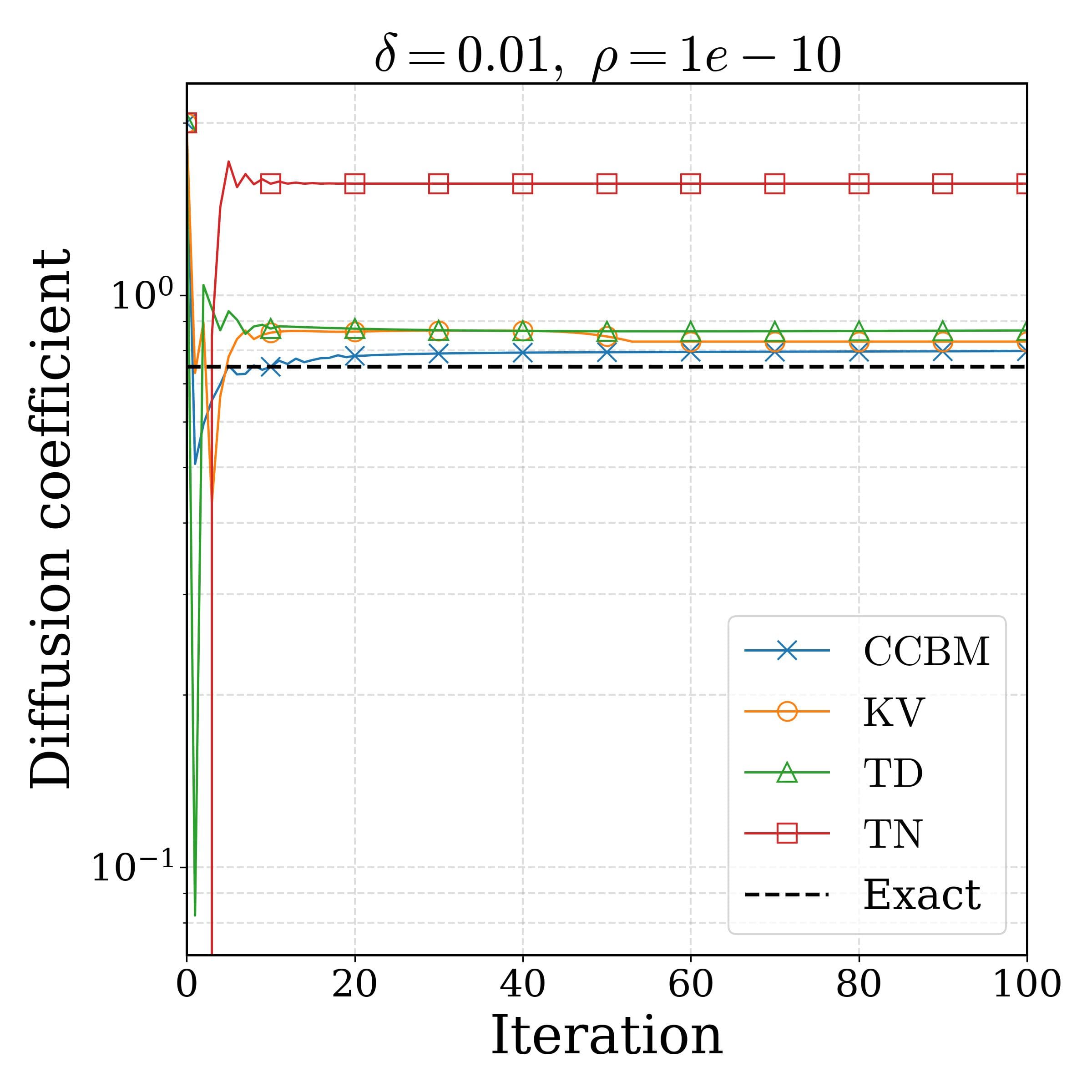}} \  
\resizebox{0.225\textwidth}{!}{\includegraphics{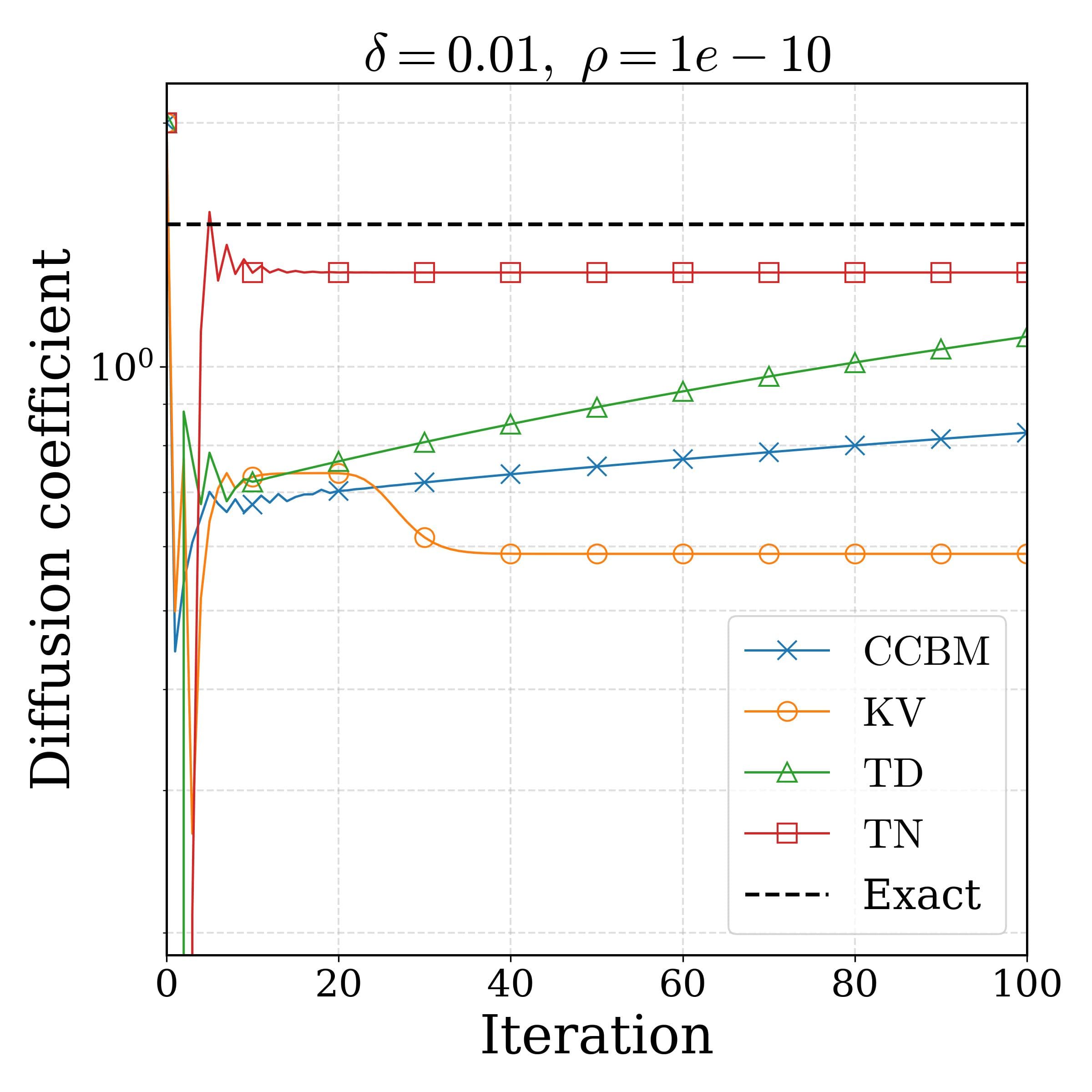}} \  
\resizebox{0.225\textwidth}{!}{\includegraphics{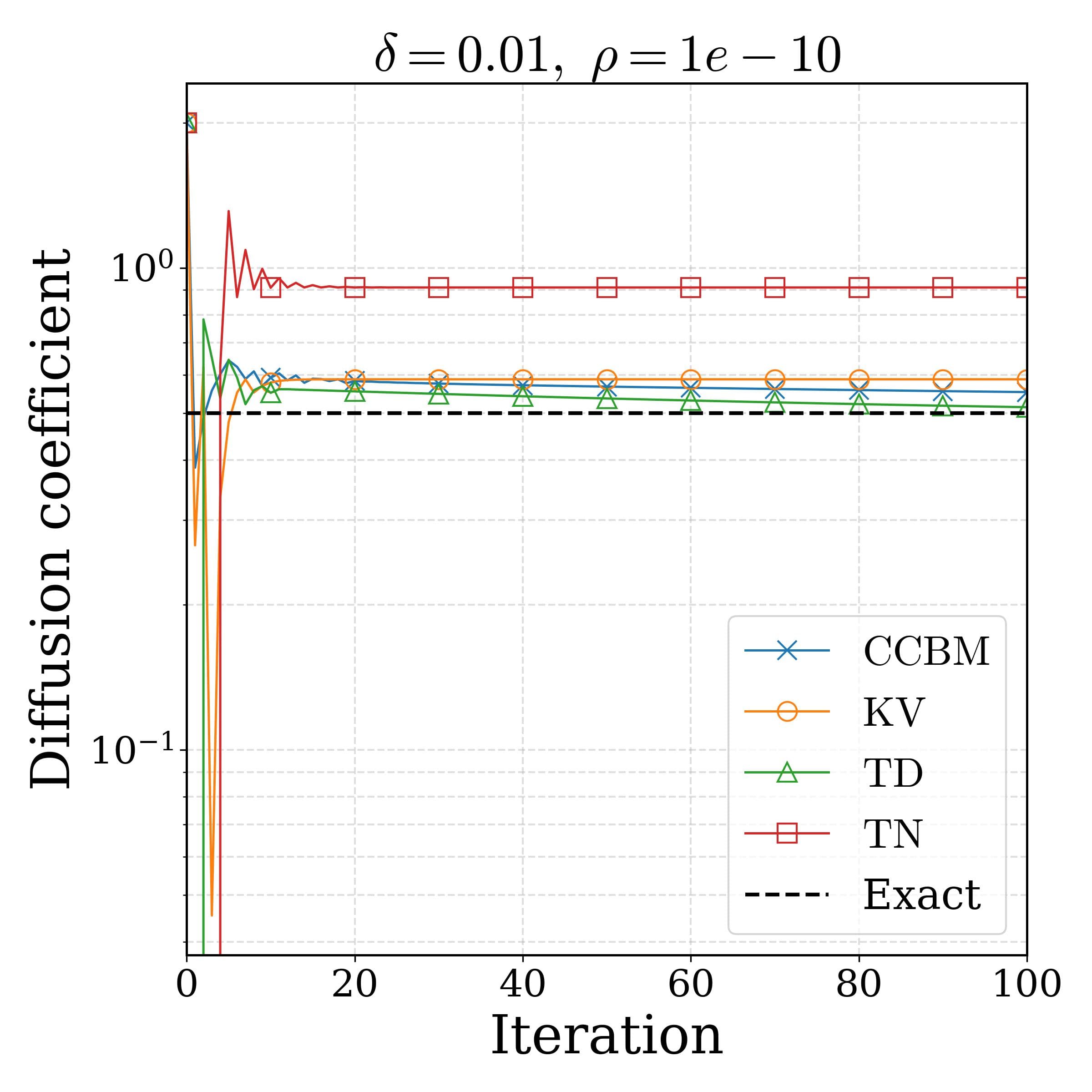}} \  
\resizebox{0.225\textwidth}{!}{\includegraphics{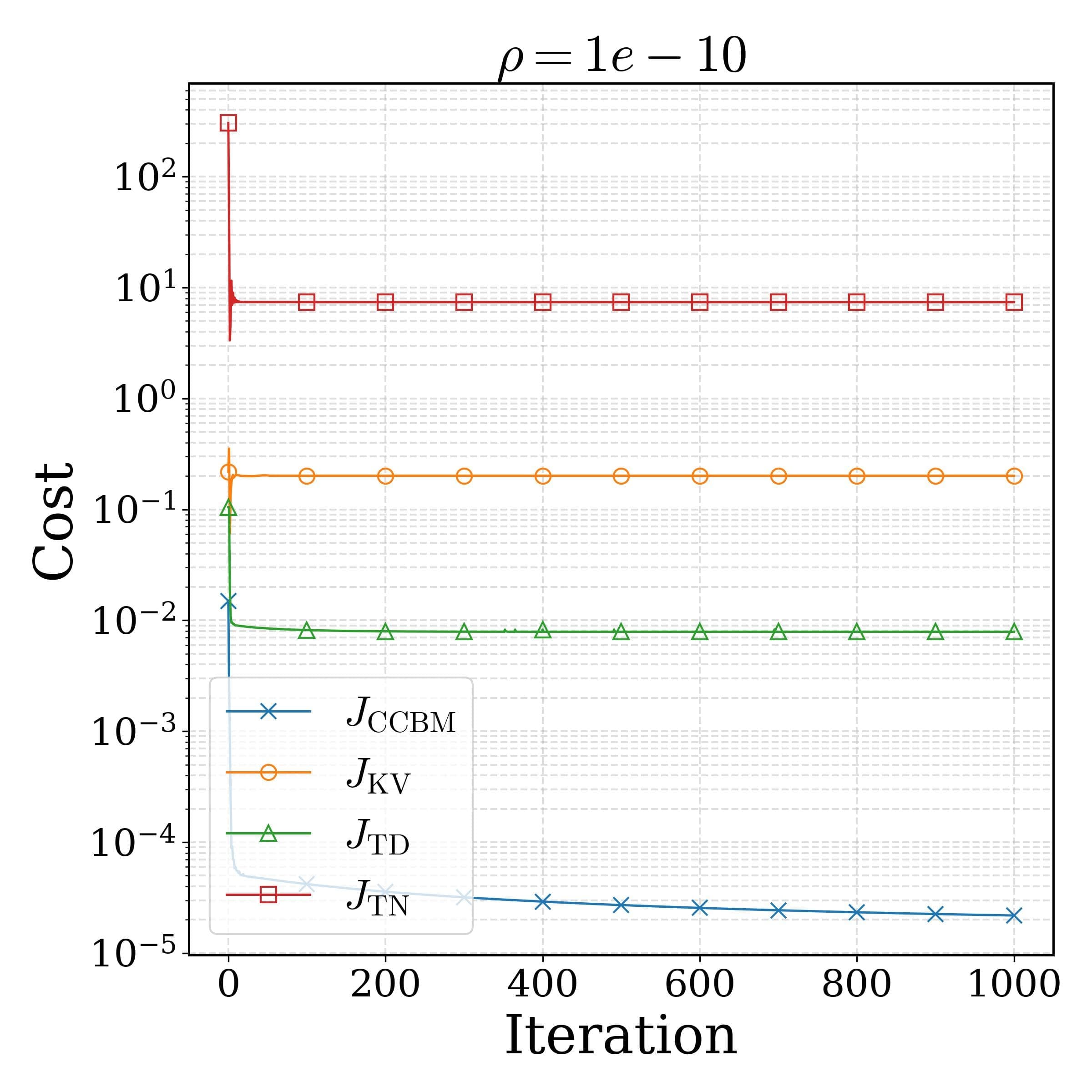}}
\caption{Reconstruction histories and cost histories}
\label{fig:three_subregions_histories_of_values}
\end{figure}

\begin{table}[htbp]
\centering
\resizebox{\textwidth}{!}{%
\begin{tabular}{c l c c c c}
\toprule
Noise & Parameter & CCBM & KV & TD & TN \\\midrule
\multirow{4}{*}{0}
 & $\alpha_{\varOmega_L}$ & 0.740086 (0.009914) & 0.746712 (0.003288) & 0.744950 (0.005050) & 0.804400 (0.054400) \\
 & $\alpha_{\varOmega_C}$ & 1.128213 (0.371787) & 1.432428 (0.067572) & 1.253930 (0.246070) & 0.454844 (1.045156) \\
 & $\alpha_{\varOmega_R}$ & 0.528290 (0.028290) & 0.503495 (0.003495) & 0.516044 (0.016044) & 0.407875 (0.092125) \\
\rowcolor{grey1} & Avg. Err. & 0.136664 & \cellcolor{yellow1}0.024785 & 0.089055 & 0.397227\\\hline
\multirow{4}{*}{0.001}
 & $\alpha_{\varOmega_L}$ & 0.747266 (0.002734) & 0.751368 (0.001368) & 0.755269 (0.005269) & -0.568317 (1.318317) \\
 & $\alpha_{\varOmega_C}$ & 1.156297 (0.343703) & 1.499315 (0.000685) & 1.334599 (0.165401) & -1.177029 (2.677029) \\
 & $\alpha_{\varOmega_R}$ & 0.524042 (0.024042) & 0.496646 (0.003354) & 0.508652 (0.008652) & -1.421624 (1.921624) \\
\rowcolor{grey1} & Avg. Err. & 0.123493 & \cellcolor{yellow1}0.001802 & 0.059774 & 1.972323\\\hline
\multirow{4}{*}{0.002}
 & $\alpha_{\varOmega_L}$ & 0.755081 (0.005081) & 0.740432 (0.009568) & 0.768172 (0.018172) & 0.982248 (0.232248) \\
 & $\alpha_{\varOmega_C}$ & 1.201932 (0.298068) & 1.503213 (0.003213) & 1.552304 (0.052304) & 0.715645 (0.784355) \\
 & $\alpha_{\varOmega_R}$ & 0.518350 (0.018350) & 0.488640 (0.011360) & 0.494292 (0.005708) & 0.410353 (0.089647) \\
\rowcolor{grey1} & Avg. Err. & 0.107166 & \cellcolor{yellow1}0.008047 & 0.025395 & 0.368750\\\hline
\multirow{4}{*}{0.0025}
 & $\alpha_{\varOmega_L}$ & 0.759160 (0.009160) & 0.862642 (0.112642) & 0.774324 (0.024324) & 1.034820 (0.284820) \\
 & $\alpha_{\varOmega_C}$ & 1.228561 (0.271439) & 0.566966 (0.933034) & 1.614653 (0.114653) & 0.756538 (0.743462) \\
 & $\alpha_{\varOmega_R}$ & 0.515296 (0.015296) & 0.566966 (0.066966) & 0.490531 (0.009469) & 0.409096 (0.090904) \\
\rowcolor{grey1} & Avg. Err. & 0.098632 & 0.370881 & \cellcolor{yellow1}0.049482 & 0.373062\\\hline
\multirow{4}{*}{0.005}
 & $\alpha_{\varOmega_L}$ & 0.780975 (0.030975) & 0.948906 (0.198906) & 0.802153 (0.052153) & 1.673080 (0.923080) \\
 & $\alpha_{\varOmega_C}$ & 1.387665 (0.112335) & 0.546452 (0.953548) & 1.520016 (0.020016) & 1.348167 (0.151833) \\
 & $\alpha_{\varOmega_R}$ & 0.499592 (0.000408) & 0.546452 (0.046452) & 0.491849 (0.008151) & 0.436586 (0.063414) \\
\rowcolor{grey1} & Avg. Err. & 0.047906 & 0.399635 & \cellcolor{yellow1}0.026773 & 0.379442\\\hline
\multirow{4}{*}{0.01}
 & $\alpha_{\varOmega_L}$ & 0.829310 (0.079310) & 0.829252 (0.079252) & 0.891907 (0.141907) & 1.566150 (0.816150) \\
 & $\alpha_{\varOmega_C}$ & 1.726544 (0.226544) & 0.587548 (0.912452) & 1.946164 (0.446164) & 1.307170 (0.192830) \\
 & $\alpha_{\varOmega_R}$ & 0.473853 (0.026147) & 0.587548 (0.087548) & 0.468012 (0.031988) & 0.911091 (0.411091) \\
\rowcolor{grey1} & Avg. Err. & \cellcolor{yellow1}0.110667 & 0.359751 & 0.206686 & 0.473357\\\hline
\multirow{4}{*}{0.03}
 & $\alpha_{\varOmega_L}$ & 0.949163 (0.199163) & 0.234017 (0.515983) & 0.963771 (0.213771) & 2.000000 (1.250000) \\
 & $\alpha_{\varOmega_C}$ & 0.809424 (0.690576) & 0.075030 (1.424970) & 0.538981 (0.961019) & 2.000000 (0.500000) \\
 & $\alpha_{\varOmega_R}$ & 0.532718 (0.032718) & -0.111561 (0.611561) & 0.538980 (0.038980) & 2.000000 (1.500000) \\
\rowcolor{grey1} & Avg. Err. & \cellcolor{yellow1}0.307486 & 0.850838 & 0.404590 & 1.083333\\\hline
\multirow{4}{*}{0.05}
 & $\alpha_{\varOmega_L}$ & 1.108570 (0.358570) & 0.268648 (0.481352) & 1.063890 (0.313890) & 27.772400 (27.022400) \\
 & $\alpha_{\varOmega_C}$ & 0.887906 (0.612094) & 0.114273 (1.385727) & 0.537079 (0.962921) & 23.340143 (21.840143) \\
 & $\alpha_{\varOmega_R}$ & 0.511133 (0.011133) & -0.061770 (0.561770) & 0.537079 (0.037079) & -0.096416 (0.596416) \\
\rowcolor{grey1} & Avg. Err. & \cellcolor{yellow1}0.327266 & 0.809616 & 0.437963 & 16.486320\\\hline
\bottomrule
\end{tabular}%
}
\caption{Reconstructed values with absolute errors, and average absolute errors per method and noise level. Exact parameter values are denoted as $\alpha_{\varOmega_L} = 0.75$, $\alpha_{\varOmega_C} = 1.5$, and $\alpha_{\varOmega_R} = 0.5$.}
\label{tab:three_subregions}
\end{table}

\subsubsection{Four-subregions case}
For the following example, the boundary data are given by $g(x_1,x_2) = \exp\!\bigl(\sin(\pi x_1)\sin(\pi x_2)\bigr)$.

\begin{example}[Four-subregions case]
Consider the domain $\varOmega = (-1,1)^2$ partitioned into four quadrants:
\[
\begin{aligned}
\varOmega_{Q1} &= \{ (x_1,x_2) : x_1 \ge 0,\ x_2 \ge 0 \},\\
\varOmega_{Q2} &= \{ (x_1,x_2) : x_1 < 0,\ x_2 \ge 0 \},\\
\varOmega_{Q3} &= \{ (x_1,x_2) : x_1 < 0,\ x_2 < 0 \},\\
\varOmega_{Q4} &= \{ (x_1,x_2) : x_1 \ge 0,\ x_2 < 0 \}.
\end{aligned}
\]
The diffusion coefficient is piecewise constant,
\[
\alpha|_{\varOmega_{Q1}}=0.25,\quad
\alpha|_{\varOmega_{Q2}}=0.50,\quad
\alpha|_{\varOmega_{Q3}}=0.75,\quad
\alpha|_{\varOmega_{Q4}}=1.00.
\]
A pick-a-point restriction is applied: after each update, $\alpha$ is set uniformly on each quadrant using the values at
$(\xi,\xi)$,
$(-\xi,\xi)$,
$(-\xi,-\xi)$,
and $(\xi,-\xi)$, respectively, where $\xi \in (0,1)$.
\end{example}

Tables~\ref{tab:four_quadrants_complete_xi0.9}--\ref{tab:four_quadrants_complete_xi0.5} summarize the reconstructed diffusion coefficients for the four-subregions example across different noise levels $\delta$ and pick-a-point parameters $\xi$, with relative errors reported in parentheses. The average relative error over the four quadrants is shown for each method, and the smallest average error is highlighted.

Overall, CCBM provides the most stable and accurate reconstructions, with average relative errors typically below $0.2$ for moderate noise. TD performs moderately well for intermediate noise ($\delta \approx 0.002$--$0.006$), KV shows mixed results, and TN is generally unreliable, exhibiting large deviations under higher noise. Variations in the pick-a-point parameter $\xi$ have only minor impact on reconstruction quality, and the relative ranking of methods remains consistent.

Figure~\ref{fig:four_subregions} shows the iteration histories of the reconstructed diffusion coefficients for $\delta = 0.01$, with dashed lines denoting the exact values. The cost functional and gradient norm histories (Figure~\ref{fig:four_subregions_cost_and_gradient_norms}) reveal rapid decreases during the first 100 iterations, followed by slower convergence over 1000 iterations. The zoomed-in plots highlight early-stage behavior, confirming smooth and stable progress of the optimization.

\begin{table}[htp!]
\centering
\resizebox{!}{0.45\textheight}{%
\begin{tabular}{c l c c c c}
\toprule
$\delta$ & Region & CCBM & KV & TD & TN \\
\midrule
\multirow{5}{*}{0.001}
& $\varOmega_{Q1}$ & 0.2046 (0.1817) & 0.1652 (0.3393) & 0.0876 (0.6495) & 0.1413 (0.4347) \\
& $\varOmega_{Q2}$ & 0.2459 (0.5082) & 0.2068 (0.5864) & 0.3304 (0.3392) & 0.2501 (0.4998) \\
& $\varOmega_{Q3}$ & 0.7180 (0.0426) & 0.6948 (0.0736) & 0.6904 (0.0795) & 0.2712 (0.6385) \\
& $\varOmega_{Q4}$ & 1.0057 (0.0057) & 0.9920 (0.0080) & 0.9643 (0.0357) & 0.4476 (0.5524) \\
\rowcolor{grey1}
& Avg. Err. & \cellcolor{yellow1}0.1846 & 0.2518 & 0.2760 & 0.5314 \\
\midrule
\multirow{5}{*}{0.002}
& $\varOmega_{Q1}$ & 0.2016 (0.1937) & 0.1661 (0.3357) & 0.1815 (0.2741) & 13.9312 (54.7250) \\
& $\varOmega_{Q2}$ & 0.2969 (0.4062) & 0.1996 (0.6009) & 0.2135 (0.5730) & 12.6856 (24.3712) \\
& $\varOmega_{Q3}$ & 0.7272 (0.0304) & 0.6672 (0.1104) & 0.7099 (0.0534) & 0.8175 (0.0900) \\
& $\varOmega_{Q4}$ & 0.9991 (0.0009) & 0.9399 (0.0601) & 0.9965 (0.0035) & 0.8588 (0.1412) \\
\rowcolor{grey1}
& Avg. Err. & \cellcolor{yellow1}0.1578 & 0.2768 & 0.2260 & 19.8319 \\
\midrule
\multirow{5}{*}{0.003}
& $\varOmega_{Q1}$ & 0.2031 (0.1876) & 0.0642 (0.7433) & 0.1821 (0.2715) & 41.6961 (165.7843) \\
& $\varOmega_{Q2}$ & 0.2788 (0.4423) & 0.2053 (0.5894) & 0.2130 (0.5740) & 35.8780 (70.7559) \\
& $\varOmega_{Q3}$ & 0.7300 (0.0267) & 0.6307 (0.1591) & 0.7129 (0.0494) & 0.9643 (0.2858) \\
& $\varOmega_{Q4}$ & 0.9979 (0.0021) & 0.8752 (0.1248) & 0.9992 (0.0008) & 0.7625 (0.2375) \\
\rowcolor{grey1}
& Avg. Err. & \cellcolor{yellow1}0.1647 & 0.4042 & 0.2239 & 59.2659 \\
\midrule
\multirow{5}{*}{0.004}
& $\varOmega_{Q1}$ & 0.2077 (0.1694) & 0.1490 (0.4041) & 0.1821 (0.2717) & 58.9242 (234.6967) \\
& $\varOmega_{Q2}$ & 0.3773 (0.2453) & 0.2066 (0.5868) & 0.2115 (0.5771) & 49.2954 (97.5909) \\
& $\varOmega_{Q3}$ & 0.7460 (0.0053) & 0.7186 (0.0418) & 0.7194 (0.0408) & 1.1755 (0.5674) \\
& $\varOmega_{Q4}$ & 0.9904 (0.0096) & 0.9708 (0.0292) & 0.9815 (0.0185) & 0.6848 (0.3152) \\
\rowcolor{grey1}
& Avg. Err. & \cellcolor{yellow1}0.1074 & 0.2655 & 0.2270 & 83.2926 \\
\midrule
\multirow{5}{*}{0.005}
& $\varOmega_{Q1}$ & 0.2051 (0.1797) & 0.1684 (0.3265) & 0.1818 (0.2728) & 0.3386 (0.3544) \\
& $\varOmega_{Q2}$ & 0.2723 (0.4553) & 0.1857 (0.6285) & 0.2158 (0.5685) & 0.5171 (0.0343) \\
& $\varOmega_{Q3}$ & 0.7386 (0.0152) & 0.6151 (0.1799) & 0.7165 (0.0447) & 1.7723 (1.3631) \\
& $\varOmega_{Q4}$ & 0.9933 (0.0067) & 0.7953 (0.2047) & 0.9874 (0.0126) & 0.5391 (0.4609) \\
\rowcolor{grey1}
& Avg. Err. & \cellcolor{yellow1}0.1642 & 0.3349 & 0.2247 & 0.5532 \\
\midrule
\multirow{5}{*}{0.006}
& $\varOmega_{Q1}$ & 0.2154 (0.1385) & 0.1656 (0.3376) & 0.1822 (0.2711) & 0.2127 (0.1491) \\
& $\varOmega_{Q2}$ & 0.3439 (0.3122) & 0.2011 (0.5977) & 0.2125 (0.5749) & 0.4402 (0.1196) \\
& $\varOmega_{Q3}$ & 0.7534 (0.0046) & 0.7119 (0.0508) & 0.7296 (0.0272) & 2.0657 (1.7543) \\
& $\varOmega_{Q4}$ & 0.9881 (0.0119) & 0.9169 (0.0831) & 0.9706 (0.0294) & 0.4898 (0.5102) \\
\rowcolor{grey1}
& Avg. Err. & \cellcolor{yellow1}0.1168 & 0.2673 & 0.2257 & 0.6333 \\
\midrule
\multirow{5}{*}{0.007}
& $\varOmega_{Q1}$ & 0.2058 (0.1767) & 0.0989 (0.6044) & 0.1833 (0.2667) & 0.2891 (0.1564) \\
& $\varOmega_{Q2}$ & 0.2586 (0.4829) & 0.1952 (0.6095) & 0.2125 (0.5750) & 0.7830 (0.5661) \\
& $\varOmega_{Q3}$ & 0.7463 (0.0050) & 0.6837 (0.0884) & 0.7383 (0.0156) & 2.5762 (2.4350) \\
& $\varOmega_{Q4}$ & 0.9890 (0.0110) & 0.7986 (0.2014) & 0.9751 (0.0249) & 0.5731 (0.4269) \\
\rowcolor{grey1}
& Avg. Err. & \cellcolor{yellow1}0.1689 & 0.3759 & 0.2206 & 0.8961 \\
\midrule
\multirow{5}{*}{0.008}
& $\varOmega_{Q1}$ & 0.2899 (0.1597) & 0.1650 (0.3401) & 0.1826 (0.2695) & 1.3825 (4.5301) \\
& $\varOmega_{Q2}$ & 0.5128 (0.0257) & 0.1736 (0.6529) & 0.2129 (0.5743) & 0.5350 (0.0701) \\
& $\varOmega_{Q3}$ & 0.7964 (0.0618) & 0.5495 (0.2673) & 0.7458 (0.0057) & 2.5601 (2.4134) \\
& $\varOmega_{Q4}$ & 0.9826 (0.0174) & 0.6686 (0.3314) & 0.9719 (0.0281) & 0.3958 (0.6042) \\
\rowcolor{grey1}
& Avg. Err. & \cellcolor{yellow1}0.0662 & 0.3979 & 0.2194 & 1.9045 \\
\midrule
\multirow{5}{*}{0.009}
& $\varOmega_{Q1}$ & 0.2000 (0.1998) & 0.1697 (0.3210) & 0.1823 (0.2706) & 0.0908 (0.6370) \\
& $\varOmega_{Q2}$ & 0.2214 (0.5572) & 0.1840 (0.6321) & 0.2203 (0.5595) & 0.3418 (0.3164) \\
& $\varOmega_{Q3}$ & 0.7534 (0.0046) & 0.6181 (0.1758) & 0.7391 (0.0145) & 1.8331 (1.4442) \\
& $\varOmega_{Q4}$ & 0.9820 (0.0180) & 0.7430 (0.2570) & 0.9880 (0.0120) & 1.3235 (0.3235) \\
\rowcolor{grey1}
& Avg. Err. & \cellcolor{yellow1}0.1949 & 0.3465 & 0.2142 & 0.6803 \\
\midrule
\multirow{5}{*}{0.01}
& $\varOmega_{Q1}$ & 0.1996 (0.2015) & 0.1671 (0.3316) & 0.1840 (0.2639) & 0.2660 (0.0642) \\
& $\varOmega_{Q2}$ & 0.2216 (0.5567) & 0.1954 (0.6091) & 0.2114 (0.5773) & 0.3844 (0.2312) \\
& $\varOmega_{Q3}$ & 0.7568 (0.0090) & 0.6998 (0.0669) & 0.7471 (0.0038) & 2.9767 (2.9689) \\
& $\varOmega_{Q4}$ & 0.9820 (0.0180) & 0.8068 (0.1932) & 0.9484 (0.0516) & 0.3512 (0.6488) \\
\rowcolor{grey1}
& Avg. Err. & \cellcolor{yellow1}0.1963 & 0.3002 & 0.2242 & 0.9783 \\
\bottomrule
\end{tabular}%
}
\caption{Reconstructed diffusion coefficients and relative errors (parentheses)  $\xi = 0.9$.}
\label{tab:four_quadrants_complete_xi0.9}
\end{table}

\begin{table}[htp!]
\centering
\resizebox{!}{0.45\textheight}{%
\begin{tabular}{c l c c c c}
\toprule
$\delta$ & Region & CCBM & KV & TD & TN \\
\midrule
\multirow{5}{*}{0.001}
 & $\varOmega_{Q1}$ & 0.2457 (0.0170) & 0.1980 (0.2080) & 0.2220 (0.1119) & 0.1918 (0.2329) \\
 & $\varOmega_{Q2}$ & 0.2929 (0.4141) & 0.2336 (0.5327) & 0.2827 (0.4346) & 0.2424 (0.5152) \\
 & $\varOmega_{Q3}$ & 0.7319 (0.0241) & 0.7087 (0.0551) & 0.7269 (0.0309) & 0.2424 (0.6768) \\
 & $\varOmega_{Q4}$ & 1.0093 (0.0093) & 0.9862 (0.0138) & 1.0020 (0.0020) & 0.1918 (0.8082) \\
\rowcolor{grey1}
 & Avg. Err. & \cellcolor{yellow1}0.1161 & 0.2024 & 0.1456 & 0.5589 \\
\midrule
\multirow{5}{*}{0.002}
 & $\varOmega_{Q1}$ & 0.2437 (0.0252) & 0.1351 (0.4595) & 0.2190 (0.1240) & 11.6115 (45.4460) \\
 & $\varOmega_{Q2}$ & 0.2892 (0.4216) & 0.2290 (0.5420) & 0.2433 (0.5134) & 11.0857 (21.1714) \\
 & $\varOmega_{Q3}$ & 0.7356 (0.0192) & 0.6746 (0.1005) & 0.7242 (0.0344) & 0.8408 (0.1210) \\
 & $\varOmega_{Q4}$ & 1.0066 (0.0066) & 0.9363 (0.0637) & 1.0024 (0.0024) & 0.9152 (0.0848) \\
\rowcolor{grey1}
 & Avg. Err. & \cellcolor{yellow1}0.1183 & 0.2929 & 0.1686 & 14.4043 \\
\midrule
\multirow{5}{*}{0.003}
 & $\varOmega_{Q1}$ & 0.2408 (0.0368) & 0.0937 (0.6253) & 0.2191 (0.1235) & 35.4089 (140.6357) \\
 & $\varOmega_{Q2}$ & 0.2798 (0.4403) & 0.2166 (0.5668) & 0.2448 (0.5104) & 32.2431 (63.4862) \\
 & $\varOmega_{Q3}$ & 0.7385 (0.0153) & 0.6271 (0.1639) & 0.7265 (0.0314) & 1.0013 (0.3351) \\
 & $\varOmega_{Q4}$ & 1.0042 (0.0042) & 0.9314 (0.0686) & 1.0028 (0.0028) & 0.8507 (0.1493) \\
\rowcolor{grey1}
 & Avg. Err. & \cellcolor{yellow1}0.1242 & 0.2412 & 0.1670 & 17.9356 \\
\midrule
\multirow{5}{*}{0.004}
 & $\varOmega_{Q1}$ & 0.2580 (0.0319) & 0.1470 (0.4118) & 0.2184 (0.1263) & 25.5883 (101.3531) \\
 & $\varOmega_{Q2}$ & 0.3298 (0.3403) & 0.2203 (0.5593) & 0.2420 (0.5161) & 22.9964 (44.9929) \\
 & $\varOmega_{Q3}$ & 0.7520 (0.0026) & 0.6850 (0.0867) & 0.7310 (0.0253) & 1.2384 (0.6512) \\
 & $\varOmega_{Q4}$ & 1.0006 (0.0006) & 0.9536 (0.0464) & 0.9925 (0.0076) & 0.7908 (0.2092) \\
\rowcolor{grey1}
 & Avg. Err. & \cellcolor{yellow1}0.0939 & 0.1985 & 0.1953 & 32.7071 \\
\midrule
\multirow{5}{*}{0.005}
 & $\varOmega_{Q1}$ & 0.2690 (0.0760) & 0.1918 (0.2327) & 0.2174 (0.1304) & 0.2328 (0.0689) \\
 & $\varOmega_{Q2}$ & 0.3524 (0.2952) & 0.2186 (0.5628) & 0.2445 (0.5111) & 0.4568 (0.0865) \\
 & $\varOmega_{Q3}$ & 0.7613 (0.0150) & 0.6920 (0.0773) & 0.7420 (0.0106) & 1.7817 (1.3756) \\
 & $\varOmega_{Q4}$ & 0.9978 (0.0022) & 0.8959 (0.1041) & 0.9861 (0.0139) & 0.6572 (0.3428) \\
\rowcolor{grey1}
 & Avg. Err. & \cellcolor{yellow1}0.1021 & 0.2727 & 0.2260 & 0.6440 \\
\midrule
\multirow{5}{*}{0.006}
 & $\varOmega_{Q1}$ & 0.2545 (0.0181) & 0.1252 (0.4993) & 0.2192 (0.1232) & 2.3093 (8.2371) \\
 & $\varOmega_{Q2}$ & 0.3118 (0.3764) & 0.2143 (0.5714) & 0.2511 (0.4978) & 2.3302 (3.6604) \\
 & $\varOmega_{Q3}$ & 0.7587 (0.0116) & 0.7109 (0.0521) & 0.7492 (0.0011) & 2.3302 (2.1069) \\
 & $\varOmega_{Q4}$ & 0.9961 (0.0039) & 0.9675 (0.0325) & 0.9821 (0.0179) & 2.3093 (1.3093) \\
\rowcolor{grey1}
 & Avg. Err. & \cellcolor{yellow1}0.1028 & 0.2891 & 0.2125 & 2.0662 \\
\midrule
\multirow{5}{*}{0.007}
 & $\varOmega_{Q1}$ & 0.2305 (0.0781) & 0.1885 (0.2462) & 0.2187 (0.1251) & 0.1095 (0.5619) \\
 & $\varOmega_{Q2}$ & 0.2520 (0.4961) & 0.2014 (0.5973) & 0.2456 (0.5088) & 0.8284 (0.6568) \\
 & $\varOmega_{Q3}$ & 0.7529 (0.0039) & 0.6098 (0.1870) & 0.7457 (0.0057) & 2.2004 (1.9339) \\
 & $\varOmega_{Q4}$ & 0.9934 (0.0066) & 0.7551 (0.2449) & 0.9931 (0.0069) & 0.7230 (0.2770) \\
\rowcolor{grey1}
 & Avg. Err. & \cellcolor{yellow1}0.1462 & 0.3188 & 0.1916 & 0.8649 \\
\midrule
\multirow{5}{*}{0.008}
 & $\varOmega_{Q1}$ & 0.2299 (0.0803) & 0.1892 (0.2433) & 0.2167 (0.1331) & -0.0729 (1.2917) \\
 & $\varOmega_{Q2}$ & 0.2519 (0.4962) & 0.2192 (0.5615) & 0.2439 (0.5122) & 0.2037 (0.5927) \\
 & $\varOmega_{Q3}$ & 0.7577 (0.0103) & 0.7206 (0.0392) & 0.7573 (0.0097) & 1.8169 (1.4225) \\
 & $\varOmega_{Q4}$ & 0.9902 (0.0098) & 0.9088 (0.0912) & 0.9684 (0.0316) & 0.2843 (0.7157) \\
\rowcolor{grey1}
 & Avg. Err. & \cellcolor{yellow1}0.1491 & 0.2335 & 0.1782 & 0.8288 \\
\midrule
\multirow{5}{*}{0.009}
 & $\varOmega_{Q1}$ & 0.2349 (0.0604) & 0.1268 (0.4930) & 0.2191 (0.1235) & -1.1447e11 (4.5788e11) \\
 & $\varOmega_{Q2}$ & 0.2627 (0.4746) & 0.2016 (0.5968) & 0.2441 (0.5118) & -1.1447e11 (2.2894e11) \\
 & $\varOmega_{Q3}$ & 0.7633 (0.0177) & 0.6939 (0.0748) & 0.7635 (0.0180) & -1.1447e11 (1.5263e11) \\
 & $\varOmega_{Q4}$ & 0.9890 (0.0110) & 0.8875 (0.1125) & 0.9718 (0.0282) & -1.1447e11 (1.1447e11) \\
\rowcolor{grey1}
 & Avg. Err. & \cellcolor{yellow1}0.0152 & 0.3193 & 0.1709 & 1.1447e11 \\
\midrule
\multirow{5}{*}{0.010}
 & $\varOmega_{Q1}$ & 0.3571 (0.4283) & 0.1767 (0.2932) & 0.2173 (0.1307) & -0.5609 (3.2437) \\
 & $\varOmega_{Q2}$ & 0.4667 (0.0667) & 0.1812 (0.6376) & 0.2439 (0.5121) & 0.0470 (0.9060) \\
 & $\varOmega_{Q3}$ & 0.8141 (0.0854) & 0.5072 (0.3237) & 0.7634 (0.0179) & 1.7888 (1.3851) \\
 & $\varOmega_{Q4}$ & 0.9849 (0.0151) & 0.5995 (0.4005) & 0.9628 (0.0372) & 0.0659 (0.9341) \\
\rowcolor{grey1}
 & Avg. Err. & \cellcolor{yellow1}0.1489 & 0.3313 & 0.2428 & 0.9112 \\
\bottomrule
\end{tabular}%
}
\caption{Reconstructed diffusion coefficients and relative errors (parentheses)  $\xi = 0.7$.}
\label{tab:four_quadrants_complete_xi0.7}
\end{table}

\begin{table}[htp!]
\centering
\resizebox{!}{0.45\textheight}{%
\begin{tabular}{c l c c c c}
\toprule
$\delta$ & Region & CCBM & KV & TD & TN \\
\midrule
\multirow{5}{*}{0.001}
 & $\varOmega_{Q1}$ & 0.3392 (0.3568) & 0.2836 (0.1344) & 0.3129 (0.2516) & 0.3029 (0.2117) \\
 & $\varOmega_{Q2}$ & 0.3517 (0.2966) & 0.3028 (0.3943) & 0.3161 (0.3678) & 0.3029 (0.3942) \\
 & $\varOmega_{Q3}$ & 0.7538 (0.0051) & 0.7316 (0.0246) & 0.7461 (0.0052) & 0.3029 (0.5961) \\
 & $\varOmega_{Q4}$ & 1.0152 (0.0152) & 1.0014 (0.0014) & 1.0143 (0.0143) & 0.3029 (0.6971) \\
\rowcolor{grey1}
 & Avg. Err. & 0.1684 & \cellcolor{yellow1}0.1387 & 0.1597 & 0.4750 \\
\midrule
\multirow{5}{*}{$0.002$}
 & $\varOmega_{Q1}$ & 0.3372 (0.3486) & 0.2445 (0.0219) & 0.2275 (0.0902) & 0.3348 (0.3393) \\
 & $\varOmega_{Q2}$ & 0.3510 (0.2980) & 0.2929 (0.4141) & 0.4807 (0.0387) & 0.3348 (0.3303) \\
 & $\varOmega_{Q3}$ & 0.7582 (0.0110) & 0.6987 (0.0684) & 0.7521 (0.0028) & 0.3348 (0.5536) \\
 & $\varOmega_{Q4}$ & 1.0124 (0.0124) & 0.9911 (0.0089) & 0.9914 (0.0086) & 0.3348 (0.6652) \\
\rowcolor{grey1}
 & Avg. Err. & 0.1675 & 0.1283 & \cellcolor{yellow1}0.0351 & 0.4710 \\
\midrule
\multirow{5}{*}{$0.003$}
 & $\varOmega_{Q1}$ & 0.3639 (0.4558) & 0.2727 (0.0908) & 0.3085 (0.2340) & 0.2170 (0.1319) \\
 & $\varOmega_{Q2}$ & 0.4066 (0.1868) & 0.2940 (0.4119) & 0.3166 (0.3668) & 0.3691 (0.2617) \\
 & $\varOmega_{Q3}$ & 0.7729 (0.0305) & 0.7563 (0.0085) & 0.7561 (0.0082) & 1.3034 (0.7379) \\
 & $\varOmega_{Q4}$ & 1.0078 (0.0078) & 0.9771 (0.0229) & 1.0045 (0.0045) & 0.8744 (0.1256) \\
\rowcolor{grey1}
 & Avg. Err. & 0.1702 & 0.1358 & \cellcolor{yellow1}0.1534 & 0.3143 \\
\midrule
\multirow{5}{*}{$0.004$}
 & $\varOmega_{Q1}$ & 0.3368 (0.3471) & 0.2648 (0.0593) & 0.1933 (0.2269) & 0.2077 (0.1693) \\
 & $\varOmega_{Q2}$ & 0.3525 (0.2951) & 0.2836 (0.4328) & 0.4597 (0.0806) & 0.4114 (0.1771) \\
 & $\varOmega_{Q3}$ & 0.7681 (0.0241) & 0.7364 (0.0181) & 0.7518 (0.0024) & 1.4999 (0.9998) \\
 & $\varOmega_{Q4}$ & 1.0071 (0.0071) & 0.9284 (0.0716) & 0.9797 (0.0203) & 0.8833 (0.1167) \\
\rowcolor{grey1}
 & Avg. Err. & 0.1683 & 0.1457 & \cellcolor{yellow1}0.0825 & 0.3657 \\
\midrule
\multirow{5}{*}{$0.005$}
 & $\varOmega_{Q1}$ & 0.3318 (0.3271) & 0.1654 (0.3383) & 0.2793 (0.1171) & 0.1862 (0.2553) \\
 & $\varOmega_{Q2}$ & 0.3437 (0.3125) & 0.2367 (0.5266) & 0.3691 (0.2618) & 0.4574 (0.0852) \\
 & $\varOmega_{Q3}$ & 0.7713 (0.0284) & 0.5917 (0.2110) & 0.7669 (0.0225) & 1.7421 (1.3228) \\
 & $\varOmega_{Q4}$ & 1.0047 (0.0047) & 0.8548 (0.1452) & 0.9912 (0.0088) & 0.9008 (0.0992) \\
\rowcolor{grey1}
 & Avg. Err. & 0.1682 & 0.3053 & \cellcolor{yellow1}0.1026 & 0.4406 \\
\midrule
\multirow{5}{*}{$0.006$}
 & $\varOmega_{Q1}$ & 0.3682 (0.4729) & 0.2375 (0.0502) & 0.2255 (0.0979) & 0.1745 (0.3021) \\
 & $\varOmega_{Q2}$ & 0.4070 (0.1860) & 0.2758 (0.4484) & 0.4253 (0.1495) & 0.5513 (0.1026) \\
 & $\varOmega_{Q3}$ & 0.7887 (0.0516) & 0.7440 (0.0080) & 0.7665 (0.0220) & 1.8744 (1.4992) \\
 & $\varOmega_{Q4}$ & 1.0002 (0.0002) & 0.9627 (0.0373) & 0.9786 (0.0214) & 0.8167 (0.1833) \\
\rowcolor{grey1}
 & Avg. Err. & 0.1777 & 0.1355 & \cellcolor{yellow1}0.0727 & 0.5218 \\
\midrule
\multirow{5}{*}{$0.007$}
 & $\varOmega_{Q1}$ & 0.3499 (0.3996) & 0.2356 (0.0577) & 0.2650 (0.0601) & -0.3008 (2.2032) \\
 & $\varOmega_{Q2}$ & 0.3735 (0.2530) & 0.2432 (0.5136) & 0.3864 (0.2273) & 0.2104 (0.5792) \\
 & $\varOmega_{Q3}$ & 0.7875 (0.0500) & 0.6090 (0.1880) & 0.7763 (0.0350) & 1.6207 (1.1610) \\
 & $\varOmega_{Q4}$ & 0.9987 (0.0013) & 0.7272 (0.2728) & 0.9814 (0.0186) & 0.5099 (0.4901) \\
\rowcolor{grey1}
 & Avg. Err. & 0.1760 & 0.2580 & \cellcolor{yellow1}0.0852 & 0.6061 \\
\midrule
\multirow{5}{*}{$0.008$}
 & $\varOmega_{Q1}$ & 0.3551 (0.4204) & 0.2474 (0.0106) & -0.0017 (1.0069) & 1.1168e11 (4.4672e11) \\
 & $\varOmega_{Q2}$ & 0.3801 (0.2399) & 0.2714 (0.4572) & 0.4792 (0.0416) & 1.1168e11 (2.2336e11) \\
 & $\varOmega_{Q3}$ & 0.7942 (0.0589) & 0.7358 (0.0189) & 0.6637 (0.1151) & 1.1168e11 (1.4891e11) \\
 & $\varOmega_{Q4}$ & 0.9960 (0.0040) & 0.9086 (0.0914) & 0.8751 (0.1249) & 1.1168e11 (1.1168e11) \\
\rowcolor{grey1}
 & Avg. Err. & 0.1808 & \cellcolor{yellow1}0.1445 & 0.3221 & 1.1168e11 \\
\midrule
\multirow{5}{*}{$0.009$}
 & $\varOmega_{Q1}$ & 0.3603 (0.4414) & 0.1570 (0.3718) & 0.3040 (0.2159) & -1.0390 (5.1561) \\
 & $\varOmega_{Q2}$ & 0.3860 (0.2280) & 0.2154 (0.5693) & 0.3164 (0.3672) & -0.2156 (1.4312) \\
 & $\varOmega_{Q3}$ & 0.8008 (0.0678) & 0.5953 (0.2063) & 0.7729 (0.0305) & 1.5407 (1.0543) \\
 & $\varOmega_{Q4}$ & 0.9934 (0.0066) & 0.7814 (0.2186) & 0.9876 (0.0124) & 0.0332 (0.9668) \\
\rowcolor{grey1}
 & Avg. Err. & 0.1859 & 0.3415 & \cellcolor{yellow1}0.1565 & 2.1521 \\
\midrule
\multirow{5}{*}{$0.010$}
 & $\varOmega_{Q1}$ & 0.3421 (0.3685) & 0.2267 (0.0933) & 0.2986 (0.1944) & -10.8965 (44.5859) \\
 & $\varOmega_{Q2}$ & 0.3587 (0.2826) & 0.2318 (0.5365) & 0.3210 (0.3581) & -6.4513 (13.9025) \\
 & $\varOmega_{Q3}$ & 0.8001 (0.0668) & 0.5737 (0.2351) & 0.7906 (0.0541) & -4.4919 (6.9892) \\
 & $\varOmega_{Q4}$ & 0.9916 (0.0084) & 0.6694 (0.3306) & 0.9762 (0.0238) & -8.4691 (9.4691) \\
\rowcolor{grey1}
 & Avg. Err. & 0.1766 & 0.2989 & \cellcolor{yellow1}0.1576 & 10.7367 \\
\bottomrule
\end{tabular}
}
\caption{Reconstructed diffusion coefficients and relative errors (parentheses)  $\xi = 0.5$.}
\label{tab:four_quadrants_complete_xi0.5}
\end{table}

\begin{table}[htp!]
\centering
\resizebox{!}{0.45\textheight}{%
\begin{tabular}{c l c c c c}
\toprule
$\delta$ & Region & CCBM & KV & TD & TN \\
\midrule
\multirow{5}{*}{0.001}
 & $\varOmega_{Q1}$ & 0.2457 (0.0170) & 0.1980 (0.2080) & 0.2220 (0.1119) & 0.1918 (0.2329) \\
 & $\varOmega_{Q2}$ & 0.2929 (0.4141) & 0.2336 (0.5327) & 0.2827 (0.4346) & 0.2424 (0.5152) \\
 & $\varOmega_{Q3}$ & 0.7319 (0.0241) & 0.7087 (0.0551) & 0.7269 (0.0309) & 0.2424 (0.6768) \\
 & $\varOmega_{Q4}$ & 1.0093 (0.0093) & 0.9862 (0.0138) & 1.0020 (0.0020) & 0.1918 (0.8082) \\
\rowcolor{grey1}
 & Avg. Err. & \cellcolor{yellow1}0.1161 & 0.2024 & 0.1449 & 0.4060 \\
\midrule
\multirow{5}{*}{$0.002$}
 & $\varOmega_{Q1}$ & 0.2437 (0.0252) & 0.1351 (0.4595) & 0.2190 (0.1240) & 11.6115 (45.4460) \\
 & $\varOmega_{Q2}$ & 0.2892 (0.4216) & 0.2290 (0.5420) & 0.2433 (0.5134) & 11.0857 (21.1714) \\
 & $\varOmega_{Q3}$ & 0.7356 (0.0192) & 0.6746 (0.1005) & 0.7242 (0.0344) & 0.8408 (0.1210) \\
 & $\varOmega_{Q4}$ & 1.0066 (0.0066) & 0.9363 (0.0637) & 1.0024 (0.0024) & 0.9152 (0.0848) \\
\rowcolor{grey1}
 & Avg. Err. & \cellcolor{yellow1}0.2181 & 0.2113 & 0.1686 & 6.5883 \\
\midrule
\multirow{5}{*}{$0.003$}
 & $\varOmega_{Q1}$ & 0.2408 (0.0368) & 0.0937 (0.6253) & 0.2191 (0.1235) & 35.4089 (140.6357) \\
 & $\varOmega_{Q2}$ & 0.2798 (0.4403) & 0.2166 (0.5668) & 0.2448 (0.5104) & 32.2431 (63.4862) \\
 & $\varOmega_{Q3}$ & 0.7385 (0.0153) & 0.6271 (0.1639) & 0.7265 (0.0314) & 1.0013 (0.3351) \\
 & $\varOmega_{Q4}$ & 1.0042 (0.0042) & 0.9314 (0.0686) & 1.0028 (0.0028) & 0.8507 (0.1493) \\
\rowcolor{grey1}
 & Avg. Err. & \cellcolor{yellow1}0.1241 & 0.3666 & 0.1670 & 17.4108 \\
\midrule
\multirow{5}{*}{$0.004$}
 & $\varOmega_{Q1}$ & 0.2580 (0.0319) & 0.1470 (0.4118) & 0.2184 (0.1263) & 25.5883 (101.3531) \\
 & $\varOmega_{Q2}$ & 0.3298 (0.3403) & 0.2203 (0.5593) & 0.2420 (0.5161) & 22.9964 (44.9929) \\
 & $\varOmega_{Q3}$ & 0.7520 (0.0026) & 0.6850 (0.0867) & 0.7310 (0.0253) & 1.2384 (0.6512) \\
 & $\varOmega_{Q4}$ & 1.0006 (0.0006) & 0.9536 (0.0464) & 0.9925 (0.0076) & 0.7908 (0.2092) \\
\rowcolor{grey1}
 & Avg. Err. & \cellcolor{yellow1}0.0939 & 0.2761 & 0.1688 & 12.4709 \\
\midrule
\multirow{5}{*}{$0.005$}
 & $\varOmega_{Q1}$ & 0.2690 (0.0760) & 0.1918 (0.2327) & 0.2174 (0.1304) & 0.2328 (0.0689) \\
 & $\varOmega_{Q2}$ & 0.3524 (0.2952) & 0.2186 (0.5628) & 0.2445 (0.5111) & 0.4568 (0.0865) \\
 & $\varOmega_{Q3}$ & 0.7613 (0.0150) & 0.6920 (0.0773) & 0.7420 (0.0106) & 1.7817 (1.3756) \\
 & $\varOmega_{Q4}$ & 0.9978 (0.0022) & 0.8959 (0.1041) & 0.9861 (0.0139) & 0.6572 (0.3428) \\
\rowcolor{grey1}
 & Avg. Err. & \cellcolor{yellow1}0.0971 & 0.2442 & 0.1665 & 0.6362 \\
\midrule
\multirow{5}{*}{$0.006$}
 & $\varOmega_{Q1}$ & 0.2545 (0.0181) & 0.1252 (0.4993) & 0.2192 (0.1232) & 2.3093 (8.2371) \\
 & $\varOmega_{Q2}$ & 0.3118 (0.3764) & 0.2143 (0.5714) & 0.2511 (0.4978) & 2.3302 (3.6604) \\
 & $\varOmega_{Q3}$ & 0.7587 (0.0116) & 0.7109 (0.0521) & 0.7492 (0.0011) & 2.3302 (2.1069) \\
 & $\varOmega_{Q4}$ & 0.9961 (0.0039) & 0.9675 (0.0325) & 0.9821 (0.0179) & 2.3093 (1.3093) \\
\rowcolor{grey1}
 & Avg. Err. & \cellcolor{yellow1}0.1025 & 0.2188 & 0.1605 & 2.5664 \\
\midrule
\multirow{5}{*}{$0.007$}
 & $\varOmega_{Q1}$ & 0.2305 (0.0781) & 0.1885 (0.2462) & 0.2187 (0.1251) & 0.1095 (0.5619) \\
 & $\varOmega_{Q2}$ & 0.2520 (0.4961) & 0.2014 (0.5973) & 0.2456 (0.5088) & 0.8284 (0.6568) \\
 & $\varOmega_{Q3}$ & 0.7529 (0.0039) & 0.6098 (0.1870) & 0.7457 (0.0057) & 2.2004 (1.9339) \\
 & $\varOmega_{Q4}$ & 0.9934 (0.0066) & 0.7551 (0.2449) & 0.9931 (0.0069) & 0.7230 (0.2770) \\
\rowcolor{grey1}
 & Avg. Err. & \cellcolor{yellow1}0.1446 & 0.3189 & 0.1616 & 1.1699 \\
\midrule
\multirow{5}{*}{$0.008$}
 & $\varOmega_{Q1}$ & 0.2299 (0.0803) & 0.1892 (0.2433) & 0.2167 (0.1331) & -0.0729 (1.2917) \\
 & $\varOmega_{Q2}$ & 0.2519 (0.4962) & 0.2192 (0.5615) & 0.2439 (0.5122) & 0.2037 (0.5927) \\
 & $\varOmega_{Q3}$ & 0.7577 (0.0103) & 0.7206 (0.0392) & 0.7573 (0.0097) & 1.8169 (1.4225) \\
 & $\varOmega_{Q4}$ & 0.9902 (0.0098) & 0.9088 (0.0912) & 0.9684 (0.0316) & 0.2843 (0.7157) \\
\rowcolor{grey1}
 & Avg. Err. & \cellcolor{yellow1}0.0501 & 0.2333 & 0.1717 & 0.7567 \\
\midrule
\multirow{5}{*}{$0.009$}
 & $\varOmega_{Q1}$ & 0.2349 (0.0604) & 0.1268 (0.4930) & 0.2191 (0.1235) & -1.1447e11 (4.5788e11) \\
 & $\varOmega_{Q2}$ & 0.2627 (0.4746) & 0.2016 (0.5968) & 0.2441 (0.5118) & -1.1447e11 (2.2894e11) \\
 & $\varOmega_{Q3}$ & 0.7633 (0.0177) & 0.6939 (0.0748) & 0.7635 (0.0180) & -1.1447e11 (1.5263e11) \\
 & $\varOmega_{Q4}$ & 0.9890 (0.0110) & 0.8875 (0.1125) & 0.9718 (0.0282) & -1.1447e11 (1.1447e11) \\
\rowcolor{grey1}
 & Avg. Err. & \cellcolor{yellow1}0.1331 & 0.3195 & 0.1709 & 1.1447e11 \\
\midrule
\multirow{5}{*}{$0.010$}
 & $\varOmega_{Q1}$ & 0.3571 (0.4283) & 0.1767 (0.2932) & 0.2173 (0.1307) & -0.5609 (3.2437) \\
 & $\varOmega_{Q2}$ & 0.4667 (0.0667) & 0.1812 (0.6376) & 0.2439 (0.5121) & 0.0470 (0.9060) \\
 & $\varOmega_{Q3}$ & 0.8141 (0.0854) & 0.5072 (0.3237) & 0.7634 (0.0179) & 1.7888 (1.3851) \\
 & $\varOmega_{Q4}$ & 0.9849 (0.0151) & 0.5995 (0.4005) & 0.9628 (0.0372) & 0.0659 (0.9341) \\
\rowcolor{grey1}
 & Avg. Err. & \cellcolor{yellow1}0.2373 & 0.3548 & 0.1709 & 0.9337 \\
\bottomrule
\end{tabular}
}
\caption{Reconstructed diffusion coefficients and relative errors (parentheses)  $\xi = 0.3$.}
\label{tab:four_quadrants_complete_xi0.3}
\end{table}

\begin{table}[htp!]
\centering
\resizebox{!}{0.45\textheight}{%
\begin{tabular}{c l c c c c}
\toprule
$\delta$ & Region & CCBM & KV & TD & TN \\
\midrule
\multirow{5}{*}{0.001}
 & $\varOmega_{Q1}$ & 0.2457 (0.0170) & 0.1980 (0.2080) & 0.2220 (0.1119) & 0.1918 (0.2329) \\
 & $\varOmega_{Q2}$ & 0.2929 (0.4141) & 0.2336 (0.5327) & 0.2827 (0.4346) & 0.2424 (0.5152) \\
 & $\varOmega_{Q3}$ & 0.7319 (0.0241) & 0.7087 (0.0551) & 0.7269 (0.0309) & 0.2424 (0.6768) \\
 & $\varOmega_{Q4}$ & 1.0093 (0.0093) & 0.9862 (0.0138) & 1.0020 (0.0020) & 0.1918 (0.8082) \\
\rowcolor{grey1}
 & Avg. Err. & \cellcolor{yellow1}0.1161 & 0.2024 & 0.1449 & 0.4060 \\
\midrule
\multirow{5}{*}{$0.002$}
 & $\varOmega_{Q1}$ & 0.2437 (0.0252) & 0.1351 (0.4595) & 0.2190 (0.1240) & 11.6115 (45.4460) \\
 & $\varOmega_{Q2}$ & 0.2892 (0.4216) & 0.2290 (0.5420) & 0.2433 (0.5134) & 11.0857 (21.1714) \\
 & $\varOmega_{Q3}$ & 0.7356 (0.0192) & 0.6746 (0.1005) & 0.7242 (0.0344) & 0.8408 (0.1210) \\
 & $\varOmega_{Q4}$ & 1.0066 (0.0066) & 0.9363 (0.0637) & 1.0024 (0.0024) & 0.9152 (0.0848) \\
\rowcolor{grey1}
 & Avg. Err. & \cellcolor{yellow1}0.2181 & 0.2113 & 0.1686 & 6.5883 \\
\midrule
\multirow{5}{*}{$0.003$}
 & $\varOmega_{Q1}$ & 0.2408 (0.0368) & 0.0937 (0.6253) & 0.2191 (0.1235) & 35.4089 (140.6357) \\
 & $\varOmega_{Q2}$ & 0.2798 (0.4403) & 0.2166 (0.5668) & 0.2448 (0.5104) & 32.2431 (63.4862) \\
 & $\varOmega_{Q3}$ & 0.7385 (0.0153) & 0.6271 (0.1639) & 0.7265 (0.0314) & 1.0013 (0.3351) \\
 & $\varOmega_{Q4}$ & 1.0042 (0.0042) & 0.9314 (0.0686) & 1.0028 (0.0028) & 0.8507 (0.1493) \\
\rowcolor{grey1}
 & Avg. Err. & \cellcolor{yellow1}0.1241 & 0.3666 & 0.1670 & 17.4108 \\
\midrule
\multirow{5}{*}{$0.004$}
 & $\varOmega_{Q1}$ & 0.2580 (0.0319) & 0.1470 (0.4118) & 0.2184 (0.1263) & 25.5883 (101.3531) \\
 & $\varOmega_{Q2}$ & 0.3298 (0.3403) & 0.2203 (0.5593) & 0.2420 (0.5161) & 22.9964 (44.9929) \\
 & $\varOmega_{Q3}$ & 0.7520 (0.0026) & 0.6850 (0.0867) & 0.7310 (0.0253) & 1.2384 (0.6512) \\
 & $\varOmega_{Q4}$ & 1.0006 (0.0006) & 0.9536 (0.0464) & 0.9925 (0.0076) & 0.7908 (0.2092) \\
\rowcolor{grey1}
 & Avg. Err. & \cellcolor{yellow1}0.0939 & 0.2761 & 0.1688 & 12.4709 \\
\midrule
\multirow{5}{*}{$0.005$}
 & $\varOmega_{Q1}$ & 0.2690 (0.0760) & 0.1918 (0.2327) & 0.2174 (0.1304) & 0.2328 (0.0689) \\
 & $\varOmega_{Q2}$ & 0.3524 (0.2952) & 0.2186 (0.5628) & 0.2445 (0.5111) & 0.4568 (0.0865) \\
 & $\varOmega_{Q3}$ & 0.7613 (0.0150) & 0.6920 (0.0773) & 0.7420 (0.0106) & 1.7817 (1.3756) \\
 & $\varOmega_{Q4}$ & 0.9978 (0.0022) & 0.8959 (0.1041) & 0.9861 (0.0139) & 0.6572 (0.3428) \\
\rowcolor{grey1}
 & Avg. Err. & \cellcolor{yellow1}0.0971 & 0.2442 & 0.1665 & 0.6362 \\
\midrule
\multirow{5}{*}{$0.006$}
 & $\varOmega_{Q1}$ & 0.2545 (0.0181) & 0.1252 (0.4993) & 0.2192 (0.1232) & 2.3093 (8.2371) \\
 & $\varOmega_{Q2}$ & 0.3118 (0.3764) & 0.2143 (0.5714) & 0.2511 (0.4978) & 2.3302 (3.6604) \\
 & $\varOmega_{Q3}$ & 0.7587 (0.0116) & 0.7109 (0.0521) & 0.7492 (0.0011) & 2.3302 (2.1069) \\
 & $\varOmega_{Q4}$ & 0.9961 (0.0039) & 0.9675 (0.0325) & 0.9821 (0.0179) & 2.3093 (1.3093) \\
\rowcolor{grey1}
 & Avg. Err. & \cellcolor{yellow1}0.1025 & 0.2188 & 0.1605 & 2.5664 \\
\midrule
\multirow{5}{*}{$0.007$}
 & $\varOmega_{Q1}$ & 0.2305 (0.0781) & 0.1885 (0.2462) & 0.2187 (0.1251) & 0.1095 (0.5619) \\
 & $\varOmega_{Q2}$ & 0.2520 (0.4961) & 0.2014 (0.5973) & 0.2456 (0.5088) & 0.8284 (0.6568) \\
 & $\varOmega_{Q3}$ & 0.7529 (0.0039) & 0.6098 (0.1870) & 0.7457 (0.0057) & 2.2004 (1.9339) \\
 & $\varOmega_{Q4}$ & 0.9934 (0.0066) & 0.7551 (0.2449) & 0.9931 (0.0069) & 0.7230 (0.2770) \\
\rowcolor{grey1}
 & Avg. Err. & \cellcolor{yellow1}0.1446 & 0.3189 & 0.1616 & 1.1699 \\
\midrule
\multirow{5}{*}{$0.008$}
 & $\varOmega_{Q1}$ & 0.2299 (0.0803) & 0.1892 (0.2433) & 0.2167 (0.1331) & -0.0729 (1.2917) \\
 & $\varOmega_{Q2}$ & 0.2519 (0.4962) & 0.2192 (0.5615) & 0.2439 (0.5122) & 0.2037 (0.5927) \\
 & $\varOmega_{Q3}$ & 0.7577 (0.0103) & 0.7206 (0.0392) & 0.7573 (0.0097) & 1.8169 (1.4225) \\
 & $\varOmega_{Q4}$ & 0.9902 (0.0098) & 0.9088 (0.0912) & 0.9684 (0.0316) & 0.2843 (0.7157) \\
\rowcolor{grey1}
 & Avg. Err. & \cellcolor{yellow1}0.0501 & 0.2333 & 0.1717 & 0.7567 \\
\midrule
\multirow{5}{*}{$0.009$}
 & $\varOmega_{Q1}$ & 0.2349 (0.0604) & 0.1268 (0.4930) & 0.2191 (0.1235) & -1.1447e11 (4.5788e11) \\
 & $\varOmega_{Q2}$ & 0.2627 (0.4746) & 0.2016 (0.5968) & 0.2441 (0.5118) & -1.1447e11 (2.2894e11) \\
 & $\varOmega_{Q3}$ & 0.7633 (0.0177) & 0.6939 (0.0748) & 0.7635 (0.0180) & -1.1447e11 (1.5263e11) \\
 & $\varOmega_{Q4}$ & 0.9890 (0.0110) & 0.8875 (0.1125) & 0.9718 (0.0282) & -1.1447e11 (1.1447e11) \\
\rowcolor{grey1}
 & Avg. Err. & \cellcolor{yellow1}0.1331 & 0.3195 & 0.1709 & 1.1447e11 \\
\midrule
\multirow{5}{*}{$0.010$}
 & $\varOmega_{Q1}$ & 0.3571 (0.4283) & 0.1767 (0.2932) & 0.2173 (0.1307) & -0.5609 (3.2437) \\
 & $\varOmega_{Q2}$ & 0.4667 (0.0667) & 0.1812 (0.6376) & 0.2439 (0.5121) & 0.0470 (0.9060) \\
 & $\varOmega_{Q3}$ & 0.8141 (0.0854) & 0.5072 (0.3237) & 0.7634 (0.0179) & 1.7888 (1.3851) \\
 & $\varOmega_{Q4}$ & 0.9849 (0.0151) & 0.5995 (0.4005) & 0.9628 (0.0372) & 0.0659 (0.9341) \\
\rowcolor{grey1}
 & Avg. Err. & \cellcolor{yellow1}0.2373 & 0.3548 & 0.1709 & 0.9337 \\
\bottomrule
\end{tabular}
}
\caption{Reconstructed diffusion coefficients and relative errors (parentheses)  $\xi = 0.2$.}
\label{tab:four_quadrants_complete_xi0.2}
\end{table}

%
%
\begin{figure}[htp!]
\resizebox{0.225\textwidth}{!}{\includegraphics{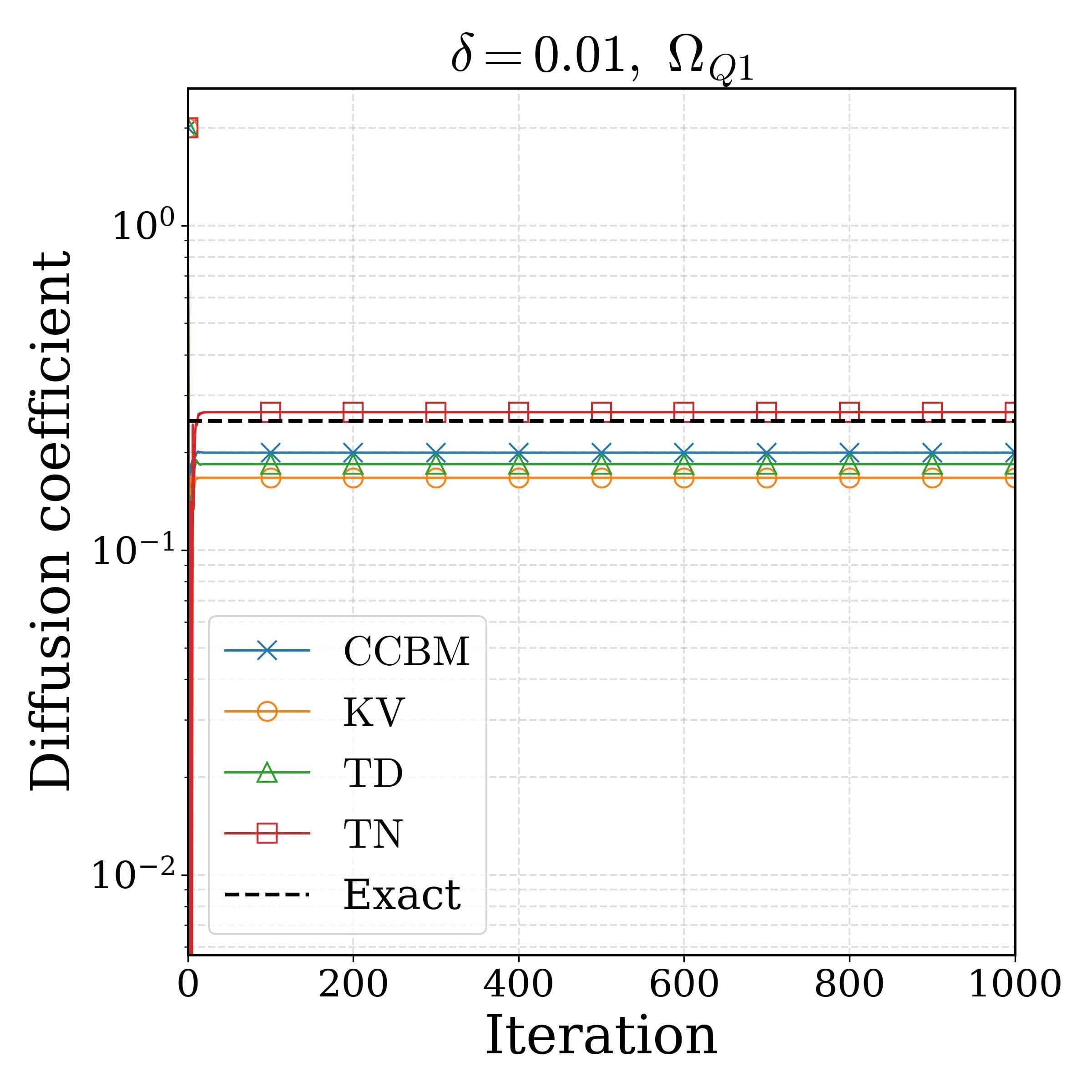}} \  
\resizebox{0.225\textwidth}{!}{\includegraphics{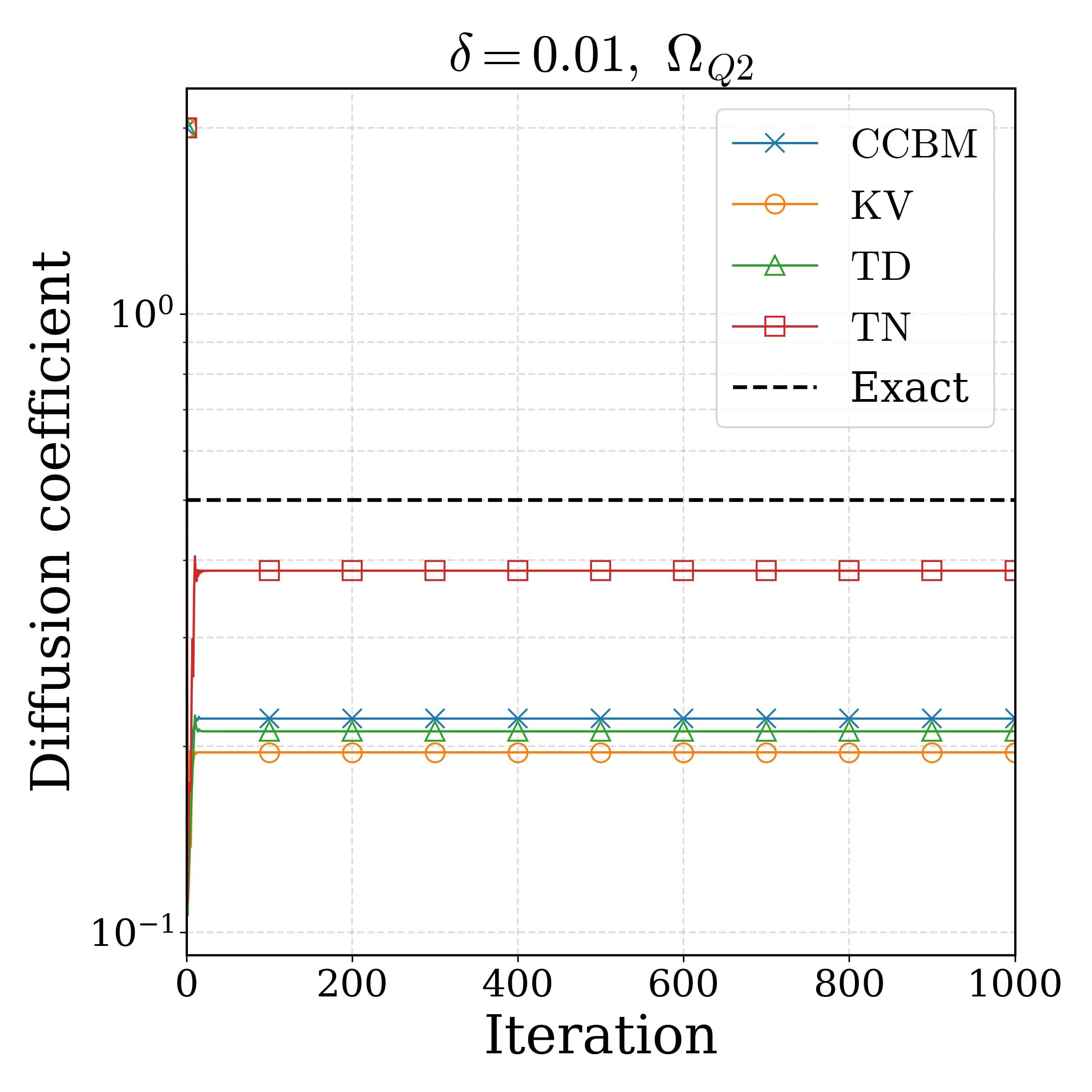}} \  
\resizebox{0.225\textwidth}{!}{\includegraphics{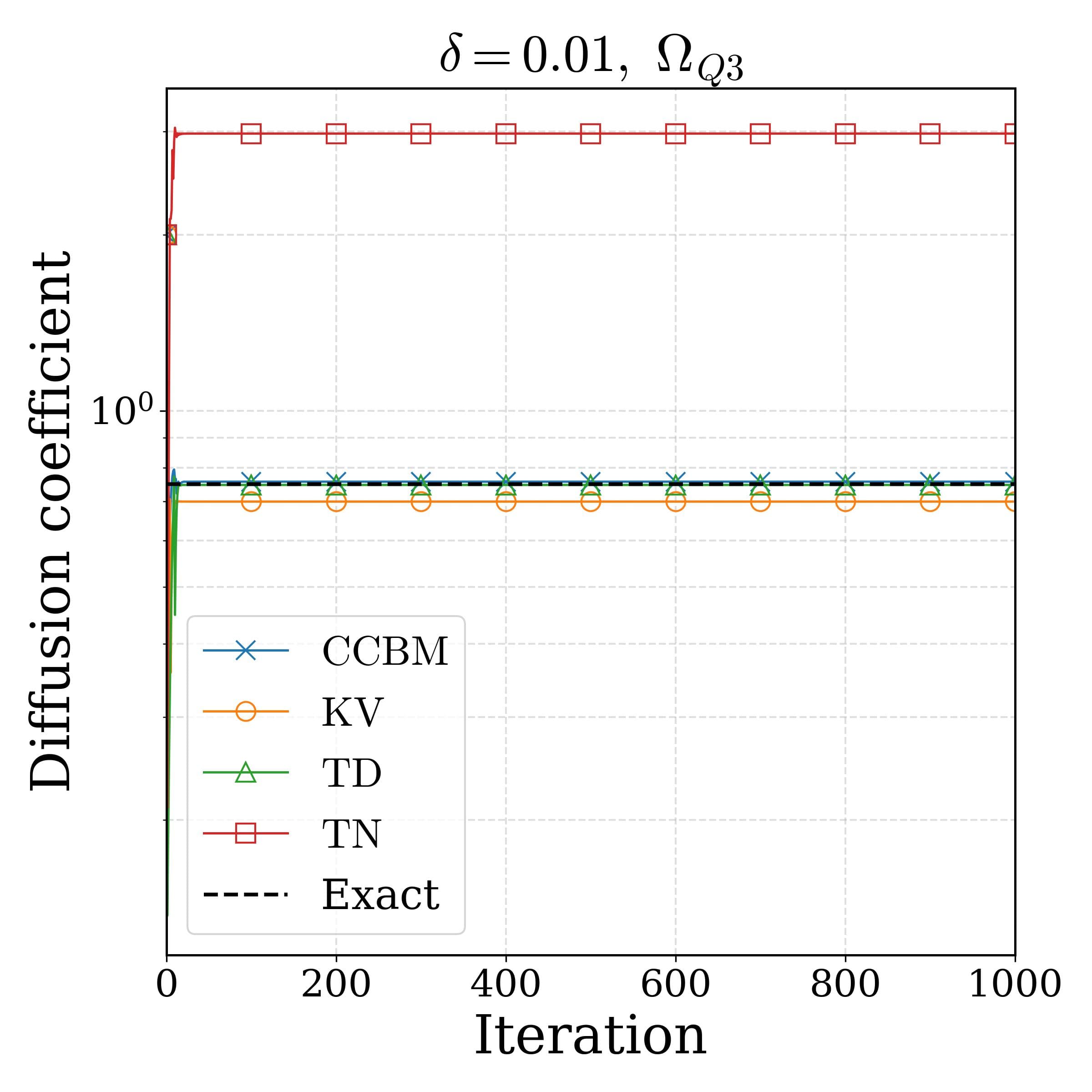}} \  
\resizebox{0.225\textwidth}{!}{\includegraphics{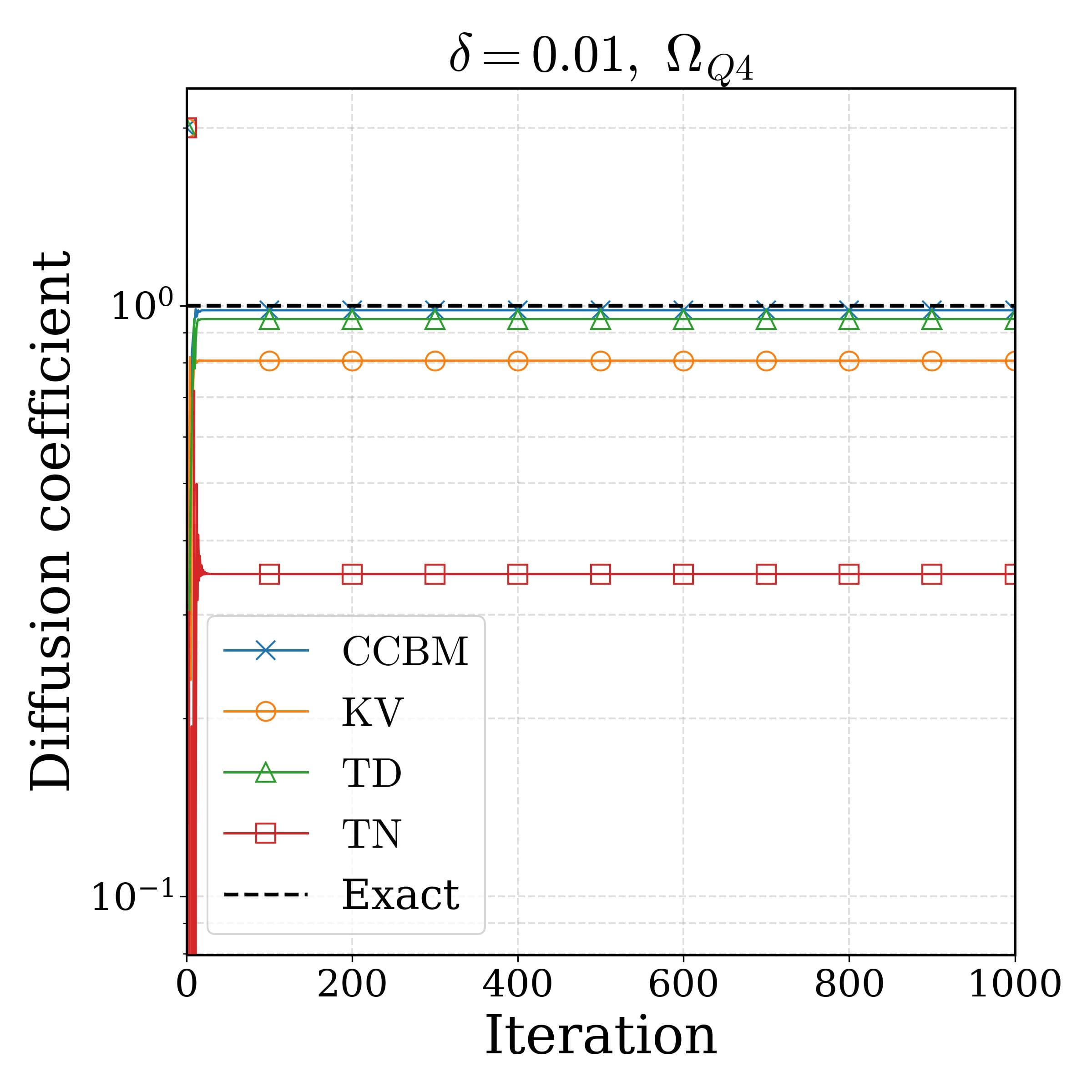}}
\caption{Iteration histories of the reconstructed diffusion coefficients in the four-subregions case for noise level $\delta = 0.01$.  Dashed lines denote the exact values.}
\label{fig:four_subregions}
\end{figure}
%

%
%
\begin{figure}[htp!]
\resizebox{0.225\textwidth}{!}{\includegraphics{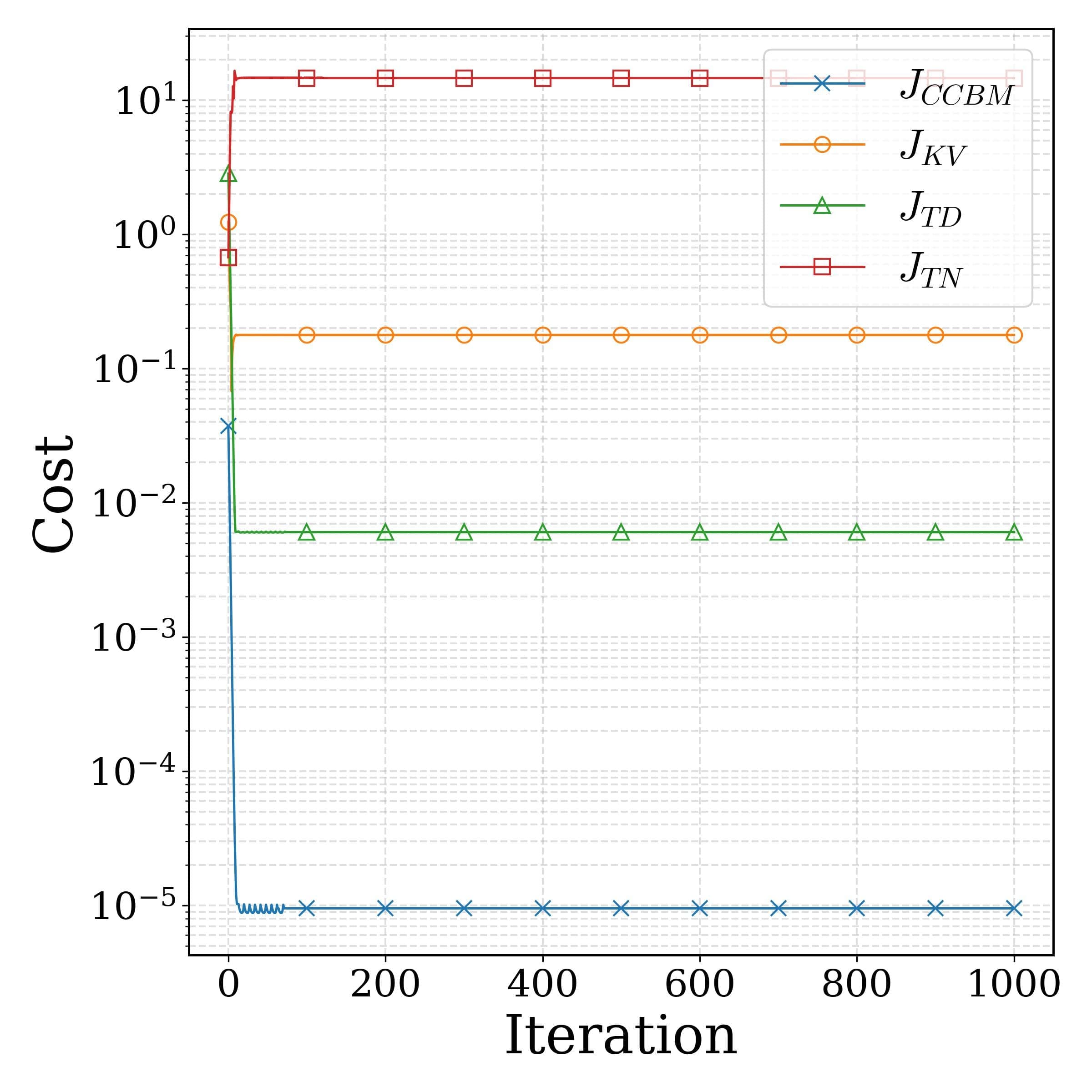}} \  
\resizebox{0.225\textwidth}{!}{\includegraphics{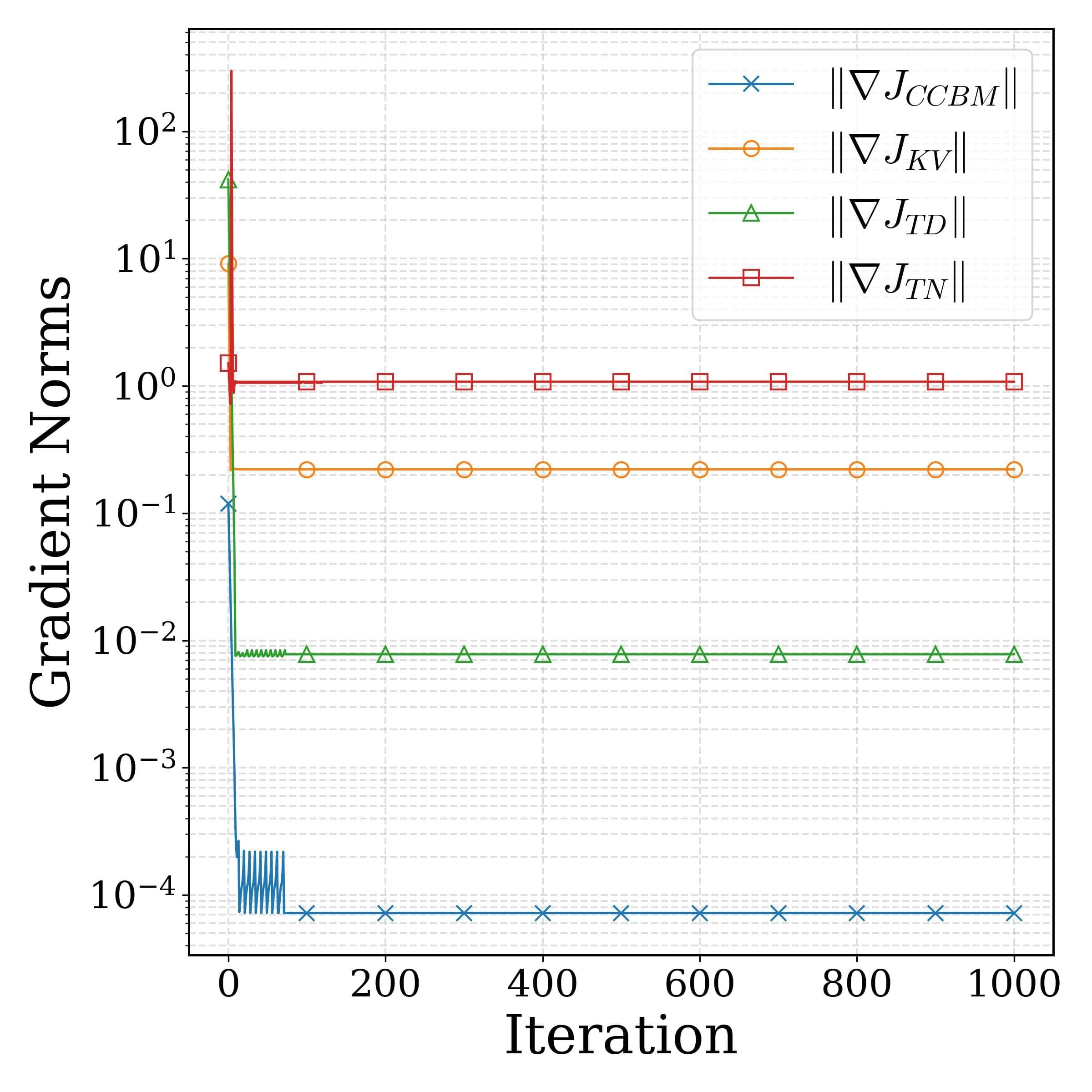}} \  
\resizebox{0.225\textwidth}{!}{\includegraphics{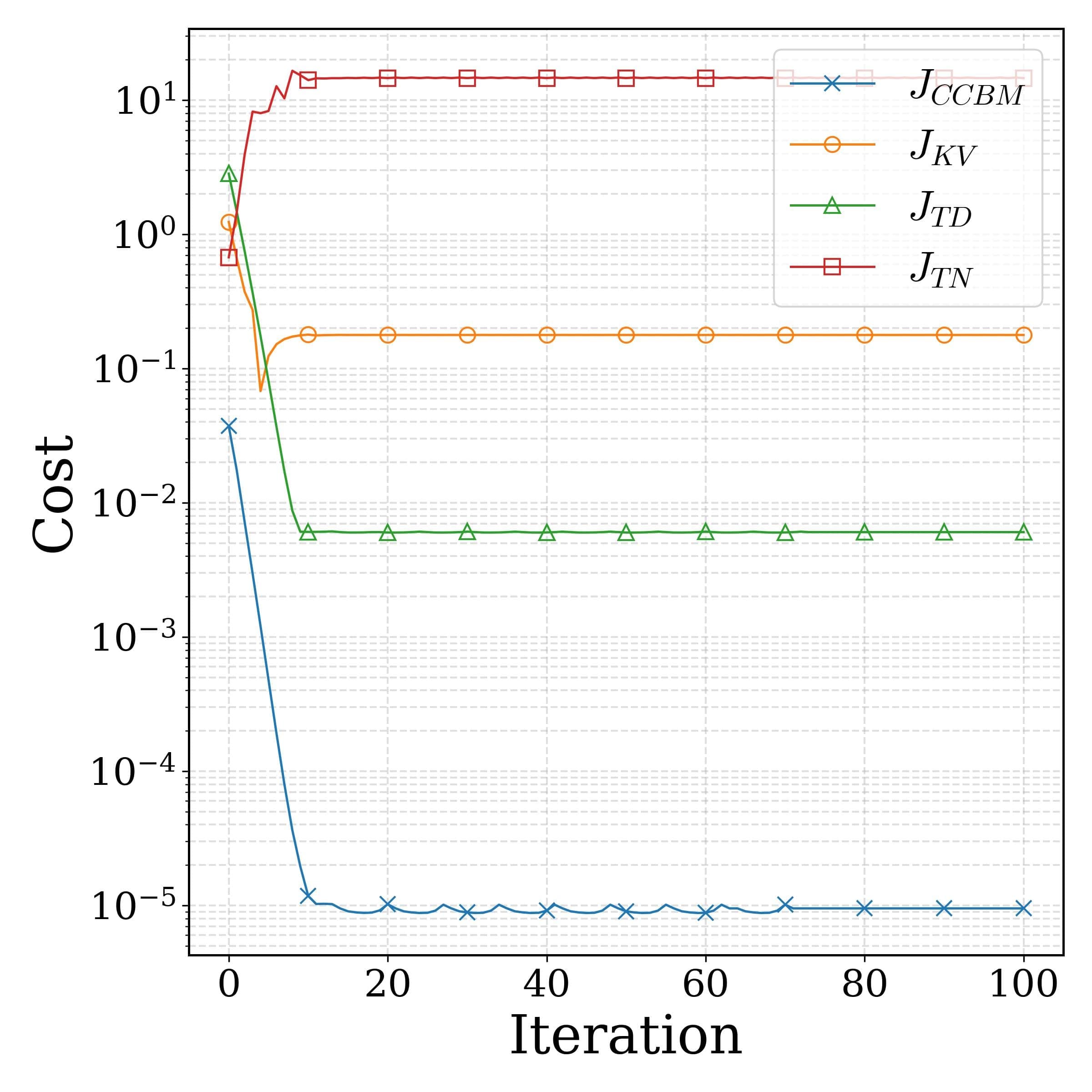}} \  
\resizebox{0.225\textwidth}{!}{\includegraphics{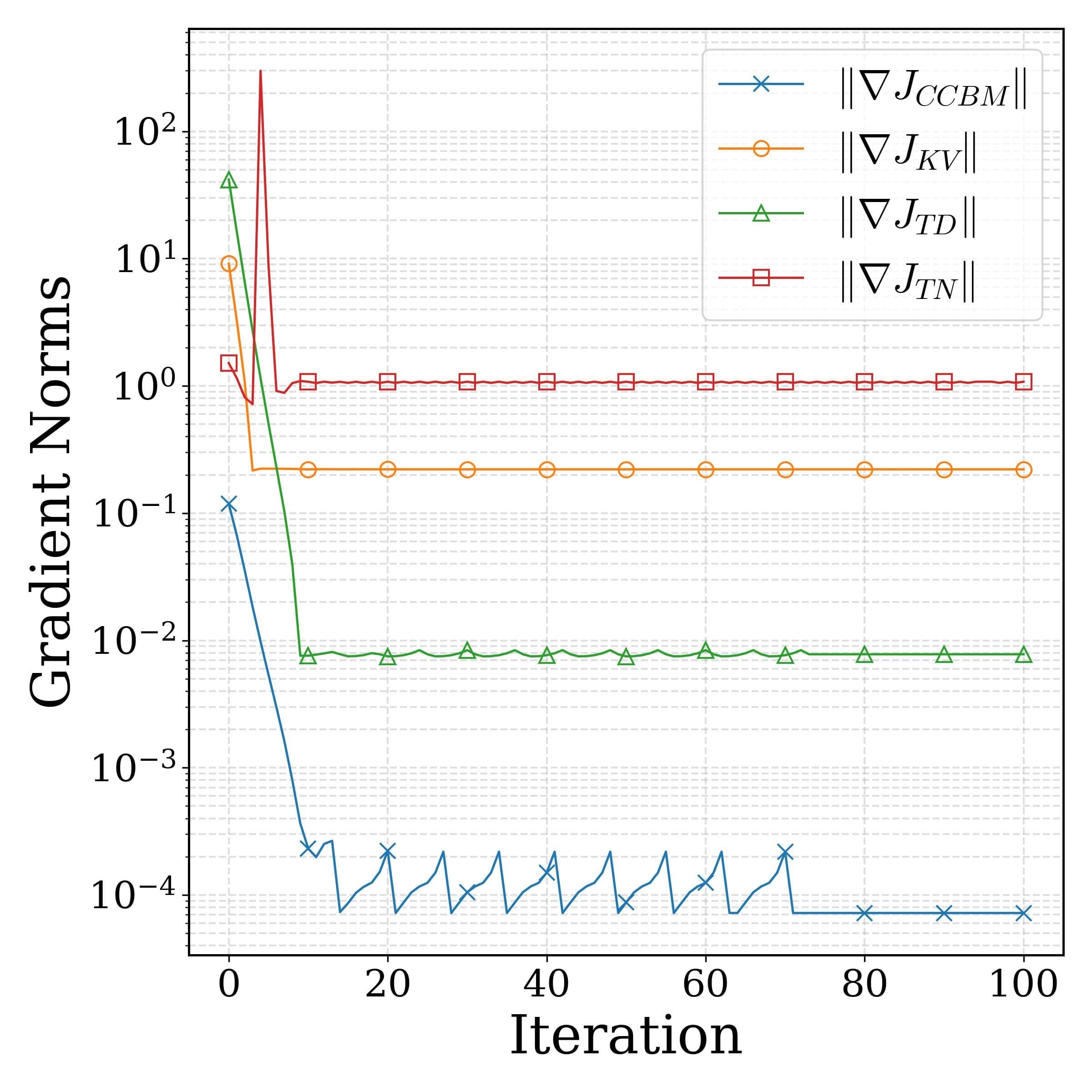}}
\caption{Left: Histories of the cost functional and gradient norm for the four-subregions example with noise level $\delta = 0.01$ over 1000 iterations; Right: Zoomed-in histories for the first 100 iterations.}
\label{fig:four_subregions_cost_and_gradient_norms}
\end{figure}
%

\newpage
\section{Conclusion}\label{sec:conclusion}  
We studied the inverse problem of reconstructing spatially varying diffusion coefficients from boundary Cauchy data within a modified coupled complex-boundary method framework. We established theoretical properties of the associated regularized optimization problem, including continuity and differentiability of the forward map, Lipschitz continuity of the cost functional, existence of minimizers, stability with respect to noisy data, and convergence under vanishing noise.

Numerical reconstructions obtained using a Sobolev-gradient descent scheme exhibit stable behavior across the considered test cases. In the reported experiments, incorporating a gradient-weighted misfit term is associated with improved sensitivity to spatial variations and reduced high-frequency artifacts when the weighting parameter is chosen within an appropriate range. A projection-based extension further enables effective recovery of piecewise-constant diffusion coefficients in heterogeneous settings.

Overall, the theoretical analysis and numerical findings support the proposed approach as a viable method for diffusion coefficient identification in both smooth and piecewise-defined media. The numerical framework also suggests potential applicability to related inverse problems involving piecewise-defined parameters, such as absorption coefficient estimation or source identification, which may be explored in future work.

\medskip
\textbf{Acknowledgements} SPN acknowledges the Japanese Ministry of Education, Culture, Sports, Science and Technology (MEXT) for scholarship support during his PhD program. JFTR is supported by the JSPS Postdoctoral Fellowships for Research in Japan and partially by the JSPS Grant-in-Aid for Early-Career Scientists under Japan Grant Number JP23K13012.
HN is partially supported by JSPS Grants-in-Aid for Scientific Research under Grant Numbers JP21H04431, JP24H00188, and JP25K00920.
JFTR and HN are also partially supported by the JST CREST Grant Number JPMJCR2014.

%
%
\bibliographystyle{alpha} 
\bibliography{main}   
\end{document}